\documentclass[10pt]{amsart}
\DeclareRobustCommand{\VAN}[3]{#2}
\usepackage[utf8]{inputenc}
\usepackage{tikz-cd}
\usepackage{stmaryrd}
\usepackage[OT2,T1]{fontenc}
\DeclareSymbolFont{cyrletters}{OT2}{wncyr}{m}{n}
\DeclareMathSymbol{\Sha}{\mathalpha}{cyrletters}{"58}

\usepackage{comment}
\usepackage{fancyhdr}
\usepackage[shortlabels]{enumitem}
\usepackage{amsmath}
\usepackage{mathtools} 
\usepackage{relsize}
\usepackage[bbgreekl]{mathbbol}
\usepackage{amsfonts}
\usepackage{amssymb} 
\usepackage{amsthm}
\usepackage{mathrsfs}
\usepackage{calligra}

\DeclareSymbolFontAlphabet{\mathbb}{AMSb}
\DeclareSymbolFontAlphabet{\mathbbl}{bbold}
\newcommand{\prism}{{\mathlarger{\mathbbl{\Delta}}}}

\usepackage[linktocpage=true, hyperindex,pagebackref]{hyperref}

\usepackage[margin=1.5in]{geometry}

\newcommand\restr[2]{{
  \left.\kern-\nulldelimiterspace 
  #1 
  \vphantom{\big|} 
  \right|_{#2} 
  }}

\mathtoolsset{showonlyrefs}
\numberwithin{equation}{subsection}
\newtheorem{Thm}[subsubsection]{Theorem}
\newtheorem{Lem}[subsubsection]{Lemma}
\newtheorem{Prop}[subsubsection]{Proposition}
\newtheorem{Cor}[subsubsection]{Corollary}
\newtheorem{Conj}[subsubsection]{Conjecture}
\newtheorem{mainThm}{Theorem}
\newtheorem{Claim}[subsubsection]{Claim}
\theoremstyle{definition}
\newtheorem{Def}[subsubsection]{Definition}
\newtheorem{Construction}[subsubsection]{Construction}
\newtheorem{defn}[subsubsection]{Definition}
\newtheorem{Hyp}[subsubsection]{Hypothesis}

\newtheorem{Question}[subsubsection]{Question}
\newtheorem{eg}[subsubsection]{Example}
\newtheorem{Rem}[subsubsection]{Remark}
\newtheorem{Assump}[subsubsection]{Assumption}
\newtheorem{Setup}[subsubsection]{Setup}

\newtheorem{IntroConj}{Conjecture}

\newcommand{\isom}{\xrightarrow{\sim}}

\newcommand{\ebar}{\overline{E}}

\newcommand{\qpbr}{\breve{\mathbb{Q}}_p}

\newcommand{\zpbr}{\breve{\mathbb{Z}}_p}

\newcommand{\qp}{{\mathbb{Q}_{p}}}
\newcommand{\zp}{{\mathbb{Z}_{p}}}
\newcommand{\zlocp}{{\mathbb{Z}_{(p)}}}
\newcommand{\ql}{\mathbb{Q}_{\ell}}
\newcommand{\qlbar}{\overline{\mathbb{Q}}_{\ell}}
\newcommand{\flbar}{\overline{\mathbb{F}}_{\ell}}
\newcommand{\zlbar}{\overline{\mathbb{Z}}_{\ell}}

\newcommand{\zl}{\mathbb{Z}_{\ell}}
\newcommand{\gal}{\operatorname{Gal}}
\newcommand{\afp}{\mathbb{A}_f^p}
\newcommand{\af}{\mathbb{A}_f}

\newcommand{\mad}{\mathrm{ad}}
\newcommand{\ab}{\mathrm{ab}}
\newcommand{\der}{\mathrm{der}}
\newcommand{\qbar}{\overline{\mathbb{Q}}}
\newcommand{\qpbar}{\overline{\mathbb{Q}}_p}
\newcommand{\spec}{\operatorname{Spec}}
\newcommand{\spf}{\operatorname{Spf}}
\newcommand{\spa}{\operatorname{Spa}}
\newcommand{\spd}{\operatorname{Spd}}
\newcommand{\perf}{\operatorname{Perfd}}
\newcommand{\affperfk}{\mathbf{Aff}_k^{\text{perf}}}
\newcommand{\affperf}{\mathbf{Aff}^{\text{perf}}}

\newcommand{\ovfp}{\overline{\mathbb{F}}_{p}}
\newcommand{\fpbar}{\overline{\mathbb{F}}_{p}}

\newcommand{\grgloc}{\mathrm{Gr}_{\mathcal{G}}^{\mathrm{W}}}
\newcommand{\grgmu}{\operatorname{Gr}_{G, -\mu}}

\newcommand{\xspec}{X_{\widehat{G}}^{\mathrm{spec}}}

\newcommand{\shtg}{\operatorname{Sht}_{\mathcal{G}}}

\newcommand{\gad}{G^{\mathrm{ad}}}
\newcommand{\Gad}{\g^{\mathrm{ad}}}
\newcommand{\gadqone}{\g^{\mathrm{ad}}(\mathbb{Q})^{1}}

\newcommand{\calgad}{\mathcal{G}^{\mathrm{ad}}}

\newcommand{\shtgmu}{\operatorname{Sht}_{\mathcal{G},\mu}}
\newcommand{\shtgmuone}{\mathrm{Sht}_{\mathcal{G},\mu, \delta=1}}

\newcommand{\shtlocg}{\operatorname{Sht}^{\mathrm{W}}_{\mathcal{G}}}

\newcommand{\shtlocgmu}{\operatorname{Sht}^{\mathrm{W}}_{\mathcal{G},\mu}}
\newcommand{\shtlocgmuone}{\mathrm{Sht}^{\mathrm{W}}_{\mathcal{G},\mu, \delta=1}}
\newcommand{\shtlocgpmup}{\operatorname{Sht}^{\mathrm{W}}_{\mathcal{G}',\mu'}}
\newcommand{\shtlocgmup}{\operatorname{Sht}^{\mathrm{W}}_{\mathcal{G},\mu'}}

\newcommand{\gisoc}{G_{}\mathrm{-Isoc}}
\newcommand{\gtwoisoc}{G_{2}\mathrm{-Isoc}}

\newcommand{\gvpisoc}{G_{V'}\mathrm{-Isoc}}
\newcommand{\gisocmu}{G\mathrm{-Isoc}_{\le -\mu}}
\newcommand{\gpisocmup}{G'\mathrm{-Isoc}_{\le -\mu'}}

\newcommand{\gisocmumup}{G\mathrm{-Isoc}_{\le -\mu,-\mu'}}
\newcommand{\gpisocmumup}{G'\mathrm{-Isoc}_{\le -\mu,-\mu'}}

\newcommand{\hisoc}{H_{}\mathrm{-Isoc}}

\newcommand{\gpisoc}{G'\mathrm{-Isoc}}

\newcommand{\Q}{\mathbb{Q}}
\newcommand{\R}{\mathbb{R}}
\newcommand{\mC}{\mathbb{C}}
\newcommand{\fp}{\mathbb{F}_p}

\newcommand{\bdtimes}{\buildrel{\boldsymbol{.}}\over\times}

\newcommand{\bun}{\operatorname{Bun}}
\newcommand{\bungmu}{\operatorname{Bun}_{G, -\mu}}
\newcommand{\bung}{\operatorname{Bun}_{G}}

\newcommand{\bungpmup}{\operatorname{Bun}_{G', -\mu'}}
\newcommand{\bungp}{\operatorname{Bun}_{G'}}
\newcommand{\bungmup}{\operatorname{Bun}_{G, -\mu'}}

\newcommand{\bungpmumup}{\operatorname{Bun}_{G',P, -\mu', -\mu}}
\newcommand{\bungmumup}{\operatorname{Bun}_{G,P, -\mu, -\mu'}}
\newcommand{\bungmucmup}{\operatorname{Bun}_{G, -\mu \cup -\mu'}}
\newcommand{\bungmucmupk}{\operatorname{Bun}_{G, -\mu \cup -\mu',k}}

\newcommand{\igs}{\operatorname{Igs}^{\circ}}
\newcommand{\ig}{\operatorname{Ig}^{\circ}}
\newcommand{\scrs}{\mathscr{S}}
\newcommand{\sh}{\operatorname{Sh}}
\newcommand{\msh}{\mathbf{Sh}}
\newcommand{\scrshat}{\widehat{\mathscr{S}}}

\newcommand{\bgmu}{\rB(G,-\mu)}
\newcommand{\bgmup}{\rB(G,-\mu')}
\newcommand{\bgmumup}{\rB(G,P,-\mu,-\mu')}
\newcommand{\bgpmumup}{\rB(G',P,-\mu,-\mu')}

\newcommand{\g}{\mathsf{G}}

\newcommand{\ul}[1]{\underline{#1}}
\newcommand{\mb}{\mathsf{B}}
\newcommand{\zg}{Z_{\mathsf{G}}}

\newcommand{\zglocp}{Z_{\mathsf{G},\zlocp}(\zlocp)}
\newcommand{\zgq}{Z_{\mathsf{G}}(\mathbb{Q})}

\newcommand{\x}{\mathsf{X}}
\newcommand{\mL}{\mathsf{L}}
\newcommand{\ml}{\mathsf{L}}

\newcommand{\mlp}{\mathsf{L}^{+}}
\newcommand{\mkp}{\mathsf{K}^{+}}

\newcommand{\tor}{\mathsf{T}}
\newcommand{\h}{\mathsf{H}}
\newcommand{\f}{\mathsf{F}}
\newcommand{\ho}{\mathsf{H}_{1}}

\newcommand{\gp}{\mathsf{G}'}
\newcommand{\xp}{\mathsf{X}'}

\newcommand{\hy}{{(\mathsf{H}, \mathsf{Y})}}
\newcommand{\hytwo}{{(\mathsf{H}_2, \mathsf{Y}_2)}}

\newcommand{\hytwop}{{(\mathsf{H}_2', \mathsf{Y}_2')}}

\newcommand{\hyo}{{(\mathsf{H}_1, \mathsf{Y}_1)}}

\newcommand{\hyop}{{(\mathsf{H}_1', \mathsf{Y}_1')}}

\newcommand{\gtwo}{\g_{2}}

\newcommand{\gafp}{\mathsf{G}(\afp)}

\newcommand{\gaf}{\mathsf{G}(\af)}
\newcommand{\gx}{{(\mathsf{G}, \mathsf{X})}}
\newcommand{\gxkappa}{{(\mathsf{G}, \mathsf{X},\mathsf{P}, \kappa^p)}}

\newcommand{\gxone}{{(\mathsf{G}_1, \mathsf{X}_1)}}
\newcommand{\gxtwo}{{(\mathsf{G}_2, \mathsf{X}_2)}}
\newcommand{\gxthree}{{(\mathsf{G}_3, \mathsf{X}_3)}}
\newcommand{\gxfour}{{(\mathsf{G}_4, \mathsf{X}_4)}}

\newcommand{\gxonep}{{(\mathsf{G}_1', \mathsf{X}_1')}}
\newcommand{\gxtwop}{{(\mathsf{G}_2', \mathsf{X}_2')}}
\newcommand{\gxthreep}{{(\mathsf{G}_3', \mathsf{X}_3')}}

\newcommand{\gxad}{{(\mathsf{G}^{\mathrm{ad}}, \mathsf{X}^{\mathrm{ad}})}}

\newcommand{\gv}{\mathsf{G}_{V}}

\newcommand{\gwx}{(\mathsf{G}_{W},\mathsf{H}_{W})}

\newcommand{\gvx}{(\mathsf{G}_{V},\mathsf{H}_{V})}
\newcommand{\gvlx}{(\mathsf{G}_{V_{\ml}},\mathsf{H}_{V_{\ml}})}
\newcommand{\gvlxp}{(\mathsf{G}_{V_{\ml}'},\mathsf{H}_{V_{\ml}}')}
\newcommand{\gvxone}{(\mathsf{G}_{V_1},\mathsf{H}_{V_1})}
\newcommand{\gvxtwo}{(\mathsf{G}_{V_2},\mathsf{H}_{V_2})}
\newcommand{\gvxthree}{(\mathsf{G}_{V_3},\mathsf{H}_{V_3})}

\newcommand{\gxp}{(\mathsf{G}', \mathsf{X}')}

\newcommand{\gvxp}{(\mathsf{G}_{V'},\mathsf{H}_{V'})}

\newcommand{\dcirc}{\diamond/\circ}

\newcommand{\scrshg}{\scrs_K\gx}

\newcommand{\shginf}{\operatorname{Sh}_{K_p}\gx}

\newcommand{\shgvinf}{\operatorname{Sh}_{M_p}\gvx}

\newcommand{\shgpinf}{\operatorname{Sh}_{K_p'}\gxp}

\newcommand{\igspre}{\operatorname{Igs}^{\circ,\mathrm{pre}}}

\newcommand{\igspree}{\operatorname{Igs}^{\mathrm{spre}}}
\newcommand{\igse}{\operatorname{Igs}^{\mathrm{s}}}

\newcommand{\igsprecc}{\operatorname{Igs}^{\mathrm{pre},\dcirc}}
\newcommand{\igscc}{\operatorname{Igs}^{\dcirc}}
\newcommand{\igsperf}{\operatorname{Igs}^{\mathrm{perf}}}
\newcommand{\igspreperf}{\operatorname{Igs}^{\mathrm{pre}, \mathrm{perf}}}

\newcommand{\red}{\mathrm{red}}
\newcommand{\pre}{\mathrm{pre}}
\newcommand{\crys}{\mathrm{crys}}

\newcommand{\shcorrpre}{\operatorname{ShCorr}^{\mathrm{pre}}_{\Omega, \mathsf{P}}}
\newcommand{\shcorr}{\operatorname{ShCorr}^{}_{\Omega, \mathsf{P}}}

\newcommand{\igscorrpre}{\operatorname{IgsCorr}^{\mathrm{pre}}_{\Omega, \mathsf{P}}}
\newcommand{\igscorrpreperf}{\operatorname{IgsCorr}^{\mathrm{pre}, \mathrm{perf}}_{\Omega, \mathsf{P}}}
\newcommand{\igscorrperfpre}{\operatorname{IgsCorr}^{\mathrm{pre}, \mathrm{perf}}_{\Omega, \mathsf{P}}}
\newcommand{\igscorrperf}{\operatorname{IgsCorr}^{ \mathrm{perf}}_{\Omega, \mathsf{P}}}

\newcommand{\igscorr}{\operatorname{IgsCorr_{\Omega, \mathsf{P}}}}
\newcommand{\igscorriotakappa}{\operatorname{IgsCorr_{\mathsf{P},\iota,\kappa^p}}}
\newcommand{\igscorrperfiotakappa}{\operatorname{IgsCorr^{\mathrm{perf}}_{\mathsf{P},\iota,\kappa^p}}}
\newcommand{\igscorrperfnaiveiotakappa}{\operatorname{IgsCorr^{\mathrm{perf,naive}}_{\mathsf{P},\iota,\kappa^p}}}

\newcommand{\igscorrpreviotakappa}{\operatorname{IgsCorr^{\mathrm{pre}}_{\mathsf{P}_V,\kappa^p_V}}}
\newcommand{\igscorrvperfpreiotakappa}{\operatorname{IgsCorr^{\mathrm{pre,perf}}_{\mathsf{P}_V,\kappa^p_V}}}
\newcommand{\igscorrvperfiotakappa}{\operatorname{IgsCorr^{\mathrm{perf}}_{\mathsf{P}_V,\kappa^p_V}}}
\newcommand{\igscorrviotakappa}{\operatorname{IgsCorr_{\mathsf{P}_V,\kappa^p_V}}}
\newcommand{\igscorrvoneiotakappa}{\operatorname{IgsCorr_{\mathsf{P}_{V_1},\kappa^p_{V_1}}}}
\newcommand{\igscorrvtwoiotakappa}{\operatorname{IgsCorr_{\mathsf{P}_{V_2},\kappa^p_{V_2}}}}
\newcommand{\igscorrvonetwoiotakappa}{\operatorname{IgsCorr_{\mathsf{P}_{V_1,V_2},\kappa^p_{V_1,V_2}}}}
\newcommand{\igscorrvthreeiotakappa}{\operatorname{IgsCorr_{\mathsf{P}_{V_3},\kappa^p_{V_3}}}}
\newcommand{\igscorrad}
{\operatorname{IgsCorr_{\Theta_2, \Sigma}}}
\newcommand{\igscorradone}{\operatorname{IgsCorr_{\Theta_{3}, \Sigma_{1}}}}
\newcommand{\igscorradfour}{\operatorname{IgsCorr_{\Theta_{2}, \Sigma_{4}}}}

\newcommand{\igscorrone}{\operatorname{IgsCorr_{\Omega_1, \mathsf{P}_1}}}

\newcommand{\igscorrthree}{\operatorname{IgsCorr_{\Omega_3, \mathsf{P}_1}}}

\newcommand{\shtlocgmumup}{\operatorname{Sht}^{\mathrm{W}}_{\mathcal{G},\mu,\mathcal{G}',\mu',P}}

\newcommand{\rH}{\mathrm{H}}
\newcommand{\rZ}{\mathrm{Z}}
\newcommand{\rB}{\mathrm{B}}

\newcommand{\agext}{\mathcal{A}^{\mathrm{Ext}}(\g)}
\newcommand{\agpext}{\mathcal{A}^{\mathrm{Ext}}(\g')}
\newcommand{\ag}{\mathcal{A}(\g)}
\newcommand{\agone}{\mathcal{A}(\g_1)}

\newcommand{\agtwop}{\mathcal{A}^p(\g_2)}

\newcommand{\apg}{\mathcal{A}^p(\mathsf{G})}
\newcommand{\apgtwo}{\mathcal{A}^p(\mathsf{G}_2)}
\newcommand{\agp}{\mathcal{A}(\mathsf{G}')}
\newcommand{\apgp}{\mathcal{A}^p(\gp)}

\newcommand{\agintext}{\mathcal{A}^{\mathrm{Ext}}(\mathcal{G}_{\zlocp})}
\newcommand{\agint}{\mathcal{A}(\mathcal{G}_{\zlocp})}
\newcommand{\agpint}{\mathcal{A}(\mathcal{G}'_{\zlocp})}
\newcommand{\tildepi}{\tilde{\pi}}
\newcommand{\tildef}{\tilde{\mathcal{F}}}
\newcommand{\Div}{\mathrm{Div}^{1}_{E}}

\newcommand{\Divp}{\mathrm{Div}^{1}_{E'}}

\newcommand{\loc}{\operatorname{Par}_{G}}
\newcommand{\cocycle}{Z^1(W_\qp,{\widehat{G}})}
\newcommand{\dualG}{\widehat{G}}

\newcommand{\gm}{\mathbb{G}_{m}}
\newcommand{\HT}{\mathrm{HT}}
\newcommand{\pibar}{\overline{\pi}}
\newcommand{\dr}{\operatorname{dR}}
\newcommand{\indperf}{\operatorname{Ind} \operatorname{Perf}}
\newcommand{\lock}{\operatorname{Par}_{G,k}}
\newcommand{\lochat}{\operatorname{Par}_{G,\phi}^{\wedge}}

\newcommand{\twopartdef}[3]{%
   \left\{
      \begin{array}{ll}
          #1 & \mbox{if } #2 \\
          #3 & \mbox{otherwise}
      \end{array}
   \right.
}

\newcommand{\threepartdef}[6]
{
	\left\{
		\begin{array}{lll}
			#1 & \mbox{if } #2 \\
			#3 & \mbox{if } #4 \\
			#5 & \mbox{} #6
		\end{array}
	\right.
}
\iftrue
\newcommand{\commentPol}[1]{\textcolor{red}{Pol: #1}}
\newcommand{\commentJack}[1]{\textcolor{blue}{Jack: #1}}
\else
\newcommand{\commentPol}[1]{}
\newcommand{\commentJack}[1]{}
\fi

\begin{document}
\title[A geometric Jacquet--Langlands correspondence for Shimura varieties]{A geometric Jacquet--Langlands correspondence for Shimura varieties}

\author{Pol van Hoften} 
\address{School of Mathematical Sciences, Zhejiang University, 866 Yuhangtang Rd, Hangzhou, 310058, P. R. China}
\email{pvhoften@zju.edu.cn}
\thanks{PvH was (partly) funded by the Dutch Research Council (NWO) under the grant VI.Veni.232.127.}
\author[Jack Sempliner]{Jack Sempliner}\email{jsempliner@math.ucla.edu}\address{Mathematical Sciences Building, 520 Portola Plaza, Los Angeles, CA 90095, United States of America}
\thanks{J.S. received funding from the European Research Council (ERC) under the European Union's Horizon 2020 research and innovation program (grant agreement No. 884596).}

\begin{abstract}
We formulate a conjecture predicting the existence of exotic isomorphisms between Igusa stacks associated to different Shimura data whose underlying groups are pure inner forms of each other, and prove it in many cases of interest. In combination with the spectral action of Fargues--Scholze, this allows us to relate the cohomology groups of the two Shimura varieties, giving a geometric incarnation of the Jacquet--Langlands correspondence. An important ingredient in our proofs is the work of Xiao--Zhu on exotic Hecke correspondences between the perfect special fibers of different Shimura varieties, which we reinterpret in terms of exotic isomorphisms of perfect Igusa stacks.
\end{abstract}

\maketitle  
\setcounter{tocdepth}{2}

\tableofcontents

\section{Introduction}

\subsection{Overview} It is an empirical fact, observed by Serre, Ribet, Helm, and Tian--Xiao, that the mod $p$ geometry of \emph{different} Shimura varieties is often related, see \cite{Serre}, \cite{Ribet}, \cite{HelmI}, \cite{HelmII}, \cite{TianXiao}, \cite{TianXiaoII}, \cite{HelmTianXiao}. For example, the mod $p$ fibers of Hilbert modular surfaces associated to a real quadratic field $\mlp$ will naturally contain the mod $p$ fibers of Shimura curves associated to certain quaternion algebras over $\mlp$, if $p$ is split in $\mlp$. These geometric observations have found several applications to the cohomology of Shimura varieties. For example the work of Tian--Xiao has been used in work of Diamond--Kassaei--Sasaki \cite{DiamondKasseaiSasaki} and of Caraiani--Tamiozzo \cite{CaraianiTamiozzo}. It was observed by Caraiani--Tamiozzo that the work of Tian--Xiao gives rise to exotic isomorphisms between Igusa varieties associated to different Shimura varieties; they apply this observation to prove new torsion-vanishing results for Hilbert modular varieties. \smallskip 

More generally, there are sometimes correspondences between the mod $p$ fibers of different Shimura varieties which were named \emph{exotic Hecke correspondences} by Xiao and Zhu \cite{XiaoZhu}. Xiao and Zhu conjectured the existence and shape of quite general exotic Hecke correspondences. Their conjecture predicts the existence of exotic Hecke correspondences between the (perfect) mod $p$ fibers of the Shimura varieties associated to Shimura data $\gx$ and $\gxp$ such that $\g \otimes \af \simeq \g' \otimes \af$. \smallskip

We will state a conjecture, see Conjecture \ref{Conj:IntroMain}, on the existence of exotic isomorphisms between Igusa stacks associated to Shimura data $\gx$ and $\gxp$ such that $\g \otimes \afp \simeq \g' \otimes \afp$. Conjecture \ref{Conj:IntroMain} implies Conjecture \ref{Conj:MainIII} on the existence of exotic Hecke correspondences between the perfect mod $p$ fibers of the corresponding Shimura varieties, generalizing the conjecture of Xiao--Zhu. Our main result, Theorem \ref{Thm:IntroMainIgusaStack}, proves Conjecture \ref{Conj:IntroMain} for many abelian type Shimura data. A crucial ingredient in the proof is a rigidity theorem for Igusa stacks, Theorem \ref{Thm:IntroIgusaDCirc}, which should be of independent interest. Conjecture \ref{Conj:IntroMain} allows us to relate the cohomology groups of the Shimura varieties for $\gx$ and $\gxp$, and we begin by stating our main cohomological results.

\subsection{Main cohomological results} Fix a prime $p$ and an isomorphism $\alpha:\mathbb{C} \xrightarrow{} \qpbar$. Let $\gx$ be a Shimura datum of abelian type satisfying Milne's axiom SV5 and let $\mathsf{P}$ be a $\g$-torsor over $\spec \mathbb{Q}$ that is trivial over $\spec \mathbb{Q}_{\ell}$ for all $\ell \not=p$, and let $\g'=\operatorname{Aut}_{\g}(\mathsf{P})$. We let $\mathsf{X}'$ be a Shimura datum for $\g'$ such that Assumption \ref{Assump:Infinity} holds (this is automatic e.g. if $\g$ is adjoint). There exists a canonical identification ${}^{\mathrm{L}}\g_{\qp} = {}^{\mathrm{L}}\gp_{\qp}$ of the associated Langlands dual groups. We moreover fix an isomorphism $\mathsf{P} \otimes \afp \isom \g \otimes \afp$ inducing $\g \otimes \afp \xrightarrow{\sim} \g' \otimes \afp$, choose $K^p \subset \gafp = \g'(\afp)$ a sufficiently small compact open subgroup, and write $\mathbb{T}_{K^p}$ for the Hecke algebra of level $K^p$ with values in $\Lambda$, and $\mathcal{Z}_{K^p}$ for its center. Write $P=\mathsf{P}_{\qp}$. \smallskip 

Fix a prime $\ell \not=p$, let $\Lambda \in \{\qlbar, \flbar\}$ and fix $\sqrt{p} \in \Lambda$. Let $\loc$ be the moduli stack of local Langlands parameters for ${}^{\mathrm{L}}\g_{\qp}$ over $\Lambda$. Associated to $\x, \xp,\alpha$ and $\sqrt{p}$ there are representations $V_{\mu}, V_{\mu'}$ of $\dualG$ which induce vector bundles $\mathcal{V}_\mu, \mathcal{V}_{\mu'}$ over $\loc$ of dimensions $n,n'$. One of the most striking upshots of the work \cite{FarguesScholze} of Fargues--Scholze is the construction of the so-called \emph{spectral action} of the category of perfect complexes on $\loc$ on the derived category $\mathcal{D}_{\mathrm{lis}}(\bung, \Lambda)$. Using this action, we show the following. Let $e=|\pi_0(Z(G))|$.

\begin{mainThm} \label{Thm:IntroCohomology}
Suppose Conjecture \ref{Conj:IntroMain} holds for $\gx, \mathsf{P}$, that $\kappa(\mu)+\kappa([P])=\kappa(\mu')$ (see Remark \ref{Rem:VacuousConjectureRemark}) and that $e \in \Lambda^{\times}$. Let $\phi:W_{\qp} \to {}^{\mathrm{L}}\g_{\qp}(\Lambda)$ be a semisimple $L$-parameter with centralizer $S_{\phi}$. \\

\noindent \textbf{(1)}: Let $\chi:\mathcal{Z}_{K^p} \to \Lambda$ be a character. If $n,n' \in \Lambda^{\times}$ and $\phi$ is supercuspidal, then
\begin{align}
    R\Gamma(\mathbf{Sh}_{K^p}\gx, \Lambda)_{\phi,\chi} \not = 0 \Leftrightarrow R\Gamma(\mathbf{Sh}_{K^p}\gxp, \Lambda)_{\phi,\chi} \not=0.
\end{align}\\
\textbf{(2):} If $[P] \in H^1(\qp, \g)$ is trivial, then there is a map
    \begin{align}
            \operatorname{RHom}(V_{\mu}, V_{\mu'}) \to \operatorname{RHom}_{\g(\qp) \times \mathbb{T}_{K^p}}\left(R\Gamma(\mathbf{Sh}_{K^p}\gx, \Lambda)[d], R\Gamma(\mathbf{Sh}_{K^p}\gxp, \Lambda)[d'] \right)
    \end{align}
induced by the spectral action. Here $d$ (resp. $d'$) is the dimension of $\mathbf{Sh}_K\gx$ (resp. $\mathbf{Sh}_{K'}\gxp$).\\

\noindent\textbf{(3)}: Suppose that $S_{\phi}=Z(\dualG)^{\gal_{\qp}}$. If $[P] \in H^1(\qp, \g)$ is trivial, then there exists a complex of smooth $\g(\qp) \times \mathbb{T}_{K^p}$-representations $N$ together with quasi-isomorphisms
    \begin{align}
        N^{\oplus n} & \xrightarrow{\sim} R\Gamma(\mathbf{Sh}_{K^p}\gx, \Lambda)_{\phi}[d] \\
        N^{\oplus n'} & \xrightarrow{\sim} R\Gamma(\mathbf{Sh}_{K^p}\gxp, \Lambda)_{\phi}[d']
    \end{align}
that are $G(\qp) \times \mathbb{T}_{K^p}$-equivariant.

\noindent \textbf{(4)} Suppose that $S_{\phi}$ is linearly reductive and that $\phi$ is generous. If $[P] \in H^1(\qp, \g)$ is trivial and $V_{\mu}$ is a direct summand of $V_{\mu'}$ in the category of $S_{\phi}$-representations, then $R\Gamma(\mathbf{Sh}_{K^p}\gx, \Lambda)_{\phi}[d]$ is a $G(\qp) \times \mathbb{T}_{K^p}$-equivariant direct summand of $R\Gamma(\mathbf{Sh}_{K^p}\gxp, \Lambda)_{\phi}[d']$.
\end{mainThm}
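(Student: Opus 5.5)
The plan is to express both cohomologies through a single object on $\bung$ and then compare them using the spectral action.

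\textbf{Step 1: a common sheaf on $\bung$.} By the Igusa stack formalism, the cohomology of $\mathbf{Sh}_{K^p}\gx$ is computed on $\bung$. Write $\pi_{\mathrm{HT}}:\igs\gx\to\bung$ and set $\mathcal{F}_{K^p}=R\pi_{\mathrm{HT},*}\Lambda$. The cartesian diagram relating $\mathbf{Sh}_{K^p}\gx$, $\igs$, the local Hecke stack $\mathrm{Hck}_{G,\mu}$ and $\bung$ then gives
\begin{equation}
R\Gamma(\mathbf{Sh}_{K^p}\gx,\Lambda)[d]\simeq i_1^{*}T_{\mu}\mathcal{F}_{K^p},
\end{equation}
up to Tate twists. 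Here $T_\mu$ is the Hecke operator attached to $V_\mu$, and by Fargues--Scholze it equals the spectral action of $\mathcal{V}_\mu$.

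Conjecture \ref{Conj:IntroMain} supplies an isomorphism $\igs\gx\simeq\igs\gxp$ over the twisting equivalence $\bung\simeq\bungp$, which sends $b$ to $b\cdot[P]^{-1}$. It is compatible with the $\gafp$-action, because the fixed identification of $\mathsf{P}\otimes\afp$ is used, and hence also with the $\mathbb{T}_{K^p}$-action. Under this isomorphism $\mathcal{F}'_{K^p}$ corresponds to the translate of $\mathcal{F}_{K^p}$. Consequently
\begin{equation}
R\Gamma(\mathbf{Sh}_{K^p}\gxp,\Lambda)[d']\simeq i_{[P]}^{*}T_{\mu'}\mathcal{F}_{K^p}.
\end{equation}
The hypothesis $\kappa(\mu)+\kappa([P])=\kappa(\mu')$ makes the relevant connected components match, so that only $b\in B(G,\mu')$ contributes. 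The condition $e\in\Lambda^\times$ is used for the central twist and the $\pi_0 Z$ bookkeeping needed to descend along components.

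\textbf{Step 2: part (2).} When $[P]$ is trivial, both cohomologies are obtained by applying $i_1^*$ to the spectral actions of $\mathcal{V}_\mu$ and $\mathcal{V}_{\mu'}$ on the same object $\mathcal{F}_{K^p}$. Functoriality of the spectral action in $\mathrm{Perf}(\loc)$ gives a map
\begin{equation}
\operatorname{RHom}(\mathcal{V}_\mu,\mathcal{V}_{\mu'})\to\operatorname{RHom}(T_\mu\mathcal{F}_{K^p},T_{\mu'}\mathcal{F}_{K^p}).
\end{equation}
We precompose with $\operatorname{RHom}(V_\mu,V_{\mu'})\to\operatorname{RHom}(\mathcal{V}_\mu,\mathcal{V}_{\mu'})$ and postcompose with $i_1^*$. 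Equivariance for $\g(\qp)$ and $\mathbb{T}_{K^p}$ holds because the spectral action commutes with the Hecke action away from $p$.

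\textbf{Step 3: parts (3) and (4).} Localize at $\phi$ using the action of the spectral Bernstein centre. The completion of $\loc$ at $\phi$ is governed by $[\,\cdot/S_\phi]$.

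For (3), the hypothesis $S_\phi=Z(\dualG)^{\gal_\qp}$ makes $S_\phi$ central. Then $\mathcal{V}_\mu$ is, near $\phi$, the trivial bundle of rank $n$ twisted by a central character. After twisting, the localized $T_\mu\mathcal{F}$ becomes $N^{\oplus n}$ with $N=i_1^*(\mathcal{F}_{K^p})_\phi$, and the same holds with $n'$. The central characters match by the $\kappa$-condition.

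For (4), linear reductivity of $S_\phi$ together with genericity lets the localized action factor through $\operatorname{Rep}(S_\phi)$, which is semisimple. A splitting $V_\mu\to V_{\mu'}\to V_\mu$ with composite equal to the identity then transports to $T_\mu\mathcal{F}_\phi\to T_{\mu'}\mathcal{F}_\phi\to T_\mu\mathcal{F}_\phi$.

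\textbf{Step 4: part (1).} Here $[P]$ may be nontrivial. For supercuspidal $\phi$, the localized category decomposes over the basic points, and $\mathcal{F}_\phi$ is supported on basic strata. By the spectral action, $T_{\mu'}T_{\mu}^{\vee}$ carries the localization at $i_1$ to a sum of copies of the localization at $i_{[P]}$ and back. Nonvanishing on one side therefore implies nonvanishing on the other, given that $V_\mu$ has a nonzero piece in the correct $\pi_0 Z$-component. The assumption $n,n'\in\Lambda^\times$ is used to split off the trace, so that the identity of $\mathcal{F}_\phi$ is a retract of $T_{\mu^\vee}T_\mu\mathcal{F}_\phi$.

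\textbf{Main obstacle.} I expect the main difficulty to be the compatibility in Step 1. One has to check that the exotic Igusa isomorphism intertwines the Hecke stacks and the twisting equivalence on $\bung$ with the correct $\kappa$-shifts and Tate twists, and that it respects $\mathbb{T}_{K^p}$-equivariance. Once this is in place, parts (2)--(4) are formal consequences of the spectral action.
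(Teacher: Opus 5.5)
The gap is in Step 1. Conjecture \ref{Conj:IntroMain} does not give an isomorphism $\igs\gx\simeq\igs\gxp$ over $\bung$. It only identifies the open substacks $\igs\gx_{-\mu'}\isom\igs\gxp_{-\mu}$ lying over $\bungmumup$. The two Igusa stacks live over the different open substacks $\bungmu$ and $\bungmup=\beta_P^{-1}(\bungpmup)$. So $\mathcal{F}'$ is not a translate of $\mathcal{F}$; the two sheaves agree only after restriction to $\bungmumup$.

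Consequently your formula $R\Gamma(\mathbf{Sh}_{K^p}\gxp,\Lambda)[d']\simeq i_{[P]}^{*}T_{\mu'}\mathcal{F}_{K^p}$, with $\mathcal{F}_{K^p}$ built from $\igs\gx$ alone, fails in general. The functor $i_{[P]}^{*}T_{\mu'}$ sees the restriction of its argument to all of $\bungmup$. On $\bungmup\setminus\bungmu$ your sheaf is just an extension of $\mathcal{F}$ from $\bungmu$ (by zero or by $Rj_*$) and has nothing to do with $\mathcal{F}'$. With one Igusa stack you cannot realize both cohomology complexes through Hecke operators on a single object, and every part of the theorem rests on exactly that. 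The obstacle you flag, compatibility of $\beta_P$ with Hecke stacks and with $\mathbb{T}_{K^p}$, is routine by comparison.

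The missing idea is a gluing step.
\begin{enumerate}
\item Use the conjectural $\gafp$-equivariant isomorphism to glue $\igs\gx$ and $\igs\gxp$ (the latter viewed over $\bung$ via $\beta_P^{-1}$) along their common open substack. This gives $\tilde\pi:\igs\gxkappa\to\bungmucmup$; set $\tildef=R\tilde\pi_{*}\mathbb{W}$.
\item Smooth base change along the open immersions gives $\mathbb{T}_{K^p}$-equivariant isomorphisms $\restr{\tildef}{\bungmu}\simeq\mathcal{F}$ and $\restr{\tildef}{\bungmup}\simeq\mathcal{F}'$.
\item You then need a boundedness statement: $i_1^{*}T_{\mu}j_{!}\tildef\to i_1^{*}T_{\mu}j_{!}\mathcal{F}$ is an isomorphism, because the part of $\mathrm{Hck}_{G,\mu}$ over the neutral stratum maps into $\bungmu$ under $h_1$. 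The same holds for $\mu'$ and $[1']$.
\item Together with the Igusa-stack computation of cohomology, this gives $i_1^{*}T_{\mu}j_!\tildef\simeq R\Gamma(\mathbf{Sh}_{K^p}\gx)[d]$ and $i_{1'}^{*}T_{\mu'}j_!\tildef\simeq R\Gamma(\mathbf{Sh}_{K^p}\gxp)[d']$ for one and the same object.
\end{enumerate}
After this, your Steps 2--4 go through essentially as in the paper with $\tildef$ in place of $\mathcal{F}_{K^p}$. In (1), the hypothesis $\kappa(\mu)+\kappa([P])=\kappa(\mu')$ places $\tildef$ on a single connected component. Basic support for supercuspidal $\phi$ then reduces nonvanishing of $T_\mu\tildef_\phi$ and $T_{\mu'}\tildef_\phi$ to the basic points $[1]$ and $[1']$.

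Three smaller corrections.
\begin{itemize}
\item In (3), as written your $N=i_1^{*}(\mathcal{F}_{K^p})_\phi$ is wrong: it vanishes once $\kappa(\mu)\neq 0$, since then $[1]\notin B(G,-\mu)$. The correct complex is $N=i_1^{*}T_{\mathcal{L}(\kappa(\mu))}\tildef_\phi$.
\item For $\operatorname{Perf}(\lochat)$ to act on $\tildef_\phi$, you need $\tildef$ to be ULA, so that $\tildef_\phi$ lies in the completed category.
\item The hypothesis $e\in\Lambda^{\times}$ is simply the condition under which the Fargues--Scholze spectral action is available. It is not used for any central twist or $\pi_0 Z$ bookkeeping.
\end{itemize}
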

Part \textbf{(2)} of Theorem \ref{Thm:IntroCohomology} is closely related to \cite[Conjecture 4.7.16]{ZhuCoherent}.
\begin{Rem}
    A similar looking map to the one in \textbf{(2)} is obtained in the work of Xiao--Zhu in \cite[\S 7.3.11]{XiaoZhu} for the cohomology at \emph{hyperspecial level $K_p$} under the assumption that $[P] \in H^1(\qp,\g)$ is trivial, implying that $\g_{\qp} \simeq \gp_{\qp}$. The advantage of our approach is threefold: First that we are able to study the ramified part of the cohomology of Shimura varieties, second we are able to handle the case that $\g_{\qp} \not\cong \gp_{\qp}$, and third we are able to prove refined results like \textbf{(3)} and \textbf{(4)} using the spectral action. 
\end{Rem}
\begin{Rem}
Part \textbf{(1)} simply asserts the transfer of certain mod $\ell$ Hecke eigensystems from $\g$ to $\g'$ and back, and can be thought of as a qualitative mod $\ell$ Jacquet--Langlands transfer. The isomorphism in Part \textbf{(3)} is a quantitative or "multiplicity preserving" mod $\ell$ Jacquet--Langlands transfer. Part \textbf{(4)} is a less precise version of \textbf{(3)} under much weaker assumptions on $S_{\phi}$. We also prove a version of \textbf{(3)} when $[P]$ is nontrivial, see Theorem \ref{Thm:arii2j3asd}. In Section \ref{Sec:Cohomology} our results are moreover all stated for cohomology with coefficients in automorphic local systems. 
\end{Rem}

\subsection{A conjecture} Our main geometric results are stated in terms of the Igusa stacks introduced by Zhang in \cite{ZhangThesis} and constructed in greater generality by Daniels--van Hoften--Kim--Zhang in \cite{DvHKZIgusaStacks},\cite{DvHKZIgusaStacksII} and by Kim in \cite{KimFunctoriality}. We first state a precise conjecture, and describe our main geometric results afterwards.

\subsubsection{} Fix a prime $p$ and an identification $\mathbb{C} \isom \qpbar$. Let $\gx$ be a Shimura datum of abelian type satisfying Milne's axiom SV5 and let $\mathsf{P}$ be a $\g$-torsor over $\spec \mathbb{Q}$ that is trivial over $\spec \mathbb{Q}_{\ell}$ for all $\ell \not=p$, and let $\g'=\operatorname{Aut}_{\g}(\mathsf{P})$. We let $\mathsf{X}'$ be a Shimura datum for $\g'$ such that Assumption \ref{Assump:Infinity} holds (this is automatic e.g. if $\g$ is adjoint). The isomorphism $\mathbb{C} \isom \qpbar$ defines $p$-adic places $v$ and $v'$ of the reflex fields $\mathsf{E}$ and $\mathsf{E}'$ of $\gx$ and $\gxp$, and we let $E$ and $E'$ be the respective $p$-adic completions. We write $G=\g \otimes \qp$ and $G' = \g' \otimes \qp$. Let $\mu$ be the $G(\qpbar)$-conjugacy class of Hodge cocharacters induced by $\mathsf{X}$ and $v$, and let $\mu'$ be the $G'(\qpbar)$-conjugacy class of Hodge cocharacters induced by $\mathsf{X}'$ and $v'$.

\subsubsection{} The $G$-torsor $P=\mathsf{P} \otimes \qp$ over $\spec \qp$ induces an isomorphism $\beta_{P}:\bun_{G} \to \bun_{G'}$ by work of Fargues--Scholze, see \cite[Section III.4.1]{FarguesScholze}. We consider the open substacks $\bungmu \subset \bun_G$ and $\bungpmup \subset \bun_{G'}$ and consider their intersection
\begin{align}
    \bungpmumup:=\bungpmup \times_{\bun_{G'}} \beta_P(\bungmu).
\end{align}
\subsubsection{} Let $\pi:\igs \gx \to \bungmu$ (resp. $\pi':\igs \gxp \to \bungpmup$) be the Igusa stacks\footnote{The superscript $\circ$ indicates that we are using the Igusa stacks that are quotients of the good reduction locus of the generic fiber of the Shimura variety. We expect a version of Conjecture \ref{Conj:IntroMain} to hold for the larger Igusa stacks $\operatorname{Igs}\gx \supset \igs \gx$ constructed in \cite{KimFunctoriality},\cite{DvHKZIgusaStacksII}.} constructed in \cite{DvHKZIgusaStacksII}. We define the open substack $\igs \gx_{-\mu'} \subset \igs \gx$ to be the inverse image of $\bungpmumup$ and similarly we define $\igs \gxp_{-\mu} \subset \igs \gxp$. Choose an isomorphism $\kappa^p:\mathsf{P} \otimes \afp \isom \g \otimes \afp$ which induces an isomorphism $\gp \otimes \afp \isom \g \otimes \afp$ which we will use to identify $\gp(\afp)$ with $\gafp$. 
\begin{IntroConj}[Conjecture \ref{Conj:Main}] \label{Conj:IntroMain}
Let $\gx$ and $\mathsf{P}$ be as above. If $\Sha^1(\mathbb{Q},\g)=0$, then there is a $\gafp$-equivariant isomorphism
    \begin{align}
        \igs \gx_{-\mu'} \to \igs \gxp_{-\mu}
    \end{align}
    of v-stacks over $\bungp$. 
\end{IntroConj}
Conjecture \ref{Conj:IntroMain} can be used to show the existence of exotic Hecke correspondences between the special fiber of the Shimura varieties for $\gx$ and the Shimura varieties for $\gxp$, see Section \ref{Sub:Conjectures} and Conjecture \ref{Conj:MainIII}. This conjecture on exotic Hecke correspondences is a generalization of a conjecture of Xiao--Zhu, see \cite[Hypothesis 7.3.2]{XiaoZhu}, who deal with the case that $[P] \in H^1(\qp,G)$ is trivial and that $G$ is unramified. Note that Xiao--Zhu have announced a proof of their conjecture.
\begin{Rem} \label{Rem:VacuousConjectureRemark}
Let $\pi_1(G)$ be the algebraic fundamental group of $G$ equipped with its action of the Galois group $\gal_{\qp}$ of $\qp$, and note that $P$ induces a $\gal_{\qp}$-equivariant isomorphism $\pi_1(G) \isom \pi_1(G')$. The open substack $\bungmumup \subset \bun_{G}$ is nonempty precisely when $\kappa(\mu)+\kappa([P])=\kappa(\mu')$ in $\pi_1(G)_{\gal_{\qp}}=\pi_1(G')_{\gal_{\qp}}$. Thus Conjecture \ref{Conj:IntroMain} is vacuously true if $\bungmumup$ is empty. This also explains the occurrence of the assumption $\kappa(\mu)+\kappa([P])=\kappa(\mu')$ in Theorem \ref{Thm:IntroCohomology}; we should not be able to deduce cohomological consequences from a vacuous conjecture! 
\end{Rem}
\begin{Rem} \label{Rem:Sha1Trivial}
    The assumption that $\Sha^1(\mathbb{Q},\g)=0$ can be removed from the conjecture if one replaces the Shimura varieties (and thus the Igusa stacks) for $\gx$ with the rational Shimura varieties for $\gx$ of Sempliner--Taylor \cite{SemplinerTaylorShimura} or the extended Shimura varieties of Xiao--Zhu \cite{XiaoZhu2}. The conjecture should moreover be generalized to allow $\mathsf{P}$ to be a basic Kottwitz cocycle in the sense of \cite{sempliner2024cocycleskottwitzcohomology}. In this generality, we only expect an isomorphism after basechange along $\spd \fpbar \to \spd \fp$ because $\beta_{P}$ will generally only be defined over $\fpbar$.

    A special case of this generalized conjecture is the case that $\g'(\mathbb{R})$ is compact modulo center. In this case, the generalized conjecture gives a description of the basic Igusa variety. We prove this in some cases by relating it to basic uniformization, see Corollary \ref{Cor:BasicIgusaVariety}. 
\end{Rem}
\subsection{Main geometric results}
We can now state our main geometric results.
\begin{mainThm} \label{Thm:IntroMainIgusaStack}
Let $\gx$ be a Shimura datum of abelian type. Assume that $G$ splits over an unramified extension of $\qp$. Conjecture \ref{Conj:IntroMain} holds in the following situations\footnote{Note that $\Sha^1(\mathbb{Q},\g)=0$ for the groups considered in Theorem \ref{Thm:IntroMainIgusaStack} so that the statement is not vacuous.}:
\begin{itemize}
    \item The Shimura datum $\gx$ is of PEL type $A_{2n+1}$ and $p>2$.

    \item Conjecture \ref{Conj:XiaoZhu} holds and $\g=\operatorname{Res}_{\mlp/\mathbb{Q}} \operatorname{SO}(V)$ for a quadratic space $V$ of odd dimension over a totally real field $\mlp$. 

    \item Conjecture \ref{Conj:XiaoZhu} holds and $\g=\operatorname{Res}_{\mlp/\mathbb{Q}} \mathsf{J}$ for a totally real field $\mlp$ and $\mathsf{J}$ an inner form of $\operatorname{PGSp}_{2g}$ over $\mlp$.
\end{itemize}
\end{mainThm}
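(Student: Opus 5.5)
The plan is to reduce Conjecture \ref{Conj:IntroMain} to a statement about perfect special fibers, and to feed in the exotic Hecke correspondences of Xiao--Zhu there. This uses the rigidity theorem for Igusa stacks (Theorem \ref{Thm:IntroIgusaDCirc}): an isomorphism of Igusa stacks over $\bungp$ should be determined by, and constructible from, an isomorphism of their "$\diamond/\circ$" versions $\igscc$. These in turn are built from the perfection of the (mod $p$) Shimura variety together with the map to the stack of $G$-isocrystals.

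\textbf{Step 1: perfect Igusa stacks.} I will set up perfect Igusa stacks $\igsperf\gx \to \gisocmu$, equivalently Igusa stacks over the perfect special fiber. They receive the perfected special fiber $\scrshg^{\mathrm{perf}}$ via a Cartesian diagram with the local shtuka stack $\shtlocgmu$.

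\textbf{Step 2: constructing the perfect isomorphism.} I will build a $\gafp$-equivariant isomorphism
\[
\igsperf\gx_{-\mu'} \to \igsperf\gxp_{-\mu}
\]
over $\gpisoc$, compatible with $\beta_P$. Xiao--Zhu produce correspondences between $\scrshg^{\mathrm{perf}}$ and the perfect Shimura variety for $\gxp$ over the moduli of local shtukas $\shtlocgmumup$. Quotienting both sides by the local Hecke action, I would reinterpret such a correspondence as an isomorphism of perfect Igusa stacks.

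\textbf{Step 3: sources of the correspondences, by case.}
\begin{enumerate}[(a)]
\item For PEL type $A_{2n+1}$ with $p>2$, the exotic correspondences are available unconditionally. I would construct them directly by isogenies of abelian varieties with extra structure, using that the relevant unitary group is the automorphism group of a quasi-isogeny class. This uses that $[P]$ arises from modifying a hermitian space at $p$ and $\infty$.
\item For the orthogonal and symplectic cases, Conjecture \ref{Conj:XiaoZhu} is assumed. I must transport through the usual abelian-type passage. I would pick Hodge type lifts (spin groups for $\operatorname{SO}(V)$, $\operatorname{GSp}$-type for inner forms of $\operatorname{PGSp}_{2g}$), apply functoriality of Igusa stacks with respect to central isogenies, as in Kim and DvHKZ, and check that the isomorphism descends along the adjoint quotient. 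Here $\Sha^1(\mathbb{Q},\g)=0$ ensures no extra components intervene.
\end{enumerate}

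\textbf{Step 4: lifting to generic-fiber Igusa stacks.} I would invoke Theorem \ref{Thm:IntroIgusaDCirc} to lift the perfect isomorphism to $\igs\gx_{-\mu'} \to \igs\gxp_{-\mu}$ over $\bungp$. This should go through once we know the isomorphism is compatible with the maps to $\bungp$ after applying the "$\diamond/\circ$" functor.

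The main obstacle I expect is Step 2. The Xiao--Zhu correspondences are stated at hyperspecial level and with $[P]$ trivial at $p$. Here one needs the version with $G_{\qp}\not\cong G'_{\qp}$ and arbitrary parahoric level, and one needs compatibility with the maps to $\bungp$ after twisting by $\beta_P$. I would first prove the statement at hyperspecial level with unramified $G$, where the Xiao--Zhu input applies. I would then pass to the limit over $K_p$ using that Igusa stacks are independent of $p$-level, and handle nontrivial $[P]$ by realizing it as modification by a basic element $b$, with $J_b = G'$.
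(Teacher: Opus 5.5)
Your core mechanism is what the paper does in Theorem \ref{Thm:XZCorrespondences}: Xiao--Zhu correspondences at hyperspecial level give an isomorphism of perfect Igusa stacks, and rigidity lifts it to $\igs$. But that mechanism only applies under restrictive hypotheses. It needs a Hodge type datum with reductive models $\mathcal{G},\mathcal{G}'$ for \emph{both} $G$ and $G'$, and with $P=\mathsf{P}\otimes\qp$ trivial. This is the scope of Conjecture \ref{Conj:XiaoZhu} and Proposition \ref{Prop:PEL}, and Theorem \ref{Thm:IntroIgusaDCirc} is only proved for Hodge type data with unramified group.

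The theorem only assumes that $G$ splits over an unramified extension. So $G$ need not be quasi-split, and $[P]\in H^1(\qp,G)$ may be nontrivial. In that case $G'$ is typically not quasi-split: for odd-dimensional $V$, the other quadratic space with the same discriminant has smaller Witt index. Then $G'$ has no hyperspecial level at all, and neither of your fixes helps:
\begin{itemize}
\item Passing to the limit over $K_p$ is irrelevant, because the obstruction is the group, not the level.
\item The identification $G'\simeq G_b$ for a basic $b$ is already encoded in $\beta_P$. It supplies no geometric input relating the two global Shimura varieties.
\end{itemize}

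The missing idea is a Weil-restriction trick, in three steps:
\begin{enumerate}
\item Choose a Galois totally real field $\f$, unramified at $p$ and disjoint from $\mlp$, such that $\operatorname{Res}_{\f/\mathbb{Q}}\g_{\f}$ is quasi-split at $p$ and $[\mathsf{P}]$ dies in $H^1$ at $p$ (Corollary \ref{Cor:ExistenceHodgeTypeBCVeryGood}, Lemma \ref{Lem:PELCovering}).
\item Build a $\operatorname{Gal}(\f/\mathbb{Q})$-stable correspondence there (Claim \ref{Claim:GammaStable}).
\item Recover the original Igusa stack as $\igs\gxtwo\simeq\igs\hytwo^{h\Gamma}\times_{\bun_{H_2}^{h\Gamma}}\bun_{G_2,-\mu_2}$. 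This is Theorem \ref{Thm:FixedPointsAdjointIgusaStacks}, or, in the PEL case, \cite[Theorem 6.2.1]{vHSemplinerFixedPoints} via Theorem \ref{Thm:HodgeTypeCorrespondences}.
\end{enumerate}
This trick is used in all three cases. In particular, the PEL correspondences are available ``unconditionally'' only after this base change.

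Your Step 3(b) also underestimates the passage from Hodge type to abelian type, in three ways.

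First, the naive lift fails. $\mathsf{P}$ changes the real form at the places of $S_\infty$, where the corresponding factors of $\g'$ become compact. A lift by $\operatorname{Res}_{\mlp/\mathbb{Q}}$ of a $\operatorname{GSpin}$- or $\operatorname{GSp}$-type group then has no Hodge embedding whose twist by $\mathsf{P}$ is again a Hodge embedding. The paper instead uses Deligne's CM-twisted group $\g_{\Upsilon}$, built from $(\g_0\times\operatorname{Res}_{\ml/\mlp}\gm)/\gm$. For this group, Lemma \ref{Lem:HodgeEmbeddingComputation} and Proposition \ref{Prop:TwistIsHodgeEmbedding} give Assumption \ref{Assump:InfinityHodge}. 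The paper must also lift $\mathsf{P}_2$ to a torsor that stays trivial away from $p$ (Proposition \ref{Prop:HodgeCoverBC}).

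Second, functoriality under central isogenies does not by itself descend the isomorphism along $\igs\gx\to\igs\gxtwo$. One needs two further inputs:
\begin{itemize}
\item stability of the correspondence under the Deligne group $\mathcal{A}^p(\g)$ (Proposition \ref{Prop:AgEquivariance});
\item the torsor description of Lemma \ref{Lem:AdjointIgusaTorsor}.
\end{itemize}
Proposition \ref{Prop:AgEquivariance} requires $\mathsf{Z}_{\g}$ connected with $H^1(\qp,Z_G)=0$. This fails in type $D$ (Lemma \ref{Lem:ConnectedCenter}), which is why only odd-dimensional $V$ appears.

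Third, Theorem \ref{Thm:IntroIgusaDCirc} is only available for the auxiliary Hodge type datum. Your Step 4 therefore has to be carried out there, before descending, not applied to the abelian type datum.
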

We note that a proof of Conjecture \ref{Conj:XiaoZhu} has been announced by Xiao--Zhu. Conjecture \ref{Conj:XiaoZhu} predicts the existence of exotic Hecke correspondence between the special fibers of certain Shimura varieties of Hodge type of hyperspecial level. More precisely, we give a definition of a space of exotic Hecke correspondences and Conjecture \ref{Conj:XiaoZhu} asserts that it is of the expected shape (e.g. that it is nonempty). 

\begin{mainThm} \label{Thm:IntroMainIgusa}
If Conjecture \ref{Conj:IntroMain} holds, then for $b: \spd \ovfp \to \bungmumup$, there is a $\gafp \times \tilde{G}_b$-equivariant isomorphism of v-sheaf Igusa varieties
\begin{align}
    \operatorname{Ig}^{b,\mathrm{v}}\gx \isom \operatorname{Ig}^{b,\mathrm{v}}\gxp.
\end{align}
\end{mainThm}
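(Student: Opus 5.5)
The plan is to realize both v-sheaf Igusa varieties as fibers of the corresponding Igusa stacks over the point $b$. Conjecture \ref{Conj:IntroMain} then hands us an isomorphism of these fibers, and the remaining work is to check that it respects the $\tilde{G}_b$-actions.

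First, I will recall the description of $\operatorname{Ig}^{b,\mathrm{v}}\gx$ from \cite{DvHKZIgusaStacksII}: it is the fiber product
\begin{align}
    \operatorname{Ig}^{b,\mathrm{v}}\gx = \igs \gx \times_{\bungmu, b} \spd \ovfp ,
\end{align}
where $b$ is viewed as a point of $\bun_G$. The $\tilde{G}_b$-action arises because $\tilde{G}_b$ is the automorphism group v-sheaf of the $G$-bundle $\mathcal{E}_b$. Concretely, $\tilde{G}_b$ acts on the map $b:\spd \ovfp \to \bun_G$, meaning it acts through 2-morphisms, and this induces an action on the fiber product. The $\gafp$-action is inherited from the action on $\igs\gx$.

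Second, I will transport everything along $\beta_P$. Since $b$ factors through $\bungmumup$, the point $b$ corresponds to a point $b'=\beta_P(b)$ lying in $\bungpmup \cap \beta_P(\bungmu)$. Because $\beta_P$ is an isomorphism of stacks, it identifies $\operatorname{Aut}(\mathcal{E}_b)$ with $\operatorname{Aut}(\mathcal{E}_{b'})$, so $\tilde{G}_b \simeq \tilde{G}'_{b'}$. Here one should also use that $\beta_P$ is induced by twisting by $P$, which is compatible with the automorphism groups. I will then identify $\operatorname{Ig}^{b,\mathrm{v}}\gx$ with the fiber of $\igs\gx_{-\mu'}$ over $b$, viewed via $\beta_P$ as a point of $\bungp$. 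This is legitimate because $\igs \gx_{-\mu'}$ is by definition the open preimage of $\bungmumup$, and $b$ lands in it. The same reasoning identifies $\operatorname{Ig}^{b',\mathrm{v}}\gxp$ with the fiber of $\igs \gxp_{-\mu}$ over $b'$.

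Third, I apply Conjecture \ref{Conj:IntroMain}, which supplies a $\gafp$-equivariant isomorphism $\igs\gx_{-\mu'} \isom \igs\gxp_{-\mu}$ over $\bungp$. Base-changing this isomorphism along $b':\spd\ovfp\to\bungp$ gives an isomorphism of fibers, and $\gafp$-equivariance is preserved under this base change.

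The main point to check is $\tilde{G}_b$-equivariance. The action on each fiber is defined purely in terms of the map to the base and the automorphisms of the base point. Any isomorphism of stacks over $\bungp$, meaning one given together with a 2-commutativity datum, induces on fibers a map that commutes with the action of $\operatorname{Aut}(b')$. The reason is that the action is given by pasting 2-morphisms, and this pasting is functorial in morphisms over the base. I expect the subtlety to lie in matching the $\tilde{G}_b$-action on the $\gx$ side, defined via $\bun_G$, with the one induced through $\beta_P$. This matching amounts to the compatibility of $\beta_P$ with automorphism sheaves of points noted in the second step. Finally, both Igusa varieties are defined over $\ovfp$, and the conjecture is formulated over $\spd\fp$, so base change poses no issue.
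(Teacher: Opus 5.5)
Your proposal is correct and follows the paper's argument, which treats the theorem as a formal consequence of Conjecture \ref{Conj:IntroMain}: the v-sheaf Igusa varieties are the fibers of the Igusa stacks over $b$ and $\beta_P(b)$, and the isomorphism over $\bungp$ is base-changed along $\beta_P\circ b$. The $\tilde{G}_b$-equivariance comes, as you say, from $\tilde{G}_b=\operatorname{Aut}(b)$ acting through 2-morphisms of the base point, with $\beta_P$ identifying $\tilde{G}_b$ with $\tilde{G}'_{\beta_P(b)}$.
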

A version of this result was proved for Hilbert modular varieties by Caraiani--Tamiozzo, see \cite[Theorem 4.2.4]{CaraianiTamiozzo}. During the process of writing up this manuscript, a version of Conjecture \ref{Conj:IntroMain} was proved for Hilbert modular varieties in \cite{jiang2026classicalityhilbertmodularforms}.

\subsection{The proof of Theorem \ref{Thm:IntroMainIgusaStack}} The proof starts with a reduction to the case of a Hodge type Shimura datum $\hyo$ with $\h_{1,\qp}$ unramified and a torsor $\mathsf{P}_1$ which is trivial over $\af$, we will explain this reduction step in Section \ref{subsub:IntroReduction}. We now first explain the proof for $\hyo$.

\subsubsection{} The Igusa stack $\igs \hyo$ is a sheaf for the v-topology on the category $\perf$ of affinoid perfectoid spaces $\spa(R,R^+)$ in characteristic $p$. A presheaf $\mathcal{F}$ on $\perf$ can be turned into a presheaf on the category $\affperf$ of affine perfect schemes $\spec A$ by sending $\mathcal{F}$ to the presheaf $\spec A \mapsto \mathcal{F}((\spec A)^{\diamond})$; this operation turns v-sheaves into v-sheaves. The reduction of the Igusa stack $\igs \hyo^{\red}$ has been identified in \cite[Theorem 6.5.1]{DvHKZIgusaStacks} as a v-sheaf quotient of the perfect scheme $\operatorname{Sh}_{K_p}\hyo$, the perfect special fiber of a canonical integral model of the Shimura variety for some choice of parahoric model $\mathcal{G}$ of $G$ (with $\mathcal{G}(\zp)=K_p$). Since $G$ is a quasi-split and splits over an unramified extension, we may take $\mathcal{G}$ to be reductive; this is crucial later. We will denote the reduction of the Igusa stack by $\igsperf \hyo$, the so-called \emph{perfect} Igusa stack. \smallskip 

There is a variant of Conjecture \ref{Conj:IntroMain} for perfect Igusa stacks, see Conjecture \ref{Conj:MainII}. This asserts the existence of an isomorphism
\begin{align}
    \igsperf \hyo_{\mu'} \xrightarrow{\sim} \igsperf \hyop_{\mu},
\end{align}
where now $\igsperf \hyo_{\mu'} \subset \igsperf \hyo$ is a \emph{closed} union of Newton strata. Given 
\cite[Theorem 6.5.1]{DvHKZIgusaStacks}, it is fairly straightforward to show that this conjecture is equivalent to a conjecture about exotic Hecke correspondences between the perfect special fibers $\shginf$ and $\shgpinf$, see Conjecture \ref{Conj:XiaoZhu}. 

\subsubsection{} The operation $\mathcal{F} \mapsto \mathcal{F}^{\mathrm{red}}$ typically loses a lot of information. For example it sends the diamond of a formal scheme over $\spf \zp$ to the perfect special fiber of that formal scheme, and the reduction of a locally spatial diamond (e.g. a perfectoid space) is empty by a result of Gleason, see \cite[Proposition 4.8.(4)]{GleasonSpecialization}. Quite surprisingly, it turns out that applying the reduction functor to the Igusa stack does not lose any information. \smallskip 

More precisely, given a presheaf $\mathcal{G}$ on $\affperf$ we can turn it into a presheaf $\mathcal{G}^{\diamond/\circ, \pre}$ on $\perf$ which sends $(R,R^+)$ to $\mathcal{G}(R^+/R^{\circ\circ})$, where $R^{\circ \circ} \subset R^+$ is the ideal of topologically nilpotent elements (this operation was introduced by Heuer \cite{HeuerPicard}, and expanded upon by Gleason \cite{GleasonSpecialization}). This does not generally turn v-sheaves into v-sheaves, and so we need to distinguish $\mathcal{G}^{\diamond/\circ, \pre}$ from its v-sheafification $\mathcal{G}^{\diamond/\circ}$. 
\begin{mainThm} \label{Thm:IntroIgusaDCirc}
Let $\hyo$ be a Shimura datum of Hodge type. If $H_1=\h_{1,\qp}$ is unramified, then there is a natural isomorphism
    \begin{align}
        (\igsperf\hyo)^{\diamond/\circ} \to \igs \hyo.
    \end{align}
\end{mainThm}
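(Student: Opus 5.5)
First I construct the comparison map. Recall that \cite[Theorem 6.5.1]{DvHKZIgusaStacks} identifies $\igsperf\hyo = (\igs\hyo)^{\red}$. For any v-sheaf $\mathcal{F}$ on $\perf$ there is a natural map $(\mathcal{F}^{\red})^{\dcirc,\pre}\to\mathcal{F}$. On $(R,R^+)$ it sends a section over $\spec R^+/R^{\circ\circ}$ to its pullback along the natural map $\spa(R,R^+)\to (\spec R^+/R^{\circ\circ})^{\diamond}$, composed with the counit. This requires care, since $(\spec R^+/R^{\circ\circ})^\diamond$ only maps to $\spd R^+$. So instead I will define the map using the fact that $\spa(R,R^+)\to\spd(R^+)$ factors through specialization, and use that $\igs\hyo$ is formally étale/"$p$-adically rigid" over its reduction. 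Since $\igs\hyo$ is a v-sheaf, the map factors through the v-sheafification and gives $(\igsperf\hyo)^{\dcirc}\to\igs\hyo$.

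Next I reduce to a statement about a v-cover. The Igusa stack sits in the cartesian diagram
\begin{align}
\operatorname{Sh}_{K_p}\hyo^{\diamond} \to \operatorname{Sht}^{\mathrm{W}}_{\mathcal{G},\mu}, \qquad \igs\hyo \to \bun_{G,-\mu}.
\end{align}
Here I use the good reduction locus, for which $\igs$ is the quotient of $\scrshat_{K_p}^{\diamond}$ (the diamond of the $p$-adic completion of the integral model) by the Hecke action at $p$. Correspondingly, $\igsperf\hyo$ is the v-quotient of the perfect special fiber $\operatorname{Sh}_{K_p}\hyo^{\mathrm{perf}}$ by the perfect Hecke correspondences. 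The functor $(-)^{\dcirc}$ commutes with colimits after v-sheafification. So it suffices to prove two things. First, that $(\operatorname{Sh}^{\mathrm{perf}}_{K_p})^{\dcirc}$ and $\scrshat_{K_p}^{\diamond}$ become isomorphic after passing to the quotient by the equivalence relations. Second, that the equivalence relations match.

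The key input is the hyperspecial hypothesis, i.e.\ that $H_1$ is unramified. Then the integral model is smooth with its $p$-divisible group (with $G$-structure). Its formal completions are the universal deformation spaces, and these are products of Lubin--Tate/Rapoport--Zink type formal groups. By Serre--Tate, $\scrshat^\diamond$ is a relative version of $(\operatorname{Sh}^{\mathrm{perf}})^{\dcirc}$ fibred over local shtukas. Concretely, I expect the comparison to become the statement that $\operatorname{Sht}^{\mathrm{W}}_{\mathcal{G},\mu}\to\bun_{G,-\mu}$ is "$\dcirc$-invariant". More precisely, it is the statement that the map $\mathcal{M}^{\dcirc}\to \mathcal{M}$ from the reduction of the moduli of $\mathcal{G}$-shtukas with one leg bounded by $\mu$ becomes an isomorphism after quotienting by $\mathcal{G}(\zp)$-type automorphisms over $\bun_G$. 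The argument I plan to use goes through Gleason's specialization results and the fact that $\operatorname{Sht}_{\mathcal{G},\mu}^{\mathrm{W}}$ is a formal v-sheaf whose reduction is the perfect stack of $\mu$-bounded Witt vector shtukas. I would then deform a shtuka over $R^+/R^{\circ\circ}$ to $R^+$ uniquely up to the relevant quasi-isogeny. This uses Scholze--Weinstein's classification of $p$-divisible groups over $\mathcal{O}_C$ together with the rigidity of quasi-isogenies: quasi-isogenies lift uniquely along nilpotent thickenings.

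The main obstacle is this last step: showing the comparison map is surjective and injective as v-sheaves. Surjectivity amounts to lifting, v-locally, an $R^+$-point of $\igs$ to the good reduction locus. Injectivity is the statement that two points agreeing mod $R^{\circ\circ}$ differ by a quasi-isogeny of $p$-divisible groups preserving tensors. I would check both on geometric points $\spa(C,C^+)$ first, and then use that both sides are small v-stacks over $\bun_G$ with proper/quasi-compact diagonals, so that a v-local isomorphism check on such points suffices.
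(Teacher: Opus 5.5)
There are two genuine gaps, and the first is structural. As you half-notice, a general v-sheaf $\mathcal{F}$ admits no natural map $(\mathcal{F}^{\mathrm{red}})^{\dcirc,\mathrm{pre}}\to\mathcal{F}$, and ``formally \'etale/$p$-adically rigid'' does not supply one. The natural comparison goes the other way. In the notation of Section~\ref{Sub:ReductionIgusaStack}, with $\gx=\hyo$ and $\mathcal{G}$ reductive, it is $\rho_\Xi\colon\igspree_\Xi\gx\to(\igspreperf_\Xi\gx)^{\dcirc,\mathrm{pre}}$, which restricts $x\colon\spf R^+\to\scrshat_{K_p}\gx$ and formal quasi-isogenies to $\spec R^+_{\mathrm{red}}$. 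Points with nontrivial untilt can be discarded by formal smoothness and Lemma~\ref{Lem:IgusaStackIsNotTHATStrange}, and the isomorphism in the statement is the inverse of the sheafified map. More seriously, your reduction assumes that $(-)^{\dcirc}$ commutes with v-sheafified colimits. Since $\igsperf\gx$ is the v-sheafification of a presheaf quotient of $\shginf$, this amounts to $\mathcal{F}^{\dcirc}\simeq(\mathcal{F}^{\mathrm{sh}})^{\dcirc}$, which is exactly the open Question~\ref{Question:VtopologyLemmaReduction}: it is not known that v-covers of $\spec R^+_{\mathrm{red}}$ are refined by v-covers of $\spa(R,R^+)$. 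In addition, the first of your two reduced statements is just the theorem itself. The paper circumvents the sheafification issue in three steps. First, Theorem~\ref{Thm:IgusaDCirc} shows that $\rho_\Xi$ is an equivalence \emph{of presheaves}. Second, $\shginf\times_{\igspreperf_\Xi\gx}\shginf$ is already a v-sheaf (ind-representability of quasi-isogenies plus closedness of the $G$-structure conditions), which gives injectivity after sheafifying. Third, v-surjectivity of $\shginf^{\dcirc}\to(\igsperf\gx)^{\dcirc}$ comes from base change to $(\shtlocgmu)^{\dcirc}\to(\gisocmu)^{\dcirc}$ (Propositions~\ref{Prop:DiamondCircVSurjectiveSpecialCase} and \ref{Prop:VSurjectiveNewtonDiamondCirc}). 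The point there is that for w-contractible $\spa(R,R^+)$ the scheme $\spec R^+_{\mathrm{red}}$ is a strict comb, over which the perfectly proper, perfectly finitely presented fibres of the Newton map admit sections (Lemma~\ref{Lem:combhlocal}).

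The second gap is the $G$-structure at $p$, where the real content lies and for which you give no mechanism. Full faithfulness needs Heuer's Lemma~\ref{Lem:SpreadingOut}, which is not plain rigidity along nilpotent thickenings: $R^{\circ\circ}/\varpi$ is nil but not nilpotent, so one first spreads out to $R^+/J$ with $J\subset R^{\circ\circ}$ finitely generated and $\varpi\in J$. Even then, you must show that the isomorphism of vector bundles on $X_{(R,R^+)}$ comes from an isomorphism of $G$-bundles. This is needed for the lifted quasi-isogeny, and also for the identity map between two lifts or untilts with the same reduction (Lemmas~\ref{Lem:IgusaStackIsNotTHATStrange} and \ref{Lem:IgusaStackIsStrange}). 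The paper deduces this from the prismatic $F$-crystal with $\mathcal{G}$-structure of Imai--Kato--Youcis and Madapusi--Youcis (Theorem~\ref{Thm:ImaiKatoYoucis}); this, not merely smoothness, is where reductivity of $\mathcal{G}$ is used. It combines this with:
\begin{itemize}
\item the compatibility of realizations (Lemmas~\ref{Lem:PrismaticCompatibilityI} and \ref{Lem:ObviousHopefully});
\item Fargues' full faithfulness when $R^+=R^\circ$ (Lemma~\ref{Lem:ReductionGStructure});
\item for fullness, closedness of the $G$-structure-preserving locus (Lemma~\ref{Lem:GStructurePreservingClosedPerfectoid}), which reduces the check to $(C,\mathcal{O}_C)$, where $\mathcal{O}_C\to k$ splits and Proposition~\ref{Prop:IgusaStackIsStrangeII} applies.
\end{itemize}
That reduction to rank-one points is valid only because the condition is closed, hence partially proper. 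Your general principle that an isomorphism of such v-stacks can be checked on geometric points is false: for $S$ infinite profinite, $\coprod_{s\in S}\spd\fp\to\underline{S}$ is bijective on all $\spa(C,C^+)$-points but is not an isomorphism. Finally, the purely local statement you expect is not what is needed, and if it is meant as $(\gisocmu)^{\dcirc}\simeq\bungmu$ it fails. At a non-basic $b$ the automorphism sheaves are $\underline{G_b(\qp)}$ and $\tilde G_b$ respectively, and the latter has a nontrivial connected part. The rigidity comes from abelian schemes and the quasi-isogeny relation, not from shtukas alone.
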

It follows from results of \cite{GleasonIvanovZillinger} that under this isomorphism the subfunctor $(\igsperf\hyo_{\mu'})^{\diamond/\circ}$ is identified with $\igs \hyo_{\mu'}$. Thus an isomorphism $\igsperf\hyo_{\mu'} \to \igsperf\hyop_{\mu}$ induces an isomorphism $\igs \hyo_{\mu'} \to \igs \hyop_{\mu}$. Therefore, in this special case, we can deduce Conjecture \ref{Conj:IntroMain} from a statement in perfect algebraic geometry.
\begin{Rem}
If $\mathcal{G}=\mathbb{A}^1_{\fp}$, then $\mathcal{G}^{\diamond/\circ}$ is the (v-sheafification of) the functor
\begin{align}
    (R,R^+) \mapsto (R^+/R^{\circ \circ}).
\end{align}
Geometrically, the functor $(R,R^+) \mapsto R^+$ can be thought of as the closed unit disk while $(R,R^+) \mapsto R^{\circ \circ}$ can be thought of as the open unit disk. Thus the sheaf $\mathcal{G}^{\diamond/\circ}$ is a rather exotic object, although it is a smooth Artin v-stack of $\ell$-cohomological dimension zero for all $\ell \not=p$, like the Igusa stack.
\end{Rem}

\subsubsection{} Our proof of Theorem \ref{Thm:IntroIgusaDCirc} is where a significant amount of the technical work of this paper happens. One ingredient in the proof is the following generalization of an observation of Heuer \cite{HeuerProEtaleUniformisation}.
\begin{Lem}[Lemma \ref{Lem:SpreadingOut}]
    For a perfectoid Huber pair $(R,R^+)$, the category of abelian schemes over $\spf R^+$ up to formal\footnote{See Definition \ref{defn:formalQIsog}.}quasi-isogeny is equivalent to the category of abelian schemes over $R^+/R^{\circ \circ}$ up to quasi-isogeny. 
\end{Lem}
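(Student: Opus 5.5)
The plan is to factor the reduction functor $\spf R^+ \to \spec R^+/R^{\circ\circ}$ through the intermediate steps $R^+ \to R^+/\varpi$ for a pseudo-uniformizer $\varpi$, and $R^+/\varpi \to R^+/R^{\circ\circ}$. Here we use that $R^{\circ\circ}=\bigcup_n \varpi^{1/p^n}R^+$, so that $R^+/R^{\circ\circ}=\varinjlim_n R^+/\varpi^{1/p^n}$, with transition maps that are reductions modulo ideals of the form $\varpi^{1/p^n}R^+/\varpi^{1/p^{n+1}}R^+$. Each of these ideals is nilpotent; indeed the $p$th power of any element of it lies in $\varpi^{1/p^{n}}R^+$.

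\textbf{Step 1 (formal scheme to $R^+/\varpi$).} By Grothendieck--Messing/Serre--Tate deformation theory, or more simply by rigidity of homomorphisms of abelian schemes along nilpotent thickenings, the reduction functor from abelian schemes over $\spf R^+$ up to formal quasi-isogeny to abelian schemes over $R^+/\varpi$ up to quasi-isogeny is fully faithful. The standard argument is that if $I^{N}=0$, then for any homomorphism $f$ of the reductions, $p^{N'}f$ lifts uniquely. Full faithfulness on $\mathrm{Hom}\otimes\mathbb{Q}$ then follows, and formal quasi-isogenies are exactly what makes this compatible in the limit over $\varpi^n$. I expect Definition~\ref{defn:formalQIsog} to be designed precisely so that $\mathrm{Hom}$ over $\spf R^+$ is $\varprojlim_n \mathrm{Hom}_{R^+/\varpi^n}\otimes\mathbb{Q}$, in a compatible form.

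Essential surjectivity is where perfectoidness enters. Frobenius $R^+/\varpi^{1/p}\to R^+/\varpi$ is an isomorphism, and pulling back along Frobenius is quasi-isogenous to the identity via the relative Frobenius isogeny $F_{A/S}\colon A\to A^{(p)}$. So given $A_0$ over $R^+/\varpi$, I would choose an abelian scheme $A_1$ over $R^+/\varpi^{p}\cong R^+/\varpi$ (via Frobenius) whose reduction is isogenous to $A_0$. Concretely, $A_1$ is the Frobenius-untwist of $A_0$, and its reduction mod $\varpi$ is $A_0^{(p)}$, which is isogenous to $A_0$. Iterating this step-by-step lifting gives a compatible system up to isogeny over $R^+/\varpi^{p^n}$. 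This is Heuer's observation, which I would generalize from his setting.

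\textbf{Step 2 ($R^+/\varpi$ to $R^+/R^{\circ\circ}$).} The same mechanism works in the other direction: any abelian scheme over the colimit $R^+/R^{\circ\circ}$ descends to some $R^+/\varpi^{1/p^n}$ by finite presentation. Frobenius then identifies $R^+/\varpi^{1/p^n}$ with $R^+/\varpi$, up to the isogeny $F^n$, so essential surjectivity holds. For full faithfulness, homomorphisms over the colimit are the colimit of homomorphisms. Each transition map is a nilpotent reduction, so it is injective on $\mathrm{Hom}$ and surjective after $\otimes\mathbb{Q}$ by Step 1's rigidity. Composing the two steps gives the claimed equivalence.

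\textbf{Main obstacle.} I expect the hard part to be making the limit in Step 1 honest: the isogeny denominators needed at level $\varpi^n$ must be bounded uniformly in $n$, otherwise the inverse limit of $\mathrm{Hom}\otimes\mathbb{Q}$ differs from $(\varprojlim \mathrm{Hom})\otimes\mathbb{Q}$. This is presumably why the statement uses \emph{formal} quasi-isogenies rather than ordinary ones. My first attempt would be the following. Rigidity with bound $p^{N}$ for ideals of square zero ($N$ depending only on $p$ and the dimension, through $\ker[p]$ being finite flat) gives a $p^{\log_2 n}$ growth in the denominators. Since Frobenius twisting lets us use the perfectoid structure to pass from level $\varpi^{p^k}$ back to level $\varpi$ with only a factor $F^k$, I would check whether this growth is acceptable in the formal quasi-isogeny category. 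If it is not, I would use the Frobenius trick to make the denominators uniform.
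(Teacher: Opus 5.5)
\textbf{Where your proposal agrees with the paper, and a misreading.} Your full-faithfulness argument is the paper's. The paper shows that $\operatorname{Hom}_{R^+/\varpi}(A,B)\otimes\mathbb{Q}\to\operatorname{Hom}_{R^+_{\red}}(A_0,B_0)\otimes\mathbb{Q}$ is bijective:
\begin{itemize}
\item injectivity comes from Mumford's rigidity;
\item surjectivity comes from spreading out to $R^+/J$, for a finitely generated $J\subset R^{\circ\circ}$ enlarged to contain $\varpi$, and then lifting along the nilpotent ideal $J/\varpi$ by Katz's rigidity of quasi-isogenies.
\end{itemize}
Your colimit over the $R^+/\varpi^{1/p^n}$ is the same argument.

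Your Step~1 ``main obstacle'' comes from misreading the definition. By Definition~\ref{defn:formalQIsog}, a formal quasi-isogeny is just a quasi-isogeny over $R^+/\varpi$; independence of $\varpi$ is exactly the rigidity you cite. So passing from $\spf R^+$ to $R^+/\varpi$ is fully faithful by definition. There is no inverse limit over $\varpi^n$, and no uniform bound on denominators is needed. Those denominators really are unbounded, and the analogous statement with honest quasi-isogenies over $\spf R^+$ is false in general, since a lift can have a smaller endomorphism algebra than its reduction. So no Frobenius trick could repair it.

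\textbf{The genuine gap: essential surjectivity from $\spf R^+$ to $R^+/\varpi$.} Your iteration produces $A_k$ over $R^+/\varpi^{p^k}$ with $A_{k+1}\otimes R^+/\varpi^{p^k}\cong A_k^{(p)}$, not $\cong A_k$.
\begin{itemize}
\item A system compatible only up to isogeny does not define an abelian scheme over $\spf R^+$; that requires honest isomorphisms between successive reductions.
\item You cannot untwist, because Frobenius on $R^+/\varpi^{p^k}$ is not bijective.
\item If $R^+$ has mixed characteristic, Frobenius is not even a ring endomorphism of $R^+$, or of $R^+/\varpi^m$ for $m$ large.
\end{itemize}
The missing idea is deformation theory. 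Abelian schemes over an affine base lift without obstruction along nilpotent thickenings (Grothendieck--Illusie; alternatively, Serre--Tate combined with Illusie's lifting theorem for $p$-divisible groups, since $p$ is nilpotent in each $R^+/\varpi^m$). Hence $A_0$ lifts on the nose successively to every $R^+/\varpi^m$, that is, to $\spf R^+$. This is the input the paper uses, in the form of formal smoothness of the integral model, in Lemmas~\ref{Lem:TrivialUntilt} and~\ref{Lem:ReductionShimuraSurjective}. The body version of Lemma~\ref{Lem:SpreadingOut} itself only proves the Hom statement.

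By contrast, your Frobenius argument for essential surjectivity from $R^+/\varpi$ onto $R^+_{\red}$ is correct. If $A$ lives over $R^+/\varpi^{1/p^n}$, let $B$ be its transport along the isomorphism $R^+/\varpi\cong R^+/\varpi^{1/p^n}$ inverse to the $n$-fold Frobenius. The quotient map $R^+/\varpi\to R^+/\varpi^{1/p^n}$ is that isomorphism followed by $n$-fold absolute Frobenius. Hence the reduction of $B$ is $A^{(p^n)}$, which is isogenous to $A$ via relative Frobenius. The paper does not spell this step out.
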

Generalizing this lemma to Shimura varieties of Hodge type requires us to tackle some questions in (rational) $p$-adic Hodge theory. We do this by making use of the prismatic $F$-crystals with $\mathcal{G}$-structure (with $\mathcal{G}$ reductive) constructed by Imai--Kato--Youcis and Madapusi--Youcis, \cite{ImaiKatoYoucis}, \cite{MadapusiYoucis}; this is where the assumption that $G$ is unramified comes in. We also crucially use the smoothness of the canonical integral models for $\mathcal{G}$ reductive. To finish the proof of Theorem \ref{Thm:IntroIgusaDCirc}, we need to deal with subtleties regarding the interaction of $\mathcal{G} \mapsto \mathcal{G}^{\diamond/\circ}$ with v-sheafification, see Question \ref{Question:VtopologyLemmaReduction}.

\subsubsection{The reduction step} \label{subsub:IntroReduction} We now explain how to reduce Theorem \ref{Thm:IntroMainIgusaStack} to the Hodge type case discussed above. Let us call our abelian type Shimura datum $\gxtwo$ with torsor $\mathsf{P}_2$ trivial over $\afp$. First we choose a Galois totally real field $\f$ in which $p$ is unramified and write $\h_2 = \operatorname{Res}_{\f/\mathbb{Q}} \g_{2,\f}$ and $\mathsf{P}_{2,\f}=\mathsf{P}_{2} \times^{\g_2} \h_{2}$ and $\Gamma=\gal_{\f/\mathbb{Q}}$. We may choose $\f$ such that $H_{2} = \h_{2} \otimes \qp$ is quasi-split and $\mathsf{P}_{2,\f}$ is trivial at $p$. We prove using the methods of \cite{vHSemplinerFixedPoints} that there is a Cartesian diagram
\begin{equation}
    \begin{tikzcd}
        \igs\gxtwo \arrow{r} \arrow{d} & \igs \hytwo^{h \Gamma} \arrow{d} \\
        \bun_{G_{2},-\mu_2} \arrow{r} & \bun_{H_{2}}^{h \Gamma}, 
    \end{tikzcd}
\end{equation}
where $h \Gamma$ denotes homotopy fixed points; we prove the same statement for $\gxtwop$. It follows from this that a $\Gamma$-equivariant version of Conjecture \ref{Conj:IntroMain} for $\hytwo$ and $\mathsf{P}_{2,\f}$ implies Conjecture \ref{Conj:IntroMain} for $\gxtwo$ and $\mathsf{P}_2$. \smallskip 

Next, we choose a Hodge type Shimura variety $\gx$ together with an ad-isomorphism $\gx \to \gxtwo$ and a lift $\mathsf{P}$ of $\mathsf{P}_2$ which is trivial over $\afp$. Using the totally real field $\f$, we construct the auxiliary Hodge type Shimura datum $\hyo \subset  \operatorname{Res}_{\f/\mathbb{Q}} \g_{\f}$ with an action of $\Gamma$ such that $H_1=\h_{1} \otimes \qp$ is quasi-split and $\mathsf{P}_1=\mathsf{P} \times^{\g} \h_1$ is trivial at $p$. We then prove a $\Gamma$-equivariant version of Conjecture \ref{Conj:IntroMain} for $\hyo$ and $\mathsf{P}_1$ as explained above. Finally, we deduce from this a $\Gamma$-equivariant version of Conjecture \ref{Conj:IntroMain} for $\hytwo$ and $\mathsf{P}_{2,\f}$, by descending along the morphism $\hyo \to \hytwo$.

Choosing $\gx$ is far from routine: First of all we need to lift the torsor $\mathsf{P}_2$ to a torsor $\mathsf{P}$ which remains trivial over $\afp$. Second of all, we need $\h_1$ to satisfy rather many properties, see Definition \ref{Def:ExcellentHodgeTypeLifting}. For instance, we need $\h_{1,\qp}$ to be quasi-split and split over an unramified extension, and $\Sha^1(\mathbb{Q},\h_1)=0$. We check that such $\gx$ exists by an extensive case-by-case analysis, see Section \ref{Sec:Examples} and Appendix \ref{Appendix:HodgeEmbeddings}.

\begin{Rem}
It would be interesting to prove Theorem \ref{Thm:IntroMainIgusaStack} in the case that $\g_2$ is an inner form of $\operatorname{Res}_{\mlp/\mathbb{Q}} \operatorname{GSp}_{2g}$ rather than its adjoint quotient. There are two difficulties to extend our methods to this case, which we hope to be surmountable with additional effort: The first is that the methods of \cite{vHSemplinerFixedPoints} require the assumption that the center of $\g_2$ has $\mathbb{R}$-split rank equal to its $\mathbb{Q}$-split rank, which is false here. The second is that inner forms of $\operatorname{Res}_{\mlp/\mathbb{Q}} \operatorname{GSp}_{2g}$ are typically not quotients of Hodge type Shimura data. It would similarly be interesting to prove Theorem \ref{Thm:IntroMainIgusaStack} in the case that $\g_2=\operatorname{Res}_{\mlp/\mathbb{Q}} \operatorname{SO}(V)$ for a quadratic space $V$ of even dimension over a totally real field $\mlp$. The difficulty here is that $\gxtwo$ is not a quotient of a Shimura datum of Hodge type with connected center. 
\end{Rem}

\subsection{Proofs of the main cohomological results} Our main cohomological results follow by studying the cohomology of Shimura varieties via Igusa stacks, as in \cite{DvHKZIgusaStacks}, \cite{DvHKZIgusaStacksII}, and using the spectral action of Fargues--Scholze \cite{FarguesScholze}. 

\subsubsection{} We consider the Igusa sheaves $\mathcal{F}=R \pi_{\ast} \Lambda$ and $\mathcal{F}'=R \pi'_{\ast} \Lambda$ which we think of as living on $\bung$ via $\beta_{P}^{-1}$.\footnote{When $\ell$ is not torsion in $\Lambda$, then we have to define $\mathcal{F}$ in an indirect way following \cite{CaraianiHamannZhang}, see Section \ref{subsub:Rational}.} By Conjecture \ref{Conj:IntroMain} and smooth base change, these sheaves agree on the open substack
\begin{align}
    \bungmumup \subset \bung
\end{align}
which we have assumed to be nonempty in Theorem \ref{Thm:IntroCohomology}. Thus we can glue them together to a sheaf $\tildef$ on $\bung$; alternatively we can glue together the Igusa stacks and pushforward $\Lambda$ along its structure map to $\bung$. We show that the sheaf $\tildef$ controls the cohomology of both Shimura varieties, in that
\begin{align}
    i_1^{\ast} T_{\mu} \tildef &= R\Gamma(\mathbf{Sh}_{K^p}\gx, \Lambda)[d] \\
    i_{1'}^{\ast} T_{\mu'} \tildef&=R\Gamma(\mathbf{Sh}_{K^p}\gxp, \Lambda)[d'].
\end{align}
Here $i_1$ is the inclusion of the neutral stratum of $\bung$, and $i_{1'}$ is the inclusion of the neutral stratum of $\bungp \xrightarrow{\beta_{P}^{-1}} \bung$. We do this by studying how the Hecke operators interact with the isomorphism $\beta_{[P]}$, and using results of \cite[Section 8]{DvHKZIgusaStacks} and \cite[Section 5]{DvHKZIgusaStacksII}. If $[P]$ is trivial then $i_1=i_{1'}$, and part \textbf{(2)} of Theorem \ref{Thm:IntroCohomology} is now a direct consequence of the functoriality of the spectral action. \smallskip 

We prove part \textbf{(1)} of Theorem \ref{Thm:IntroCohomology} using the conservativity of Hecke operators, together with the fact that the sheaves localized at supercuspidal parameters $\phi$ are supported on the basic locus; this argument was suggested to us by David Hansen. For part \textbf{(3)} and \textbf{(4)} of Theorem \ref{Thm:IntroCohomology}, write $\lochat$ for the formal completion of $\loc$ in the inverse image of $\phi$ under $\loc \to \xspec$. Then since $S_{\phi}$ is linearly reductive, it follows from Luna's \'etale slice theorem (of \cite{AlperHallRydh}) that  
\begin{align}
    \lochat \simeq \left[ \spf A / S_{\phi} \right].
\end{align}
We show that $\operatorname{Perf}(\lochat)$ acts on $\mathcal{D}(\bun_{G}, \Lambda)_{\phi} \ni \tilde{\mathcal{F}}_{\phi}$, see Corollary \ref{Cor:LocalizationCompletion}. Part \textbf{(4)} now follows from the additivity of the spectral action. If $S_{\phi}=Z(\dualG)^{\gal_{\qp}}$, then $\restr{V_{\mu}}{\lochat}$ is isomorphic to $\mathcal{L}^{\oplus n}$, where $\mathcal{L}$ is the character of $Z(\hat{G})^{\gal_{\qp}}$ corresponding to the element $\kappa(\mu)$ in $\pi_1(G)_{\gal_{\qp}}=X^{\ast}(Z(\hat{G})^{\gal_{\qp}})$. Since $[P]$ is trivial it follows that $\kappa(\mu)=\kappa(\mu')$ and thus $\restr{V_{\mu'}}{\lochat} \simeq  \mathcal{L}^{\oplus n'}$. The theorem now follows from the additivity of the spectral action. 

\subsection{Tool and computational resource disclosure} We used ChatGPT Pro to proofread this article. Its main contribution was to discover that Lemma \ref{Lem:ConnectedCenter} is false in type $D$.\footnote{This error was discovered at a late stage, which is why a large number of our arguments do include groups of type $D$, while our main theorem does not include groups of type $D$.} The mathematics and writing were done by the authors. 

\subsection{Acknowledgments} We would like to thank George Boxer, Ana Caraiani, Marco D'Addezio, Patrick Daniels, Toby Gee, David Hansen, Dongryul Kim, Zhiyou Wu, Mingjia Zhang and Konrad Zou for helpful conversations. The material in Sections \ref{Sub:ReductionAnalytification} and \ref{Sub:ReductionIgusaStack} benefited greatly from discussions with Ian Gleason. The material in Section \ref{Sec:Cohomology} owes a special debt to the work of Nguyen \cite{nguyen}, which was a source of great inspiration. The second named author would like to thank Hamann for some useful early conversations about the categorical conjecture. Finally, we would like to thank Liang Xiao and Xinwen Zhu for many stimulating conversations about their inspiring works \cite{XiaoZhu},\cite{XiaoZhu2}. \smallskip 

This project started while both authors were attending the 2022 IHES summer school on the Langlands Program, and we would like to thank the IHES for providing excellent working conditions. The first-named author gives special thanks to Yau Mathematical Sciences Center and the Morningside Center for Mathematics for inviting him to give a series of talks about this project. He also thanks Stanford University and VU Amsterdam where a significant portion of this work was carried out, and Imperial College London for hosting him during the preparation of this work. The second named author thanks VU Amsterdam, Imperial College London, and Stanford for providing excellent working conditions during the preparation of this manuscript.

\section{Preliminaries} In this section we collect some preliminaries, starting with Archimedean preliminaries in Section \ref{Sec:Archimedes}. In Section \ref{Sub:PerfectoidPrelim} we collect some preliminaries in perfectoid geometry, and in Section \ref{Sub:PerfectPrelim} we recall some perfect algebraic geometry and prove a few lemmas. In Section \ref{Sub:ReductionAnalytification}, we discuss various functors intermediating between perfectoid geometry and perfect algebraic geometry. In Section \ref{Sub:PrismaticComparison} we discuss the prismatic theory of crystalline local systems, and in Section \ref{Sub:IgusaStacks} we recall the theory of Igusa stacks. Finally in \ref{Sub:Conjectures} we state our main conjectures on exotic isomorphisms of (perfect) Igusa stacks and exotic Hecke correspondences, and discuss their dependencies.

\subsection{Archimedean preliminaries} \label{Sec:Archimedes}
The goal of this subsection is to introduce the real Kottwitz set $B(\R,G)$ for a connected reductive group $G$ over $\mathbb{R}$, and to discuss its relation with Shimura data. 

\subsubsection{} Let us write $\mathbb{S}=\operatorname{Res}_{\mC/\R} \gm$ for the Deligne torus, and $S^1 \subset \mathbb{S}$ for the kernel of the norm homomorphism to $\gm$. 

\subsubsection{} Let $G/\mathbb{R}$ be a real reductive algebraic group and let $T_0 \subset G$ be a maximal compact torus in $G$ (a torus isomorphic to $(S^1)^d$ for some $d$, which is maximal with that property). Let $T=Z(T_0)$ be the centraliser of $T_0$, which is a maximal torus of $G$ by \cite[second paragraph of Section 3]{BorovoiReal}. Set $N_0 = N_G(T_0)$, $W_0 = N_0/T$, and we let $T_1 \subset T$ denote the maximal split subtorus of $T$. It follows from \cite[Lemma 2.4]{BorovoiReal} that $T_0$ and $T$ are unique up to $G(\R)$-conjugacy, and we call $T$ a \emph{fundamental torus} of $G$.

\subsubsection{} By a result of Borovoi \cite[Theorem 3.1]{BorovoiReal} there exists a well-defined action of $W_0$ on $\rH^1(\mathbb{R}, T)$ via the following formula: If $w \in W_0(\mathbb{C}), c \in \rZ^1(\mathbb{R}, T)$ then $c$ is uniquely determined by its value on complex conjugation $\sigma$, and we define $w \cdot c(\sigma) = n^{-1} c(\sigma) \bar{n}$ where $n \in N_0(\mathbb{C})$ lifts the element $w$. This induces a right action on $\rH^1(\mathbb{R}, T)$ which can easily be shown to be independent of the choice of lift $n$, and the choice of cocycle $c$. 

\begin{Prop}\cite[Lemma 2.1, Theorem 3.1]{BorovoiReal}\label{Prop:BorovoiRealCohomology}
    The natural inclusion 
    \[
        \rH^1(\mathbb{R}, T) \to \rH^1(\mathbb{R},G)
    \]
    factors through the quotient
    \[
        \rH^1(\mathbb{R}, T) \to \rH^1(\mathbb{R}, T)/W_0 
    \]
    and the induced map 
    \[
        \psi_T: \rH^1(\mathbb{R}, T)/W_0 \to \rH^1(\mathbb{R}, G)
    \]
    is an isomorphism of pointed sets.
    
    Further one has the following description of the Galois cohomology group $\rH^1(\mathbb{R}, T)$,
    \[
    \rH^1(\mathbb{R}, T) \simeq T_0(\mathbb{R})[2]/(T_0 \cap T_1)(\mathbb{R})[2]
    \]where the map $T_0(\mathbb{R})[2] \to \rH^1(\mathbb{R}, T)$ is obtained by taking $t$ to the cocycle $c(1) = 1, c(\sigma) = t$ where $\sigma$ is complex conjugation. 
\end{Prop}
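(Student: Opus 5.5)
The plan has three parts: compute $\rH^1(\mathbb{R},T)$ directly, show the map to $\rH^1(\mathbb{R},G)$ is surjective, and show its fibres are exactly the $W_0$-orbits.

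\textbf{Cohomology of $T$.} A cocycle $c$ is determined by $t=c(\sigma)\in T(\mathbb{C})$ satisfying $t\bar t=1$, and coboundaries are of the form $s^{-1}t\bar s$. Decompose $T$ up to isogeny as $T_0\cdot T_1\cdot T_2$, where $T_0$ is the maximal compact (anisotropic) part, $T_1$ is split, and $T_2$ is a product of copies of $\mathbb{S}$. Both split tori and $\mathbb{S}$ have trivial $\rH^1$. For an anisotropic torus, $\bar t=t^{-1}$, so the cocycle condition holds for every $t$ and $\rH^1=T_0(\mathbb{R})/T_0(\mathbb{R})^2$, which is represented by $T_0(\mathbb{R})[2]$. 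The remaining work is to track the isogeny kernels; these produce exactly the quotient by $(T_0\cap T_1)(\mathbb{R})[2]$. Concretely, I would show that every class has a representative in $T_0(\mathbb{R})[2]$, by applying Hilbert 90-type vanishing to the quotient $T/T_0$, which is $\mathbb{R}$-split times induced. Then two elements of $T_0(\mathbb{R})[2]$ are cohomologous iff they differ by $s^{-1}\bar s$; such an element lies in $T_0\cap T_1$, which forces it to be $2$-torsion there.

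\textbf{Surjectivity onto $\rH^1(\mathbb{R},G)$.} Given a class $\xi\in\rH^1(\mathbb{R},G)$, form the inner twist $G_\xi$. Every real reductive group has a fundamental torus, and fundamental tori of inner forms are transported to one another. More concretely, take the cocycle $g=c(\sigma)$ with $g\bar g=1$. The element $g\sigma$ acts as an involution on $G(\mathbb{C})$, and its semisimple part can be conjugated into $N(T)$; the standard argument (Cartan involutions, or Kottwitz/Shelstad) puts $g$ into $T_0(\mathbb{C})$ up to cohomology. So $\rH^1(\mathbb{R},T_0\cdot\ldots)\to\rH^1(\mathbb{R},G)$ is surjective. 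I expect this to be the main obstacle: it requires the fact that the compact torus of the twisted form can be conjugated back to $T_0$ inside $G(\mathbb{C})$, compatibly with the real structure.

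\textbf{Factoring through $W_0$ and injectivity.} For $n\in N_0(\mathbb{C})$, the cocycle $n^{-1}c(\sigma)\bar n$ is visibly cohomologous to $c$ in $G$, so the map factors through $\rH^1(\mathbb{R},T)/W_0$. For injectivity, suppose $t,t'\in T_0(\mathbb{R})[2]$ satisfy $t'=g^{-1}t\bar g$ with $g\in G(\mathbb{C})$. Then $\mathrm{Ad}(g)$ intertwines the real structures $\mathrm{Ad}(t')\sigma$ and $\mathrm{Ad}(t)\sigma$. Both structures keep $T_0$ compact, so $gT_0g^{-1}$ and $T_0$ are maximal compact tori of the same real form. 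They are therefore conjugate by a real point $h$ of that twisted form, and $hg$ normalizes $T_0$, producing the required element of $W_0$. This reduces the problem to Borovoi's statement that $W_0$ is well defined on $N_0/T$ and computes the action.
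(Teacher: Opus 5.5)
The paper gives no argument of its own here; the statement is quoted from \cite[Lemma 2.1, Theorem 3.1]{BorovoiReal}. Your factorization through $W_0$ is fine. It uses that $\sigma$ acts trivially on $W_0$ (because $\sigma=-1$ on $X^*(T_0)$), so $n^{-1}\overline{n}\in T(\mathbb{C})$.

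Your proof that the fibres are $W_0$-orbits is also correct, with the maximality step filled in as follows. Since $\mathrm{Ad}(t)$ is trivial on $T=Z_G(T_0)$, any compact torus of ${}_tG$ containing $T_0$ lies in the anisotropic part of $(T,\sigma)$, which is $T_0$. So $T_0$ and $gT_0g^{-1}$ are maximal compact tori of ${}_tG$, hence conjugate by some $h\in{}_tG(\mathbb{R})$ by \cite[Lemma 2.4]{BorovoiReal}. Then $h^{-1}t\overline{h}=t$ gives $(hg)^{-1}t\,\overline{hg}=t'$ with $hg\in N_0(\mathbb{C})$.

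The genuine gap is surjectivity, which is the real content of the statement. You name the obstacle but do not overcome it. Moreover, conjugating ``the semisimple part of $g\sigma$'' into $N(T)$ is not meaningful for an antiholomorphic involution. Here is the missing argument.
\begin{enumerate}
\item Let $T'$ be a fundamental torus of $(G_{\mathbb{C}},\mathrm{Ad}(g)\sigma)$ and pick $x\in G(\mathbb{C})$ with $xTx^{-1}=T'$. Then $g_1=x^{-1}g\overline{x}$ is cohomologous to $g$, and $T$ is stable under both $\sigma$ and $\mathrm{Ad}(g_1)\sigma$. So $g_1\in N_G(T)(\mathbb{C})$ with image $w$ in the absolute Weyl group $W$, and the twisted Galois action on $X^*(T)$ is $w\sigma_T$.
\item Fundamental tori have no real roots, since a real root would permit a Cayley transform enlarging the compact part. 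Hence the $(-1)$-eigenspace of the Galois action contains a regular element. This gives positive systems with $\sigma_T\Phi^+=-\Phi^+$ and $w\sigma_T\Psi^+=-\Psi^+$.
\item Write $\Psi^+=v\Phi^+$. Since $W$ is normalized by $\sigma_T$ and acts simply transitively on positive systems, $w\sigma_T=v\sigma_Tv^{-1}$.
\item For a lift $n_v$ of $v$, the cohomologous cocycle $n_v^{-1}g_1\overline{n_v}$ has trivial image in $W$, i.e.\ it lies in $T(\mathbb{C})$.
\end{enumerate}
This is exactly the ``compatibility with the real structure'' you flag, i.e.\ the surjectivity lemma for fundamental tori you attribute to Kottwitz/Shelstad. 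Without it, the proposal only shows that $\rH^1(\mathbb{R},T)/W_0\to\rH^1(\mathbb{R},G)$ is well defined and injective.

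Your computation of $\rH^1(\mathbb{R},T)$ reaches the right answer through false intermediate claims. For anisotropic $T_0$ one has $T_0(\mathbb{C})=X_*(T_0)\otimes\mathbb{C}^\times$ with $\sigma(\lambda\otimes z)=\lambda\otimes\overline{z}^{-1}$. So $t\sigma(t)=1$ does not hold for all $t$: the cocycles are $X_*(T_0)\otimes\mathbb{R}^\times$ and the coboundaries are $X_*(T_0)\otimes\mathbb{R}_{>0}$. Each class therefore has a unique representative in $T_0(\mathbb{R})[2]=X_*(T_0)\otimes\{\pm1\}$. Your formula $T_0(\mathbb{R})/T_0(\mathbb{R})^2$ is trivial, since compact tori are divisible. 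Also $T=T_0T_1$ already, so there is no separate $\mathbb{S}$-factor to track.

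The clean route is the sequence $1\to T_0\to T\to T_1/(T_0\cap T_1)\to 1$.
\begin{itemize}
\item The quotient is split, so $\rH^1(\mathbb{R},T_0)\to\rH^1(\mathbb{R},T)$ is surjective.
\item The kernel is the image of the connecting map, and this is all of $T_0\cap T_1$. Indeed, each $\epsilon\in T_0\cap T_1\subset T_1(\mathbb{R})[2]$ equals $s^{-1}\overline{s}$ for some $s\in T_1(\mathbb{C})$ (coordinatewise $-1=i^{-1}\overline{i}$).
\end{itemize}
You only argued that the kernel is contained in $T_0\cap T_1$, not the reverse inclusion.
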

\begin{Lem}\label{Lem:RealSquare}
    Let $\g$ be a connected reductive algebraic group over $\mathbb{R}$, and let $[P] \in \rH^1(\mathbb{R}, \g)$ be such that $\g' = \operatorname{Aut}(P)$. Then there exists a natural cartesian diagram of pointed sets
    \begin{equation}
        \begin{tikzcd}
            \rH^1(\mathbb{R}, \g) \arrow{r}{\beta_P}\arrow{d}{\kappa} & \rH^1(\mathbb{R}, \g')\arrow{d}{\kappa} \\
            \pi_1(\g)_{\gal_{\mathbb{R}}} \arrow{r}{+ \kappa([P])} & \pi_1(\g')_{\gal_{\mathbb{R}}}
        \end{tikzcd}
    \end{equation}
    and the morphism $\beta_P$ is a bijection. 
\end{Lem}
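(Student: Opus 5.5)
The plan is to build the square explicitly from the twisting bijection and then get commutativity by reducing to a fundamental torus. Once the square commutes, cartesianness is automatic. The bottom arrow is translation by $\kappa([P])$ composed with the canonical Galois-equivariant identification $\pi_1(\g)\cong\pi_1(\g')$ (the inner twist does not change the based root datum up to inner automorphisms). So the bottom arrow is a bijection of sets, and a commutative square whose two horizontal arrows are bijections is cartesian.

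\textbf{Step 1: the top arrow.} Define $\beta_P$ as the classical twisting bijection (Serre, \emph{Cohomologie galoisienne}, I.5.3). Fix a cocycle $p\in \rZ^1(\mathbb{R},\g)$ representing $[P]$, so that $\g'$ is $\g$ with twisted conjugation $\sigma\mapsto p(\sigma)\,\sigma(\cdot)\,p(\sigma)^{-1}$. Send a cocycle $c$ to $c\cdot p^{-1}$, or to $c^{-1}$-type variants if needed. The direction and sign conventions will be fixed so as to match the Fargues--Scholze convention for $\beta_P$ used for $\bun_G\to\bun_{G'}$. This map is a well-defined bijection $\rH^1(\mathbb{R},\g)\to \rH^1(\mathbb{R},\g')$, since the inverse is twisting by the class of $P^{-1}$ in $\rH^1(\mathbb{R},\g')$. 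It is not a map of pointed sets in general, so "pointed" should be read as applying to the vertical maps only.

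\textbf{Step 2: reduction to a fundamental torus.} Choose a fundamental torus $T\subset \g$. By Proposition \ref{Prop:BorovoiRealCohomology}, the map $\rH^1(\mathbb{R},T)\to \rH^1(\mathbb{R},\g)$ is surjective. Hence $[P]$ is represented by a cocycle $p$ with $p(\sigma)=t_0\in T_0(\mathbb{R})[2]$. Since this cocycle takes values in $T$ and $T$ is commutative, $T$ is untwisted inside $\g'$. Thus $T\subset\g'$ is again a maximal torus, with the same compact part $T_0$, so it is a fundamental torus of $\g'$. With this choice, $\beta_P$ restricted to classes coming from $T$ is the group-theoretic translation $c\mapsto c\,t_0^{-1}$ on $\rH^1(\mathbb{R},T)$. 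The Kottwitz map is compatible with the two embeddings: for $\g$ and for $\g'$ it is given on $\rH^1(\mathbb{R},T)$ by the Tate--Nakayama map
\[
\rH^1(\mathbb{R},T)\to X_*(T)_{\gal_\mathbb{R}}
\]
followed by the projection to $\pi_1(\g)_{\gal_{\mathbb{R}}}=\pi_1(\g')_{\gal_{\mathbb{R}}}$, which is a quotient of $X_*(T)_{\gal_\mathbb{R}}$ by the image of the coroot lattice. This compatibility follows from functoriality of $\kappa$ for $T\to\g$ and the identification $\pi_1(\g)=X_*(T)/\langle\text{coroots}\rangle=\pi_1(\g')$.

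\textbf{Step 3: commutativity.} Because Tate--Nakayama is a group homomorphism, translation by $t_0$ on $\rH^1(\mathbb{R},T)$ corresponds to translation by $\kappa([P])$, with the sign dictated by the convention in Step 1. Surjectivity of $\rH^1(\mathbb{R},T)\to\rH^1(\mathbb{R},\g)$ then gives commutativity of the square on all of $\rH^1(\mathbb{R},\g)$.

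\textbf{Main obstacle.} The delicate point is checking that these identifications are independent of the choices of $T$ and of the representative $t_0$. It must be shown that $\kappa$ on $\rH^1(\mathbb{R},\g)$ is well defined, so that the $W_0$-action of Borovoi, $n^{-1}c(\sigma)\bar n$, acts trivially on $\pi_1(\g)_{\gal_\mathbb{R}}$. Here I would use that $W_0$ acts on $X_*(T)$ through the Weyl group, which is trivial modulo coroots. It must also be checked that the identification $\pi_1(\g)\cong\pi_1(\g')$ coming from $P$ agrees with the one obtained through $T$. Pinning down the sign of $\kappa([P])$ against the Fargues--Scholze normalization of $\beta_P$ is the other place requiring care.
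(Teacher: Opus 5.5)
Your proof is correct, but it takes a different route from the paper.

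\textbf{How the paper argues.} It works with an arbitrary cocycle $k$ representing $[P]$ and the isomorphism $\psi\colon \g_{\mathbb{C}}\to\g'_{\mathbb{C}}$ that $k$ determines. It checks by hand that $\phi\mapsto\psi(\phi\cdot k^{-1})$ gives a bijection on cohomology. For compatibility with $\kappa$, it first assumes $\g^{\mathrm{der}}$ is simply connected. Then $\kappa$ factors through $\rH^1(\mathbb{R},\g^{\mathrm{ab}})$ and Tate--Nakayama, and twisting is visibly a translation there. The general case is reduced to this one via a $z$-extension.

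\textbf{How you argue.} You use Borovoi's surjectivity $\rH^1(\mathbb{R},T)\to\rH^1(\mathbb{R},\g)$ twice. First, you represent $[P]$ by some $t_0\in T_0(\mathbb{R})[2]$. This makes $T$ a real maximal torus of both $\g$ and $\g'$ with the same real structure, and turns $\beta_P$ into translation by $t_0^{-1}$ on $\rH^1(\mathbb{R},T)$. Second, you extend the resulting identity from $T$-classes to all of $\rH^1(\mathbb{R},\g)$. For this you only need functoriality of $\kappa$ along $T\hookrightarrow\g$ and $T\hookrightarrow\g'$.

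\textbf{What each route buys.} Your route avoids $z$-extensions entirely. In particular you never have to lift $P$ and the twisting to a $z$-extension, a step the paper leaves implicit. The price is reliance on a fact special to $\mathbb{R}$: a single torus through which all of $\rH^1$ factors. The abelianization argument is closer to how $\kappa$ is actually constructed, and it adapts more readily to other base fields.

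\textbf{Your listed obstacles are not real obstacles.}
\begin{itemize}
\item You need not prove that $\kappa$ is well defined or $W_0$-invariant. $\kappa_{\g}$ is Kottwitz's already-constructed functorial map, and you only use $\kappa_{\g}\circ i_*=\mathrm{pr}\circ\kappa_T$. The left-hand side of your commutativity check does not depend on which lift to $\rH^1(\mathbb{R},T)$ you choose.
\item The identification $\pi_1(\g)\cong\pi_1(\g')$ coming from $P$ agrees with the one through $T$. In your model, $\g'$ is $\g_{\mathbb{C}}$ with real structure $\mathrm{Ad}(t_0)\circ\sigma$, so the comparison isomorphism is the identity on $\g_{\mathbb{C}}$.
\item The sign is irrelevant. $\rH^1(\mathbb{R},T)$ is killed by $2$, so by your surjectivity every element of $\kappa(\rH^1(\mathbb{R},\g))$ is $2$-torsion. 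In particular $\kappa([P])=-\kappa([P])$.
\end{itemize}

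Your observation that $\beta_P$ and the bottom translation are not maps of pointed sets is right. Your derivation of cartesianness from bijectivity of both horizontal arrows is also right.
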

\begin{proof}
    Taking a complex point of $P$ we obtain an isomorphism 
    \[
        \psi: \g_{\mathbb{C}} \to \g'_{\mathbb{C}} 
    \]
    such that if $\sigma_\g$ is complex conjugation on $\g_{\mathbb{C}}$ and $\sigma_{\g'}$ is complex conjugation on $\g'_{\mathbb{C}}$, we have that 
    \[
        \psi^{-1} \circ \sigma_{\g'} \psi = \operatorname{Ad}_{k} \circ \sigma_\g,
    \]
    where $\tau(1) = 1, \tau(c) = k$ is a cocycle for $\gal_\R = \{1, c\}$. The usual pushout procedure $[Q] \to [Q \times^{\g} P]$ defines an isomorphism between the set of isomorphism classes of right $\g$-torsors to the set of left $\g'$-torsors, which is the map whence the isomorphism $\beta_P$ comes from. To calculate Kottwitz invariants however, we need to be slightly more explicit. We instead define a map on cocycles, let $\phi: \gal_\R \to \g$ be a Galois cocycle, then we define $\beta_P(\phi)(1) = 1, \beta_P(\phi)(c) = \psi(\phi(c) \cdot k^{-1})$. If $\phi'(c) = x\phi(c)(x)^{c, -1}$ we check that
    \begin{align*}
        \beta_P(\phi')(c) &= \psi(x) \cdot \psi(\phi(c)) \cdot \psi(\sigma_{\g}(x)^{-1}) \cdot \psi(k^{-1}) \\
        &= \psi(x) \cdot \psi(\phi(c) \cdot k^{-1}) \cdot \sigma_{\g'}(\psi(x^{-1}))
    \end{align*}
    thus $\beta_P(\phi')$ is cohomologous to $\beta_P(\phi)$. On the other hand 
    \begin{align*}
        \psi(\phi(c) \cdot k^{-1}) \cdot \sigma_{\g'}(\psi(\phi(c) \cdot k^{-1})) &= \psi(\phi(c)) \cdot \psi(k)^{-1} \cdot (\psi \circ \operatorname{Ad}_{k} \circ \sigma_\g)(\phi(c) \cdot k^{-1}) \\ 
        &= \psi(\phi(c) \cdot \sigma_{\g}(\phi(c))) \cdot \sigma_{\g}(k)^{-1}k^{-1} \\
        &= 1,
    \end{align*}
    where the final line follows because $\g'$ is a pure inner form. Thus $\beta_P$ induces a bijection between normalized cocycles of $\g$ and those of $\g'$ which respects the equivalence relation defined by cohomology, thus a fortiori $\beta_P$ induces a bijection on cohomology.

    It remains to verify that this twisting isomorphism is compatible with the Kottwitz morphism. For this we first assume that $\g_{\mathrm{der}}$ is simply connected, in this case the Kottwitz morphism $\kappa_{\mathbb{R}}: \rH^1(\mathbb{R}, \g) \to \pi_1(\g)_{\gal_\R}$ is simply the natural map 
    \[
        \rH^1(\mathbb{R}, \g) \to \rH^1(\mathbb{R}, \g^{\mathrm{ab}})
    \]
    composed with the Tate--Nakayama morphism
    \[
        \rH^1(\mathbb{R}, \g^{\mathrm{ab}}) \xrightarrow \pi_1(\g)_{\gal_\R}
    \]
    where here $\g^{\mathrm{ab}}$ denotes the cocenter of $\g$. Thus the statement is clear. In the general case we can pass to a z-extension of $\g$, and thus reduce to the case that $\g_{\mathrm{der}}$ is simply connected. 
\end{proof}

\subsubsection{} Kottwitz has defined a functorial cohomology set $\rB(\mathbb{R}, \g)$, we refer to \cite[Section 10]{kottwitzglobal} for additional details. This comes equipped with a functorial Kottwitz map (\cite[Section 11]{kottwitzglobal})
\begin{align}
    \kappa_{\mathbb{R}, \g}: \rB(\mathbb{R}, \g) \to \pi_1(\g)_{\sigma},
\end{align}
where $\pi_1(\g)$ is the algebraic fundamental group of $\g$ equipped with the natural action of $\gal(\mC/\R)$; we use $\sigma \in \gal(\mC/\R)$ to denote the nontrivial element. In the case that $\g=\tor$ is a torus we have that $\pi_1(\tor) \simeq X_{\ast}(\tor)$ and in this case the map $\kappa_{\mathbb{R}, \tor}$ defines an isomorphism $\rB(\mathbb{R}, \tor) \simeq X_{\ast}(\tor)_{\sigma}$. 

\subsubsection{} We define $\rB(\mathbb{R}, \g)_{\mathrm{bsc}} \subset \rB(\mathbb{R}, \g)$ to the subset of basic elements, that is those elements $[b]$ with $\nu_{[b]}$ central. Then there exists a functorial inclusion $\rH^1(\mathbb{R}, \g) \subset \rB(\mathbb{R}, \g)_{\mathrm{bsc}}$ (see \cite[Section 1.3.4]{kottwitzglobal}. For $\tor \subset \g$ a fundamental maximal torus, we define $\rB(\mathbb{R}, \tor)_{{\g-\mathrm{bsc}}}$ to be the preimage of $\rB(\mathbb{R}, \g)_{\mathrm{bsc}}$ under the natural map $\rB(\mathbb{R}, \tor) \to \rB(\mathbb{R}, \g)$. It is shown in \cite[Section 13.5]{kottwitzglobal} that the right action of $W_0$ given above extends along the natural embedding $\rH^1(\mathbb{R}, \tor) \hookrightarrow \rB(\mathbb{R}, \tor)$, and we have the following extension of the above theorem of Borovoi. 

\begin{Prop}\cite[Lemma 13.2]{kottwitzglobal}
The natural map induced by the inclusion $\tor \to \g$ induces a bijection of pointed sets $\rB(\mathbb{R}, \tor)_{{\g-\mathrm{bsc}}}/W_0 \simeq \rB(\mathbb{R}, G)_{\mathrm{bsc}}$.
\end{Prop}

\begin{Prop}\cite[Theorem 1.2]{KottwitzSTFE}
    The map $\kappa_{\mathbb{R}, \g}$ restricts to a map $\kappa_{\mathbb{R}, \g}: \rH^1(\mathbb{R}, \g) \to \pi_1(\g)_{\sigma}$ with kernel given by the image of $\rH^1(\mathbb{R}, \g^{sc}) \to \rH^1(\mathbb{R}, \g)$ and image the kernel of the natural norm map $\pi_1(\g)_{\sigma} \to \pi_1(\g)$. 
\end{Prop}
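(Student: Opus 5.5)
The plan is to reduce first to a torus, where the statement is Tate--Nakayama for $\gal(\mC/\R)$, and then pass to $\g$ through a $z$-extension and a fundamental torus. The main obstacle is the surjectivity part of the image statement; the kernel statement is formal once the derived group is simply connected.

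\emph{Step 1: tori.} Let $\g=\tor$ be a torus. Then $\kappa_{\R,\tor}\colon \rB(\R,\tor)\isom X_\ast(\tor)_\sigma$, and $\rH^1(\R,\tor)=\hat{\rH}^{-1}(\gal_\R,X_\ast(\tor))$ by Tate--Nakayama. Here $\hat{\rH}^{-1}(\gal_\R,X_\ast(\tor))=\ker\bigl(N\colon X_\ast(\tor)\to X_\ast(\tor)\bigr)/(1-\sigma)X_\ast(\tor)$, with $N=1+\sigma$. This is precisely the kernel of the induced norm map $X_\ast(\tor)_\sigma\to X_\ast(\tor)$. I will check that the inclusion $\rH^1\subset \rB_{\mathrm{bsc}}$ is compatible with these identifications; this is a cocycle computation using $c(\sigma)=t\in \tor_0(\R)[2]$ as in Proposition \ref{Prop:BorovoiRealCohomology}. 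So for tori the map is injective with image exactly $\ker N$.

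\emph{Step 2: reduction to $\g_{\der}$ simply connected.} Choose a $z$-extension $1\to \mathsf{Z}\to\tilde{\g}\to\g\to 1$ with $\mathsf{Z}$ an induced torus and $\tilde{\g}_{\der}$ simply connected.
\begin{itemize}
\item Since $\rH^1(\R,\mathsf{Z})=0$, the map $\rH^1(\R,\tilde\g)\to\rH^1(\R,\g)$ is surjective.
\item $\pi_1(\tilde\g)\to\pi_1(\g)$ is surjective with kernel $X_\ast(\mathsf{Z})$, and $\tilde\g^{sc}=\g^{sc}$.
\item Because $X_\ast(\mathsf Z)$ is an induced module, $\hat{\rH}^{-1}(X_\ast(\mathsf Z))=\hat{\rH}^0(X_\ast(\mathsf Z))=0$.
\end{itemize}
The last point gives $\ker N_{\pi_1(\tilde\g)}\twoheadrightarrow\ker N_{\pi_1(\g)}$, via the long exact Tate sequence for the short exact sequence $0\to X_\ast(\mathsf{Z})\to\pi_1(\tilde\g)\to\pi_1(\g)\to 0$. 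The same vanishing shows that an element of $\rH^1(\R,\tilde\g)$ whose invariant dies in $\pi_1(\g)_\sigma$ already has trivial invariant in $\ker N_{\pi_1(\tilde\g)}$. Hence both the kernel and the image statements descend from $\tilde\g$ to $\g$.

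\emph{Step 3: the case $\g_{\der}=\g^{sc}$.} Let $\mathsf{D}=\g/\g^{sc}$, so that $\pi_1(\g)=X_\ast(\mathsf{D})$. In this case $\kappa$ is the composite $\rH^1(\R,\g)\to\rH^1(\R,\mathsf{D})\isom\ker N$ from Step 1.
\begin{itemize}
\item \emph{Kernel.} The exact sequence of pointed sets $\rH^1(\R,\g^{sc})\to\rH^1(\R,\g)\to\rH^1(\R,\mathsf{D})$ immediately gives that the kernel is the image of $\rH^1(\R,\g^{sc})$.
\item \emph{Image.} I will show that $\rH^1(\R,\g)\to\rH^1(\R,\mathsf{D})$ is surjective. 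By Proposition \ref{Prop:BorovoiRealCohomology} it suffices to show $\rH^1(\R,\tor)\to\rH^1(\R,\mathsf{D})$ is surjective, where $\tor$ is a fundamental torus. Using $0\to Q^\vee\to X_\ast(\tor)\to X_\ast(\mathsf{D})\to 0$, with $Q^\vee=X_\ast(\tor^{sc})$ the coroot lattice, this reduces to $\hat{\rH}^0(\gal_\R,Q^\vee)=0$.
\end{itemize}

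\emph{Step 4: the main obstacle.} For this vanishing I will use that $\tor$ is fundamental. I expect that $\sigma$ then acts on the roots as $-\tau$, where $\tau$ preserves a positive system, so $\tau$ permutes the simple coroots. This should follow from the absence of real roots for a fundamental Cartan, combined with Borovoi's description $\tor=Z(\tor_0)$. Then $Q^\vee$ decomposes into pieces of two kinds. Fixed simple coroots give copies of $\mathbb{Z}$ with $\sigma=-1$, which have $\hat{\rH}^0=0$. Two-element orbits give induced modules, which also have $\hat{\rH}^0=0$. Hence $\hat{\rH}^0(Q^\vee)=0$, which gives the surjectivity and completes the image statement. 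Verifying carefully the claim that $\sigma$ acts as $-\tau$ on a fundamental torus is the step I expect to need the most care.
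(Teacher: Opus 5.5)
\paragraph{Where the proposal breaks.} The paper gives no proof of this statement; it only cites Kottwitz. Your route (tori via Tate--Nakayama, then a $z$-extension, then a fundamental torus) is the standard one, and Steps 1, 3 and 4 are sound. The claim you flag in Step 4 is true: a fundamental torus has no real roots, so no root vanishes on $\operatorname{Lie}\tor_0$. A regular element of $\operatorname{Lie}\tor_0$ therefore defines a positive system $\Phi^+$ stable under the Cartan involution $\theta$, and $\sigma=-\theta$ on $X^\ast(\tor)$ and $X_\ast(\tor)$. The genuine gap is in Step 2.

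\paragraph{The gap in Step 2.} For a central extension $1\to Z\to\tilde{\g}\to\g\to 1$, the vanishing of $\rH^1(\R,Z)$ only controls the fibres of $\rH^1(\R,\tilde\g)\to\rH^1(\R,\g)$, i.e.\ injectivity. The obstruction to lifting a class lies in $\rH^2(\R,Z)$, and for an induced torus in the usual sense this can be nonzero, e.g.\ $\rH^2(\R,\gm)=\mathrm{Br}(\R)=\mathbb{Z}/2$.

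\paragraph{A counterexample.} $\operatorname{GL}_2\to\operatorname{PGL}_2$ is a $z$-extension with kernel $\gm$. Here $\rH^1(\R,\operatorname{GL}_2)=0$, but $\rH^1(\R,\operatorname{PGL}_2)$ contains the class of the Hamilton quaternions, so the map is not surjective. The same example breaks your third bullet: $X_\ast(\gm)=\mathbb{Z}$ with trivial action has $\hat{\rH}^0=\mathbb{Z}/2\neq 0$. Correspondingly, $\ker N$ is $0$ for $\pi_1(\operatorname{GL}_2)=\mathbb{Z}$ but $\mathbb{Z}/2$ for $\pi_1(\operatorname{PGL}_2)=\mathbb{Z}/2$. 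So neither the kernel statement (you need to lift $c$) nor the image statement descends along this $z$-extension.

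\paragraph{The fix.} Build the $z$-extension using $\mC$ as the splitting field, which is allowed since $\mC$ splits every connected reductive group over $\R$. Then $Z\simeq(\operatorname{Res}_{\mC/\R}\gm)^r$, so $X_\ast(Z)$ is a free $\mathbb{Z}[\gal_\R]$-module. For instance, for $\operatorname{PGL}_2$ take $\tilde{\g}=\operatorname{GL}_2\times^{\gm}\operatorname{Res}_{\mC/\R}\gm$. Then $\hat{\rH}^i(\gal_\R,X_\ast(Z))=0$ for all $i$. In particular $\rH^2(\R,Z)\simeq\hat{\rH}^0(\gal_\R,X_\ast(Z))=0$, which is the correct reason for the surjectivity of $\rH^1(\R,\tilde\g)\to\rH^1(\R,\g)$. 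With this choice the rest of Step 2 goes through as you wrote it.
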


Finally the natural map $\rH^1(\mathbb{R}, \g^{\mathrm{ad}}) \to \rB(\mathbb{R}, \g^{\mathrm{ad}})_{\mathrm{bsc}}$ is an isomorphism when $\g$ is adjoint. Thus given a class $[b] \in \rB(\mathbb{R}, \g)_{\mathrm{bsc}}$ we may associate an inner form $\g_{[b]}$ of $\g$ over $\mathbb{R}$ to the class $[b]$ just as in the setting of non-archimedean local fields.

\subsubsection{} A \emph{weak real Shimura datum} is a pair $\gx$, where $\g$ is a connected reductive group over $\R$ and $\x$ is a $\g(\mathbb{R})$-conjugacy class of homomorphism $h:\mathbb{S} \to \g$ satisfying Milne's axioms SV1 and SV2, see \cite[Definition 5.5]{Milne}. A \emph{morphism of weak real Shimura data} is a morphism of pairs. A \emph{real Shimura datum} is a weak real Shimur datum $\gx$ satisfying Milne's axiom SV3, see \cite[Definition 5.5]{Milne}

For a symplectic space $(V,\psi)$ over $\mathbb{R}$ we denote by $\gvx$ the standard weak real Shimura datum, where $\x_{V}$ is the $\gv(\mathbb{R})$-conjugacy class of complex structures $J$ on $V$ such that the bilinear form $\psi(x,Jx)$ is positive definite or negative definite on $V$.

\subsubsection{} \label{subsub:RealBGShimura} Attached to a weak real Shimura datum $\gx$ is a class $[X] \in B(\R, \g)$, see \cite[\S 1.2-1.3]{SemplinerTaylorShimura} where this class is referred to as $\widehat{\boldsymbol{\lambda}}_{\g}(Y)$, where $Y$ is the $\g(\R)$-conjugacy class of Hodge cocharacters associated to $X$. Its image under $B(\R,\g) \to B(\R,\g^{\mathrm{ad}})=H^1(\R, \gad)$ is the class of the (unique) compact inner form of $\g^{\mathrm{ad}}$, and its image under the Kottwitz map is the class of $\mu_X$ inside of $\pi_1(\g)_{\sigma}$. 

\subsubsection{} \label{subsub:RealTwistKottwitz} If $P$ is a $G$-torsor over $\spec \mathbb{R}$ with automorphism group $G'=\operatorname{Aut}_{G}(P)$, then there is a canonical bijection
\begin{align}
    \beta_{P}:B(\R,\g) \to B(\R,\g'),
\end{align}
extending $H^1(\R,\g) \xrightarrow{\sim} H^1(\R, \g')$, and compatible with the real Kottwitz maps. This can be proved in the same way as \ref{Lem:RealSquare} by working at the level of real Kottwitz cocycles, rather than Galois cocycles. We omit the proof. 

\subsubsection{} We will sometimes find ourselves in the following rather baroque situation. Let $\mathsf{P}$ be a $\g$-torsor over $\spec \mathbb{R}$ with automorphism group $\g'=\operatorname{Aut}_{\g}(\mathsf{P})$. Choose $x$ a $\mathbb{C}$-point of $\mathsf{P}$ defining an isomorphism $\psi: \g_{\mathbb{C}} \to \g'|_{\mathbb{C}}$ be an isomorphism satisfying $\psi^{-1} \circ \sigma_{\g'} \psi \circ \sigma_{\g}^{-1} = \operatorname{Ad}_k$ for some $k \in \g(\mathbb{C})$, where $\sigma_{\g'}$ and $\sigma_{\g}$ denotes the complex conjugations on $\g'$ and $\g$. 

Let $h: \mathbb{C}^* \to \g(\mathbb{R})$ be a (pointed) weak Shimura datum, and let $h'$ be a (pointed) weak Shimura datum for $\g'$. Let $\rho: \g \to \operatorname{GSp}(V)$ be a Hodge embedding. We can twist $\g$ and $\rho$ by $\mathsf{P}$ to get an inner form $\operatorname{GSp}'(V)/\mathbb{R}$ together with $\rho': \g' \to \operatorname{GSp}'(V)$. Writing $\sigma_0$ for the complex conjugation on $\operatorname{GSp}(V)$, we get an isomorphism $\psi_0: \operatorname{GSp}(V_{\mathbb{C}}) \to \operatorname{GSp}'(V_{\mathbb{C}})$ which satisfies $\psi_0^{-1} \circ \sigma_0' \circ \psi_0 = \mathrm{Ad}_{\rho(k)} \circ \sigma_0$, and which restricts to $\psi$ along the morphisms $\rho: \g \to \operatorname{GSp}(V), \rho': \g' \to \operatorname{GSp}'(V)$. \noindent

By the triviality of $\rH^1(\mathbb{R}, \operatorname{GSp}(V))$, there exists $d \in \operatorname{GSp}(V)(\mathbb{C})$ such that $d^{-1}\rho(k)\sigma_0(d) = 1$, from this one obtains an isomorphism $\psi_0' = \psi_0 \circ \mathrm{Ad}_d$ which is defined over $\mathbb{R}$, and a homomorphism of algebraic groups
    \[
        \rho'' = (\psi_0')^{-1} \circ \rho': \g' \to \operatorname{GSp}(V) 
    \]
    also defined over $\mathbb{R}$.
\begin{Lem}\label{Lem:TwistingHodgeEmbedding}
Let the notation be as above. Suppose that $\beta_P([X]) = [X']$. Then $\rho''$ and thus $\rho'$ is an Hodge embedding if and only if $\rho(\mu_h)$ and $\rho''(\mu_{h'})$ are $\operatorname{GSp}(V)(\mathbb{C})$-conjugate. 
\end{Lem}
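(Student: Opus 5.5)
The plan is to read ``$\rho''$ is a Hodge embedding'' as the statement that $\rho''\circ h'$ lies in $\x_V$, the Siegel double space. One direction is immediate. The other direction rests on the fact that, for $\operatorname{GSp}(V)$ over $\mathbb{R}$, a homomorphism $\mathbb{S}\to \operatorname{GSp}(V)_{\mathbb{R}}$ lies in $\x_V$ as soon as two things hold: its Hodge cocharacter is in the right geometric conjugacy class, and its class in $\rB(\R,\operatorname{GSp}(V))$ (in the sense of \ref{subsub:RealBGShimura}) is $[\x_V]$. The first condition is exactly the hypothesis of the lemma. The second I will get from the assumption $\beta_P([X])=[X']$, using functoriality of $\beta$ and the fact that $\operatorname{GSp}(V)$ has trivial $\rH^1$ over $\mathbb{R}$.

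\emph{Step 1 (the easy direction).} If $\rho''$ is a Hodge embedding, then $\rho''\circ h'\in \x_V$ and $\rho\circ h\in \x_V$. Since $\x_V$ is a single $\operatorname{GSp}(V)(\R)$-orbit, the cocharacters $\rho''(\mu_{h'})$ and $\rho(\mu_h)$ are conjugate. That $\rho'$ is a Hodge embedding if and only if $\rho''$ is follows because $\psi_0'$ is an isomorphism defined over $\mathbb{R}$ and $\rho''=(\psi_0')^{-1}\circ\rho'$.

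\emph{Step 2 (computing the real Kottwitz class of $\rho''\circ h'$).} Let $\mathsf{P}_0=\mathsf{P}\times^{\g,\rho}\operatorname{GSp}(V)$, so that $\operatorname{GSp}'(V)=\operatorname{Aut}(\mathsf{P}_0)$. The twisting bijections of \ref{subsub:RealTwistKottwitz} should be functorial for the morphism of pairs $(\rho,\rho')$, which gives
\[
\rho'_\ast\beta_P=\beta_{\mathsf{P}_0}\rho_\ast.
\]
I will check this directly on real Kottwitz cocycles, using the explicit formula $\phi\mapsto\psi(\phi\cdot k^{-1})$ from the proof of Lemma \ref{Lem:RealSquare} together with the compatibility $\psi_0\circ\rho=\rho'\circ\psi$. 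Next, because $d^{-1}\rho(k)\sigma_0(d)=1$, the same formula shows that $\beta_{\mathsf{P}_0}$, composed with the real isomorphism $(\psi_0')^{-1}$, is the identity on $\rB(\R,\operatorname{GSp}(V))$: twisting by a trivialized torsor does nothing. Combining these with $\beta_P[X]=[X']$ yields
\[
[\rho''\circ h'] = (\psi_0')^{-1}_\ast\rho'_\ast[X'] = \rho_\ast[X] = [\x_V].
\]

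\emph{Step 3 (classification for $\operatorname{GSp}$; the main obstacle).} Suppose $\rho''(\mu_{h'})$ is conjugate to $\rho(\mu_h)$. Then $J=\rho''(h'(i))$ is a complex structure on $V_{\mathbb{R}}$ compatible with $\psi$ up to similitude, so $\rho''\circ h'$ has the Hodge type of $\x_V$. The $\operatorname{GSp}(V)(\R)$-orbits of such homomorphisms are classified by the signature $(p,q)$ of the symmetric form $\psi(x,Jx)$. The step I expect to need the most care is showing that the class in $\rB(\R,\operatorname{GSp}(V))$ detects whether this form is definite. I would argue as follows. The image of the class in $\rH^1(\R,\operatorname{GSp}(V)^{\mathrm{ad}})$ is the inner form $\operatorname{Ad}(J)$-twisted from $\operatorname{GSp}(V)^{\mathrm{ad}}$. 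The centralizer of $J$ is $\mathrm{U}(p,q)$, and the twisted form is compact modulo center exactly when $pq=0$, i.e.\ when $\operatorname{Ad}(J)$ is a Cartan involution. So Step 2 forces $\operatorname{Ad}(J)$ to be a Cartan involution, hence $J$ is definite and $\rho''\circ h'\in\x_V$. If the class turns out to be too coarse to separate $\x_V$ from its complex conjugate, that does not matter, since $\x_V$ contains both the positive and the negative definite structures.
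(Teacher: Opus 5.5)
Your route is the paper's. Step 2 is exactly the paper's observation that, since $\mathsf{P}\times^{\g,\rho}\operatorname{GSp}(V)$ is trivial, $\beta_P([X])=[X']$ gives $\rho''_\ast[X']=\rho_\ast[X]$. Step 3 is the paper's combination of SV2 (the image in $\rH^1(\R,\operatorname{GSp}(V)^{\mathrm{ad}})$ is the compact form) with rigidity of the Siegel datum. However, one inference in Step 3 does not hold as written.

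You claim that conjugacy of $\rho''(\mu_{h'})$ and $\rho(\mu_h)$ already makes $J=\rho''(h'(i))$ a complex structure. It does not. In $\operatorname{GSp}_2=\operatorname{GL}_2$, the real homomorphism $h'(z)=\operatorname{diag}(z\bar z,1)$ has Hodge cocharacter $z\mapsto\operatorname{diag}(z,1)$, which is conjugate to that of the standard Siegel $h$, yet $h'(i)=1$.

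The conjugacy class of $\mu_{h'}$ does not control its position relative to its complex conjugate. So it cannot exclude summands of $V_{\mathbb{C}}$ on which $\rho''\circ h'$ acts through the norm or trivially; equivalently, it does not make the weight $\rho''\circ w_{h'}$ central. SV1 for $(\g',\x')$ only makes $w_{h'}$ central in $\g'$, and $\rho''$ need not carry the centre of $\g'$ into the centre of $\operatorname{GSp}(V)$.

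The missing input is the Newton point, which the paper uses explicitly. The Newton point of $[X']$ is the weight, so $\rho''_\ast[X']=\rho_\ast[X]$ forces $\rho''\circ w_{h'}$ to be conjugate to, hence equal to, the central standard weight $\rho\circ w_h$. Then $J^2=\rho''(h'(-1))=\rho(h(-1))=-1$.

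Your Step 2 already contains this information, but you only extract the adjoint image from it. Even that step presupposes that $J^2$ is central, since otherwise $\sigma\mapsto\operatorname{Ad}(J)$ is not a cocycle for $\operatorname{GSp}(V)^{\mathrm{ad}}$. With this line inserted before your signature argument, the proof goes through and coincides with the paper's.
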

\begin{proof}
We first check that $\rho'' \circ h$ gives a Shimura datum for $\operatorname{GSp}(V)$. The condition on the Hodge cocharacters forces $\rho'(h')$ to satisfy SV1. Since $\beta_P([X]) = [X']$, we know that $[X]$ and $[X']$ have the same Newton point and thus that $[X]$ and $[X']$ have the same (central) weight cocharacter under the canonical isomorphisms of their centers. Because Since $\beta_P([X])=[X']$ and the pushforward of $\mathsf{P}$ along $\rho$ is trivial, it follows that $\rho''_{\ast}[X']=\rho_{\ast}[X]$ and thus that $\rho_{\ast}[X]$ is basic. 

We see that the image of $\rho''_{\ast}[X']$ in $B(\mathbb{R},\operatorname{GSp}(V)^{\mathrm{ad}})$ lies in $H^1(\mathbb{R}, \operatorname{GSp}(V)^{\mathrm{ad}})$, and SV2 is precisely the assertion that this class corresponds to the compact inner form. 
This is true for $\rho_{\ast}[X]$ and thus for $\rho''_{\ast}[X']=\rho_{\ast}[X]$. \smallskip 

To conclude the proof, it suffices to note that $\operatorname{GSp}(V)$ only admits a single $\g(\mathbb{R})$-conjugacy class of Shimura data with weight equal to the weight of the standard Shimura datum and with Hodge cocharacter conjugate to the Hodge cocharacter of the standard Shimura datum. 
\end{proof}

\subsubsection{An important calculation} 
\newcommand{\y}{\mathsf{Y}}
\label{subsub:ComputationHodgeEmbedding} In this section we record a calculation which will be a key archimedean input in treating Shimura varieties of type $B, C,$ and $D$. Let $\gx$ be a weak real Shimura datum and let $\iota:\g \to \gv$ be an injective morphism. Choose an nondegenerate symmetric $\mathbb{R}$-linear bilinear form $\psi'$ on $\mathbb{C}$ and consider $W=V \otimes_{\mathbb{R}} \mathbb{C}$ equipped with the $\mathbb{R}$-linear symplectic form given by $\psi_{w}(v_1 \otimes z_1, v_2 \otimes z_2) =\psi(v_1,v_2) \cdot \psi'(z_1, z_2)$. We let $\h = (\g \times \mathbb{S})/\mathbb{G}_m$ with the weak real Shimura datum $\mathsf{Y}$ induced from $\x \times 1$. There is a map $\iota:\hy \to \gwx$ by letting $(g,t)$ acts on $v \otimes z$ by $gv \otimes tz$. \smallskip

Let $h \in \x$, and observe that $k=[(h(i), -i)] \in \h(\R)$ has the property that $k^2=[(-1,-1)]$ is the identity element of $\h(\mathbb{R})$. Thus the assignment sending $\sigma \mapsto k$ is a cocycle for $\gal(\mathbb{C}/\R)$. If $\g^{\mathrm{ad}}(\mathbb{R})$ is not compact, then the class of this cocycle is nontrivial in $H^1(\R,\h)$ because it maps to the (nontrivial) class of the compact inner form under the map $H^1(\R,\h) \to H^1(\R,\g^{\mathrm{ad}})$, by axiom SV2. The following lemma is a special case of a result of Xiao--Zhu, and we give a simplified version of their proof. 

\begin{Lem} \label{Lem:HodgeEmbeddingComputation}
Let $\h'$ be the pure inner twist of $\h$ by $k$ and consider the morphism $\iota':\h' \to \operatorname{GSp}(W', \psi_{W'})$, where $(W', \psi_{W'})$ is the twist of $(W, \psi_W)$ by $k$. Define a morphism $h':\mathbb{S} \to \h'$ by $h'(z)=[(1,z)]$. If $\psi_{W}(v, h(i)v)$ is a positive (resp. negative) definite pairing on $W$, then $(w,w') \mapsto \psi_{W'}(w, h'(i)w')$ is a positive (resp. negative) definite pairing on $W'$. Furthermore, if $\iota \circ h$ lands in $\mathsf{H}_{W}^{\pm}$, then $\iota' \circ h'$ lands in $\mathsf{H}_{W'}^{\pm}$. In particular, if we equip $\h'$ with the weak real Shimura datum given by the $\h'(\mathbb{R})$-conjugacy class of $h'$ and if $\iota$ is a morphism of weak real Shimura data, then $\iota'$ is a morphism of weak real Shimura data. 
\end{Lem}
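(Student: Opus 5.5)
We will make the twist completely explicit. Realize $\h'$ and $W'$ as the groups and spaces with the same complex points as $\h_{\mC}$ and $W_{\mC}=W\otimes_{\R}\mC$, but with twisted complex conjugations $\sigma'=\operatorname{Ad}_k\circ\sigma$ and $\sigma'_W=k\circ\sigma_W$. Since $k^2=1$ in $\h(\R)$, $\sigma'_W$ is an involution, and
\[
W'=\{w\in W_{\mC} : k\,\overline{w}=w\}.
\]
The form $\psi_{W'}$ is the restriction of the $\mC$-bilinear extension of $\psi_W$. It is real-valued on $W'$ because $k$ acts on $W$ by a symplectic similitude of multiplier $c(k)$; I will check that $c(k)=1$, or at worst is positive after rescaling.

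The element $h'(i)=[(1,i)]$ commutes with $k$, hence it lies in $\h'(\R)$ and preserves $W'$. The central observation is that on $W_{\mC}$ we have $k=[(h(i),-i)]=h(i)\cdot[(1,-i)]$, and $[(1,-i)]$ acts by $v\otimes z\mapsto v\otimes(-iz)$. So $h'(i)$ acts by multiplication by $i$ on the $\mC$-factor. The plan is to compute the form $\psi_{W'}(w,h'(i)w)$ for $w\in W'$ and rewrite it in terms of $\psi_W(\cdot,h(i)\cdot)$.

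To do this, write $W_{\mC}=V_{\mC}\otimes_{\R}\mC$ and decompose with respect to the $\mC$-factor. The complexification $\mC\otimes_{\R}\mC$ splits as $\mC_+\oplus\mC_-$, the eigenspaces of the action $z\mapsto iz$ with eigenvalues $\pm i$. The $\R$-bilinear form $\psi'$ on $\mC$ can be taken, after a real change of variables, to be a rotation-invariant form or its conjugate; I will check which, since a symmetric form for which multiplication by $i$ is a similitude is forced up to scalar to be $\operatorname{Re}(z_1\bar z_2)$ or the hyperbolic form $\operatorname{Re}(z_1z_2)$.

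With this decomposition, the condition $k\overline{w}=w$ identifies $W'$ with a real form on which $h'(i)$ acts through $h(i)$ up to the sign $\pm i$ on each eigenline. A direct computation should then give
\[
\psi_{W'}(w,h'(i)w)=\lambda\,\psi_W(u,h(i)u)\cdot\psi'(e,\bar e)
\]
for a suitable $u\in V_{\mC}$ with Hermitian extension, where $\lambda>0$ and $e$ is an eigenvector. This transfers definiteness, and preserves its sign, from $W$ to $W'$. In particular, the case of a negative definite form is symmetric, obtained by replacing $h$ with $h^{-1}$.

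The statement about $\mathsf{H}^{\pm}$ then follows from two facts. First, $h'$ is a homomorphism $\mathbb{S}\to\h'$ defined over $\R$, because $[(1,z)]$ commutes with $k$ and is compatible with the twisted conjugation. Second, the weight of $\iota'\circ h'$ agrees with that of $\iota\circ h$, since $[(1,t)]=[(t,1)]$ for $t\in\R^\times$ in $\h=(\g\times\mathbb{S})/\gm$. Combined with the definiteness just established, this gives exactly the definition of $\mathsf{H}^{\pm}_{W'}$. The final clause follows by taking $\h'(\R)$-conjugacy classes, using equivariance of $\iota'$, with SV1 and SV2 for $h'$ inherited through the faithful representation $\iota'$.

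The main obstacle is the sign and Hermitian bookkeeping in the middle step: choosing coordinates in which the twisted conjugation $k\circ\bar{\cdot}$ and the form $\psi'$ are simultaneously diagonal, and confirming that the scalar $\lambda$ is positive rather than negative. I would first try the normalization $\psi'(z_1,z_2)=\operatorname{Re}(z_1\bar z_2)$ and compute on pure tensors $v\otimes 1\pm h(i)v\otimes i$, which are fixed by $k\circ\bar{\cdot}$ in suitable form. If that fails, I would fall back on a deformation argument: definiteness is an open and closed condition along a path in $\h(\mC)$ connecting the two situations.
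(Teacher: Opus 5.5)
Your setup matches the paper's: $W'=\{w\in W_{\mC}:k\overline{w}=w\}$, $\psi_{W'}$ is the restriction of $\psi_{W,\mC}$, $h'(i)=[(1,i)]\in\h'(\R)$, and $k=h(i)\cdot[(1,-i)]$. But the heart of the lemma, the transfer of definiteness, is not proved. The eigenline computation is only announced (``should then give''), and you leave the sign of $\lambda$ open. The fallback does not work as stated. A continuous path of cocycles in $\h(\mC)$ stays in one class of $H^1(\R,\h)$, so it cannot join $1$ to $k$ in the case of interest, where $[k]\neq 0$. You would also have to move the operator from $h(i)$ on $W$ to $h'(i)$ on $W'$.

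The missing step is a one-line identity, and you already have every ingredient for it. Your factorization gives $h'(i)=h(i)k^{-1}$, since everything commutes and $h(i)=[(h(i),1)]$ is real. For $w\in W'$, the condition $k\overline{w}=w$ gives $k^{-1}w=\overline{w}$. Hence $h'(i)w=h(i)\overline{w}$, and
\begin{align}
\psi_{W'}(w,h'(i)w)=\psi_{W,\mC}\bigl(w,h(i)\overline{w}\bigr).
\end{align}
The right-hand side is the Hermitian form on $W_{\mC}$ attached to the real symmetric form $B(x,y)=\psi_W(x,h(i)y)$. For $w=x+iy$ with $x,y\in W$ it equals $B(x,x)+B(y,y)$. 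So it is definite of the same sign as $B$ on $W_{\mC}$, and a fortiori on the real subspace $W'$.

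This is exactly the paper's proof. It needs no eigenline decomposition and no normalization of $\psi'$. It also needs no separate negative case; your reduction $h\mapsto h^{-1}$ would change $k$ anyway.

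Your coordinate route does close up if carried out. Put $e_-=\overline{e_+}$ and $J=h(i)|_V$. Then $w=ae_++be_-\in W'$ forces $b=iJ\overline{a}$, and one gets $\psi_{W'}(w,h'(i)w)=2\,\psi'_{\mC}(e_+,e_-)\,\psi_{\mC}(a,J\overline{a})$, so $\lambda=2$. This is just the identity above in coordinates. The rest of your plan is fine: $h'$ is defined over $\R$ because $\{1\}\times\mathbb{S}$ is central, and the weight is unchanged because $[(1,t)]=[(t,1)]$.
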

\begin{proof}
Morally this Lemma is an application of Lemma \ref{Lem:TwistingHodgeEmbedding}, and as in that lemma, the essence of the matter is to calculate that $(\iota' \circ h')(i)$ is a Cartan involution and that $\iota'_{\mathbb{C}} \circ \mu_{h'}$ is a minuscule cocharacter of $\g_{W'}$. However, we give a direct proof of these claims in this special case. 

Recall that $\h'(\mathbb{R})=\{h \in \h(\mathbb{C}) \; : \; k \sigma(h) k^{-1}=h\}$ which acts on $W'=\{w \in W_{\mathbb{C}} : k \sigma(w)=w \}$, and that $\psi_{W'}$ is simply the restriction of $\psi_{W,\mathbb{C}}$. Note that $h'(z) \in \h'(\mathbb{R})$ and that $h'(z)$ defines a complex structure on $W'$ because $h'(i)^2=[(1,-1)]=[(-1,1)]$. We now compute
\begin{align}
    \psi_{W,\mathbb{C}}(w, h'(i) w) &= \psi_{W,\mathbb{C}}(w, h(i) k^{-1} w) \\
    &=\psi_{W,\mathbb{C}}(w, h(i) \sigma(w)).
\end{align}
Since $w \mapsto \psi_{W}(w,h(i) w)$ is positive (or negative) definite on $W$, it follows that $w \mapsto \psi_{W,\mathbb{C}}(w, h(i) \sigma(w))$ is positive (or negative) definite on $W_{\mathbb{C}}$. We deduce that $w \mapsto \psi_{W'}(w, h'(i) w)$ is positive (or negative) definite on $W'$, and so $(\iota' \circ h')(i)$ is a Cartan involution of $\g_W$ as desired. That $\iota' \circ \mu_{h'}$ lies in the minuscule conjugacy class of cocharacters of $\g_{W'}$ is obvious from the definition of $h'$.  
\end{proof} 

\subsection{Perfectoid geometry} \label{Sub:PerfectoidPrelim}

\subsubsection{} \label{Sec:DiamondFunctors} We will write $\perf$ for the category of affinoid perfectoid spaces over $\fp$. If $X$ is a pre-adic space over $\spa(\zp)$ in the sense of \cite[Definition 3.4.1]{ScholzeWeinsteinBerkeley}, let $X^\lozenge$ denote the set-valued functor on $\perf$ given by
\begin{align}
    X^\lozenge(S) = \{(S^\sharp, f)\} / \sim
\end{align}
for any $S$ in $\perf$, where $S^\sharp$ is an untilt of $S$ and $f:S^\sharp \to X$ is a morphism of adic spaces. This determines a v-sheaf on $\perf$ by \cite[Lemma 18.1.1]{ScholzeWeinsteinBerkeley}. For a Huber pair $(A,A^+)$ over $\zp$, we write $\spd(A,A^+)$ in place of $\spa(A,A^+)^\lozenge$, and when $A^+ = A^\circ$ we write $\spd(A)$ instead of $\spd(A,A^+)$. In particular, we see that $\spd(\zp)$ parametrizes isomorphism classes of untilts, cf. \cite[Definition 10.1.3]{ScholzeWeinsteinBerkeley}. 

\subsubsection{Formal schemes} For us a formal scheme over $\spf \zp$ is a functor on the category of $\zp$-algebras in which $p$ is nilpotent, which is locally isomorphic to $\spf A$ where $A$ is a topological ring which is complete and
separated for the $I$-adic topology for some finitely generated ideal $I \subset A$. For a formal scheme $\mathfrak{X}$ over $\spf \zp$, we write $\mathfrak{X}^\mathrm{ad}$ for the pre-adic space associated to $\mathfrak{X}$ as in \cite[Proposition 2.2.1]{ScholzeWeinsteinModuli}, and we often abbreviate $(\mathfrak{X}^\mathrm{ad})^\lozenge$ to $\mathfrak{X}^\lozenge$. If $X$ is a scheme over $\spec \zp$, then we write $X^{\diamond}$ for $(\widehat{X})^{\lozenge}$, where $\widehat{X} = X \times_{\spec \zp} \spf \zp$ is the $p$-adic completion of $X$. \smallskip

For a formal scheme over $\spf \zp$, we define $\mathfrak{X}^{\lozenge,\mathrm{pre}}$ to be the presheaf on $\perf$ sending $(R,R^+)$ to the equivalence class of triples $(R^\sharp, R^{\sharp+}, f)$, where
 $(R^\sharp, R^{\sharp+})$ is an untilt of $(R, R^+)$ and
 $f \colon \spf R^{\sharp+} \to \mathfrak{X}$ is a morphism of formal schemes, see \cite[Definition 2.1.3]{DvHKZIgusaStacks}. The following lemma is \cite[Lemma 2.1.4]{DvHKZIgusaStacks}. 
\begin{Lem} \label{Lem:DiamondOfFormalScheme}
Let $\mathfrak{X}$ be a formal scheme over $\spf \zp$, which locally admits a finitely generated ideal of definition. Then $\mathfrak{X}^{\lozenge} / \spd \zp$ is the analytic sheafification of the presheaf on $\perf$ given by $\mathfrak{X}^{\lozenge,\mathrm{pre}}$.
\end{Lem}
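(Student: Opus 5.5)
The plan is to compare the two presheaves on $\perf$ directly and then invoke the general fact that sheafification of a presheaf agrees with sheafification of any subpresheaf which is "locally everything." By definition, $\mathfrak{X}^{\lozenge}$ sends $S$ to pairs $(S^\sharp, f)$ with $f\colon S^\sharp \to \mathfrak{X}^{\mathrm{ad}}$ a morphism of adic spaces. There is a natural map $\mathfrak{X}^{\lozenge,\mathrm{pre}} \to \mathfrak{X}^{\lozenge}$. Given $f\colon \spf R^{\sharp+} \to \mathfrak{X}$, we apply $(-)^{\mathrm{ad}}$ and restrict along the open immersion $\spa(R^\sharp,R^{\sharp+}) \subset (\spf R^{\sharp+})^{\mathrm{ad}}$, which is the analytic locus. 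Since $\mathfrak{X}^{\lozenge}$ is a v-sheaf by \cite[Lemma 18.1.1]{ScholzeWeinsteinBerkeley}, and in particular an analytic sheaf, this map factors through the analytic sheafification. It therefore suffices to show two things: that the map is injective on sections, and that it is locally surjective for the analytic topology.

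For injectivity, two maps $f_1,f_2\colon \spf R^{\sharp+} \to \mathfrak{X}$ that agree on $\spa(R^\sharp,R^{\sharp+})$ are considered. I would first reduce to $\mathfrak{X}=\spf A$ affine with finitely generated ideal of definition $I$. Agreement on the analytic locus gives equality of the induced continuous maps $A \to R^{\sharp}$. Because $R^{\sharp+}\subset R^\sharp$, the two maps $A\to R^{\sharp+}$ coincide. If $\mathfrak{X}$ is not affine, I would check that the images of the topological points of $\spf R^{\sharp+}$ are determined by the adic map, using that every point of $\spec R^{\sharp+}/\varpi$ specializes from an analytic point. This reduces the non-affine case to the affine one.

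For local surjectivity, take $f\colon \spa(R^\sharp,R^{\sharp+}) \to \mathfrak{X}^{\mathrm{ad}}$. Cover $\mathfrak{X}$ by affines $\spf A_i$. The preimages of $(\spf A_i)^{\mathrm{ad}}$ are open and cover $\spa(R^\sharp,R^{\sharp+})$. I would refine this cover by rational opens $U_j=\spa(R_j,R_j^+)$, which remain affinoid perfectoid. On each $U_j$ the map lands in $(\spf A)^{\mathrm{ad}}$, so it corresponds to a continuous map $A \to R_j^{+}$ by the universal property of $(\spf A)^{\mathrm{ad}}$ for maps from affinoid adic spaces: a map $\spa(B,B^+)\to \spa(A,A)$ is a continuous map $A\to B^+$. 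Since $I$ is finitely generated and maps to topologically nilpotent elements, this map is continuous for the $I$-adic topology on $A$. Because $R_j^+$ is $\varpi$-adically complete, it gives a map $\spf R_j^{\sharp+} \to \spf A \subset \mathfrak{X}$, and hence a section of $\mathfrak{X}^{\lozenge,\mathrm{pre}}$ over $U_j$ lifting $f|_{U_j}$.

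The main obstacle is the local surjectivity step, specifically making sure that the analytic cover can be chosen by rational subsets compatible with the given affine cover of $\mathfrak{X}$. This requires that the preimage of $(\spf A_i)^{\mathrm{ad}}$ be open and quasi-compact enough to be refined by rationals. Quasi-compactness of $\spa(R^\sharp,R^{\sharp+})$ handles this. The compatibility with untilts is formal, since everything is carried out over the fixed untilt $S^\sharp$.
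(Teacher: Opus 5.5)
Your argument is correct. The paper gives no proof of its own and simply quotes \cite[Lemma 2.1.4]{DvHKZIgusaStacks}. Your argument is the standard proof of that statement: reduce to the affine case, where $\operatorname{Hom}(\spa(R^\sharp,R^{\sharp+}),\spa(A,A))=\operatorname{Hom}_{\mathrm{cont}}(A,R^{\sharp+})=\operatorname{Hom}(\spf R^{\sharp+},\spf A)$ shows that $\mathfrak{X}^{\lozenge,\mathrm{pre}}$ already agrees with $\mathfrak{X}^{\lozenge}$, and then glue.

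The step you flag as the main obstacle is harmless, since rational subsets form a basis of any affinoid adic space. The input that genuinely needs a word is the surjectivity of specialization $|\spa(R^\sharp,R^{\sharp+})|\to|\spec R^{\sharp+}/\varpi|$ in your non-affine injectivity step. It does hold: given a prime $\mathfrak{p}\ni\varpi$, take a minimal prime $\mathfrak{q}\subseteq\mathfrak{p}$, which automatically avoids the non-zero-divisor $\varpi$, and a valuation ring dominating $(R^{\sharp+}/\mathfrak{q})_{\mathfrak{p}}$. Then pass to the horizontal specialization of the resulting valuation $v$ to the convex hull of $v(\varpi)$; this valuation is continuous and still has center $\mathfrak{p}$.
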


\subsubsection{The Fargues--Fontaine curve} For any perfectoid space $S$ over $\fp$, we write $S \bdtimes \zp$ for the analytic adic space whose associated v-sheaf is $S^\lozenge \times \spd \zp$. This comes equipped with a Frobenius $\varphi$, and any untilt $S^\sharp$ of $S$ determines a closed Cartier divisor $S^\sharp \hookrightarrow S \bdtimes \spa\zp$, see \cite[Proposition 11.3.1]{ScholzeWeinsteinBerkeley}. 

For $S$ in $\perf$ we define $\mathcal{Y}(S) = S \bdtimes \zp \setminus \{p = 0\}$, and for any interval $I \subset [0,\infty)$ we have an open subset $\mathcal{Y}_I(S)$ of $S\bdtimes \spa\zp$, see \cite[Section 12.2]{ScholzeWeinsteinBerkeley}. In particular $\mathcal{Y}_{[0,\infty)}(S) = S \bdtimes \spa\zp$ and we also denote $\mathcal{Y}_{(0,\infty)}(S)$ by $\mathcal{Y}(S)$. 

\subsubsection{} For any $S$ in $\perf$, the relative adic Fargues--Fontaine curve over $S$ is the quotient
\begin{align}
    X_S = \mathcal{Y}(S) / \varphi^\mathbb{Z},
\end{align}
see \cite[beginning of Chapter II]{FarguesScholze}. Let $G$ be a reductive group over $\qp$. Following \cite{FarguesScholze}, we denote by $\bun_G(S)$ the groupoid of $G$-torsors on $X_S$. By \cite[Theorem~III.0.2]{FarguesScholze}, the presheaf of groupoids $\bun_G$ on $\perf$ sending $S$ to $\bun_G(S)$ is a small v-stack. The following lemma is \cite[Proposition 6.3.11]{DvHKZIgusaStacks}. 
\begin{Lem} \label{Lem:GStructurePreservingClosedPerfectoid}
Let $G \to H$ be a closed immersion of connected reductive groups over $\qp$. Then the natural map
\begin{align}
    \bung \times_{\bung} \bung \to \bung \times_{\bun_{H}} \bung
\end{align}
is representable in closed immersions.
\end{Lem}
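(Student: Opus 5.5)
The lemma says: given two $G$-torsors on a relative Fargues--Fontaine curve and an isomorphism of the induced $H$-torsors, the locus where this isomorphism comes from a $G$-isomorphism is closed. My plan is to reduce this to the vanishing locus of a section of a vector bundle on $X_S$.

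\textbf{Identifying the fiber.} Fix $S\in\perf$ and a point of $\bung\times_{\bun_H}\bung$ over $S$. This is a pair of $G$-torsors $\mathcal{E}_1,\mathcal{E}_2$ on $X_S$ with an isomorphism $\phi:\mathcal{E}_1\times^G H\isom \mathcal{E}_2\times^G H$. The fiber of the diagonal over this point is the functor on $\perf_S$ sending $T\to S$ to the set of $G$-isomorphisms $\mathcal{E}_{1,T}\isom\mathcal{E}_{2,T}$ inducing $\phi_T$. Such an isomorphism is unique if it exists, since $G\to H$ is a monomorphism. So the fiber is a subfunctor of $S$, and it remains to show it is represented by a closed subspace of $S$.

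\textbf{Translating into sections of a vector bundle.} Put $\mathcal{F}=\mathcal{E}_2\times^G H$. The $G$-torsors $\mathcal{E}_2$ and $\phi(\mathcal{E}_1)$ are two reductions of $\mathcal{F}$ to $G$. These correspond to two sections $s_1,s_2$ of $\mathcal{F}\times^H(H/G)\to X_S$. The fiber is the locus of $T\to S$ on which $s_1=s_2$ over all of $X_T$.

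Since $G$ is reductive, $H/G$ is affine by Matsushima's criterion. Hence there is a finite-dimensional algebraic representation $W$ of $H$ and a vector $w\in W$ such that $h\mapsto hw$ induces a closed immersion $H/G\hookrightarrow W$. Consequently $s_1,s_2$ become sections of the vector bundle $\mathcal{W}=\mathcal{F}\times^H W$ on $X_S$. Because the orbit map is injective on $H/G$, the equality $s_1=s_2$ holds exactly where the section $s=s_1-s_2\in H^0(X_S,\mathcal{W})$ vanishes. So the fiber is the functor of $T\to S$ such that $s|_{X_T}=0$.

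\textbf{Main obstacle: closedness of the vanishing locus.} The key step is to show that this vanishing locus is closed in $S$. My first approach is to use the Banach--Colmez space $\mathcal{BC}(\mathcal{W})\to S$ of Fargues--Scholze. It represents $T\mapsto H^0(X_T,\mathcal{W}_T)$, and $s$ is a section of it. Our locus is the pullback of the zero section along $s$. It therefore suffices that the zero section of $\mathcal{BC}(\mathcal{W})\to S$ is a closed immersion, i.e.\ that $\mathcal{BC}(\mathcal{W})$ is separated over $S$.

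This can be checked v-locally on $S$. Twisting by $\mathcal{O}(n)$, we may assume $\mathcal{W}\otimes\mathcal{O}(n)$ has positive slopes. Then $s$ vanishes if and only if $s\otimes t$ vanishes for a suitable nonvanishing section $t$ of $\mathcal{O}(n)$, for instance one that is invertible on a Frobenius fundamental domain after pulling back to $\mathcal{Y}(S)$.

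If this twisting route is awkward, I will argue directly on $\mathcal{Y}_{[1,p]}(S)$, which is affinoid over $S$, using the following fact. A function $f$ on the sousperfectoid space $\mathcal{Y}_I(S)$ vanishes over $T\subset S$ exactly when all of its finitely many coefficients with respect to a presentation vanish. Each coefficient gives a function on $S$, and the common zero locus of a family of functions $\{|g_i|=0\}$ is closed in $S$. This last step, making the zero locus of a function on $\mathcal{Y}_I(S)$ descend to a closed condition on $S$, is where I expect the real work to lie.
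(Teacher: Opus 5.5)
The paper does not prove this lemma; it only cites \cite[Proposition 6.3.11]{DvHKZIgusaStacks}. Your proposal is a correct, self-contained proof once one step in the twisting argument is repaired (second paragraph).

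\textbf{Comparison.} The closest argument in the paper is its proof of the perfect analogue, Lemma \ref{Lem:GStructurePreservingClosedPerfect}. There the underlying torsors over $W(R)[1/p]$ are trivialized v-locally, and the claim reduces to $LG \to LH$ being a closed immersion. That does not carry over verbatim to $X_S$: $G$-bundles on $X_S$ cannot be trivialized v-locally on $S$, because their isomorphism class at geometric points is unchanged by pullback (e.g.\ $\mathcal{E}_b$ with $b \neq 1$). You instead realize $G$ as the stabilizer of a vector $w$ in an $H$-representation $W$, with $H\cdot w \cong H/G$ closed by Matsushima. This turns the fibre of the relative diagonal into the zero locus of the section $\phi_*(w_{\mathcal{E}_1}) - w_{\mathcal{E}_2}$ of $\mathcal{E}_2 \times^G W$. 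Closedness of that zero locus then follows because $\mathcal{BC}$ of a bundle with everywhere positive slopes is partially proper, hence separated, over $S$ (Fargues--Scholze, Prop.~II.2.5), so its zero section is a closed immersion. Your route costs this input on Banach--Colmez spaces plus a twisting step; the paper avoids both by deferring to the reference.

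\textbf{The step to fix.} There is no nonvanishing section of $\mathcal{O}(n)$ for $n>0$. Every nonzero section vanishes along a degree-$n$ divisor, and its pullback to $\mathcal{Y}(S)$ has zeros in every Frobenius fundamental domain. What you actually need is that multiplication by $t$ is injective on $\mathcal{W}|_{X_T}$ for every $T \to S$, so that $s|_{X_T}=0$ if and only if $(ts)|_{X_T}=0$. This holds whenever $t$ cuts out a closed Cartier divisor universally. For example, take $t = t_1^n$, where $t_1 \in H^0(X_S,\mathcal{O}(1))$ defines the divisor of an untilt $S^\sharp \subset X_S$. Its base change to $X_T$ is the Cartier divisor $T^\sharp$, so $t_1$ stays a non-zero-divisor. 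Such a $t_1$ exists pro-\'etale locally on $S$, and this suffices because being a closed immersion is v-local on the target. Also take $S$ affinoid, so the slopes of $\mathcal{W}$ are bounded below and a single $n$ makes $\mathcal{W}(n)$ everywhere positive.

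\textbf{The fallback.} Your alternative via ``finitely many coefficients'' of functions on $\mathcal{Y}_I(S)$ is not right as stated, since such functions are infinite series. With the fix above it is not needed.
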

\begin{Lem} \label{Lem:RepresentableClosedImmersionPerfectoid}
    Let $G \to H$ be a closed immersion of connected reductive groups over $\qp$ and let $\mu$ be a $G(\qpbar)$-conjugacy class of cocharacters with reflex field $E$; Let $\mu_{H}$ be its image in $H$. If $X \to \bun_{G}$ is a v-sheaf, then the natural map
    \begin{align}
        X \times_{\bung} \grgmu \to X \times_{\bun_{H}} \operatorname{Gr}_{H, -\mu_H,E}
    \end{align}
    is representable in closed immersions.
\end{Lem}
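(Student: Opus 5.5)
The plan is to reduce the statement to Lemma \ref{Lem:GStructurePreservingClosedPerfectoid} by rewriting both sides as fiber products over the stacks $\bung$ and $\bun_H$. Recall that $\grgmu$ (over $\spd E$) comes with two maps to $\bung$: one sends a modification to the trivial bundle, the other to the modified bundle. The map to $X$ is along the first, i.e. $X \times_{\bung} \grgmu$ is formed using the map $\grgmu \to \bung$ recording the source (trivial) $G$-bundle together with the modification. The first step is to use the standard description of the $B_{\mathrm{dR}}^+$-affine Grassmannian as the moduli of pairs of $G$-bundles with a modification at the untilt. This gives
\begin{align}
    \operatorname{Gr}_{G,E} \simeq \spd E \times_{\bung} \mathcal{H}ecke_G,
\end{align}
where the base change is along the trivial bundle. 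Hence $X \times_{\bung} \operatorname{Gr}_{G,E}$ is the moduli of a point of $X$, a $G$-bundle $\mathcal{E}_2$, and an isomorphism of $\mathcal{E}_X$ with $\mathcal{E}_2$ away from the untilt $S^\sharp$, bounded by $-\mu$. The analogous description holds for $H$.

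The second step is to separate the map into two pieces:
\begin{align}
    X \times_{\bung} \grgmu \to X \times_{\bung} \operatorname{Gr}_{G,E} \times_{\operatorname{Gr}_{H,E}} \operatorname{Gr}_{H,-\mu_H,E} \to X \times_{\bun_H} \operatorname{Gr}_{H,-\mu_H,E}.
\end{align}
The first piece is a closed immersion. The reason is that $\operatorname{Gr}_{G,-\mu} \subset \operatorname{Gr}_{G,E}$ is closed, and $\operatorname{Gr}_{G,E} \to \operatorname{Gr}_{H,E}$ is a closed immersion because $H/G$ is affine, via the usual $B_{\mathrm{dR}}^+$-lattice argument. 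Together these give $\operatorname{Gr}_{G,-\mu} = \operatorname{Gr}_{G,E} \cap \operatorname{Gr}_{H,-\mu_H}$ as closed subfunctors. The closedness of $\operatorname{Gr}_{G,-\mu}$ in $\operatorname{Gr}_G \times_{\operatorname{Gr}_H} \operatorname{Gr}_{H,-\mu_H}$ can be checked on geometric points via Schubert cells.

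The third step is the main point: identify the second map as a base change of
\begin{align}
    \bung \times_{\bung} \bung \to \bung \times_{\bun_H} \bung.
\end{align}
A point of the target of the second map consists of a point of $X$ (with $G$-bundle $\mathcal{E}_1$), an $H$-bundle $\mathcal{F}_2$, and a modification $\mathcal{E}_1 \times^G H \dashrightarrow \mathcal{F}_2$ bounded by $\mu_H$. Lifting this to the source means reducing $\mathcal{F}_2$ to a $G$-bundle so that the modification comes from $G$. Equivalently, one applies the diagonal statement to the pair $(\mathcal{E}_1, \mathcal{E}_2)$, where $\mathcal{E}_2$ is obtained by gluing away from the divisor. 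To avoid gluing, I would rather argue directly on the curve, with $S$ affinoid perfectoid:

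\begin{itemize}
\item Over $X_S \setminus S^\sharp$, the $H$-bundle $\mathcal{F}_2$ inherits the $G$-reduction $\mathcal{E}_1$.
\item The locus where this reduction extends across $S^\sharp$ is the locus where the section of $\mathcal{F}_2/G$ (an affine bundle) extends over the completed local ring $B_{\mathrm{dR}}^+$.
\item The condition that this section be integral at $S^\sharp$ is exactly what the $\operatorname{Gr}_G \subset \operatorname{Gr}_H$ closedness expresses.
\item Once the section extends, it is unique, since $X_S \setminus S^\sharp$ is dense in the relevant sense.
\end{itemize}

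So the map is injective, and its image is closed by the lattice condition.

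The expected obstacle is the third step: making precise that "reduction extends across the divisor" defines a closed subfunctor v-locally on $S$. I would first try Beauville–Laszlo gluing for $B_{\mathrm{dR}}^+$ (available for v-sheaves by Fargues–Scholze). This reduces the question to the closed immersion $\operatorname{Gr}_{G,E} \to \operatorname{Gr}_{H,E}$ base changed along $X \times_{\bung} \bung \times_{\bun_H} \ldots$. Lemma \ref{Lem:GStructurePreservingClosedPerfectoid} is then used to handle uniqueness of the $G$-structure on $\mathcal{E}_2$.
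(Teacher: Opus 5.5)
Your ingredients are the right ones: Lemma~\ref{Lem:GStructurePreservingClosedPerfectoid}, closedness of $\operatorname{Gr}_{G,E}\to\operatorname{Gr}_{H,E}$, and closedness of Schubert cells. However, the factorization you chose creates the obstacle you flag, and the identification in your third step is false.

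Your second arrow $X \times_{\bung} (\operatorname{Gr}_{G,E}\times_{\operatorname{Gr}_{H,E}}\operatorname{Gr}_{H,-\mu_H,E}) \to X\times_{\bun_H}\operatorname{Gr}_{H,-\mu_H,E}$ cannot be a base change of $\bung\times_{\bung}\bung\to\bung\times_{\bun_H}\bung$. Its target carries only one $G$-bundle (from $X$) and one $H$-bundle (from $\operatorname{Gr}_{H}$), so it has no map to $\bung\times_{\bun_H}\bung$. This is what forces you into extending reductions across $S^\sharp$ and into Beauville--Laszlo gluing, which you leave as a sketch.

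The missing step is to change the base stack \emph{before} enlarging the Grassmannian:
\[
X\times_{\bung}\grgmu \longrightarrow X\times_{\bun_H}\grgmu \longrightarrow X\times_{\bun_H}\operatorname{Gr}_{H,-\mu_H,E}.
\]
\begin{itemize}
\item The first arrow is the base change of the map in Lemma~\ref{Lem:GStructurePreservingClosedPerfectoid} along $X\times_{\bun_H}\grgmu\to\bung\times_{\bun_H}\bung$. That map records the $G$-bundle from $X$, the $G$-bundle from $\grgmu$, and the identification of their $H$-pushouts.
\item The second arrow is the base change of the closed immersion $\grgmu\to\operatorname{Gr}_{H,-\mu_H,E}$.
\end{itemize}
This is the paper's two-line proof, and nothing on $X_S$ needs to be analysed. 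Your own second arrow can be repaired the same way: factor it through $X\times_{\bun_H}(\operatorname{Gr}_{G,E}\times_{\operatorname{Gr}_{H,E}}\operatorname{Gr}_{H,-\mu_H,E})$ and use that $\operatorname{Gr}_{G,E}\to\operatorname{Gr}_{H,E}$ is a closed immersion.

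There are two smaller errors.

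First, the map $\grgmu\to\bung$ in the fiber product is the Beauville--Laszlo map, which sends a modification of the trivial bundle to the modified bundle. So a point of $X\times_{\bung}\operatorname{Gr}_{G,E}$ is a point $x$ of $X$ together with a modification at $S^\sharp$ from the \emph{trivial} $G$-bundle to the bundle of $x$. Your description, with an arbitrary $\mathcal{E}_2$, describes $X\times_{\bung}\mathrm{Hck}_G$ instead. It drops exactly the datum that Lemma~\ref{Lem:GStructurePreservingClosedPerfectoid} controls. In your bullet points, once the $G$-reduction extends across $S^\sharp$, you must still impose that the extended reduction of the trivial $H$-bundle is the standard one. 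This is a closed condition, not a uniqueness statement, and the lattice condition alone does not give it.

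Second, the equality $\operatorname{Gr}_{G,-\mu}=\operatorname{Gr}_{G,E}\cap\operatorname{Gr}_{H,-\mu_H,E}$ fails in general. Take the diagonal torus $T\subset\mathrm{GL}_2$ and $\mu=(1,0)$: the component of $\operatorname{Gr}_T$ indexed by $(0,-1)$ also lands in $\operatorname{Gr}_{\mathrm{GL}_2,-\mu_H}$. This is harmless, since the intersection is a subfunctor of $\operatorname{Gr}_{G,E}$, so $\operatorname{Gr}_{G,-\mu}$ is closed in it anyway.
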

\begin{proof}
We factor the map of the lemma as
\begin{align}
    X \times_{\bung} \grgmu \to X \times_{\bun_{H}} \operatorname{Gr}_{G, -\mu} \to X \times_{\bun_{H}} \operatorname{Gr}_{H, -\mu_{H},E}.
\end{align}
The second map is a closed immersion because $\grgmu \to \operatorname{Gr}_{H, -\mu_H,E}$ is a closed immersion (being induced from a closed immersion of classical flag varieties). The first map sits in a $2$-Cartesian diagram
\begin{equation}
    \begin{tikzcd}
        X \times_{\bung} \grgmu \arrow{r} \arrow{d} & X \times_{\bun_{H}} \grgmu \arrow{d} \\
        \bung \times_{\bung} \bung \arrow{r} & \bung \times_{\bun_{H}} \bung,
    \end{tikzcd}
\end{equation}
and is thus a closed immersion by Lemma \ref{Lem:GStructurePreservingClosedPerfectoid}; this completes the proof.
\end{proof}

\subsubsection{} Let $B(G)$ be the set of $\sigma$-conjugacy classes in $G(\qpbr)$, equipped with the topology coming from the \emph{opposite} of the partial order defined in \cite[Section 2.3]{RapoportRichartz}. By \cite[Theorem 1]{Viehmann}, there is a homeomorphism
\begin{align}
    |\bun_G| \to B(G). 
\end{align}
If $\mu$ is a $G(\qpbar)$-conjugacy class of minuscule cocharacters, we let $\bgmu \subset B(G)$ be the set of $\mu^{-1}$-admissible elements, as defined in \cite[Section 1.1.5]{KMPS}; note that this set is closed in the partial order and thus defines an open substack
$$\bungmu \subset \operatorname{Bun}_G,$$ via \cite[Proposition 12.9]{EtCohDiam}. We use the additive notation $-\mu$ instead of the multiplicative notation $\mu^{-1}$ because $-\mu'$ looks better than $(\mu')^{-1}$.

\subsubsection{} Let $G$ be as above and let $\mathcal{G}$ be a (quasi-)parahoric model of $G$ over $\zp$ in the sense of \cite[Section 2.2]{PappasRapoportRZ}. We recall the stack
\begin{align}
    \shtg \to \spd \zp
\end{align}
defined in \cite[Section 3.1.2]{DanielsVHKimZhangCompanion}. Now let $\mu$ be as above and let $E \subset \qpbar$ be the reflex field of $\mu$. Then there is a closed substack
\begin{align}
    \shtgmu \subset \shtg \times_{\spd \zp} \spd \mathcal{O}_E,
\end{align}
see \cite[Section 3.1.2]{DanielsVHKimZhangCompanion}. We further recall the open and closed substack
\begin{align}
    \shtgmuone \subset \shtgmu
\end{align}
of \cite[Section 3.3.8]{DanielsVHKimZhangCompanion}. We further recall the integral Beauville--Laszlo map $\operatorname{BL}^{\circ}:\shtg \to \bung$, under which $ \shtgmuone$ maps to $\bungmu$ by \cite[Proposition 3.1.10]{DanielsVHKimZhangCompanion}.

\subsection{Perfect geometry} \label{Sub:PerfectPrelim} Let $k$ be a perfect field and let $\affperf_{k}$ denote the category of affine perfect schemes over $k$. We will equip this with the Grothendieck (pre-)topology where covers are given by v-covers in the sense of \cite[Definition 2.1]{BhattScholze}. By \cite[Theorem 4.1]{BhattScholze}, perfect $\fp$-schemes define v-sheaves on $\affperf$, and for $X$ a scheme over $\fp$ we will write $X^{\mathrm{perf}}$ for the perfection of $X$ (the inverse limit over the Frobenius of $X$). When $k=\fp$, we will write $\affperf$ for $\affperf_{\fp}$.

In this section we will recall the theory of Witt-vector shtukas introduced in \cite{Zhu1},\cite{XiaoZhu},\cite{ShenYuZhang}. In order to work in maximal generality, we follow the presentation of \cite[Section 3]{DvHKZIgusaStacks}.

\subsubsection{} Let $G$ be a connected reductive group over $\qp$ with (quasi-)parahoric model $\mathcal{G}$ and let $\mu$ be a $G(\qpbar)$-conjugacy class of cocharacters with reflex field $E \subset \qpbar$. We recall the Witt vector affine flag variety $\grgloc$ from \cite[Section 3.1.2]{DvHKZIgusaStacks}; it is representable by an ind-(perfectly projective scheme) by \cite[Theorem 1.1]{BhattScholze}. 

Let $\gisoc$ denote the stack on $\affperf$ sending $R$ to the groupoid of (\'etale)  $G$-torsors $P$ over $\spec W(R)[1/p]$ together with an isomorphism
\begin{align}
    \beta:\sigma^{\ast}P \to P;
\end{align}
this is a v-stack, see \cite[Proposition 6.3]{GleasonIvanovZillinger}. There is a homeomorphism $\lvert\gisoc\rvert \simeq B(G)$, where now $B(G)$ is equipped with the order topology for the partial order defined in \cite[Section 2.3]{RapoportRichartz}. We define the closed substack $\gisocmu \subset \gisoc$ to be the closed substack corresponding to $\bgmu \subset B(G)$.

\begin{Lem} \label{Lem:GStructurePreservingClosedPerfect}
    Let $G \to H$ be a closed immersion of connected reductive groups over $\qp$. The natural map
    \begin{align}
        \gisoc \times_{\gisoc} \gisoc \to \gisoc \times_{\hisoc} \gisoc
    \end{align}
    is representable in closed immersions.
\end{Lem}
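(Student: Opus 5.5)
The plan is to mirror the proof of the perfectoid analogue, Lemma \ref{Lem:GStructurePreservingClosedPerfectoid}, in the setting of perfect schemes. Fix a perfect affine scheme $\spec R$ with a map to $\gisoc \times_{\hisoc} \gisoc$. This is given by two $G$-isocrystals $(P_1,\beta_1)$ and $(P_2,\beta_2)$ over $W(R)[1/p]$, together with an isomorphism $f: P_1 \times^G H \isom P_2 \times^G H$ of $H$-torsors compatible with the Frobenius structures. The fiber product with the diagonal is the subfunctor of $\spec R$ on which $f$ comes from an isomorphism of $G$-torsors. So we must show that the locus of $R \to R'$ over which $f$ preserves $G$-structure is represented by a closed subscheme of $\spec R$.

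First I would translate the condition into one on sections. Since $G \to H$ is a closed immersion and $H/G$ is affine (by Matsushima, as $G$ is reductive), $G$-reductions of an $H$-torsor $Q$ correspond to sections of $Q/G = Q \times^H (H/G)$. Hence $f$ preserves $G$-structure exactly when the two sections $s_1, s_2$ of the affine scheme $Y := (P_2 \times^G H)/G$ over $W(R)[1/p]$ coincide. These are $s_2$, the tautological section, and $s_1$, the image under $f$ of the tautological section of $P_1\times^G H$. Choosing by Chevalley a representation $H \to GL(V)$ and a vector $v$ whose stabilizer is $G$, the condition becomes the vanishing of finitely many elements of a twisted form of $V$ over $W(R)[1/p]$. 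After passing to a v-cover (or étale cover) of $\spec R$ trivialising the torsors, these are finitely many elements $a_j \in W(R)[1/p]$. Frobenius-equivariance of $f$ makes the set of $a_j$ stable under $\sigma$ up to the invertible $\beta$.

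Next I would show that, for $a \in W(R)[1/p]$, the locus $\{R \to R' : a \mapsto 0 \in W(R')[1/p]\}$ is closed. Multiply by a power of $p$ so that $a \in W(R)$ and write $a = \sum [x_i]p^i$ with $x_i \in R$. Since $R'$ is perfect, $W(R')$ is $p$-torsionfree, so $a \mapsto 0$ in $W(R')[1/p]$ if and only if $a \mapsto 0$ in $W(R')$. This holds if and only if all $x_i$ map to $0$, which is the closed condition cut out by the ideal $(x_i)_i$ (perfected). Infinitely many conditions still give a closed subscheme. Finally I would descend: closed immersions satisfy v-descent for perfect schemes by \cite{BhattScholze}, so the closed subschemes defined on the cover glue to a closed subscheme of $\spec R$.

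The main obstacle is that $G$-torsors over $W(R)[1/p]$ need not be trivial étale-locally on $\spec R$, since they are torsors on the punctured spectrum of $W(R)$. I would handle this by arguing directly with the twisted representation $P_2 \times^H V$, a vector bundle on $\spec W(R)[1/p]$, and with its Frobenius structure. Alternatively, one could work v-locally on $\spec R$ with $R$ a product of perfect valuation rings, where the torsors become trivial, and then use v-descent of closed immersions. If this fails, I would fall back on transporting the statement from Lemma \ref{Lem:GStructurePreservingClosedPerfectoid} via the compatibility of $\gisoc$ with $\bung$ from \cite{GleasonIvanovZillinger}.
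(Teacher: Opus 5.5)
Your proposal is correct and is essentially the paper's proof. The paper also reduces v-locally, via \cite[Lemma 11.2, Proposition 11.5]{Anschutz} (your products of perfect valuation rings, rather than an \'etale cover), to the case where the underlying torsors are trivial. Then $f$ is an element $h\in H(W(R)[1/p])$, and the condition becomes $h\in G(W(R)[1/p])$.

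The paper then simply cites that $LG\to LH$ is representable in closed immersions \cite[Lemma 1.2.(ii)]{Zhu1}, which is what your Teichm\"uller-expansion argument reproves. Note that once the torsors are trivialized, the Matsushima/Chevalley step is unnecessary: the defining equations of $G$ in $H$ already supply the finitely many $a_j$.
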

\begin{proof}
We can identify the right hand side with the stack of triples $(P,Q,\alpha)$ where $P$ and $Q$ are $G$-isocrystals and $\alpha:P \times^{G} H \to Q \times^{G} H$ is an isomorphism of $H$-isocrystals. The left hand side is the stack of triples $(E,F,\beta)$ where $E$ and $F$ are $G$-isocrystals and $\beta:E \to F$ is an isomorphism of $G$-isocrystals. Thus we are asking that for a triple $(P,Q,\alpha)$ over a perfect scheme $R$, there is a closed subscheme $\spec B \subset \spec R$ where $\alpha$ uniquely comes from an isomorphism of $G$-isocrystals $\beta:P \to Q$. \smallskip 

The v-topology is subcanonical on perfect schemes, and v-covers are topological quotient maps (see \cite[Tag 0ETP]{stacks-project}); thus the property of being representable in closed immersions is v-local on the target. Thus by \cite[Lemma 11.2,Proposition 11.5]{Anschutz} we may assume that the underlying $G$-bundles of $P$ and $Q$ over $\spec W(R)[1/p]$ are trivial and identify them with $G$. Write $b_{P}, b_{Q} \in G(W(R)[1/p])$ for the elements corresponding to $\beta_{P}$ and $\beta_{Q}$, respectively. Recall the sheaf $LH$ on $\affperf$ which sends $R$ to $H(W(R)[1/p])$; it is represented by an ind-(perfect scheme), see \cite[Proposition 1.1]{Zhu1}. The set of isomorphisms $\alpha$ is given by the subfunctor
\begin{align}
    \{h \in LH : h^{-1} b_{P} \sigma(h) = b_{Q} \} \subset LH
\end{align}
while the set of isomorphisms $\beta$ is given by the subfunctor
\begin{align}
    \{g \in LG : g^{-1} b_{P} \sigma(g) = b_{Q}\} \subset LG.
\end{align}
The lemma now follows from the fact that $LG \to LH$ is representable in closed immersions, see \cite[Lemma 1.2.(ii)]{Zhu1}.
\end{proof}

\subsubsection{} Recall the stack of $\mathcal{G}$-shtukas $\shtlocg$. We further recall
\begin{align}
    \shtlocgmu \subset \shtlocg \times_{\spec \fp} \spec k_{E},
\end{align}
see \cite[Section 3.1.6]{DvHKZIgusaStacks}. Recall moreover the natural morphism $\shtlocg \to \gisoc$. If $\mathcal{G}$ is parahoric, then under this map $\shtlocgmu$ is sent to $\gisocmu$ (as explained in \cite[Section 3.2.2]{DvHKZIgusaStacks}). 

\begin{Lem} \label{Lem:IndRepresentableI}
Let $R$ be a perfect $\mathbb{F}_p$-algebra and let $\spec R \to \gisoc$ be a morphism corresponding to a $G$-isocrystal $(P,\Phi)$. If there is an \'etale cover $R \to R'$ such that $P \times_{\spec W(R)} \spec W(R')[\tfrac{1}{p}]$ is trivial, then the fiber product
\begin{align}
    \shtlocg \times_{\gisoc} \spec R
\end{align}
is an ind-(perfectly proper and perfectly finitely presented algebraic space) over $\spec R$.
\end{Lem}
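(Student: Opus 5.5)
We reduce to the case where the $G$-isocrystal $(P,\Phi)$ is trivialized as a $G$-torsor. The property of being an ind-(perfectly proper and perfectly finitely presented algebraic space) over $\spec R$ is étale local on $\spec R$. Indeed, perfectly finitely presented algebraic spaces and perfectly proper morphisms descend along étale covers, and the transition maps of an ind-system can be descended as closed immersions. So after replacing $R$ by $R'$ we may assume $P$ is the trivial $G$-torsor over $\spec W(R)[\tfrac1p]$. Then $\Phi$ is given by an element $b \in G(W(R)[\tfrac1p]) = LG(R)$.

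Next we describe the fiber product explicitly. An $R$-point of $\shtlocg \times_{\gisoc} \spec R$ consists of a $\mathcal{G}$-torsor $\mathcal{P}$ over $\spec W(R)$, an isomorphism $\phi : \sigma^\ast \mathcal{P}[\tfrac1p] \to \mathcal{P}[\tfrac1p]$, and an isomorphism of $G$-isocrystals $\mathcal{P}[\tfrac1p] \simeq (G, b\sigma)$. This is the same as a $\mathcal{G}$-lattice in the trivial $G$-torsor over $W(R)[\tfrac1p]$, that is, a point of the Witt vector affine flag variety $\grgloc$. No further condition is needed, since the Frobenius on $\mathcal{P}[\tfrac1p]$ is transported from $b\sigma$; the relative position of $\mathcal{P}$ and $\phi(\sigma^\ast\mathcal{P})$ is unconstrained because we are not restricting to $\shtlocgmu$. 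So we expect
\begin{align}
    \shtlocg \times_{\gisoc} \spec R \simeq \grgloc \times \spec R.
\end{align}
To verify this, we use the presentation $\shtlocg = [L^+\mathcal{G}\backslash LG]$ (up to the $\sigma$-twisted action), $\gisoc = [LG/_{\sigma} LG]$, and $\grgloc = LG/L^+\mathcal{G}$. The map sends a lattice $g L^+\mathcal{G}$ to the shtuka with matrix $g^{-1} b \sigma(g)$. We check that this identification is compatible with the $\sigma$-conjugation actions, and that the groupoid fibers are sets because the trivialization is part of the data.

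Finally, $\grgloc$ is an ind-(perfectly projective scheme) by \cite[Theorem 1.1]{BhattScholze}, as recalled above. Its base change to $\spec R$ is therefore ind-(perfectly proper and perfectly finitely presented). Descending along $R \to R'$ gives the claim over $R$, and the descended object is a priori an algebraic space, which is why the statement says algebraic spaces.

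The main obstacle is the descent step. We must check that the ind-presentation is compatible with the étale descent datum, so that the filtering by closed sub-objects descends. The natural ind-structure is by Schubert-type bounds on the relative position of $\mathcal{P}$ with respect to a fixed lattice. After trivializing, this fixed lattice changes by an automorphism of $P$ over $W(R'')[\tfrac1p]$, which is only locally bounded. We plan to handle this by working Zariski-locally on $\spec R'\otimes_R R'$ with quasi-compactness, so that each bounded piece maps into a larger bounded piece. The descent datum is then an isomorphism of ind-objects, and the descended sub-objects are perfectly finitely presented algebraic spaces that are perfectly proper over $\spec R$.
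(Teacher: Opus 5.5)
Your proposal is correct and follows the same route as the paper. Both arguments reduce by étale descent to the case where $P$ is trivial over $W(R')[\tfrac1p]$, identify $\shtlocg \times_{\gisoc} \spec R'$ with $\spec R' \times \grgloc$, and conclude from the Bhatt--Scholze ind-representability of the Witt vector affine flag variety; the paper treats the descent step as routine, and your remark that the transition element in $LG(R'\otimes_R R')$ is bounded by quasi-compactness is an elaboration of that same step rather than a different argument.
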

\begin{proof}
By \'etale descent, it suffices to prove that the basechange via $\spec R' \to \spec R$ is an ind-perfectly proper ind-scheme. But over $\spec R'$ there is an isomorphism
\begin{align}
    \shtlocg \times_{\gisoc} \spec R' \simeq \spec R' \times \operatorname{Gr}_{\mathcal{G}},
\end{align}
and the affine flag variety $\operatorname{Gr}_{\mathcal{G}}$ is an ind-(perfectly projective perfectly finitely presented scheme). 
\end{proof}
The following result is a direct consequence of Lemma \ref{Lem:IndRepresentableI}.
\begin{Cor} \label{Cor:IndRepresentableII}
Let $R$ be a perfect $\mathbb{F}_p$-algebra and let $\spec R \to \gisoc$ be a morphism corresponding to a $G$-isocrystal $(P,\Phi)$ over $\spec R$. If there is an \'etale cover $R \to R'$ such that $P \times_{\spec W(R)} \spec W(R')[\tfrac{1}{p}]$ is trivial, then the fiber product
\begin{align}
    \shtlocgmu \times_{\gisoc} \spec R
\end{align}
is an ind-(perfectly proper and perfectly finitely presented algebraic space) over $\spec R$.
\end{Cor}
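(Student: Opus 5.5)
The plan is to deduce the corollary from Lemma \ref{Lem:IndRepresentableI} by showing that $\shtlocgmu$ sits inside $\shtlocg \times_{\spec \fp} \spec k_E$ as a closed substack. The fiber product with $\spec R$ is then a closed sub-object of an ind-(perfectly proper, perfectly finitely presented) algebraic space. Concretely, write $\spec R \to \gisoc$ for the given map and $k_E$ for the residue field of $E$. There is a chain of maps
\begin{align}
    \shtlocgmu \times_{\gisoc} \spec R \hookrightarrow \left(\shtlocg \times_{\gisoc} \spec R\right) \times_{\spec \fp} \spec k_E \to \shtlocg \times_{\gisoc} \spec R.
\end{align}
The first map is a base change of the closed immersion $\shtlocgmu \subset \shtlocg \times_{\spec \fp} \spec k_E$ recalled above. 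The second map is finite étale, since $k_E/\fp$ is a finite extension of fields. By Lemma \ref{Lem:IndRepresentableI}, the target is an ind-(perfectly proper and perfectly finitely presented algebraic space) over $\spec R$.

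The key steps, in order, are as follows.

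First, base change along the finite étale map $\spec k_E \to \spec \fp$ preserves each term of the ind-system and keeps it perfectly proper and perfectly finitely presented over $\spec R$. So the middle object is again such an ind-object, now over $\spec R \otimes_{\fp} k_E$, and therefore over $\spec R$.

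Second, intersect the closed sub-object $\shtlocgmu \times_{\gisoc} \spec R$ with each term $X_i$ of the ind-presentation. Each intersection $Z_i = X_i \cap (\shtlocgmu \times_{\gisoc} \spec R)$ is a closed subspace of $X_i$, hence perfectly proper over $\spec R$. The sub-object is the filtered colimit of the $Z_i$.

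Third, check perfect finite presentation of each $Z_i$. This is the step I expect to be the only real obstacle, because a closed subspace of a perfectly finitely presented space need not be perfectly finitely presented in general. I would first work étale locally on $R$, passing to $R'$ where $P$ is trivial, so that the space becomes $\spec R' \times \operatorname{Gr}_{\mathcal{G}}$. There $\shtlocgmu$ corresponds to $\spec R' \times \mathcal{M}_{\mathcal{G},\mu}$, the product with the local model (the $\mu$-admissible locus) inside $\operatorname{Gr}_{\mathcal{G}} \otimes k_E$. This locus is a perfectly finitely presented closed subscheme: it is the perfection of a finite union of Schubert varieties. Hence $Z_i$ is perfectly finitely presented after the étale cover, and étale descent finishes the argument as in the proof of Lemma \ref{Lem:IndRepresentableI}.

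The definition in \cite[Section 3.1.6]{DvHKZIgusaStacks} may be given by a bounded relative position condition rather than directly as such a product. In that case I would use that the trivialization over $R'$ identifies the relative position with the position in $\operatorname{Gr}_{\mathcal{G}}$, which gives exactly the product description above.
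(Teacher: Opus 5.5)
Your reduction is the one the paper intends: the paper simply calls the corollary a direct consequence of Lemma \ref{Lem:IndRepresentableI}, and your Steps 1 and 2 are fine. The gap is in Step 3, which you rightly single out as the crux. There, the identification of the trivialized fiber with $\spec R'\times\mathcal{M}_{\mathcal{G},\mu}$ is false.

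After trivializing $P$ over $W(R')[\tfrac{1}{p}]$, write $\Phi=b\sigma$ with $b\in G(W(R')[\tfrac{1}{p}])$. A point of $\shtlocg\times_{\gisoc}\spec R'$ is a lattice $\mathcal{P}=g\cdot L^{+}\mathcal{G}$ with $g\in\operatorname{Gr}_{\mathcal{G}}$. The condition cutting out $\shtlocgmu$ bounds the relative position of $\mathcal{P}$ and $\Phi(\sigma^{\ast}\mathcal{P})=b\sigma(g)\cdot L^{+}\mathcal{G}$. That is, it asks that $g^{-1}b\sigma(g)\cdot L^{+}\mathcal{G}$ lie in the $\mu$-admissible locus $\mathcal{A}_{\mathcal{G},\mu}\subset\operatorname{Gr}_{\mathcal{G}}\otimes k_E$. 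It does not bound the position of $g$ itself, so your last paragraph is exactly where the argument breaks.

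The fiber is in fact the family of affine Deligne--Lusztig varieties $Z(b)_{\mu}$ (compare the proof of Proposition \ref{Prop:VSurjectiveNewtonMap}). It depends on $b$ and is typically not quasi-compact over $\spec R'$. For example, take $G=\mathrm{GL}_2$, $\mathcal{G}$ hyperspecial, $\mu=(1,0)$ and $b$ basic: the fiber over $\spec\fpbar$ is $\coprod_{\mathbb{Z}}\spec\fpbar$, not $\mathbb{P}^{1,\mathrm{perf}}$. Your description would also make the fiber product perfectly proper over $\spec R$. The statement, however, only gives an ind-object, and Proposition \ref{Prop:VSurjectiveNewtonMap} has to extract a quasi-compact closed $Z\subset Z(b)_{\mu}$ surjecting onto $\spec R$ for precisely this reason.

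The conclusion of Step 3 is still true, and the right argument needs no trivialization. The stack $\shtlocgmu$ is the pullback, along the relative-position map $\shtlocg\to[L^{+}\mathcal{G}\backslash\operatorname{Gr}_{\mathcal{G}}]$, of the closed substack $[L^{+}\mathcal{G}\backslash\mathcal{A}_{\mathcal{G},\mu}]$. Since $\mathcal{A}_{\mathcal{G},\mu}$ is a finite union of perfected Schubert varieties, it is a perfectly finitely presented closed subscheme of some perfectly projective $\operatorname{Gr}_{\mathcal{G},\le w}\otimes k_E$. So the inclusion of this substack is representable in perfectly finitely presented closed immersions. By base change, $\shtlocgmu\to\shtlocg\times_{\spec\fp}\spec k_E$ is a perfectly finitely presented closed immersion, and hence so is each $Z_i\to X_i$, which is what you need.
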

\subsubsection{} We call a perfect scheme $X=\spec R$ a \emph{strict comb} (cf. \cite[Definition 2.12]{GleasonIvanovZillinger}) if the set of closed points $X^c$ is closed within $X$, for every $y \in \pi_0(X)$ the corresponding connected component $X_{y} \subset X$ is isomorphic to $\spec V_y$ where $V_y$ is a valuation ring with algebraically closed fraction field, and the topological space $\pi_0(X)$ is extremally disconnected in the sense of \cite[Definition 2.4.4]{BhattScholzeProEtale}. By \cite[Theorem 1.8]{BhattScholzeProEtale}, strict combs are in particular w-contractible. 

\begin{Prop} \label{Prop:VSurjectiveNewtonMap}
    If $X=\spec R$ is a strict comb and $\mathcal{G}$ is parahoric, then the natural map $\shtlocgmu(\spec R) \to \gisocmu(\spec R)$ is essentially surjective. 
\end{Prop}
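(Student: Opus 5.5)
We reduce the essential surjectivity to a pointwise statement on connected components and then spread out using the strict comb structure. Let $(P,\Phi) \in \gisocmu(\spec R)$. Since $X=\spec R$ is w-contractible, every étale cover of $X$ splits. By \cite[Lemma 11.2, Proposition 11.5]{Anschutz}, $P$ becomes trivial over $W(R')[1/p]$ for some étale (after refining, v-local but we will use étale) cover $R \to R'$. Hence $P$ is already trivial over $W(R)[1/p]$, and we write $\Phi$ as $b\sigma$ for some $b \in G(W(R)[1/p])$. A lift to $\shtlocgmu(\spec R)$ amounts to finding $g \in LG(R)$ with $g^{-1} b \sigma(g) \in \mathcal{G}(W(R))\, p^{-\mu}$-admissible cells, i.e.\ a section of
\begin{align}
    Y:=\shtlocgmu \times_{\gisoc} \spec R \to \spec R .
\end{align}
By Corollary \ref{Cor:IndRepresentableII}, this is an ind-(perfectly proper, perfectly finitely presented algebraic space). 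Moreover, the $\mu$-boundedness condition cuts $Y$ out inside a single perfectly proper piece, namely the preimage of the admissible locus in $\operatorname{Gr}_{\mathcal{G}}$, locally over $\spec R$. So $Y \to \spec R$ is perfectly proper.

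\textbf{Step 1 (geometric points).} For every point $x$ of $X$ with algebraically closed residue field $k$, the fiber $Y_x$ is nonempty. This is the statement that an element $[b] \in \bgmu$ admits a representative in $\bigcup_{w \in \mathrm{Adm}(\mu^{-1})} \mathcal{G}(W(k))\dot w \mathcal{G}(W(k))$, i.e.\ that the corresponding affine Deligne--Lusztig variety $X(\mu,b)_{K}$ is nonempty. For parahoric $\mathcal{G}$ this is exactly He's theorem (the Kottwitz--Rapoport conjecture: $X(\mu,b)_K \neq \emptyset$ iff $[b] \in B(G,\mu)$), which we invoke. Note that since $\gisocmu$ is defined via the admissible set, the sign conventions match.

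\textbf{Step 2 (spreading over each component).} Fix a connected component $X_y=\spec V_y$ with $V_y$ a valuation ring with algebraically closed fraction field $K_y$. By Step 1 there is a $K_y$-point of $Y$. By the valuative criterion for the perfectly proper map $Y \times_X X_y \to X_y$, this extends to a section over $\spec V_y$. Here we use that perfectly proper algebraic spaces satisfy the existence part of the valuative criterion for valuation rings.

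\textbf{Step 3 (gluing over $\pi_0$).} It remains to pass from sections over each $X_y$ to a global section. Since $Y \to X$ is locally of perfectly finite presentation and $X_y=\lim_{U \ni y} U$ over clopen neighbourhoods of $y$ (as $\pi_0(X)$ is profinite and the components are the cofiltered intersections of clopens), each section over $X_y$ extends to a section over some clopen neighbourhood $U_y$. By compactness of $\pi_0(X)$ finitely many $U_y$ cover $X$, and we can refine them to a disjoint clopen cover and glue. Here the extremal disconnectedness of $\pi_0(X)$ is not even needed; only profiniteness is used.

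The main obstacle is Step 1 together with making the boundedness precise in Step 2. We must check that the fiber of $\shtlocgmu \to \gisoc$ at a geometric point is the union of the affine Deligne--Lusztig varieties for $\mathrm{Adm}(\mu^{-1})$, so that He's nonemptiness theorem applies on the nose. We must also check that $Y$ is perfectly proper rather than only ind-proper, so that the valuative criterion is available.
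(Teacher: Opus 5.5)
Your claim that $Y=\shtlocgmu\times_{\gisoc}\spec R$ is perfectly proper over $\spec R$ is false, so the check you single out as the main obstacle would fail. The boundedness condition bounds the relative position of $g$ and $b\sigma(g)$: it asks that $g^{-1}b\sigma(g)$ lie in the admissible double cosets. It does not put $g$ itself into a Schubert variety. Thus $Y$ is the \emph{preimage} of the admissible locus under $g\mapsto g^{-1}b\sigma(g)$, not its intersection with the affine flag variety. Its fibres are affine Deligne--Lusztig varieties, which in general are only locally perfectly of finite type. For example, take $R=\fpbar$, $\mu$ trivial and $b=1$; Lang's theorem for $\mathcal{G}$ then gives $Y(\fpbar)=G(\qp)/\mathcal{G}(\zp)$. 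This set is infinite and discrete as soon as $G(\qp)$ is noncompact (it is $\mathbb{Z}$ for $G=\mathbb{G}_m$).

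The error is not load-bearing. By Corollary \ref{Cor:IndRepresentableII}, $Y=\operatorname{colim}_n Y_n$ with each $Y_n$ perfectly proper and perfectly finitely presented over $\spec R$. The $K_y$-point from Step 1 factors through some $Y_n$, so apply the valuative criterion to $Y_n\times_X X_y\to X_y$. The resulting section over $X_y$ lies in $Y_n$, and the finite presentation of $Y_n$ is what lets you spread it out to a clopen neighbourhood in Step 3. Compactness of $\pi_0(X)$ then involves only finitely many stages, and the gluing goes through. One further point on Step 1: when $K_y\neq\fpbar$, first $\sigma$-conjugate $b_{K_y}$ into $G(\qpbr)$ before invoking He's theorem, which is a statement over $\fpbar$.

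With this repair your argument is correct, and it differs from the paper's at the key point. The paper trivializes $P$ via \cite[Lemma 6.4]{GleasonIvanovZillinger}. It then quotes \cite[Lemma 3.2.4]{DvHKZIgusaStacks} for a closed quasi-compact $Z\subset Y$ with $Z\to\spec R$ surjective --- exactly the uniform bound you were taking for free. This makes $Z\to\spec R$ a perfectly finitely presented, perfectly proper surjection, to which the general Lemma \ref{Lem:combhlocal} applies. Your Steps 2--3 are essentially the second half of the proof of that lemma, run on the ind-proper $Y$ one component, and hence one finite stage, at a time. Compactness of $\pi_0(X)$ makes a single surjecting stage unnecessary. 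Spreading out to clopen rather than Zariski neighbourhoods also removes the need for w-contractibility in the lifting step. The cost is an explicit appeal to He's nonemptiness theorem on every component, whereas the paper stays modular and packages the pointwise input into the cited lemma.

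Finally, for the triviality of $P$ over $W(R)[1/p]$, cite \cite[Lemma 6.4]{GleasonIvanovZillinger} directly, as the paper does. The paper uses \cite{Anschutz} only for v-local triviality, and w-contractibility alone does not split arbitrary v-covers. Your ``étale'' shortcut is therefore not justified as written.
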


First we recall the following lemma. 

\begin{Lem} \label{Lem:combwContractible}
    Let $\pi:Y \to X=\spec R$ be a pro-\'etale cover. If $X$ is a strict comb, then $\pi$ admits a section.
\end{Lem}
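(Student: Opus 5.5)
The plan is to reduce to the case of a single connected component and then use the fact that extremally disconnected profinite sets have no nontrivial covers. Write $\pi_0(X)$ for the (extremally disconnected, profinite) space of connected components, with each component $X_y=\spec V_y$, where $V_y$ is a valuation ring with algebraically closed fraction field.

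\textbf{Step 1: sections exist on each component.} Since $\pi$ is pro-\'etale, its restriction $\pi^{-1}(X_y)\to X_y$ is a pro-\'etale cover of $\spec V_y$. Every pro-\'etale cover of such a scheme (an absolutely integrally closed valuation ring, so strictly henselian at every point and w-local with closed points closed) admits a section. To see this, note that the pro-\'etale cover can be refined by an ind-\'etale, hence weakly \'etale, faithfully flat cover. By \cite[Lemma 2.2.9 / Theorem 1.8]{BhattScholzeProEtale}, $\spec V_y$ is w-contractible when $V_y$ has algebraically closed fraction field: it is w-local, its closed point subset is a single point (trivially extremally disconnected), and all local rings at closed points are strictly henselian. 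Hence the cover splits over $X_y$.

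\textbf{Step 2: spread out and glue.} More efficiently, I would directly invoke the cited fact that strict combs are w-contractible \cite[Theorem 1.8]{BhattScholzeProEtale}. The required hypotheses are:
\begin{enumerate}[(i)]
\item $X$ is w-local, i.e. $X^c$ is closed and every connected component has a unique closed point. Each component is $\spec V_y$ with $V_y$ local, and $X^c$ is closed by assumption.
\item The local rings at closed points are strictly henselian. This holds because a valuation ring with algebraically closed fraction field is strictly henselian, being normal with separably closed fraction field.
\item $\pi_0(X)\cong X^c$ is extremally disconnected, which holds by assumption.
\end{enumerate}
Then w-contractibility means, by definition, that every weakly \'etale surjection, and in particular every pro-\'etale cover $Y\to X$ (after replacing $Y$ by an affine pro-\'etale refinement, which is weakly \'etale), admits a section. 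A section of the refinement composed with the refinement map gives a section of $\pi$.

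\textbf{Main obstacle.} The main subtlety is that a pro-\'etale cover $Y\to X$ need not be affine, so it need not itself be weakly \'etale affine. I would handle this by choosing a finite disjoint union of affine opens of $Y$ covering $X$, which is possible since $X$ is quasi-compact. This union is an affine weakly \'etale surjection, and a section of it yields a section of $\pi$.
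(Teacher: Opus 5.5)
Your Step 2 is exactly the paper's argument: a strict comb satisfies Bhatt--Scholze's criterion for w-contractibility (w-local, strictly henselian local rings at closed points, $\pi_0(X)$ extremally disconnected), so affine weakly \'etale surjections onto $X$ split, and your Step 1 is unnecessary. One correction to your reduction to an affine $Y$: finitely many affine opens of $Y$ whose images cover $X$ exist because a pro-\'etale cover is by definition an fpqc cover, not merely because $X$ is quasi-compact --- for instance $\coprod_{y\in\pi_0(X)} X_y \to X$ is a weakly \'etale surjection with no section when $\pi_0(X)$ is infinite.
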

\begin{proof}
This is a simple consequence of \cite[Theorem 1.8]{BhattScholzeProEtale}.
\end{proof}

\begin{Lem} \label{Lem:combhlocal}
    Let $\pi:Y \to X=\spec R$ be a perfectly finitely presented morphism which is a $v$-cover of perfect schemes. If $X$ is a strict comb, then $\pi$ admits a section.
\end{Lem}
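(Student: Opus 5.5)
The plan is to reduce to a statement about a single valuation ring with algebraically closed fraction field, where the classical fact applies: a finitely presented surjective morphism onto the spectrum of such a ring has a section. We then glue these local sections using the profinite, extremally disconnected structure of $\pi_0(X)$ together with Lemma \ref{Lem:combwContractible}.

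\textbf{Step 1 (pointwise sections).} Fix $y \in \pi_0(X)$ with $X_y = \spec V_y$. The base change $Y \times_X X_y \to X_y$ is perfectly finitely presented and still a v-cover. Since $V_y$ has algebraically closed fraction field, a v-cover of $\spec V_y$ admits a section. To see this, first lift the generic point to a point of $Y_{V_y}$ with residue field $\mathrm{Frac}(V_y)$; this uses algebraic closedness and the fact that $Y$ is perfectly of finite presentation. Then extend using the valuative criterion in the form that v-covers satisfy, where the relevant specialization property is exactly what being a v-cover provides. Concretely, choose a valuation ring $W$ dominating $V_y$ with $\spec W \to Y$. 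Since $\mathrm{Frac}(V_y)$ is algebraically closed and $Y$ is of (perfectly) finite presentation, we can replace $W$ by a section of $V_y$. This is the standard argument that absolutely integrally closed valuation rings are local for the h/v-topology (Bhatt--Scholze, Rydh).

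\textbf{Step 2 (spreading out).} We have $X_y = \lim_{U \ni y} X_U$, where $U$ ranges over clopen neighbourhoods of $y$ in $\pi_0(X)$ and $X_U$ denotes the clopen preimage. Since $Y \to X$ is perfectly finitely presented, the section $X_y \to Y$ extends to a section over $X_{U_y}$ for some clopen $U_y \ni y$.

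\textbf{Step 3 (gluing).} The sets $U_y$ cover the compact space $\pi_0(X)$, so finitely many suffice. Refining to a disjoint clopen cover $U_1, \dots, U_n$ and taking the corresponding sections gives a section of $\pi$ over all of $X = \coprod X_{U_i}$.

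Alternatively, one can argue as follows: $\coprod_y X_{U_y} \to X$ is a pro-étale (indeed clopen) cover, so Lemma \ref{Lem:combwContractible} applies. Either way, extremal disconnectedness is only needed through Lemma \ref{Lem:combwContractible}, and the finite clopen refinement already suffices for this step.

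\textbf{Main obstacle.} The hard part is Step 1, namely producing a section over $\spec V_y$ rather than just over its generic point. Surjectivity on points is not enough; we genuinely need the v-cover property to lift the specialization from the generic point to the closed point. The first thing to try is to use that the closed points $X^c$ form a closed set in a strict comb, together with a valuative lifting along $V_y$. If that fails, we fall back on the result that rings whose connected components are absolutely integrally closed valuation rings satisfy "every v-cover of finite presentation splits" (Bhatt--Scholze, proof of $v$-descent).
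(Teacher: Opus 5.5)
Your Steps 2 and 3 are sound. They are even a little more economical than the paper's gluing: since $\pi_0(X)$ is profinite, a finite disjoint clopen refinement of the $U_y$ lets you glue, so neither extremal disconnectedness nor closedness of $X^c$ is needed there.

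The gap is Step 1, which carries all the content. The statement you need is true for finitely presented v-covers, but you give no argument for it that works. Without finite presentation it is false: $\spec \overline{k(t)} \to \spec k$ is a v-cover with no section. Both mechanisms you sketch also break down:
\begin{itemize}
\item A v-cover only produces a lift $\spec W \to Y$ after some extension of valuation rings $V_y \to W$. In general there is no $V_y$-algebra map $W \to V_y$, so ``replacing $W$ by $V_y$'' is exactly the assertion to be proved.
\item Lifting the generic point to an arbitrary $\mathrm{Frac}(V_y)$-point and then extending fails, because $\pi$ is not proper. In the Zariski cover $\spec V_y[1/\varpi] \sqcup \spec V_y \to \spec V_y$, for a nonzero non-unit $\varpi$, the generic lift lying in the first piece does not extend.
\end{itemize}
Your fallback citation is just the lemma itself, with a weaker hypothesis on $\pi_0$.

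The missing idea is Rydh's structure theorem \cite[Theorem 3.12]{RydhSub}, which is what the paper uses. Pass to a finitely presented model of $\pi$; it is still a v-cover, since perfection is a universal homeomorphism. Then refine it by $Y_0' \xrightarrow{g} Z_0 \xrightarrow{f} X$, where $g$ is a quasi-compact open cover and $f$ is proper, surjective and finitely presented. Properness is what makes your valuative idea work over $\spec V_y$:
\begin{itemize}
\item The fibre of $f$ over $\mathrm{Frac}(V_y)$ is nonempty and of finite type, so it has a rational point by algebraic closedness.
\item That point extends to a section over $\spec V_y$ by the valuative criterion of properness.
\item The pullback of $g$ along this section is an open cover of the local scheme $\spec V_y$. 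One member contains the closed point, hence is all of $\spec V_y$.
\end{itemize}
With this inserted, your Steps 2--3 finish the proof. The paper organizes it slightly differently: it applies Rydh over all of $X$, spreads out and glues sections of $f$ only, and then splits the pulled-back open cover $g$ globally via Lemma \ref{Lem:combwContractible}.
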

\begin{proof}
By \cite[Proposition 3.13]{BhattScholze} we fix a finitely presented model $\pi_0: Y_0 \to X$ for the morphism $\pi$. Because the morphism $Y = Y_0^{\mathrm{perf}} \to Y_0$ is a universal homeomorphism and by \cite[Proposition 2.1]{RydhSub} in tandem \cite[remark after Definition 2.1]{BhattScholze}, the morphism $\pi_0$ is also a v-covering. By the universal property of perfection, it suffices to show that $\pi_0$ admits a section. As the scheme $X$ is affine, by the structure theorem of \cite[Theorem 3.12]{RydhSub} the morphism $\pi_0$ refines to $\pi_0': Y_0' \to X$ where the morphism $\pi_0'$ factors as  
\[
    Y_0' \xrightarrow{g} Z_0 \xrightarrow{f} X
\]
with $g$ a quasicompact open cover, and $f$ a finitely presented proper surjective morphism. By Lemma \ref{Lem:combwContractible} it suffices to show that the morphism $f$ admits a section. Indeed, given a section $s$ of $f$ we reduce to producing a section of $Y_0' \times_{Z_0} s(X) \to X$.

Let $x \in \pi_0(X)$ with fiber $X_x \subset X$. Then $X_x \simeq \spec V$ with $V$ an absolutely integrally closed valuation ring, and we denote by $K$ the fraction field of $V$. Then $Z_0 \times_{X} \spec K$ is nonempty because $f$ is surjective, and admits a section because $K$ is algebraically closed and $f$ is of finite presentation. By the valuative criterion of properness, this extends to a section $s$ of $Z_0 \times_{X} \spec V$. Now, since the morphism $f$ is finitely presented, we can spread out this section over a Zariski open neighborhood of $X_x$, writing $X_x$ as a limit of opens containing $x$. As $x$ was arbitrary, we get sections over an open cover of $X$, and we conclude by another application of Lemma \ref{Lem:combwContractible}.
\end{proof}

\begin{proof}[Proof of Proposition \ref{Prop:VSurjectiveNewtonMap}]
We follow the proof of \cite[Proposition 3.2.3]{DvHKZIgusaStacks}. We start with a morphism 
\begin{align}
    \spec R \to \gisocmu
\end{align}
corresponding to a $G$-isocrystal $(P, \beta)$ over $\spec R$. By \cite[Lemma 6.4]{GleasonIvanovZillinger}, the torsor $P$ is trivial over $W(R)[1/p]$, and so $\beta$ corresponds to an element $b \in G(W(R)[1/p])$. By Corollary \ref{Cor:IndRepresentableII} and its proof, the fiber product (this agrees with the scheme $Z(b)_{\mu}$ as defined in \cite[discussion before the proof of Lemma 3.2.4]{DvHKZIgusaStacks} by inspection)
\begin{align}
    Z(b)_{\mu}=\shtlocgmu \times_{\gisoc} \spec R \to \spec R
\end{align}
is an ind-(perfectly projective perfectly finitely presented perfect scheme) over $\spec R$. By \cite[Lemma 3.2.4]{DvHKZIgusaStacks}, there is a closed quasicompact subscheme $Z \subset Z(b)_{\mu}$ such that $Z \to \spec R$ is surjective. Since $ Z(b)_{\mu}$ is an (ind-perfectly projective perfectly finitely presented perfect scheme), it follows that $Z \to \spec R$ is a perfectly finitely presented perfectly proper surjection of perfect schemes, and thus admits a section by Lemma \ref{Lem:combhlocal}; this concludes the proof.
\end{proof}

\subsubsection{} We collect the following lemma for later use (cf. \cite[Section 6.5]{Kottwitz2}).
\begin{Lem} \label{Lem:Intersection}
Let $G_1 \to G_2$ be an ad-isomorphism of connected reductive groups over $\qp$, and let $\mu,\mu'$ be two $G_1(\qpbar)$-conjugacy classes of cocharacters of $G_1$. If $B(G_1,-\mu) \cap B(G_1,-\mu')$ is nonempty, then the natural map
\begin{align}
    B(G_1,-\mu) \cap B(G_1,-\mu') \to B(G_2,-\mu_{2}) \cap B(G_2,-\mu'_{2})
\end{align}
is a bijection.
\end{Lem}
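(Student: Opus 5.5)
The plan is to reduce everything to the classical description of Kottwitz sets: an element $[b] \in B(G)$ is determined by its Newton point $\nu_b$ together with its Kottwitz invariant $\kappa_G(b) \in \pi_1(G)_{\gal_{\qp}}$ (Kottwitz, \emph{Isocrystals with additional structure II}, 4.13). I will also use the standard fact, cf. \cite[Section 6.5]{Kottwitz2}, that for an ad-isomorphism $G_1 \to G_2$ the induced map $B(G_1,-\mu) \to B(G_2,-\mu_2)$ is a bijection. Indeed, both sides map bijectively to $B(G^{\mathrm{ad}}, -\mu^{\mathrm{ad}})$, because an element of $B(G,-\mu)$ has fixed Kottwitz invariant $\kappa(-\mu)$ and its Newton point is determined by its adjoint image together with the central part. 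The same holds with $\mu'$ in place of $\mu$.

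\textbf{Injectivity.} The map of the lemma is the restriction of the bijection $B(G_1,-\mu) \to B(G_2,-\mu_2)$ to the intersection, so it is injective, and it clearly lands in $B(G_2,-\mu_2) \cap B(G_2,-\mu_2')$.

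\textbf{Surjectivity.} Let $[b_2]$ lie in the intersection on the $G_2$-side. By the two bijections there are unique $[b] \in B(G_1,-\mu)$ and $[b'] \in B(G_1,-\mu')$ mapping to $[b_2]$, and it suffices to show $[b]=[b']$.
\begin{itemize}
\item \emph{Equal Kottwitz invariants.} Every element of $B(G_1,-\mu)$ has Kottwitz invariant equal to the image of $-\mu$ in $\pi_1(G_1)_{\gal_{\qp}}$, and similarly for $\mu'$. Since the intersection $B(G_1,-\mu)\cap B(G_1,-\mu')$ is nonempty by hypothesis, these two images coincide, so $\kappa(b)=\kappa(b')$.
\item \emph{Equal Newton points.} The Newton point lives in $X_*(T_1)_{\mathbb{Q}}^{+,\gal_{\qp}}$, which decomposes into a central part and an adjoint part. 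The adjoint parts of $\nu_b$ and $\nu_{b'}$ agree, since both equal the Newton point of the common image of $[b_2]$ in $B(G^{\mathrm{ad}})$. The central part of $\nu_b$ is the image of $\kappa(b)$ under $\pi_1(G_1)_{\gal_{\qp}} \to \pi_1(G_1^{\mathrm{ab}})_{\gal_{\qp}} \otimes \mathbb{Q}$, and the same holds for $b'$. By the previous point these central parts also agree, so $\nu_b=\nu_{b'}$.
\end{itemize}
Injectivity of $(\nu,\kappa)$ on $B(G_1)$ then gives $[b]=[b']$.

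The only delicate point is the bookkeeping of the central part of the Newton point. If I preferred not to cite Kottwitz's bijection, I would check directly that $B(G_1)\to B(G_2)$ is injective on elements with a fixed Kottwitz invariant, using the same Newton point and $\kappa$ argument. That argument only needs $\pi_1(G_1)_{\gal_{\qp}} \to \pi_1(G_2)_{\gal_{\qp}}$ to be compatible with the central Newton parts, which follows from functoriality of $\nu$ and $\kappa$.
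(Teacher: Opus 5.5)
Your proof is correct and is essentially the paper's argument. In both, the nonemptiness hypothesis places $B(G_1,-\mu)$ and $B(G_1,-\mu')$ in a single fiber of $\kappa_{G_1}$, and on that fiber $B(G_1)\to B(G_2)$ is a bijection onto the corresponding fiber of $\kappa_{G_2}$.

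The paper reads this off in one step from Kottwitz's Cartesian square \cite[Proposition 4.10]{Kottwitz2}. You instead split it into two parts: existence of lifts, via the bijection $B(G_1,-\mu)\to B(G_2,-\mu_2)$ (which also makes explicit the matching of the admissibility conditions that the paper leaves implicit), and uniqueness, via injectivity of $(\nu,\kappa)$. Note that your closing fallback would only re-prove the uniqueness half, so the existence of the lifts $[b],[b']$ must still be quoted from Kottwitz.
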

\begin{proof}
    By \cite[Proposition 4.10]{Kottwitz2}, the following diagram is Cartesian
    \begin{equation}
        \begin{tikzcd}
            B(G_1) \arrow{r}{\kappa_{G_1}} \arrow{d} & \pi_1(G_1)_{\gal_{\qp}} \arrow{d} \\
            B(G_2) \arrow{r}{\kappa_{G_2}} & \pi_1(G_2)_{\gal_{\qp}}.
        \end{tikzcd}
    \end{equation}
By definition, the sets $B(G_1,-\mu),B(G_1,-\mu')$ both lie in a single fiber of $\kappa_{G_1}$, and by assumption this is the same fiber. Similarly, it follows that the sets $B(G_2,-\mu_2),B(G_2,-\mu'_2)$ both lie in the same fiber of $\kappa_{G_2}$. But by the Cartesian diagram above, the map $B(G_1) \to B(G_2)$ induces a bijection on fibers of the Kottwitz map, and so we are done.
\end{proof}

\subsection{Reduction and analytification} \label{Sub:ReductionAnalytification} In this section, we explain various ways of passing between sheaves on $\perf$ and sheaves on $\affperf$.

\subsubsection{} \label{subsub:SmallDiamondPerfectScheme} Given a perfect $\fp$-scheme $X$, there is a v-sheaf $X^\diamond$ on $\perf$. When $\spec R$ is an affine perfect scheme, its functor of points is given by
\begin{align}
    (\spec R)^{\diamond}(A,A^+)= (\spec R)(A^+),
\end{align}
and the general case is handled by gluing, see \cite[Proposition 3.2]{GleasonSpecialization}. If $\spec B \to \spec A$ is a v-cover of perfect $\fp$-algebras, then the induced map $\spd B \to \spd A$ is v-surjective, see \cite[Proposition 3.7]{GleasonSpecialization}. 

\subsubsection{} For a presheaf in groupoids $\mathcal{F} \colon \perf^\mathrm{op} \to \mathsf{Grpd}$ we recall from \cite[Section 3.2]{GleasonSpecialization} the reduction $\mathcal{F}^{\red}$ of $\mathcal{F}$, defined by
\begin{align*}
    \mathcal{F}^{\mathrm{red}}(A) = \operatorname{Hom}(\spd A, \mathcal{F}).
\end{align*}
If $\mathcal{F}$ is a v-stack on $\perf$, then $\mathcal{F}^{\mathrm{red}}$ is a v-stack on $\affperf$ (by the last sentence of Section \ref{subsub:SmallDiamondPerfectScheme}). We will often use the following lemma implicitly (see \cite[Lemma 3.0.6]{DvHKZIgusaStacks}).
\begin{Lem} \label{Lem:ReductionOfScheme}
  Let $X$ be a scheme over $\zp$. Then the natural map
  \[
    X_{\fp}^\mathrm{perf} \to (X^\diamond)^\mathrm{red}
  \]
  is an isomorphism.
\end{Lem}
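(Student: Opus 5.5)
The plan is to construct the map, show it is an isomorphism after restricting to affine perfect schemes, and then glue over $X$. For an affine perfect $\fp$-algebra $A$, write $\spd A$ for the v-sheaf $(\spec A)^\diamond$, with $(\spec A)^\diamond(R,R^+)=\operatorname{Hom}(A,R^+)$ as in Section \ref{subsub:SmallDiamondPerfectScheme}.

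\textbf{Step 1: the map.} Let $\mathfrak{X}=\widehat{X}$ be the $p$-adic completion of $X$. A morphism $\spec A \to X_{\fp}^{\mathrm{perf}}$ is the same as a morphism $\spec A \to X_{\fp}$, because $A$ is perfect. Composing with $X_{\fp}\subset \mathfrak{X}$ gives $\spf A \to \mathfrak{X}$, since $A$ is killed by $p$. This produces a morphism
\begin{align}
    \spd A \to (\spf A)^{\lozenge} \to \mathfrak{X}^{\lozenge}=X^{\diamond}.
\end{align}
Explicitly, an $(R,R^+)$-point $A\to R^+$ is sent to the untilt $R$ itself together with $\spf R^+\to \spf A \to \mathfrak{X}$. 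This construction is functorial in $A$, so it defines the natural map $X_{\fp}^{\mathrm{perf}} \to (X^\diamond)^{\mathrm{red}}$.

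\textbf{Step 2: reduce to $X$ affine.} Both sides are Zariski-local on $X$. For the right-hand side this holds because an open cover of $X$ induces an open cover of $X^\diamond$. A map $\spd A \to X^\diamond$ pulls this back to an open cover of $\spd A$. The opens of $\spd A$ arising this way come from opens of $\spec A$, since $|\spd A|$ specializes onto $|\spec A|$ and the preimage of the special fibre is pulled back from $\spec A$. So we may take $X=\spec B$ and $\mathfrak{X}=\spf \widehat{B}$.

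\textbf{Step 3: the key computation.} We must show that every map $f:\spd A\to \spf\widehat{B}^{\lozenge}$ comes from a unique ring map $\widehat B\to A$, which necessarily factors through $B/p$. I would use Lemma \ref{Lem:DiamondOfFormalScheme}: locally on $S=\spa(R,R^+)$, the map $f$ is given by an untilt $R^\sharp$ and a continuous map $\widehat B\to R^{\sharp+}$.

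Next, $\spd A$ has a canonical cover by $\spa(A((t^{1/p^\infty})),A[[t^{1/p^\infty}]])$, and more generally by perfectoid covers of product-of-points type. Evaluating $f$ on these covers gives, compatibly, an untilt of $A[[t^{1/p^\infty}]]$-type rings over $\spd A$. The claim to prove is that this untilt must be the characteristic-$p$ one. The reason is that $\spd A\to\spd \zp$ factors through $\spd \fp$, because the untilt data is part of the map, and the generic fibre of $\spd A$ over $\spd\zp$ is empty.

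Granting this, the map $\widehat B\to R^+$ kills $p$. Uniqueness and descent then reduce to the identity $\operatorname{Hom}(\spd A,\spd B_{\fp}^{\mathrm{perf}})=\operatorname{Hom}(B_{\fp}^{\mathrm{perf}},A)$, which is the full faithfulness of $\spec A\mapsto \spd A$ on perfect schemes \cite[Proposition 3.2 and Section 3]{GleasonSpecialization}.

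\textbf{Main obstacle.} The hard part is Step 3: showing that a map $\spd A\to \spd\zp$ cannot involve a characteristic-zero untilt, and that ring maps can be recovered from v-local data. For the first point I would argue as follows. On a product of points, the condition "$p$ maps to $0$" is checked on the rank-one points. Any rank-one geometric point of $\spd A$ arises from a map $A\to \mathcal{O}_C$ that factors through $A\to k$ with $k$ the residue field. Sending $t\to 0$ shows the structure map is constant along the specialization, so it is forced to land in the closed point of $\spd\zp$.

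For the second point I would invoke the theorem of Gleason (or \cite[Lemma 3.0.6]{DvHKZIgusaStacks} directly) that $\mathcal{O}(\spd A)^+=A$ for $A$ perfect. This gives injectivity and surjectivity on global sections, and hence identifies both sides.
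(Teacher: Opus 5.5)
Your architecture is the right one: force the characteristic-$p$ untilt, then use full faithfulness for perfect schemes. But there are two genuine gaps, and the first sits exactly where you locate the difficulty.

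\textbf{The characteristic-$p$ untilt (Step 3).} You need the composite $s\colon \spd A\xrightarrow{f} X^\diamond\to\spd\zp$ to be the characteristic-$p$ untilt.

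Your first reason (``the generic fibre of $\spd A$ over $\spd\zp$ is empty'') presupposes this, since $s$ is the only structure map to $\spd\zp$ in play. Your fallback rests on a false claim: a rank-one point $A\to\mathcal{O}_C$ need not factor through a residue field (take $A=\fp[t^{1/p^\infty}]$ and $t\mapsto\varpi$). Moreover, ``sending $t\to 0$'' is not available on analytic test objects, where $t$ is a pseudo-uniformizer.

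One way to close this is as follows.
\begin{itemize}
\item The map $T=\spa(A((u^{1/p^\infty})),A[[u^{1/p^\infty}]])\to\spd A$ given by the constant embedding $A\subset A[[u^{1/p^\infty}]]$ is surjective, because every $A\to R^+$ factors through it via $u\mapsto\varpi$.
\item Since $\spd\fp\to\spd\zp$ is a monomorphism, it suffices to show that $s|_T$ is the untilt $(p)$.
\item Write $s|_T=(\xi)$ with $\xi=\sum_i[c_i]p^i\in W(A[[u^{1/p^\infty}]])$ primitive of degree one.
\item The $A$-algebra automorphisms $u\mapsto u^{p^n}$ of $T$ lie over $\spd A$, so $\xi(u)$ and $\xi(u^{p^n})$ generate the same ideal. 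Modulo $p$, this makes $c_0(u)$ a unit multiple of $c_0(u^{p^n})$.
\item Since $c_0$ is topologically nilpotent in a perfect ring, $c_0\in u^{p^{-m}}A[[u^{1/p^\infty}]]$ for some $m$. Hence $c_0\in\bigcap_n u^{p^{n-m}}A[[u^{1/p^\infty}]]=0$, and so $(\xi)=(p)$.
\end{itemize}

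\textbf{The reduction to affine $X$ (Step 2).} Open subsheaves of $\spd A$ are in general \emph{not} pulled back from $\spec A$.

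For $A=\fp[t^{1/p^\infty}]$, the locus where $t$ becomes a unit of $R$ is open in $\spd A$; on each test object it is $\bigcup_n\{|\varpi^n|\le|t|\}$. It is not of the form $U^\diamond$. It contains the point $t\mapsto\varpi$, which forces $U$ to contain the point $t=0$, yet it excludes the point $t\mapsto 0$.

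For the particular opens $f^{-1}(\widehat{U}_i^{\lozenge})$ your claim is true. However, membership in them is governed by where $\spec R^+/\varpi$ lands in $X_{\fp}$. Comparing that with $\spec A$ requires a map $\spec A\to X_{\fp}$ compatible with $f$, which is what you are trying to prove.

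The remedy is not to localize at all. Once $s$ factors through $\spd\fp$, the map $f$ factors through $X^\diamond\times_{\spd\zp}\spd\fp=(X_{\fp}^{\mathrm{perf}})^\diamond$. Full faithfulness of $Y\mapsto Y^\diamond$ on arbitrary, not necessarily affine, perfect schemes (as in \cite{ScholzeWeinsteinBerkeley}, \cite{GleasonSpecialization}) then gives the lemma; that is where the gluing actually lives.

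\textbf{A circular citation.} The paper gives no argument of its own here; it simply cites \cite[Lemma 3.0.6]{DvHKZIgusaStacks}, which \emph{is} this statement. So invoking that lemma for $\mathcal{O}^+(\spd A)=A$ would be circular.
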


\subsubsection{} There is a functor $\perf \to \affperf$ sending $(R,R^+)$ to $R^+_{\red}=R^+/R^{\circ\circ}$ with $R^{\circ \circ} \subset R^+$ the ideal of topologically nilpotent elements. For a presheaf of groupoids $\mathcal{F}$ on $\affperf$, we will write $\mathcal{F}^{\dcirc, \pre}$ for the presheaf on $\perf$ sending $(R,R^+)$ to $\mathcal{F}(R^+_{\red})$. We use $\mathcal{F}^{\dcirc}$ to denote its v-sheafification (our notation follows that of \cite[Section 4.4]{GleasonSpecialization}). We have the following question.
\begin{Question} \label{Question:VtopologyLemmaReduction}
If $\mathcal{F}$ is a presheaf on $\affperf$ with v-sheafification $\mathcal{F} \to \mathcal{G}$, is the natural map
    \begin{align}
        \mathcal{F}^{\dcirc} \to \mathcal{G}^{\dcirc}
    \end{align}
    an isomorphism?
\end{Question}
In the proof of Theorem \ref{Thm:IgusaDCircII}, we will use the following proposition to circumvent answering Question \ref{Question:VtopologyLemmaReduction}.
\begin{Prop} \label{Prop:VSurjectiveNewtonDiamondCirc}
If $\mathcal{G}$ is parahoric, then the natural map $(\shtlocgmu)^{\dcirc} \to (\gisocmu)^{\dcirc}$ is a v-cover.
\end{Prop}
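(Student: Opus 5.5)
We reduce the statement to Proposition \ref{Prop:VSurjectiveNewtonMap}, using the fact that strict combs are abundant enough to test v-surjectivity of maps out of $\dcirc$-sheafifications. Since $(\shtlocgmu)^{\dcirc}$ and $(\gisocmu)^{\dcirc}$ are v-sheafifications of the presheaves $(\shtlocgmu)^{\dcirc,\pre}$ and $(\gisocmu)^{\dcirc,\pre}$, it suffices to check the following. Let $S=\spa(R,R^+)\in\perf$ and let $x\in(\gisocmu)^{\dcirc,\pre}(S)=\gisocmu(R^+_{\red})$. Then, after replacing $S$ by a v-cover $\tilde S=\spa(\tilde R,\tilde R^+)\to S$, the pullback of $x$ to $\gisocmu(\tilde R^+_{\red})$ should lift to $\shtlocgmu(\tilde R^+_{\red})$. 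Indeed, any section of the sheafified target locally comes from the presheaf, so v-surjectivity of the sheafified map reduces to this presheaf-level statement.

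\textbf{Step 1: choose the cover.} We replace $S$ by a product of points, i.e. $\tilde S=\spa(\tilde R,\tilde R^+)$ with $\tilde R^+=\prod_{i\in I}C_i^+$, where the $C_i$ are algebraically closed perfectoid fields and the $C_i^+$ are open and bounded valuation subrings. By the standard results of Scholze, any affinoid perfectoid $S$ admits a v-cover of this form, and we may moreover take $I$ to be extremally disconnected (e.g. a Stone–Čech compactification of a discrete set). The key computation is then of
\begin{align}
    \tilde R^+_{\red}=\Big(\prod_i C_i^+\Big)\Big/\tilde R^{\circ\circ}.
\end{align}
For a product of points, the topologically nilpotent elements are exactly the sequences whose valuations are uniformly bounded below by a positive constant. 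So $\tilde R^+_{\red}$ is a quotient of $\prod_i C_i^+/\mathfrak m_{C_i}$-type rings glued along ultrafilters. Its connected components correspond to points of $\beta I=\pi_0$, and each component is the spectrum of a valuation ring with algebraically closed fraction field (an ultraproduct of the $C_i^+/\mathfrak{m}$-residue-type valuation rings $k_i^+\subset k_i$ with $k_i$ algebraically closed). The set of closed points is closed. In other words, $\spec\tilde R^+_{\red}$ should be a strict comb in the sense recalled before Proposition \ref{Prop:VSurjectiveNewtonMap}, in line with \cite[Definition 2.12]{GleasonIvanovZillinger}, where such combs arise exactly this way.

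\textbf{Step 2: apply Proposition \ref{Prop:VSurjectiveNewtonMap}.} Since $\spec\tilde R^+_{\red}$ is a strict comb and $\mathcal{G}$ is parahoric, the map $\shtlocgmu(\tilde R^+_{\red})\to\gisocmu(\tilde R^+_{\red})$ is essentially surjective. Hence the pullback of $x$ lifts, which proves v-surjectivity of $(\shtlocgmu)^{\dcirc}\to(\gisocmu)^{\dcirc}$.

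The main obstacle is Step 1, namely verifying that $\tilde R^+_{\red}$ really is a strict comb. One must check three things: that each connected component is $\spec$ of a valuation ring (not merely a local ring) with algebraically closed fraction field; that closed points form a closed subset; and that $\pi_0$ is extremally disconnected. The plan is to quote \cite{GleasonIvanovZillinger}, where precisely this reduction of products of points to strict combs is established. Failing that, I would verify it by hand using the ultrafilter description of components of $\spec\prod_i(C_i^+/\varpi_i)$-like rings and perfectness, where valuation rings are stable under ultraproducts and quotients by prime ideals. A minor secondary point is compatibility of the sheafification with pullback along $\tilde S\to S$. This is formal, because $(\cdot)^{\dcirc,\pre}$ is functorial in $S$ via $R^+_{\red}\to\tilde R^+_{\red}$.
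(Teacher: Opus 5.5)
Your proposal is correct and is essentially the paper's argument. Both reduce v-surjectivity to essential surjectivity of $\shtlocgmu(R^+_{\red})\to\gisocmu(R^+_{\red})$ on a basis of the v-topology for which $\spec R^+_{\red}$ is a strict comb, and then invoke Proposition \ref{Prop:VSurjectiveNewtonMap}. The only difference is in how the strict-comb property is obtained. The paper works with general w-contractible spaces (Lemma \ref{Lem:BasisTopology}) and gets the property from Lemma \ref{Lem:ReductionETDStrictComb} via Gleason's homeomorphism $|\spa(R,R^+)|\simeq|\spec R^+_{\red}|$; the product-of-points case you use is the corollary stated right after that lemma. This route spares you the ultrafilter computation you flag as the main obstacle.
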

\subsubsection{} Recall from \cite[proof of Proposition VII.1.6]{FarguesScholze} that a perfectoid space $X$ is \emph{w-contractible} if $X$ is strictly totally disconnected, the set of closed points $Z \subset |X|$ is closed, and $\pi_0(X)$ is extremally disconnected. 
\begin{Lem} \label{Lem:ReductionETDStrictComb}
Let $(R,R^+) \in \perf$. If $\spa(R,R^+)$ is $w$-contractible, then $\spec R^+_{\red}$ is a strict comb.
\end{Lem}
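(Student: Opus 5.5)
We have to verify three properties of $X=\spec R^+_{\red}$: its closed points form a closed subset, each connected component is the spectrum of a valuation ring with algebraically closed fraction field, and $\pi_0(X)$ is extremally disconnected. The plan is to compare $\spec R^+_{\red}$ with $\spa(R,R^+)$ through the specialization map and transport each property across.

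\emph{Step 1: connected components.} Since $\spa(R,R^+)$ is strictly totally disconnected, for each connected component the pair $(R,R^+)$ localizes to $(K,K^+)$. Here $K$ is a perfectoid field that is algebraically closed, and $K^+$ is an open and bounded valuation subring. We first record that $\pi_0(\spa(R,R^+))\cong\pi_0(\spec R^+)\cong\pi_0(\spec R^+_{\red})$. The first identification holds because idempotents of $R$ are power bounded, so they lie in $R^+$. The second holds because $R^{\circ\circ}$ lies in the Jacobson radical of $R^+$, since $R^+$ is $\varpi$-adically complete, so idempotents lift uniquely along $R^+\to R^+_{\red}$. The connected component of $\spec R^+_{\red}$ over a point $y$ is $\spec$ of the colimit of $R^+_{\red}e$ over clopens containing $y$. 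This colimit equals $K^+/K^{\circ\circ}$, because taking $(-)^+$ and quotienting by topologically nilpotent elements commute with this filtered colimit of rational localizations; this uses the standard description of $K^+$ as the completed colimit, and completion does not change the quotient by $K^{\circ\circ}$. Now $K^+/K^{\circ\circ}=K^+/\mathfrak{m}_{K^\circ}$ is a valuation ring of the residue field $k=K^\circ/\mathfrak{m}_{K^\circ}$, which is algebraically closed because $K$ is. So each component is $\spec V_y$ with $V_y$ a valuation ring whose fraction field is algebraically closed.

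\emph{Step 2: extremal disconnectedness} is immediate from the homeomorphism of $\pi_0$ in Step 1, since $\pi_0(\spa(R,R^+))$ is extremally disconnected by hypothesis.

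\emph{Step 3: closed points.} The closed point of $\spec V_y$ is the maximal ideal of $K^+$, and it corresponds to the closed point of the component of $\spa(R,R^+)$, namely the valuation attached to $K^+$. I would use the continuous specialization map $\operatorname{sp}\colon|\spa(R,R^+)|\to|\spec R^+_{\red}|$. This map sends a valuation $x$ to the prime $\{f\in R^+:|f(x)|<1\}$ modulo $R^{\circ\circ}$. It is surjective and closed, following Gleason \cite{GleasonSpecialization}. It carries the closed points of each component, that is the rank-maximal points, onto the closed points of $\spec R^+_{\red}$. Hence $X^c=\operatorname{sp}(Z)$, where $Z$ is the closed set of closed points, and therefore $X^c$ is closed.

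I expect Step 3 to be the main obstacle. The difficulty is justifying that $\operatorname{sp}$ is a closed map, and that it matches closed points exactly rather than merely up to specialization. If citing closedness is problematic, a fallback is to check closedness of $X^c$ directly. This is done fiberwise over the profinite set $\pi_0$, using that a subset of $\spec R^+_{\red}$ meeting each component in its closed point is closed once its image structure agrees with that of $Z$ over $\pi_0$. The agreement follows from quasi-compactness, since $\spa(R,R^+)$ and $\spec R^+_{\red}$ have the same constructible clopen structure over $\pi_0$.
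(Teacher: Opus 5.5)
Your proposal is correct and follows essentially the paper's route. The paper also compares via Gleason's specialization map, but it cites \cite[Proposition 4.3]{GleasonSpecialization}, which says that $\operatorname{sp}\colon|\spa(R,R^+)|\to|\spec R^+_{\red}|$ is a homeomorphism when $\spa(R,R^+)$ is w-contractible. This gives extremal disconnectedness of $\pi_0$ and closedness of the set of closed points at once, so your Step 3 obstacle disappears and your vague fallback is not needed. The paper then uses \cite[Proposition 1.15]{EtCohDiam} for the connected components exactly as you do. Your idempotent and colimit computation in Step 1 usefully spells out why each component of $\spec R^+_{\red}$ is $\spec K^+/K^{\circ\circ}$, which the paper only asserts.
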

\begin{proof}
The natural specialization map $|\spa(R,R^+)| \to |\spec R^+_{\red}|$ of \cite[Proposition 4.2]{GleasonSpecialization} is a homeomorphism by \cite[Proposition 4.3]{GleasonSpecialization}. We deduce that $\pi_0(\spec R^+_{\red})$ is extremally disconnected and that the set of closed points in $\spec R^+_{\red}$ is closed. Now by \cite[Proposition 1.15]{EtCohDiam}, every connected component of $\spa(R,R^+)$ is isomorphic to $\spa(C,C^+)$ with $C$ an algebraically closed field. It follows from this that the corresponding connected component of $\spec R^+_{\red}$ is isomorphic to $\spec C^+_{\red}$. Now $C^+_{\red}$ will itself be a valuation ring, and its fraction field is algebraically closed since $C$ is algebraically closed. We conclude that $\spec R^+_{\red}$ is a strict comb.
\end{proof}
We take the time here to state an obvious corollary of this. 
\begin{Cor}
    Let $(R, R^+)$ be a product of points, then $(R, R^+)$ is $w$-contractible, in particular $\spec R^+_{\red}$ is a strict comb. 
\end{Cor}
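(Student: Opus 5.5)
The plan is to check the three conditions in the definition of w-contractibility directly for $X=\spa(R,R^+)$, where $R^+=\prod_{i\in I} C_i^+$, each $C_i$ is an algebraically closed perfectoid field with open and bounded valuation subring $C_i^+$, and $R=R^+[1/\varpi]$ for $\varpi=(\varpi_i)_i$ a product of pseudouniformizers. The ``in particular'' clause then follows immediately from Lemma \ref{Lem:ReductionETDStrictComb}.

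\textbf{Step 1: connected components.} I would first identify $\pi_0(X)$ with the Stone--\v{C}ech compactification $\beta I$, i.e.\ the space of ultrafilters on $I$.
\begin{itemize}
    \item Idempotents of $R^+$ are the indicator functions of subsets of $I$. This gives a continuous surjection $|X|\to \beta I$, and its fibers are connected.
    \item The fiber over an ultrafilter $u$ is $\spa(C_u,C_u^+)$. Here $C_u^+$ is the $\varpi$-adic completion of the ultraproduct $\prod_u C_i^+$, and $C_u$ is obtained from it by inverting $\varpi$.
    \item The field $C_u$ is an algebraically closed perfectoid field. Indeed, a monic polynomial over the ultraproduct has a root coordinatewise, and completion preserves algebraic closedness.
\end{itemize}
Since $\beta I$ is extremally disconnected, this settles the condition on $\pi_0(X)$. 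Moreover, $X$ is affinoid perfectoid and every connected component is of the form $\spa(C,C^+)$ with $C$ algebraically closed, so $X$ is strictly totally disconnected by \cite[Proposition 1.15]{EtCohDiam} (or its converse direction).

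\textbf{Step 2: the closed points form a closed set.} In each component $\spa(C_u,C_u^+)$ there is a unique closed point, namely the valuation with ring $C_u^+$. This defines a section $s:\beta I\to |X|$.
\begin{itemize}
    \item I would check that $s$ is continuous and spectral. The preimage of a rational open $\{|f|\le |g|\neq 0\}$ is $\{u : |f_i|\le |g_i|\neq 0 \text{ for } u\text{-almost all } i\}$, which is clopen in $\beta I$.
    \item Hence the image $Z=s(\beta I)$ is pro-constructible in the spectral space $|X|$.
    \item $Z$ is stable under specialization, since the points of $Z$ are maximal in their components.
    \item In a spectral space, a pro-constructible subset stable under specialization is closed.
\end{itemize}
So $Z$ is closed.

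\textbf{Main obstacle.} The step requiring the most care is the continuity and spectrality of $s$ in Step 2, because $|X|$ is not Hausdorff. Compactness of $\beta I$ alone does not show that the image is closed, which is why the argument goes through pro-constructibility plus stability under specialization. If that route gets messy, the fallback is to cite the standard fact (Gleason, or the discussion in \cite[proof of Proposition VII.1.6]{FarguesScholze}) that products of points are w-contractible.
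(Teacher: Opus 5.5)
Your argument is correct, but it takes a different route from the paper. The paper never analyzes $|X|$ directly. It cites \cite[Proposition 1.6]{GleasonSpecialization} for strict total disconnectedness and \cite[Proposition 4.3]{GleasonSpecialization} for the fact that specialization gives a homeomorphism $|X|\to|\spec R^+/\varpi|$. It then imports the two remaining properties from the scheme $\spec R^+=\spec\prod_i C_i^+$, whose $\pi_0$ is $\beta I$ and whose closed points form a closed set, by passing to the closed subset $V(\varpi)$. That passage is harmless because $\varpi$ lies in the Jacobson radical of $R^+$ and the idempotents of $R^+/\varpi=\prod_i C_i^+/\varpi_i$ are still the subsets of $I$; the paper leaves this implicit.

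You instead stay entirely on the adic side, using ultrafilters and ultraproducts. You identify the components as $\spa(C_u,C_u^+)$, deduce strict total disconnectedness from that, and get closedness of the closed points from a spectral section $s\colon\beta I\to|X|$ together with pro-constructibility. Your route is self-contained and avoids the specialization homeomorphism altogether. The price is several verifications that the paper outsources to Gleason and to the known topology of products of valuation rings.

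One detail in Step 2 needs fixing. Your preimage formula is only correct for genuine rational subsets $\{|f_j|\le|g|\ \forall j\}$ with $(f_j)$ generating an open ideal, say with some $\varpi^N$ among the $f_j$. For the set $\{|f|\le|g|\neq 0\}$ as you wrote it, the formula fails. Take $I=\mathbb{N}$ and $f=g=(\varpi_i^{\,i})_i$: every coordinate satisfies the condition, yet $g$ maps to $0$ in $C_u$ for every nonprincipal $u$.

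With the open-ideal condition the formula does hold. If $g\mid\varpi^N$ in $C_u^+$, then $g\mid\varpi^N$ already in $\prod_u C_i^+$. This is because the kernel $\bigcap_n\varpi^n\prod_u C_i^+$ of the map to $C_u^+$ lies in $\varpi^{N+1}\prod_u C_i^+$, and elements of the form $1+\varpi c$ are units. Once $g\mid\varpi^N$ there, divisibility of each $f_j$ by $g$ also descends from $C_u^+$ to the ultraproduct, and hence to $u$-almost all coordinates.
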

\begin{proof}
    That $X = \spa(R, R^+)$ is strictly totally disconnected is \cite[Proposition 1.6]{GleasonSpecialization}, from this and \cite[Proposition 4.3]{GleasonSpecialization} it follows that the map $|X| \to |\spec R^+/\varpi |$ is a homeomorphism for any choice of pseudouniformizer $\varpi$. On the other hand the map $R^+ \to R^+/\varpi$ induces a closed immersion $\spec R^+/\varpi \to \spec R^+$. Since the latter has $\pi_0$ extremally disconnected and the set of closed points is closed, the same is true for the closed subset $\spec R^+/\varpi$, and thus finally for $X$. 
\end{proof}

\begin{Lem} \label{Lem:BasisTopology}
The set of w-contractible affinoid perfectoid spaces forms a basis of the v-topology.
\end{Lem}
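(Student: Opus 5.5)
To prove Lemma \ref{Lem:BasisTopology}, we show that every affinoid perfectoid space $X=\spa(R,R^+)$ in $\perf$ admits a v-cover by a w-contractible affinoid perfectoid space. Since w-contractible spaces are by definition affinoid perfectoid, this is exactly what is needed for them to form a basis of the v-topology on $\perf$. We use products of points: by the corollary to Lemma \ref{Lem:ReductionETDStrictComb}, every product of points is w-contractible. So it suffices to show that every $X$ admits a v-cover $Y \to X$ with $Y$ a product of points.

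The construction is the standard one from \cite[Lemma 2.22]{EtCohDiam}. For each point $x \in |X|$, choose a map $\spa(C_x,C_x^+) \to X$ hitting $x$, with $C_x$ algebraically closed and $C_x^+$ a valuation ring. This is possible by taking a completed algebraic closure of the residue field $k(x)$, with a valuation ring extending $k(x)^+$. Choose pseudouniformizers $\varpi_x \in C_x$ compatibly with a fixed pseudouniformizer $\varpi \in R$, so that the image of $\varpi$ has absolute value in a fixed range; this can be arranged by replacing $\varpi$ by powers on each factor in the usual way. Set
\[
Y=\spa(A,A^+), \qquad A^+=\prod_x C_x^+, \qquad A=A^+[1/\varpi].
\]
The maps $R^+ \to C_x^+$ assemble into a map $R^+ \to A^+$, which gives $Y \to X$.

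It remains to check that $Y \to X$ is a v-cover. Both spaces are affinoid, so $Y \to X$ is quasicompact. It is surjective because every $x$ lies in the image of the corresponding factor $\spa(C_x,C_x^+) \subset Y$. By \cite[Definition 8.1]{EtCohDiam}, these two properties make it a v-cover.

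The only real subtlety is the bookkeeping of pseudouniformizers, which is needed so that $(A,A^+)$ is a genuine perfectoid Huber pair and the map $R \to A$ is continuous. If one prefers not to cite \cite{EtCohDiam} directly, this is the step that requires care. An alternative route is to refine an arbitrary v-cover by a strictly totally disconnected space, using \cite[Lemma 7.18]{EtCohDiam}, and then pass to a w-contractible cover via the proof of \cite[Proposition VII.1.6]{FarguesScholze}. That route, however, only gives pro-étale covers, which are in particular v-covers.
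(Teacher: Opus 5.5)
Your argument is correct and is essentially the paper's own. The paper only cites Fargues--Scholze, but the Corollary placed just before the lemma (products of points are w-contractible) shows the intended proof is exactly your cover of $\spa(R,R^+)$ by a product of points $\spa\bigl(\prod_x C_x^+[1/\varpi],\prod_x C_x^+\bigr)$. The one thing to tidy up is the pseudouniformizer remark: just take $\varpi_x$ to be the image of $\varpi$, which already makes this a perfectoid Huber pair with $R\to A$ continuous, so no rescaling is needed (rescaling by factor-dependent powers would in fact destroy continuity).
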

\begin{proof}
This is stated in \cite[proof of Proposition VII.1.6]{FarguesScholze}.
\end{proof}
\begin{proof}[Proof of Proposition \ref{Prop:VSurjectiveNewtonDiamondCirc}]
By Lemma \ref{Lem:BasisTopology}, it suffices to prove that for $(R,R^+) \in \perf$ with $\spa(R,R^+)$ w-contractible, the functor
\begin{align}
    (\shtlocgmu)^{\dcirc, \pre}(R,R^+) \to (\gisocmu)^{\dcirc, \pre}(R,R^+)
\end{align}
is essentially surjective. This functor identifies with
\begin{align}
\shtlocgmu(R^+_{\red}) \to \gisocmu(R^+_{\red}),
\end{align}
which is essentially surjective by Proposition \ref{Prop:VSurjectiveNewtonMap} and Lemma \ref{Lem:ReductionETDStrictComb}.
\end{proof}

\begin{Lem} \label{Lem:ClosedToOpen}
    Let $N \to M$ be a functor of v-stacks on $\affperf$ that is representable in closed immersions that are perfectly finitely presented. Then $N^{\dcirc} \to M^{\dcirc}$ is representable in open immersions.
\end{Lem}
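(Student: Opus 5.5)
We have to show that for any $(R,R^+) \in \perf$ and any map $\mathcal{S}=\spa(R,R^+) \to M^{\dcirc}$, the fiber product $N^{\dcirc} \times_{M^{\dcirc}} \mathcal{S}$ is an open subspace of $\mathcal{S}$. Being an open immersion is v-local on the target, and by Lemma \ref{Lem:BasisTopology} we may pass to a v-cover of $\mathcal{S}$. Since $M^{\dcirc}$ is the v-sheafification of $M^{\dcirc,\pre}$, after such a cover we may assume the map $\mathcal{S} \to M^{\dcirc}$ comes from an object of $M(R^+_{\red})$, i.e. a morphism $\spec R^+_{\red} \to M$.

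Next I reduce to the case of a single closed subscheme. By hypothesis $N \times_M \spec R^+_{\red} = \spec R^+_{\red}/I$ for an ideal $I$. Perfect finite presentation means this closed subscheme is the perfection of a finitely presented one, so $I$ is the radical of an ideal generated by finitely many elements $\bar f_1,\dots,\bar f_n$. Choose lifts $f_i \in R^+$. I then claim that
\begin{align}
N^{\dcirc} \times_{M^{\dcirc}} \mathcal{S} = \{x \in \mathcal{S} : |f_i(x)|<1 \text{ for all } i\}.
\end{align}
The right-hand side is open: it is the union over $m$ of the rational opens $\{|f_i^{p^m}| \le |\varpi|\}$, or equivalently the preimage of the open complement of $V(\bar f_i)$ under specialization.

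To check the claim, I compute both sides on test objects $(A,A^+) \to (R,R^+)$. By the definition of the fiber product of presheaves, a point of the left-hand side is, v-locally, a factorization of $\spec A^+_{\red} \to \spec R^+_{\red}$ through $V(I)$. Because $N \to M$ is a monomorphism, fiber products commute with the presheaf construction, and sheafification preserves fiber products and monomorphisms. The factorization exists exactly when each $\bar f_i$ maps to a nilpotent element of the reduced ring $A^+_{\red}$, i.e. to zero. That in turn says the image of $f_i$ lies in $A^{\circ\circ}$, which is the condition $|f_i|<1$ at every point of $\spa(A,A^+)$. Since the condition on the right-hand side is pointwise, the presheaf and its v-sheafification agree there.

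The main obstacle is the sheafification bookkeeping: the presheaf fiber product is only computed at the presheaf level, and one must check that v-sheafifying preserves the fiber product with a representable object. Both constructions commute with finite limits, so this should follow formally. The one place finite presentation is genuinely used is in obtaining finitely many generators $\bar f_i$, which is what makes the locus a union of rational opens.
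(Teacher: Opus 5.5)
\textbf{The openness step is wrong as written.} Your reductions are fine: you pass to a v-cover so that $\mathcal{S}\to M^{\dcirc}$ comes from $M(R^+_{\red})$, you write the pullback of $N$ as $V(I)$ with $I=\sqrt{(\bar f_1,\dots,\bar f_n)}$, and you show that $\spa(A,A^+)\to\mathcal{S}$ lands in the fiber product iff every $f_i\in A^{\circ\circ}$. The problem is the next step, where you identify the fiber product with
\[
T=\{x\in\mathcal{S} : |f_i(x)|<1 \text{ for all } i\}
\]
and call $T$ open. There are three issues.
\begin{enumerate}
\item The specialization of $x$ is the prime $\{a\in R^+ : |a(x)|<1\}/R^{\circ\circ}$, so $T=\mathrm{sp}^{-1}(V(I))$. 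This is \emph{closed}, since its complement is the union of the rational opens $\{|1|\le|f_i|\}$.
\item The preimage of the complement of $V(\bar f_i)$, which you wrote, is the complement of $T$, not $T$.
\item $T$ does not in general equal $\bigcup_m\{|f_i^{p^m}|\le|\varpi|\}$. The two sets have the same rank-one points but differ at higher-rank points.
\end{enumerate}
For a concrete failure, take $\mathcal{S}=\spa(C,C^+)$ with $C^+\subsetneq\mathcal{O}_C$ of rank two, and $f\in\mathfrak{m}_{C^+}\cap\mathcal{O}_C^{\times}$. Then $T$ is the closed point, which is not open, while $\bigcup_m\{|f^{p^m}|\le|\varpi|\}=\emptyset$. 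The whole content of the lemma is that a closed condition becomes an open one, so this step needs an actual argument, not an identification with a closed set.

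\textbf{How to repair it.} Use the fact that membership in $A^{\circ\circ}$ is detected at the rank-one points of $\spa(A,A^+)$. A morphism of perfectoid spaces sends the rank-one generalization of a point to the unique rank-one generalization of its image. Hence a map $\spa(A,A^+)\to\mathcal{S}$ factors through the fiber product iff its image lies in
\[
U=\{x : |f_i(x^{\mathrm{max}})|<1 \text{ for all } i\},
\]
where $x^{\mathrm{max}}$ is the maximal generalization of $x$. This $U$ is open:
\[
U=\bigcup_m\{|f_1^{p^m}|\le|\varpi|,\dots,|f_n^{p^m}|\le|\varpi|\},
\]
because $|f(x^{\mathrm{max}})|<1$ gives $|f^{p^m}(x^{\mathrm{max}})|<|\varpi(x^{\mathrm{max}})|$ for $m\gg0$. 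A strict inequality in the rank-one coarsening persists at $x$, and conversely $|f^{p^m}(x)|\le|\varpi(x)|$ forces $|f(x^{\mathrm{max}})|<1$.

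So the fiber product is represented by the open $U$, which can be strictly smaller than $T$. In the example it is empty, because no perfectoid space maps to $\spa(C,C^+)$ with image just the closed point. With this correction your argument is a valid, self-contained alternative to the paper's proof, which simply cites Gleason's Proposition 4.22. Finite presentation is used exactly where you say: with infinitely many $f$, the locus $U$ would be an infinite intersection of such unions and need not be open.
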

\begin{proof}
    This follows from \cite[Proposition 4.22]{GleasonSpecialization}, as explained in \cite[discussion before Lemma 4.25]{GleasonSpecialization}.
\end{proof}

\subsubsection{} By \cite[Lemma 5.2]{HeuerPicard}, for a perfect $\fp$-algebra $R$ we have that $(\spec R)^{\dcirc, \pre}=(\spec R)^{\dcirc}$. This implies as before that for a perfect $\fp$-scheme $X$ the analytic sheafification of $(R,R^+) \mapsto X(R^+_{\red})$ is a v-sheaf. 

\subsubsection{} We note as in \cite[Section 4.2]{GleasonSpecialization} that for a perfect $\fp$-scheme $X$ there is a natural map $X^{\diamond} \to X^{\dcirc}$. By \cite[Proposition 4.24]{GleasonSpecialization}, this induces an isomorphism
\begin{align}
    (X^{\diamond})^{\red} \to (X^{\dcirc})^{\red},
\end{align}
where we note that the left hand side is isomorphic to $X$ by Lemma \ref{Lem:ReductionOfScheme}. 

\subsubsection{} \label{subsub:ReductionShtukaBung} The reduction of the morphism $\shtgmu \to \bung$ can be identified with $\shtlocgmu \to \gisoc$, see \cite[Lemma 3.1.7]{DvHKZIgusaStacks}, \cite[Theorem 1.11]{GleasonIvanovZillinger} and \cite[Section 3.2.1]{DvHKZIgusaStacks}. We will write $\shtlocgmuone \subset \shtlocgmu$ for the reduction of $\shtgmuone \subset \shtgmu$, which maps to $\gisocmu$ even if $\mathcal{G}$ is not parahoric. 

\subsubsection{} Let $R \in \affperf$ together with a map $\spec R \to \gisoc$, and let $\spd R \to \bun_{G}$ be the induced map. Let $[b] \in B(G)$ and let $Z \subset \gisoc$ be the (finitely presented, see \cite[Theorem 6.8]{GleasonIvanovZillinger} or \cite{RapoportRichartz}) closed substack corresponding to $[b'] \le [b]$, let $U \subset \bun_{G}$ be the corresponding open substack. This defines a closed subscheme $\spec B \subset \spec R$ and an open substack $(\spd R)_U \subset \spd R$. We learned the following result from Ian Gleason.
\begin{Lem} \label{Lem:NewtonStrata}
   The natural map $\spd R \to (\spec R)^{\dcirc}$ identifies $(\spd R)_U$ with the inverse image of $(\spec B)^{\dcirc}$.
\end{Lem}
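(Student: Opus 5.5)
Both $(\spd R)_U$ and the inverse image of $(\spec B)^{\dcirc}$ are subfunctors of $\spd R$: the first is open by construction, and the second is open by Lemma \ref{Lem:ClosedToOpen}, since $\spec B \subset \spec R$ is a perfectly finitely presented closed immersion. So it suffices to show they contain the same geometric points, and then that the two open subfunctors agree. For the second part I plan to test on w-contractible affinoid perfectoid $S=\spa(C,C^+)$-points, and more generally on products of points, using Lemma \ref{Lem:BasisTopology}.

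I would first make both subfunctors explicit. A map $S=\spa(A,A^+) \to \spd R$ is a ring map $R \to A^+$. The induced $G$-bundle on $X_S$ comes from the isocrystal $(P,\beta)$ pulled back along $W(R)[1/p] \to W(A^+)[1/p]$ and then to the curve. Its Newton point at a geometric point $\spa(C,C^+)$ is computed by the isocrystal pulled back to $W(C)[1/p]$, hence to the residue field of $C$ or $C^+$ at the corresponding points. The map $S \to (\spec R)^{\dcirc}$ is instead the composite $R \to A^+ \to A^+_{\red}$. Such a point lies over $(\spec B)^{\dcirc}$ exactly when $R \to A^+_{\red}$ factors through $B$, at least v-locally, which is automatic on w-contractible $S$. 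Since $\spec B$ is the closed locus where the Newton point is $\le [b]$, this happens exactly when every point of $\spec A^+_{\red}$ has Newton point $\le [b]$.

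The comparison therefore reduces to the following statement. For $(C,C^+)$ with $C$ algebraically closed, write $k$ for the residue field of $C^+$ and $\kappa$ for the residue field of $C^\circ$. I claim that the Newton point of the bundle $\mathcal{E}_b$ on $X_{C}$ is that of the isocrystal over $\kappa$. The reason is that $\mathcal{E}$ only depends on the point of $\spec R$ given by $R \to C^+ \to \kappa$, and that $\spa(C,C^+)$ specializes to $\spec C^+_{\red}$, whose generic point is $\kappa$ and closed point is $k$. The open condition on $\spd R$ is tested at the rank-one point, i.e.\ via $\kappa$. The closed condition on $\spec C^+_{\red}$, with $C^+_{\red}$ a valuation ring, holds at all points if and only if it holds at the generic point $\kappa$, by closedness and the fact that the Newton point only increases under specialization. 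Here the paper's convention matters: $U$ is open in $\bun_G$ and $Z$ is closed in $\gisoc$ via the opposite orders. So the two conditions coincide, and together with openness and specialization this matches the points; by the homeomorphism $|\spa(R',R'^+)| \cong |\spec R'^+_{\red}|$ of \cite[Proposition 4.3]{GleasonSpecialization} this extends to products of points.

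The main obstacle will be the first compatibility: that the Newton point of the vector bundle on $X_C$ attached to $b \in G(W(C^+)[1/p])$, coming from $R$, equals the Newton point over $\kappa$ and not over $k$. This is where the identifications of \cite[Theorem 1.11]{GleasonIvanovZillinger} (reduction of $\bun_G$ is $\gisoc$) and \cite[Lemma 3.1.7]{DvHKZIgusaStacks} enter. I would prove it by noting that $W(C^+)[1/p] \to W(\kappa)[1/p]$ and the curve construction are compatible with base change along $R \to \kappa \to C^\circ$. Alternatively, I would reduce directly to the case $R=\kappa$ a field, where it is the classical fact that $\mathcal{E}_b$ has the Newton point of $b$.
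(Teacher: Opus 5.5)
Your reduction is sound as far as it goes. Both subfunctors are open (the second by Lemma \ref{Lem:ClosedToOpen}). An open subfunctor of $\spd R$ is determined by its underlying open subset of $|\spd R|$ (\cite[Proposition 12.9]{EtCohDiam}), so the passage to products of points is unnecessary. For a point $R\to C^+$, the $\dcirc$-condition holds exactly when the point $\spec\kappa\to\spec R$ given by $R\to C^+\to\kappa$ lies in $\spec B$; closedness of $\spec B$ and reducedness of $C^+_{\red}$ suffice for this, with no specialization theorem and no v-localization needed. This is a more hands-on route than the paper's, which deduces the lemma formally from \cite[Proposition 7.19]{GleasonIvanov} and \cite[discussion before Lemma 4.25]{GleasonSpecialization}. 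But all of the content then sits in the step you call the main obstacle, and your treatment of it has a gap.

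First, your earlier assertion that the Newton point at $\spa(C,C^+)$ is computed by pulling the isocrystal back to $W(C)[1/p]$ is wrong. Take $R=\fp[t^{1/p^\infty}]$ with an isocrystal that is ordinary at the generic point and supersingular at $t=0$, and the point $t\mapsto\varpi$. Then $R\to C$ is injective, so $W(C)[1/p]$ sees the ordinary Newton point, whereas the bundle on $X_C$ is basic. Indeed, with your rule $(\spd R)_U$ for $U$ the basic locus would be the non-open set $\{t=0\}$.

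Second, your proposed proof of the correct claim (Newton point at $\kappa$) does not work. The given map $R\to C^\circ$ does \emph{not} factor through $\kappa$ (in the example it sends $t\mapsto\varpi$), so there is no base change along $R\to\kappa\to C^\circ$. Likewise, reducing to $R=\kappa$ presupposes exactly the independence to be proved.

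The missing input is rigidity. By Construction \ref{Constr:AnalyticIsocrystal} and the proof of Lemma \ref{Lem:ObviousHopefully}, the bundle attached to $R\to C^\circ$ is obtained from the $F$-isocrystal over $C^\circ/\varpi$, because $W(C^\circ)[1/p]\to\mathcal{O}(\mathcal{Y}_{[r,\infty)})$ factors through $\mathbf{B}^+_{\crys}(C^\circ/\varpi)$ for $r\gg0$. Moreover, $F$-isocrystals do not see the nil-thickening $C^\circ/\varpi\to\kappa$ (Dwork's trick). The kernel of this thickening is the union of the ideals $\varpi^{1/p^n}C^\circ/\varpi$, whose elements have vanishing $p^n$-th powers.

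Hence you may replace $R\to C^\circ$ by $R\to\kappa\xrightarrow{s}C^\circ$ without changing the bundle up to isomorphism, where $s$ is a section of $C^\circ\to\kappa$ (cf. \cite[Lemma 6.2.1]{DvHKZIgusaStacks}). This new map does factor through $\kappa$. The classical identification of the Newton point of $\mathcal{E}_b$ for $b\in G(W(\kappa)[1/p])$ then finishes the argument. With this inserted, your proof is complete.
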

\begin{proof}
This is a direct consequence of \cite[Proposition 7.19]{GleasonIvanov}, together with \cite[Discussion before Lemma 4.25]{GleasonSpecialization}.
\end{proof}

\subsection{Prismatic comparison theorems } \label{Sub:PrismaticComparison} Let $K$ be a finite unramified extension of $\qp$ with ring of integers $\mathcal{O}_K=V_0$ and residue field $k$, and let $X \to \spf \mathcal{O}_K$ be a smooth $p$-adic formal scheme. Let $\operatorname{Vect}^{\mathrm{an}, \varphi}(X_{\prism})$ denote the category of analytic prismatic $F$-crystals on $X$ (see \cite[Definition 3.5]{GuoReinecke}). 

\subsubsection{} We will write $\operatorname{Loc}_{\zp}^{\mathrm{crys}}(X_{\eta})$ for the category of crystalline $\zp$-local systems on the rigid generic fiber $X_{\eta}$ of $X$, see \cite[Definition 2.31]{GuoReinecke}. We will instead use the superscript $\dr$ when we want to discuss de-Rham local systems, see e.g. \cite[Definition 2.5]{GuoReinecke}. 

\subsubsection{} Fix a reductive group scheme $\mathcal{G}$ over $\zp$ with generic fiber $G$. For a $\zp$-linear tensor category $\mathcal{C}$, we will denote by $\mathcal{G}-\mathcal{C}$ the groupoid of exact $\zp$-linear tensor functors $\operatorname{Rep}_{\zp} \mathcal{G} \to \mathcal{C}$. Similarly, for a $\qp$-linear tensor category $\mathcal{D}$, we will denote by $G-\mathcal{D}$ the groupoid of exact $\qp$-linear tensor functors $\operatorname{Rep}_{\qp} G \to \mathcal{D}$. Let $\shtg \to \spd \zp$ denote the stack of $\mathcal{G}$-shtukas and recall the morphism $\operatorname{BL}^{\circ}:\shtg \to \bung$. For a v-sheaf $Y \to \spd \zp$ we denote by $\shtg(Y)$ the groupoid of morphisms $Y \to \shtg$ over $\spd \zp$.  

\subsubsection{} \label{subsub:FullyfaithfulShtukasPrisms} There is a shtuka-realisation functor
\begin{align}
    T_{\mathrm{sht}}:\mathcal{G}-\operatorname{Vect}^{\mathrm{an}, \varphi}(X_{\prism}) \to \shtg(X^{\lozenge}),
\end{align}
see \cite[Construction 3.16]{ImaiKatoYoucisTannakian}. Recall moreover from \cite[Section 3.2.2]{ImaiKatoYoucisCrystalline} the functor
\begin{align}
    U_{\mathrm{sht}}:\mathcal{G}-\operatorname{Loc}_{\zp}^{\dr}(X_{\eta}) \to \shtg(X_{\eta}^\lozenge)
\end{align}
and from \cite[Section 2.3.4]{ImaiKatoYoucis}, the \'etale realization functor
\begin{align}
    T_{\text{\'et}}:\mathcal{G}-\operatorname{Vect}^{\mathrm{an}, \varphi}(X_{\prism}) \to \mathcal{G}-\operatorname{Loc}_{\zp}^{\mathrm{crys}}(X_{\eta}).
\end{align}
Then by \cite[Theorem 3.17]{ImaiKatoYoucisTannakian}, the following diagram is $2$-commutative
\begin{equation}
    \begin{tikzcd}
        \mathcal{G}-\operatorname{Vect}^{\mathrm{an}, \varphi}(X_{\prism}) \arrow{r}{T_{\text{\'et}}} \arrow{d}{T_{\mathrm{sht}}}& \mathcal{G}-\operatorname{Loc}_{\zp}^{\mathrm{crys}}(X_{\eta}) \arrow{d}{U_{\mathrm{sht}}} \\
        \shtg(X^{\lozenge}) \arrow{r} & \shtg(X^{\lozenge}_{\eta}).
    \end{tikzcd}
\end{equation}
Moreover, the top horizontal arrow is an equivalence by \cite[Proposition 2.22]{ImaiKatoYoucisTannakian}, see also \cite{GuoReinecke}, and the bottom horizontal arrow is fully faithful by \cite[Theorem 2.7.7]{PappasRapoportShtukas}. 

\subsubsection{} Let $V_0=W(k)$ and let $\operatorname{Isoc}(X_s/V_0)$ be the category of $F$-isocrystals on the special fiber $X_s$.\footnote{Since we do not consider isocrystals without $F$-structure, we omit the Frobenius from the notation.} Let us write $\bung(X^{\lozenge})$ for the groupoid of morphisms $X^{\lozenge} \to \bun_{G}$. 
\begin{Construction} \label{Constr:AnalyticIsocrystal}
We now construct a functor
\begin{align}
    \delta_{\ast}:G-\operatorname{Isoc}(X_s/V_0) \to \bung(X^{\lozenge}).
\end{align}
Given $(R,R^+) \in \perf$ together with an untilt $(R^{\sharp}, R^{\sharp+})$ and a morphism $x:\spf R^{\sharp+} \to X$, we will construct a $G$-bundle on $X_{(R,R^+)}$. For a pseudouniformizer $\varpi \in R^{+}$ with image $\varpi^{\sharp} \in R^{\sharp+}$ let $\operatorname{Mod}^{\varphi}(\mathbf{B}^+_{\crys}(R^+/\varpi))$ be the category of $\varphi$-modules over $\mathbf{B}^+_{\crys}(R^+/\varpi)$, which we note is canonically equivalent to $\operatorname{F}\mathrm{-Isoc}(\spec R^{\sharp+}/\varpi^{\sharp})$ (see \cite[Proposition 4.1.3]{ScholzeWeinsteinModuli}). Let $\operatorname{Vect}(X_{(R,R^+)})$ be the $\qp$-linear tensor category of vector bundles on the relative Fargues--Fontaine curve over $\spa(R,R^+)$, then there is an exact $\qp$-linear tensor functor 
\begin{align}
    \delta:\operatorname{Mod}^{\varphi}(\mathbf{B}_{\crys}^{+}(R^+/\varpi)) \to \operatorname{Vect}(X_{(R,R^+)})
\end{align}
described in \cite[Section 2.5.3]{DvHKZIgusaStacks}.\footnote{The functor is constructed by pushing out via a $\varphi$-equivariant map $\mathbf{B}_{\crys}^{+}(R^+/\varpi) \to \mathcal{O}_{\mathcal{Y}_{[r,\infty)}}(R,R^+)$ for some $r \gg 0$ and then descending to $X_{(R,R^+)}$, see \cite[Section 2.5.3]{DvHKZIgusaStacks}; this will be relevant later.} The functor $\delta_{\ast}$ is now given by taking a $\qp$-linear exact tensor functor
\begin{align}
    \omega:\operatorname{Rep}_{\qp} G \to \operatorname{Isoc}(X_s/V_0)
\end{align}
to the morphism $X^{\lozenge} \to \bung$ which over $x:\spd(R,R^+) \to X^{\lozenge}$ evaluates to
\begin{align}
    \delta \circ x^{\ast} \omega.
\end{align}
\end{Construction}
\subsubsection{} \label{subsub:IsocrystalConstruction} There is a realization functor (see \cite[Remark 3.14]{GuoReinecke})
\begin{align}
    D_{\mathrm{crys}}:\operatorname{Vect}^{\mathrm{an}, \varphi}(X_{\prism}) &\to \operatorname{Vect}^{\mathrm{an}, \varphi}(X_{s,\prism}) \simeq \operatorname{Isoc}(X_s/V_0).
\end{align}
\begin{Lem} \label{Lem:PrismaticCompatibilityI}
    The following diagram is $2$-commutative. 
    \begin{equation}
        \begin{tikzcd}
            \mathcal{G}-\operatorname{Vect}^{\mathrm{an}, \varphi}(X_{\prism}) \arrow{r}{T_{\mathrm{sht}}} \arrow{d}{D_{\crys}} & \shtg(X^{\lozenge}) \arrow{d}{\operatorname{BL}^{\circ}} \\
            G-\operatorname{Isoc}(X_s/V_0) \arrow{r}{\delta_{\ast}} & \bung(X^{\lozenge}).
        \end{tikzcd}
    \end{equation}
\end{Lem}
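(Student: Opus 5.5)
By the Tannakian formalism and the exactness of all the functors involved, it is enough to compare the two composites after applying a fixed representation $\Lambda \in \operatorname{Rep}_{\zp}\mathcal{G}$. This reduces the statement to the case $\mathcal{G}=\operatorname{GL}_n$, provided the resulting isomorphisms are natural in $\Lambda$ and compatible with tensor products. So fix an analytic prismatic $F$-crystal $\mathcal{E}$ on $X$, together with a test object $(R,R^+)$, an untilt $(R^\sharp,R^{\sharp+})$ and a point $x:\spf R^{\sharp+}\to X$. Both sides are v-stacks and the construction is functorial in $x$. By Lemma \ref{Lem:BasisTopology} we may therefore work v-locally, for instance on products of points, where $R^{\sharp+}$ is a perfectoid ring. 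We then need an isomorphism of vector bundles on $X_{(R,R^+)}$, natural in $x$, between $\operatorname{BL}^{\circ}(T_{\mathrm{sht}}(\mathcal{E}))_x$ and $\delta(x^\ast D_{\crys}(\mathcal{E}))$.

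The first step is to unwind the left-hand side. By \cite[Construction 3.16]{ImaiKatoYoucisTannakian}, $T_{\mathrm{sht}}(\mathcal{E})_x$ is obtained by evaluating $\mathcal{E}$ on the perfect prism $(A_{\inf}(R^{\sharp+}),\ker\theta)$ attached to $x$. This gives a $\varphi$-module over $A_{\inf}(R^+)$ away from the appropriate locus, restricted to $\mathcal{Y}_{[0,\infty)}(S)$ minus $V(p,[\varpi])$. The integral Beauville--Laszlo map $\operatorname{BL}^{\circ}$ takes such a shtuka, restricts it to $\mathcal{Y}_{[r,\infty)}$ for $r\gg0$ (a neighbourhood of $p=0$ away from the leg), and descends along $\varphi$ to $X_{(R,R^+)}$. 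The right-hand side is formed in the same way (see the footnote in Construction \ref{Constr:AnalyticIsocrystal}). One base-changes $x^\ast D_{\crys}(\mathcal{E})$ to $\mathbf{B}^+_{\crys}(R^+/\varpi)$, pushes it out along a $\varphi$-equivariant map $\mathbf{B}^+_{\crys}(R^+/\varpi)\to\mathcal{O}_{\mathcal{Y}_{[r,\infty)}}(R,R^+)$, and then descends.

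The key comparison is therefore between two $\varphi$-modules over $\mathcal{O}(\mathcal{Y}_{[r,\infty)})$: the evaluation of $\mathcal{E}$ on $A_{\inf}(R^{\sharp+})$, and the crystalline realization evaluated on the crystalline prism. For this I would use the crystal property of $\mathcal{E}$. The map $A_{\inf}(R^{\sharp+})\to A_{\crys}(R^{\sharp+}/\varpi^\sharp)$ is a map of prisms, once one passes to the pd-envelope of $(p)$-type via the Frobenius twist, as in the standard comparison $\varphi^\ast A_{\inf}\to A_{\crys}$. Base change of $\mathcal{E}$ along this map gives an identification after inverting $p$. The crystalline realization $D_{\crys}$ of \cite[Remark 3.14]{GuoReinecke} is defined precisely by evaluating on crystalline prisms of $X_s$, and by \cite[Proposition 4.1.3]{ScholzeWeinsteinModuli} this identifies with $x^\ast D_{\crys}(\mathcal{E})\otimes\mathbf{B}^+_{\crys}(R^+/\varpi)$. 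Finally, for $r\gg0$ both pushouts to $\mathcal{O}(\mathcal{Y}_{[r,\infty)})$ factor compatibly through $A_{\crys}$, using the inclusions $A_{\inf}[1/p]\subset\mathbf{B}^+_{\crys}\to\mathcal{O}(\mathcal{Y}_{[r,\infty)})$ near $p=0$. All the isomorphisms involved are $\varphi$-equivariant, so they descend to $X_{(R,R^+)}$.

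I expect the main obstacle to be the Frobenius twist. Analytic prismatic $F$-crystals are only isomorphisms away from $\ker\theta$, and the comparison with $A_{\crys}$ involves $\varphi$. One must therefore check that, on the region $\mathcal{Y}_{[r,\infty)}$ where the leg is absent, the Frobenius structure lets one move between $\mathcal{E}$ and $\varphi^\ast\mathcal{E}$ without changing the descended bundle. One must also check independence of $\varpi$ and $r$, and compatibility with the normalizations in \cite[Section 2.5.3]{DvHKZIgusaStacks}. After that, naturality in $x$ and the tensor compatibility are formal, which gives the $2$-commutativity of the diagram.
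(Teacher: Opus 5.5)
Your proposal is correct and follows essentially the paper's own argument. You reduce to a test point $x:\spf R^{\sharp+}\to X$, evaluate the crystal on $A_{\inf}(R^{\sharp+})$ (the paper's identification with $\mathcal{G}$-Breuil--Kisin--Fargues modules), and compare the two realizations using that $A_{\inf}[1/p]\to\mathcal{O}(\mathcal{Y}_{[r,\infty)}(R,R^+))$ factors through $\mathbf{B}^+_{\crys}(R^+/\varpi)$ for $r\gg0$; your treatment of the Frobenius twist spells out what the paper calls a straightforward consequence of the constructions.

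One small slip that does not affect the argument: in the paper's conventions the locus $\{p=0\}$ sits at radius $0$, since $\mathcal{Y}_{(0,\infty)}(S)=\mathcal{Y}(S)$. So $\mathcal{Y}_{[r,\infty)}$ for $r\gg0$ is the region where $[\varpi]$ is small relative to $p$, far from $\{p=0\}$, not a neighbourhood of it.
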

\begin{proof}
Let $(R,R^+) \in \perf$ together with an untilt $(R^{\sharp}, R^{\sharp+})$ and a morphism $x:\spf R^{\sharp+} \to X$. In this case
\begin{align}
    \mathcal{G}-\operatorname{Vect}^{\mathrm{an}, \varphi}\left((\spf R^{\sharp+})_{\prism}\right) \simeq \mathcal{G}-\operatorname{BKF}(R^{\sharp+}),
\end{align}
where $\operatorname{BKF}(R^{\sharp+})$ is the category of Breuil--Kisin--Fargues modules over $R^{\sharp+}$ as in \cite[Definition 2.2.4]{PappasRapoportShtukas}. Moreover, the functor $T_{\mathrm{sht}}$ factors, by construction, through this equivalence via composition with an $\zp$-linear exact tensor functor (see \cite[Definition 2.2.6]{PappasRapoportShtukas})
\begin{align}
    \operatorname{BKF}(R^{\sharp+}) \to \operatorname{Sht}(\spd(R,R^+) \to \spd \zp).
\end{align}
The lemma comes down to showing that the following diagram is $2$-commutative. 
\begin{equation} \label{Eq:DiagramOfRealizations}
    \begin{tikzcd}
        \operatorname{BKF}(R^{\sharp+}) \arrow{r} \arrow{d}&  \operatorname{Sht}(\spd(R,R^+) \to \spd(\zp)) \arrow{d}{\operatorname{BL}^{\circ}} \\
        \operatorname{Mod}^{\varphi}(\mathbf{B}_{\crys}^{+}(R^+/\varpi)) \arrow{r}{\delta} & \operatorname{Vect}(X_{(R,R^+)}).
    \end{tikzcd}
\end{equation}
This is well known and moreover a straightforward consequence of the constructions, see \cite[Section 2.5]{DvHKZIgusaStacks} for a very similar argument.\footnote{
The key point is that $W(R^+)[1/p] \to \Gamma(\mathcal{Y}_{[r, \infty)}(R, R^+), \mathcal{O}_{\mathcal{Y}_{[r,\infty)}(R,R^+)})$ factors through $\mathbf{B}_{\crys}^{+}(R^+/\varpi)$ for $r \gg 0$, see \cite[Section 2.5.3]{DvHKZIgusaStacks}.} 
\end{proof}

\subsubsection{} Let $X_s^{\mathrm{perf}}$ be the perfection of $X_s$. There is a functor
\begin{align}
    \epsilon:\bun_G(X^{\lozenge}) \to G-\operatorname{Isoc}(X_s^{\mathrm{perf}})=\gisoc(X_s^{\mathrm{perf}}/V_0)
\end{align}
coming from the isomorphisms $(X^{\diamond})^{\mathrm{red}}=X_s^{\mathrm{perf}}$ (see Lemma \ref{Lem:ReductionOfScheme}) and $(\bung)^{\mathrm{red}} \simeq \gisoc$, see Section \ref{subsub:ReductionShtukaBung}.
\begin{Lem} \label{Lem:ObviousHopefully}
    The composition $\epsilon \circ \delta_{\ast}$ is isomorphic to the functor obtained from pullback along $X_s^{\mathrm{perf}} \to X_s$.
\end{Lem}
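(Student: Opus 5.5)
We reduce to a pointwise statement and then to the identification of the reduction functor on $\bung$ with Witt-vector isocrystals. Since both functors are exact tensor functors out of $\operatorname{Rep}_{\qp} G$, composed with $\omega$, it suffices to treat vector bundles, i.e. the case $G=\operatorname{GL}_n$. We must then produce a natural isomorphism between two $F$-isocrystals on $X_s^{\mathrm{perf}}$ and check that it is compatible with tensor products. The first is $\epsilon(\delta_\ast \mathcal{E})$, for an $F$-isocrystal $\mathcal{E}$ on $X_s/V_0$. The second is the pullback of $\mathcal{E}$ to $X_s^{\mathrm{perf}}$, regarded as a $\varphi$-module over $W(\cdot)[1/p]$ via the crystalline site of a perfect scheme.

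Both sides are v-stacks on $\affperf$, so it suffices to construct the isomorphism functorially on points $\spec A \to X_s^{\mathrm{perf}}$ with $A$ perfect. Unwinding $\epsilon$, such a point gives $\spd A \to X^\lozenge$, and hence a $G$-bundle on $X_{(R,R^+)}$ for every $(R,R^+)$ with a map $\spec R^+ \to \spec A$, more precisely $A\to R^+$ lifted along $\spf R^{\sharp+}\to X$. By construction this bundle is $\delta(x^\ast\mathcal{E})$, obtained by base change along $\mathbf{B}^+_{\crys}(R^+/\varpi) \to \mathcal{O}(\mathcal{Y}_{[r,\infty)})$. On the other hand, the isomorphism $(\bung)^{\red}\simeq \gisoc$ of Section \ref{subsub:ReductionShtukaBung} (via \cite{GleasonIvanovZillinger}) sends a Witt-vector isocrystal $(M,\varphi)$ over $W(A)[1/p]$ to the bundle on $X_{(R,R^+)}$ obtained by base change along $W(A)[1/p] \to W(R^+)[1/p] \to \mathcal{O}(\mathcal{Y}_{[r,\infty)})$ and descent.

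The comparison therefore reduces to the following claim. The crystal $x^\ast\mathcal{E}$ evaluated on $\mathbf{B}^+_{\crys}(R^+/\varpi)$ is canonically the base change of the evaluation of $\mathcal{E}$ on the PD-thickening $W(A)\to A$, which is the crystalline value on the perfect scheme $\spec A$. This base change is along the map $W(A)[1/p] \to W(R^+)[1/p] \to \mathbf{B}^+_{\crys}(R^+/\varpi)$. The claim holds because $A_{\crys}(R^+/\varpi)$ receives a PD-morphism from $W(A)$ compatible with $A \to R^+/\varpi$, and crystals are compatible with morphisms of PD-thickenings. After inverting $p$ the Frobenius-isocrystal structure identifies the values; this is the usual identification $\operatorname{F}\mathrm{-Isoc}(R^+/\varpi)\simeq \operatorname{Mod}^\varphi(\mathbf{B}^+_\crys)$ of \cite[Proposition 4.1.3]{ScholzeWeinsteinModuli}. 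One also needs that the composite $W(A)[1/p]\to \mathbf{B}^+_{\crys}\to \mathcal{O}(\mathcal{Y}_{[r,\infty)})$ agrees with the direct map used by $\operatorname{BL}$/Gleason--Ivanov--Zillinger. This is the footnoted key point from \cite[Section 2.5.3]{DvHKZIgusaStacks}.

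The main obstacle is this last compatibility. Specifically, the identification of $(\bung)^{\red}$ with $\gisoc$ is not given by an explicit formula in \cite{GleasonIvanovZillinger}. It is characterized, say on products of points or on $\spd$ of perfect rings, through the functor $W(A)[1/p]$-isocrystals $\to$ bundles, and we must check it really is the base change described above. I would first verify this on geometric points $\spec k'$ with $k'$ algebraically closed, where both functors are the classical $b\mapsto \mathcal{E}_b$. I would then use that the maps are determined v-locally, reducing to w-contractible $(R,R^+)$ via Lemma \ref{Lem:BasisTopology} and using the full faithfulness results there. Finally, the isomorphisms must be independent of $\varpi$ and $r$, which follows from the transition maps being compatible, and they must respect tensor products, which is clear since every step is a base change.
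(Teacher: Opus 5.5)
Your argument is essentially the paper's. The paper likewise reduces, via $\operatorname{Hom}(X_s^{\mathrm{perf}},\gisoc)=\operatorname{Hom}(X_s^{\diamond},\bung)$, to comparing the two $G$-bundles on $X_{(R,R^+)}$ at points with $R^\sharp=R$; it works directly with points $\spec R^+\to X_s$ of $X_s^{\diamond,\mathrm{pre}}$ rather than through an auxiliary $A$, and concludes from the diagram \eqref{Eq:DiagramOfRealizations} with $R^\sharp=R$, i.e.\ from the factorization of $W(R^+)[1/p]\to\mathcal{O}(\mathcal{Y}_{[r,\infty)}(R,R^+))$ through $\mathbf{B}^+_{\crys}(R^+/\varpi)$, which your crystal base change along $W(A)\to A_{\crys}(R^+/\varpi)$ simply spells out.

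The point you flag as the main obstacle is not one: the identification $\gisoc\simeq(\bung)^{\red}$ is given by exactly the base-change functor described in your second paragraph, and the paper uses it in that form. Your proposed fallback would not work in any case, since agreement on geometric points does not produce a (natural) isomorphism of objects of a stack.
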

\begin{proof}
Morphisms $X_s^{\mathrm{perf}} \to \gisoc$ correspond uniquely to morphisms $X_s^{\diamond} \to \bung$. Thus it suffices to show that for all affinoid perfectoid spaces $(R,R^+)$ and for all maps $\spd(R,R^+) \to X_s^{\diamond}$, the corresponding $G$-bundles on $X_{(R,R^+)}$ agree. We may moreover replace $X_s^{\diamond}$ with $X_s^{\diamond,\mathrm{pre}}$, whose $(R,R^+)$-points are morphisms $\spec R^+ \to X_s$. \smallskip 

Unwinding the constructions above, we are trying to show that the following two $G$-bundles on $X_{(R,R^+)}$ are isomorphic:
\begin{itemize}
    \item Pull back the $G$-bundle on $X$ along $\spec R^+/\varpi \to X$ for some $\varpi$, and then apply $\delta_{\ast}$.

    \item Pull back the $G$-bundle on $X$ to a $G$-bundle on $\spec R^+$, and then pushforward via $W(R^+)[1/p] \to \Gamma(\mathcal{Y}_{[r, \infty)}(R, R^+),
  \mathcal{O}_{\mathcal{Y}_{[r,\infty)}(R,R^+)})$.
\end{itemize}
This follows from the $2$-commutative diagram at the end of the proof of Lemma \ref{Lem:PrismaticCompatibilityI} above, with $R^{\sharp}=R$. 
\end{proof}

\subsection{Recollections on Igusa stacks} \label{Sub:IgusaStacks} Let $\gx$ be a Shimura datum of abelian type with reflex field $\mathsf{E}$, fix a prime number $p$ and write $G=\mathsf{G} \otimes \qp$. Let $v$ be a prime of $\mathsf{E}$ above $p$ and let $E$ be the $v$-adic completion of $\mathsf{E}$ with ring of integers $\mathcal{O}_E$. For $K \subset \gaf$ we consider the adic space $ \msh_{K}\gx^{\mathrm{an}}$ over $\spa E$ given by the analytification of the Shimura variety over $\spec E$. This contains a quasicompact open subspace 
\begin{align}
    \msh_{K}\gx^{\circ, \mathrm{an}} \subset \msh_{K}\gx^{\mathrm{an}}
\end{align}
called the potentially crystalline locus, see \cite[Theorem 5.17]{ImaiMieda}. We will write
\begin{align}
    \msh_{K}\gx^{\circ, \lozenge} \subset \msh_{K}\gx^{\lozenge}
\end{align}
for the associated diamonds over $\spd E$. The formation of $\msh_K\gx^{\circ, \lozenge}$ is compatible with changing $K$, see \cite[Corollary 5.29]{ImaiMieda}, and we will also consider
\begin{align}
    \msh\gx^{\circ, \lozenge} \coloneqq\varprojlim_{K \subset \gaf} \msh_{K}\gx^{\circ, \lozenge}.
\end{align}
Let $\mu$ be the $G(\qpbar)$-conjugacy class of Hodge cocharacters of $G$ induced by $\x,v$. Consider the following result of \cite{ZhangThesis}, \cite{DvHKZIgusaStacks}, \cite{KimFunctoriality} and \cite{DanielsVHKimZhangCompanion}. 
\begin{Thm} \label{Thm:IgusaMain}
  There is a small v-stack $\igs\gx$ over $\spd \fp$ equipped with an
  action of $\ul{\gafp}$ and a $\ul{\gafp}$-invariant map 
  \begin{align*}
      \pibar_{\HT} \colon \igs\gx \to \bungmu
  \end{align*}
  fitting in a $2$-Cartesian diagram 
  \begin{equation} \label{Eq:ConjectureCartesianDiagramGen} \begin{tikzcd} 
    \msh\gx^{\circ, \lozenge} \arrow{r}{\pi_{\mathrm{HT}}} \arrow{d} & \operatorname{Gr}_{G, -\mu} \arrow{d}{\mathrm{BL}} \\
    \igs\gx \arrow{r}{\overline{\pi}_\mathrm{HT}} & \bungmu.
  \end{tikzcd} \end{equation}
\end{Thm}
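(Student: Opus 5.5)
The statement is Theorem \ref{Thm:IgusaMain}, which the paper recalls from \cite{ZhangThesis}, \cite{DvHKZIgusaStacks}, \cite{KimFunctoriality} and \cite{DanielsVHKimZhangCompanion}. I would not reprove it from scratch but assemble it from those sources, following the standard construction. The plan has three steps: first the Hodge type case, then descent to abelian type, and finally the verification of the $2$-Cartesian square for $\msh\gx^{\circ,\lozenge}$.

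\textbf{Step 1 (Hodge type).} Choose a Hodge embedding $\gx \to \gvx$ and a parahoric model $\mathcal{G}$. Over the integral model $\scrs_{K_p}\gx$ one has the universal $p$-divisible group with tensors, hence a $\mathcal{G}$-shtuka $\scrs_{K_p}\gx^{\lozenge} \to \shtgmu$. Composing with $\operatorname{BL}^{\circ}$ and Lemma \ref{Lem:DiamondOfFormalScheme} gives a map to $\bungmu$.

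Next define $\igs\gx$ as the v-sheaf quotient of $\scrs_{K_p}\gx^{\lozenge}$ by the equivalence relation ``isogenous $p$-divisible groups with tensors, compatible prime-to-$p$ level''. Equivalently, it is the stack whose $S$-points are abelian varieties up to quasi-isogeny over $S^{\sharp+}$-type bases, with tensors. The map $\pibar_{\HT}$ is induced by the Beauville--Laszlo map, which is invariant under quasi-isogeny.

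The required properties then need to be checked. The $\gafp$-action comes from prime-to-$p$ Hecke operators, and the target lies in $\bungmu$ by \cite[Proposition 3.1.10]{DanielsVHKimZhangCompanion}.

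\textbf{Step 2 (Cartesian diagram).} The key input is the $p$-adic uniformization of isogeny classes, a Rapoport--Zink type statement. For a geometric point, the fiber of $\scrs^{\lozenge} \to \igs$ over a point is identified with the fiber of $\operatorname{Gr}_{G,-\mu} \to \bungmu$. Concretely, modifications of the $G$-bundle at the untilt correspond to quasi-isogenies of the $p$-divisible group, by Scholze--Weinstein's classification of $p$-divisible groups over $\mathcal{O}_C$ and its relative version.

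From this one obtains a map $\msh\gx^{\circ,\lozenge} \to \igs\gx \times_{\bungmu} \operatorname{Gr}_{G,-\mu}$. It should be checked to be an isomorphism v-locally on product-of-points test objects, using that points of the potentially crystalline locus extend to $\spf R^{\sharp+}$ after v-localization (Imai--Mieda together with the extension property for abelian varieties).

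\textbf{Step 3 (abelian type).} Here I would pass to a Hodge type datum with an ad-isomorphism and descend. $\igs$ is built as a quotient of products $\igs\gxone \times^{\mathcal{A}(\g_1)} \mathcal{A}(\g)$, mirroring Kisin's construction of abelian type integral models. Cartesianness is inherited because both sides are induced along the same group actions, and by Lemma \ref{Lem:Intersection}-type comparisons of $\bungmu$ under central isogenies.

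\textbf{Main obstacle.} I expect the hardest step to be the Cartesianness in Step 2 away from rank-one geometric points. The stack $\igs\gx$ is defined by v-sheafification, so one must show that the comparison map is v-surjective and injective on product-of-points. This requires the rigidity that quasi-isogenies of $p$-divisible groups over $R^{\sharp+}$ lift uniquely, together with $G$-structure preservation, which is handled by closed-immersion arguments like Lemma \ref{Lem:GStructurePreservingClosedPerfectoid}.
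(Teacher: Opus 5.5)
Your proposal is correct and takes the same route as the paper, which proves Theorem \ref{Thm:IgusaMain} purely by citation: \cite[Theorem 1.3]{ZhangThesis} in the PEL case, \cite[Theorem I]{DvHKZIgusaStacks} or \cite[Theorem D]{KimFunctoriality} in the Hodge type case, and \cite[Theorem I]{DvHKZIgusaStacksII} in the abelian type case. Your outline of those constructions is a fair gloss, with one correction in the abelian type step: you should induce along the prime-to-$p$ groups $\mathcal{A}^p(\g_1)$ and $\mathcal{A}^p(\g)$ (the Deligne groups modulo their $p$-adic points, as in Lemma \ref{Lem:AdjointIgusaTorsor}) rather than the full $\mathcal{A}$-groups, because $p$-adic points already act trivially on $\igs\gxone$ and inducing along the full groups would not produce the right stack.
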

If $\gx$ is of PEL type, then Theorem \ref{Thm:IgusaMain} is \cite[Theorem 1.3]{ZhangThesis}. If $\gx$ is of Hodge type, then Theorem \ref{Thm:IgusaMain} is \cite[Theorem I]{DvHKZIgusaStacks} or \cite[Theorem D]{KimFunctoriality}. If $\gx$ is of abelian type, then Theorem \ref{Thm:IgusaMain} is \cite[Theorem I]{DvHKZIgusaStacksII}.

\subsubsection{} \label{subsub:IntegralFiberProduct} We consider the following integral refinement of Theorem \ref{Thm:IgusaMain}. Let $\mathcal{G}$ be a quasi-parahoric model of $G$. Recall the quotient $\g \to \g^c$, which induces a parahoric model $\mathcal{G}^c$ of $G^c=\g^c \otimes \qp$ (see \cite[Section 4.1.1]{DanielsYoucis}). Then there is a (conjectural) canonical integral model $\scrs_{K_p}\gx$ of $\msh_{K_p}\gx$ over $\mathcal{O}_{E}$, characterized by a morphism
\begin{align}
    \scrs_{K_p}\gx^{\lozenge/} \to \operatorname{Sht}_{\mathcal{G}^c, \mu^c},
\end{align}
see \cite[Conjecture 4.5]{Daniels} and \cite[Conjecture 4.2.2]{PappasRapoportShtukas}. This conjecture is known in almost all cases (e.g. if $p \ge 5$), see \cite{DanielsYoucis}. We have the following conjectural refinement of Theorem \ref{Thm:IgusaMain}. 
\begin{Conj} \label{Conj:IgusaMainInt}
Given a parahoric model $\mathcal{G}$ of $G$ with $K_p=\mathcal{G}(\zp)$, we consider the fiber product
  \[ \begin{tikzcd} \label{Eq:ConjectureCartesianDiagramInt}
    S \arrow{r}{\pi_{\mathrm{crys}}} \arrow{d} & \shtgmuone \arrow{d}{\mathrm{BL}^{\circ}} \\
    \igs\gx \arrow{r}{\overline{\pi}_\mathrm{HT}} & \bungmu.
  \end{tikzcd} \]
Then $S \to \shtgmu \to \operatorname{Sht}_{\mathcal{G}^c, \mu^c}$ is $\gafp$-equivariantly isomorphic to $\scrs_{K_p}\gx^{\diamond} \to \operatorname{Sht}_{\mathcal{G}^c, \mu^c}$.
\end{Conj}
If $\gx$ is of Hodge type, then $\g=\g^c$ and this is \cite[Theorem VII]{DvHKZIgusaStacks}. The unramified PEL type case is \cite[Theorem 1.3]{ZhangThesis}.

\subsubsection{} Now assume that Conjecture \ref{Conj:IgusaMainInt} holds. Let $\mathcal{G}$ be a quasi-parahoric integral model of $G$ and write $K_p=\mathcal{G}(\zp)$. Then we can take the reduction of \eqref{Eq:ConjectureCartesianDiagramInt} as in \cite[Section 4.3.1]{DvHKZIgusaStacks} (see Section \ref{subsub:ReductionShtukaBung})
\begin{equation}
    \begin{tikzcd}
        \shginf \arrow{r} \arrow{d} & \shtlocgmuone \arrow{d} \\
        \igs \gx^{\mathrm{red}} \arrow{r} & \gisocmu.
    \end{tikzcd}
\end{equation}
It follows from \cite[Proposition 3.0.21, Lemma 3.0.2]{DvHKZIgusaStacks} that the map $\shtlocgmuone \to \gisocmu$ is v-surjective, and thus that $\operatorname{Sh}_{K_p}\gx \to \igs \gx^{\mathrm{red}}$ is v-surjective. 

\subsection{Conjectural exotic Hecke correspondences} \label{Sub:Conjectures} In this section we state a conjecture on the existence of exotic isomorphisms between Igusa stacks, and explain that it generalizes a conjecture of Xiao--Zhu on the existence of exotic Hecke correspondences. 

\subsubsection{} \label{Sec:StandardSetup} Let us start with a Shimura datum $\gx$ of abelian type, a prime $p$ and an isomorphism $\mathbb{C} \isom \qpbar$. Let $\mathsf{P}$ be a $\g$-torsor over $\spec \mathbb{Q}$ that is trivial over $\spec \mathbb{Q}_{\ell}$ for all $\ell \not=p$, and let $\g'=\operatorname{Aut}_{\g}(\mathsf{P})$. We let $\x'$ be a Shimura datum for $\g'$. We consider the following condition on $\x'$ (see Sections \ref{subsub:RealBGShimura}, \ref{subsub:RealTwistKottwitz} for the definitions of the involved objects).
\begin{Assump} \label{Assump:Infinity}
Under the natural map $\beta_{\mathsf{P}_{\R}}:B(\R, \g) \to B(\R, \g')$, the class $[\x]$ is mapped to $[\x']$.
\end{Assump}

\subsubsection{} Our isomorphism $\mathbb{C} \isom \qpbar$ defines $p$-adic places $v$ and $v'$ of the reflex fields $\mathsf{E}$ and $\mathsf{E}'$ of $\gx$ and $\gxp$, and we let $E$ and $E'$ be the respective $p$-adic completions. We write $G=\g \otimes \qp$ and $G' = \g' \otimes \qp$. Let $\mu$ be the $G(\qpbar)$-conjugacy class of Hodge cocharacters induced by $\mathsf{X}$ and $v$, and let $\mu'$ be the $G'(\qpbar)$-conjugacy class of Hodge cocharacters induced by $\mathsf{X}'$ and $v'$.

\subsubsection{} The $G$-torsor $P=\mathsf{P} \otimes \qp$ over $\spec \qp$ defines a $\spd \fp$-point $P:\spd \fp \to \bun_G$ by pullback. Then as explained in \cite[Section III.4.1]{FarguesScholze} the map
\begin{align}
    \beta_{P}:\bun_G &\to \bun_{G'} \\    
    \mathcal{E} &\to \operatorname{Isom}_{G}(\mathcal{E}, P)
\end{align}
is an isomorphism, which takes $P$ to the trivial $G'$-torsor on the Fargues--Fontaine curve. We consider the open substacks $\bungmu \subset \bun_G$ and $\bungpmup \subset \bun_{G'}$ and consider
\begin{align}
    \bungpmumup:=\bungpmup \times_{\bun_{G'}} \beta_P(\bungmu).
\end{align}
\subsubsection{} We now define two open substacks of our Igusa stacks via pullback diagrams 
\begin{equation}
    \begin{tikzcd}
        \igs \gx_{-\mu'} \arrow{r} \arrow{d} & \igs \gx \arrow{d}{\beta_P} \\
        \bungpmumup \arrow{r} & \bun_{G'}
    \end{tikzcd}
        \begin{tikzcd}
        \igs \gxp_{-\mu} \arrow{r} \arrow{d} & \igs \gxp \arrow{d} \\
        \bungpmumup \arrow{r} & \bun_{G'}.
    \end{tikzcd}
\end{equation}
We have the following quite general conjecture. Choose an isomorphism $\kappa^p:\mathsf{P} \otimes \afp \isom \g \otimes \afp$ which induces an isomorphism $\gp \otimes \afp \isom \g \otimes \afp$ which we will use to identify $\gp(\afp)$ with $\gafp$.
\begin{Conj} \label{Conj:Main}
If Assumption \ref{Assump:Infinity} holds and $\Sha^1(\mathbb{Q},\g)=0$, then there is a $\gafp$-equivariant isomorphism
    \begin{align}
        \igs \gx_{-\mu'} \to \igs \gxp_{-\mu}
    \end{align}
    of v-stacks over $\bungpmumup$. 
\end{Conj}
\begin{Rem}
    The assumption that $\Sha^1(\mathbb{Q},\g)=0$ can be removed if one replaces the Shimura varieties (and thus the Igusa stacks) for $\gx$ with the rational Shimura varieties for $\gx$ of Sempliner--Taylor \cite{SemplinerTaylorShimura} or the extended Shimura varieties of Xiao--Zhu \cite{XiaoZhu2}. The conjecture should moreover be generalized to allow $\mathsf{P}$ to be a basic Kottwitz cocycle in the sense of \cite{SemplinerTaylorShimura}, although then $P$ generally only defines an $\fpbar$-point of $\bung$ and so the generalized conjecture would be for Igusa stacks over $\fpbar$.
\end{Rem}
\begin{Rem}
The stack $\bungmumup$ is nonempty precisely when the set $\bgpmumup:=\beta_{P}(\bgmu) \cap \bgmup$ is nonempty. If it is empty, then Conjecture \ref{Conj:Main} is vacuous. Assumption \ref{Assump:Infinity} can be seen as an Archimedean analogue of the nonemptiness of $\bgpmumup$, although it is not clear to us that the conjecture is vacuously true if Assumption \ref{Assump:Infinity} does not hold.
\end{Rem}
\begin{Rem}
The conjecture extends verbatim to the larger Igusa stacks $\operatorname{Igs} \gx$ constructed in \cite{KimFunctoriality},\cite{DvHKZIgusaStacksII}. 
\end{Rem}

\subsubsection{} Taking the reduction of the stacks and morphisms in the previous conjecture, we obtain an isomorphism (see Section \ref{subsub:ReductionShtukaBung} and Lemma \ref{Lem:NewtonStrata})
\begin{align}
    \beta_P:\gisoc \to \gpisoc, 
\end{align}
a closed substack $\gpisocmumup \subset \gpisoc$ and Cartesian diagrams
\begin{equation}
    \begin{tikzcd}
        \igs \gx_{-\mu'}^{\mathrm{red}} \arrow{r} \arrow{d} & \igs \gx^{\mathrm{red}} \arrow{d}{\beta_P} \\
        \gpisocmumup \arrow{r} & \gpisoc
    \end{tikzcd}
        \begin{tikzcd}
        \igs \gxp_{-\mu}^{\mathrm{red}} \arrow{r} \arrow{d} & \igs \gxp^{\mathrm{red}} \arrow{d} \\
        \gpisocmumup \arrow{r} & \gpisoc.
    \end{tikzcd}
\end{equation}
We have the following direct consequence of Conjecture \ref{Conj:Main} for reductions of the Igusa stacks.
\begin{Conj} \label{Conj:MainII}
If Assumption \ref{Assump:Infinity} holds and $\Sha^1(\mathbb{Q},\g)=0$, then there is a $\gafp$-equivariant isomorphism
    \begin{align}
        \igs \gx^{\mathrm{red}}_{-\mu'} \to \igs \gxp_{-\mu}^{\mathrm{red}}
    \end{align}
    of v-stacks over $\gpisocmumup$. 
\end{Conj}
As a corollary of Conjecture \ref{Conj:MainII}, we get the following consequence for Igusa varieties. Recall that for $b:\spec \ovfp \to \gisocmu$, respectively, $b':\spec \ovfp \to \gpisocmup$ we define perfect Igusa varieties as the following fiber products (see \cite[Lemma 4.3.3]{DvHKZIgusaStacks})
\begin{equation}
    \begin{tikzcd}
        \operatorname{Ig}^{ b}\gx \arrow{r} \arrow{d} & \spec \ovfp \arrow{d}{b} \\
        \igs \gx^{\mathrm{red}} \arrow{r} & \gisocmu
    \end{tikzcd}
     \begin{tikzcd}
        \operatorname{Ig}^{ b'}\gxp \arrow{r} \arrow{d} & \spec \ovfp \arrow{d}{b'} \\
        \igs \gxp^{\mathrm{red}} \arrow{r} & \gpisocmup.
    \end{tikzcd}
\end{equation}
Note that $\operatorname{Ig}^{ b}\gx$ receives an action of the $\sigma$-centralizer $G_b(\qp)$ of $b$ and that $\operatorname{Ig}^{ b'}\gxp$ receives an action of the $\sigma$-centralizer $G'_{b'}(\qp)$ of $b'$.

\begin{Conj} \label{Conj:IgusaVarieties}
If Assumption \ref{Assump:Infinity} holds and $\Sha^1(\mathbb{Q},\g)=0$, then for $b':\spec \ovfp \to \gpisocmumup$ with corresponding morphism $b:\spec \ovfp \to \gisocmu$ (under $\beta_P^{-1}$), there is a $\g(\afp) \times G_b(\qp)$-equivariant isomorphism
    \begin{align}
        \operatorname{Ig}^{b}\gx \isom \operatorname{Ig}^{ b'}\gxp,
    \end{align}
    where we identify $G'_{b'}(\qp) = G_b(\qp)$ using $\beta_{P}$.
\end{Conj}
\begin{Rem}
One can similarly formulate a conjecture about $\g(\afp) \times \tilde{G}_b$-equivariant isomorphisms of the v-sheaf Igusa varieties of \cite[Section 4.4]{DvHKZIgusaStacks}, which similarly follows from Conjecture \ref{Conj:Main}.
\end{Rem}

\subsubsection{} We now assume that Conjecture \ref{Conj:IgusaMainInt} holds.\footnote{If $\g=\g^c$, this is known by \cite{DanielsYoucis} (for $p>2$) and \cite{MaoWu} (in general).} Choose parahoric models $\mathcal{G}$ and $\mathcal{G}'$ of $G$ and $G'$ respectively and consider the fiber product
\begin{equation}
    \begin{tikzcd}
        \shtlocgmumup \arrow{r} \arrow{d} & \shtlocgmu \arrow{d}{\beta_P} \\
    \shtlocgpmup \arrow{r} & \gpisoc.
    \end{tikzcd}
\end{equation}
Note that the image of $\shtlocgmumup \to \gpisoc$ is precisely $\gpisocmumup$. Moreover, the stack $\shtlocgmumup$ fits in a fiber product diagram
\begin{equation}
    \begin{tikzcd}
        \shtlocgmumup \arrow{r} \arrow{d} & \shtlocgmu \times \shtlocgpmup \arrow{d} \\
     \gpisocmumup \arrow{r}{\beta_P^{-1} \times 1} & \gisoc \times \gpisoc.
    \end{tikzcd}
\end{equation}
Finally, by Corollary \ref{Cor:IndRepresentableII}, the morphisms $\shtlocgmumup \to \shtlocgmu, \shtlocgpmup$ are representable in ind-(perfectly proper and perfectly finitely presented perfect algebraic spaces). 

\subsubsection{} \label{subsub:IgusaConjToCorrConj} We continue to assume that Conjecture \ref{Conj:IgusaMainInt} holds. Let $K_p=\mathcal{G}(\zp)$ and $K_p'=\mathcal{G}'(\zp)$ and let $\shginf$ (resp. $\shgpinf$) be the special fibers of the integral models of the Shimura varieties for $\gx$ (resp. $\gxp$) of level $K_p$ (resp. $K_p'$). Assume Conjecture \ref{Conj:MainII} and basechange the graph of the isomorphism $\alpha$ of Conjecture \ref{Conj:MainII} along the natural map 
\begin{align}
    \shginf \times \shgpinf \to \igs \gx^{\mathrm{red}} \times \igs \gxp^{\mathrm{red}}.
\end{align}
We obtain using Lemma \ref{Lem:IndRepresentableI} an ind-(perfect scheme)
\begin{align}
    \operatorname{Sh}_{\mathsf{P}, \mu,\mu'} \to \shginf \times \shgpinf
\end{align}
which fits in a $2$-commutative diagram 
\begin{equation}
\begin{tikzcd}
   \operatorname{Sh}_{\mathsf{P}, \mu,\mu'} \arrow{r} \arrow{d} & \shginf\times \shgpinf \arrow{d} \\
    (\igs \gx^{\mathrm{red}})_{-\mu'} \arrow{r}{1 \times \alpha} & (\igs \gx^{\mathrm{red}})_{-\mu'} \times (\igs \gxp^{\mathrm{red}})_{-\mu}.
\end{tikzcd}
\end{equation}
Since $\alpha$ is an isomorphism, we see that $\operatorname{Sh}_{\mathsf{P}, \mu,\mu'} \simeq\shginf \times_{(\igs \gx^{\mathrm{red}})_{-\mu'}} \shgpinf$ and fits in a $2$-commutative cube
\begin{equation} \label{Eq:TheCube}
  \begin{tikzcd}[column sep=tiny, row sep=tiny]
    & \shginf \arrow[rr] \arrow[dd] & &  (\igs \gx^{\mathrm{red}})_{-\mu'} \arrow{dd} \\
     \msh_{P, \mu,\mu'}\arrow[rr, crossing over] \arrow{ur} \arrow{dd} & &  \shgpinf  \arrow{ur}\\
    &  \shtlocgmu \arrow{rr} & & \gisocmumup \\
      \shtlocgmumup \arrow{rr}{\overline{\pi}_{\mathrm{HT}}} \arrow{ur} & & \arrow{ur} \arrow[from=uu, crossing over] \shtlocgpmup.
  \end{tikzcd}
\end{equation}
The top and bottom faces as well as the left and right faces of the cube are $2$-Cartesian, which implies that the front and back faces are also Cartesian. To conclude, we find that $\operatorname{Sh}_{\mathsf{P}, \mu,\mu'}$ satisfies Conjecture \ref{Conj:MainIII} below.
\begin{Conj} \label{Conj:MainIII}
If Assumption \ref{Assump:Infinity} holds and $\Sha^1(\mathbb{Q},\g)=0$, then there is an ind-(perfect scheme) $\operatorname{Sh}_{\mathsf{P}, \mu,\mu'}$ and a $2$-commutative diagram
    \begin{equation}
        \begin{tikzcd}
            \operatorname{Sh}_{K_p}\gx \arrow{d} & \operatorname{Sh}_{\mathsf{P}, \mu,\mu'} \arrow{r} \arrow{l} \arrow{d} & \operatorname{Sh}_{K_p'}\gxp \arrow{d} \\
            \shtlocgmu & \shtlocgmumup \arrow{r} \arrow{l} & \shtlocgpmup
        \end{tikzcd}
    \end{equation}
    such that both squares are $2$-Cartesian.
\end{Conj}
If $P$ is the trivial torsor and $\mathcal{G}$ and $\mathcal{G}'$ are reductive group schemes over $\zp$, then this recovers a conjecture of Xiao--Zhu, see \cite[Hypothesis 7.3.2]{XiaoZhu}. 

\begin{Rem}
    We have seen that Conjecture \ref{Conj:Main} implies Conjecture \ref{Conj:MainII}. In this article, we will show that Conjecture \ref{Conj:MainII} implies Conjecture \ref{Conj:Main} under some hypotheses. 
\end{Rem}

\section{Isogenies and Igusa stacks} \label{Sec:IgusaStacks} 

We start this section by recalling the definition of Igusa stacks for Hodge type Shimura varieties and their relation to canonical integral models at parahoric level of these Shimura varieties; here we follow \cite[Section 5]{DvHKZIgusaStacks}. In Section \ref{Sub:ReductionIgusaStack} we prove Theorem \ref{Thm:IntroIgusaDCirc}.

\subsection{Recollection on integral models of Shimura varieties of Hodge type} 
Let $\gx$ be a Shimura datum of Hodge type with reflex field $\mathsf{E}$, let $p$ be a prime and write $G=\mathsf{G}_{\qp}$. Fix a prime $v$ above $p$ of $\mathsf{E}$ and let $E$ be the completion of $\mathsf{E}$ at $v$. 
\subsubsection{} \label{subsub:XiChoice} Let $\mathcal{G}$ be a stabilizer Bruhat--Tits model of $G$ over $\zp$ and let $K_p=\mathcal{G}(\zp)$. Because $\mathcal{G}$ is the stabilizer of a point in the extended building $\mathcal{B}^e(G,\qp)$, it follows from the discussion in \cite[Section~1.3.2]{KMPS} that there exists a Hodge embedding $\iota:\gx \to \gvx$ and a $\zp$-lattice $V_{\zp} \subset V_{\qp}$ on which $\psi$ is $\zp$-valued, such that $\mathcal{G}(\zpbr)$ is the stabilizer in $G(\qpbr)$ of $V_{\zp} \otimes_{\zp} \zpbr$. By Zarhin's trick, see \cite[Remark 2.2.4]{ShenYuZhang}, we may moreover assume (after possibly changing $\iota$ and the symplectic space) that $V_{\zp}$ is a self-dual lattice. The fact that $\mathcal{G}(\zpbr)$ stabilizes $V_{\zpbr}$ implies, by \cite[Corollary 2.10.10]{KalethaPrasad}, that $G \to G_V$ extends to a morphism $\mathcal{G} \to \mathcal{G}_{V}=\mathrm{GSp}(V_{\zp})$. We denote these data by $\Xi = (\mathcal{G}, \iota, V_{\zp})$.

\subsubsection{} Let $\Xi$ be as above. By \cite[Lemma 2.1.2]{KisinModels}, for $K^p \subset \gafp$ we can find $M^p \subset \gv(\afp)$ containing $K^p$ such that the natural map
\begin{align}
    \msh_{K}\gx \to \msh_{M} \gvx \otimes_{\mathbb{Q}_p} E
\end{align}
is a closed immersion, where $M_p=\mathrm{GSp}(V_{\zp})(\zp)$ and $M=M^pM_p$ and $K=K^pK_p$. We then define $\scrs_{K}\gx$ to be the normalization of the Zariski closure of $\msh_{K}\gx$ in $\scrs_{M}\gvx \otimes_{\zp} \mathcal{O}_E$. By \cite[Theorem~4.1.3]{DanielsVHKimZhangCompanion}, the integral model $\scrs_{K}\gx$ is an integral canonical model which admits an essentially unique morphism
\begin{align}
    \scrs_{K}\gx^{\diamond} \xrightarrow{\pi_{\mathrm{crys}, \mathcal{G}}} \shtgmuone. 
\end{align}
We will also consider
\begin{align}
    \scrs_{K_p}\gx:=\varprojlim_{K^p \subset \mathsf{G}(\afp)} \scrs_{K^pK_p}\gx,
\end{align}
which is equipped with an action of the locally profinite group scheme $\ul{\mathsf{G}(\afp)}$.  

\subsubsection{Etale Local Systems} \label{Sec:EtaleTensors} We now summarize \cite[Section 5.1.8, 5.1.9]{DvHKZIgusaStacks}. We will write $\pi:A_{\Xi} \to \scrshg$ for the pullback along $\scrs_{K}\gx \to \scrs_M\gvx$ of the universal abelian variety up to prime-to-$p$ isogeny over $\scrs_M\gvx$. We will write $\mathcal{V}^p$ for the dual of the pro-\'etale sheaf
\begin{align}
    R^1 \pi_{\ast} \underline{\afp} 
\end{align}
on $\scrs_{K}\gx$. It is explained in \cite[Section 5.1.4]{KisinShinZhu} that there is an exact tensor functor
\begin{align}
    \mathbb{L}^p:\operatorname{Rep}_{\mathbb{Q}}(\mathsf{G}) \to \{\afp \text{ local systems on } \scrs_{K}\gx\}
\end{align}
such that $\mathbb{L}^p(V)=\mathcal{V}^p$, where $V$ is the representation induced by $\mathsf{G} \to \gv \to \operatorname{GL}_V$.\footnote{As in \cite[Section 5.1.8, 5.1.9]{DvHKZIgusaStacks}, we note that what \cite[Section 5.1.4]{KisinShinZhu} calls $\mathcal{V}^p$ is $\mathbb{L}^p(V^{\ast})$ in our notation.}Over $\scrs_{K_p}\gx$, there is a canonical isomorphism
\begin{align}
    \eta: \mathcal{V}^p\isom V \otimes \underline{\mathbb{A}}_{f}^{p},
\end{align}
of $\gafp$-torsors, see \cite[Lemma 5.1.9]{KisinShinZhu}.
\subsubsection{Crystals}\label{subsub:CrystallineTensors} Let $S=\spa(R,R^+)$ be an object in $\perf$ with an untilt $S^\sharp=\spa(R^\sharp, R^{\sharp+})$, and let $x$ be a morphism $x:\spf R^{\sharp+} \to \scrshat_K\gx$. By pulling back $A_{\Xi}$ along
\begin{align}
    \spf R^{\sharp+} \to \scrshat_{K_p}\gx \to \scrshat_{M}\gvx \otimes_{\zp} \mathcal{O}_E,
\end{align}
we get a formal abelian scheme up to prime-to-$p$ isogeny $A_x \to \spf R^{\sharp+}$. As explained in \cite[Section 2.5.3]{DvHKZIgusaStacks}, this gives rise to a vector bundle $\mathcal{E}(A_x)$ on the relative Fargues--Fontaine curve $X_S$. \smallskip 

The untilt $S^\sharp$ along with the composition
\begin{align}
    \spa(R^{\sharp},R^{\sharp+}) \to \spa R^{\sharp+} \to (\scrshat_{K_p}\gx)^\mathrm{ad}
\end{align}
determines a point $\tilde{x}$ of $\scrs_{K_p}\gx^\diamond(S)$. In turn, via the composition
\begin{equation}\label{Eq:MapToBunG}
    \scrs_{K_p}\gx^\diamond \xrightarrow{\pi_\mathrm{crys}} \shtgmu \xrightarrow{\mathrm{BL}^{\circ}} \bun_G,
\end{equation}
$\tilde{x}$ determines a $G$-torsor $Q$ over $X_S$. By the Tannakian description of $G$-torsors, see \cite[Appendix to Lecture 19]{ScholzeWeinsteinBerkeley}, this gives rise to an exact tensor functor
\begin{align}
    \mathbb{L}_{\mathrm{crys},x}:\operatorname{Rep}_{\qp}(G) \to
    \{ \text{vector bundles on } X_S\}.
\end{align}
Let $Y_x := A_x[p^\infty]$ be the $p$-divisible group over $R^{\sharp +}$ associated with $A_x$. By construction, the composition
\begin{align*}
\spa(R^{\sharp},R^{\sharp+}) \to \scrs_{K_p}\gx^\diamond \xrightarrow{\pi_\mathrm{crys}} \shtgmu \to \mathrm{Sht}_{\mathcal{GL}(V)}
\end{align*}
corresponds to the shtuka $\mathscr{V}(Y_x)$ defined in \cite[Section 2.4]{DvHKZIgusaStacks}, see \cite[Section 4.6.3]{PappasRapoportShtukas}. It then follows from \cite[Lemma 2.5.5]{DvHKZIgusaStacks} that there is a canonical isomorphism
\[
  \mathbb{L}_{\mathrm{crys},x}(V) \simeq \mathcal{E}(A_x) \simeq \operatorname{BL}^{\circ}(\mathscr{V}(Y_x)).
\]

\subsubsection{On crystalline tensors} \label{sub:CrystallineTensorsII} If $\mathcal{G}$ is reductive, then $X=\scrshat_{K}\gx$ is a smooth $p$-adic formal scheme, see \cite{KisinModels}, \cite{KimMadapusi}. Recall that the morphism
\begin{align}
    X^{\diamond} \to \shtgmu
\end{align}
is uniquely characterized by the property that $X^{\diamond}_{\eta} \to \shtgmu$ recovers $U_{\mathrm{sht}}$ applied to the de Rham $\mathcal{G}(\zp)$-local system given by $\msh_{K^p}\gx^{\circ, \lozenge} \to X^{\diamond}_{\eta}$, see Section \ref{subsub:FullyfaithfulShtukasPrisms}. Thus it makes sense to ask if $X^{\diamond} \to \shtgmu$ comes (necessarily uniquely) from an object
\begin{align}
    \omega_{\prism} \in \mathcal{G}-\operatorname{Vect}^{\mathrm{an}, \varphi}(X_{\prism}).
\end{align}
We have the following theorem of Imai--Kato--Youcis \cite[Theorem A]{ImaiKatoYoucis} and Madapusi--Youcis \cite[Theorem C]{MadapusiYoucis}.
\begin{Thm} \label{Thm:ImaiKatoYoucis}
    If $\mathcal{G}$ is reductive, then there is an object $\omega_{\prism} \in \mathcal{G}-\operatorname{Vect}^{\mathrm{an}, \varphi}(X_{\prism})$ such that $T_{\mathrm{sht}}(\omega_{\prism})$ gives $\scrs_{K}\gx^{\diamond} \to \shtgmu$.
\end{Thm}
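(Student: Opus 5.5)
The strategy is to build $\omega_{\prism}$ from the universal abelian scheme and its crystalline tensors, and then to check that its shtuka realization agrees with $\pi_{\mathrm{crys},\mathcal{G}}$ on the generic fibre. Fullness of the restriction functor will then upgrade this to an isomorphism over all of $X^{\diamond}$.

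\textbf{Step 1: the $\mathcal{GL}(V_{\zp})$-object.} The universal $p$-divisible group $A_{\Xi}[p^\infty]$ over the smooth formal scheme $X=\scrshat_K\gx$ determines, via its prismatic Dieudonné module (Anschütz--Le Bras), an object of $\operatorname{Vect}^{\mathrm{an},\varphi}(X_{\prism})$. Its étale realization is the crystalline $\zp$-local system $T_p(A_{\Xi})^{\vee}$ on $X_\eta$.

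\textbf{Step 2: equipping it with $\mathcal{G}$-structure.} Choose tensors $(s_\alpha)\subset V_{\zp}^{\otimes}$ whose pointwise stabilizer is $\mathcal{G}$; this is possible since $\mathcal{G}$ is reductive and the embedding $\mathcal{G}\to\mathcal{GL}(V_{\zp})$ is a closed immersion. The étale tensors $s_{\alpha,\mathrm{et}}$ on $X_\eta$ are crystalline. By the equivalence
\begin{align}
\operatorname{Vect}^{\mathrm{an},\varphi}(X_{\prism})\simeq\operatorname{Loc}^{\crys}_{\zp}(X_\eta),
\end{align}
which is the top arrow of the diagram in Section~\ref{subsub:FullyfaithfulShtukasPrisms}, they come from prismatic tensors $s_{\alpha,\prism}$. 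The functor $\omega_{\prism}$ is then the $\mathcal{G}$-torsor $\underline{\operatorname{Isom}}\big((V_{\zp},s_\alpha),(\mathcal{M},s_{\alpha,\prism})\big)$.

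\textbf{Step 3: local triviality (main obstacle).} The key point is to show that this Isom-sheaf is a $\mathcal{G}$-torsor in the analytic prismatic topology, equivalently that the $\mathcal{G}$-structure is locally trivial. I would first reduce to points, in particular to $\spf\mathcal{O}_K$ for $K/\breve{\mathbb{Q}}_p$ finite and to the Breuil--Kisin prism. There one invokes Kisin's result (and Anschütz's extension over $\mathfrak{S}$ away from the closed point) that tensors on a crystalline lattice with reductive stabilizer define a $\mathcal{G}$-torsor which is trivial on the punctured spectrum and extends by purity. To pass from points to the whole smooth $X$, I would use that $\mathcal{G}$-torsors over regular two-dimensional prism bases extend across codimension two. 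I would also use that the torsor property can be checked at crystalline points, which are dense because $X$ is smooth and points with values in $\mathcal{O}_K$ lift residue points by Hensel's lemma. This fibrewise verification is where reductivity of $\mathcal{G}$, hence the unramifiedness hypothesis, is essential.

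\textbf{Step 4: comparison with $\pi_{\mathrm{crys},\mathcal{G}}$.} By the 2-commutative diagram of Section~\ref{subsub:FullyfaithfulShtukasPrisms}, the composite $T_{\mathrm{sht}}(\omega_{\prism})|_{X_\eta^\lozenge}$ equals $U_{\mathrm{sht}}$ applied to the $\mathcal{G}(\zp)$-local system $(T_p,s_{\alpha,\mathrm{et}})$. This is exactly the de Rham local system given by $\msh_{K^p}\gx^{\circ,\lozenge}\to X_\eta^\diamond$. The restriction functor $\shtg(X^\lozenge)\to\shtg(X^\lozenge_\eta)$ is fully faithful (Pappas--Rapoport), and $\pi_{\mathrm{crys},\mathcal{G}}$ is characterized by the same generic-fibre property (Section~\ref{sub:CrystallineTensorsII}). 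Hence $T_{\mathrm{sht}}(\omega_{\prism})\simeq\pi_{\mathrm{crys},\mathcal{G}}$, which proves the theorem.
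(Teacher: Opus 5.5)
Your Step~4 is the right way to finish. Step~3, which you correctly flag as the crux, does not work as sketched. For $\underline{\operatorname{Isom}}\big((V_{\zp},s_\alpha),(\mathcal{M},s_{\alpha,\prism})\big)$ to give a $\mathcal{G}$-object in $\operatorname{Vect}^{\mathrm{an},\varphi}(X_{\prism})$, it must be a $\mathcal{G}$-torsor over $\spec A\setminus V(p,I)$ for \emph{every} prism $(A,I)$ in $X_{\prism}$. Nothing you say reduces this to Breuil--Kisin prisms of $\mathcal{O}_K$-points:
\begin{itemize}
\item Density of classical points, and Hensel lifting of residue points, are statements about $X_\eta$ and $X$. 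They give no control over the analytic locus of an arbitrary prism over $X$.
\item Such prisms need not be noetherian. Already the Breuil--Kisin prism $W(k)[[u]]\langle T_1^{\pm1},\dots,T_d^{\pm1}\rangle$ of a framed affine open is regular of dimension $d+2$, not $2$.
\item The purity results you invoke (Kisin, Ansch\"utz) concern extending torsors across the closed locus $V(p,I)$. Analytic crystals never require this; it is the input for the non-analytic category $\operatorname{Vect}^{\varphi}(X_{\prism})$.
\end{itemize}
Also, reductivity of $\mathcal{G}$ enters here first of all by making $X=\scrshat_K\gx$ smooth (Section~\ref{sub:CrystallineTensorsII}), so that the prismatic comparison theorems apply at all; it is not primarily a fibrewise input. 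What is missing is a genuine criterion for local triviality, equivalently exactness, on the analytic prismatic site. That is precisely the Tannakian content of the work of Imai--Kato--Youcis.

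You can avoid Steps~2--3 altogether. The top arrow of the diagram in Section~\ref{subsub:FullyfaithfulShtukasPrisms} is not the equivalence of underlying categories, as you say. It is the functor $T_{\text{\'et}}$ on $\mathcal{G}$-objects, and the paper records that it is an equivalence \cite[Proposition 2.22]{ImaiKatoYoucisTannakian}. So it suffices that the \'etale $\mathcal{G}(\zp)$-local system $\omega_{\text{\'et}}$ lies in $\mathcal{G}-\operatorname{Loc}^{\mathrm{crys}}_{\zp}(X_\eta)$, i.e.\ that $\omega_{\text{\'et}}(W)$ is crystalline for every $W\in\operatorname{Rep}_{\zp}\mathcal{G}$. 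Crystallinity is a rational condition and $\operatorname{Rep}_{\qp}G$ is semisimple, so each $W[1/p]$ is a direct summand of a tensor construction on $V$. Hence this reduces to crystallinity of the Tate module of $A_{\Xi}$, which you already use in Step~1. A preimage of $\omega_{\text{\'et}}$ under $T_{\text{\'et}}$ is the desired $\omega_{\prism}$, and your Step~4 then identifies $T_{\mathrm{sht}}(\omega_{\prism})$ with $\pi_{\mathrm{crys},\mathcal{G}}$. The paper does not prove this theorem itself; it quotes \cite[Theorem A]{ImaiKatoYoucis} and \cite[Theorem C]{MadapusiYoucis}, and the argument above is the natural one given the equivalence the paper records.
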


\subsection{Recollections on Igusa stacks} We now review the construction of the Igusa stack of \cite[Section 5.2]{DvHKZIgusaStacks}. 

\subsubsection{}
Suppose we are given a Huber pair $(R, R^+)$ together with a choice of untilts $R^{\sharp_i}$ for $i=1,2$ and a pair of abelian schemes up to prime-to-$p$ isogeny $A_i/\spf(R^{\sharp+}_i)$. We recall the following definition from \cite[Section 2.5.1]{DvHKZIgusaStacks}. 
\begin{Def}\label{defn:formalQIsog}
A \emph{formal quasi-isogeny} $f:A_1 \dashrightarrow A_2$ is a quasi-isogeny
    \[
        f: A_1 \otimes_{R^{\sharp_1+}} R^+/\varpi \dashrightarrow A_2 \otimes_{R^{\sharp_2+}} R^+/\varpi
    \]
for some pseudouniformizer $\varpi \in R^+$ such that there is a natural map $R^{\sharp+}_i \to R^+/\varpi$ for both values of $i$. This definition does not depend on the choice of $\varpi$ by Serre--Tate lifting as explained in loc. cit.
\end{Def}
\subsubsection{} Choose $\Xi$ as in Section \ref{subsub:XiChoice}. Suppose that $A_1, A_2$ arise from points $x_1: \spf(R^{\sharp_{1}+}) \to \scrshat_{K_p}\gx, x_2: \spf(R^{\sharp_2+}) \to \scrshat_{K_p}\gx$. 
\begin{Def} \label{defn:FormalQIsogStructure}
A formal quasi-isogeny $f:A_1 \dashrightarrow A_2$ is $\g$\emph{-structure preserving away from $p$} if the induced map on the prime-to-$p$ adelic Tate modules
    \[
        \mathcal{V}^p(f): \mathcal{V}^p(A_{1} \otimes R^+/\varpi) \to \mathcal{V}^p(A_{2} \otimes R^+/\varpi)
    \]
    fits into a commutative square
    \[
        \begin{tikzcd}
            &\mathcal{V}^p(A_{1} \otimes R^+/\varpi) \arrow{r}{\mathcal{V}^p(f)}\arrow{d}{\eta_{x_1}}& \mathcal{V}^p(A_{2} \otimes R^+/\varpi) \arrow{d}{\eta_{x_2}}\\
            &V \otimes \afp \arrow[r, equals]& V \otimes \afp.
        \end{tikzcd}
    \]
\end{Def}
For $x_i$ as above we write $\mathbb{L}_{x_i}$ for point of $\bun_{G}(R,R^+)$ coming from $x_i:\spd(R,R^+) \to \bun_{G}$ as before, and note that there is a canonical isomorphism $\mathcal{E}(A_{x_i}) \to \mathbb{L}_{x_i} \times^{G} V$, see Section \ref{subsub:CrystallineTensors}.
\begin{Def}\label{defn:CondnAtP}
    We say that a formal quasi-isogeny $f:A_1 \dashrightarrow A_2$ is \emph{$\g$-structure preserving at $p$} if the induced isomorphism (see \cite[Section 2.5.3]{DvHKZIgusaStacks})
      \[
        f \colon \mathcal{E}(A_{x_1}) \to \mathcal{E}(A_{x_2})
      \]
      of vector bundles on $X_S$ is induced by a (necessarily unique) isomorphism of $G$-bundles $\mathbb{L}_{\mathrm{crys},x_{1}} \isom \mathbb{L}_{\mathrm{crys},x_{2}}$. We say that a formal quasi-isogeny $f:A_1 \dashrightarrow A_2$ is \emph{$\g$-structure preserving} if $f$ is $\g$-structure preserving at $p$ and $\g$-structure preserving away from $p$.
\end{Def}
Given this we can recall \cite[Definition 5.2.4]{DvHKZIgusaStacks}.
\begin{Def} \label{Def:IgsPre}
We define $\igspre_{\Xi} \gx$ to be the presheaf of groupoids on $\perf$, whose value on $S=\spa(R,R^+)$ is the following groupoid: 
\begin{itemize}
    \item an object is a pair $(S^\sharp, x)$ where $S^\sharp = \spa(R^\sharp,
      R^{\sharp+})$ is an untilt of $S$ over $\mathcal{O}_E$ and $x$ is a map
      $\spf R^{\sharp+} \to \scrshat_{K_p}\gx$ of formal schemes over
      $\mathcal{O}_E$,
    \item a morphism $f \colon (S^{\sharp_1},x_1) \to (S^{\sharp_2},x_2)$ is a
      formal quasi-isogeny $A_1 \dashrightarrow A_2$ which is $\g$-structure preserving.
\end{itemize}
\end{Def}
This groupoid is $0$-truncated by \cite[Lemma 5.2.6]{DvHKZIgusaStacks} and we let $\igs\gx$ be the v-sheafification of its set isomorphism classes (which does not depend on $\Xi$ by \cite[Proposition 7.3.1]{DvHKZIgusaStacks}). There is a natural map $ \scrs_{K_p}\gx^{\diamond,\pre} \to \igspre_{\Xi}\gx$, and it follows from \cite[Theorem 6.0.1]{DvHKZIgusaStacks} that the following $2$-commutative diagram
\begin{equation} \label{Eq:TheDiagram} \begin{tikzcd} 
    \scrs_{K_p}\gx^{\diamond} \ar[r,"{\pi_\mathrm{crys}}"] \ar[d] & \shtgmuone
    \ar[d,"{\mathrm{BL}^\circ}"] \\ \igs\gx
    \arrow[r,"{\overline{\pi}_\mathrm{HT}}"] & \bun_G
  \end{tikzcd} \end{equation}
of small v-stacks on $\perf$ is 2-Cartesian.

\begin{Rem}
    Note that the definition of the morphisms in the category $\igspre_{\Xi}\gx(R,R^+)$ is quite subtle, in particular the untilt is allowed to change between the source and the target.
\end{Rem}

\subsubsection{} Choose $\Xi=(\mathcal{G},\iota, V_{\zp})$ as before. We now prove some extra properties of the Igusa stack under the assumption that $\mathcal{G}$ is reductive. Let $(R,R^+) \in \perf$ and let $(R^{\sharp_1}, R^{\sharp_1+}), (R^{\sharp_2}, R^{\sharp_2+})$ be two untilts of $(R, R^+)$. Let $x: \spf(R^{\sharp_1+}) \to \scrshat\gx, y: \spf(R^{\sharp_2+}) \to \scrshat\gx$ be two points of $\scrshat_{K_p}\gx$.
\begin{Lem}\label{Lem:IgusaStackIsNotTHATStrange}
Suppose that there is a pseudo-uniformizer $\varpi \in R^+$ such that the induced maps $x_0: \spec(R^+/\varpi) \to \spf(R^{\sharp_1+}) \to \scrshat\gx$, $y_0: \spec(R^+/\varpi) \to \spf(R^{\sharp_2+}) \to \scrshat\gx$ agree. If $\mathcal{G}$ is reductive, then there exists a canonical isomorphism $x \cong y$ as sections of $\igspre_{\Xi}\gx(R,R^+)$. 
\end{Lem}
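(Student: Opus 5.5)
The plan is to take the identity quasi-isogeny as the candidate morphism. Since $x_0 = y_0$ as maps $\spec(R^+/\varpi) \to \scrshat\gx$, the reductions $A_x \otimes R^+/\varpi$ and $A_y \otimes R^+/\varpi$ are both pullbacks of $A_\Xi$ along the same map, so they are canonically identified. We therefore get a formal quasi-isogeny $f = \mathrm{id}: A_x \dashrightarrow A_y$ in the sense of Definition \ref{defn:formalQIsog}. It remains to check that $f$ is $\g$-structure preserving, i.e.\ to verify the two conditions of Definitions \ref{defn:FormalQIsogStructure} and \ref{defn:CondnAtP}. Canonicity is automatic, since the groupoid is $0$-truncated.

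\textbf{Away from $p$.} The trivializations $\eta_x$ and $\eta_y$ are pulled back from the universal $\eta$ on $\scrs_{K_p}\gx$. The \'etale local system $\mathcal{V}^p$ is insensitive to nilpotent thickenings, so both $\eta_{x}$ and $\eta_y$ restricted to $R^+/\varpi$ equal the pullback of $\eta$ along $x_0 = y_0$. Hence the square commutes with $\mathcal{V}^p(f) = \mathrm{id}$.

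\textbf{At $p$.} This is the main step. We must show that the isomorphism $\mathcal{E}(A_x) \simeq \mathcal{E}(A_y)$ induced by $f$ comes from an isomorphism of $G$-bundles $\mathbb{L}_{\mathrm{crys},x} \simeq \mathbb{L}_{\mathrm{crys},y}$. Here reductivity is used.

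By Theorem \ref{Thm:ImaiKatoYoucis}, $\pi_{\mathrm{crys}}$ equals $T_{\mathrm{sht}}(\omega_\prism)$, so by Lemma \ref{Lem:PrismaticCompatibilityI} we have $\mathrm{BL}^\circ \circ \pi_{\mathrm{crys}} = \delta_\ast(D_{\crys}(\omega_\prism))$. Here $D_{\crys}(\omega_\prism)$ is a $G$-isocrystal on the special fiber $X_s$. By Construction \ref{Constr:AnalyticIsocrystal}, $\mathbb{L}_{\mathrm{crys},x}$ is obtained by pulling back this $G$-isocrystal along $\spec R^+/\varpi \to X_s$ and then applying $\delta$. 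The same holds for $y$ via $y_0$. Since $x_0 = y_0$, we obtain a canonical isomorphism of $G$-bundles $\mathbb{L}_{\mathrm{crys},x} \simeq \mathbb{L}_{\mathrm{crys},y}$. This uses that, via the equivalence with $\varphi$-modules over $\mathbf{B}^+_{\crys}(R^+/\varpi)$, the functor $\delta$ depends only on $R^+/\varpi$ and not on the untilt.

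The point to check carefully is that, after applying the standard representation $V$, this isomorphism agrees with the isomorphism $\mathcal{E}(A_x) \simeq \mathcal{E}(A_y)$ induced by $f$. Two compatibilities are needed:
\begin{itemize}
\item the $\mathbf{B}^+_{\crys}$-description of $\mathcal{E}(A)$ in \cite[Section 2.5.3]{DvHKZIgusaStacks}, which is built from the crystalline Dieudonn\'e module of $A \otimes R^+/\varpi$ and is therefore functorial in quasi-isogenies of the reduction;
\item the identification $D_{\crys}(\omega_\prism)(V) \simeq$ the (rational) crystalline Dieudonn\'e isocrystal of $A_\Xi$ on $X_s$, which follows from the Hodge-embedding compatibility of $\omega_\prism$ in \cite{ImaiKatoYoucis}.
\end{itemize}
I expect this matching of the two identifications of $\mathcal{E}(A_x)$ to be the main obstacle. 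I would handle it by reducing to the case $\mathcal{G} = \mathcal{G}_V$, where it is exactly the diagram \eqref{Eq:DiagramOfRealizations}. The $G$-structure then comes for free, because an isomorphism of $G$-bundles is determined by its image under a faithful representation; this is Lemma \ref{Lem:GStructurePreservingClosedPerfectoid}.
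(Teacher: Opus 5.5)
Your proposal is correct and takes the same route as the paper. You take the identity on the common reduction, note that the away-from-$p$ condition is immediate, and use the Imai--Kato--Youcis prismatic $F$-crystal together with Lemma \ref{Lem:PrismaticCompatibilityI} to see that $x_0=y_0$ forces an isomorphism $\mathbb{L}_{\mathrm{crys},x}\simeq\mathbb{L}_{\mathrm{crys},y}$ of $G$-bundles on $X_{(R,R^+)}$. The paper leaves implicit that this isomorphism matches $\mathcal{E}(f)$ under the standard representation; you rightly flag that check and handle it via the $\mathcal{G}_V$-case of diagram \eqref{Eq:DiagramOfRealizations}.
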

\begin{Rem}
    At first blush, this may seem a bit of a tautology. The subtlety here is that it is not clear \emph{a priori} that the identity map $A_x \otimes R^+/\varpi \to A_y \otimes R^+/\varpi$ is $\g$-structure preserving at $p$. We check that this is in fact true.
\end{Rem}
\begin{proof}[Proof of Lemma \ref{Lem:IgusaStackIsNotTHATStrange}]
    It suffices to show that the identity map $A_x \otimes R^+/\varpi \to A_y \otimes R^+/\varpi$ is $\g$-structure preserving at $p$, since it is clearly $\g$-structure preserving away from $p$. For this we will use the $F$-isocrystal with $G$-structure over $\scrshg$ constructed as in Section \ref{subsub:IsocrystalConstruction} from the prismatic $F$-crystal with $\mathcal{G}$-structure of Imai--Kato--Youcis (see Theorem \ref{Thm:ImaiKatoYoucis}). \smallskip
    
    Since $x_0=y_0$, the identity map induces an isomorphism of $F$-isocrystals with $G$-structure over $R^+/\varpi$, and thus by Lemma \ref{Lem:PrismaticCompatibilityI}, it induces an isomorphism of $G$-bundles on the Fargues--Fontaine curve $X_{(R,R^+)}$. In other words, the identity map is $\g$-structure preserving at $p$.
\end{proof}
\begin{Lem} \label{Lem:TrivialUntilt}
If $\mathcal{G}$ is reductive, then the natural map $\scrs_{K_p}\gx^{\diamond}_{k_{E}} \to \scrs_{K_p}\gx^{\diamond} \to \igs \gx$ is v-surjective.
\end{Lem}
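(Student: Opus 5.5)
Because $\igs\gx$ is the v-sheafification of $\igspre_{\Xi}\gx$, it suffices to show that every section of $\igspre_{\Xi}\gx$ over an affinoid perfectoid $S=\spa(R,R^+)$ is, v-locally on $S$, isomorphic in $\igspre_{\Xi}\gx(R,R^+)$ to a section in the image of $\scrs_{K_p}\gx^{\diamond,\pre}_{k_E}$. Here a section of $\scrs_{K_p}\gx^{\diamond,\pre}_{k_E}$ means a pair $(S^{\sharp},x)$ in which the untilt $S^\sharp$ is $S$ itself, viewed over $\mathcal{O}_E$ via $\mathcal{O}_E\to k_E$. The map $\scrs_{K_p}\gx^{\diamond} \to \igs\gx$ is already v-surjective, since it is a base change of $\mathrm{BL}^\circ:\shtgmuone\to\bun_{G,-\mu}$, which is v-surjective. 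So the real issue is to move the untilt to characteristic $p$, and the plan is to do this by lifting the mod $\varpi$ reduction of a point.

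Start with $(S^\sharp,x)$, where $x:\spf R^{\sharp+}\to\scrshat_{K_p}\gx$. By Lemma \ref{Lem:BasisTopology} we may pass to a v-cover and assume $S$ is w-contractible, for instance a product of points. Choose a pseudouniformizer $\varpi\in R^+$ with $\varpi^\sharp \mid p$ in $R^{\sharp+}$, so that $R^{\sharp+}/\varpi^\sharp\cong R^+/\varpi$. Then $x$ induces a point
\[x_0:\spec R^+/\varpi\to\scrs_{K}\gx_{k_E}.\]
Since $\mathcal{G}$ is reductive, $\scrs_{K}\gx$ is smooth over $\mathcal{O}_E$ (Kisin, Kim--Madapusi), and hence so is its special fiber over $k_E$. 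The ring $R^+$ is $\varpi$-adically complete, so formal smoothness lets us lift $x_0$ successively along $R^+/\varpi^n$ and obtain a map $y:\spf R^+\to\scrshat_{K}\gx_{k_E}$ lifting $x_0$. To do this at infinite level $K_p$, we lift at a finite level $K=K^pK_p$ and then pass to the limit. The transition maps in $K^p$ are finite étale, so the lift is unique once a lift at one level is fixed, and the tower of lifts is compatible. We then regard $(S,y)$ as an object of $\igspre_{\Xi}\gx(R,R^+)$ with the characteristic-$p$ untilt.

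By construction $x$ and $y$ agree after composing with $\spec R^+/\varpi$. Lemma \ref{Lem:IgusaStackIsNotTHATStrange} then applies, because $\mathcal{G}$ is reductive, and gives an isomorphism $x\cong y$ in $\igspre_{\Xi}\gx(R,R^+)$. Therefore the section represented by $x$ lies in the image of $\scrs_{K_p}\gx^{\diamond}_{k_E}$, and the map is v-surjective. In fact this shows surjectivity already on presheaf sections over every $S$, and in particular v-locally.

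The main obstacle is making the lifting step rigorous. The first point is to check that lifting along $R^+\to R^+/\varpi$ works for a smooth formal scheme over $k_E$ when $R^+$ is only $\varpi$-adically complete rather than noetherian. We would handle this with the infinitesimal lifting criterion at each square-zero step $R^+/\varpi^{n+1}\to R^+/\varpi^n$. For a non-affine target, this requires either that the obstructions vanish on affines or that we first work Zariski locally on $\spec R^+/\varpi$. This is where we would use that $S$ is w-contractible, since then $\spec R^+/\varpi$ has trivial higher Zariski cohomology in the relevant sense. Failing that, we would pass to an open cover of $S$ on which $x_0$ factors through an affine open. The second point is to verify that the infinite-level prime-to-$p$ structure $\eta$ transfers correctly from $x$ to $y$, so that the identity quasi-isogeny is $\g$-structure preserving away from $p$. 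This holds because $\eta$ depends only on the reduction mod $\varpi$, by the rigidity of étale local systems.
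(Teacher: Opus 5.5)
Your proposal is correct and follows essentially the paper's argument. You reduce to the presheaf level, lift $x \bmod \varpi$ to a point $y:\spf R^+\to\scrshat_{K_p}\gx$ with the characteristic-$p$ untilt using formal smoothness (available because $\mathcal{G}$ is reductive), and conclude with Lemma~\ref{Lem:IgusaStackIsNotTHATStrange}. The lifting step you flag as the main obstacle is standard: a smooth morphism has the infinitesimal lifting property for every affine test scheme, even when the target is not affine, since the obstruction to gluing local lifts lies in a quasi-coherent $H^1$ on the affine $\spec R^+/\varpi^n$. So neither the w-contractibility reduction nor any Zariski localization on $S$ is needed. Your passage to infinite level, via uniqueness of lifts along the finite \'etale transition maps, is a correct elaboration of what the paper compresses into ``formally smooth''.
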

\begin{proof}
It suffices to prove this for the presheaf versions. We need to show that given $(R,R^+)$ and $x:\spf R^{\sharp+} \to \scrshat_{K_p}\gx$, that $x$ is isomorphic to some $y:\spf R^{+} \to \scrshat_{K_p}\gx$. Since the formal scheme is formally smooth because $\mathcal{G}$ is reductive, we may lift $x \otimes R^+ / \varpi$ to a point $y:\spf R^{+} \to \scrshat_{K_p}\gx$. By Lemma \ref{Lem:IgusaStackIsNotTHATStrange}, we see that $x$ is isomorphic to $y$ in $\igspre_{\Xi}\gx(R,R^+)$. 
\end{proof}

\subsubsection{} Let $(R^{\sharp_1}, R^{\sharp_1+}), (R^{\sharp_2}, R^{\sharp_2+})$ be two untilts of $(R, R^+)$, and let $x: \spf(R^{\sharp_1+}) \to \scrshat\gx, y: \spf(R^{\sharp_2+}) \to \scrshat\gx$ be two points of $\scrshat\gx$. 

\begin{Lem} \label{Lem:IgusaStackIsStrange}
If $x$ and $y$ are equal when restricted to $\spec R^+_{\red}$, then $x$ and $y$ are isomorphic in $\igspre_{\Xi}\gx(R,R^+)$.
\end{Lem}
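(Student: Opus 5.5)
The plan is to reduce to Lemma \ref{Lem:IgusaStackIsNotTHATStrange}. The hypothesis there is agreement modulo a single pseudouniformizer $\varpi$; here we only know agreement modulo $R^{\circ\circ}$. Since $R^{+}_{\red}=R^+/R^{\circ\circ}=\varinjlim_{\varpi} R^+/\varpi$ over the pseudouniformizers $\varpi\in R^{\circ\circ}$, and $\scrshat_{K_p}\gx$ is locally of finite presentation over $\spf \mathcal{O}_E$ after reduction mod $p$, the equality $x|_{\spec R^+_{\red}}=y|_{\spec R^+_{\red}}$ should force the two maps $x_0,y_0\colon \spec R^+/\varpi\to \scrshat\gx$ to agree modulo a (possibly very small) ideal. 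They need not agree modulo any single $\varpi$ on the nose, so a small deformation argument is needed.

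Concretely, fix a pseudouniformizer $\varpi$ with $p\in \varpi R^{\sharp_i+}$ for both untilts, so that $x_0,y_0\colon\spec R^+/\varpi\to \scrs_{K}\gx_{k_E}$ are defined; one may pass to finite level $K^p$. The kernel $I=R^{\circ\circ}/\varpi$ of $R^+/\varpi\to R^+_{\red}$ is a nil ideal, since every element of $R^{\circ\circ}$ has a power in $\varpi R^+$. Because $x_0$ and $y_0$ agree on $R^+/I$ and the target is of finite type, their restrictions agree modulo some finitely generated subideal $J\subset I$. A finitely generated nil ideal is nilpotent, and is contained in $\varpi^{1/N}R^+/\varpi$ for some $N$ after replacing generators by suitable $p$-power roots, using perfectness of $R^+$. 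Replacing $\varpi$ by $\varpi^{1/N}$, we get that $x$ and $y$ agree modulo the pseudouniformizer $\varpi^{1/N}$.

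The subtle point is where $x,y$ and their reductions live. Since $R^+$ is perfect, $R^+/\varpi^{1/N}$ is an admissible quotient for both untilts, and there are canonical maps $R^{\sharp_i+}\to R^+/\varpi^{1/N}$ as required. So Lemma \ref{Lem:IgusaStackIsNotTHATStrange} applies to $\varpi^{1/N}$ and gives an isomorphism $x\cong y$ in $\igspre_{\Xi}\gx(R,R^+)$. By Definition \ref{defn:formalQIsog} this does not depend on the choice of $\varpi$.

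The step I expect to be delicate is the finite-presentation argument on the pro-system $\scrshat_{K_p}\gx=\varprojlim_{K^p}$. This limit is not of finite type, so agreement mod $J$ cannot be obtained at infinite level. I would handle this by working at a fixed finite level $K^p$ and then treating the level structure separately. Two points should not be confused here. The prime-to-$p$ trivializations $\eta$ are étale data, hence are determined by reduction along the henselian surjection $R^+/\varpi\to R^+_{\red}$, which has nil kernel. Second, the identity morphism on the abelian scheme modulo $\varpi^{1/N}$ is automatically compatible with $\eta_x,\eta_y$. If needed, I would instead use topological invariance of the étale site for $R^+/\varpi\to R^+_{\red}$ directly to identify $x_0$ and $y_0$ at infinite level. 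This works because $\scrs_{K_p}\gx\to \scrs_K\gx$ is a pro-étale torsor.
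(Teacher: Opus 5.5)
Your proposal is correct and follows essentially the same route as the paper. Both arguments spread out at a finite level $K^p$ to get agreement modulo a finitely generated ideal $J$ with $\varpi\in J\subset R^{\circ\circ}$, use perfectness of $R^+$ to get $J\subset \varpi^{1/N}R^+$, pass back to infinite level using that $\scrs_{K_p}\gx\to\scrs_K\gx$ is a pro-\'etale torsor and the thickening to $R^+_{\red}$ has nil kernel, and then conclude by Lemma \ref{Lem:IgusaStackIsNotTHATStrange}; the paper's ``$J\subset(\varpi^n)$'' should be read as your $J\subset\varpi^{1/N}R^+$.
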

\begin{proof}
Let $x,y$ be as in the statement of the proposition, and let $x_0,y_0$ denote their restrictions to $\spec R^+_{\red}$. By Lemma \ref{Lem:IgusaStackIsNotTHATStrange}, it suffices to show that $x$ and $y$ agree when restricted to $\spec R^+/\varpi$ for some pseudo-uniformizer $\varpi$. \smallskip 

Choose a sufficiently small compact open subgroup $K^p \subset \gafp$ so that we have a pro-\'etale $K^p$-torsor $\scrs_{K_p}\gx \to \scrs_{K}\gx$. Since $R^+_{\red}=\varinjlim_{J \subset R^{\circ \circ}} R^+/J$, where $J$ runs over finitely generated ideals contained in $R^{\circ \circ}$, it follows from spreading out that the projections of $x$ and $y$ to $\scrs_{K}\gx$ agree when restricted to $\spec R^+_{J}$ for some $J$. \smallskip 

Now choose a pseudo-uniformizer $\varpi$. After replacing $J$ by the ideal generated by $J$ and $\varpi$, we may assume $\varpi \in J$. Since the topology on $R^+$ is the $\varpi$-adic topology, we see that every element of $J$ is nilpotent modulo $\varpi$ implying that $J \subset (\varpi^n)$ for some $n$. This shows that the kernel of $R^+/J \to R^+_{\red}$ is generated by $p$-th power roots of $\varpi$, so that $\spec R^+_{\red} \to \spec R^+/J$ is a universal homeomorphism. Since $\scrs_{K_p}\gx \to \scrs_{K}\gx$ is pro-\'etale, it in fact follows that $x$ and $y$ agree when restricted to $\spec R^+/{J}$. In particular, $x$ and $y$ agree when restricted to $\spec R^+/\varpi^n$ for some $n$, and the lemma follows from Lemma \ref{Lem:IgusaStackIsNotTHATStrange} since $\varpi^n$ is a pseudo-uniformizer.
\end{proof}

\subsubsection{} We define $\igspree_{\Xi}\gx \subset \igspre_{\Xi}\gx$ to be the presheaf of groupoids consisting of points $x:\spf R^{\sharp+} \to \scrshat_{K}\gx$ where $R^{\sharp}=R$. By Lemma \ref{Lem:TrivialUntilt} and its proof, the natural map $\igspree_{\Xi}\gx \to \igspre_{\Xi}\gx$ is an equivalence of presheaves of groupoids if $\mathcal{G}$ is reductive. We let $\igse_{\Xi}\gx$ be the v-sheafification of $\igspree_{\Xi}\gx$; this is per definition a quotient of $\sh_{K^p}\gx^{\diamond}=\scrshat_{K}\gx^{\diamond}_{k_{E}}$. 

\subsection{The Igusa stack and its reduction} \label{Sub:ReductionIgusaStack} The goal of this section is to recall the reduction $\igs\gx^{\red}$ of \cite[Theorem 6.5.1]{DvHKZIgusaStacks} and to prove Theorem \ref{Thm:IgusaDCircII}, which shows that $\igs \gx$ can be reconstructed from $\igs\gx^{\red}$ under some assumptions.

\subsubsection{} Choose $\Xi=(\mathcal{G}, \iota, V_{\zp})$ as before. We will write $\shginf$ for the perfect special fiber of $\scrs_{K_p}\gx$, which is naturally identified with $(\scrs_{K_p}\gx^{\diamond}_{k_E})^{\red}=(\scrs_{K_p}\gx^{\diamond})^{\red}$ (see Lemma \ref{Lem:ReductionOfScheme}). The reduction of the morphism $\pi_{\mathrm{crys}}:\scrs_{K_p}\gx^{\diamond} \to \shtgmu$ gives rise to a morphism $\pi_{\mathrm{crys}}^{\red}:\shginf \to \shtlocgmu$ (see Lemma \ref{Lem:ReductionOfScheme} and Section \ref{subsub:ReductionShtukaBung}). Its
composition with the reduction of $\mathrm{BL}^{\circ}:\shtgmu \to \bun_G$ gives rise to a morphism $\shginf \to \gisoc$, see Section \ref{subsub:ReductionShtukaBung}. 

\subsubsection{} For $x:\spec R\to \shginf$ we will denote $\mathbb{L}_{\mathrm{crys},x}$ the induced $G$-isocrystal over $\spec R$. We similarly let $\mathscr{E}(A_{x})$ be the $\mathrm{GL}_V$-isocrystal corresponding to Dieudonn\'e-module $\mathbb{D}^{\natural}(A_x[p^{\infty}])$ of the pullback along $i:\shginf \to \shgvinf$ of the universal abelian scheme up to prime-to-$p$ isogeny $A_{\Xi}$. It follows from the discussion in  \cite[Section 5.1.10]{DvHKZIgusaStacks} that there is a natural isomorphism
\begin{align}
    \mathbb{L}_{\mathrm{crys},x} \times^{G} \mathrm{GL}_V \simeq \mathscr{E}(A_{x}).
\end{align}
We will use the induced maps $\mathbb{L}_{\mathrm{crys},x} \to \mathscr{E}(A_{x})$ below.
\begin{Def} \label{defn:FormalQIsogStructurePerf}
Given $(x_1,x_2):\spec R \to \shginf \times \shginf$, we say that a quasi-isogeny $f:A_{x_1} \dashrightarrow A_{x_2}$ is $\g$\emph{-structure preserving away from $p$} if the induced map on the prime-to-$p$ adelic Tate modules fits into a commutative square
    \[
        \begin{tikzcd}
            &\mathcal{V}^p(A_{x_{1}}) \arrow{r}{\mathcal{V}^p(f)}\arrow{d}{\eta_{x_1}}& \mathcal{V}^p(A_{x_{2}}) \arrow{d}{\eta_{x_2}}\\
            &V \otimes \afp \arrow[r, equals]& V \otimes \afp.
        \end{tikzcd}
    \]
\end{Def}
\begin{Def}\label{defn:CondnAtPPerf}
Given $(x_1,x_2):\spec R \to \shginf \times \shginf$, we say that a quasi-isogeny $f:A_{x_1} \dashrightarrow A_{x_2}$ is \emph{$\g$-structure preserving at $p$} if the induced isomorphism (see \cite[Section 2.5.3]{DvHKZIgusaStacks})
      \[
        \mathscr{E}(A_{x_{1}}) \to \mathscr{E}(A_{x_{2}})
      \]
      of $\mathrm{GL}_V$-isocrystals on $\spec R$ is induced by a (necessarily unique) isomorphism of $G$-isocrystals fitting in a commutative diagram
 \begin{equation}
     \begin{tikzcd}
     \mathbb{L}_{\mathrm{crys},{x_1}} \arrow{d} \arrow{r} & \mathbb{L}_{\mathrm{crys},{x_2}} \arrow{d} \\
         \mathscr{E}(A_{x_{1}}) \arrow{r} &  \mathscr{E}(A_{x_{2}}).
     \end{tikzcd}
 \end{equation}
We say that $f$ is $\g$-structure preserving if $f$ is $\g$-structure preserving at $p$ and $\g$-structure preserving away from $p$. 
\end{Def}

\subsubsection{} We define $\igspreperf_{\Xi}\gx$ to be the presheaf of groupoids on $\affperfk$ sending $R$ to the following groupoid: Its objects are morphisms $x:\spec R \to \shginf$ and its morphisms $x_1 \to x_2$ are $G$-structure preserving quasi-isogenies $g:A_1 \dashrightarrow A_2$. We let $\igsperf \gx$ be its v-sheafification. By \cite[Theorem 6.5.1]{DvHKZIgusaStacks}, the reduction of the Igusa stack morphism gives a v-quotient
\begin{align}
    \shginf \to \igs \gx ^{\red}
\end{align}
which agrees with the v-quotient given by $\igsperf \gx$. 

\subsubsection{} We now begin our comparison of the Igusa stack with its reduction. \textbf{Assume for the rest of this section that $\mathcal{G}$ is reductive} (although we will restate these assumptions when stating results). We write $\igsprecc_{\Xi}\gx$ for $(\operatorname{Igs}^{\pre, \mathrm{perf}}_{\Xi}\gx)^{\diamond/\circ, \pre}$ and $\igscc_{\Xi}\gx$ for its v-sheafification $(\operatorname{Igs}^{\pre, \mathrm{perf}}_{\Xi}\gx)^{\diamond/\circ}$.

\subsubsection{} We want to define a morphism $\igspree_{\Xi}\gx \to \igsprecc_{\Xi}\gx$, which will require the following lemma.

\begin{Lem} \label{Lem:ReductionGStructure}
Let $(R,R^+) \in \perf$. Let $x_{1,2}:\spf R^+ \to \scrshat_{K_p}\gx$ and let $f:A_{x_1} \dashrightarrow A_{x_2} $ be a formal quasi-isogeny. Assume that $\mathcal{G}$ is reductive. If $f$ is $\g$-structure preserving, then so is its reduction $f_{\red}:A_{x_1} \otimes R^+_{\red} \dashrightarrow A_{x_2} \otimes R^+_{\red}$
\end{Lem}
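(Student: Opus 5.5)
The plan is to check the two halves of Definition \ref{defn:FormalQIsogStructurePerf} and Definition \ref{defn:CondnAtPPerf} separately for $f_{\red}$, regarding $x_{1,\red},x_{2,\red}:\spec R^+_{\red} \to \shginf$ as the reductions of $x_1,x_2$. Let $f$ be represented by a quasi-isogeny over $R^+/\varpi$ for some pseudo-uniformizer $\varpi$; then $f_{\red}$ is its base change along $R^+/\varpi \to R^+_{\red}$.

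\textbf{Away from $p$.} The prime-to-$p$ Tate modules $\mathcal{V}^p$ and the trivializations $\eta_{x_i}$ are pulled back from $\scrs_{K_p}\gx$, so they are compatible with base change along $R^+/\varpi \to R^+_{\red}$. The commutative square for $f$ therefore pulls back to the square for $f_{\red}$. This step is formal.

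\textbf{At $p$.} This is the main point, and I would use the prismatic $\mathcal{G}$-structure. Since $\mathcal{G}$ is reductive, Theorem \ref{Thm:ImaiKatoYoucis} provides $\omega_{\prism}$ on $X=\scrshat_{K}\gx$, and $D_{\crys}(\omega_{\prism})$ is an $F$-isocrystal with $G$-structure on $X_s$. Pulling back along $x_i \bmod \varpi$ gives $G$-objects in $\operatorname{Mod}^{\varphi}(\mathbf{B}^+_{\crys}(R^+/\varpi))$, say $\mathbb{L}^{\crys}_{x_i,\varpi}$. These map to $\mathbb{D}(A_{x_i}[p^\infty])[1/p]$ via the tensor $V$.

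By Lemma \ref{Lem:PrismaticCompatibilityI}, the functor $\delta$ carries $\mathbb{L}^{\crys}_{x_i,\varpi}$ to $\mathbb{L}_{\mathrm{crys},x_i}$ on $X_{(R,R^+)}$. By Lemma \ref{Lem:ObviousHopefully}, pulling $\mathbb{L}^{\crys}_{x_i,\varpi}$ back further to $R^+_{\red}$ recovers the $G$-isocrystal $\mathbb{L}_{\mathrm{crys},x_{i,\red}}$ of Definition \ref{defn:CondnAtPPerf}.

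The isogeny $f$ induces a $\varphi$-module isomorphism $\mathbb{D}(f)$ between the $V$-realizations over $\mathbf{B}^+_{\crys}(R^+/\varpi)$. Its image under $\delta$ is $\mathcal{E}(f)$, which by hypothesis preserves $G$-structure. I want to conclude that $\mathbb{D}(f)$ itself preserves $G$-structure, meaning it carries the crystalline tensors to each other. I would argue this in two steps.

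1. The locus where an isomorphism of $\varphi$-modules respects a finite set of tensors $s_\alpha$ cutting out $G$ is detected by the equations $\mathbb{D}(f)(s_{\alpha,1}) = s_{\alpha,2}$.
2. The functor $\delta$ is faithful on $\varphi$-equivariant morphisms; equivalently, $\mathbf{B}^+_{\crys}(R^+/\varpi)^{\varphi}$-type Hom-sets inject into Hom-sets of vector bundles on $X_S$.

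Together these show that the tensor equations can be checked after applying $\delta$, where they hold. Having established that $\mathbb{D}(f)$ preserves $G$-structure, base change along $R^+/\varpi \to R^+_{\red}$ gives that $f_{\red}$ is $\g$-structure preserving at $p$.

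\textbf{Main obstacle.} I expect the hard step to be the injectivity in (2), i.e. that a $\varphi$-equivariant map of $\mathbf{B}^+_{\crys}(R^+/\varpi)$-modules vanishing after passage to $\mathcal{O}(\mathcal{Y}_{[r,\infty)})$ is zero. My first approach would be to reduce to the case where $(R,R^+)$ is a product of points, by v-locality of the tensor equations and of $f$. For a single geometric point $\spa(C,C^+)$, I would use Frobenius to move into the region where $\mathbf{B}^+_{\crys} \to \mathbf{B}^+$ is injective, and then use that isocrystals over $C^+_{\red}$ and their images on the curve are governed by the same slopes.

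If this approach fails, the alternative is to avoid faithfulness altogether. Both $\mathbb{L}_{\mathrm{crys},x_{i,\red}}$ and the reduction of $\mathcal{E}(f)$ are reductions along $\spd(R,R^+) \to \bung$. Reduction of v-stacks is functorial, and $(\bung)^{\red}=\gisoc$. So the reduction of the $G$-isomorphism $\mathbb{L}_{\mathrm{crys},x_1} \isom \mathbb{L}_{\mathrm{crys},x_2}$ should directly provide the required isomorphism of $G$-isocrystals over $R^+_{\red}$, with compatibility with $\mathscr{E}(A_{x_i})$ following from Lemma \ref{Lem:ObviousHopefully}.
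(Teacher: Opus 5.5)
Your away-from-$p$ argument, and the bookkeeping at $p$ via Theorem \ref{Thm:ImaiKatoYoucis}, Lemma \ref{Lem:PrismaticCompatibilityI} and Lemma \ref{Lem:ObviousHopefully}, match the paper. \textbf{The gap is step 2.} You need that a $\varphi$-invariant element of a $\varphi$-module over $\mathbf{B}^+_{\crys}(R^+/\varpi)$, namely $\mathbb{D}(f)(s_{\alpha,1})-s_{\alpha,2}$, which vanishes on $X_{(R,R^+)}$ is zero, and you need this for \emph{arbitrary} $R^+$. You do not prove it, and your sketch does not work as written.

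Passing to products of points does not remove the issue $R^+\neq R^\circ$. At a point $\spa(C,C^+)$, the claim that ``isocrystals over $C^+_{\red}$ and their images on the curve have the same slopes'' fails in two ways:
\begin{itemize}
\item The source objects are $\varphi$-modules over $\mathbf{B}^+_{\crys}(C^+/\varpi)$, not isocrystals over $C^+_{\red}$. Already for $C^+=\mathcal{O}_C$ one has $(\mathbf{B}^+_{\crys})^{\varphi=p}\neq 0$, whereas $W(k)[1/p]^{\varphi=p}=0$.
\item For $C^+\neq\mathcal{O}_C$, an isocrystal over the valuation ring $C^+_{\red}$ can have different Newton points at its generic and closed points, while the bundle on $X_{(C,C^+)}$ has only one.
\end{itemize}
Your fallback is not valid either. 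The reduction of the perfectoid space $\spd(R,R^+)$ is empty by Gleason's result, so functoriality of $\mathcal{F}\mapsto\mathcal{F}^{\red}$ applied to the $G$-isomorphism over $\spd(R,R^+)$ gives nothing over $R^+_{\red}$. The $G$-isocrystals $\mathbb{L}_{\mathrm{crys},x_{i,\red}}$ come from reducing $(\spf R^+)^{\diamond}$, whereas your hypothesis on $f$ lives only on the generic fibre. Bridging the two is exactly what the lemma asserts.

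\textbf{The missing idea:} you never need faithfulness for general $R^+$, because the conclusion lives on $\spec R^+_{\red}$.
\begin{itemize}
\item By Lemma \ref{Lem:GStructurePreservingClosedPerfect}, the locus in $\spec R^+_{\red}$ where $f_{\red}$ is $\g$-structure preserving at $p$ is closed.
\item Since $R^+_{\red}\to R^\circ_{\red}$ is injective and both rings are reduced, $\spec R^\circ_{\red}$ is Zariski dense in $\spec R^+_{\red}$.
\item The hypothesis on $f$ pulls back along $\spa(R,R^\circ)\to\spa(R,R^+)$.
\end{itemize}
So you may assume $R^+=R^\circ$. In that case the functor from $F$-isocrystals over $R^+/\varpi$ to vector bundles on $X_{(R,R^+)}$ is fully faithful, by the result of Fargues quoted in the paper. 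Hence $f$ is an isomorphism of $F$-isocrystals with $G$-structure over $R^+/\varpi$. From there, your remaining steps (base change to $R^+_{\red}$ and Lemma \ref{Lem:ObviousHopefully}) go through as you describe.
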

\begin{proof}
That $f_{\red}$ is $\g$-structure preserving away from $p$ follows from the fact that $\spec R^+_{\red} \to \spec R^+/\varpi$ is a universal homeomorphism, so it remains to check that $f_{\red}$ is $\g$-structure preserving at $p$. There is a closed subset of $\spec R^+_{\red}$ where $f_{\red}$ is $\g$-structure preserving at $p$, see Lemma \ref{Lem:GStructurePreservingClosedPerfect}. Since $\spec(R^\circ_{\red})$ is Zariski dense in $\spec(R^+_{\red})$ (because $R^+_{\red} \to R^\circ_{\red}$ is injective), we may thus assume that $R^{+}=R^{\circ}$. \smallskip 

We once again make use of the prismatic $F$-crystal with $\mathcal{G}$-structure of Theorem \ref{Thm:ImaiKatoYoucis}, using the assumption that $\mathcal{G}$ is reductive. By Lemma \ref{Lem:PrismaticCompatibilityI}, the morphism $\scrshat_{K_p}\gx^{\lozenge} \to \bun_{G}$ is induced from an $F$-isocrystal with $G$-structure on $\scrs_{K_p}\gx_{k_{E}}$. Since we have assumed $R^{+}=R^{\circ}$, the functor sending an $F$-isocrystal on $\spec R^+/\varpi$ to a vector bundle on $X_{(R,R^+)}$ is fully faithful, see \cite[discussion after Corollary 6.3]{FarguesConjecture}. Thus since $f$ is $\g$-structure preserving, it follows that $f$ induces an isomorphism of $F$-crystals with $G$-structure over $\spec R^+/\varpi$. It follows that $f_{\red}$ induces an isomorphism of $F$-crystal with $G$-structure over $\spec R^+_{\red}$. By Lemma \ref{Lem:ObviousHopefully}, this implies that $f_{\red}$ is $\g$-structure preserving at $p$. 
\end{proof}

\begin{Construction} \label{Constr:ReductionMorphism}
Consider the morphism $\rho_{\Xi}:\igspree_{\Xi}\gx \to \igsprecc_{\Xi}\gx$ defined on $(R,R^+)$ points by: On objects we send $x:\spf R^+ \to \scrshat_{K_p}\gx$ to the induced morphism $\spec R^+_{\red} \to \shginf$ by restriction to $\spec R^+_{\red}$, and on morphisms it takes a formal quasi-isogeny $A_x \dashrightarrow A_y$ to its restriction to $\spec R^+_{\red}$; this is well defined by Lemma \ref{Lem:ReductionGStructure}. 
\end{Construction}
\begin{Lem} \label{Lem:ReductionMorphismDiagramCommutative}
    The following diagram is $2$-commutative
    \begin{equation}
        \begin{tikzcd}
            \shginf^{\diamond, \pre} \arrow{d} \arrow{r} & \shginf^{\diamond/\circ, \pre} \arrow{d} \\
            \igspree_{\Xi}\gx \arrow{r}{\rho_{\Xi}} & \igsprecc_{\Xi}\gx.
        \end{tikzcd}
    \end{equation}
\end{Lem}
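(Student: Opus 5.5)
This is a direct unwinding of definitions. We check $2$-commutativity on $(R,R^+)$-points for $(R,R^+) \in \perf$, naturally in $(R,R^+)$. First we identify the four presheaves and the maps between them at the level of $(R,R^+)$-points.

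The top-left corner $\shginf^{\diamond,\pre}(R,R^+)$ is the set of morphisms $\spec R^+ \to \shginf$. Since $\shginf$ is the perfect special fiber of $\scrs_{K_p}\gx$, this is the set of morphisms $\spf R^+ \to \scrshat_{K_p}\gx$ factoring through the special fiber (with the trivial untilt). The left vertical map sends such a point to the corresponding object of $\igspree_{\Xi}\gx(R,R^+)$, regarded as a map $\spf R^+ \to \scrshat_{K_p}\gx$. The top horizontal map $\shginf^{\diamond,\pre} \to \shginf^{\dcirc,\pre}$ is restriction along $\spec R^+_{\red} \to \spec R^+$; this is the natural map $X^{\diamond} \to X^{\dcirc}$ recalled in Section \ref{Sub:ReductionAnalytification}. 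The right vertical map sends a morphism $\spec R^+_{\red} \to \shginf$ to the same morphism, viewed as an object of $\igspreperf_{\Xi}\gx(R^+_{\red}) = \igsprecc_{\Xi}\gx(R,R^+)$.

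On objects, $\rho_\Xi$ restricts $x$ to $\spec R^+_{\red}$ (Construction \ref{Constr:ReductionMorphism}). Both composites therefore send $x:\spec R^+ \to \shginf$ to $x|_{\spec R^+_{\red}}$, so they agree on the nose. Since $\igsprecc_{\Xi}\gx(R,R^+)$ is a groupoid, the $2$-isomorphism between the composites can be taken to be the identity. We must check that this identity is natural in $(R,R^+)$, which holds because everything is defined by restriction. We must also check that it is compatible with the morphisms in the source; the top-left presheaf is set-valued, so this is automatic.

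The only point needing care is that the left vertical map lands in $\igspree_{\Xi}\gx$ with the identity untilt $R^\sharp = R$. Concretely, a point of $\shginf$ over $R^+$ gives a point of $\scrshat_{K_p}\gx$ over $\spf R^+$ through the special fiber $k_E$. This is fine because $\igspree$ was defined using $R^\sharp=R$, and $\scrs_{K_p}\gx^{\diamond}_{k_E}$ maps to it. Beyond this bookkeeping there is no real obstacle: Lemma \ref{Lem:ReductionGStructure} is needed only for $\rho_\Xi$ to be well defined on morphisms, not for this diagram.
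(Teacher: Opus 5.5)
Your proposal is correct and is essentially the paper's argument: both composites send a $\spf R^+$-point (with trivial untilt) to its restriction to $\spec R^+_{\red}$, so the two composites agree on objects. Since the source is set-valued, the identity $2$-isomorphism suffices, and Lemma \ref{Lem:ReductionGStructure} enters only to make $\rho_\Xi$ a well-defined map of groupoids, exactly as the paper remarks.
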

\begin{proof}
    This is a straightforward consequence of Construction \ref{Constr:ReductionMorphism}, the point being that the map $\shginf^{\diamond, \pre} \to \shginf^{\diamond/\circ, \pre}$ is the map sending a $\spf R^+$-point $x$ of $\scrshat_{K_p}\gx$ to the restriction of $x$ to $\spec R^+_{\red}$. Lemma \ref{Lem:ReductionGStructure} is precisely the statement that this morphism is compatible with the ($0$-truncated) groupoids defining $\igspree_{\Xi}\gx$ and $\igsprecc_{\Xi}\gx$.
\end{proof}

\begin{Thm} \label{Thm:IgusaDCirc}
If $\mathcal{G}$ is reductive, then $\rho_{\Xi}:\igspree_{\Xi}\gx \to \igsprecc_{\Xi}\gx$ is an isomorphism. 
\end{Thm}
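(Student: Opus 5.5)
The plan is to check the two halves of an equivalence of ($0$-truncated) presheaves of groupoids separately. Fix $(R,R^+) \in \perf$ and a pseudo-uniformizer $\varpi$. We show that $\rho_{\Xi}$ is essentially surjective on objects and bijective on morphism sets. Throughout we use the device from the proof of Lemma \ref{Lem:IgusaStackIsStrange}. We have $R^+_{\red}=\varinjlim_J R^+/J$, with $J$ ranging over finitely generated ideals with $\varpi \in J \subset R^{\circ\circ}$. For each such $J$ one has $J^N \subset (\varpi)$ for some $N$, so $R^+/\varpi \to R^+/J$ is a nilpotent thickening. The map $\spec R^+_{\red} \to \spec R^+/J$ is a universal homeomorphism whose kernel is generated by $p$-power roots of $\varpi$. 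In particular $R^+_{\red}$ is the perfection of $R^+/J$ up to this radical.

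\emph{Objects.} Let $y:\spec R^+_{\red} \to \shginf$. Pick $K^p$ small so that $\scrs_{K_p}\gx \to \scrs_K\gx$ is a pro-\'etale $K^p$-torsor. The special fiber of $\scrs_K\gx$ is finitely presented, so the projection of $y$ spreads out to $\spec R^+/J$ for some $J$. Pro-\'etale torsors extend uniquely along universal homeomorphisms, so the $K^p$-level structure extends as well. Since $\mathcal{G}$ is reductive, $\scrshat_{K}\gx$ is formally smooth. We therefore lift successively along the nilpotent thickenings $R^+/\varpi^{m} \to R^+/J\varpi^{m-1} \to \cdots$, and pass to the limit using $\varpi$-adic completeness of $R^+$. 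This produces $x:\spf R^+ \to \scrshat_{K_p}\gx$ with $\rho_{\Xi}(x)=y$. Lemma \ref{Lem:IgusaStackIsStrange} shows that the resulting object is independent of choices up to canonical isomorphism.

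\emph{Morphisms, underlying quasi-isogenies.} Given $x_1,x_2$, a morphism in the source is a $\g$-structure preserving quasi-isogeny over $R^+/\varpi$. A morphism in the target is one over $R^+_{\red}$. By Serre--Tate/Drinfeld rigidity ($p$ is nilpotent), restriction of quasi-isogenies along the nilpotent thickening $R^+/\varpi \to R^+/J$ is a bijection. Quasi-isogenies over $R^+_{\red}$ are the colimit of those over $R^+/J$, because the relevant Hom-functors are of finite presentation. The transition maps in this colimit are bijective by the same rigidity, applied to $R^+/J \to R^+/J'$ for $J \subset J'$ (again a nilpotent thickening). Hence restriction gives a bijection between formal quasi-isogenies $A_{x_1}\dashrightarrow A_{x_2}$ and quasi-isogenies $A_{x_1}\otimes R^+_{\red}\dashrightarrow A_{x_2}\otimes R^+_{\red}$. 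We also need this bijection to preserve the $\g$-structure conditions. For the condition away from $p$, this is immediate in both directions, since $\spec R^+_{\red}\to \spec R^+/\varpi$ is a universal homeomorphism.

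\emph{Structure at $p$, the main obstacle.} One direction is Lemma \ref{Lem:ReductionGStructure}. For the converse, suppose $f_{\red}$ is $\g$-structure preserving at $p$. By Theorem \ref{Thm:ImaiKatoYoucis}, the $G$-bundles $\mathbb{L}_{\mathrm{crys},x_i}$ come via $\delta_\ast$ (Lemma \ref{Lem:PrismaticCompatibilityI}) from $F$-isocrystals with $G$-structure on $\spec R^+/\varpi$, obtained by pulling back $D_{\crys}(\omega_{\prism})$. Because $F$-isocrystals are crystals, and are insensitive to nilpotent thickenings and to passage to perfections, the category of $F$-isocrystals with $G$-structure on $\spec R^+/\varpi$ is equivalent to that on $\spec R^+_{\red}$. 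Under this equivalence the map induced by $f$ corresponds to that induced by $f_{\red}$, and Lemma \ref{Lem:ObviousHopefully} identifies the latter with the perfect realization. Hence $f$ is $\g$-structure preserving as a map of $G$-isocrystals over $R^+/\varpi$. Applying $\delta$ and Lemma \ref{Lem:PrismaticCompatibilityI}, $f$ is induced by an isomorphism of $G$-bundles on $X_{(R,R^+)}$. I expect the delicate point to be justifying this invariance of $F$-isocrystals with $G$-structure along $R^+/\varpi \to R^+_{\red}$ for general (non-noetherian) $R^+$. I would first try to reduce, Tannakianly, to vector-bundle $F$-isocrystals and use \cite[Proposition 4.1.3]{ScholzeWeinsteinModuli}, which identifies these with $\varphi$-modules over $\mathbf{B}^+_{\crys}(R^+/\varpi)$. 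These depend only on $(R^+/\varpi)^{\flat}=R^+$-data and are unchanged under the universal homeomorphism to $R^+_{\red}$.
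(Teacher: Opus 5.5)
You handle objects and the underlying quasi-isogenies correctly, and essentially as the paper does (Lemmas \ref{Lem:ReductionShimuraSurjective} and \ref{Lem:SpreadingOut}). The easy direction at $p$ is indeed Lemma \ref{Lem:ReductionGStructure}.

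\textbf{The gap.} It is in the converse direction at $p$. There you use an equivalence between $F$-isocrystals (with $G$-structure) on $\spec R^+/\varpi$ and on $\spec R^+_{\red}$, and that equivalence is false. Dwork's trick does give invariance along the nilpotent thickenings $R^+/\varpi \to R^+/J$. But $R^+_{\red}=\varinjlim_J R^+/J$ is the colimit perfection of $R^+/\varpi$, whose kernel $R^{\circ\circ}/\varpi$ is nil but not nilpotent, and $F$-isocrystals do not commute with this colimit.

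Concretely, take $(R,R^+)=(C,\mathcal{O}_C)$, so that $R^+_{\red}=k$. Let $D_0$ and $D_1$ be the rank one objects with $\varphi(e)=e$ and $\varphi(e)=pe$.
\begin{itemize}
\item Over $\mathcal{O}_C/\varpi$: by the full faithfulness into vector bundles on $X_C$ used in the proof of Lemma \ref{Lem:ReductionGStructure}, $\operatorname{Hom}(D_1,D_0)$ is the space of global sections of a degree one line bundle on the curve. It is $\mathbf{B}^+_{\crys}(\mathcal{O}_C/\varpi)^{\varphi=p}\neq 0$.
\item Over $k$: the same Hom is $W(k)[1/p]^{\varphi=p}=0$.
\end{itemize}
So restriction to $R^+_{\red}$ is not even faithful. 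Knowing that $\mathcal{E}(f_{\red})$ comes from a map of $G$-isocrystals therefore tells you nothing formal about $\mathcal{E}(f)$. Your suggested fix through the $\mathbf{B}^+_{\crys}$-description does not help either, since $(R^+_{\red})^{\flat}=R^+_{\red}\neq R^+$.

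\textbf{What is missing.} The missing input is geometric, not a statement about isocrystals. The $G$-structures on $\mathcal{E}(A_{x_i})$ are pulled back from the Shimura variety, so you are allowed to move the points. The paper argues as follows.
\begin{enumerate}
\item By Lemma \ref{Lem:GStructurePreservingClosedPerfectoid}, the locus in $\spd(R,R^+)$ where the unique lift $f$ of $f_0$ is $\g$-structure preserving at $p$ is closed. It therefore suffices to treat rank one geometric points $(C,\mathcal{O}_C)$.
\item At such a point, $\mathcal{O}_C\to k$ has a section $s$. By Lemma \ref{Lem:IgusaStackIsStrange} (which rests on Lemma \ref{Lem:IgusaStackIsNotTHATStrange}), each $x_i$ is canonically isomorphic in $\igspree_{\Xi}\gx$ to $x_{i,0}\circ s$.
\item For the points $x_{i,0}\circ s$, everything is base changed from $k$: the abelian schemes, their $G$-isocrystals (via Theorem \ref{Thm:ImaiKatoYoucis} and Lemma \ref{Lem:ObviousHopefully}), and the quasi-isogeny $f_0\otimes_s\mathcal{O}_C$. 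Hence $f_0\otimes_s\mathcal{O}_C$ is $\g$-structure preserving (Lemma \ref{Lem:SectionGStructure}).
\item So $\rho_{\Xi}(C,\mathcal{O}_C)$ is an equivalence (Proposition \ref{Prop:IgusaStackIsStrangeII}). The uniqueness in Lemma \ref{Lem:SpreadingOut} then forces $f$ itself to be $\g$-structure preserving.
\end{enumerate}
You need to replace your isocrystal-equivalence step with this argument, or with another one that uses where the $G$-structure comes from.
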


\begin{Lem} \label{Lem:ReductionShimuraSurjective}
If $\mathcal{G}$ is reductive, then the morphism $\shginf^{\diamond, \pre} \to \shginf^{\diamond/\circ, \pre}$ is surjective. 
\end{Lem}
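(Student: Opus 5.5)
Surjectivity here means: for every $(R,R^+) \in \perf$, every point $\bar x:\spec R^+_{\red} \to \shginf$ lifts to a point $x:\spec R^+ \to \shginf$, i.e.\ an $(R,R^+)$-point of $\shginf^{\diamond,\pre}$, whose restriction along $R^+ \to R^+_{\red}$ is $\bar x$. The plan is a spreading-out step followed by a lifting step along a nilpotent thickening, using smoothness of the integral model when $\mathcal{G}$ is reductive.

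The first step reduces to finite level. Fix $K^p$ small, so that $\shginf \to \operatorname{Sh}_{K^pK_p}\gx$ is a pro-\'etale $K^p$-torsor and $\operatorname{Sh}_{K}=\operatorname{Sh}_{K^pK_p}\gx$ is the perfection of a smooth scheme of finite type over $k_E$. Write $R^+_{\red}=\varinjlim_J R^+/J$, with $J$ ranging over finitely generated ideals in $R^{\circ\circ}$ containing a fixed pseudo-uniformizer $\varpi$. As in the proof of Lemma \ref{Lem:IgusaStackIsStrange}, each such $J$ satisfies $(\varpi)\subset J \subset (\varpi^{1/p^m})$ for some $m$ up to radical, and $\spec R^+_{\red} \to \spec R^+/J$ is a universal homeomorphism. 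Because $\operatorname{Sh}_K$ is the perfection of a finitely presented scheme, the image of $\bar x$ in $\operatorname{Sh}_K$ factors through $\spec (R^+/J)_{\mathrm{perf}}$ for some $J$. Since $R^+$ is perfect, the perfection of $R^+/J$ is $R^+/\sqrt{J}$, and $\sqrt{J}=R^{\circ\circ}$ when $\varpi \in J$. So we get this factorization only after passing to the perfection, which is exactly what the perfection of the scheme gives us. The pro-\'etale torsor then lifts uniquely along the universal homeomorphism, as in the proof of Lemma \ref{Lem:IgusaStackIsStrange}, so it is enough to lift points of the perfect scheme $\operatorname{Sh}_K$, or equivalently of the smooth model $\scrs_K\gx_{k_E}$ via Frobenius twists.

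The second step produces the lift itself. A $R^+_{\red}$-point of the perfection of $Y=\scrs_K\gx_{k_E}$ is a compatible system of points of $Y$, equivalently one point $y_0:\spec R^+_{\red}\to Y$, since all Frobenius twists are isomorphisms on perfect rings. We want $y:\spec R^+\to Y$ reducing to $y_0$, together with its canonical Frobenius-compatible system, which is unique because $R^+$ is perfect. Since $R^+$ is $\varpi$-adically complete and $Y$ is smooth, hence formally smooth, the plan is to lift successively along $R^+/\varpi^n$. The initial map, however, lives on $R^+/R^{\circ\circ}$, not on $R^+/\varpi$. To bridge this, first use finite presentation to factor $y_0$ through $R^+/J$ with $J \supset (\varpi)$ finitely generated. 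Then note that $R^+/\varpi \to R^+/J$ is surjective with nilpotent kernel, because $J$ is finitely generated by topologically nilpotent elements, each of which is nilpotent mod $\varpi$. Formal smoothness of the affine-locally smooth $Y$ gives a lift to $R^+/\varpi$, then to each $R^+/\varpi^n$, and the limit gives $\spf R^+\to \widehat{Y}$, hence $\spec R^+ \to Y$ by algebraization for affine targets. Finally, replace this by its perfection-compatible version: the point of $Y^{\mathrm{perf}}$ over the perfect ring $R^+$ is obtained because $Y^{\mathrm{perf}}(R^+)=Y(R^+)$ for perfect $R^+$.

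I expect the main obstacle to be making the formal smoothness argument work for non-affine $Y$ and non-local rings: lifting along nilpotent thickenings of arbitrary affine bases. Smooth morphisms lift along affine nilpotent thickenings; the obstruction lives in $H^1$ of a quasi-coherent sheaf on an affine scheme, which vanishes. This holds globally in $Y$, since the obstruction torsor is under $\operatorname{Hom}(y_0^*\Omega_Y, I)$ on the affine $\spec$ of the base, so no Zariski localization is needed. This is precisely where the assumption that $\mathcal{G}$ is reductive, giving smoothness of $\scrs_K\gx$ by \cite{KisinModels}, \cite{KimMadapusi}, is used.
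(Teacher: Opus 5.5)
Your proposal is correct and follows essentially the paper's argument: you spread out the point to some $R^+/J$ with $\varpi\in J$ finitely generated using finite presentation, and then lift along the nilpotent thickenings $R^+/\varpi^n$ using formal smoothness of the smooth integral model. The only difference is that the paper lifts directly at infinite level $\scrshat_{K_p}\gx$, whereas you lift at finite level and restore the prime-to-$p$ level structure afterwards; when you do, note that extending the $K^p$-torsor section from $R^+_{\red}$ to $R^+$ needs unique \'etale lifting along all the thickenings, not just the universal homeomorphism $\spec R^+_{\red}\to\spec R^+/J$, and that getting a $\spec R^+$-point rather than the $\spf R^+$-point the paper uses needs the Henselian property of $(R^+,\varpi)$ together with something like quasi-projectivity of the non-affine $Y$, rather than ``algebraization for affine targets''.
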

\begin{proof}
Let $(R,R^+) \in \perf$ and let $x_0:\spec R^+_{\red} \to \shginf$ be a morphism. By Lemma \ref{Lem:IgusaStackIsStrange}, it suffices to show that $x_0$ can be lifted to a morphism $x:\spf R^+/\varpi \to \scrs_{K_p}\gx$. By spreading out as in the proof of Lemma \ref{Lem:IgusaStackIsNotTHATStrange}, we see that $x_0$ lifts to a morphism $\spf R^+/J \to \scrshat_{K_p}\gx$ for some finitely generated ideal $J \subset (\varpi^n)$ for some $n$. Using the formal smoothness as in the proof of Lemma \ref{Lem:TrivialUntilt}, it moreover lifts to a morphism $\spf R^+ \to \scrshat_{K_p}\gx$.
\end{proof}

\subsubsection{} Let $(R,R^+) \in \perf$ and let $s$ be a section of the morphism of rings $R^+ \to R^+_{\red}$ (in particular we are assuming such a section exists). 
\begin{Lem} \label{Lem:SectionGStructure}
    Let $x_0,y_0:\spec R^+_{\red} \to \shginf$ and let $f_0:A_{x_0} \dashrightarrow A_{y_0}$ be a $\g$-structure preserving quasi-isogeny. Consider $x=x_0 \circ s$ and $y=y_0 \circ s$ as morphism $\spf R^+ \to \scrshat_{K_p}\gx$ and let $\tilde{f}:A_{x} \dashrightarrow A_{y}$ be the quasi-isogeny induced by $f_0$. For a pseudo-uniformizer $\varpi \in R^+$ we write $f$ for the formal quasi-isogeny induced by restricting $\tilde{f}$ to $R^+/\varpi$. If $\mathcal{G}$ is reductive, then $f$ is $\g$-structure preserving. 
\end{Lem}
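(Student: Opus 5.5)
We need to show that $f$ is $\g$-structure preserving both away from $p$ and at $p$. Away from $p$ this is formal. Since $s$ is a section of $R^+ \to R^+_{\red}$, the composite $\spec R^+/\varpi \to \spec R^+ \xrightarrow{s} \spec R^+_{\red}$ followed by the quotient map back to $R^+_{\red}$ is compatible with everything. The level structures $\eta_x,\eta_y$ are pulled back from $\eta_{x_0},\eta_{y_0}$ along $s$. The prime-to-$p$ Tate modules are pro-étale local systems, and $\spec R^+_{\red} \to \spec R^+/\varpi$ is a universal homeomorphism. So the commutative square of Definition \ref{defn:FormalQIsogStructurePerf} for $f_0$ pulls back to the square of Definition \ref{defn:FormalQIsogStructure} for $f$.

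At $p$, the plan is to run the argument of Lemma \ref{Lem:ReductionGStructure} in reverse, using the prismatic $F$-crystal $\omega_\prism$ of Theorem \ref{Thm:ImaiKatoYoucis}; this is where reductivity of $\mathcal{G}$ enters. Let $D_{\crys}(\omega_\prism)$ be the associated $F$-isocrystal with $G$-structure on the special fiber. By Lemma \ref{Lem:PrismaticCompatibilityI}, for any $\spf R^+$-point $z$ of $\scrshat_{K_p}\gx$ we have
\[
\mathbb{L}_{\crys,z} \simeq \delta_\ast\bigl(z^\ast D_{\crys}(\omega_\prism)\bigr),
\]
where $z^\ast D_{\crys}(\omega_\prism)$ is the pullback to $R^+/\varpi$, viewed as a $G$-object in $\operatorname{Mod}^\varphi(\mathbf{B}^+_\crys(R^+/\varpi))$. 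This identification is compatible with $\mathcal{E}(A_z)$ via $V$. By the same compatibility, Lemma \ref{Lem:ObviousHopefully}, on the perfect side the $G$-isocrystal $\mathbb{L}_{\crys,x_0}$ is the pullback of $D_{\crys}(\omega_\prism)$ along $x_0$.

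The hypothesis says $f_0$ induces an isomorphism $\mathbb{L}_{\crys,x_0} \isom \mathbb{L}_{\crys,y_0}$ of $G$-isocrystals over $R^+_{\red}$ lifting $\mathscr{E}(A_{x_0}) \to \mathscr{E}(A_{y_0})$. Pull this back along the map $R^+_{\red} \to R^+/\varpi$ induced by $s$. Because $x = x_0\circ s$, the result is an isomorphism of $F$-isocrystals with $G$-structure
\[
x^\ast D_\crys(\omega_\prism) \isom y^\ast D_\crys(\omega_\prism)
\]
over $R^+/\varpi$, lifting the map on Dieudonné isocrystals induced by $f$. Here we use that the crystalline Dieudonné functor commutes with base change and that $\tilde f$ is, by construction, the pullback of $f_0$. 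Applying the exact tensor functor $\delta$ of Construction \ref{Constr:AnalyticIsocrystal} gives an isomorphism of $G$-bundles $\mathbb{L}_{\crys,x} \isom \mathbb{L}_{\crys,y}$. Its pushforward along $G \to \mathrm{GL}_V$ is the map $\mathcal{E}(A_x) \to \mathcal{E}(A_y)$ induced by $f$, by the diagram \eqref{Eq:DiagramOfRealizations}. This is exactly Definition \ref{defn:CondnAtP}.

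The main obstacle is the bookkeeping in the identification $\mathbb{L}_{\crys,x} \simeq \delta_\ast(x^\ast D_\crys(\omega_\prism))$, specifically that it is compatible with the \emph{Dieudonné} identification of $\mathcal{E}(A_x)$ used to define $f$ on vector bundles. I would reduce this to the fact that $\omega_\prism$ pushed to $\mathrm{GL}(V)$ is the prismatic Dieudonné crystal of $A[p^\infty]$, whose crystalline realization is $\mathbb{D}(A[p^\infty])[1/p]$. Unlike Lemma \ref{Lem:ReductionGStructure}, no reduction to $R^+=R^\circ$ or full faithfulness is needed, because here we transport structure forward rather than detect it.
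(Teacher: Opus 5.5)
Your proposal is correct and follows essentially the same route as the paper. The away-from-$p$ condition is formal. At $p$ you use the prismatic $F$-crystal of Theorem \ref{Thm:ImaiKatoYoucis} together with Lemma \ref{Lem:ObviousHopefully} to identify the $G$-isocrystals on $\shginf$ with pullbacks of $D_{\crys}(\omega_{\prism})$, transport the isomorphism induced by $f_0$ along $s$, and conclude with Lemma \ref{Lem:PrismaticCompatibilityI}.

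The only difference is cosmetic: you pull back directly along $R^+_{\red} \to R^+/\varpi$, whereas the paper first obtains the isomorphism over $R^+$ and then restricts to $R^+/\varpi$.
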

\begin{proof}
That $f$ is $\g$-structure preserving away from $p$ follows from the fact that $\spec R^+_{\red} \to \spec R^+/\varpi$ is a universal homeomorphism, so it remains to check that $f$ is $\g$-structure preserving at $p$. 

We once again make use of the prismatic $F$-crystal with $\mathcal{G}$-structure of Theorem \ref{Thm:ImaiKatoYoucis}, using the assumption that $\mathcal{G}$ is reductive. By Lemma \ref{Lem:PrismaticCompatibilityI}, the morphism $\scrshat_{K_p}\gx^{\lozenge} \to \bun_{G}$ is induced from an $F$-isocrystal with $G$-structure on $\scrs_{K_p}\gx_{k_{E}}$. Since $f_0$ is $\g$-structure preserving (i.e., induces an isomorphism of $F$-isocrystals with $G$-structure), it follows from Lemma \ref{Lem:ObviousHopefully} that $\tilde{f}$ induces an isomorphism of $F$-isocrystals with $G$-structure over $R^+$. It follows that $f$ induces an isomorphism of $F$-isocrystals with $G$-structure over $R^+/\varpi$, and thus $f$ is $\g$-structure preserving by Lemma \ref{Lem:PrismaticCompatibilityI}. 
\end{proof}
Let $(R,R^+) \in \perf$ and let $s$ be a section of the morphism of rings $R^+ \to R^+_{\red}$. 
\begin{Construction}
Assume that $\mathcal{G}$ is reductive. Then there is a functor $\sigma_{\Xi,s}:\igsprecc_{\Xi}\gx(R,R^+) \to \igspree_{\Xi}\gx(R,R^+)$ given by: On objects sending $x_0:\spec R^+_{\red} \to \shginf$ to $x=x_0 \circ s:\spf R^+ \to \scrshat_{K_p}\gx$, on morphisms sending $f_0:A_{x_0} \dashrightarrow A_{y_0}$ to the restriction of $\tilde{f}$ to $R^+/\varpi$; this is well defined by Lemma \ref{Lem:SectionGStructure}. 
\end{Construction}

\begin{Prop} \label{Prop:IgusaStackIsStrangeII}
Let $(R,R^+) \in \perf$ and assume that $\mathcal{G}$ is reductive. If the morphism of rings $R^+ \to R^+_{\red}$ admits a section $s$, then $\rho_{\Xi}(R,R^+)$ is an equivalence of categories with quasi-inverse $\sigma_{\Xi,s}$.
\end{Prop}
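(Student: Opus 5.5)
We check directly that $\rho_{\Xi}\circ\sigma_{\Xi,s}$ and $\sigma_{\Xi,s}\circ\rho_{\Xi}$ are isomorphic to the respective identity functors on the groupoids $\igsprecc_{\Xi}\gx(R,R^+)$ and $\igspree_{\Xi}\gx(R,R^+)$. Both groupoids are $0$-truncated: this is \cite[Lemma 5.2.6]{DvHKZIgusaStacks} for $\igspree_{\Xi}$, and quasi-isogenies are determined by their effect on rational Tate modules for the perfect side. So it suffices to construct the natural isomorphisms on objects; naturality is then automatic.

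\textbf{The composite $\rho_{\Xi}\circ\sigma_{\Xi,s}$.} Start with $x_0:\spec R^+_{\red}\to\shginf$. Then $\rho_{\Xi}(\sigma_{\Xi,s}(x_0))$ is the restriction of $x_0\circ s$ along $R^+\to R^+_{\red}$. Since $s$ is a section of this map, the restriction equals $x_0$ on the nose. On morphisms, $f_0$ is sent to the reduction of $\tilde f$, and $\tilde f$ is the pullback of $f_0$ along $s$. Hence the reduction of $\tilde f$ is $f_0$, and this composite is literally the identity.

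\textbf{The composite $\sigma_{\Xi,s}\circ\rho_{\Xi}$.} Start with $x:\spf R^+\to\scrshat_{K_p}\gx$. The object $\sigma_{\Xi,s}(\rho_{\Xi}(x))$ is $y=x_{\red}\circ s$, where $s$ is viewed as a map $\spf R^+\to\spec R^+_{\red}$. By construction, $x$ and $y$ have the same restriction to $\spec R^+_{\red}$. Lemma~\ref{Lem:IgusaStackIsStrange}, with trivial untilt on both sides, then produces an isomorphism $x\cong y$ in $\igspre_{\Xi}\gx(R,R^+)$. That lemma passes through Lemma~\ref{Lem:IgusaStackIsNotTHATStrange}, which says the identity on $A\otimes R^+/\varpi^n$ is $\g$-structure preserving. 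Because $\igspree_{\Xi}\to\igspre_{\Xi}$ is fully faithful by definition, this isomorphism lives in $\igspree_{\Xi}\gx(R,R^+)$. This gives the required natural isomorphism $\sigma_{\Xi,s}\rho_{\Xi}\cong\mathrm{id}$. Naturality follows from $0$-truncatedness, or alternatively by checking that both ways around the square give quasi-isogenies agreeing modulo some $\varpi^n$.

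\textbf{Main obstacle.} The delicate point is making sure the isomorphism from Lemma~\ref{Lem:IgusaStackIsStrange} really is $\g$-structure preserving at $p$. This relies on the reductivity of $\mathcal{G}$ through Theorem~\ref{Thm:ImaiKatoYoucis} and Lemma~\ref{Lem:PrismaticCompatibilityI}, which are already packaged in the cited lemmas. We also need $\sigma_{\Xi,s}$ to be well defined on morphisms, which is exactly Lemma~\ref{Lem:SectionGStructure}, and $\rho_{\Xi}$ to be well defined, which is Lemma~\ref{Lem:ReductionGStructure}. Granting these, the argument is formal.
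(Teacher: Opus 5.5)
Your proposal is correct and matches the paper's argument. The composite $\rho_{\Xi}\circ\sigma_{\Xi,s}$ is the identity because $s$ is a section of $R^+\to R^+_{\red}$, and $\sigma_{\Xi,s}\circ\rho_{\Xi}$ is isomorphic to the identity by Lemma~\ref{Lem:IgusaStackIsStrange}, with naturality coming from the uniqueness of these isomorphisms, which you phrase as $0$-truncatedness.
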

\begin{proof}
Take $(R,R^+) \in \perf$. It suffices to show that $\rho_{\Xi}(R,R+) \circ \sigma_{\Xi,s}$ and $\sigma_{\Xi,s} \circ \rho_{\Xi}(R,R^+)$ are naturally isomorphic to the identity. The former composition is the identity on objects (the restriction of $x_0 \circ s$ to $\spec R^+_{\red}$ equals $x_0$). The latter composition is naturally isomorphic to the identity by Lemma \ref{Lem:IgusaStackIsStrange}, which says that $x$ is uniquely isomorphic to $x_0 \circ s$ in $\igspree_{\Xi}\gx(R,R^+)$.
\end{proof}

\subsubsection{} We now prove fully faithfulness of $\rho_{\Xi}$, for which we start with the following simple observation of Heuer, see \cite{HeuerProEtaleUniformisation}.
\begin{Lem} \label{Lem:SpreadingOut}
Let $(R,R^+)$ be a perfectoid Huber pair, let $\varpi \in R^+$ be a pseudo-uniformizer. For abelian schemes $A,B$ over $\spec R^+/\varpi$ with reductions $A_0$ and $B_0$ over $R^+_{\red}$, the natural map
\begin{align}
    \operatorname{Hom}_{R^+/\varpi}(A,B) \otimes_{\mathbb{Z}} \mathbb{Q} \to \operatorname{Hom}_{R^+_{\red}}(A_0,B_0) \otimes_{\mathbb{Z}} \mathbb{Q}
\end{align}
is a bijection. 
\end{Lem}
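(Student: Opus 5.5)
The natural strategy is to write $R^+_{\red}=\varinjlim_J R^+/J$, where $J$ runs over finitely generated ideals contained in $R^{\circ\circ}$. As in the proof of Lemma \ref{Lem:IgusaStackIsStrange}, after adjoining $\varpi$ we may assume $\varpi\in J\subset(\varpi^n)$ for some $n$ depending on $J$. Abelian schemes are finitely presented, so homomorphisms spread out: every homomorphism $A_0\to B_0$ over $R^+_{\red}$ comes from a homomorphism $A\otimes R^+/J\to B\otimes R^+/J$ for some such $J$, and two homomorphisms agreeing over $R^+_{\red}$ already agree over some $R^+/J$. The statement therefore reduces to comparing $\operatorname{Hom}$ over $R^+/\varpi$ with $\operatorname{Hom}$ over $R^+/J$, rationally and compatibly in $J$.

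For this comparison I would use that $R^+/\varpi\to R^+/J$ is a surjection with nilpotent kernel, since $\varpi\in J$ and every element of $J$ is nilpotent mod $\varpi$. By finite generation, $(J/\varpi)^N=0$ for some $N$. Rigidity of abelian schemes (via Serre--Tate or Drinfeld rigidity) then gives two facts over a base $S\to S'$ with square-zero, or more generally nilpotent, ideal killed by a power of $p$ (here $p\in(\varpi)$ after shrinking, since we are in characteristic $p$ and $p=0$):
\begin{itemize}
\item[(a)] the restriction map on $\operatorname{Hom}$ is injective;
\item[(b)] for any $f_0$ over the smaller base, $p^M f_0$ lifts, where $M$ depends only on the nilpotence order.
\end{itemize}
In characteristic $p$ I would use the Frobenius instead. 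If $I\subset S$ is an ideal with $I^{[p^k]}=0$, then the relative Frobenius $F^k$ factors through $S/I$, and $V^kF^k=p^k$ is a multiplication map. Hence $p^k f_0$ lifts canonically, and injectivity follows because the lift agrees with $p^kf$ whenever $f$ exists. This gives injectivity and surjectivity after $\otimes\mathbb{Q}$ for each $J$.

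Passing to the colimit over $J$ yields the bijection with $\operatorname{Hom}_{R^+_{\red}}(A_0,B_0)_{\mathbb{Q}}$. The remaining point is that the tensor product with $\mathbb{Q}$ commutes with the filtered colimit and with the spreading out. Hom groups of abelian schemes are torsion free, so no torsion subtleties arise. The main obstacle is the uniformity in step (b): the power of $p$ needed grows with $J$. The colimit statement must therefore be argued element by element, which is harmless because each element of $\operatorname{Hom}(A_0,B_0)$ only involves one $J$. One must also check that the canonical Frobenius lifts are compatible as $J$ grows, which they are by construction.
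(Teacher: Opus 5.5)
Your proposal is correct and is essentially the paper's argument. You spread out along $R^+_{\mathrm{red}}=\varinjlim_J R^+/J$, arrange $\varpi\in J$ so that $J/\varpi$ is a nilpotent ideal of $R^+/\varpi$, and invoke rigidity of homomorphisms of abelian schemes over nilpotent thickenings. The paper cites Katz's Serre--Tate lemma for lifting quasi-isogenies and Mumford's rigidity lemma for injectivity before tensoring with $\mathbb{Q}$. You instead get injectivity from the same thickening rigidity together with finite presentation, and your Frobenius--Verschiebung factorization $p^k=V^kF^k$ gives a self-contained characteristic-$p$ proof of the lifting step.

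There is one harmless slip, inherited from the paper's wording in the proof of Lemma \ref{Lem:IgusaStackIsStrange}. Once $\varpi\in J$, you cannot have $J\subset(\varpi^n)$ for $n\geq 2$; the correct statement is $J^N\subset(\varpi)$, which is exactly the nilpotence $(J/\varpi)^N=0$ you actually use. Your worry about uniformity in $J$ is also unnecessary. Each map $\operatorname{Hom}_{R^+/\varpi}(A,B)\otimes\mathbb{Q}\to\operatorname{Hom}_{R^+/J}(A_J,B_J)\otimes\mathbb{Q}$ is already a bijection, and a filtered colimit of isomorphisms is an isomorphism.
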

\begin{proof}
It is injective (even before tensoring by $\mathbb{Q}$) by \cite[Proposition 6.1]{MumfordGIT}. To prove surjectivity, let $f \in \operatorname{Hom}_{R^+_{\red}}(A_0,B_0) \otimes_{\mathbb{Z}} \mathbb{Q}$. Since $R^+_{\red}=\varinjlim_{J \subset R^{\circ \circ}} R^+/J$, where $J$ runs over finitely generated ideals contained in $R^{\circ \circ}$, we can use spreading out to lift $f$ to a quasi-isogeny over $R^+/J$. After replacing $J$ by the ideal generated by $J$ and $\varpi$, we may assume $\varpi \in J$. Then since $J/\varpi$ is a nilpotent ideal of $R^+/\varpi$, we can lift $f$ to a quasi-isogeny over $R^+/\varpi$ by the rigidity of quasi-isogenies \cite[Lemma~1.1.3]{KatzSerreTate}.
\end{proof}

\begin{Prop} \label{Prop:Fullyfaithful}
If $\mathcal{G}$ is reductive, then the morphism $\rho_{\Xi}:\igspree_{\Xi}\gx \to \igsprecc_{\Xi}\gx$ of presheaves of groupoids is fully faithful.
\end{Prop}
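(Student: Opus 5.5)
Since both presheaves of groupoids are $0$-truncated, it suffices to fix $(R,R^+) \in \perf$ and two objects $x_1,x_2:\spf R^+ \to \scrshat_{K_p}\gx$ of $\igspree_{\Xi}\gx(R,R^+)$, with reductions $x_{1,0},x_{2,0}:\spec R^+_{\red} \to \shginf$. We must show that restriction to $\spec R^+_{\red}$ gives a bijection between two sets of quasi-isogenies. The source is the set of $\g$-structure preserving formal quasi-isogenies $A_{x_1} \otimes R^+/\varpi \dashrightarrow A_{x_2} \otimes R^+/\varpi$. The target is the set of $\g$-structure preserving quasi-isogenies $A_{x_{1,0}} \dashrightarrow A_{x_{2,0}}$.

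\textbf{Step 1: the underlying map of quasi-isogenies is bijective.} By Lemma \ref{Lem:SpreadingOut}, restriction induces a bijection on $\operatorname{Hom}(-,-)\otimes\mathbb{Q}$ between abelian schemes over $R^+/\varpi$ and their reductions over $R^+_{\red}$. Applying this in both directions shows that quasi-isogenies correspond to quasi-isogenies: the inverse of a reduced quasi-isogeny lifts to an inverse. Since formal quasi-isogenies are independent of $\varpi$, this gives the bijection on plain quasi-isogenies. Lemma \ref{Lem:ReductionGStructure} already says that $\g$-structure preserving source morphisms map to $\g$-structure preserving target morphisms. It therefore remains to show that if $f$ is a formal quasi-isogeny whose reduction $f_{\red}$ is $\g$-structure preserving, then $f$ itself is $\g$-structure preserving.

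\textbf{Step 2: away from $p$.} The condition at $\ell \neq p$ only involves the prime-to-$p$ Tate modules. These are pro-\'etale local systems, and they are insensitive to the universal homeomorphism $\spec R^+_{\red} \to \spec R^+/\varpi$ used in Lemma \ref{Lem:IgusaStackIsStrange}. Hence $f$ is $\g$-structure preserving away from $p$ if and only if $f_{\red}$ is.

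\textbf{Step 3: at $p$ (main obstacle).} We need that the isomorphism $\mathcal{E}(A_{x_1}) \to \mathcal{E}(A_{x_2})$ on $X_{(R,R^+)}$ comes from an isomorphism of $G$-bundles. By Theorem \ref{Thm:ImaiKatoYoucis} and Lemma \ref{Lem:PrismaticCompatibilityI}, both $G$-bundles are $\delta_\ast$ of $F$-isocrystals with $G$-structure obtained by pulling back the prismatic $F$-crystal along $x_i \bmod \varpi$. Since $\delta_\ast$ is a tensor functor, it suffices to show that $f$ induces an isomorphism of these $F$-isocrystals with $G$-structure over $R^+/\varpi$, i.e. that $f$ respects the crystalline tensors. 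I would argue this by crystalline invariance. $F$-isocrystals over $R^+/\varpi$ are identified with $\varphi$-modules over $\mathbf{B}^+_{\crys}(R^+/\varpi)$, and these depend only on $R^+/\varpi$ up to nilpotent thickenings. Passing to the colimit over $J$, the base $\spec R^+_{\red}$ is a colimit of nilpotent thickenings of $\spec R^+/\varpi$ up to universal homeomorphism. So the category of $F$-isocrystals over $R^+/\varpi$ should be equivalent, via pullback, to that over $R^+_{\red}$: both are computed by $A_{\crys}$ of the common tilt-type datum, since $\mathbf{B}^+_{\crys}$ only sees $R^+/p$-type quotients through $W(R^+)$, and $W(R^+) \to W(R^+_{\red})$ is compatible. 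Under this equivalence, the tensors cutting out $G$ inside $\mathrm{GL}_V$ are preserved by $f$ because they are preserved by $f_{\red}$, which is $\g$-structure preserving by Lemma \ref{Lem:ObviousHopefully}. Justifying this rigidity of isocrystal morphisms under $R^+/\varpi \to R^+_{\red}$ is the delicate point. If a full equivalence is hard to establish, it suffices to prove faithfulness on Hom-sets of isocrystals tensored with $\mathbb{Q}$, which follows from Dieudonn\'e theory and Lemma \ref{Lem:SpreadingOut}. Alternatively, one can reduce v-locally to the case where $R^+ \to R^+_{\red}$ has a section $s$, by working over products of points. In that case Proposition \ref{Prop:IgusaStackIsStrangeII} gives the conclusion directly, and since Lemma \ref{Lem:GStructurePreservingClosedPerfectoid} shows that the $\g$-structure preserving condition is closed and v-local, this suffices.
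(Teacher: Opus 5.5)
Steps 1 and 2 are fine. Step 3, the condition at $p$, has a genuine gap.

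\textbf{The main argument fails.} It rests on the claim that restriction of $F$-isocrystals along $R^+/\varpi \to R^+_{\red}$ is an equivalence, or at least faithful. That claim is false. The kernel $R^{\circ\circ}/\varpi$ is nil but not nilpotent, so Dwork-type crystalline invariance does not apply. Already for $(R,R^+)=(C,\mathcal{O}_C)$, restriction to $k=\mathcal{O}_C/\mathfrak{m}_C$ is not faithful. $F$-isocrystals over $\mathcal{O}_C/\varpi$ embed fully faithfully into vector bundles on $X_C$; this is the fact used in the proof of Lemma \ref{Lem:ReductionGStructure}. Vector bundles of different slopes admit nonzero maps, e.g.\ $H^0(X_C,\mathcal{O}(1))\neq 0$. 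Isocrystals over $k$ of different slopes admit none. Hence a morphism $\mathbf{1}\to\mathscr{E}^{\otimes}$ over $R^+/\varpi$, such as a crystalline tensor, is not determined by its restriction to $R^+_{\red}$. Your faithfulness fallback fails for the same reason. Lemma \ref{Lem:SpreadingOut} cannot rescue it, because it controls homomorphisms of abelian schemes, not arbitrary morphisms of isocrystals.

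\textbf{The section fallback is the right idea but is set up incorrectly.} Being $\g$-structure preserving is v-local. But passing to a v-cover by products of points only helps if $R^+\to R^+_{\red}$ admits a ring section for products of points, and you give no argument for this. The usual lifting trick for perfect rings needs the kernel to be killed by a fixed power of Frobenius. That fails here, because topologically nilpotent elements of a product of points are not uniformly so. The missing idea is to use the closedness in Lemma \ref{Lem:GStructurePreservingClosedPerfectoid} to reduce to points rather than to covers:
\begin{itemize}
\item Let $f$ be the unique lift of $f_0$. The locus $Z\subset\spd(R,R^+)$ where $f$ is $\g$-structure preserving at $p$ is closed.
\item A closed subset containing all rank one points is everything, since every point is a specialization of its rank one generization (equivalently, closed immersions are partially proper). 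So it suffices to treat $(C,\mathcal{O}_C)$.
\item There $\mathcal{O}_C\to k$ does admit a section by \cite[Lemma 6.2.1]{DvHKZIgusaStacks}. Proposition \ref{Prop:IgusaStackIsStrangeII} then shows $\rho_{\Xi}(C,\mathcal{O}_C)$ is full, so the restriction of $f_0$ to $k$ lifts to a $\g$-structure preserving quasi-isogeny.
\item By the uniqueness in Lemma \ref{Lem:SpreadingOut}, this lift coincides with the restriction of $f$.
\end{itemize}
This is the paper's proof.
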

\begin{proof}
The faithfulness follows from Lemma \ref{Lem:SpreadingOut}. For fullness, we take $(R,R^+) \in \perf$, two morphisms $x_1,x_2:\spf R^+ \to \scrshat_{K_p} \gx$ and a $\g$-structure preserving quasi-isogeny $f_0:A_{x_1, R^+_{\red}} \dashrightarrow A_{x_2, R^+_{\red}}$. By Lemma \ref{Lem:SpreadingOut}, we know that $f_0$ lifts uniquely to a quasi-isogeny $f:A_{x_1, R^+/\varpi} \dashrightarrow A_{x_2, R^+/\varpi}$, and it suffices to show that $f$ is $\g$-structure preserving at $p$. By Lemma \ref{Lem:GStructurePreservingClosedPerfectoid}, there is a closed subset $Z \subset \spd(R,R^+)$ over which $f$ is $\g$-structure preserving. To show that $Z$ equals all of $\spd(R,R^+)$, it suffices to show that $Z$ contains all rank one geometric points of $|\spd(R,R^+)|$ (e.g. by \cite[Lemma 11.11]{EtCohDiam} and the fact that closed immersions are partially proper). In other words, we have to show that $f$ is $\g$-structure preserving at $p$ in the special case that $(R,R^+)=(C, \mathcal{O}_C)$. In this case, the morphism $\mathcal{O}_C \to k=\mathcal{O}_C^{\red}$ admits a section by \cite[Lemma 6.2.1]{DvHKZIgusaStacks}. We may thus apply Proposition \ref{Prop:IgusaStackIsStrangeII} to conclude that $\rho_{\Xi}$ is fully faithful, which by the uniqueness of $f$ implies that $f$ must be $\g$-structure preserving at $p$. 
\end{proof}
We can now finish the proof of Theorem \ref{Thm:IgusaDCirc}. 
\begin{proof}[Proof of Theorem \ref{Thm:IgusaDCirc}]
The morphism $\igspree_{\Xi}\gx \to \igsprecc_{\Xi}\gx$ is fully faithful by Proposition \ref{Prop:Fullyfaithful}, thus it suffices to show it is also essentially surjective, which follows from Lemma \ref{Lem:ReductionShimuraSurjective}.
\end{proof}

\subsubsection{} We have the following corollary of Theorem \ref{Thm:IgusaDCirc}.\footnote{If Question \ref{Question:VtopologyLemmaReduction} had a positive answer, it would be an immediate corollary.}
\begin{Thm} \label{Thm:IgusaDCircII}
If $\mathcal{G}$ is reductive, then there is a commutative diagram
\begin{equation}
    \begin{tikzcd}
        \shginf^{\diamond} \arrow{r} \arrow{d} & \shginf^{\diamond/\circ} \arrow{d} \\
        \igse\gx  \arrow[r] & (\igsperf\gx)^{\diamond/\circ},
    \end{tikzcd}
\end{equation}
where the bottom horizontal arrow is an isomorphism.
\end{Thm}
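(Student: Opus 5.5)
The bottom arrow comes from Theorem \ref{Thm:IgusaDCirc} by v-sheafifying $\rho_{\Xi}$, with a separate argument to compare the v-sheafification of $\igsprecc_{\Xi}\gx$ with $(\igsperf\gx)^{\diamond/\circ}$. This second comparison is exactly the content of Question \ref{Question:VtopologyLemmaReduction}, which we want to avoid.

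\textbf{Step 1: the map $\igse\gx \to \igscc_{\Xi}\gx$.} By Theorem \ref{Thm:IgusaDCirc}, $\rho_{\Xi}$ is an isomorphism of presheaves of groupoids $\igspree_{\Xi}\gx \isom \igsprecc_{\Xi}\gx$. Both sides are $0$-truncated, so v-sheafifying gives an isomorphism
\begin{align}
    \igse\gx \isom \igscc_{\Xi}\gx = (\igspreperf_{\Xi}\gx)^{\diamond/\circ}.
\end{align}
Lemma \ref{Lem:ReductionMorphismDiagramCommutative}, after sheafification, gives the commutativity of the square. Here we use that $\shginf^{\diamond,\pre}$ sheafifies to $\shginf^{\diamond}$ and that $\shginf^{\diamond/\circ,\pre}$ sheafifies to $\shginf^{\diamond/\circ}$, the latter by \cite[Lemma 5.2]{HeuerPicard}.

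\textbf{Step 2: comparison with $(\igsperf\gx)^{\diamond/\circ}$.} There is a natural map
\begin{align}
    \theta:(\igspreperf_{\Xi}\gx)^{\diamond/\circ} \to (\igsperf\gx)^{\diamond/\circ},
\end{align}
induced by the sheafification map $\igspreperf_{\Xi}\gx \to \igsperf\gx$. We show $\theta$ is an isomorphism by checking surjectivity and injectivity.

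\emph{Surjectivity.} It suffices to lift sections on a basis. By Lemma \ref{Lem:BasisTopology}, take $(R,R^+)$ w-contractible, so $\spec R^+_{\red}$ is a strict comb by Lemma \ref{Lem:ReductionETDStrictComb}. A section of $\igsperf\gx$ over a strict comb lifts, v-locally, to a point of $\shginf$. To get a global lift, use that $\shginf \to \igsperf\gx$ is a v-surjection. Its pullback along $\spec R^+_{\red}\to\igsperf\gx$ is controlled by the Cartesian diagram with $\shtlocgmuone \to \gisocmu$, so it reduces to the surjectivity statement of Proposition \ref{Prop:VSurjectiveNewtonMap}. I would mirror the proof of Proposition \ref{Prop:VSurjectiveNewtonDiamondCirc}: the pullback is an ind-(perfectly proper, perfectly finitely presented) surjection onto $\spec R^+_{\red}$, so by Lemma \ref{Lem:combhlocal} it admits a section. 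Composing with $(\spec R^+_{\red})^{\diamond/\circ}$ then gives the lift.

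\emph{Injectivity.} Morphisms in $\igsperf\gx$ between two points of $\shginf$ over $\spec A$ are v-locally quasi-isogenies. The presheaf $\igspreperf_{\Xi}\gx$ is $0$-truncated and separated for the v-topology, because quasi-isogenies with $\g$-structure descend: $\g$-structure preservation is a closed condition by Lemma \ref{Lem:GStructurePreservingClosedPerfect}, and homomorphisms of abelian schemes satisfy v-descent on perfect schemes. So sheafification only identifies objects that become isomorphic after a v-cover of $\spec R^+_{\red}$. Such a cover is refined by a cover $(R',R'^+)$ of $(R,R^+)$, again using strict combs and Lemma \ref{Lem:combhlocal}. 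Hence two sections of $\igscc_{\Xi}\gx$ with the same image already agree v-locally.

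\textbf{Main obstacle.} Step 2 is the delicate part, because it replaces the general Question \ref{Question:VtopologyLemmaReduction} by the strict-comb section arguments.

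For surjectivity, the subtle point is that the fiber of $\shginf \to \igsperf\gx$ over a strict comb is ind-proper over the base only after passing through $\shtlocgmuone \to \gisocmu$. I would therefore work with the Cartesian square
\begin{align}
    \shginf = \igsperf\gx \times_{\gisocmu} \shtlocgmuone
\end{align}
and invoke Proposition \ref{Prop:VSurjectiveNewtonMap} directly for the needed section.

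For injectivity, I would try first to use that morphisms in $\igspreperf_{\Xi}\gx$ form a v-sheaf (the separatedness noted above). That reduces the question to essential surjectivity, which was just handled.
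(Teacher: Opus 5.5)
Your proposal is correct and follows essentially the paper's proof. You sheafify the isomorphism $\rho_{\Xi}$ of Theorem \ref{Thm:IgusaDCirc}. You get injectivity of $(\igspreperf_{\Xi}\gx)^{\diamond/\circ}\to(\igsperf\gx)^{\diamond/\circ}$ from the fact that $\shginf\times_{\igspreperf_{\Xi}\gx}\shginf$ is already a v-sheaf; this rests on ind-representability of quasi-isogenies and closedness of the $\g$-structure condition, which you should state both at $p$ and away from $p$. You get v-surjectivity from the Cartesian square over $\shtlocgmuone\to\gisocmu$ together with strict-comb sections, as in Propositions \ref{Prop:VSurjectiveNewtonMap} and \ref{Prop:DiamondCircVSurjectiveSpecialCase}.

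Your sentence about refining an arbitrary v-cover of $\spec R^+_{\red}$ by a perfectoid cover via Lemma \ref{Lem:combhlocal} is not justified, since that lemma only handles perfectly finitely presented covers. It is also unnecessary: separatedness makes $(R,R^+)\mapsto\igspreperf_{\Xi}\gx(R^+_{\red})$ a subpresheaf of $(R,R^+)\mapsto\igsperf\gx(R^+_{\red})$, and v-sheafification preserves monomorphisms.
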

The following proposition is the essential ingredient in the proof.
\begin{Prop} \label{Prop:DiamondCircVSurjectiveSpecialCase}
    The natural map $\shginf^{\diamond/\circ} \to (\igsperf)^{\diamond/\circ}$ is v-surjective.
\end{Prop}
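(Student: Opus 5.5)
The plan is to check v-surjectivity on a basis of the v-topology and lift sections one at a time. By Lemma \ref{Lem:BasisTopology}, it suffices to do the following. Take $(R,R^+) \in \perf$ with $\spa(R,R^+)$ w-contractible and a section $z \in (\igsperf\gx)^{\diamond/\circ}(R,R^+)$. We must find a v-cover of $\spa(R,R^+)$ over which $z$ lifts to $\shginf^{\diamond/\circ}$.

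Since $(\igsperf\gx)^{\diamond/\circ}$ is the v-sheafification of the presheaf $(R,R^+) \mapsto \igsperf\gx(R^+_{\red})$, after passing to a v-cover we may assume $z$ comes from an element $z_0 \in \igsperf\gx(R^+_{\red})$. We may then replace the cover again by a w-contractible one, which is allowed by Lemma \ref{Lem:BasisTopology}. Here $\igsperf\gx$ is only the v-sheafification of $\igspreperf_{\Xi}\gx$, so $z_0$ need not come from a point of $\shginf$ over $\spec R^+_{\red}$. However, $\shginf \to \igsperf\gx$ is a v-surjection of v-sheaves on $\affperf$. Hence there is a v-cover $\spec B \to \spec R^+_{\red}$ such that $z_0|_{\spec B}$ lifts to a point $x \in \shginf(B)$.

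The key step is to transport this perfect v-cover $\spec B \to \spec R^+_{\red}$ to a v-cover on the perfectoid side over which $z$ becomes the image of a point of $\shginf^{\diamond/\circ}$. By \cite[Proposition 3.7]{GleasonSpecialization}, as recalled in Section \ref{subsub:SmallDiamondPerfectScheme}, the map $\spd B \to \spd R^+_{\red}$ is v-surjective. I would instead use that $\spec R^+_{\red}$ is a strict comb, by Lemma \ref{Lem:ReductionETDStrictComb}, and then reduce to a cover that admits a section. First, by approximation, write the v-cover $\spec B \to \spec R^+_{\red}$ as refined by a perfectly finitely presented v-cover. This uses that $\shginf \to \igsperf\gx$ is obtained as a limit of quotients of finite type objects, via \cite[Proposition 3.13]{BhattScholze}, together with the fact that the graph of the Igusa relation is ind-(perfectly proper) over $\shginf \times \shginf$ (compare Lemma \ref{Lem:IndRepresentableI}). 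Then apply Lemma \ref{Lem:combhlocal} to obtain a section $\spec R^+_{\red} \to \spec B$. Given such a section, $z_0$ itself lifts to $\shginf(R^+_{\red})$, and hence $z$ lifts to $\shginf^{\diamond/\circ,\pre}(R,R^+)$, which finishes the proof.

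The main obstacle is precisely this finite-presentation reduction. A general v-cover of a strict comb need not have a section, so I need a v-cover by something perfectly finitely presented and proper-type. I would try to mimic the proof of Proposition \ref{Prop:VSurjectiveNewtonMap}. Map $z_0$ to $\gisocmu$ and use Proposition \ref{Prop:VSurjectiveNewtonMap} to lift it to $\shtlocgmuone(R^+_{\red})$. Then use the Cartesian diagram of \cite[Theorem 6.5.1]{DvHKZIgusaStacks}, expressing $\shginf$ as the fiber product $\igsperf\gx \times_{\gisocmu} \shtlocgmuone$, to conclude that $z_0$ lifts to $\shginf(R^+_{\red})$. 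This argument avoids approximation altogether, and it is the route I would try first. It needs only that the reduction of the diagram \eqref{Eq:TheDiagram} is Cartesian, which holds because reduction preserves fiber products.
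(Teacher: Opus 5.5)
Your final route is correct and is essentially the paper's proof. You lift the image of $z_0$ in $\gisocmu(R^+_{\red})$ to a shtuka over the strict comb $\spec R^+_{\red}$ via Proposition \ref{Prop:VSurjectiveNewtonMap}, and then use the Cartesian square $\shginf \simeq \igsperf\gx \times_{\gisocmu} \shtlocgmuone$. The paper applies $\dcirc$ to that same square and reduces by base change to Proposition \ref{Prop:VSurjectiveNewtonDiamondCirc}, whose proof is exactly your pointwise argument on w-contractible test objects.

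The differences are minor. The paper also passes to the identity component $\mathcal{G}^{\circ}$ to reduce to the parahoric case. That step is harmless here, since $\mathcal{G}$ is reductive and Proposition \ref{Prop:VSurjectiveNewtonMap} is stated for $\shtlocgmu$ with $\mathcal{G}$ parahoric. Your earlier approximation-based detour is unnecessary, and its finite-presentation step is not justified as stated, so you should drop it.
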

\begin{proof}
Using the Cartesian diagram
\begin{equation}
    \begin{tikzcd}
        \shginf^{\diamond/\circ} \arrow{r} \arrow{d} & (\shtlocgmuone)^{\diamond/\circ} \arrow{d} \\
        (\igsperf\gx)^{\diamond/\circ} \arrow{r} & (\gisocmu)^{\diamond/\circ},
    \end{tikzcd}
\end{equation}
it suffices to show that $(\shtlocgmuone)^{\diamond/\circ} \to (\gisocmu)^{\diamond/\circ}$ is v-surjective. If $\mathcal{G}^{\circ}$ is the identity component of $\mathcal{G}$, then we have a natural map $\operatorname{Sht}^{\mathrm{W}}_{\mathcal{G}^{\circ}, \mu} \to \shtlocgmuone$. We may therefore assume that $\mathcal{G}$ is parahoric, and then the result is Proposition \ref{Prop:VSurjectiveNewtonDiamondCirc}. 
\end{proof}

\begin{proof}[Proof of Theorem \ref{Thm:IgusaDCircII}]
Write $\mathcal{R}^{\mathrm{perf}}=\shginf \times_{\igspreperf \gx} \shginf$. This is in fact a sheaf; this follows from the ind-representability of the sheaf of quasi-isogenies between two abelian schemes.\footnote{Note that the condition that a quasi-isogeny is $\g$-structure preserving is representable in closed immersions, see Lemma \ref{Lem:GStructurePreservingClosedPerfect} for the representability in closed immersions of being $\g$-structure preserving at $p$; the representability in closed immersions of being $\g$-structure preserving away from $p$ follows from the fact that $\ul{\gafp} \to \ul{\g_V(\afp)}$ is a closed immersion.} Therefore, $\mathcal{R}^{\mathrm{perf}}$ is also equal to (since sheafification preserves fiber products)
\begin{align}
    \shginf \times_{\igsperf \gx} \shginf.
\end{align}
Write $\mathcal{R}^{\mathrm{pre}}=\shginf^{\diamond,\pre} \times_{\igspree_{\Xi}\gx} \shginf^{\diamond,\pre}$. Then the following diagram
\begin{equation}
    \begin{tikzcd}
        \mathcal{R}^{\mathrm{pre}} \arrow{r} \arrow{d} & \left(\mathcal{R}^{\mathrm{perf}}\right)^{\dcirc, \pre} \arrow{d} \\
        \shginf^{\diamond,\pre} \arrow{r} \arrow{d} & \shginf^{\diamond/\circ, \pre} \arrow{d} \\
        \igspree_{\Xi}\gx  \arrow{r}{\rho_{\Xi}} & (\igspreperf_{\Xi}\gx)^{\diamond/\circ,\pre} \arrow{r} & \igsperf\gx^{\diamond/\circ, \pre}.
    \end{tikzcd}
\end{equation}
is commutative by Lemma \ref{Lem:ReductionMorphismDiagramCommutative}, and the bottom left horizontal morphism is an isomorphism by Theorem \ref{Thm:IgusaDCirc}. The top diagram is moreover Cartesian; this follows from the bottom left horizontal map being an isomorphism. We moreover observe that the rightmost vertical arrow is v-surjective by definition. 

Applying v-sheafification, we get the following commutative diagram
\begin{equation}
    \begin{tikzcd}
        \mathcal{R} \arrow{r} \arrow{d} & \left(\mathcal{R}^{\mathrm{perf}}\right)^{\dcirc} \arrow{d} \\
        \shginf^{\diamond} \arrow{r} \arrow{d} & \shginf^{\diamond/\circ} \arrow{d} \\
        \igs\gx  \arrow{r}{\rho_{\Xi}} & \igspreperf_{\Xi}\gx^{\diamond/\circ} \arrow{r} & \igsperf\gx^{\diamond/\circ}, 
    \end{tikzcd}
\end{equation}
in which the top square is still Cartesian. The natural map
\begin{align}
    \shginf^{\diamond/\circ} \times_{\igspreperf_{\Xi}\gx^{\diamond/\circ}} \shginf^{\diamond/\circ} \to \shginf^{\diamond/\circ} \times_{\igsperf\gx^{\diamond/\circ}} \shginf^{\diamond/\circ}
\end{align}
is an isomorphism since this is true before applying $\diamond/\circ$ and since sheafification commutes with fiber products. Thus
\begin{align}
    \igspreperf_{\Xi}\gx)^{\diamond/\circ} \to \igsperf\gx^{\diamond/\circ}
\end{align}
is injective, and it remains to prove it is also v-surjective. This follows directly from Proposition \ref{Prop:DiamondCircVSurjectiveSpecialCase} and Lemma \ref{Lem:ReductionShimuraSurjective}.
\end{proof}

\subsubsection{} We collect the following result for later use. Let $Z \subset B(G,-\mu)$ be a closed subset defining a closed subfunctor $\igsperf\gx_{Z} \subset \igsperf\gx$ and an open subfunctor $\igs \gx_{Z} \subset \igs \gx$. Recall that Theorem \ref{Thm:IgusaDCircII} gives us a natural isomorphism $\igsperf\gx^{\dcirc} \to \igs\gx$. 
\begin{Prop} \label{Prop:NewtonStrata}
    The natural map $(\igsperf\gx_{Z})^{\dcirc} \to \igsperf\gx^{\dcirc} \to \igs\gx$ induces an isomorphism between $(\igsperf\gx_{Z})^{\dcirc}$ and $\igs\gx_{Z}$.
\end{Prop}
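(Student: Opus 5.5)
The plan is to reduce the statement to the corresponding fact for $\gisocmu$ and $\bungmu$, using that both Igusa stacks sit over these bases compatibly with the isomorphism of Theorem \ref{Thm:IgusaDCircII}.

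\textbf{Step 1: pullback squares over the bases.} By definition, $\igsperf\gx_{Z}$ is the pullback of the closed substack $(\gisocmu)_Z \subset \gisocmu$. Similarly, $\igs\gx_Z$ is the pullback of the open substack $(\bungmu)_Z$. The closed substack $(\gisocmu)_Z$ is perfectly finitely presented by \cite[Theorem 6.8]{GleasonIvanovZillinger}. Hence, by Lemma \ref{Lem:ClosedToOpen}, the map $(\igsperf\gx_Z)^{\dcirc} \to \igsperf\gx^{\dcirc}$ is an open immersion. Moreover, since $(-)^{\dcirc,\pre}$ commutes with fiber products and v-sheafification preserves fiber products, $(\igsperf\gx_Z)^{\dcirc}$ is the pullback of $((\gisocmu)_Z)^{\dcirc}$ along $\igsperf\gx^{\dcirc} \to (\gisocmu)^{\dcirc}$.

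\textbf{Step 2: compatibility with the bases.} It therefore suffices to check two things. First, the isomorphism $\igsperf\gx^{\dcirc} \simeq \igs\gx$ fits in a commutative square with $\overline{\pi}_{\HT}:\igs\gx \to \bungmu$ and some map $\bungmu \to (\gisocmu)^{\dcirc}$. Second, under that map the open $(\bungmu)_Z$ is the pullback of $((\gisocmu)_Z)^{\dcirc}$.

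For the first point, I would check commutativity after pulling back along the v-surjection $\shginf^{\diamond} \to \igs\gx$. There, both compositions to $(\gisocmu)^{\dcirc}$ factor through $\shginf^{\diamond} \to \shginf^{\dcirc} \to (\gisocmu)^{\dcirc}$. This uses Lemma \ref{Lem:ObviousHopefully} and Lemma \ref{Lem:PrismaticCompatibilityI}, together with the fact that the $G$-bundle attached to $\spf R^+ \to \scrshat$ is determined by the $G$-isocrystal over $R^+_{\red}$.

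For the second point, I would again test on points $\spd R \to \bung$ coming from perfect schemes, and invoke Lemma \ref{Lem:NewtonStrata}. That lemma identifies $(\spd R)_U$ with the inverse image of $(\spec B)^{\dcirc}$.

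\textbf{Main obstacle.} The main difficulty is this second point: Lemma \ref{Lem:NewtonStrata} is only stated for $\spd R$ with $R$ perfect. To get around this, I would pull everything back to $\shginf^{\diamond}$, which is a v-cover of $\igs\gx$. The pullback of $\igs\gx_Z$ there is the open defined by the Newton stratification of $\shginf^{\diamond}\to\bungmu$. Covering $\shginf$ by affine perfect schemes $\spec R$ and using that $\spd R$ covers $\shginf^{\diamond}$, Lemma \ref{Lem:NewtonStrata} then identifies this open with the inverse image of $(\shginf_Z)^{\dcirc}$, where $\shginf_Z$ is the closed stratum. Since $\shginf_Z$ maps onto $\igsperf\gx_Z$ v-surjectively, and Theorem \ref{Thm:IgusaDCircII} handles the passage to quotients, this gives the equality of subfunctors. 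Both sides are open subfunctors of $\igs\gx$ that agree after pullback to a v-cover, so they coincide.
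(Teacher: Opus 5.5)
Your final paragraph is essentially the paper's proof. Both $(\igsperf\gx_Z)^{\dcirc}$ (via Lemma \ref{Lem:ClosedToOpen}) and $\igs\gx_Z$ are open subfunctors of $\igs\gx$, and they agree after pullback along the v-cover $\shginf^{\diamond}\to\igs\gx$ of Lemma \ref{Lem:TrivialUntilt}, restriction to affine opens $\spd R$, and an application of Lemma \ref{Lem:NewtonStrata}; the square of Theorem \ref{Thm:IgusaDCircII} identifies the pulled-back open with the inverse image of $(\shginf_Z)^{\dcirc}$. The base-level reduction in Step 2, which needs a map $\bungmu \to (\gisocmu)^{\dcirc}$ that the paper never constructs, is not used by that argument, so you can drop it and keep only the openness claim from Step 1.
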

In other words, applying $\dcirc$ to the schematic (closed union of) Newton strata $\igsperf\gx_{Z}$ gives the analytic (open union of) Newton strata $\igs\gx_{Z}$. 
\begin{proof}[Proof of Proposition \ref{Prop:NewtonStrata}]
It follows from Lemma \ref{Lem:ClosedToOpen} that $$(\igsperf\gx_{Z})^{\dcirc} \to \igsperf\gx^{\dcirc} \isom \igs \gx$$ is representable in open immersions, and the same is true for $\igs \gx_{Z} \to \igs \gx$. So we have two open subfunctors that we are trying to check are equal, and we can do this after pullback via the v-cover $\sh_{K_p}\gx^{\diamond} \to \igs \gx$ 
of Lemma \ref{Lem:TrivialUntilt}. We can moreover pass to an open cover of $\sh_{K_p}\gx^{\diamond}$ given by affine opens, and the result then follows from Lemma \ref{Lem:NewtonStrata} and descent.
\end{proof}

\section{Correspondences} In Section \ref{Sub:Correspondences} we define a subsheaf $\igscorr$ of $\igs \gx \times \igs \gxp$ that is conjecturally the graph of the exotic isomorphism of Conjecture \ref{Conj:IntroMain}. This depends a priori on a large number of choices (captured by the subscript $\Omega$), and we check that it is in fact functorial in Section \ref{sub:NotationFunctoriality}. In Section \ref{sub:PerfectCorrespondence} we define a similar subsheaf $\igscorrperf$ of the product of the perfect Igusa stacks, and in Section \ref{Sub:ReductionCorrespondence}, we verify that $(\igscorr)^{\mathrm{red}}=\igscorrperf$.

\subsection{Correspondences}\label{Sub:Correspondences} Here we construct a correspondence between the Igusa stacks associated to two different Shimura data, see Definition \ref{defn:Corr}. We conjecture that this correspondence is the graph of an isomorphism between two open substacks, see Conjecture \ref{Conj:ConstructionWorks}. 

\subsubsection{} Let $\g$ be a smooth group scheme over an affine scheme $\spec A$, and write $\operatorname{Rep}_{A} \g$ for the category of representations of $\g$ on finite projective $A$-modules. Let $\mathsf{P}$ be an \'etale $\g$-torsor and let $\g'=\operatorname{Aut}_{\g}(\mathsf{P})$ be the corresponding pure inner form. 
\begin{Construction}
Given $V \in \operatorname{Rep}_{A} \g$ we consider $V'=V \times^{\g} \mathsf{P}$, on which $\g'$ naturally acts on the left.\footnote{Here we are conflating the $A$-module $V$ with the sheaf $B \mapsto V \otimes_{A} B$, which is represented by the total space of the vector bundle corresponding to $V$.} The addition on $V$ induces a binary operation on $V'$ with identity element given by $0 \times^{\g} \mathsf{P} \to V'$.\footnote{The map $V \times V \times \mathsf{P} \times \mathsf{P} = V \times V \times \mathsf{P} \times \g \to V \times \mathsf{P}$ given by $(v,w,p,gp) \mapsto (g^{-1} v+w,p)$ is $\g \times \g$-equivariant via projection onto the second factor, and thus induces a binary operation $V' \times V' \to V'$.}
\end{Construction}

\begin{Lem} \label{Lem:TwistingRepresentations}
For $V \in \operatorname{Rep}_{A} \g$ as above, the sheaf $V'$ defined above is representable by a finite projective $A$-module, and the functor $\beta_{\mathsf{P}}:V \mapsto V \times^{\g} \mathsf{P}$ (equipped with its left action of $\g'$) induces an equivalence
    \begin{align}
        \operatorname{Rep}_{A} \g \to \operatorname{Rep}_{A} \g'
    \end{align}
    of $A$-linear tensor categories. 
\end{Lem}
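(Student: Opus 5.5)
The plan is to prove representability by étale descent and then obtain the equivalence by exhibiting an explicit inverse given by twisting with the inverse torsor.

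\textbf{Representability.} Since $\mathsf{P}$ is an étale $\g$-torsor, choose an étale cover $A \to B$ together with a section $p \in \mathsf{P}(B)$. This section trivializes $\mathsf{P}_B \simeq \g_B$, and under the induced identification
\begin{align}
V'_B = (V \times^{\g} \mathsf{P})_B \simeq V \otimes_A B,
\end{align}
the binary operation of the Construction becomes ordinary addition, since the footnote formula reduces to $(v,w) \mapsto v+w$ when $g=1$. The scalar multiplication on $V$ is $\g$-equivariant, so it likewise descends to $V'$. Hence $V'$ is an étale sheaf of $\mathcal{O}$-modules that is étale locally isomorphic to the quasi-coherent sheaf attached to $V$. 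By fpqc descent for quasi-coherent modules, together with descent of the property of being finite projective, $V'$ is represented by a finite projective $A$-module. The left action of $\g' = \operatorname{Aut}_{\g}(\mathsf{P})$ on the factor $\mathsf{P}$ is $\mathcal{O}$-linear, so $V'$ is an object of $\operatorname{Rep}_A \g'$.

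\textbf{Functoriality and tensor compatibility.} The assignment $V \mapsto V'$ is visibly functorial and $A$-linear on morphisms. For tensor products, we define the natural map
\begin{align}
(V \times^{\g} \mathsf{P}) \otimes (W \times^{\g} \mathsf{P}) \to (V \otimes W) \times^{\g} \mathsf{P}
\end{align}
at the level of local sections by the formula $[v,p] \otimes [w,p] \mapsto [v \otimes w, p]$. To check that it is well defined, $\g'$-equivariant, an isomorphism, and compatible with the unit $A \mapsto A$ and with the associativity and commutativity constraints, we note that each of these is a statement about morphisms of sheaves. It can therefore be verified étale locally, where we trivialize $\mathsf{P}$ and all the maps become the identity. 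Exactness follows in the same way, since it too may be checked after the faithfully flat base change to $B$.

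\textbf{Equivalence.} For the inverse functor, we use that $\mathsf{P}$ is naturally a $(\g',\g)$-bitorsor. Its inverse $\mathsf{P}^{-1} = \operatorname{Isom}_{\g}(\mathsf{P}, \g)$ is a $(\g,\g')$-bitorsor, and $\operatorname{Aut}_{\g'}(\mathsf{P}^{-1}) = \g$. Define $\beta_{\mathsf{P}^{-1}} : \operatorname{Rep}_A \g' \to \operatorname{Rep}_A \g$ by the same construction. The canonical bitorsor isomorphisms
\begin{align}
\mathsf{P} \times^{\g'} \mathsf{P}^{-1} \simeq \g, \qquad \mathsf{P}^{-1} \times^{\g} \mathsf{P} \simeq \g'
\end{align}
together with the associativity of contracted products then yield natural isomorphisms $\beta_{\mathsf{P}^{-1}} \beta_{\mathsf{P}} \simeq \mathrm{id}$ and $\beta_{\mathsf{P}} \beta_{\mathsf{P}^{-1}} \simeq \mathrm{id}$. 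These are again verified étale locally.

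The main obstacle is bookkeeping rather than any real difficulty. One has to keep the left and right actions consistent, since $V$ carries a left $\g$-action while $\mathsf{P}$ carries a right one, and then confirm that the descended module structure is genuinely $A$-linear and that the $\g'$-action is algebraic. All of this becomes tautological after trivializing $\mathsf{P}$, so we expect no substantive problem.
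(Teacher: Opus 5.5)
Your argument is correct. For representability and tensor compatibility it matches the paper's proof: trivialize $\mathsf{P}$ over an \'etale cover, where $\beta_{\mathsf{P}}$ becomes the identity, and descend.

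You diverge from the paper in how you get the equivalence.

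\textbf{The paper's route.} The paper obtains the equivalence from the same local observation. The assignment $B \mapsto \operatorname{Rep}_B \g_B$ is an \'etale stack over $\spec A$, being vector bundles on $[\spec A/\g]$. The functor $\beta_{\mathsf{P}}$ commutes with base change, so it defines a morphism of stacks. A morphism of stacks that is \'etale-locally isomorphic to the identity is an equivalence.

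\textbf{Your route.} You build an explicit quasi-inverse $\beta_{\mathsf{P}^{-1}}$ from the inverse bitorsor, and you get the natural isomorphisms from associativity of contracted products. This way you only ever need descent of modules, to know the twisted objects are finite projective. You never need descent for the categories $\operatorname{Rep}_B \g_B$ or for morphisms between them.

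\textbf{Comparison.} The paper's version is a one-liner, but it leaves implicit both the stack property and the compatibility of $\beta_{\mathsf{P}}$ with base change. Yours costs some bookkeeping, and it needs the remark that $\g'$ is again a smooth group scheme with $\mathsf{P}^{-1}$ an \'etale $\g'$-torsor, so that the construction applies to it. Your ordering convention $\mathsf{P} \times^{\g'} \mathsf{P}^{-1} \simeq \g$ is consistent with the paper's way of writing $V \times^{\g} \mathsf{P}$. In return, your argument identifies the inverse functor concretely as $\beta_{\mathsf{P}^{-1}}$.
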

\begin{proof}
The functor becomes isomorphic to the identity functor over an \'etale cover of $A$ trivializing $\mathsf{P}$, and the lemma thus follows from \'etale descent (for projective modules on the stack $[\spec A /\g]$, see \cite[Tag 023S]{stacks-project}).
\end{proof}
\subsubsection{} \label{subsub:TwistSymplectic}
Now let $(V, \psi)$ be a symplectic vector space over $\f$ with similitude group $\g_{V}=\g_{V, \psi}$, let $\mathsf{P}$ be a $\g_{V}$-torsor over $\f$ and let $\g'$ be as above. Then $V'=\beta_{\mathsf{P}}(V)$ is a vector space over $\f$ equipped with a perfect alternating pairing 
\begin{align}
    \psi':V' \otimes V' \to \beta_{\mathsf{P}}(\f(1)),
\end{align}
where $\f(1)$ is the one-dimensional representation of $\g_{V}$ where $\g_{V}$ acts on $\f$ through the similitude character $\operatorname{sim}:\g_{V} \to \mathbb{G}_m$. There is a similitude group $\g_{V'}=\g_{V', \psi'}$ of automorphisms of $V'$ which commute with $\psi'$ up to a scalar. If we choose an isomorphism $\beta_{\mathsf{P}}(\f(1)) \xrightarrow{\sim} \f$ or equivalently an isomorphism $\mathsf{P} \times^{\g_{V}} \mathbb{G}_m \to \mathbb{G}_m$, then $(V', \psi')$ is a symplectic vector space over $\f$ and $\g_{V',\psi'}$ is its group of symplectic similitudes. This allows us to equip $\g_{V'}$ with the standard Siegel Shimura datum (which does not depend on any choices).

\subsubsection{}\label{Subsub:allthedata} In order to define our correspondences there are a number of choices to make. We first fix a prime $p$ and an isomorphism $\mathbb{C} \isom \qpbar$. We fix a Shimura datum $\gx$ of Hodge type and a $\g$-torsor $\mathsf{P}$, write $\g'=\operatorname{Aut}_{\g}(\mathsf{P})$ and let $\x'$ be a Shimura datum for $\g'$. We fix a Hodge embedding $\iota:\gx \to \gvx$, write $\mathsf{P}_V=\mathsf{P} \times^{\g, \iota} \gv$, and we write $\gvxp$ for the Siegel Shimura datum of Section \ref{subsub:TwistSymplectic}, so that there is an induced morphism $\iota':\g' \to \g_{V'}$. \smallskip 

We further choose an isomorphism $\kappa^p:\mathsf{P} \otimes \afp \to \g \otimes \afp$ (in particular assuming such an isomorphism exists), which induces an identification $\chi_{\kappa^p}:\g \otimes \afp \xrightarrow{\sim} \g' \otimes \afp$. Moreover, it induces an isomorphism $\nu_{\kappa^p}: V \otimes \afp \xrightarrow{\sim} V' \otimes \afp$ of representations of $\g \otimes \afp \xrightarrow{\chi_{\kappa^p}} \g' \otimes \afp$. We further extend $\iota$ and $\iota'$ to triples $\Xi = (\mathcal{G}, \iota, V_{\zp})$ and $\Xi'=(\mathcal{G}', \iota', V_{\zp}')$ as above (in particular we assume that this is possible). We denote these data by the notation $\Omega = (\iota, \mathcal{G}, \mathcal{G}', V_{\zp}, V_{\zp}', \kappa^p)$, which we call a \emph{Hodge datum} for $(\g,\x,\mathsf{P})$. Consider the following assumption on $\x'$.
\begin{Assump} \label{Assump:InfinityHodge}
    The morphism $\iota'$ underlies a morphism of Shimura data $\gxp \to \gvxp$.
\end{Assump}
We will call the Hodge datum $\Omega$ \emph{admissible} if Assumption \ref{Assump:InfinityHodge} holds. We will write $K_p=\mathcal{G}(\zp)$ and $K_p'=\mathcal{G}'(\zp)$. We will use the notation $P=\mathsf{P}_{\qp}$ and $P_V=\mathsf{P}_{V,\qp}$.
\begin{Rem}
By \cite[Proposition 6.3.8]{XiaoZhu2}, Assumption \ref{Assump:InfinityHodge} implies Assumption \ref{Assump:Infinity}. 
\end{Rem}

\subsubsection{} Suppose that we are given $(R,R^+) \in \perf$, untilts $R^{\sharp_i}$ for $i=1,2$ and morphisms $x:\spf R^{\sharp_1+} \to \scrshat_{K_p}\gx, y: \spf R^{\sharp_2+} \to \scrshat_{K_p'}\gxp$. We note that there is a canonical isomorphism of $G_{V'}$-bundles
\begin{align} \label{Eq:CanonicalIsomorphismVectorBundlesTwist}
     \beta_{P}(\mathbb{L}_{\crys,x}) \times^{\iota',G'} G_{V'} = \beta_{P_V}\left(\mathbb{L}_{\crys,x} \times^{\iota, G} G_{V}\right).
\end{align}
\begin{Def} \label{Def:GtoGpStructure}
We say that a formal quasi-isogeny $f:A_{x} \dashrightarrow A_{y}$ is \emph{$\Omega$-structure preserving away from $p$} if the induced map on the prime-to-$p$ adelic Tate modules
    \[
        V^p(f): V^p(A_{x} \otimes R^+/\varpi) \to V^p(A_{y} \otimes R^+/\varpi)
    \]
    fits into a commutative square
    \[
        \begin{tikzcd}
            &V^p(A_{x} \otimes R^+/\varpi) \arrow{r}{V^p(f)}\arrow{d}{\eta_{x}}& V^p(A_{y} \otimes R^+/\varpi) \arrow{d}{\eta_{y}}\\
            &V \otimes \afp \arrow{r}{\nu_{\kappa^p}}& V' \otimes \afp.
        \end{tikzcd}
    \]
    We say that $f$ is \emph{$\Omega$-structure preserving at $p$} if there exists a (necessarily unique) isomorphism of $G'$-bundles $\varphi_{x, y}: \beta_{P}(\mathbb{L}_{\crys,x}) \isom \mathbb{L}_{\crys,y}$ whose pushout along $G' \to \operatorname{GL}_{V'}$ recovers the isomorphism of $\operatorname{GL}_{V'}$-bundles
\begin{align}
    \mathcal{E}(A_x) \xrightarrow{\mathcal{E}(f)} \mathcal{E}(A_y),
\end{align}
using the identifications
\begin{align}
    \mathcal{E}(A_x) &= \beta_{P_V}\left(\mathbb{L}_{\crys,x} \times^{\iota, G} G_{V}\right) = \beta_{P}(\mathbb{L}_{\crys,x}) \times^{\iota',G'} G_{V'} \\
    \mathcal{E}(A_y)&=\mathbb{L}_{\crys,y} \times^{\iota',G'} G_{V'}.
\end{align}
We say that $f$ is \emph{$\Omega$-structure preserving} if it is both $\Omega$-structure preserving away from $p$ and $\Omega$-structure preserving at $p$.
\end{Def}
\begin{Rem}
    Given an isomorphism $\kappa_p:P \to G$ of $G$-torsors with induced isomorphism $\kappa_{p}:P_V \to G_V$, we can identify $\beta_{P}$ and $\beta_{P_V}$ with the identity map. Then $f$ is $\Omega$-structure preserving at $p$ precisely when it is $\g$-structure preserving at $p$.
\end{Rem}
\begin{Rem}
There is some asymmetry in primes $\ell \not=p$ and the prime $p$ in Definition \ref{Def:GtoGpStructure} having to do with the fact that the Igusa stack $\igs \gx$ has "infinite level away from $p$". If we instead worked with $\igs \gx / \ul{\gafp}$ then we would not have to fix $\kappa^p$ and would instead ask that $V^p(f)$ preserves certain tensors, see \cite[Section 5.1.9]{DvHKZIgusaStacks}.
\end{Rem}

\begin{defn}\label{defn:Corr}
    We define the space $\igscorrpre$ to be the presheaf in groupoids on $\perf^{op}$ which assigns to a Huber pair $(R, R^+)$ the following category: Its \textbf{objects} are quintuples $(R^{\sharp_1}, R^{\sharp_2}, x,y, f)$ where 
    \begin{align*}
        (R^{\sharp_1}, R^{\sharp_1+}, x) &\in \igspre_{\Xi} \gx\\
        (R^{\sharp_2}, R^{\sharp_2+}, y) &\in \igspre_{\Xi'} \gxp\\
        f: A_{x}  &\dashrightarrow A_{y}
    \end{align*}
    where $f$ is a formal quasi-isogeny that is $\Omega$-structure preserving. The \textbf{morphisms} $(R^{\sharp_1}, R^{\sharp_2}, x,y, f) \to (R^{\sharp_3}, R^{\sharp_4}, x',y', f')$ in $\igscorrpre(R,R^+)$ are commutative diagrams of formal quasi-isogenies 
    \[
        \begin{tikzcd}
            & A_{x}  \arrow[d, dashed, "g_1"] \arrow[r,"f", dashed] & A_{y}  \arrow[d, "g_2", dashed] \\
            & A_{x'}  \arrow[r, "f'", dashed] & A_{y'} 
        \end{tikzcd}
    \]
    such that $g_1$ is $\g$-structure preserving and $g_2$ is $\g'$-structure preserving. We define the space $\igscorr$ to be the $v$-sheafification of the presheaf of isomorphism classes of $\igscorrpre$. 
\end{defn}

We note that by the definition of $\igscorrpre$ it lies in a $2$-commutative diagram
\begin{equation} \label{Eq:GStructureDiagram}
    \begin{tikzcd}
        \igscorrpre \arrow{r} \arrow{d} & \bun_{G} \arrow{d}{(\operatorname{Id}, \beta_{\mathsf{P}})} \\
        \igspre_{\Xi}\gx \times \igspre_{\Xi'}\gxp \arrow{r} & \bun_{G} \times \bun_{G'}.
    \end{tikzcd}
\end{equation}
\begin{Lem} \label{Lem:CorrespondenceNewton}
    The morphisms $\igscorr \to \igs \gx$ and $\igscorr \to \igs \gxp$ are monomorphisms with image contained in $\igs \gx_{-\mu'}$ and $\igs \gxp_{-\mu}$ respectively. 
\end{Lem}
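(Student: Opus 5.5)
The plan is to treat the two assertions separately: first that the projections are monomorphisms, then the image statement, and to argue with the presheaf $\igscorrpre$ before v-sheafifying. Monomorphisms are preserved by sheafification, since sheafification is left exact. Images can be checked v-locally, and the target substacks $\igs\gx_{-\mu'}$ and $\igs\gxp_{-\mu}$ are open, hence v-sheaves themselves.

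For the monomorphism claim, consider the projection $\igscorrpre \to \igspre_{\Xi}\gx$ on isomorphism classes. Suppose we have two objects $(x,y,f)$ and $(x',y',f')$ together with a $\g$-structure preserving formal quasi-isogeny $g_1:A_x \dashrightarrow A_{x'}$. I would set
\[
g_2 = f' \circ g_1 \circ f^{-1} : A_y \dashrightarrow A_{y'}.
\]
Then the square commutes by construction, so the only thing to verify is that $g_2$ is $\g'$-structure preserving.
\begin{itemize}
\item Away from $p$, the map $V^p(g_2)$ is the composite $\eta_{y'}^{-1}\nu_{\kappa^p}\eta_{x'}\,V^p(g_1)\,\eta_x^{-1}\nu_{\kappa^p}^{-1}\eta_y$. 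Since $V^p(g_1)$ intertwines $\eta_x$ and $\eta_{x'}$, this collapses to intertwining $\eta_y$ and $\eta_{y'}$.
\item At $p$, the isomorphisms $\varphi_{x',y'}\circ\beta_P(\phi_{g_1})\circ\varphi_{x,y}^{-1}$ give an isomorphism of $G'$-bundles $\mathbb{L}_{\crys,y}\to\mathbb{L}_{\crys,y'}$. Its pushout to $\operatorname{GL}_{V'}$ is $\mathcal{E}(g_2)$, because $\beta_P$ is functorial and compatible with \eqref{Eq:CanonicalIsomorphismVectorBundlesTwist}.
\end{itemize}
Isomorphisms in $\igscorrpre$ with fixed $g_1$ are unique because $g_2$ is determined by commutativity. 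Together with $0$-truncatedness of $\igspre$, this shows the fibers of the projection are subsingletons, so the map on isomorphism-class presheaves is injective. The symmetric argument, using $g_1=f'^{-1}g_2 f$ and $\beta_P^{-1}$, handles the projection to $\igs\gxp$.

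For the image claim, I will use the $2$-commutative diagram \eqref{Eq:GStructureDiagram}. A point of $\igscorrpre$ gives $x$ whose image in $\bun_G$ lies in $\bungmu$ by Theorem \ref{Thm:IgusaMain}. The isomorphism $\varphi_{x,y}$ identifies $\beta_P(\overline{\pi}_{\HT}(x))$ with $\overline{\pi}_{\HT}(y)\in\bungpmup$. Hence $\beta_P(\overline{\pi}_{\HT}(x))$ lies in $\bungpmup\cap\beta_P(\bungmu)=\bungpmumup$, which is exactly the defining condition of both $\igs\gx_{-\mu'}$ and $\igs\gxp_{-\mu}$.

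The main subtlety I expect is in the monomorphism step at $p$. One has to check carefully that the canonical identification \eqref{Eq:CanonicalIsomorphismVectorBundlesTwist} is natural in $\mathbb{L}_{\crys,x}$, so that $\beta_P$ of a $G$-isomorphism is compatible with $\mathcal{E}(g_1)$ under the twisted identifications. I would verify this by Tannakian functoriality of $\beta_P$ (Lemma \ref{Lem:TwistingRepresentations}) on the Fargues--Fontaine curve.
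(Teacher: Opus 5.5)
Your proposal is correct and follows the paper's own route. The paper shows that $\igscorrpre \to \igspre_{\Xi}\gx$ is fully faithful because $g_1$ determines $g_2$, obtains the other projection by symmetry, and deduces the image statement from \eqref{Eq:GStructureDiagram}; your check that $g_2 = f'\circ g_1\circ f^{-1}$ is $\g'$-structure preserving, both away from $p$ and at $p$, simply makes explicit the fullness step that the paper leaves implicit.
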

\begin{proof}
The map of presheaves in groupoids $\igscorrpre \to \igspre_{\Xi}\gx$ is a fully faithful functor, because in the definition of morphisms it is clear that $g_1$ uniquely determines $g_2$. By symmetry the same is true for the map $\igscorrpre \to \igspre_{\Xi'}\gxp$. The fact that the image lies in $\igs \gx_{-\mu'}$ follows from the fact that $\igspre\gx$ maps to $\bungmu$ and that $\igspre\gxp$ maps to $\bungpmup$, in combination with \eqref{Eq:GStructureDiagram}.
\end{proof}

\begin{Conj} \label{Conj:ConstructionWorks}
If $\Sha^1(\mathbb{Q},\g)=0$, then the morphisms $\igscorr \to \igs \gx_{-\mu'}$ and $\igscorr \to \igs \gxp_{-\mu}$ are isomorphisms.
\end{Conj}
Note that Conjecture \ref{Conj:ConstructionWorks} implies Conjecture \ref{Conj:Main}. 
\begin{Rem}
    It is not at all clear that $\igscorr$ is nonempty, even when $\g$ is a torus. 
\end{Rem}

\subsubsection{} Recall our identification $\chi_{\kappa^p}:\g \otimes \afp \xrightarrow{\sim} \g' \otimes \afp$ induced by $\kappa^p$. Note that under this identification, for $g \in \gafp$, the following diagram commutes
\begin{equation}
    \begin{tikzcd}
        V \otimes \afp \arrow{r}{\nu_{\kappa^p}} \arrow{d}{g} & V' \otimes \afp \arrow{d}{\chi_{\kappa^p}(g)} \\
         V \otimes \afp \arrow{r}{\nu_{\kappa^p}} & V' \otimes \afp.
    \end{tikzcd}
\end{equation}
It follows from this that $\igscorr \subset \igs \gx \times \igs \gxp$ is stable under the action of $\ul{\gafp}$ via $\ul{\gafp} \xrightarrow{1 \times \chi_{\kappa^p}} \ul{\gafp} \times \ul{\g'(\afp)}$. Therefore, the obvious forgetful morphisms $\igscorr \to \igs \gx$ and $\igscorr \to \igs \gxp$ are $\ul{\gafp}$-equivariant. 

\subsubsection{} We start with the following trivial case of Conjecture \ref{Conj:ConstructionWorks}.
\begin{Lem} \label{Lem:TrivialSiegelCase}
Suppose that $\gx=\gvx$ and that $\iota$ is the identity. Then Conjecture \ref{Conj:ConstructionWorks} holds.
\end{Lem}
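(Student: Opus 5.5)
We need to show that when $\g=\gv$ and $\iota$ is the identity, both projections $\igscorr \to \igs\gvx_{-\mu'}$ and $\igscorr \to \igs\gvxp_{-\mu}$ are isomorphisms. Lemma \ref{Lem:CorrespondenceNewton} already gives that both are monomorphisms landing in the right open substacks. It therefore suffices to prove v-surjectivity onto $\igs\gvx_{-\mu'}$; the other projection follows by symmetry, replacing $\mathsf{P}$ by its inverse torsor.

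In the Siegel case the twisted Siegel datum $(\g_{V'},\mathsf{H}_{V'})$ is again a Siegel datum, and $\g_{V'} \simeq \gv$ over $\mathbb{Q}$. Indeed, $H^1(\mathbb{Q},\operatorname{GSp})=0$, and $\Sha^1$ vanishes. So the content is purely about abelian varieties. The plan is as follows.

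\textbf{Step 1 (reduction to a product of points).} Start with a point $(S^\sharp,x)$ of $\igspre_{\Xi}\gvx$ over $S=\spa(R,R^+)$ whose image in $\bun_{G_V}$ lies in $\beta_P^{-1}(\bun_{G_{V'},-\mu'})$. By Lemma \ref{Lem:BasisTopology} we may pass to a v-cover and assume $S$ is w-contractible, for instance a product of points. By Lemma \ref{Lem:TrivialUntilt} we may take $S^\sharp = S$.

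\textbf{Step 2 (produce the abelian scheme on the other side).} The point is that a formal abelian scheme $A_x$ up to prime-to-$p$ isogeny, with its level structure $\eta_x$, lives in both moduli problems. The trivialisation $\kappa^p$ transports $\eta_x$ to $\nu_{\kappa^p}\circ\eta_x$, which is a level structure for $V'$. The polarisation must also be twisted: the similitude factor of $\mathsf{P}_V$ gives a $\mathbb{Q}^\times$-valued twist, since $\mathsf{P}_V\times^{\gv}\mathbb{G}_m$ is trivialised, and one multiplies $\lambda_x$ by the corresponding scalar. At $p$, the $G_{V'}$-bundle $\beta_{P_V}(\mathcal{E}(A_x))$ lies in $\bun_{G_{V'},-\mu'}$. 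Over a w-contractible base (Proposition \ref{Prop:VSurjectiveNewtonMap} and its perfectoid analogue) it comes from a $p$-adic shtuka of type $\mu'$. Via the isomorphism $\beta_{P_V}(\mathcal{E}(A_x))\simeq \mathcal{E}(A_x)$, this amounts to a lattice modification, i.e.\ a $p$-quasi-isogeny $A_x \dashrightarrow B$ whose target has $p$-divisible group of the right type. We then argue as in the proof of \cite[Theorem 6.0.1]{DvHKZIgusaStacks}, where the Cartesian diagram \eqref{Eq:TheDiagram} was built by exactly such modifications. This yields $y\colon \spf R^{\sharp'+} \to \scrshat_{K_p'}\gvxp$ together with a formal quasi-isogeny $f\colon A_x \dashrightarrow A_y$.

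\textbf{Step 3 (check $\Omega$-structure preservation).} Away from $p$ this holds by construction. At $p$, $\mathcal{E}(f)$ equals $\beta_{P_V}$ of the identity, which is tautologically a $G_{V'}$-isomorphism $\beta_P(\mathbb{L}_{\crys,x}) \to \mathbb{L}_{\crys,y}$, because $G=G_V$ and no extra tensors need preserving.

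The main obstacle is Step 2 at $p$: realising the modification of $\mathcal{E}(A_x)$ as a genuine integral $p$-divisible group over $R^{\sharp+}$ with a principal polarisation, compatibly with the twist of the similitude line. I would handle this using the 2-Cartesian diagram \eqref{Eq:TheDiagram} for $\gvxp$ directly. The Beauville--Laszlo fibre of $\shtgmuone$ over the given point of $\bun_{G_{V'},-\mu'}$ is v-surjective. Pulling back to this fibre gives a point of $\scrs_{K_p'}\gvxp^\diamond$ mapping to the same point of $\bun$, and hence an isomorphism of $\mathcal{E}$'s, which one must show comes from a quasi-isogeny. For this I would invoke the description of morphisms in the Siegel Igusa stack as quasi-isogenies (\cite[Section 5.2]{DvHKZIgusaStacks}), using the fact that the Siegel Igusa stack is a quotient by quasi-isogenies.
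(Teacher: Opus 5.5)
Your reduction to v-surjectivity via Lemma \ref{Lem:CorrespondenceNewton} is fine. Step 2 at $p$, the step you flag yourself, does not work as proposed.

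Lifting through the 2-Cartesian diagram for $\gvxp$ only produces a point $y$ of $\scrs_{K_p'}\gvxp^{\diamond}$ whose relation to $x$ is an isomorphism of their images in $\bun_{G_{V'}}$. Such an isomorphism does not in general come from a quasi-isogeny $A_x \dashrightarrow A_y$. The fibres of $\overline{\pi}_{\mathrm{HT}}\colon \igs\gvxp \to \bun_{G_{V'}}$ are whole Igusa varieties, so two points with the same image in $\bun$ need not be isogenous. Knowing that morphisms in the Siegel Igusa stack are quasi-isogenies says nothing about a pair of points that are only identified in $\bun$.

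Your other sketch, modifying $A_x[p^\infty]$ along a lifted shtuka, has the same defect. Run through the Cartesian diagram for $\gvx$, it gives a point of the moduli problem for $(V,V_{\zp})$, and you would still need an identification with the moduli problem for $(V',V'_{\zp})$. The $\mathbb{Q}^\times$-twist of the polarization is also unnecessary: $\nu_{\kappa^p}\circ\eta_x$ only needs to be a similitude up to $(\afp)^\times$. A non-unit twist at $p$ is exactly what creates the $p$-adic problem you then try to repair.

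The missing idea is to actually use $H^1(\mathbb{Q},\gv)=0$, which you noticed, and choose a global trivialization $\kappa_V\colon \mathsf{P}\to \gv$.
\begin{itemize}
\item Its prime-to-$p$ part differs from $\kappa^p$ by some $g\in\gafp$. Replacing $\kappa^p$ by $g\circ\kappa^p$ changes $\igscorr$ only by an isomorphism: postcompose $\eta_y$ with $\chi_{\kappa^p}(g)$.
\item Weak approximation for $\gv(\mathbb{Q})\subset\gv(\qp)$, together with $\gv(\qp)$-conjugacy of self-dual lattices, lets you also arrange that $\kappa_V$ carries $V_{\zp}$ to $V'_{\zp}$.
\end{itemize}
After this, $V'=V$, $\beta_P=\mathrm{id}$ and $\mu'=\mu$, so $\igs\gvx_{-\mu'}=\igs\gvx$. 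Moreover, $\Omega$-structure preserving just means $\g$-structure preserving. By definition, $\igscorrpre$ is then $\igspre_{\Xi}\gvx\times_{\igspre_{\Xi}\gvx}\igspre_{\Xi}\gvx$. After sheafification, both projections $\igs\gvx\times_{\igs\gvx}\igs\gvx\to\igs\gvx$ are tautologically isomorphisms. In your language, you take $y=x$ and $f=\mathrm{id}$, with no shtuka lifting or modification at $p$ at all.
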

\begin{proof}
We first note that if $\Omega'=(\iota, \mathcal{G}, \mathcal{G}', V_{\zp}, V_{\zp}', g \circ \kappa^p )$ for $g \in \gafp$, then there is an induced isomorphism (which does not commute with the projection map to $\igs\gxp$!)
\begin{align}
    \igscorr \xrightarrow{g} \operatorname{IgsCorr}_{\Omega', \mathsf{P}}
\end{align}
given on objects by postcomposing $\eta_{y}$ by $\chi_{\kappa^p}(g)$ and given by the identity on morphisms. Thus we can choose a trivialization $\kappa_{V}:\mathsf{P} \xrightarrow{} \gv$ inducing $\kappa^p$. By weak approximation for $\gv(\mathbb{Q}) \to \gv(\qp)$, and the fact that all self dual lattices in $(V_{\qp}, \psi)$ are $\gv(\qp)$-conjugate, we may moreover change $\kappa_V$ to assume that $V_{\zp}=V'_{\zp}$ under $V =V'$. It is now a direct consequence of the definition that
\begin{align}
    \igscorrpre \to \igspre_{\Xi} \gvx \times \igspre_{\Xi} \gvx
\end{align}
may be identified with
\begin{align}
    \igspre_{\Xi} \times_{\igspre_{\Xi}} \igspre_{\Xi} \to \igspre_{\Xi} \times \igspre_{\Xi},
\end{align}
compatible with the maps to $\igscorrpre \to \bun_{G_V} \times_{\bun_{G_V}} \bun_{G_V}$. Conjecture \ref{Conj:ConstructionWorks} now asserts that the maps
\begin{align}
   \igs \gvx \xrightarrow{\Delta} \igs \gvx \times_{\igs \gvx} \igs \gvx \xrightarrow{p_{1,2}} \igs \gvx
\end{align}
are isomorphisms, which is tautologically true.
\end{proof}
\begin{Cor} \label{Cor:Independence}
Suppose that $\gx=\gvx$ and that $\iota$ is the identity, then $\igscorrpre$ only depends on $\mathsf{P}, \kappa^p$, and not on $V_{\zp}, V_{\zp}'$.
\end{Cor}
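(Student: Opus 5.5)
The plan is to read Corollary \ref{Cor:Independence} off the proof of Lemma \ref{Lem:TrivialSiegelCase}, keeping track of which data actually enter. Recall that $\igscorrpre$ is built from objects $(x,y,f)$. Here $x$ is a point of $\scrshat_{K_p}\gvx$ for the level $K_p=\mathcal{G}_V(\zp)$ stabilizing $V_{\zp}$. Similarly $y$ is a point for the level stabilizing $V'_{\zp}$. Finally $f$ is an $\Omega$-structure preserving formal quasi-isogeny. At first sight, then, the lattices enter through the choice of integral models.

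The key observation is that for Siegel data the objects of $\igspre_{\Xi}\gvx$ are simply formal abelian schemes up to prime-to-$p$ isogeny with polarization class and level structure $\eta$ away from $p$. The choice of self-dual lattice $V_{\zp}$ only fixes the "standard" isogeny class representative. More precisely, for two self-dual lattices $V_{\zp}, W_{\zp}$ I will write $W_{\zp}=gV_{\zp}$ with $g \in \gv(\qp)$, using that self-dual lattices are $\gv(\qp)$-conjugate. Then, as in the proof of Lemma \ref{Lem:TrivialSiegelCase}, I approximate $g$ by an element of $\gv(\mathbb{Q})$ via weak approximation. Right translation by $g$ on the generic fibres extends to an isomorphism $\scrs_{K_p}\gvx_{V_{\zp}} \simeq \scrs_{K_p}\gvx_{W_{\zp}}$ of the infinite-level-away-from-$p$ integral models. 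This is the standard fact that the Siegel integral model at infinite prime-to-$p$ level is the moduli of abelian schemes with level structure up to $\mathbb{Z}_{(p)}^\times$-isogeny, independent of the lattice.

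Next I check that this identification is compatible with formal quasi-isogenies and with the $\Omega$-structure-preserving conditions. Away from $p$ this holds because $\eta$ is transported compatibly, after twisting by the prime-to-$p$ component of the rational element. At $p$ it holds because $\mathbb{L}_{\crys}$ is defined through the $p$-divisible group, or equivalently $\mathcal{E}(A_x)$, which is insensitive to quasi-isogeny. Doing this simultaneously for $V_{\zp}$ and for $V'_{\zp}$ yields an equivalence of presheaves of groupoids $\igscorrpre$ for the two choices. This equivalence lies over $\igspre\times\igspre$ up to the identification above, so the resulting object depends only on $\mathsf{P}$ and $\kappa^p$.

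An alternative and cleaner route, which I would try first, is purely formal. By the proof of Lemma \ref{Lem:TrivialSiegelCase}, $\igscorrpre$ is identified with $\igspre_{\Xi}\times_{\igspre_{\Xi}}\igspre_{\Xi}$. Combined with \cite[Proposition 7.3.1]{DvHKZIgusaStacks} (independence of $\Xi$), this gives the claim after sheafification.

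The main obstacle is making sure that the weak approximation step does not disturb $\kappa^p$, and that the rational element moves both $V_{\zp}$ and $V'_{\zp}$ correctly. Note that $\kappa^p$ itself changes by the $\afp$-component of the rational element. I would handle this with the isomorphism $\igscorr \xrightarrow{g} \operatorname{IgsCorr}_{\Omega', \mathsf{P}}$ noted at the start of the proof of Lemma \ref{Lem:TrivialSiegelCase}, which absorbs the change of $\kappa^p$.
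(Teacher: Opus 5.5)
Your Route 1 contains the right idea, and it is essentially what the paper's one-line ``by inspection'' amounts to. In the Siegel case with $\iota=\mathrm{id}$, three things hold. First, the integral model at infinite level away from $p$ is the moduli space of triples $(A,\lambda,\eta^p)$ up to prime-to-$p$ isogeny, and this moduli problem does not see the self-dual lattice. Second, $\mathbb{L}_{\crys,x}$ is the bundle of symplectic-similitude frames of $\mathcal{E}(A_x)$; once the $\mathrm{GSp}(V_{\zp})$-shtuka is pushed to $\bun_{G_V}$, it too is independent of $V_{\zp}$. Third, the $\Omega$-conditions on $f$ only involve $\eta_x,\eta_y,\nu_{\kappa^p}$ and $\beta_P$. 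Hence, for fixed $\mathsf{P},\kappa^p$, the objects and morphisms of $\igscorrpre$ are literally the same for every choice of $V_{\zp},V'_{\zp}$. This holds compatibly with the forgetful maps to $\igspre\gvx\times\igspre\gvxp$, and no translation or quasi-isogeny is needed.

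You should, however, drop the weak approximation step. It is what creates your ``main obstacle'', and the fix you propose does not give what you claim. In the proof of Lemma \ref{Lem:TrivialSiegelCase}, weak approximation serves to align $V_{\zp}$ with $V'_{\zp}$ under $V=V'$, which is irrelevant here.

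A rational element also acts on $\eta^p$, so it moves $\kappa^p$. The isomorphism $\igscorr\xrightarrow{g}\operatorname{IgsCorr}_{\Omega',\mathsf{P}}$ that you use to absorb this does not commute with the projection to $\igs\gvxp$; the paper stresses this point. So you only get an identification of the two correspondences twisted by prime-to-$p$ Hecke operators. That is not an identification lying over the canonical one for $\igspre\times\igspre$, as you assert. To repair it, you would have to check that the net twist is the diagonal action of $g^p$, and then invoke the diagonal $\gafp$-stability of the correspondence.

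Route 2 inherits the same twist: the paper identifies the correspondence with $\igspre_{\Xi}\times_{\igspre_{\Xi}}\igspre_{\Xi}$ only after changing $\kappa^p$. In addition, Route 2 only yields a statement after sheafification, whereas the corollary concerns the presheaf $\igscorrpre$.
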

\begin{proof}
    This follows from the proof of Lemma \ref{Lem:TrivialSiegelCase} by inspection.
\end{proof}
Let us now write $\igscorrpreviotakappa$ for $\igscorrpre$ when $\gx=\gvx$ and $\iota$ is the identity.

\subsubsection{} Let $\gx,\mathsf{P}, \kappa^p$ be as above, let $\iota:\gx \to \gvx$ be a Hodge embedding and let $\kappa^p_V:\mathsf{P}_V \otimes \afp \to \g_V \otimes \afp$ be induced from $\kappa^p$. Assume that Assumption \ref{Assump:InfinityHodge} holds, and define
\begin{align}
\igscorriotakappa^{\mathrm{naive}}&:= \left(\igs \gx \times \igs \gxp \right) \times_{\igs \gvx \times \igs \gvxp} \igscorrviotakappa \\
\igscorriotakappa&:=\igscorriotakappa^{\mathrm{naive}} \times_{\left(\bung \times_{\bun_{G_{V'}}} \bungp \right)} \left(\bung \times_{\bungp} \bungp \right).
    \end{align}
\begin{Prop} \label{Prop:IndepenceI}
If $\Omega$ is an admissible Hodge datum containing $\iota$ and $\kappa^p$, then $\igscorr$ is isomorphic to $\igscorriotakappa$ compatible with the structure maps to $\igs \gx \times \igs \gxp$ and to $\bung \times_{\bungp} \bungp$
\end{Prop}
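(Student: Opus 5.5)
We construct a comparison map at the level of presheaves of groupoids and then check that it becomes an isomorphism after v-sheafification.

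\textbf{The map.} An object $(R^{\sharp_1},R^{\sharp_2},x,y,f)$ of $\igscorrpre(R,R^+)$ yields three pieces of data.
\begin{itemize}
\item By Assumption \ref{Assump:InfinityHodge}, the points $x$ and $y$ push forward along $\iota$ and $\iota'$ to points of $\igspre_{\Xi_V}\gvx$ and $\igspre_{\Xi_V'}\gvxp$, where $\Xi_V$, $\Xi'_V$ are the obvious Siegel data with lattices $V_{\zp}$, $V'_{\zp}$. The abelian schemes are literally the same, since $\scrs_K\gx$ is defined as the normalization of a closure in $\scrs_M\gvx$.
\item The formal quasi-isogeny $f$ is $\Omega$-structure preserving away from $p$. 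Its image in $V^p$ is then compatible with $\nu_{\kappa^p}$, which is exactly the condition away from $p$ defining $\igscorrpreviotakappa$ with $\kappa^p_V$. At $p$, pushing out the isomorphism $\varphi_{x,y}$ along $G'\to G_{V'}$ and using \eqref{Eq:CanonicalIsomorphismVectorBundlesTwist} shows that $f$ is $\Omega_V$-structure preserving at $p$. So $f$ defines a point of $\igscorrpreviotakappa$, which by Corollary \ref{Cor:Independence} does not depend on the lattices.
\item The isomorphism $\varphi_{x,y}\colon\beta_P(\mathbb{L}_{\crys,x})\isom \mathbb{L}_{\crys,y}$ gives the required lift from $\bung\times_{\bun_{G_{V'}}}\bungp$ to $\bung\times_{\bungp}\bungp$.
\end{itemize}
On morphisms we send $(g_1,g_2)$ to the same quasi-isogenies, which are in particular $\g_V$- and $\g_{V'}$-structure preserving. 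This gives a functor from $\igscorrpre$ to the presheaf fiber product defining $\igscorriotakappa$. After v-sheafification, and since sheafification commutes with finite limits, we obtain a map $\igscorr\to\igscorriotakappa$ compatible with the projections to $\igs\gx\times\igs\gxp$ and to $\bung\times_{\bungp}\bungp$.

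\textbf{Injectivity.} By Lemma \ref{Lem:CorrespondenceNewton}, $\igscorr\to\igs\gx\times\igs\gxp$ is a monomorphism. The projection from $\igscorriotakappa$ is also a monomorphism: $\igscorrviotakappa$ is a monomorphism into the product of Siegel Igusa stacks by Lemma \ref{Lem:CorrespondenceNewton} applied to the Siegel case, and the second fiber product is taken along a monomorphism, since Lemma \ref{Lem:GStructurePreservingClosedPerfectoid} makes it a closed immersion. Hence our map is injective, and both sides are subsheaves of $\igs\gx\times\igs\gxp$.

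\textbf{Surjectivity.} It remains to show that every point of $\igscorriotakappa$ lies v-locally in the image. Passing to a v-cover, a point is given by
\begin{itemize}
\item points $x,y$ of the presheaf Igusa stacks;
\item a point of $\igscorrpreviotakappa$ whose two ends are isomorphic, through Siegel-structure preserving quasi-isogenies $h_1,h_2$, to $\iota(x)$ and $\iota'(y)$;
\item a $G'$-isomorphism $\varphi\colon\beta_P(\mathbb{L}_{\crys,x})\isom\mathbb{L}_{\crys,y}$ compatible with the $G_{V'}$-isomorphism determined by the Siegel correspondence.
\end{itemize}
Composing gives a formal quasi-isogeny $f\colon A_x\dashrightarrow A_y$. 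Structure preservation at $p$ holds by construction, because $\varphi$ pushes out to $\mathcal{E}(f)$. The tensor condition away from $p$ needs more care: the fiber product with $\igs\gx\times\igs\gxp$ only yields $\eta$-compatibility up to the $\gafp$- and $\g'(\afp)$-actions. The point is that $\igs\gx\to\igs\gvx$ is $\gafp$-equivariant with level structures $\eta_x$ pushed forward from $G$. The $\ul{\g_V(\afp)}$-ambiguity is then cut down to $\ul{\gafp}$, using that $\ul{\gafp}\to\ul{\g_V(\afp)}$ is a closed immersion and that the fiber product identifies the levels. Unwinding this shows $V^p(f)$ is intertwined with $\nu_{\kappa^p}$, so the point lies in $\igscorrpre$.

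\textbf{Main obstacle.} I expect the hardest step to be this bookkeeping away from $p$ in the surjectivity argument. Concretely, one must ensure that the identification of $\igs\gx$ with the quotient through $\igs\gvx$ does not lose the $\g$-level structure. My first attempt would reduce to checking on the v-cover $\scrs_{K_p}\gx^\diamond\times\scrs_{K'_p}\gxp^\diamond$ using the $2$-Cartesian diagram \eqref{Eq:TheDiagram}. There the level structures are honest trivializations, and the comparison becomes an equality of closed conditions on sheaves of quasi-isogenies.
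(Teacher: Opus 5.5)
Your proposal is correct and is essentially the paper's argument. The paper observes directly that the presheaf-level fiber product over the Siegel Igusa presheaves is the presheaf of quintuples with $f$ only required to be $\Omega$-structure preserving away from $p$, that the further fiber product over $\bung \times_{\bun_{G_{V'}}} \bungp$ imposes the condition at $p$, and then sheafifies; your separate injectivity and surjectivity checks are a repackaging of that identification. The obstacle you flag away from $p$ is not real: isomorphisms in the Siegel Igusa presheaves preserve $\eta$ on the nose (the bottom row in Definition \ref{defn:FormalQIsogStructure} is an equality), so $f=h_2^{-1}\circ f_V\circ h_1$ satisfies $\eta_y\circ V^p(f)=\nu_{\kappa^p}\circ\eta_x$ immediately, with no $\gafp$-ambiguity and no need for the closed immersion $\ul{\gafp}\to\ul{\g_V(\afp)}$ or a reduction to Shimura varieties.
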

\begin{proof}
The fiber product
\begin{align}
     \left(\igscorriotakappa\right)^{\mathrm{naive}}&:= \left(\igspre_{\Xi} \gx \times \igspre_{\Xi'} \gxp \right) \times_{\igspre_{\Xi} \gvx \times \igspre_{\Xi'} \gvxp} \igscorrpreviotakappa
\end{align}
is the presheaf of groupoids on $\perf$ sending $(R,R^+)$ to the groupoid of quintuples $(R^{\sharp_{1}}, R^{\sharp_{2}}, x_1, x_2, f)$ as in Definition \ref{defn:Corr}, except that $f$ is only required to be $\Omega$-structure preserving away from $p$. The further fiber product with 
\begin{align}
    \bung \times_{\bungp} \bungp
\end{align}
over
\begin{align}
    \bung \times_{\bun_{G_{V'}}} \bungp
\end{align}
enforces the condition that $f$ is $\Omega$-structure preserving at $p$. Since stackification preserves fiber products, we are done.
\end{proof}
A particular consequence of Proposition \ref{Prop:IndepenceI} is that $\igscorr$ does not depend on the choice of stabilizer quasi-parahoric integral models. 
\begin{Rem}
    It follows as in the proof of Lemma \ref{Lem:CorrespondenceNewton} that the natural map $\igscorriotakappa \to \igs \gx$ factors through $\igs \gx_{-\mu'}$ and similarly that the map $\igscorriotakappa \to \igs \gxp$ factors through $\igs \gxp_{-\mu}$. The downside of the definition of $\igscorriotakappa$ is that it is not clear that the natural maps $\igscorriotakappa \to \igs \gx, \igs \gxp$ are monomorphisms, unless there is an admissible Hodge datum $\Omega$ containing $\iota$ and $\kappa^p$. 
\end{Rem}

\subsection{Functoriality} \label{sub:NotationFunctoriality} In this section we prove that the construction of Section \ref{Sub:Correspondences} is functorial for morphisms $a:\gxone \to \gxtwo$, compatible with the functoriality of Igusa stacks. 

\subsubsection{} Let $\gxone$ be a Shimura datum of Hodge type, let $\mathsf{P}_1$ be a $\g_1$-torsor over $\spec \mathbb{Q}$ that is trivial over $\afp$. Let $\g_1'=\operatorname{Aut}_{\g_1}(\mathsf{P}_{1})$, and let $\x'_1$ be a Shimura datum for $\g'_1$. Let $\iota_1:\gxone \to \gvxone$ be a Hodge embedding such that $\iota'_1$ satisfies Assumption \ref{Assump:InfinityHodge}, let $\kappa_1^p:\mathsf{P}_1 \otimes \afp \to \g_1 \otimes \afp$ be an isomorphism and choose $\kappa_{1,V}$ as above.

\subsubsection{} Let $\gxtwo$ be a Shimura datum of Hodge type, let $\mathsf{P}_2$ be a $\g_2$-torsor over $\spec \mathbb{Q}$ that is trivial over $\afp$. Let $\g_2'=\operatorname{Aut}_{\g_2}(\mathsf{P}_{2})$, and let $\x'_2$ be a Shimura datum for $\g'_2$. Let $\iota_2:\gxtwo \to \gvxtwo$ be a Hodge embedding such that $\iota'_2$ satisfies Assumption \ref{Assump:InfinityHodge}, let $\kappa_2^p:\mathsf{P}_2 \otimes \afp \to \g_2 \otimes \afp$ be an isomorphism and choose $\kappa_{2,V}$ as above.

\subsubsection{} Let $a:\gxone \to \gxtwo$ be a morphism of Shimura data. Choose an isomorphism $\lambda: \mathsf{P}_1 \times^{\mathsf{G}_1} \mathsf{G}_2 \to \mathsf{P}_2$ of $\mathsf{G}_2$-torsors and let $a_{\lambda}':\gxonep \to \gxtwop$ be the induced morphism of Shimura data. Consider $V_3= V_1 \oplus V_2$ equipped with the direct sum symplectic form. The morphism $(\iota_1, \iota_2 \circ a):\gxone \to \gvxone \times \gvxtwo$ factors through $\gvxone \times_{\mathbb{G}_m} \gvxtwo$ by \cite[Lemma 7.1.1]{DvHKZIgusaStacks}, and we can compose with $\gvxone \times_{\mathbb{G}_m} \gvxtwo \to \gvxthree$ to get a new Hodge embedding $\iota_3:\gxone \to \gvxthree$ by \cite[Lemma 7.1.1]{DvHKZIgusaStacks}. Recall that the morphisms $a$ and $a'_{\lambda}$ induce (unique) morphisms of Igusa stacks
\begin{align}
    a:\igs \gxone &\to \igs \gxtwo \\
    a'_{\lambda}: \igs \gxonep &\to \igs \gxtwop,
\end{align}
by \cite[Theorem I]{DvHKZIgusaStacks} or \cite[Theorem C]{KimFunctoriality}.

\begin{Prop} \label{Prop:IntegralFunctoriality}
For $\lambda$ as above, there is a (necessarily unique) commutative diagram of v-sheaves 
\begin{equation} \label{Eq:IntegralFunctorialityDiagram}
    \begin{tikzcd}  
        \operatorname{IgsCorr}_{(\iota_1, \kappa_1^p)} \gxone \arrow{r} & \igs \gxone \times \igs \gxonep \\ 
        \operatorname{IgsCorr}_{(\iota_3, \kappa_1^p)} \gxone \arrow{r} \arrow[u] \arrow{d}{a} & \igs \gxone \times \igs \gxonep \arrow{d}{(a, a'_{\lambda})} \arrow[u, equals] \\
        \operatorname{IgsCorr}_{(\iota_2, \kappa_2^p)} \gxtwo \arrow{r} & \igs \gxtwo \times \igs \gxtwop. 
    \end{tikzcd}
\end{equation}
Moreover, this diagram fits into a three dimensional $2$-commutative diagram with the diagram
\begin{equation} \label{Eq:DiagramCompatibilityBunG}
    \begin{tikzcd}  
         \bun_{G_1} \arrow{r}{\operatorname{id} \times \beta_{\mathsf{P}_1}}  & \bun_{G_1} \times \bun_{G_1'} \\ 
        \bun_{G_1} \arrow{r}{\operatorname{id} \times \beta_{\mathsf{P}_1}} \arrow[u, equals ] \arrow{d}{a} & \bun_{G_1} \times \bun_{G_1'} \arrow{d}{(a, a'_{\lambda})} \arrow[u, equals] \\
        \bun_{G_2} \arrow{r}{\operatorname{id} \times \beta_{\mathsf{P}_2}} &\bun_{G_2} \times \bun_{G_2'}.
    \end{tikzcd}
\end{equation}
If there are admissible Hodge data $\Omega_i$ containing $\iota_i$ and $\kappa_i^p$ for $i=1,2,3$ (where $\kappa_3^p=\kappa_1^p)$, and Conjecture \ref{Conj:ConstructionWorks} holds for $\igscorrthree\gxone$, then it also holds for $\igscorrone\gxone$, and the map $\igscorrthree\gxone \to \igscorrone \gxone$ is an isomorphism.
\end{Prop}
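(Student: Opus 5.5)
The plan is to build the arrows in \eqref{Eq:IntegralFunctorialityDiagram} from the fiber-product descriptions $\operatorname{IgsCorr}_{(\iota,\kappa^p)}$ in the paragraph before Proposition \ref{Prop:IndepenceI}. This reduces everything to Siegel-level statements plus the known functoriality of Igusa stacks and of $\bun_G$. The case of admissible Hodge data then comes from a formal monomorphism argument.

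\emph{Step 1: the upper vertical arrow.} The Hodge embedding $\iota_3$ factors through $\gvxone \times_{\mathbb{G}_m} \gvxtwo \to \gvxthree$, and the twisted embedding $\iota_3'$ factors the same way. At the level of abelian schemes this says $A_{x,3} \simeq A_{x,1} \times A_{x,2}$, functorially and compatibly with $\eta$ and with the crystalline $G_{V_3}$-bundles. I would first construct the Siegel-level map
\begin{align}
\operatorname{IgsCorr}_{\mathsf{P}_{V_1,V_2},\kappa^p_{V_1,V_2}} \to \operatorname{IgsCorr}_{\mathsf{P}_{V_1},\kappa^p_{V_1}} \times \operatorname{IgsCorr}_{\mathsf{P}_{V_2},\kappa^p_{V_2}} ,
\end{align}
where the source is defined for $\g_{V_1}\times_{\mathbb{G}_m}\g_{V_2}$ in the same way as in Definition \ref{defn:Corr}.
\begin{itemize}
\item A formal quasi-isogeny $f$ that is structure preserving for $\g_{V_1}\times_{\mathbb{G}_m}\g_{V_2}$ respects the decomposition, because away from $p$ the decomposition is encoded in $\nu_{\kappa^p}$ and at $p$ in the $G$-bundle condition. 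So $f$ splits as $f_1\times f_2$.
\item Composing with the obvious comparison to $\operatorname{IgsCorr}_{\mathsf{P}_{V_3},\kappa^p_{V_3}}$ and base changing along $\igs\gxone\times\igs\gxonep$ as in the definition of $\operatorname{IgsCorr}_{(\iota,\kappa^p)}$ gives the upper arrow. This uses the comparison of Igusa stacks for different Hodge embeddings, \cite[Proposition 7.3.1]{DvHKZIgusaStacks}.
\item The extra fiber product with $\bung\times_{\bungp}\bungp$ is carried along unchanged, since the $G_1$-bundle condition does not depend on the Hodge embedding.
\end{itemize}

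\emph{Step 2: the lower arrow $a$.} Projecting instead to the $V_2$-factor gives a correspondence for $\iota_2\circ a$ together with an $\Omega$-structure preserving $f_2$. I would push it to $\g_2$ using the morphisms $a$ and $a'_\lambda$ of Igusa stacks and the isomorphism $\lambda$, which identifies $\beta_{\mathsf{P}_2}\circ a$ with $a'_\lambda\circ\beta_{\mathsf{P}_1}$ on $\bun$. This makes \eqref{Eq:DiagramCompatibilityBunG} $2$-commute, and it also gives compatibility with the $\bun$-factors in the fiber product defining $\operatorname{IgsCorr}_{(\iota_2,\kappa_2^p)}$. All of these checks are done on presheaves and then v-sheafified, using that sheafification preserves fiber products. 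The resulting three-dimensional $2$-commutativity reduces to the $2$-commutativity of the corresponding diagrams for Igusa stacks over $\bun$ from \cite[Theorem I]{DvHKZIgusaStacks}. I expect Step 2 to be the main obstacle. The difficulty is matching $\kappa_2^p$ with $\lambda\circ\kappa_1^p$ away from $p$, and matching the twisted bundles $\beta_{P_2}(a_*\mathbb{L})$ with $a'_{\lambda,*}\beta_{P_1}(\mathbb{L})$ at $p$. If $\kappa_2^p$ and $\lambda\circ\kappa_1^p$ differ, I would first use the translation trick from the proof of Lemma \ref{Lem:TrivialSiegelCase} by an element $g\in\g_2(\afp)$ to arrange they agree.

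\emph{Step 3: uniqueness and the final claim.} Uniqueness is not automatic in general, because $\operatorname{IgsCorr}_{(\iota,\kappa^p)}\to\igs\times\igs'$ need not be a monomorphism without admissible Hodge data. So I read ``necessarily unique'' relative to compatibility with the fiber-product structure maps, which pin the arrows down. The last assertion is then formal. By Proposition \ref{Prop:IndepenceI}, $\igscorrthree\gxone$ and $\igscorrone\gxone$ agree with the $(\iota_3,\kappa_1^p)$- and $(\iota_1,\kappa_1^p)$-versions, and by Lemma \ref{Lem:CorrespondenceNewton} both are subsheaves of $\igs\gxone_{-\mu'}$ via monomorphisms. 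The upper vertical arrow commutes with these embeddings, so
\begin{align}
\igscorrthree\gxone \subset \igscorrone\gxone \subset \igs\gxone_{-\mu'} .
\end{align}
If Conjecture \ref{Conj:ConstructionWorks} holds for $\Omega_3$, the composite inclusion is an isomorphism, so both inclusions are isomorphisms. The same argument applies on the $\gxonep$ side.
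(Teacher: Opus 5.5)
Your overall architecture matches the paper's: pass through $\g_{V_1}\times_{\gm}\g_{V_2}$, project to the $V_1$- and $V_2$-versions, and finish with $\igscorrthree\gxone\subset\igscorrone\gxone\subset\igs\gxone_{-\mu_1'}$. However, Step 1 skips the one non-formal input. Both vertical arrows in \eqref{Eq:IntegralFunctorialityDiagram} go \emph{out of} $\operatorname{IgsCorr}_{(\iota_3,\kappa_1^p)}\gxone$. This is a fiber product over $\igs\gvxthree\times\igs(\g_{V_3'},\mathsf{H}_{V_3'})$, glued along identifications in $\igs\gvxthree$, and its points carry quasi-isogenies known only to be $\g_{V_3}$-structure preserving. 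You state your splitting $f=f_1\times f_2$ only for $f$ already structure preserving for $\g_{V_1}\times_{\gm}\g_{V_2}$. So it only yields maps out of the intermediate $\g_{V_1}\times_{\gm}\g_{V_2}$-version. Your ``obvious comparison'' goes from that version \emph{into} the $\g_{V_3}$-version, so composing gives no arrow out of the latter. What you need is that this comparison becomes an isomorphism after base change along $\igs\gxone\times\igs\gxonep$ and the $\bun$-fiber products; this is exactly what the paper reduces the first part to. \cite[Proposition 7.3.1]{DvHKZIgusaStacks} does not supply it. It compares the Igusa stacks of a single datum for two Hodge embeddings. It says nothing about quasi-isogenies between points of $\gxone$ and of $\gxonep$, nor about $\g_{V_3}$-identifications between such points. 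Step 2 inherits the same gap, since ``projecting to the $V_2$-factor'' presupposes the splitting.

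The missing idea is that $\igs(\g_{V_1}\times_{\gm}\g_{V_2},\mathsf{H}_{V_1,V_2})\to\igs\gvxthree$ is a monomorphism. Fix an idempotent $\Theta$ of $V_3$ with image $V_1$. The source is the locus of triples $(A,\lambda_A,\eta^p)$ for which the idempotent of $V^pA$ induced by $\eta^p$ comes from a self-quasi-isogeny of $A$. That quasi-isogeny is unique, because $\operatorname{Hom}(A,B)\otimes\mathbb{Q}\to\operatorname{Hom}(V^pA,V^pB)$ is injective. Now take a $\g_{V_3}$-structure preserving $f$ between such points. Then $V^p(f)$ commutes with $\Theta$, since $\nu_{\kappa^p}$ respects $V_3=V_1\oplus V_2$. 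Hence $f$ commutes with the idempotent quasi-isogenies and splits. Lemma \ref{Lem:GStructurePreservingClosedPerfectoid} then handles the conditions at $p$ in the $\bun$-fiber products.

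Two smaller remarks. First, uniqueness is automatic, contrary to your Step 3. The map $\operatorname{IgsCorr}_{(\iota,\kappa^p)}\to\igs\gx\times\igs\gxp$ is always a monomorphism. It is a base change of the Siegel correspondence, which is a graph by Lemma \ref{Lem:TrivialSiegelCase}, followed by a base change of the closed immersion of Lemma \ref{Lem:GStructurePreservingClosedPerfectoid}. The remark after Proposition \ref{Prop:IndepenceI} concerns the projections to a single factor, not to the product. Second, the translation trick of Lemma \ref{Lem:TrivialSiegelCase} does not commute with the second projection, so using it would replace $a'_\lambda$ by a translate in the diagram. What is actually needed is that $\kappa_2^p$ be induced by $\kappa_1^p$ and $\lambda$.
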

\begin{proof}
Write $(\g_{V_1,V_2}, \h_{V_1,V_2}):=\gvxone \times_{\mathbb{G}_m} \gvxtwo$ and consider the following commutative diagram of Shimura data
    \begin{equation}
        \begin{tikzcd}
            & \gvxone \\
            \gxone \arrow{r}{(\iota_{1},\iota_2 \circ a)} \arrow[ur, "\iota_1"] \arrow{d}{a} & (\g_{V_1,V_2}, \h_{V_1,V_2}) \arrow{d} \arrow{u} \arrow{r} & \gvxthree \\
            \gxtwo \arrow{r}_{\iota_2} & \gvxtwo,
        \end{tikzcd}
    \end{equation}
    there is an analogous diagram for $\gxonep$ and $\gxtwop$. From this, we immediately get maps
\begin{equation}
    \begin{tikzcd}
        \igscorrvoneiotakappa \times_{\left(\bun_{G_1} \times_{\bun_{G_{V_1}}} \bun_{G'_1}\right)} \left(\bun_{G_1}\times_{\bun_{G'_1}} \bun_{G_1'} \right) \\
        \igscorrvonetwoiotakappa \times_{\left(\bun_{G_1} \times_{\bun_{G_{V_1,V_2}}} \bun_{G'_1}\right)} \left(\bun_{G_1}\times_{\bun_{G'_1}} \bun_{G_1'} \right) \arrow{d} \arrow{u} \\
       \igscorrvtwoiotakappa \times_{\left(\bun_{G_2} \times_{\bun_{G_{V_2}}} \bun_{G_2'}\right)} \left(\bun_{G_2}\times_{\bun_{G'_2}} \bun_{G_2'} \right)
    \end{tikzcd}
\end{equation}
and
\begin{equation} \label{Eq:TwoManyFiberProducts}
\begin{tikzcd}
    &\igscorrvonetwoiotakappa \times_{\left(\bun_{G_1} \times_{\bun_{G_{V_1,V_2}}} \bun_{G'_1}\right)} \left(\bun_{G_1}\times_{\bun_{G'_1}} \bun_{G_1'} \right) \arrow{d} \\
    &\igscorrvthreeiotakappa \times_{\left(\bun_{G_1} \times_{\bun_{G_{V_3}}} \bun_{G'_1}\right)} \left(\bun_{G_1}\times_{\bun_{G'_1}} \bun_{G_1'} \right).
\end{tikzcd}
\end{equation}
The first part of the proposition follows from the definitions if we can show that the bottom map of \eqref{Eq:TwoManyFiberProducts} is an isomorphism. This follows from Lemma \ref{Lem:GStructurePreservingClosedPerfectoid} together with the fact that $\igs (\g_{V_1,V_2}, \h_{V_1,V_2}) \to \igs \gvxthree$ is a monomorphism. \smallskip 

To prove this fact, choose an idempotent endomorphism $\Theta$ of $V_3$ such that $\Theta(V_3)=V_1$. Then $\igs \gvxthree$ is the v-sheafification of the presheaf of groupoids sending $(R,R^+)$ to the groupoid whose objects are triples $(A, \lambda, \eta^p)$ of polarized abelian schemes $(A, \lambda)$ over $\spec R^+/\varpi$ up to prime-to-$p$ quasi-isogeny, together with an isomorphism $\eta^p:V^p A \to V_3 \otimes \afp$ of $\afp$-local systems compatible with the Weil pairing, and whose morphisms are quasi-isogenies compatible with $\lambda$. The sheaf $\igs (\g_{V_1,V_2}, \h_{V_1,V_2})$ is the sheafification of the presheaf of groupoids sending $(R,R^+)$ to the subgroupoid of triples $(A, \lambda, \eta^p)$ such that the idempotent $\Theta_{A}$ of $V^p A$ induced by $\eta^p$ comes (uniquely) from a self prime-to-$p$ quasi-isogeny of $A$.\footnote{Indeed, this follows from \cite[Theorem 1.3, Definition 8.1]{ZhangThesis} after taking the inverse limit over the level subgroups away from $p$, see also \cite[Lemma 5.2.13]{DvHKZIgusaStacks}.}\smallskip 

Next, we prove the final statement of the proposition. Since $\igscorrthree$ and $\igscorrone$ are subsheaves of $\igs \gxone \times \igs \gxonep$, the results proved above show that $\igscorrthree \subset \igscorrone$. By Lemma \ref{Lem:CorrespondenceNewton}, we moreover have an inclusion $\igscorrone \subset \igs \gxone_{-\mu'_1} \times \igs \gxonep_{-\mu_1}$ and the induced maps $\igscorrone \to \igs \gxone_{-\mu'_1},\igs \gxonep_{-\mu_1}$ are monomorphisms. If Conjecture \ref{Conj:ConstructionWorks} holds for $\igscorrthree$, then the induced maps 
\begin{align}
    \igscorrthree \to \igs \gxone_{-\mu'_1}, \igs \gxonep_{-\mu_1}
\end{align}
are isomorphisms. It is clear that the same then holds for the maps 
\begin{align}
    \igscorrone \to \igs \gxone_{-\mu'_1}, \igs \gxonep_{-\mu_1},
\end{align}
and that furthermore the inclusion $\igscorrthree \subset \igscorrone$ is an isomorphism. 
\end{proof}

\subsection{Correspondences between perfect Igusa stacks} \label{sub:PerfectCorrespondence} Let the notation be as in Section \ref{Subsub:allthedata}, in particular we have a fixed $\gx$ and $\mathsf{P}$ and an admissible Hodge datum $\Omega$ for $(\g,\x, \mathsf{P})$. Further recall the notation from Section \ref{Sub:ReductionIgusaStack}.

\subsubsection{} Suppose that we are given $R \in \affperf$ and morphisms $x:\spec R \to \shginf, y: \spec R \to \shginf$. We note that there is a canonical isomorphism of $G_{V'}$-isocrystals
\begin{align} \label{Eq:CanonicalIsomorphismVectorBundlesTwistPerfect}
     \beta_{P}(\mathbb{L}_{x,\mathrm{crys}}) \times^{\iota',G'} G_{V'} = \beta_{P_V}\left(\mathbb{L}_{x,\mathrm{crys}} \times^{\iota, G} G_{V}\right).
\end{align}
\begin{Def} \label{Def:PerfectOmegaStructurePreserving}
We say that a quasi-isogeny $f:A_{x} \dashrightarrow A_{y}$ is \emph{$\Omega$-structure preserving away from $p$} if the induced map on the prime-to-$p$ adelic Tate modules
    \[
        V^p(f): V^p(A_{x}) \to V^p(A_{y})
    \]
    fits into a commutative square
    \[
        \begin{tikzcd}
            &V^p(A_{x}) \arrow{r}{V^p(f)}\arrow{d}{\eta_{x}}& V^p(A_{y}) \arrow{d}{\eta_{y}}\\
            &V \otimes \afp \arrow{r}{\nu_{\kappa^p}}& V' \otimes \afp.
        \end{tikzcd}
    \]
    We say that $f$ is \emph{$\Omega$-structure preserving at $p$} if there exists a (necessarily unique) isomorphism of $G'$-isocrystals $\varphi_{x, y}: \beta_{P}(\mathbb{L}_{\crys,x}) \isom \mathbb{L}_{\crys,y}$ whose pushout along $G' \to \operatorname{GL}_{V'}$ recovers the isomorphism of $\operatorname{GL}_{V'}$-isocrystals
\begin{align}
    \mathscr{E}(A_x) \xrightarrow{\mathscr{E}(f)} \mathscr{E}(A_y),
\end{align}
using the identifications
\begin{align}
    \mathscr{E}(A_x) &= \beta_{P_V}\left(\mathbb{L}_{\crys,x} \times^{\iota, G} G_{V}\right) = \beta_{P}(\mathbb{L}_{\crys,x}) \times^{\iota',G'} G_{V'} \\
    \mathscr{E}(A_y)&=\mathbb{L}_{\crys,y} \times^{\iota',G'} G_{V'}.
\end{align}
We say that $f$ is \emph{$\Omega$-structure preserving} if it is both $\Omega$-structure preserving away from $p$ and $\Omega$-structure preserving at $p$.
\end{Def}
\begin{Rem}
    Given an isomorphism $\kappa_p:P \to G$ of $G$-torsors with induced isomorphism $\kappa_{p}:P_V \to G_V$ and use it to identity $\beta_{P}$ and $\beta_{P_V}$ with the identity map, then $f$ is $\Omega$-structure preserving at $p$ precisely when it is $\g$-structure preserving at $p$.
\end{Rem}

\begin{Def} \label{Def:IgsCorrperfpre}
Define $\igscorrperfpre\gx$ to be the presheaf of groupoids on $\affperf$ sending $R$ to the following groupoid: Its objects are triples $(x,y,f)$ where $(x,y):\spec R \to \shginf \times \shgpinf$ and where $f$ is an $\Omega$-structure preserving quasi-isogeny $A_x \dashrightarrow A_y$. A morphism $(g_x,g_y):(x,y,f) \to (x',y',f')$ consists of a pair of quasi-isogenies $g_x:A_x \dashrightarrow A_{x'}$ and $g_y:A_y \dashrightarrow A_{y'}$ such that the following diagram commutes
\begin{equation} \label{Eq:StructurePreserving}
    \begin{tikzcd}
        A_x \arrow[r, dashed, "f"] \arrow[d, dashed, "g_x"] & A_y \arrow[d, dashed, "g_y"] \\
        A_{x'} \arrow[r, dashed, "f'"] & A_{y'},
    \end{tikzcd}
\end{equation}
and such that $g_x$ is $\g$-structure preserving and $g_y$ is $\g'$-structure preserving. We write $\igscorrperf\gx$ for the v-sheafification of $\igscorrpreperf\gx$.
\end{Def}
There are natural forgetful maps $\igscorrpreperf \to \igspreperf \gx$ and $\igscorrpreperf \to \igspreperf \gxp$. We moreover note that by the definition of $\igscorrpre$ they lie in a $2$-commutative diagram
\begin{equation} \label{Eq:GStructureDiagramPerf}
    \begin{tikzcd}
        \igscorrpreperf \arrow{r} \arrow{d} & \gisoc \arrow{d}{(\operatorname{Id}, \beta_{\mathsf{P}})} \\
        \igspreperf\gx \times \igspreperf \gxp \arrow{r} & \gisoc \times \gpisoc
    \end{tikzcd}
\end{equation}
We let $\igscorrperf$ be the v-sheafification of $\igscorrpreperf$, which comes equipped with maps $\igscorrperf \to \igsperf \gx \times \igsperf \gxp$. 
\begin{Lem} \label{Lem:CorrespondenceNewtonPerf}
    The morphisms $\igscorrperf \to \igsperf \gx$ and $\igscorrperf \to \igsperf \gxp$ are monomorphisms with image contained in $\igsperf \gx_{-\mu'}$ and $\igsperf \gxp_{-\mu}$ respectively. 
\end{Lem}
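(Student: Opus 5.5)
The plan is to follow the proof of Lemma \ref{Lem:CorrespondenceNewton}, working first with the presheaves of groupoids and then passing to v-sheafifications. For the monomorphism statement, I will show that the forgetful functor $\igscorrpreperf \to \igspreperf\gx$ is fully faithful. Given objects $(x,y,f)$ and $(x',y',f')$ and a $\g$-structure preserving quasi-isogeny $g_x:A_x \dashrightarrow A_{x'}$, the commutativity of \eqref{Eq:StructurePreserving} forces
\begin{align}
g_y = f' \circ g_x \circ f^{-1}.
\end{align}
So $g_y$ is uniquely determined by $g_x$; this gives faithfulness. For fullness, I still need to check that this $g_y$ is $\g'$-structure preserving.

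\begin{itemize}
\item Away from $p$, this holds because $\eta_{y'} \circ V^p(g_y) \circ \eta_y^{-1}$ factors as $\nu_{\kappa^p} \circ (\text{identity}) \circ \nu_{\kappa^p}^{-1}$, by Definition \ref{Def:PerfectOmegaStructurePreserving}.
\item At $p$, I will take the isomorphisms of $G'$-isocrystals $\varphi_{x',y'}$, $\beta_P$ of the $G$-isocrystal isomorphism underlying $g_x$, and $\varphi_{x,y}^{-1}$. Their composite is an isomorphism $\mathbb{L}_{\crys,y} \isom \mathbb{L}_{\crys,y'}$ whose pushout to $\operatorname{GL}_{V'}$ is $\mathscr{E}(g_y)$.
\end{itemize}

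By the symmetric argument, the functor to $\igspreperf\gxp$ is also fully faithful. Both presheaves are $0$-truncated, since $\igspreperf$ is $0$-truncated and fully faithful functors reflect this. A fully faithful map of $0$-truncated presheaves is a monomorphism of presheaves of sets, and v-sheafification preserves monomorphisms because it is exact. Hence $\igscorrperf \to \igsperf\gx$ and $\igscorrperf \to \igsperf\gxp$ are monomorphisms.

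For the image statement, I will use the $2$-commutative diagram \eqref{Eq:GStructureDiagramPerf}. The map $\igspreperf\gx \to \gisoc$ lands in $\gisocmu$, because $\shginf \to \shtlocgmuone \to \gisocmu$. Likewise $\igspreperf\gxp \to \gpisoc$ lands in $\gpisocmup$. Given an object $(x,y,f)$, the diagram says that $\beta_P(\mathbb{L}_{\crys,x}) \simeq \mathbb{L}_{\crys,y}$. So the $G'$-isocrystal $\beta_P(\mathbb{L}_{\crys,x})$ lies in $\beta_P(\gisocmu) \cap \gpisocmup = \gpisocmumup$, and therefore $x$ lies in $\igsperf\gx_{-\mu'}$; symmetrically, $y$ lies in $\igsperf\gxp_{-\mu}$. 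These are closed substacks defined by pullback. Factoring through a closed substack is a property checkable v-locally, since it is stable under sheafification, so the factorization passes to $\igscorrperf$.

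The one step needing care is the justification that $\beta_P$ identifies closed Newton strata on isocrystals. In other words, I need $\beta_P(\gisocmu)$ to be the closed substack attached to $\beta_P(\bgmu)$. I expect this to follow from Lemma \ref{Lem:NewtonStrata} together with the identification of $\beta_P$ on $\gisoc$ as the reduction of the Fargues--Scholze isomorphism $\beta_P$. Otherwise the argument is formal.
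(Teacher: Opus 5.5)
Your proposal is correct and follows the paper's proof: you show the forgetful functors are fully faithful, using $g_y=f'\circ g_x\circ f^{-1}$. You then obtain the image statement from the diagram \eqref{Eq:GStructureDiagramPerf} together with the fact that $\igspreperf\gx$ and $\igspreperf\gxp$ map to $\gisocmu$ and $\gpisocmup$. You supply more detail than the paper, namely the check that $g_y$ is $\g'$-structure preserving and the passage to v-sheafifications. The step you flag is formal: $\beta_P:\gisoc\to\gpisoc$ is an isomorphism of stacks, so it carries the closed substack on $\bgmu$ to the one on $\beta_P(\bgmu)$.
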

\begin{proof}
The map of presheaves in groupoids $\igscorrpreperf \to \igspreperf\gx$ is a fully faithful functor, because in the definition of morphisms it is clear that $g_x$ uniquely determines $g_y$. By symmetry the same is true for the map $\igscorrpreperf \to \igspreperf\gxp$. The fact that the image lies in $\igsperf \gx_{-\mu'}$ follows from the fact that $\igspreperf\gx$ maps to $\gisocmu$ and that $\igspreperf\gxp$ maps to $\gpisocmup$, in combination with \eqref{Eq:GStructureDiagramPerf}.
\end{proof}

\begin{Conj} \label{Conj:ConstructionWorksSpecialFiberIgusa}
If $\Sha^1(\mathbb{Q},\g)=0$, then the morphisms $\igscorrperf \to \igsperf \gx_{-\mu'}$ and $\igscorrperf \to \igsperf \gxp_{-\mu}$ are isomorphisms.
\end{Conj}

\begin{Lem} \label{Lem:TrivalSiegelCasePerfect}
    If $\gx=\gvx$ and $\iota$ is the identity, then Conjecture \ref{Conj:ConstructionWorksSpecialFiberIgusa} holds.
\end{Lem}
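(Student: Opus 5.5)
The plan is to imitate the proof of Lemma \ref{Lem:TrivialSiegelCase} in the perfect setting. First I will reduce to a convenient choice of $\kappa^p$. For $g \in \gafp$, postcomposing $\eta_y$ with $\chi_{\kappa^p}(g)$ and acting by the identity on morphisms gives an isomorphism $\igscorrperf \to \operatorname{IgsCorr}^{\mathrm{perf}}_{\Omega',\mathsf{P}}$, where $\Omega'$ has $\kappa^p$ replaced by $g \circ \kappa^p$. Since $H^1(\mathbb{Q},\gv)$ is trivial, $\mathsf{P}_V$ is trivial. So we may choose a global trivialization $\kappa_V:\mathsf{P} \to \gv$ and, after the modification above, assume it induces $\kappa^p$. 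By weak approximation for $\gv(\mathbb{Q}) \to \gv(\qp)$, and since self-dual lattices in $V_{\qp}$ are $\gv(\qp)$-conjugate, we may further arrange that $V'=V$ identifies $V'_{\zp}$ with $V_{\zp}$. The requirement that $\beta_{\mathsf{P}}(\f(1))\simeq \f$ be compatible is harmless, since $\kappa_V$ respects the similitude character up to a rational scalar.

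With these identifications, $\gvxp=\gvx$, $\Xi'=\Xi$, $\shgpinf=\shgvinf$, $\nu_{\kappa^p}$ is the identity, and $\beta_{P}$ and $\beta_{P_V}$ become the identity on $\gisoc$. The remark after Definition \ref{Def:PerfectOmegaStructurePreserving} then says that $\Omega$-structure preserving is the same as $\g_V$-structure preserving, both at $p$ and away from $p$. Unwinding Definition \ref{Def:IgsCorrperfpre}, an object of $\igscorrperfpre(R)$ is a pair of points $x,y$ of $\shgvinf$ together with a $\g_V$-structure preserving quasi-isogeny $f:A_x\dashrightarrow A_y$. Morphisms are pairs $(g_x,g_y)$ of structure-preserving quasi-isogenies making the square commute. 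This identifies $\igscorrperfpre$ with the fiber product of groupoids $\igspreperf_{\Xi}\gvx \times_{\igspreperf_{\Xi}\gvx} \igspreperf_{\Xi}\gvx$ (the path groupoid), compatibly with the two projections. These identifications must also be compatible with the maps to $\gisoc$ in \eqref{Eq:GStructureDiagramPerf}.

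Next I will v-sheafify. Sheafification commutes with finite limits, so $\igscorrperf \simeq \igsperf\gvx \times_{\igsperf\gvx} \igsperf\gvx$. The projections to each factor are then isomorphisms with inverse given by the diagonal. Since $\mu=\mu'$ and $P$ is trivial, the image of each projection is all of $\igsperf\gvx = \igsperf\gvx_{-\mu'}$; this is the statement of Conjecture \ref{Conj:ConstructionWorksSpecialFiberIgusa} in this case.

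The main point to check carefully is the first step. The reduction from general $\kappa^p$ has to be an isomorphism of correspondences that is compatible with the forgetful maps up to the $\gafp$-action on $\igsperf\gxp$, so that being an isomorphism onto the Newton-stratum image is preserved. We also need $\Sha$-type triviality to produce a global trivialization of $\mathsf{P}_V$ lifting the given $\kappa^p$ up to an adelic translate. Everything after that is tautological.
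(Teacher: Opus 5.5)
Your proposal is correct and follows the paper's own argument. The paper's proof only says that the proof of Lemma \ref{Lem:TrivialSiegelCase} adapts immediately, and your steps are exactly that adaptation: translate $\kappa^p$ by $\gafp$, trivialize the $\gv$-torsor globally and use weak approximation to match the self-dual lattices, identify the presheaf correspondence with $\igspreperf_{\Xi}\gvx\times_{\igspreperf_{\Xi}\gvx}\igspreperf_{\Xi}\gvx$, and then sheafify.
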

\begin{proof}
    The proof of Lemma \ref{Lem:TrivialSiegelCase} can be adapted immediately.
\end{proof}
\begin{Cor} \label{Cor:IndependencePerfect}
Suppose that $\gx=\gvx$ and that $\iota$ is the identity, then $\igscorrpre$ only depends on $\mathsf{P}, \kappa^p$, and not on $V_{\zp}$ and $V_{\zp'}$. 
\end{Cor}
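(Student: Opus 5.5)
The plan is to follow Corollary \ref{Cor:Independence} and extract the claim from the proof of Lemma \ref{Lem:TrivalSiegelCasePerfect}, which is the perfect analogue of Lemma \ref{Lem:TrivialSiegelCase}. Take $\gx=\gvx$ with $\iota$ the identity. I will first unwind Definition \ref{Def:IgsCorrperfpre} in this case. An object over $R\in\affperf$ is a triple $(x,y,f)$. Here $x$ is an $R$-point of the perfect Siegel special fiber: a polarized abelian scheme up to prime-to-$p$ isogeny together with a level structure $\eta_x$. The point $y$ is the analogous datum for $V'$, and $f$ is a quasi-isogeny. In the Siegel case the lattices $V_{\zp},V'_{\zp}$ enter the moduli problem only through the choice of the $p$-power part of the polarization type. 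Since objects are only considered up to quasi-isogeny, this choice is invisible.

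The key steps are as follows.

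\textbf{(i)} The condition ``$\Omega$-structure preserving away from $p$'' only involves $\eta_x,\eta_y$ and $\nu_{\kappa^p}$. So it depends on $\mathsf{P}$ and $\kappa^p$ alone.

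\textbf{(ii)} The condition ``$\Omega$-structure preserving at $p$'' asks that $\mathscr{E}(f)$ come from an isomorphism of $G_{V'}$-isocrystals $\beta_{P_V}(\mathbb{L}_{\crys,x})\to\mathbb{L}_{\crys,y}$. For the Siegel group, a $G_V$-isocrystal structure on $\mathscr{E}(A_x)$ is just the isocrystal together with the similitude pairing up to scalar. The $\operatorname{GL}_V$-isocrystal $\mathscr{E}(A_x)$ is the Dieudonné isocrystal, which does not depend on $V_{\zp}$. So the condition amounts to $f$ respecting the polarizations up to a scalar, transported through $\beta_{P_V}$. Again this depends only on $\mathsf{P}$.

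\textbf{(iii)} Morphisms are pairs of $\g_V$- and $\g_{V'}$-structure preserving quasi-isogenies. The Igusa groupoids $\igspreperf_{\Xi}\gvx$ are described purely in terms of abelian varieties up to quasi-isogeny with polarization and $\afp$-level structure. Hence the same lattice-independence holds for them, and so also for their fiber products.

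Alternatively, I would mirror the proof of Lemma \ref{Lem:TrivialSiegelCase} directly. Use weak approximation for $\gv(\mathbb{Q})\to\gv(\qp)$ and the $\gv(\qp)$-conjugacy of self-dual lattices to move from one choice $(V_{\zp},V'_{\zp})$ to another. Then identify $\igscorrpreperf$ with $\igspreperf\times_{\igspreperf}\igspreperf$, which visibly has no lattice dependence.

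The main obstacle is step (ii). One must check that changing $V_{\zp}$ changes the integral model $\shginf$, and hence the space of objects, only by a prime-to-$p$ Hecke translate or a $p$-power isogeny. This change has to be absorbed into the groupoid so that the presheaves are literally, and not merely up to sheafification, the same. I would handle this exactly as the statement intends: the identification is through the quasi-isogeny description of the objects. Ending with ``by inspection,'' as in Corollary \ref{Cor:Independence}, is then justified.
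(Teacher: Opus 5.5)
Your route is the paper's: unwind the definitions in the Siegel case, i.e.\ inspect the proof of Lemma \ref{Lem:TrivialSiegelCase}. Your steps (i)--(iii) about the conditions on $f$ and on morphisms are fine, because $\eta_x,\eta_y,\nu_{\kappa^p}$ and the Dieudonn\'e isocrystals with their pairings, twisted by $\beta_{P_V}$, never mention $V_{\zp}$ or $V'_{\zp}$. The gap is in how you handle the objects. Objects of $\igscorrpreperf(R)$ are actual points $(x,y)$ of $\shginf\times\shgpinf$, not quasi-isogeny classes. So ``since objects are only considered up to quasi-isogeny, this choice is invisible'' is not a valid reason. The fix in your last paragraph also cannot work as stated. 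If changing $V_{\zp}$ really replaced $\shginf$ by a Hecke translate or a $p$-power isogenous family, absorbing this through quasi-isogenies would at best give an equivalence of groupoids, or an isomorphism after v-sheafification. It would not give the literal equality of presheaves that the statement asserts.

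The missing point is that nothing changes. The lattices $V_{\zp}$, $V'_{\zp}$ in a Hodge datum are self-dual (Section \ref{subsub:XiChoice}; the proof of Lemma \ref{Lem:TrivialSiegelCase} also uses this). So for $\iota=\mathrm{id}$ the model $\scrs_{M_p}\gvx$ is literally a moduli space of abelian schemes up to prime-to-$p$ isogeny, equipped with two pieces of data:
\begin{itemize}
\item a $\mathbb{Z}_{(p)}^{\times}$-class of polarizations containing one of degree prime to $p$;
\item a level structure $\eta\colon\mathcal{V}^p(A)\isom V\otimes\afp$.
\end{itemize}
The lattice does not enter this moduli problem at all, and likewise for $V'$. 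Hence $\shginf$, $\shgpinf$, the universal $(A,\eta)$, and the $G_V$- and $G_{V'}$-isocrystals are lattice-independent. Together with (i)--(iii), this gives equality of the presheaves together with their maps to $\igspreperf_{\Xi}\gvx\times\igspreperf_{\Xi'}\gvxp$.

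Your weak-approximation alternative does not avoid this point. In the proof of Lemma \ref{Lem:TrivialSiegelCase} one changes $\kappa_V$ (hence $\kappa^p$), not the lattices, via a twisting isomorphism that is not compatible with the second projection. Moreover, your assertion that $\igspreperf\times_{\igspreperf}\igspreperf$ ``visibly has no lattice dependence'' is exactly this same lattice-freeness of the Siegel moduli problem.
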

Let us now write $\igscorrviotakappa^{\mathrm{pre}}$ for $\igscorrpre$ when $\gx=\gvx$ and $\iota$ is the identity.
\begin{proof}
    This follows from the proof of Lemma \ref{Lem:TrivialSiegelCase} by inspection.
\end{proof}

\subsection{Reduction of correspondences} \label{Sub:ReductionCorrespondence} Let $\gx$ and $\mathsf{P}$ be as above, choose $\iota, \kappa^p$ as before. Define 
\begin{align}
\igscorrperfnaiveiotakappa&:= \igscorrvperfiotakappa \times_{\igsperf \gvx \times \igsperf \gvxp} \left(\igsperf \gx \times \igsperf \gxp\right) \\
\igscorrperfiotakappa &:= \igscorrperfnaiveiotakappa \times_{\left(\gisoc \times_{\gvpisoc} \gpisoc \right)} \left( \gisoc \times_{\gpisoc} \gpisoc\right).
\end{align}
\begin{Prop} \label{Prop:IndepenceII}
If $\Omega$ is an admissible Hodge datum containing $\iota$ and $\kappa^p$, then $\igscorrperf$ is isomorphic to $\igscorrperfiotakappa$ compatible with the structure maps to $\igsperf \gx \times \igsperf \gxp$ and to $\gisoc \times_{\gpisoc} \gpisoc $.
\end{Prop}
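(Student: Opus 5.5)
The plan is to mirror the proof of Proposition \ref{Prop:IndepenceI} in the perfect setting: first compute the presheaf-level fiber products, then v-sheafify.

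Concretely, I first consider the presheaf of groupoids on $\affperf$
\begin{align}
    \left(\igscorrperfiotakappa\right)^{\mathrm{pre,naive}} := \left(\igspreperf_{\Xi}\gx \times \igspreperf_{\Xi'}\gxp\right) \times_{\igspreperf_{\Xi}\gvx \times \igspreperf_{\Xi'}\gvxp} \igscorrvperfpreiotakappa .
\end{align}
By Corollary \ref{Cor:IndependencePerfect}, the Siegel correspondence presheaf $\igscorrvperfpreiotakappa$ may be computed using the lattices $V_{\zp}, V'_{\zp}$ underlying $\Xi,\Xi'$. Unwinding the definitions, an $R$-point of this fiber product consists of the following data:
\begin{itemize}
    \item points $x:\spec R\to\shginf$ and $y:\spec R \to \shgpinf$;
    \item a quasi-isogeny $f:A_x\dashrightarrow A_y$ which is $\Omega$-structure preserving away from $p$, in the sense of Definition \ref{Def:PerfectOmegaStructurePreserving}.
\end{itemize}
The transition morphisms in the three factors must be compatible; after eliminating redundant data, they reduce to pairs $(g_x,g_y)$ with $g_x$ being $\g$-structure preserving and $g_y$ being $\g'$-structure preserving, compatible with $f$ as in \eqref{Eq:StructurePreserving}. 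Here I use that the Hodge embeddings give closed immersions $\shginf\to\shgvinf$ up to prime-to-$p$ level, so that objects of $\igspreperf\gvx$ coming from $\shginf$ are literally the abelian schemes $A_x$ with their level structure $\eta_x$. The key check at this stage is that the three-way fiber product of groupoids collapses to this description, since the Siegel-level morphisms are determined by $g_x,g_y$.

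Next I take the further fiber product with $\gisoc\times_{\gpisoc}\gpisoc$ over $\gisoc\times_{\gvpisoc}\gpisoc$. Evaluated on the point $(\mathbb{L}_{\crys,x}, \mathbb{L}_{\crys,y}, \mathscr{E}(f))$, this fiber product adds the datum of an isomorphism of $G'$-isocrystals $\beta_P(\mathbb{L}_{\crys,x})\isom \mathbb{L}_{\crys,y}$ lifting $\mathscr{E}(f)$ under the identification \eqref{Eq:CanonicalIsomorphismVectorBundlesTwistPerfect}. By Lemma \ref{Lem:GStructurePreservingClosedPerfect}, the map $\gpisoc\times_{\gpisoc}\gpisoc\to\gpisoc\times_{\gvpisoc}\gpisoc$ is a monomorphism (indeed a closed immersion). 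Hence this datum is a property rather than extra structure, and it is precisely the condition that $f$ is $\Omega$-structure preserving at $p$. So the resulting presheaf is equivalent to $\igscorrperfpre$, compatibly with the maps to $\igspreperf\gx\times\igspreperf\gxp$ and to $\gisoc\times_{\gpisoc}\gpisoc$.

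Finally, v-sheafification commutes with finite limits (fiber products), so sheafifying gives the isomorphism $\igscorrperf\simeq\igscorrperfiotakappa$ with the stated compatibilities. I expect the main obstacle to be the first step, namely the bookkeeping of isomorphism classes. $\igscorrperf$ is defined by sheafifying isomorphism classes, whereas the fiber products are taken of (sheafified) groupoids. I would handle this using the $0$-truncatedness of all the groupoids involved, which follows as in Lemma \ref{Lem:CorrespondenceNewtonPerf} and the perfect analogue of \cite[Lemma 5.2.6]{DvHKZIgusaStacks}. This lets me replace each groupoid by its set of isomorphism classes before taking fiber products. A secondary point is that $\igspreperf\gvx$ must be compared with its restriction to points from $\shginf$; this follows from the closed-immersion property of $\g$-structure preservation away from $p$, via the closed immersion $\ul{\gafp}\to\ul{\g_V(\afp)}$.
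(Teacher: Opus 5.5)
Your proposal is correct and follows the paper's proof: identify the presheaf-level fiber product with triples $(x,y,f)$ in which $f$ is only required to be $\Omega$-structure preserving away from $p$; observe that the further fiber product over $\gisoc\times_{\gvpisoc}\gpisoc$ imposes exactly the condition at $p$; and conclude because sheafification commutes with fiber products. Your additional remarks (the monomorphism from Lemma \ref{Lem:GStructurePreservingClosedPerfect} and the $0$-truncatedness bookkeeping) make explicit what the paper leaves implicit. You do not need $\shginf\to\shgvinf$ to be a closed immersion, since the Siegel object attached to $x$ is $(A_x,\lambda_x,\eta_x)$ by definition.
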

\begin{proof}
The fiber product
\begin{align}
   \igscorrvperfpreiotakappa \times_{\igspreperf_{\Xi} \gvx \times \igspreperf_{\Xi'} \gvxp} \left(\igspreperf_{\Xi} \gx \times \igspreperf_{\Xi'} \gxp\right) 
\end{align}
is the presheaf of groupoids on $\affperf$ sending $R$ to the groupoid of triples $(x,y, f)$ as in Definition \ref{Def:IgsCorrperfpre}, except that $f$ is only required to be $\Omega$-preserving away from $p$. The further fiber product with 
\begin{align}
   \gisoc \times_{\gpisoc} \gpisoc 
\end{align}
over
\begin{align}
   \gisoc \times_{\gvpisoc} \gpisoc 
\end{align}
enforces the condition that $f$ is $\Omega$-structure preserving at $p$. Since stackification preserves fiber products, we are done.
\end{proof}

\subsubsection{} We have the following immediate corollary:
\begin{Prop} \label{Prop:ReductionCorrespondences}
There is an equality $\igscorrperfiotakappa = \left(\igscorriotakappa\right)^{\mathrm{red}}$ of subsheaves of $\igs \gx^{\mathrm{red}} \times \igs \gxp^{\mathrm{red}}$. 
\end{Prop}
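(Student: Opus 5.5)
The plan is to compare the two defining fiber products term by term, using that the reduction functor $\mathcal{F} \mapsto \mathcal{F}^{\red}$ commutes with fiber products. Recall that $\mathcal{F}^{\red}(A)=\operatorname{Hom}(\spd A, \mathcal{F})$ is a representable-style functor in $\mathcal{F}$. It therefore preserves all limits of (pre)stacks, and in particular $2$-fiber products. Hence
\begin{align}
\left(\igscorriotakappa\right)^{\red} = \left(\igscorriotakappa^{\mathrm{naive}}\right)^{\red} \times_{\left(\bung \times_{\bun_{G_{V'}}} \bungp\right)^{\red}} \left(\bung \times_{\bungp} \bungp\right)^{\red},
\end{align}
and similarly for the naive term. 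It then suffices to identify each reduced factor with its perfect counterpart, compatibly with the transition maps.

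For the $\bun$-factors I use Section \ref{subsub:ReductionShtukaBung}: $\bung^{\red} \simeq \gisoc$, functorially in $G$. I also need that $\beta_P$ reduces to the isomorphism $\beta_P:\gisoc \to \gpisoc$. This should be clear since both are given by twisting by the constant torsor $P$, and the identification $\bung^{\red}\simeq \gisoc$ is compatible with pushouts of torsors along group homomorphisms. This gives
\begin{align}
\left(\bung \times_{\bun_{G_{V'}}} \bungp\right)^{\red} = \gisoc \times_{\gvpisoc} \gpisoc,
\end{align}
and likewise for the other factor. For the Igusa stacks I use \cite[Theorem 6.5.1]{DvHKZIgusaStacks}, recalled in Section \ref{Sub:ReductionIgusaStack}, which states $\igs \gx^{\red} \simeq \igsperf \gx$. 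This holds for $\gx$, $\gxp$, $\gvx$ and $\gvxp$. Moreover, the maps $\igs \gx \to \igs \gvx$ and $\igs \gx \to \bung$ reduce to the perfect ones: both are induced from $\shginf \to \shgvinf$ and $\shginf \to \gisoc$ via the v-quotient description.

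The remaining and main step is the Siegel correspondence: showing $(\igscorrviotakappa)^{\red} = \igscorrvperfiotakappa$ inside $\igs\gvx^{\red}\times \igs\gvxp^{\red}$. I would argue this via Lemma \ref{Lem:TrivialSiegelCase} and Lemma \ref{Lem:TrivalSiegelCasePerfect}. After choosing a trivialization $\kappa_V:\mathsf{P}_V \to \gv$ inducing $\kappa^p$, both correspondences are identified with the diagonal of the respective Igusa stacks. So both become graphs of the identity $\igs\gvx \to \igs \gvxp$, respectively its perfect version, and the reduction of the graph is the graph of the reduced isomorphism. The delicate point is that these identifications are compatible with the identification $\igs\gvx^{\red}=\igsperf\gvx$, and with the twist by $\kappa_V$ on both sides. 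I expect to handle this by checking on the v-cover $\shgvinf$. There, both sides are cut out by the same condition: a quasi-isogeny between abelian schemes over $R^+/\varpi$, respectively $R$, that is compatible with $\nu_{\kappa^p}$. This condition is stable under passing to reductions, by Lemma \ref{Lem:SpreadingOut} and the description of $\igsperf$ as the reduction.

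Combining these identifications, all compatible with the structure maps, yields the claimed equality of subsheaves of $\igs \gx^{\red}\times \igs\gxp^{\red}$.
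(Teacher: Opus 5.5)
Your proposal is correct and follows the paper's proof. Reduction commutes with fiber products, so the general case reduces to the Siegel case, where Lemmas \ref{Lem:TrivialSiegelCase} and \ref{Lem:TrivalSiegelCasePerfect} identify both correspondences with diagonals and the reduction of the diagonal is the diagonal; the compatibility checks you spell out (for $\beta_P$, for the maps to $\bun_G$ and $\igs\gvx$, and for the $\kappa_V$-identification) are left implicit in the paper.
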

\begin{proof}
This is true in the Siegel case by Lemmas \ref{Lem:TrivialSiegelCase} and \ref{Lem:TrivalSiegelCasePerfect} because $\mathcal{F} \mapsto \mathcal{F}^{\red}$ commutes with fiber products and thus reduces the diagonal to the diagonal. In general, it is a direct consequence of Proposition \ref{Prop:IndepenceI}, Proposition \ref{Prop:IndepenceII} and the fact that $\mathcal{F} \mapsto \mathcal{F}^{\red}$ commutes with fiber products. 
\end{proof}

\subsubsection{Exotic Hecke correspondences} Let us now define actual exotic Hecke correspondences between $\shginf$ and $\shgpinf$. We define $\shcorrpre \to \shginf \times \shgpinf$ to be the pre-sheaf on $\affperf$ sending $(x,y):\spec R \to \shginf \times \shgpinf$ to the set of $\Omega$-preserving quasi-isogenies $f:A_x \dashrightarrow A_y$. It is clear from the definition that there is a map $\shcorrpre \to \igscorrpreperf$, which induces a commutative diagram of sheaves which is Cartesian by inspection (sheafification preserves fiber products).
\begin{equation} \label{Eq:IgsCorrToShCorr}
    \begin{tikzcd} 
        \shcorr \arrow{r} \arrow{d} & \shginf \times \shgpinf \arrow{d} \\
        \igscorrperf \arrow{r} & \igsperf\gx \times \igsperf\gxp.
    \end{tikzcd}
\end{equation}
\subsubsection{} By construction, there is a $2$-commutative diagram
\begin{equation} \label{Eq:KeyDiagram}
    \begin{tikzcd} 
        \shginf \arrow{d} & \shcorr \arrow{r} \arrow{l} \arrow{d} & \shgpinf \arrow{d} \\
        \shtlocgmu & \shtlocgmumup \arrow{r} \arrow{l} &  \shtlocgpmup.
    \end{tikzcd}
\end{equation}
\begin{Conj} \label{Conj:ConstructionWorksSpecialFiber}
If $\Sha^1(\mathbb{Q},\g)=0$, then both squares in \eqref{Eq:KeyDiagram} are Cartesian.
\end{Conj}
If $\mathcal{G}$ and $\mathcal{G}'$ are reductive and $P=\mathsf{P} \otimes \qp$ is trivial, this conjecture will be proved in forthcoming work of Xiao--Zhu. For now, we will state this case separately.
\begin{Conj} \label{Conj:XiaoZhu}
If $\Sha^1(\mathbb{Q},\g)=0$ and $\mathcal{G}$ and $\mathcal{G}'$ are reductive and $P=\mathsf{P} \otimes \qp$ is trivial, then both squares in \eqref{Eq:KeyDiagram} are Cartesian.
\end{Conj}
\begin{Prop} \label{Prop:PEL}
If $\gx$ is of PEL type AC and $\Sha^1(\mathbb{Q},\g)=0$, then Conjecture \ref{Conj:XiaoZhu} holds.
\end{Prop}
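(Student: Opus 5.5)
In the PEL type AC case with $\mathcal{G},\mathcal{G}'$ reductive and $P$ trivial, I would prove Conjecture \ref{Conj:XiaoZhu} by showing that the left square of \eqref{Eq:KeyDiagram} is Cartesian directly; the right square then follows by symmetry, exchanging the roles of $\gx$ and $\gxp$ via $\mathsf{P}^{-1}$. Concretely, given a perfect $\fp$-algebra $R$, a point $x\in\shginf(R)$ and a point of $\shtlocgmumup(R)$ lying over $\pi^{\red}_{\crys}(x)$, I need to produce a unique pair $(y,f)$, where $y\in\shgpinf(R)$ and $f:A_x\dashrightarrow A_y$ is an $\Omega$-structure preserving quasi-isogeny, inducing the given modification of shtukas.

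The plan is to use the moduli interpretation of the special fiber. Since $P$ is trivial, a point of $\shtlocgmumup$ over $x$ is a $\mathcal{G}'$-shtuka $\mathcal{E}'$ of type $\mu'$ together with an isomorphism of $G$-isocrystals between $\mathcal{E}'[1/p]$ and $\mathbb{L}_{\crys,x}$. Pushing forward to $\mathrm{GL}(V_{\zp})$, this gives a $W(R)$-lattice $M'\subset \mathbb{D}(A_x[p^\infty])[1/p]$ stable under $F$ and $V$ and carrying the $\mathcal{O}$-action and the polarization up to scalar. By Dieudonn\'e theory over perfect rings (Gabber, Lau), $M'$ is the Dieudonn\'e module of a $p$-divisible group $X'$ over $R$, together with a quasi-isogeny $A_x[p^\infty]\dashrightarrow X'$. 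Serre--Tate then produces an abelian scheme $B$ with a quasi-isogeny $f:A_x\dashrightarrow B$, and $B$ carries the PEL structure transported along $f$.

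Next I would check that $B$, with its level structure $\nu_{\kappa^p}\circ\eta_x$ transported via $f$, defines a point $y$ of $\shgpinf$. The Kottwitz determinant condition for $\g'$ follows because $\mathcal{E}'$ has type $\mu'$. The PEL moduli problem for $\gxp$ is built from the twisted hermitian or symplectic data $V'$, and one has to identify the local invariants: at primes $\ell\neq p$ this is the condition $\nu_{\kappa^p}$, and at $p$ and at the self-dual lattice $V'_{\zp}$ it is the type of $\mathcal{E}'$. Uniqueness of $(y,f)$ is clear, since the lattice determines $X'$ and quasi-isogenies are rigid. Compatibility with the bottom row of \eqref{Eq:KeyDiagram} holds by construction.

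The main obstacle is the global step: a PEL moduli space is a union of $|\Sha^1(\mathbb{Q},\g')|$ Shimura varieties, and one must show that $B$ lies in the component of $\gxp$ rather than in a fake twist. Here $\Sha^1(\mathbb{Q},\g)=0$ enters. The hermitian space attached to $B$ has the same local invariants as $V'$ at every place: the places $\ell\neq p$ are handled by $\nu_{\kappa^p}$, $p$ by the shtuka, and $\infty$ by Assumption \ref{Assump:InfinityHodge}, using the positivity of the Rosati involution. Hence the two spaces are globally isomorphic. For type A I would use the Hasse principle for hermitian forms; for type C ($\g'$ is then a form of $\mathrm{GSp}$) there is no such issue.
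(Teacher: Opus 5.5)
Your outline is essentially the Dieudonn\'e-theoretic argument behind \cite[Proposition 7.3.9]{XiaoZhu}. The paper does not reprove this; it cites that result and adds only one remark: $\Sha^1(\mathbb{Q},\g)=0$ makes the PEL moduli spaces equal to the Shimura varieties rather than disjoint unions indexed by $\Sha^1$. So the route is the same.

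Your last paragraph, however, misplaces how that hypothesis enters.

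\begin{itemize}
\item \textbf{The proposed global comparison cannot be run.} In characteristic $p$ there is no global hermitian space over $\mathbb{Q}$ attached to $B$, since there is no Betti homology. Rosati positivity also gives no real signature invariant. So the Hasse-principle argument, as you state it, cannot be carried out.

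\item \textbf{It is also unnecessary.} The condition at $\infty$ is already encoded in the Kottwitz determinant condition, which you obtain from $\mathcal{E}'$ having type $\mu'$. The decomposition of Kottwitz's moduli space into $|\ker^1(\mathbb{Q},\g')|$ pieces concerns open and closed pieces of a smooth integral model. These pieces are distinguished on the generic fiber by $H_1(A_{\mathbb{C}},\mathbb{Q})$, not by an invariant of $B$.

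\item \textbf{What replaces it.} You silently switch from $\g$ to $\g'$; this is justified because $\ker^1(\mathbb{Q},G)\simeq\ker^1(\mathbb{Q},Z(\widehat{G}))^{D}$ depends only on the inner class. Hence $\Sha^1(\mathbb{Q},\g')=\Sha^1(\mathbb{Q},\g)=0$. The moduli space for the twisted datum $V'$ at hyperspecial level $K_p'$ is therefore exactly $\scrs_{K_p'}\gxp$. Being smooth, it agrees with the normalized Zariski closure the paper uses. Once $(B,\iota,\lambda,\eta_y)$ satisfies the moduli conditions you do verify (level structure via $\nu_{\kappa^p}$, self-dual lattice and determinant condition via $\mathcal{E}'$), it is a point of $\shgpinf$ with nothing further to check.
\end{itemize}

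A minor correction: $B$ is not produced by Serre--Tate. It is the quotient of $A_x$ by the finite locally free kernel of $p^N$ times the quasi-isogeny $A_x[p^\infty]\dashrightarrow X'$.
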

\begin{proof}
This is proved in \cite[Proposition 7.3.9]{XiaoZhu}. The assumption that $\Sha^1(\mathbb{Q},\g)=0$ ensures that the PEL type moduli spaces are equal to Shimura varieties rather than a finite disjoint union of Shimura varieties indexed by $\Sha^1(\mathbb{Q},\g)$.
\end{proof}

\subsubsection{} Now recall that there is a $2$-Cartesian square
\begin{equation}
    \begin{tikzcd}
        \shtlocgmumup \arrow{r} \arrow{d} & \shtlocgmu \arrow{d} \\
        \shtlocgmup \arrow{r} & \gisoc.
    \end{tikzcd}
\end{equation}
The maps $\shtlocgmu \to \gisocmu$ and $\shtlocgpmup \to \gpisocmup$ are v-surjective by \cite[Proposition 3.2.3]{DvHKZIgusaStacks}.  In particular, it follows that $\shtlocgmumup \to \shtlocgmu$ and $\shtlocgmumup \to  \shtlocgmup$ surject onto a closed union of Newton strata. 
\begin{Lem} \label{Lem:ComparisonConjectureSpecialFiber}
    Conjecture \ref{Conj:ConstructionWorksSpecialFiber} holds if and only if Conjecture \ref{Conj:ConstructionWorksSpecialFiberIgusa} holds.
\end{Lem}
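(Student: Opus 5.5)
The plan is to relate the two conjectures through the Cartesian square \eqref{Eq:IgsCorrToShCorr}, using the v-surjective quotient maps $\shginf \to \igsperf\gx$ and $\shgpinf \to \igsperf\gxp$ from \cite[Theorem 6.5.1]{DvHKZIgusaStacks}. The cube argument of Section \ref{subsub:IgusaConjToCorrConj} is essentially the template. In that cube, the top and bottom faces are the Cartesian squares expressing $\shginf = \igsperf\gx \times_{\gisocmu} \shtlocgmu$ (and the same for $\gxp$). Together with the fact that $\shtlocgmumup = \shtlocgmu \times_{\gisoc} \shtlocgpmup$, these show that Cartesianness of the squares in \eqref{Eq:KeyDiagram} is equivalent to a statement about $\igscorrperf$.

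\textbf{Conjecture \ref{Conj:ConstructionWorksSpecialFiberIgusa} implies Conjecture \ref{Conj:ConstructionWorksSpecialFiber}.} Assume $\igscorrperf \to \igsperf\gx_{-\mu'}$ and $\igscorrperf \to \igsperf\gxp_{-\mu}$ are isomorphisms. Then \eqref{Eq:IgsCorrToShCorr} identifies $\shcorr$ with $\shginf \times_{\igsperf\gx_{-\mu'}} \shgpinf$, via the isomorphism $\alpha$. Write $\igsperf\gx_{-\mu'} = \igsperf\gx \times_{\gisoc} \gpisocmumup$. Pasting with the Cartesian squares $\shginf=\igsperf\gx\times_{\gisocmu}\shtlocgmu$ and the corresponding one for $\gxp$ is then formal. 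It shows that the left square of \eqref{Eq:KeyDiagram} is Cartesian, i.e.
\begin{align}
\shcorr \simeq \shginf \times_{\shtlocgmu} \shtlocgmumup,
\end{align}
and symmetrically for the right square. Here I use the $2$-commutativity of \eqref{Eq:GStructureDiagramPerf}, which ensures that $\alpha$ lies over $\gpisocmumup$.

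\textbf{Conjecture \ref{Conj:ConstructionWorksSpecialFiber} implies Conjecture \ref{Conj:ConstructionWorksSpecialFiberIgusa}.} By Lemma \ref{Lem:CorrespondenceNewtonPerf}, $\igscorrperf \to \igsperf\gx_{-\mu'}$ is already a monomorphism, so it suffices to prove v-surjectivity. It is enough to check this after pulling back along the v-cover $\shginf\times_{\igsperf\gx}\igsperf\gx_{-\mu'} \to \igsperf\gx_{-\mu'}$, which is the locus of $\shginf$ lying over $\gisocmumup$. By \eqref{Eq:IgsCorrToShCorr}, the pullback of $\igscorrperf$ receives $\shcorr$, so it suffices to show that $\shcorr \to \shginf$ has image exactly this closed locus, v-surjectively. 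Cartesianness of the left square of \eqref{Eq:KeyDiagram} reduces this to v-surjectivity of $\shtlocgmumup \to \shtlocgmu \times_{\gisoc} \gpisocmumup$. That map is a base change of $\shtlocgpmup \to \gpisocmup$, which is v-surjective by \cite[Proposition 3.2.3]{DvHKZIgusaStacks}. The statement for $\gxp$ is symmetric.

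\textbf{Main obstacle.} The delicate point is bookkeeping of $2$-categorical fiber products with the identification $\beta_P$. In particular, one must check that the composite $\shcorr \to \shtlocgmumup$ in \eqref{Eq:KeyDiagram} agrees with the map induced through $\igscorrperf \to \gisoc$ in \eqref{Eq:GStructureDiagramPerf}. I would verify this directly from Definition \ref{Def:PerfectOmegaStructurePreserving}: the isomorphism $\varphi_{x,y}$ is precisely the datum identifying $\beta_P(\mathbb{L}_{\crys,x})$ with $\mathbb{L}_{\crys,y}$, which is what defines the point of $\shtlocgmumup$.
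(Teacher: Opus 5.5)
Your proposal is correct and follows the paper's proof. For Conjecture \ref{Conj:ConstructionWorksSpecialFiber} $\Rightarrow$ Conjecture \ref{Conj:ConstructionWorksSpecialFiberIgusa}, you combine the monomorphism of Lemma \ref{Lem:CorrespondenceNewtonPerf} with v-surjectivity of $\shcorr$ onto the closed union of Newton strata, obtained as a base change of \cite[Proposition 3.2.3]{DvHKZIgusaStacks}; for the converse, you use the formal cube argument of Section \ref{subsub:IgusaConjToCorrConj}. The only difference is that you write out the fiber-product bookkeeping (and the compatibility of the maps to $\shtlocgmumup$) that the paper leaves implicit.
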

\begin{proof}
If Conjecture \ref{Conj:ConstructionWorksSpecialFiber} holds then $\shcorr \to \shginf_{-\mu'}$ and $\shcorr \to \shgpinf_{-\mu}$ are v-surjective by the discussion above. This proves that the maps $\igscorrperf \to \igsperf \gx_{-\mu'}$ and $\igscorrperf \to \igsperf \gxp_{-\mu}$ are v-surjective, and thus must be isomorphisms by Lemma \ref{Lem:CorrespondenceNewtonPerf}. Conversely if Conjecture \ref{Conj:ConstructionWorksSpecialFiberIgusa} holds, then Conjecture \ref{Conj:ConstructionWorksSpecialFiber} holds formally, see Section \ref{subsub:IgusaConjToCorrConj}.
\end{proof}
\begin{Cor} \label{Cor:PerfectCorrespondenceWorks}
If Conjecture \ref{Conj:ConstructionWorksSpecialFiber} holds, then the natural maps $$\igscorrperf \to \igs \gx^{\mathrm{red}}_{\mu'}, \igs \gxp^{\mathrm{red}}_{\mu}$$ are isomorphisms of v-sheaves.
\end{Cor}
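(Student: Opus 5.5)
The plan is to combine Lemma \ref{Lem:ComparisonConjectureSpecialFiber} with a comparison between $\igscorrperf$ and the reduction of the perfect Igusa stacks. Assuming Conjecture \ref{Conj:ConstructionWorksSpecialFiber}, Lemma \ref{Lem:ComparisonConjectureSpecialFiber} gives Conjecture \ref{Conj:ConstructionWorksSpecialFiberIgusa}. Hence the forgetful maps $\igscorrperf \to \igsperf\gx_{-\mu'}$ and $\igscorrperf \to \igsperf\gxp_{-\mu}$ are isomorphisms. To obtain the stated corollary, it then remains to identify $\igsperf\gx_{-\mu'}$ with $\igs\gx^{\mathrm{red}}_{-\mu'}$, and similarly for $\gxp$.

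For this identification I would use \cite[Theorem 6.5.1]{DvHKZIgusaStacks}, recalled in Section \ref{Sub:ReductionIgusaStack}. It says that the v-quotient $\shginf \to \igs\gx^{\red}$ agrees with the v-quotient $\shginf \to \igsperf\gx$, giving a canonical isomorphism $\igsperf\gx \isom \igs\gx^{\red}$. This isomorphism is compatible with the maps to $\gisocmu$: both are induced from $\shginf \to \shtlocgmu \to \gisocmu$, and the reduction of $\overline{\pi}_{\HT}$ is identified with this map via Section \ref{subsub:ReductionShtukaBung}. Both closed substacks, $\igsperf\gx_{-\mu'}$ and $\igs\gx^{\red}_{-\mu'}$, are defined by pulling back the closed substack of $\gisoc$ cut out by $\beta_P^{-1}(\gpisocmumup)$. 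This closed substack is the reduction of the open $\bungmumup$ by Lemma \ref{Lem:NewtonStrata}, and reduction commutes with fiber products. Therefore the isomorphism restricts to $\igsperf\gx_{-\mu'} \isom \igs\gx^{\red}_{-\mu'}$. The same argument applies to $\gxp$, with $\beta_P$ replaced by the identity.

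Composing these isomorphisms with the ones from Conjecture \ref{Conj:ConstructionWorksSpecialFiberIgusa} gives the claim. As a consistency check one can also use Proposition \ref{Prop:ReductionCorrespondences} together with Proposition \ref{Prop:IndepenceII}: they identify $\igscorrperf$ with $(\igscorriotakappa)^{\red}$ as subsheaves of $\igs\gx^{\red}\times\igs\gxp^{\red}$. This shows the maps in question are exactly the reductions of the projections, so that nothing depends on $\Omega$.

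The only point needing care, and the main potential obstacle, is the compatibility of the identification $\igsperf\gx \simeq \igs\gx^{\red}$ with the maps to $\gisoc$, since the Newton-strata conditions must match. I expect this to follow directly from the 2-commutativity of the reduced diagram \eqref{Eq:TheDiagram} and the v-surjectivity of $\shginf \to \igsperf\gx$. Indeed, two maps out of $\igsperf\gx$ that agree after pulling back to $\shginf$ must agree.
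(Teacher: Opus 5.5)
Your proposal is correct and matches the paper's argument, which it leaves implicit. The corollary follows at once from Lemma \ref{Lem:ComparisonConjectureSpecialFiber}: its proof shows that $\igscorrperf \to \igsperf\gx_{-\mu'}$ and $\igscorrperf \to \igsperf\gxp_{-\mu}$ are v-surjective monomorphisms, hence isomorphisms. Combining this with the identification $\igsperf\gx \simeq \igs\gx^{\red}$ of \cite[Theorem 6.5.1]{DvHKZIgusaStacks}, compatibly with the Newton strata as you describe, gives the claim; your extra check via Proposition \ref{Prop:ReductionCorrespondences} is harmless but not needed.
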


\section{A study of the case in which \texorpdfstring{$\mathsf{P}_{\mathbb{A}_f}$}{P(Af)} is trivial}

In this section we specialize to the case that $\mathsf{P}_{\mathbb{A}_f}$ is trivial and that $G$ is an unramified reductive group. In Section \ref{Sub:ReductionToPerfect} We will deduce Conjectures \ref{Conj:MainII} and \ref{Conj:Main} from Conjecture \ref{Conj:MainIII}, a proof of which has been announced by Xiao--Zhu. In Section \ref{Sub:DeligneGroup}, we will verify that our correspondences are stable under the action of (a variant of) Deligne's $\mathcal{A}$-group.

\subsection{From Conjecture \ref{Conj:ConstructionWorksSpecialFiberIgusa} to Conjecture \ref{Conj:ConstructionWorks}} \label{Sub:ReductionToPerfect}Let the notation be as in Section \ref{sub:PerfectCorrespondence} above. Assume from now on that $G$ is an unramified reductive group and that $P$ is trivial. Suppose that $\Sha^1(\mathbb{Q},\g)=0$. We will now show that Conjecture \ref{Conj:ConstructionWorks} follows from Conjecture \ref{Conj:ConstructionWorksSpecialFiberIgusa}. 
\begin{Thm} \label{Thm:XZCorrespondences}
Assume that $G$ is an unramified reductive group, that $P$ is trivial and that $\Sha^1(\mathbb{Q},\g)=0$. If $\mathcal{G}$ and $\mathcal{G}'$ are reductive and if Conjecture \ref{Conj:ConstructionWorksSpecialFiber} holds, then the natural maps
\begin{align}
   \igscorr \to \igs \gx_{-\mu'}, \igscorr \to \igs \gxp_{-\mu}
\end{align}
are isomorphisms.
\end{Thm}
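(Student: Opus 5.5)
The plan is to transport the perfect statement through the functor $\mathcal{F} \mapsto \mathcal{F}^{\dcirc}$, using Theorem \ref{Thm:IgusaDCircII} and Proposition \ref{Prop:NewtonStrata} on both sides.

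First, by Lemma \ref{Lem:ComparisonConjectureSpecialFiber} and Corollary \ref{Cor:PerfectCorrespondenceWorks}, the hypothesis gives isomorphisms $\igscorrperf \isom \igsperf\gx_{-\mu'}$ and $\igscorrperf \isom \igsperf\gxp_{-\mu}$. Since $P$ is trivial, we may identify $\beta_P$ with the identity, so $\bungpmumup$ corresponds to the closed subset $Z = B(G,-\mu)\cap B(G,-\mu')$. The composite $\alpha\colon \igsperf\gx_{-\mu'} \isom \igsperf\gxp_{-\mu}$ is then an isomorphism of perfect v-stacks over $\gisoc$. Applying $\dcirc$ gives an isomorphism $(\igsperf\gx_{-\mu'})^{\dcirc} \isom (\igsperf\gxp_{-\mu})^{\dcirc}$. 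Both groups are unramified, since $G'\simeq G$ because $P$ is trivial, and $\mathcal{G},\mathcal{G}'$ are reductive. Theorem \ref{Thm:IgusaDCircII} and Proposition \ref{Prop:NewtonStrata} therefore identify the two sides with $\igs\gx_{-\mu'}$ and $\igs\gxp_{-\mu}$. This produces an abstract $\gafp$-equivariant isomorphism of the analytic open strata.

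This alone proves Conjecture \ref{Conj:Main}, but not the claimed statement about $\igscorr$. For that, I would show that the graph of the analytic isomorphism equals $\igscorr \subset \igs\gx\times\igs\gxp$. By Lemma \ref{Lem:CorrespondenceNewton}, both projections from $\igscorr$ are monomorphisms, so it suffices to show that $\igscorr \to \igs\gx_{-\mu'}$ is v-surjective.

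To do this, I would construct a map of presheaves $\rho\colon \operatorname{IgsCorr}^{\mathrm{pre}}_{\Omega,\mathsf{P}} \to (\igscorrpreperf)^{\dcirc,\pre}$ as in Construction \ref{Constr:ReductionMorphism}. One first uses formal smoothness, as in Lemma \ref{Lem:TrivialUntilt}, to reduce to the trivial untilt. The map then sends $(x,y,f)$ to $(x_{\red},y_{\red},f_{\red})$, and Lemma \ref{Lem:ReductionGStructure}, run with $\Omega$-structure in place of $\g$-structure, shows this is well defined. The prismatic input of Theorem \ref{Thm:ImaiKatoYoucis} applies equally to $\mathcal{G}'$, and triviality of $P$ reduces $\Omega$-preservation at $p$ to $\g$-structure preservation.

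Next I would prove that $\rho$ is an equivalence, copying the proofs of Proposition \ref{Prop:IgusaStackIsStrangeII}, Proposition \ref{Prop:Fullyfaithful} and Lemma \ref{Lem:ReductionShimuraSurjective}:
\begin{itemize}
\item full faithfulness comes from Lemma \ref{Lem:SpreadingOut};
\item for the $p$-part of the lifted quasi-isogeny, one reduces via Lemma \ref{Lem:GStructurePreservingClosedPerfectoid} to rank one points and then uses a section of $\mathcal{O}_C \to k$;
\item essential surjectivity comes from lifting $(x_0,y_0)$ via formal smoothness of both integral models and then lifting $f_0$ via Lemma \ref{Lem:SpreadingOut}.
\end{itemize}
One then sheafifies exactly as in the proof of Theorem \ref{Thm:IgusaDCircII}. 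This gives a commutative square in which $\igscorr \to \igs\gx$ corresponds to $(\igscorrperf)^{\dcirc}\to(\igsperf\gx)^{\dcirc}$, and the latter is an isomorphism onto the $Z$-stratum by the first step and Proposition \ref{Prop:NewtonStrata}.

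The main obstacle is the sheafification step, i.e.\ Question \ref{Question:VtopologyLemmaReduction}: one must show that the v-sheafification of $(\igscorrpreperf)^{\dcirc,\pre}$ agrees with $(\igscorrperf)^{\dcirc}$. My first approach would mimic Proposition \ref{Prop:DiamondCircVSurjectiveSpecialCase}, using the Cartesian square \eqref{Eq:IgsCorrToShCorr}, which expresses $\igscorrperf$ as a quotient of $\shcorr$. Granting Conjecture \ref{Conj:ConstructionWorksSpecialFiber}, $\shcorr \to \shginf$ is a base change of $\shtlocgmumup\to\shtlocgmu$. That map is ind-proper and finitely presented by Corollary \ref{Cor:IndRepresentableII}, and surjective onto the $Z$-locus. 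Its $\dcirc$ should then be v-surjective by the strict-comb argument of Lemma \ref{Lem:combhlocal} and Lemma \ref{Lem:ReductionETDStrictComb}, which would give the required v-surjectivity and complete the argument.
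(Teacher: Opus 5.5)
Your proposal is correct and has the same architecture as the paper's proof. The pre-level equivalence $\igscorrpre \simeq (\igscorrperfpre)^{\dcirc,\pre}$ is Lemma \ref{Lem:CompatibilityCorrespondencesReductionPre}. After sheafifying, the theorem reduces, via Corollary \ref{Cor:PerfectCorrespondenceWorks} and Proposition \ref{Prop:NewtonStrata}, to showing that the monomorphism $(\igscorrperfpre)^{\dcirc} \to (\igscorrperf)^{\dcirc}$ is v-surjective.

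You diverge only in that last step, and the paper's argument is shorter. Since \eqref{Eq:IgsCorrToShCorr} is Cartesian and $\dcirc$ commutes with fiber products, $\shcorr^{\dcirc} \to (\igscorrperf)^{\dcirc}$ is a base change of
\[
\shginf^{\dcirc}\times\shgpinf^{\dcirc} \to (\igsperf\gx)^{\dcirc}\times(\igsperf\gxp)^{\dcirc},
\]
which is v-surjective by Proposition \ref{Prop:DiamondCircVSurjectiveSpecialCase}. So at this step you need neither Conjecture \ref{Conj:ConstructionWorksSpecialFiber} nor the base change of $\shtlocgmumup \to \shtlocgmu$ with a second strict-comb argument. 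If you do keep your route, it needs the quasi-compact surjecting closed piece used in the proof of Proposition \ref{Prop:VSurjectiveNewtonMap}, or simply that proposition applied to $\mathcal{G}'$. The reason is that Lemma \ref{Lem:combhlocal} applies to perfectly finitely presented v-covers, not to ind-proper surjections.

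Both routes rely on one fact that you leave implicit and should state: $\shcorrpre$ is already a v-sheaf, so $\shcorrpre = \shcorr$. This holds by the ind-representability of sheaves of quasi-isogenies, exactly as for $\mathcal{R}^{\mathrm{perf}}$ in the proof of Theorem \ref{Thm:IgusaDCircII}; in the paper's notation it is the statement that the presheaf $S$ coincides with $T$. Without it, $\shcorr^{\dcirc}$ does not map to $(\igscorrperfpre)^{\dcirc}$, and v-surjectivity onto $(\igscorrperf)^{\dcirc}$ tells you nothing about the map you care about. This observation is precisely how the paper sidesteps Question \ref{Question:VtopologyLemmaReduction}.
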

\begin{Lem} \label{Lem:CompatibilityCorrespondencesReductionPre}
If $\mathcal{G}$ and $\mathcal{G}'$ are reductive, then there exists a unique dashed arrow making the following diagram commute
\begin{equation}
    \begin{tikzcd}
        \igscorrpre \arrow[r, dashed] \arrow{d} & (\igscorrperfpre)^{\diamond/\circ,\pre} \arrow{d} \\
        \igspre_{\Xi} \gx \times \igspre_{\Xi'}\gxp \arrow{r} & (\igspreperf_{\Xi}\gx)^{\diamond/\circ, \pre} \times (\igspreperf_{\Xi'}\gxp)^{\diamond/\circ, \pre}.
    \end{tikzcd}
\end{equation}
The dashed arrow is moreover an isomorphism.
\end{Lem}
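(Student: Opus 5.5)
The plan is to build the dashed arrow by imitating Construction \ref{Constr:ReductionMorphism} on correspondences, and then to deduce that it is an isomorphism from Theorem \ref{Thm:IgusaDCirc} for $\gx$ and for $\gxp$. Because $\mathcal{G},\mathcal{G}'$ are reductive, Lemma \ref{Lem:TrivialUntilt} and its proof allow us to replace $\igscorrpre$ by the equivalent full subpresheaf on which both untilts equal $R$. Take an object $(x,y,f)$ with $x:\spf R^+\to\scrshat_{K_p}\gx$, $y:\spf R^+\to\scrshat_{K_p'}\gxp$, and $f$ an $\Omega$-structure preserving formal quasi-isogeny. Send it to $(x_{\red},y_{\red},f_{\red})$ over $\spec R^+_{\red}$, and send morphisms $(g_1,g_2)$ to $(g_{1,\red},g_{2,\red})$. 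The morphisms stay $\g$- and $\g'$-structure preserving by Lemma \ref{Lem:ReductionGStructure}. Commutativity of the square is then automatic, and uniqueness follows from faithfulness: by Lemma \ref{Lem:SpreadingOut} both vertical arrows are faithful on morphisms, and objects are fixed by the requirement that the diagram commute.

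The one new point needed for well-definedness is an $\Omega$-analogue of Lemma \ref{Lem:ReductionGStructure}: $f_{\red}$ must be $\Omega$-structure preserving in the sense of Definition \ref{Def:PerfectOmegaStructurePreserving}. Away from $p$ this holds because $\spec R^+_{\red}\to\spec R^+/\varpi$ is a universal homeomorphism. At $p$, $P$ is trivial, so by the Remark after Definition \ref{Def:GtoGpStructure} we may identify $\beta_P,\beta_{P_V}$ with identities. The closedness statement, Lemma \ref{Lem:GStructurePreservingClosedPerfect} applied to $G'\to \mathrm{GL}_{V'}$, lets us assume $R^+=R^\circ$. The argument of Lemma \ref{Lem:ReductionGStructure} then goes through verbatim. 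Both $\pi_{\crys}$'s come from $F$-isocrystals by Theorem \ref{Thm:ImaiKatoYoucis} and Lemma \ref{Lem:PrismaticCompatibilityI}, and full faithfulness of $\delta$ for $R^+=R^\circ$ shows that $f$ is an isomorphism of $G'$-isocrystals over $R^+/\varpi$. Lemma \ref{Lem:ObviousHopefully} then transports this to $R^+_{\red}$. The symmetric statement, that an $\Omega$-preserving isogeny over $R^+_{\red}$ lifted through a section $s$ stays $\Omega$-preserving, is the analogue of Lemma \ref{Lem:SectionGStructure} and is proved the same way.

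For the isomorphism claim, both vertical arrows in the diagram are fully faithful by Lemmas \ref{Lem:CorrespondenceNewton} and \ref{Lem:CorrespondenceNewtonPerf} ($g_1$ determines $g_2$). The bottom arrow is an equivalence by Theorem \ref{Thm:IgusaDCirc} applied to $\Xi$ and $\Xi'$. So it suffices to show that an object over $R^+_{\red}$ lies in the essential image exactly when a lift does. Concretely, given $x,y$ over $R^+$ and $f_0:A_{x_{\red}}\dashrightarrow A_{y_{\red}}$ that is $\Omega$-preserving, we must show its unique lift $f$ over $R^+/\varpi$ (Lemma \ref{Lem:SpreadingOut}) is $\Omega$-preserving. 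Essential surjectivity on the underlying points is already provided by Lemma \ref{Lem:ReductionShimuraSurjective}.

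This last step is the main obstacle, and I would handle it as in Proposition \ref{Prop:Fullyfaithful}. Lemma \ref{Lem:GStructurePreservingClosedPerfectoid}, for $G'\to\mathrm{GL}_{V'}$ and using \eqref{Eq:CanonicalIsomorphismVectorBundlesTwist}, shows that the locus in $\spd(R,R^+)$ where $f$ is $\Omega$-preserving at $p$ is closed. It therefore suffices to check rank one geometric points $(C,\mathcal{O}_C)$. There $\mathcal{O}_C\to k$ has a section, and the $\Omega$-version of Lemma \ref{Lem:SectionGStructure} combined with Lemma \ref{Lem:IgusaStackIsStrange} replaces $x,y$ by $x_{\red}\circ s,y_{\red}\circ s$. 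For these the lift $\tilde f_0$ is $\Omega$-preserving, and by uniqueness of lifts it coincides with $f$ up to the canonical identifications.
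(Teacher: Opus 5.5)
Your proposal is correct and follows essentially the same route as the paper: uniqueness comes from the bottom arrow being an isomorphism (Theorem \ref{Thm:IgusaDCirc}) and the vertical arrows being monomorphisms. The rest reduces to showing that $f$ is $\Omega$-structure preserving if and only if $f_{\mathrm{red}}$ is, which you prove via the $\Omega$-analogues of Lemma \ref{Lem:ReductionGStructure} and Proposition \ref{Prop:Fullyfaithful}, using Theorem \ref{Thm:ImaiKatoYoucis}. One small correction: uniqueness rests on the right vertical arrow being a monomorphism (Lemma \ref{Lem:CorrespondenceNewtonPerf} plus $0$-truncatedness), not on Lemma \ref{Lem:SpreadingOut}, which concerns the horizontal reduction direction rather than the vertical arrows.
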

\begin{proof}
The uniqueness follows from the fact that the bottom arrow is an isomorphism by Theorem \ref{Thm:IgusaDCirc}, and that the vertical arrows are monomorphisms (Lemmas \ref{Lem:CorrespondenceNewton} and \ref{Lem:CorrespondenceNewtonPerf}). The lemma comes down to proving that for $(R,R^+) \in \perf$ with untilts $R^{\sharp_i}$ for $i=1,2$, morphisms $x:\spf R^{\sharp_1+} \to \scrshat_{K_p}\gx, y:\spf R^{\sharp_2+} \to \scrshat_{K_p'}\gxp$ and a formal quasi-isogeny $f:A_x \dashrightarrow A_y$, that $f$ is $\Omega$-structure preserving in the sense of Definition \ref{Def:GtoGpStructure} if and only if the reduction $f_0:A_{x_0} \dashrightarrow A_{y_0}$ is $\Omega$-structure preserving in the sense of Definition \ref{Def:PerfectOmegaStructurePreserving}. But this follows from Lemma \ref{Lem:ReductionGStructure} and Proposition \ref{Prop:Fullyfaithful}, using the fact that $\mathcal{G}$ and $\mathcal{G}'$ are reductive, and making use of Theorem \ref{Thm:ImaiKatoYoucis}. 
\end{proof}

\begin{proof}[Proof of Theorem \ref{Thm:XZCorrespondences}]
We use Lemma \ref{Lem:CompatibilityCorrespondencesReductionPre} together with sheafification to get the commutative diagram
\begin{equation}
    \begin{tikzcd}
        \igscorr \arrow[r] \arrow{d} & (\igscorrperfpre)^{\diamond/\circ} \arrow{d} \arrow{r} & (\igscorrperf)^{\diamond/\circ} \arrow{dl} \\
        \igs \gx \times \igs \gxp \arrow{r} & \igsperf\gx^{\diamond/\circ} \times \igsperf \gxp^{\diamond/\circ}.
    \end{tikzcd}
\end{equation}
By Corollary \ref{Cor:PerfectCorrespondenceWorks}, it suffices to show that the composition of the two horizontal arrows is an isomorphism. The top left horizontal arrow is an isomorphism by Lemma \ref{Lem:CompatibilityCorrespondencesReductionPre}. The right horizontal arrow is a monomorphism because it is a morphism of subsheaves of $\igsperf\gx^{\diamond/\circ} \times \igsperf \gxp^{\diamond/\circ}$, and so it suffices to show that it is an epimorphism. We can do this after passing to the fiber product over $\igsperf\gx^{\diamond/\circ} \times \igsperf \gxp^{\diamond/\circ}$ with $\shginf \times \shgpinf$ (this is a v-cover by Proposition \ref{Prop:VSurjectiveNewtonDiamondCirc}). Since fiber products commute with applying $\diamond/\circ$, we are considering the map $S^{\diamond/\circ} \to T^{\diamond/\circ}$, where $S \to T$ is the natural map
\begin{align}
    &S:=\igscorrperfpre \times_{\igspreperf_{\Xi}\gx \times \igspreperf_{\Xi'}\gxp} \left(\shginf \times \shgpinf\right) \to \\
    &\igscorrperf \times_{\igsperf \gx \times \igsperf \gxp} \left(\shginf \times \shgpinf\right)=:T.
\end{align}
Since sheafification commutes with fiber products, we see that $T$ is the sheafification of $S$. Thus to show that $S \to T$ is an isomorphism it suffices to show that $S$ is already a v-sheaf. This follows as in the proof of Theorem \ref{Thm:IgusaDCircII} from the ind-representability of the sheaf of quasi-isogenies between two abelian schemes.
\end{proof}

\subsection{Correspondences and Deligne group actions} \label{Sub:DeligneGroup} In this section we describe the (extended) Deligne groups of Xiao--Zhu, following forthcoming work of Xiao--Zhu \cite{XiaoZhu2}. We then show that $\igscorr$ is stable under the action of these Deligne groups in a suitable way.

Let $\g$ be an algebraic group over $\mathbb{Q}$ with center $\zg$. We let $\agext$ be the set of isomorphism classes of pairs $(\zeta, \gamma)$ where $\zeta$ is a $\zg$-torsor over $\mathbb{Q}$ such that $\zeta_{\mathbb{R}}$ is trivial, and where $$\gamma:\left(\zeta \times ^{\zg} \g\right) \otimes \af \to \g \otimes \af$$ is an isomorphism. There is an action of $\gaf$ on $\agext$ given by postcomposition. The Baer sum of $\zg$-torsors together with the product map on $\g \otimes \af$ defines a map
\begin{align}
    \agext \times \agext \to \agext
\end{align}
described in \cite{XiaoZhu2}, which turns $\agext$ into a group equipped with an action of $\g(\af)$. There is moreover a group homomorphism $\agext \to \Sha^1(\mathbb{Q},\g)$ sending $(\zeta, \gamma)$ to the isomorphism class of $\zeta \times ^{\zg} \g$, see \cite[Lemma 6.1.14]{XiaoZhu2}. We define $\ag$ to be the kernel of this morphism. It is fairly straightforward to see that this group can be identified with Deligne's group (see \cite[Section 6.1.13]{XiaoZhu2})
\begin{align}
    \left(\frac{\g(\af)}{\zgq} \rtimes \Gad(\mathbb{Q})^1 \right)/ \frac{\g(\mathbb{Q})}{\zgq},
\end{align}
introduced in \cite[2.1.13.1]{DeligneTravaux}. Here $\gad(\mathbb{R})^1$ is the image of $\g(\mathbb{R}) \to \Gad(\mathbb{R})$, and we will use the same superscript for other subgroups of $\Gad(\mathbb{R})$.

\subsubsection{} \label{subsub:TwistAG} If $\g'=\operatorname{Aut}_{\g}(\mathsf{P})$ for some $\g$-torsor $\mathsf{P}$, then there is a canonical morphism $\zg \to \g'$ identifying $\zg \isom \zg'$. Choose an isomorphism $\kappa:\mathsf{P} \otimes \af \to \g \otimes \af$, which induces an isomorphism
\begin{align}
    \rho_{\kappa}:\agext \to \agpext
\end{align}
by sending $(\xi, \gamma) \mapsto (\xi \times^{\zg} \zg', \chi_{\kappa} \circ \gamma)$. In particular, if $\Sha^1(\mathbb{Q},\g)=0$, then it induces an isomorphism $\ag \to \agp$.

\subsubsection{} Now let $\gx$ be a Shimura datum of Hodge type, fix a prime $p$ and $\mathbb{C} \isom \qpbar$. By the functoriality of Igusa stacks, see \cite[Theorem I]{DvHKZIgusaStacks}, the group $\ag$ acts on $\igs \gx$. We topologise $\ag$ following \cite[Section 4.2.4]{DvHKZIgusaStacksII}: We give $\gaf/\zgq$ the quotient topology, we give $\gadqone$ the discrete topology, the product $\gaf/\zgq \times \gadqone$ the product topology, and $\ag$ the quotient topology. We thus get an action of $\ul{\ag}$ on $\igs \gx$, which factors through 
\begin{align}
    \ul{\apg}= \ul{\ag}/\ul{G(\qp)}.
\end{align}

\subsubsection{} \label{subsub:Good} We say that an admissible Hodge datum $\Omega$ is good if there is an isomorphism $\kappa_p:P \to G$ such that the induced map $G \to G'$ satisfies $\kappa_p(\mathcal{G}(\zpbr))=\mathcal{G}'(\zpbr)$. Now let $\Omega$ be a good Hodge datum for $(\g, \x, \mathsf{P})$ and fix $\kappa_p$ as above, and write $\kappa=\kappa^p \times \kappa_p$. Then $\kappa$ defines a $\gaf$-equivariant continuous isomorphism $\ag \isom \agp$ and therefore also an isomorphism $ \ul{\apg} \isom  \ul{\apgp}$. 
\begin{Prop} \label{Prop:AgEquivariance}
Let $\Omega$ be a good Hodge datum for $\gx$ and $\mathsf{P}$ and fix $\kappa_p$ as above. If $\Sha^1(\mathbb{Q},\g)=0$ and $\zg$ is a torus with $\rH^1(\qp, Z_{G})=0$, then $\igscorr \subset \igs \gx \times \igs \gxp$ is stable under the diagonal action of $\ul{\apg}$.
\end{Prop}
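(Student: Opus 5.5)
The plan is to reduce the statement to a concrete moduli description of the action of $\ag$ on $\igs\gx$ and then to check stability generator by generator. Deligne's group is generated by the images of $\gaf/\zgq$ and of $\gadqone$. The image of $\g(\qp)$ acts trivially, so I only need the images of $\gafp$ and of $\gadqone$. For $\gafp$, stability was already shown above: $\igscorr$ is stable under $\ul{\gafp}$ acting through $1\times \chi_{\kappa^p}$, and $\chi_\kappa$ restricts to $\chi_{\kappa^p}$ on $\gafp$. So the real content is the action of an element $\gamma\in\gadqone$, or more precisely of a pair $(\zeta,\gamma)\in\agext$ with trivial image in $\Sha^1(\mathbb{Q},\g)$.

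For such an element I will use the torsor description $(\zeta,\gamma)$ of $\agext$ from \cite{XiaoZhu2}. Via $\rho_\kappa$ of Section \ref{subsub:TwistAG}, this element corresponds on the $\g'$ side to $(\zeta\times^{\zg}\zg',\chi_\kappa\circ\gamma)$. The action on $\igs\gx$ comes from the functoriality of Igusa stacks (\cite[Theorem I]{DvHKZIgusaStacks}) applied to the twisted Shimura datum. Concretely, on the moduli description via $\iota$, a point $(x,A_x,\eta_x)$ is sent to a point whose abelian variety is the twist of $A_x$ by $\zeta$. This twist is defined because $\zg$ acts on $A_x$ by quasi-isogenies through $\iota$, so one twists by the $\zg$-torsor in the isogeny category. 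The level structure of the new point is $\gamma\circ(\eta_x\otimes\zeta)$, and its $G$-bundle is $\mathbb{L}_{\crys,x}\times^{Z_G}\zeta_{\qp}$.

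The hypothesis $\rH^1(\qp,Z_G)=0$ makes $\zeta_{\qp}$ trivial. Together with goodness of $\Omega$, this lets me take the twist compatibly with the chosen integral models at $p$, so that the action is realized on $\scrs_{K_p}\gx$ rather than merely on generic fibers. Since $\zg$ is a torus, $\zeta$ is an honest torsor for a commutative group. The same $\zeta$ therefore twists $A_y$ for $y\in\igs\gxp$, because $\zg=\zg'$ canonically. Given an $\Omega$-structure preserving $f:A_x\dashrightarrow A_y$, I define the twisted map $f\times^{\zg}\zeta$. This requires $f$ to commute with the $\zg$-actions, which holds because $f$ is $\Omega$-structure preserving: its effect on Tate modules and on $G$-bundles intertwines the $Z$-actions.

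Next I must check that $f\times^{\zg}\zeta$ is again $\Omega$-structure preserving. Away from $p$ this follows from $\nu_{\kappa^p}\circ\gamma=(\chi_\kappa\circ\gamma)\circ\nu_{\kappa^p}$, which is the diagram recorded above for $\gafp$ extended to this twist. At $p$, the $G'$-bundle isomorphism $\varphi_{x,y}$ twists to $\varphi_{x,y}\times^{Z_G}\zeta_{\qp}$, and I need this twisted map to commute with $\beta_P$. That compatibility holds because $\beta_P$ is $Z_G$-equivariant, since $Z_G$ is central, and because we fix $\kappa_p$.

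The main obstacle is the previous step: matching the functoriality-defined action on $\igs\gx$ with this explicit twisting description on the presheaf level, compatibly for both $\gx$ and $\gxp$, and in particular identifying the effect on $\mathbb{L}_{\crys}$. To handle it, I would first reduce to the Siegel-type situation with Proposition \ref{Prop:IndepenceI}, which describes $\igscorr$ as a fiber product of Siegel correspondences and $\bun$ data. The action is easiest to identify there, and the fiber product is visibly preserved by the twist because $\bung\times_{\bungp}\bungp$ is stable under twisting by $\zeta_{\qp}$. Finally, v-sheafification preserves the resulting stable subpresheaf, which gives stability of $\igscorr$ under $\ul{\apg}$.
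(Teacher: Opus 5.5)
Your architecture is the paper's. You reduce to $\ul{\gafp}$ plus lifts of the discrete quotient, describe the remaining elements by the Kisin--Pappas twist $A_x\mapsto A_x^{\zeta}$ on both sides, twist $f$ to $f^{\zeta}$, and check the condition at $p$ after trivializing $\zeta\otimes\qp$. Your argument that $f$ commutes with the central action, via $V^p(f)$, is fine.

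The gap is the step you single out yourself and then do not close. You need the $G$-bundle of the twisted point to be the twisted $G$-bundle, $\mathbb{L}_{\crys,(\zeta,\gamma)x}\simeq\mathbb{L}_{\crys,x}^{\zeta}$; this is Claim~\ref{Claim:Twistingtorsor}. It is not part of any moduli description. The twisted point is a point of $\scrs_{K_p}\gx$, and its $G$-reduction of $\mathcal{E}(A_x^{\zeta})=(\zeta\otimes\qp)\times^{Z_G}\mathcal{E}(A_x)$ is defined through $\pi_{\crys}$. Nothing formal identifies that reduction with the twisted one.

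Your proposed fix, passing to the Siegel case via Proposition~\ref{Prop:IndepenceI}, cannot supply this, for three reasons.
\begin{enumerate}
\item The action of $\ag$ does not extend along $\iota$ to $\igs\gvx$. The center $\zg$ need not land in the scalars of $\gv$, and it does not in the examples of Section~\ref{Sec:Examples}. The twist also uses a $\zg$-action on $A_x$ that exists only for points with $\g$-structure.
\item The Siegel side sees only the $G_V$-bundle, not the $G$-reduction.
\item In Proposition~\ref{Prop:IndepenceI} the condition at $p$ is encoded exactly through $\igs\gx\to\bung$. Saying the fiber product with $\bung\times_{\bungp}\bungp$ is preserved because this stack is stable under twisting by $\zeta\otimes\qp$ presupposes that $\igs\gx\to\bung$ intertwines the $\ag$-action with twisting. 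That is the claim itself.
\end{enumerate}

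What is missing is the $2$-categorical equivariance of $\scrs_{K_p}\gx^{\diamond}\xrightarrow{\pi_{\crys}}\shtgmu\xrightarrow{\mathrm{BL}^{\circ}}\bung$ under the Deligne group action. The paper imports this from \cite[Corollary 4.2, Lemma 2.6.1]{vHSemplinerFixedPoints}. You might instead try to deduce it from the fact that $\pi_{\crys}$ is determined by its restriction to the generic fiber, where the action is visible on \'etale local systems.

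A second, smaller imprecision: you say that $\rH^1(\qp,Z_G)=0$ and goodness ``let me take the twist compatibly with the integral models''. This should be made into the paper's argument, in three steps.
\begin{enumerate}
\item Work with $\agint$ and prove $\ul{\agint}\to\ul{\apg}$ is surjective. This is done by gluing $\zeta$ with the trivial $\mathcal{Z}_{\zp}$-torsor along $\spec\mathbb{Q}\sqcup\spec\zp\to\spec\zlocp$ (Lemma~\ref{Lem:AgVsAgInt}).
\item Use goodness together with Lemma~\ref{Lem:IntegralTorsor} to arrange $\mathcal{G}'=\operatorname{Aut}_{\mathcal{G}}(\mathsf{P}_{\zlocp,\kappa_p})$.
\item Then one and the same $\mathcal{Z}_{\zlocp}$ is central in both integral models, so the twist of \cite[Section 4.5]{KisinPappas} acts on $\scrs_{K_p}\gx$ and $\scrs_{K_p'}\gxp$ simultaneously.
\end{enumerate}
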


\subsubsection{} In the proof of Proposition \ref{Prop:AgEquivariance}, we will work with a variant $\agint$ instead of $\ul{\apg}$. If we are given a reductive model $\mathcal{G}$ of $G$ over $\zp$, then this can be used to define a model $\mathcal{G}_{\zlocp}$ of $\mathsf{G}$ over $\zlocp$, see \cite[Proposition 3.14]{DanielsYoucis}. To define a $p$-integral variant of $\agext$ we will assume that $\zg$ is an unramified torus. 

\subsubsection{} We let $\mathcal{Z}_{\zlocp}$ be the Zariski closure of $\mathsf{Z}_{\g}$ in $\mathcal{G}_{\zlocp}$. This is a smooth group scheme over $\zlocp$ with connected special fiber, see \cite[Proposition 2.4.14]{KisinZhou}. There is then a $p$-integral variant $\agintext$ of $\agext$ given by the set of isomorphism classes of pairs $(\zeta, \gamma^p)$ where $\zeta$ is a $\mathcal{Z}_{\zlocp}$-torsor (in the \'etale topology) over $\zlocp$ such that $\zeta_{\mathbb{R}}$ is trivial, and where $$\gamma^p:\left(\zeta \times ^{\mathcal{Z}_{\zlocp}} \mathcal{G}_{\zlocp} \right) \otimes \afp \to \g \otimes \afp$$ is an isomorphism. Once again there is a short exact sequence 
\begin{align}
    1 \to \agint \to \agintext \to \Sha^1(\mathbb{Q},\g) \to 1.
\end{align}
In the short exact sequence
\begin{align}
    1 \to \mathcal{Z}_{\zlocp} \to \mathcal{G}_{\zlocp} \to \mathcal{G}^{\mathrm{ad}}_{\zlocp} \to 1,
\end{align}
the group $\mathcal{G}^{\mathrm{ad}}_{\zp}$ can be identified with a reductive model of $\gad$ corresponding to $\mathcal{G}$, see \cite[Proposition 2.4.14]{KisinZhou}. Then as before we may identify
\begin{align}
    \agint = \left(\g(\afp) \rtimes \mathcal{G}^{\mathrm{ad}}_{\zlocp}(\zlocp)^1 \right)/ \mathcal{G}_{\zlocp}(\zlocp).
\end{align}
We topologise $\agint$ similarly to how we topologised $\ag$: We give $\gafp / \mathcal{Z}_{\zlocp}(\zlocp)$ the quotient topology, we give $\mathcal{G}^{\mathrm{ad}}_{\zlocp}(\zlocp)^1$ the discrete topology, we give $\gafp / \mathcal{Z}_{\zlocp}(\zlocp) \times \mathcal{G}^{\mathrm{ad}}_{\zlocp}(\zlocp)^1$ the product topology and $\agint$ the quotient topology. Consider the natural map $\ul{\agint} \to \ul{\ag} \to \ul{\apg}$ of v-sheaves of groups. 
\begin{Lem} \label{Lem:AgVsAgInt}
    If $\zg$ is a torus such that $\rH^1(\qp, \zg)=0$, then the natural map $\ul{\agint} \to \ul{\apg}$ is a surjective quotient map.
\end{Lem}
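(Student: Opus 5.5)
We have to show that the composite $\ul{\agint} \to \ul{\ag} \to \ul{\apg}=\ul{\ag}/\ul{G(\qp)}$ is surjective and is a topological quotient map. The plan is to reduce this to a statement about abstract topological groups. A surjective continuous homomorphism of locally profinite groups that is also a quotient map gives a v-surjection of the associated v-sheaves $\ul{(-)}$. So it suffices to show that $\agint \to \ag/G(\qp)$ is surjective and that the induced bijection
\begin{align}
\agint/\ker \to \ag/G(\qp)
\end{align}
is a homeomorphism.

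\textbf{Surjectivity.} Write $\ag$ as Deligne's quotient $\left(\gaf/\zgq \rtimes \gadqone\right)/(\g(\mathbb{Q})/\zgq)$ and $\agint$ as the analogous quotient with $\g(\afp)$ and $\mathcal{G}^{\mathrm{ad}}_{\zlocp}(\zlocp)^1$.
\begin{itemize}
\item The $\gaf$-part is handled directly: modulo $G(\qp)$, the factor $\gaf$ is covered by $\gafp$.
\item The adjoint part needs more work. Take $\gamma\in\gadqone$. We want to modify it by an element of the image of $\g(\mathbb{Q})$ (or absorb the discrepancy into $G(\qp)$) so that it lands in $\mathcal{G}^{\mathrm{ad}}_{\zlocp}(\zlocp)^1$.
\end{itemize}
We use the exact sequence $1\to Z_G\to G\to G^{\mathrm{ad}}\to 1$ over $\qp$. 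The hypothesis $\rH^1(\qp,\zg)=0$ gives $G(\qp)\to G^{\mathrm{ad}}(\qp)$ surjective, so the image of $\gamma$ in $G^{\mathrm{ad}}(\qp)$ lifts to some $g_p\in G(\qp)$. The action of $(1,\gamma)$ then agrees with that of $(g_p^{-1},\gamma)$ modulo $G(\qp)$.

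Next we need $\gamma$ to be $p$-integral. Here I would use the torsor description of $\agintext$ and $\agext$ rather than the semidirect product. An element of $\ag$ is a pair $(\zeta,\gamma)$, and its class modulo $G(\qp)$ only remembers $\gamma^p$ together with $\zeta$. The real task is to extend the $\zg$-torsor $\zeta$ to a $\mathcal{Z}_{\zlocp}$-torsor over $\zlocp$. Since $\mathcal{Z}_{\zlocp}$ is smooth with connected special fibre, torsors over $\zp$ are trivial by Lang's theorem and Hensel's lemma. Hence a $\zg$-torsor over $\mathbb{Q}$ extends to $\zlocp$ exactly when its restriction to $\qp$ is trivial, and this holds because $\rH^1(\qp,\zg)=0$. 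Gluing via Beauville--Laszlo (the torsor over $\mathbb{Q}$ together with the trivial torsor over $\zp$) gives the extension. The map $\gamma$ prime to $p$ is unchanged. Finally $\Sha^1$ is respected by the compatible short exact sequences, which takes us from the extended groups to $\agint\to\ag$.

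\textbf{Quotient map.} It suffices to show that the map is open. Here $\gafp/\mathcal{Z}_{\zlocp}(\zlocp)\to \gaf/(\zgq\, G(\qp))$ is open, because it is induced by the open projection $\gaf\to\gafp$, and the discrete factor poses no problem. The quotient topologies are compatible because quotient maps by group actions are open.

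I expect the main obstacle to be the integrality step: matching the semidirect-product descriptions so that $\mathcal{G}^{\mathrm{ad}}_{\zlocp}(\zlocp)^1$ really surjects onto $\gadqone$ modulo the image of $\g(\mathbb{Q})G(\qp)$. The torsor-theoretic argument above, which uses Lang's theorem and $\rH^1(\qp,\zg)=0$, is what I would try first.
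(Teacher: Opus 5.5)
Your proposal is correct and follows essentially the paper's argument. In both, the crux is that a $\zg$-torsor over $\mathbb{Q}$ is trivial over $\qp$ because $\rH^1(\qp,\zg)=0$, so it can be glued to the trivial $\mathcal{Z}_{\zp}$-torsor to give a $\mathcal{Z}_{\zlocp}$-torsor over $\zlocp$. The paper reaches this step through the Deligne-type exact sequences and the induced map $\ker(\rH^1(\zlocp,\mathcal{Z}_{\zlocp})\to\rH^1(\zlocp,\mathcal{G}_{\zlocp}))\to\ker(\rH^1(\mathbb{Q},\zg)\to\rH^1(\mathbb{Q},\g))$, whereas you build a preimage directly in the torsor description of $\agintext$, so your preliminary lift $g_p$ of $\gamma$ is not needed.
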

\begin{proof}
    There is a commutative diagram of short exact sequences (we have not underlined the cokernels because they have the discrete topology).
    \begin{equation}
        \begin{tikzcd}
            1 \arrow{r} & \frac{\gafp}{\zglocp} \arrow[d] \arrow{r} & \ul{\agint} \arrow{d} \arrow{r} & \frac{\calgad_{\zlocp}(\zlocp)^1}{\mathcal{G}_{\zlocp}(\zlocp)/\zglocp} \arrow{r} \arrow{d} & 1 \\
            1 \arrow{r} & \ul{\gafp}/\ul{\zgq} \arrow{r} & \ul{\apg} \arrow{r} & \frac{\Gad(\mathbb{Q})^1}{\g(\mathbb{Q})/\zgq} \arrow{r} & 1.
        \end{tikzcd}
    \end{equation}
    To show that the third vertical arrow is surjective, we consider the following commutative diagram of short exact sequences
    \begin{equation}
        \begin{tikzcd}
            \mathcal{G}_{\zlocp}(\zlocp)/\zglocp \arrow{d} \arrow{r} & \calgad_{\zlocp}(\zlocp)^1 \arrow{d} \arrow{r} & \rH^1(\zlocp, \mathcal{Z}_{\zlocp}) \arrow{d} \arrow{r} & \rH^1(\zlocp, \mathcal{G}_{\zlocp}) \arrow{r} \arrow{d} & \cdots \\
        \g(\mathbb{Q})/\zgq \arrow{r} & \Gad(\mathbb{Q}) \arrow{r} & \rH^1(\mathbb{Q}, \zg) \arrow{r} & \rH^1(\mathbb{Q}, \g) \arrow{r} & \cdots
        \end{tikzcd}
    \end{equation}
We see that it suffices to prove that the natural map
\begin{align}
    \operatorname{Ker} \left( \rH^1(\zlocp, \mathcal{Z}_{\zlocp}) \to \rH^1(\zlocp, \mathcal{G}_{\zlocp}) \right) \to \operatorname{Ker} \left(H^1(\mathbb{Q}, \zg) \to \rH^1(\mathbb{Q}, \g)\right)
\end{align}
is surjective. This comes down to asking that any $\zg$-torsor $Q$ over $\spec \mathbb{Q}$ whose induced $\g$-torsor is trivial, admits a reduction to a $\mathcal{Z}_{\zlocp}$-torsor whose induced $\mathcal{G}_{\zlocp}$-torsor is trivial. Since any $\zg$-torsor over $\spec \mathbb{Q}$ becomes trivial over $\qp$ by assumption, we can build such a $\mathcal{Z}_{\zlocp}$-torsor by gluing $Q$ to the trivial $\mathcal{Z}_{\zp}$-torsor along the trivial $Z_{\g,\qp}$-torsor using fpqc descent along the cover $\spec \mathbb{Q} \coprod \spec \zp \to \spec \zlocp$ as in the proof of \cite[Proposition 3.14]{DanielsYoucis}.
\end{proof}
\begin{Lem} \label{Lem:IntegralTorsor}
    If $P$ is trivial, then there is a $\mathcal{G}_{\zlocp}$-torsor $\mathsf{P}_{\zlocp,\kappa_{p}}$ such that $\mathsf{P}_{\zlocp} \times \spec \mathbb{Q}  \simeq\mathsf{P}$.
\end{Lem}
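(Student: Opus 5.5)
The plan is to glue $\mathsf{P}$ over $\spec \mathbb{Q}$ to a trivial integral torsor over $\spec \zp$, exactly as in the proof of Lemma \ref{Lem:AgVsAgInt} (following \cite[Proposition 3.14]{DanielsYoucis}). Here $\mathcal{G}_{\zlocp}$ is the $\zlocp$-model of $\g$ obtained from the reductive model $\mathcal{G}$ over $\zp$, and $\kappa_p:P \to G$ is the fixed trivialization of $P=\mathsf{P}_{\qp}$.

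First I will make the descent setup precise. By the construction in \cite[Proposition 3.14]{DanielsYoucis}, $\mathcal{G}_{\zlocp}$ is characterized by the data $(\g,\mathcal{G},\text{identification } \g_{\qp}=\mathcal{G}_{\qp})$. The relevant gluing uses the fpqc cover $\spec \mathbb{Q} \sqcup \spec \zp \to \spec \zlocp$. Its fiber product terms are $\spec \mathbb{Q}$, $\spec \qp$, and $\spec(\zp\otimes_{\zlocp}\zp)$. The standard Beauville--Laszlo type statement gives an equivalence: affine (or quasi-affine) $\zlocp$-schemes correspond to triples $(X_{\mathbb{Q}}, X_{\zp}, \alpha: X_{\mathbb{Q}}\otimes\qp \cong X_{\zp}\otimes \qp)$. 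This is the usual glueing along $\zlocp \to \zp$, where $p$ is a non-zero-divisor and $\zlocp/p^n=\zp/p^n$.

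Next I will apply this to torsors. I take the triple $(\mathsf{P}, \mathcal{G}, \kappa_p:\mathsf{P}_{\qp}=P \isom G=\mathcal{G}_{\qp})$, with $\mathcal{G}$ viewed as the trivial $\mathcal{G}$-torsor. Compatible with the group glueing, this descends to an affine $\zlocp$-scheme $\mathsf{P}_{\zlocp,\kappa_p}$ with a $\mathcal{G}_{\zlocp}$-action, since the actions on each piece agree over $\qp$ because $\kappa_p$ is $G$-equivariant. By construction, its generic fiber is $\mathsf{P}$.

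Finally I will check that the descended object is an étale torsor. The action map $\mathcal{G}_{\zlocp}\times \mathsf{P}_{\zlocp,\kappa_p}\to \mathsf{P}_{\zlocp,\kappa_p}\times \mathsf{P}_{\zlocp,\kappa_p}$ is an isomorphism after the faithfully flat base change to $\mathbb{Q}\sqcup\zp$, hence an isomorphism. Flatness and surjectivity also descend. Because $\mathcal{G}_{\zlocp}$ is smooth, the fpqc torsor is then étale-locally trivial. The main technical point is justifying that glueing along $\spec\mathbb{Q}\sqcup\spec\zp$ is effective for affine schemes with group actions, and not just for modules. I would handle this by applying the module statement to coordinate rings together with their algebra and coaction structures, which is exactly how the group $\mathcal{G}_{\zlocp}$ itself was constructed.
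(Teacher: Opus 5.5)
Your proposal is correct and follows the paper's argument: the paper likewise takes the integral structure $(\kappa_p)^{-1}\mathcal{G}\subset \mathsf{P}\otimes\qp$, which is your trivial $\mathcal{G}$-torsor glued via $\kappa_p$, and glues it to $\mathsf{P}$ along $\spec\mathbb{Q}\sqcup\spec\zp\to\spec\zlocp$ as in \cite[Proposition 3.14]{DanielsYoucis}. Your additional checks fill in details the paper leaves implicit: Beauville--Laszlo gluing for the flat coordinate rings, $G$-equivariance of $\kappa_p$ so that the action glues, descent of the torsor condition, and étale local triviality from smoothness of $\mathcal{G}_{\zlocp}$.
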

\begin{proof}
   Our fixed isomorphism $\kappa_{p}:\mathsf{P} \otimes \qp \to G$ provides us with a $\mathcal{G}$-torsor $\mathsf{P}_{\zp} \subset \mathsf{P} \otimes \qp$, namely $(\kappa_p)^{-1} \mathcal{G}$. The torsor $\mathsf{P}_{\zlocp}$ can now be constructed using fpqc descent along the cover $\spec \mathbb{Q} \coprod \spec \zp \to \spec \zlocp$ as in the proof of \cite[Proposition 3.14]{DanielsYoucis}.
\end{proof}
\subsubsection{} Write $\mathsf{P}_{\zlocp,\kappa_{p}}$ as in the conclusion of Lemma \ref{Lem:IntegralTorsor}. \textbf{From now on, we will work under the assumption that $\mathcal{G}'=\operatorname{Aut}_{\mathcal{G}}(\mathsf{P}_{\zlocp, \kappa_p})$.} This gives us a natural map
\begin{align}
    \mathcal{Z}_{\zlocp} \to \mathcal{G}'_{\zlocp}
\end{align}
identifying its image with the center of $\mathcal{G}'_{\zlocp}$. If $\Sha^1(\mathbb{Q},\g)=0$, this combines with $\kappa$ to give a canonical isomorphism
\begin{align}
    \rho_{\kappa}:\agint \to \agpint.
\end{align}
\begin{Lem} \label{Lem:AgIntEquivariance}
Assume that $\mathcal{G}'=\operatorname{Aut}_{\mathcal{G}}(\mathsf{P}_{\zlocp, \kappa_p})$. If $\Sha^1(\mathbb{Q},\g)=0$ and $\zg$ is a torus such that $\rH^1(\qp, \zg)=0$, then the subsheaf $\igscorr \subset \igs \gx \times \igs \gxp$ is stable under the action of $\ul{\apg}$.
\end{Lem}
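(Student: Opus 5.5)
By Lemma \ref{Lem:AgVsAgInt}, the map $\ul{\agint} \to \ul{\apg}$ is a surjective quotient map. It is compatible with $\rho_{\kappa}$ and with the isomorphism $\ul{\apg} \isom \ul{\apgp}$. It therefore suffices to show that $\igscorr$ is stable under the diagonal action of $\ul{\agint}$ via $(1,\rho_{\kappa})$. Since $\igscorr$ is a subsheaf of $\igs \gx \times \igs \gxp$, stability is a v-local condition. So I will check it on the presheaf level, i.e. on $\igscorrpre$ against the presheaves $\igspre_{\Xi}\gx \times \igspre_{\Xi'}\gxp$.

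The $\ul{\gafp}$-part is already handled: the subgroup $\gafp/\zglocp$ acts through the $\gafp$-action, and we saw above that $\igscorr$ is stable under $\ul{\gafp}$ acting via $1 \times \chi_{\kappa^p}$. Using the exact sequence in the proof of Lemma \ref{Lem:AgVsAgInt}, it remains to handle lifts of elements of $\calgad_{\zlocp}(\zlocp)^1$. Equivalently, these are pairs $(\zeta,\gamma^p)$ with $\zeta$ a $\mathcal{Z}_{\zlocp}$-torsor whose pushout $\zeta \times^{\mathcal{Z}_{\zlocp}} \mathcal{G}_{\zlocp}$ is trivial.

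For such $(\zeta,\gamma^p)$, I recall how the action on $\igs\gx$ is built via functoriality (\cite[Theorem I]{DvHKZIgusaStacks}). Twisting by $\zeta$ gives an isomorphism of Shimura data $\gx \to (\zeta \times^{\zg}\g, \x)$, which is inner. Combined with the trivialization, this identifies $\igs\gx$ with itself. On $\igspre$ it is concretely given by twisting the abelian scheme $A_x$ by $\zeta$ through the $\mathcal{Z}_{\zlocp}$-action by prime-to-$p$ quasi-isogenies (as in \cite[Section 4.2]{DvHKZIgusaStacksII}). The level structure is then changed by $\gamma^p$, and the $G$-structure at $p$ is changed using the induced trivialization over $\zp$, which preserves $\mathcal{G}$.

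The same recipe applied to $\rho_{\kappa}(\zeta,\gamma^p)$ acts on $\igs\gxp$, using $\mathcal{G}'=\operatorname{Aut}_{\mathcal{G}}(\mathsf{P}_{\zlocp,\kappa_p})$ and $\mathcal{Z}_{\zlocp}\subset \mathcal{G}'_{\zlocp}$. The key point is that the center is shared: $\zeta$ twists $A_x$ and $A_y$ by the same central torsor, acting on $V$ and $V'=\beta_{\mathsf{P}}(V)$ compatibly. Hence an $\Omega$-structure preserving $f:A_x \dashrightarrow A_y$ twists to $f_\zeta: A_x^\zeta \dashrightarrow A_y^\zeta$. This $f_\zeta$ is $\Omega$-structure preserving away from $p$, because $\nu_{\kappa^p}$ intertwines $\gamma^p$ and $\chi_{\kappa^p}\circ\gamma^p$. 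It is $\Omega$-structure preserving at $p$, because the twist of $\mathbb{L}_{\crys}$ by $\zeta_{\qp}$ commutes with $\beta_P$. Here I would use $\rH^1(\qp,\zg)=0$ and the chosen $\kappa_p$ to identify $\zeta_{\qp}$ with the trivial torsor compatibly on both sides.

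The main obstacle is making this twisting description precise enough to check compatibility at $p$. The issue is that the action of elements of $\calgad_{\zlocp}(\zlocp)^1$ is only defined indirectly through functoriality. To handle it, I would realize the action by changing the Hodge datum $\Omega$ to a twisted one $\Omega_\zeta$, replacing $\iota$ by its twist by $\zeta$. I would then apply Proposition \ref{Prop:IndepenceI} to identify $\operatorname{IgsCorr}_{\Omega_\zeta,\mathsf{P}}$ with $\igscorr$, reducing the claim to independence of the Hodge datum.
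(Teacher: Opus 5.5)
Your outline matches the paper's. You reduce to $\ul{\agint}$ via Lemma~\ref{Lem:AgVsAgInt} and dispose of $\gafp$. You then describe the action of $(\zeta,\gamma^p)$ on points of the integral models by twisting $A_x$ and $A_y$ by the central torsor; the paper does this via \cite[Section 4.5]{KisinPappas} and \cite[Lemma 4.5.7]{DanielsYoucis}. Finally you twist $f$ to $f^{\zeta}$ and check the two halves of $\Omega$-structure preservation. Away from $p$ your argument is fine.

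\textbf{The gap is at $p$.} To show that $((\zeta,\gamma^p)x,(\zeta,\gamma^p)y,f^{\zeta})$ lies in $\igscorrpre$, you need an isomorphism $\beta_P(\mathbb{L}_{\crys,(\zeta,\gamma^p)x})\isom\mathbb{L}_{\crys,(\zeta,\gamma^p)y}$ inducing $\mathcal{E}(f^{\zeta})$. So you must know the crystalline $G$-bundle of the \emph{translated} point. The needed input is a canonical isomorphism $\mathbb{L}_{\crys,(\zeta,\gamma^p)x}\isom\mathbb{L}_{\crys,x}^{\zeta}$, where the right side is the twist of $\mathbb{L}_{\crys,x}$ by the $\zg$-torsor $\zeta\otimes_{\zlocp}\qp$. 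It must be compatible with the corresponding identification of $\mathcal{E}(A_x^{\zeta})$. In other words, you need a $2$-categorical equivariance of $\scrs_{K_p}\gx^{\diamond}\xrightarrow{\pi_{\crys}}\shtgmu\xrightarrow{\mathrm{BL}^{\circ}}\bung$ under $\calgad_{\zlocp}(\zlocp)^1$.

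This is Claim~\ref{Claim:Twistingtorsor} in the paper. It comes from \cite[Corollary 4.2]{vHSemplinerFixedPoints} for $\pi_{\crys}$ and \cite[Lemma 2.6.1]{vHSemplinerFixedPoints} for $\mathrm{BL}^{\circ}$. With it in hand, trivializing $\zeta\otimes_{\zlocp}\qp$ shows that $f^{\zeta}$ is structure preserving at $p$ if and only if $f$ is. Your sentence that ``the twist of $\mathbb{L}_{\crys}$ by $\zeta_{\qp}$ commutes with $\beta_P$'' silently presupposes this isomorphism.

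Your proposed workaround does not supply it, for two reasons.
\begin{enumerate}
\item Proposition~\ref{Prop:IndepenceI} compares Hodge data with the \emph{same} $\iota$ and $\kappa^p$; it only removes the dependence on $\mathcal{G},\mathcal{G}',V_{\zp},V'_{\zp}$. Replacing $\iota$ by a twist $\iota_{\zeta}$ changes $\iota$. The paper has no unconditional independence-of-$\iota$ statement: Proposition~\ref{Prop:IntegralFunctoriality} gives only an inclusion, upgraded to equality under Conjecture~\ref{Conj:ConstructionWorks}.
\item More fundamentally, even granting independence of $\iota$, the correspondence $\operatorname{IgsCorr}_{\mathsf{P},\iota,\kappa^p}$ is cut out by a fiber product with $\bung\times_{\bungp}\bungp$ over $\bung\times_{\bun_{G_{V'}}}\bungp$. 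Transporting that condition along the action of $(\zeta,\gamma^p)$ again requires knowing how the action interacts with $\igs\gx\to\bung$, i.e.\ the same compatibility.
\end{enumerate}
So the equivariance of $\pi_{\crys}$ and $\mathrm{BL}^{\circ}$ is an essential input that re-choosing the Hodge datum cannot bypass.
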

\begin{proof}
Using the surjectivity of $\ul{\agint} \to \ul{\apg}$, see Lemma \ref{Lem:AgVsAgInt}, it suffices to show that the subsheaf is $\ul{\agint}$-stable. It is clearly $\gafp$-stable, and thus it suffices to prove stability under the action of the abstract group $\agint$. Recall from \cite[Lemma 4.5.7]{DanielsYoucis} that $\ul{\agint}$ acts on $\scrs_{K_p}\gx$ and that $\ul{\agpint}$ acts on $\scrs_{K_p'}\gxp$. The action of $\agint$ on $\scrs_{K_p}\gx$ has the following moduli-theoretic description. Take $x:\spec R \to \scrs_{K_p}\gx$ giving rise to $(A_x, \lambda_x, \eta_x)$ and $(\xi, \gamma^p) \in \agint$. Then by \cite[Lemma 4.5.2]{KisinPappas} there is a natural map $\mathcal{Z}_{\zlocp} \to \operatorname{End}(A_x)$ of group schemes over $\zlocp$. We may then form the abelian scheme up to prime-to-$p$ isogeny $A^{\xi}$ of \cite[Lemma 4.5.2]{KisinPappas}, which comes equipped with a weak polarization $\lambda_x^{\xi}$, see \cite[Lemma 4.4.8]{KisinPappas}, and a prime-to-$p$ level structure $\eta_{x}^{\gamma}$, see \cite[Lemma 4.5.4]{KisinPappas}. These constructions describe a map 
\begin{align} \label{Eq:ActionMap}
    \agint \times \scrs_{K_p}\gx &\to \scrs_{M_p}\gvx,
\end{align}
which factors uniquely through $\scrs_{K_p}\gx \to \scrs_{M_p}\gvx$ by \cite[Lemma 4.5.7]{KisinPappas}. In other words, there is a canonical isomorphism $(A_{x}^{\xi}, \lambda_x^{\xi}, \eta_x^{\gamma}) = (A_{(\xi, \gamma)x}, \lambda_{(\xi, \gamma)x}, \eta_{(\xi, \gamma)x})$. The same process describes the action of $\agint=\agpint$ on $\scrs_{K_p'}\gxp$. 

Now given $(x,y,f) \in \igscorrpre(R,R^+)$, the functoriality of the twisting construction in \cite[Section 4.5]{KisinPappas} gives a formal quasi-isogeny
\begin{align}
    f^{\xi}:A_{x}^{\xi} \dashrightarrow A_{y}^{\xi}
\end{align}
compatible with $\eta_{x}^{\gamma}$ and $\eta_{y}^{\gamma}$. To show that $((\xi, \gamma)x, (\xi, \gamma)y, f^{\xi})$ is a point of $\igscorrpre(R,R^+)$, it suffices to show that $f^{\xi}$ is $\g$-structure preserving (or equivalently $\Omega$-structure preserving). For this, we need the following claim
\begin{Claim} \label{Claim:Twistingtorsor}
    There is a canonical isomorphism
    \begin{align}
        \mathbb{L}_{(\xi, \gamma)x,\crys} \isom \mathbb{L}_{x, \crys}^{\xi}
    \end{align}
    where $\mathbb{L}_{x, \crys}^{\xi}$ is the twist of the $G$-bundle $\mathbb{L}_{x, \crys}$ over $X_{\spd(R,R^+)}$ by the $\zg$-torsor $\xi \otimes_{\zlocp} \qp$.
\end{Claim}
\begin{proof}
The statement encodes a $2$-categorical functoriality under the action of $\agint$ of the map
\begin{align}
    \scrs_{K_p}\gx^{\diamond} \to \shtgmu \to \bun_{G}
\end{align}
and in fact it is enough to consider the action of $\calgad_{\zlocp}(\zp)^{1}$. The $2$-categorical functoriality under the action of $\calgad_{\zlocp}(\zp)^{1}$ of the first map is \cite[Corollary 4.2]{vHSemplinerFixedPoints}, and for the second map is \cite[Lemma 2.6.1]{vHSemplinerFixedPoints}. 
\end{proof}
We will now deduce from the claim that $f^{\xi}$ is $\g$-structure preserving at $p$. Choosing a trivialization of the $\zg$-torsor $\xi \otimes_{\zlocp} \qp$, we see using the claim that the condition that $f^{\xi}$ is $\g$-structure preserving is equivalent to the claim that $f$ is $\g$-structure preserving. A similar argument shows that $f^{\xi}$ is $\g$-structure preserving away from $p$. 
\end{proof}
\begin{proof}[Proof of Proposition \ref{Prop:AgEquivariance}]
    The proposition is a direct consequence of Lemma \ref{Lem:AgIntEquivariance} and Lemma \ref{Lem:AgVsAgInt} if we choose $\Omega$ such that $\mathcal{G}'=\operatorname{Aut}_{\mathcal{G}}(\mathsf{P}_{\zlocp, \kappa_p})$, which is possible by Lemma \ref{Lem:IntegralTorsor} and the assumption that $\Omega$ is good.
\end{proof}

\section{Correspondences in some abelian type cases} The goal of this section is to prove Conjecture \ref{Conj:Main} for some Shimura varieties of abelian type under the assumption that the conjecture holds for some auxiliary Hodge type Shimura variety with good properties, see Theorem \ref{Thm:CorrespondenceAdjoint}. Such auxiliary Hodge type Shimura varieties will be shown to exist in Section \ref{Sec:Examples}.

\subsection{Correspondences in some abelian type cases} \label{sub:AdjointCorrespondencesI} To push forward exotic Hecke correspondence from Shimura varieties of Hodge type to Shimura varieties of abelian type, we have to use our understanding of the $\ag$-action on these correspondences proved in Proposition \ref{Prop:AgEquivariance}. 

\subsubsection{} We first establish some definitions to simplify the notation. 
\begin{Def} \label{Def:CorrespondenceDatum}
    A \emph{correspondence datum} is a quadruple $\Theta_{2}=(\g_2,\x_2,\mathsf{P}_2,\x_{2}')$ where $\gxtwo$ is a Shimura datum of abelian type such that $\Sha^1(\mathbb{Q},\g_2)=0$ and such that $G_2=\g_{2} \otimes \qp$ splits over an unramified extension, where $\mathsf{P}_2$ is a $\g_2$-torsor trivial over $\afp$, and where $\x_2'$ is a Shimura datum for $\g_{2}'=\operatorname{Aut}_{\g_2}(\mathsf{P}_2)$ such that Assumption \ref{Assump:Infinity} holds. A morphism $\Theta_{2}=(\g_2, \x_2, \mathsf{P}_2, \x_{2}') \to (\g_3, \x_3, \mathsf{P}_3,\x_3')=\Theta_{3}$ is a pair $(f,\lambda)$ where $f:\gxtwo \to \gxthree$ is a morphism of Shimura data, and where $\lambda$ is an isomorphism $\mathsf{P}_2 \times^{\g_2} \g_3 \xrightarrow{\sim} \mathsf{P}_3$.
\end{Def}
\begin{Def} \label{Def:HodgeTypeLifting}
A Hodge--type lifting of a correspondence datum $\Theta_{2}=(\g_{2}, \x_{2}, \mathsf{P}_2, \x_2')$ is a tuple $\Sigma=(\g, \x, \sigma, \mathsf{P}, \omega,\x')$, where $\gx$ is a Shimura datum of Hodge type such that $G$ splits over an unramified extension, where $\sigma:\gx \to \gxtwo$ is a morphism of Shimura data that is an ad-isomorphism, where $\mathsf{P}$ is a $\g$-torsor that is trivial over $\afp$, and where $\omega:\mathsf{P} \times^{\g} \g_2 \to \mathsf{P}_2$ is an isomorphism of $\g_2$-torsors, and where $\x'$ is a Shimura datum for $\g'=\operatorname{Aut}_{\g}(\mathsf{P})$ such that the induced map $\sigma:\g' \to \g'_2$ underlies a morphism of Shimura data $\gxp \to \gxtwop$.

Given a morphism $(f,\lambda):\Theta_{2}=(\g_2, \x_2, \mathsf{P}_2, \x_{2}') \to (\g_3, \x_3, \mathsf{P}_3,\x_3')=\Theta_3$ of correspondence data and Hodge--type liftings $\Sigma=(\g, \x, \sigma, \mathsf{P}, \omega, \x')$ of $(\g_2, \x_2, \mathsf{P}_2, \x_{2}')$ and $\Sigma_1=(\g_1, \x_1, \sigma_1, \mathsf{P}_1, \omega_1, \x_1')$ of $(\g_3, \x_3, \mathsf{P}_3,\x_3')$, a morphism $\Sigma \to \Sigma_{1}$ over $(f,\lambda)$ is a pair $(g, \lambda_1)$, where $g:\gx \to \gxone$ is a morphism of Shimura data fitting into a commutative diagram  
\begin{equation}
    \begin{tikzcd}
        \gx \arrow{d}{\sigma} \arrow{r}{g} & \gxone \arrow{d}{\sigma_1} \\
        \gxtwo \arrow{r}{f} & \gxthree, 
    \end{tikzcd}
\end{equation}
such that $\gx \to \gxone$ maps $\zg$ to $\mathsf{Z}(\g_{1})$, and where $\lambda_1:\mathsf{P} \times^{\g} \g_1 \to \mathsf{P}_1$ is an isomorphism such that the following diagram commutes
\begin{equation}
    \begin{tikzcd}
        \mathsf{P} \arrow{r}{\lambda_1} \arrow{d}{\omega} & \mathsf{P}_1 \arrow{d}{\omega_1} \\
        \mathsf{P}_2 \arrow{r}{\lambda} & \mathsf{P}_3,
    \end{tikzcd}
\end{equation}
and such that $g':\g' \to \g_1'$ underlies a morphism of Shimura data $\gxp \to \gxonep$.
\end{Def}
\begin{Def} \label{Def:ExcellentHodgeTypeLifting}
Let $(\g_2, \x_2, \mathsf{P}_2, \x_{2}')$ be a correspondence datum such that $\g_2$ is quasi-split and such that $\mathsf{P}_2$ is trivial over $\qp$. A Hodge type lifting $\Sigma=(\g, \x, \sigma, \mathsf{P}, \omega,\x')$ of $(\g_2, \x_2, \mathsf{P}_2, \x_{2}')$ is called \emph{excellent} if the following conditions hold: 
\begin{enumerate}
    \item The group $\Sha^1(\Q,\g)$ is trivial.

    \item The torsor $\mathsf{P}$ is trivial over $\qp$.

    \item The center $\mathsf{Z}_{\g}$ is connected, and $H^1(\qp, \mathsf{Z}_{\g})=0$.

    \item The set $B(G,-\mu) \cap B(G,-\mu')$ is nonempty.

    \item There exists a Hodge embedding $\iota:\gx \to \gvx$ such that $\iota':\g' \to \g_{V'}$ satisfies Assumption \ref{Assump:InfinityHodge}.
\end{enumerate}
\end{Def}

\subsubsection{} Let $(\g_2, \x_2, \mathsf{P}_2, \x_{2}')$ be a correspondence datum such that $G_2$ is quasi-split and such that $\mathsf{P}_2$ is trivial over $\qp$, let $\gxtwop$ be as before. Choose an isomorphism $\mathbb{C} \isom \qpbar$.\footnote{Throughout this section we will implicitly use this isomorphism to define places above $p$ of reflex fields of Shimura data, and hence Igusa stacks.} Let $\Sigma$ be an excellent Hodge type lifting of $\Theta_2$, and choose a good Hodge datum $\Omega$ for $\gx$ and $\mathsf{P}$.\footnote{This is always possible, because if we choose $\iota:\gx \to \gvx$ such that $\iota':\g' \to \g_{V'}$ satisfies Assumption \ref{Assump:InfinityHodge}, then we can simply take $\mathcal{G}'$ corresponding to $\mathcal{G}$ under a choice of $\kappa_p$.}

\subsubsection{} Recall the correspondence $\igscorr \subset \igs \gx \times \igs \gxp$ from Definition \ref{defn:Corr}. It follows from Proposition \ref{Prop:AgEquivariance}, whose assumptions hold because $\Sigma$ is an excellent Hodge type lifting and $\Omega$ is good, that $\igscorr \subset \igs \gx \times \igs \gxp$ is stable under the diagonal action of $\ul{\ag}$ (under the natural identification of $\ag$ with $\agp$ induced by $\kappa$, see Section \ref{subsub:TwistAG}). We have the following lemma.
\begin{Lem} \label{Lem:AdjointIgusaTorsor}
The natural map $\igs \gx \times \ul{\agtwop} \to \igs \gxtwo$ is a torsor for $\ul{\apg}$ under the diagonal action.
\end{Lem}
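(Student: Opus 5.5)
The plan is to deduce this from the corresponding statement for (perfectoid) Shimura varieties at infinite level, using the $2$-Cartesian diagrams of Theorem \ref{Thm:IgusaMain} for $\gx$ and $\gxtwo$. The key input is the classical description of connected components of abelian type Shimura varieties, as in \cite{DvHKZIgusaStacksII}. The abelian type Igusa stack $\igs\gxtwo$ is constructed there (cf. \cite[Section 4.2]{DvHKZIgusaStacksII}) as the quotient
\begin{align}
    \igs\gxtwo = \left(\igs\gx \times \ul{\agtwop}\right)/\ul{\apg},
\end{align}
where $\ul{\apg}$ acts on $\igs\gx$ via the $\ag$-action coming from functoriality, and on $\ul{\agtwop}$ by right translation through $\ag \to \mathcal{A}(\g_2)$. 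So the first step is to recall this construction precisely: which version of the $\mathcal{A}$-groups is used, and that the map of the lemma is the quotient map. With that in hand, the lemma reduces to showing that the action is free and that the quotient map is a v-torsor. That is, the map
\begin{align}
    \ul{\apg} \times \igs\gx \times \ul{\agtwop} \to \left(\igs\gx \times \ul{\agtwop}\right) \times_{\igs\gxtwo} \left(\igs\gx \times \ul{\agtwop}\right)
\end{align}
should be an isomorphism, and $\igs\gx \times \ul{\agtwop} \to \igs\gxtwo$ should be v-surjective.

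To check this, I would pull back along the v-cover $\bung \to$ (with $\operatorname{Gr}$) or, more concretely, use that both Igusa stacks sit in Cartesian squares over $\operatorname{Gr}_{G,-\mu}\to \bungmu$ and $\operatorname{Gr}_{G_2,-\mu_2}\to\bun_{G_2,-\mu_2}$. Because $G \to G_2$ is an ad-isomorphism, the Grassmannians agree and the relevant strata of $\bun$ correspond via Lemma \ref{Lem:Intersection}-type Kottwitz arguments. The torsor statement is v-local on $\igs\gxtwo$, so it can be checked after base change to $\operatorname{Gr}_{G_2,-\mu_2}$. There it becomes the statement that
\begin{align}
    \msh\gx^{\circ,\lozenge}\times \ul{\agtwop} \to \msh\gxtwo^{\circ,\lozenge}
\end{align}
is a $\ul{\apg}$-torsor. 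Note that $G(\qp)$ acts on both sides compatibly, which accounts for the quotient $\ag/G(\qp)=\apg$. The generic-fiber statement follows from Deligne's description
\begin{align}
    \msh\gxtwo = \left(\msh\gx^{+}\times \mathcal{A}(\g_2)\right)/\mathcal{A}(\g)^{\circ},
\end{align}
extended to infinite level. The freeness of the action on connected components uses $\Sha^1(\mathbb{Q},\g)=0$ and the connectedness of $\zg$ (excellence (1),(3)), which make $\ag\to\mathcal{A}(\g_2)$ injective with the right image.

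The main obstacle I expect is the topological and sheaf-theoretic bookkeeping. One must show that the quotient topology on $\apg$ is compatible with the v-sheaf quotients, so that the abstract group-theoretic torsor statement upgrades to a statement about $\ul{\apg}$. One must also verify that the v-sheafified quotient defining $\igs\gxtwo$ agrees with the one in \cite{DvHKZIgusaStacksII} rather than merely mapping to it. To handle this I would first try to quote the corresponding statement directly from \cite[Section 4.2]{DvHKZIgusaStacksII}, where the abelian type Igusa stack is built exactly as such a quotient. I would then only verify that our choice of $\gx$, with $\zg$ connected and $\Sha^1(\mathbb{Q},\g)=0$, satisfies their hypotheses.
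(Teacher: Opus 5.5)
Your fallback, quoting the abelian-type construction of \cite{DvHKZIgusaStacksII}, is exactly the paper's proof, which is a citation of \cite[Remark 4.3.11]{DvHKZIgusaStacksII}. The problem is that your verification targets the wrong hypotheses. Connectedness of $\zg$ and $\Sha^1(\mathbb{Q},\g)=0$ are not the hypotheses at stake here; they are used elsewhere, e.g.\ in Proposition \ref{Prop:AgEquivariance}.

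The one hypothesis of the cited remark that is not met is that $\gx$ and $\gxtwo$ have the same local reflex field. This need not hold for the liftings to which the lemma is applied, since the Hodge-type datum contains the CM torus $\mathsf{T}_{\mL}$, which can enlarge the reflex field. The paper's only added input is the observation that the arguments of loc.\ cit.\ do not use this assumption. Your proposal never notices the issue. Your sketch in fact assumes it silently when you say the Grassmannians agree: in general one only has $\operatorname{Gr}_{G,-\mu}=\operatorname{Gr}_{G_2,-\mu_2}\times_{\spd E_2}\spd E$.

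The self-contained sketch would also fail as written, in two places.
\begin{enumerate}
\item The map $\ag\to\mathcal{A}(\g_2)$ is not injective. When $\g_2$ is adjoint, as in the type $B$, $C$ applications, the image of $\zg(\afp)$ in $\apg$ is nontrivial but maps trivially to $\agtwop$. So the torsor property cannot be obtained from freeness on the $\ul{\agtwop}$ factor. It has to come from how $\igs\gxtwo$ is built as a quotient in \cite{DvHKZIgusaStacksII}.
\item Base change along $\operatorname{Gr}_{G_2,-\mu_2}\to\bun_{G_2,-\mu_2}$ does not turn $\igs\gx$ into $\msh\gx^{\circ,\lozenge}$. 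The stack $\igs\gx$ lives over $\bungmu$, and $\bungmu\to\bun_{G_2,-\mu_2}$ is a bijection on points (Lemma \ref{Lem:Intersection}) but not an isomorphism of stacks. On the neutral stratum, for instance, it is $[\ast/\ul{G(\qp)}]\to[\ast/\ul{G_2(\qp)}]$.
\end{enumerate}
Consequently, your displayed statement with $\ul{\apg}$ and $\ul{\agtwop}$ acting on infinite-level Shimura varieties is not what the base change produces. What Deligne's description actually gives involves the full groups $\ag$ and $\mathcal{A}(\g_2)$, over $\spd E$. Passing from that to the prime-to-$p$ statement on Igusa stacks requires exactly the analysis of $\bun_G\to\bun_{G_2}$, and of the discrepancy between $G(\qp)$ and $G_2(\qp)$, that your sketch only gestures at.
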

\begin{proof}
This is explained in \cite[Remark 4.3.11]{DvHKZIgusaStacksII}. It is assumed there that $\gx$ and $\gxtwo$ have the same local reflex fields, but the arguments work in general. 
\end{proof}
Note that this action factors through $\underline{\apg}$, and so it makes sense to define
\begin{align}
    \igscorrad:=\igscorr \times^{\underline{\apg}} \underline{\apgtwo},
\end{align}
which comes equipped with a natural map to $\igs \gxtwo \times \igs \gxtwop$. 
\begin{Lem} \label{Lem:TwoCategoryTheoryCube}
In the diagram
\begin{equation}
    \begin{tikzcd}[column sep=0.5]
        & \igs \gx \times \igs \gxp \arrow{rr} \arrow{dd} & & \bun_{G} \times \bun_{G'} \arrow{dd} \\
        \igscorr \arrow{rr} \arrow{dd} \arrow{ur}  && \bun_{G} \times_{\bun_{G'}} \bun_{G} \arrow{dd} \arrow{ur} \\
&\igs \gxtwo \times \igs \gxtwop \arrow{rr} & & \bun_{G_2} \times \bun_{G_2'}  \\
        \igscorrad \arrow{ur} \arrow[rr, dashed] & & \bun_{G_2} \times_{\bun_{G_2'}} \bun_{G_2'} \arrow{ur}, 
    \end{tikzcd}
\end{equation}
the dashed arrow exists making the diagram commute.
\end{Lem}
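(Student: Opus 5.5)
The dashed arrow should come from the universal property of the contracted product: a map out of $\igscorrad=\igscorr \times^{\ul{\apg}} \ul{\apgtwo}$ is the same as a $\ul{\apg}$-equivariant map out of $\igscorr \times \ul{\apgtwo}$, where $\ul{\apg}$ acts diagonally. I will therefore construct a map
\begin{align}
    \igscorr \times \ul{\apgtwo} \to \bun_{G_2} \times_{\bun_{G_2'}} \bun_{G_2'}
\end{align}
by first projecting to $\igscorr$ and then taking the composite $\igscorr \to \bun_{G} \times_{\bun_{G'}} \bun_{G} \to \bun_{G_2}\times_{\bun_{G_2'}}\bun_{G_2'}$. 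The first arrow here is the top front arrow of the cube, coming from \eqref{Eq:GStructureDiagram} after sheafification. The second is induced by $\sigma:G \to G_2$, by $\sigma':G'\to G_2'$, and by the compatibility $\beta_{P_2}\circ \sigma = \sigma' \circ \beta_P$. This compatibility holds because $\omega$ identifies $\mathsf{P}\times^{\g}\g_2$ with $\mathsf{P}_2$.

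The substance of the proof is equivariance of this map. The factor $\ul{\apgtwo}$ acts trivially on the target, so it suffices to show that the composite $\igscorr \to \bun_{G_2}\times_{\bun_{G_2'}}\bun_{G_2'}$ is invariant under the $\ul{\apg}$-action on $\igscorr$. This action exists by Proposition \ref{Prop:AgEquivariance}. By Lemma \ref{Lem:AgVsAgInt} we may check invariance for $\ul{\agint}$, and then separately for $\gafp$ and for the discrete group $\calgad_{\zlocp}(\zlocp)^1$.
\begin{itemize}
    \item The $\gafp$-part is immediate, since $\overline{\pi}_{\HT}$ is $\gafp$-invariant.
    \item For $(\xi,\gamma)$, Claim \ref{Claim:Twistingtorsor} says that acting by $(\xi,\gamma)$ twists $\mathbb{L}_{\crys}$ by the $\zg$-torsor $\xi\otimes\qp$. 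This twist is trivial because $\rH^1(\qp,\zg)=0$.
    \item The twisting also disappears after pushing along $\sigma$. The centre $\zg$ maps into $\mathsf{Z}_{\g_2}$, and the $\ag$-action on $\igs\gx$ is compatible with the $\mathcal{A}(\g_2)$-action on $\igs\gxtwo$, which is trivial on $\bun_{G_2}$ after passing to $\apgtwo$.
\end{itemize}

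The main obstacle is that the target is a stack, so invariance must be given as $2$-categorical data: a coherent system of $2$-isomorphisms $g^{*}F\simeq F$ satisfying the cocycle condition, not a mere equality. I will produce these $2$-isomorphisms from the canonical isomorphisms of Claim \ref{Claim:Twistingtorsor} together with a chosen trivialization of $\xi\otimes\qp$. The delicate point is that this choice is not canonical. I will argue that after pushing out to $G_2$ and $G_2'$ the dependence on the trivialization disappears, since changes of trivialization act through $\zg(\qp)$, which maps into $\mathsf{Z}_{\g_2}(\qp)$. The required coherence should then follow from the $2$-categorical functoriality statements of \cite{vHSemplinerFixedPoints} cited in the proof of the claim.

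Once equivariance is established, the map descends to $\igscorrad$. Commutativity of the remaining faces holds by construction: the front face by definition of the descended map, the bottom face because $\igs\gx\times\ul{\apgtwo}\to\igs\gxtwo$ is compatible with $\overline{\pi}_{\HT}$ (Lemma \ref{Lem:AdjointIgusaTorsor} and the functoriality of Igusa stacks), and the right face because pushforward along $\sigma$ commutes with the fiber-product projections.
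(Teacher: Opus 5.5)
Your proposal is essentially the paper's proof: Claim~\ref{Claim:Twistingtorsor} together with $\rH^1(\qp,\zg)=0$ (which the paper phrases as surjectivity of $G(\qp)\to\gad(\qp)$) makes $\igscorr\to\bun_G\times_{\bun_{G'}}\bun_G$ invariant under $\ul{\apg}$, and the dashed arrow is then obtained by composing with the pushout along $\sigma$ and descending along the contracted product. One caution about your coherence remark, a point the paper does not address at all: changes of trivialization of $\xi\otimes\qp$ act by central automorphisms, and pushing out to $G_2$ removes this ambiguity only when the image of $\zg(\qp)$ in $\mathsf{Z}_{\g_2}(\qp)$ is trivial (e.g.\ $\g_2$ adjoint, as in all the applications), so the fact that $\zg(\qp)$ ``maps into $\mathsf{Z}_{\g_2}(\qp)$'' is not sufficient on its own.
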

\begin{proof}
There is a natural map $\ag \to \gad(\qp)$ such that $\pi_{\mathrm{HT}}$ is equivariant for the induced $\ul{\ag}$ action on the flag variety, see \cite[Section 4.3.6]{DvHKZIgusaStacksII}. It follows from the $\ul{G(\qp)}$-equivariance of the Beauville--Laszlo map that $\igs \gx \to \bun_{G}$ is $\apg$-equivariant via a natural map $\apg \to \frac{\gad(\qp)}{\operatorname{Im}(G(\qp) \to \g(\qp)}$. Since $H^1(\qp, \zg)$ is assumed to be trivial, the natural map $G(\qp) \to \gad(\qp)$ is surjective, and in fact $\agp$ acts trivially on $\bun_G$ in the $2$-categorical sense. \smallskip 

It follows from Claim \ref{Claim:Twistingtorsor} that $\igscorr \to \bun_{G} \times_{\bun_{G'}} \bun_{G}$ is $\ul{\apg}$-equivariant for the trivial action of $\apg$ on the target, and so the same is true for the composition with $\bun_{G} \times_{\bun_{G'}} \bun_{G} \to \bun_{G_2} \times_{\bun_{G_2'}} \bun_{G_2'}$. The lemma follows immediately from this.  
\end{proof}
It follows from Lemma \ref{Lem:TwoCategoryTheoryCube} that the natural maps $\igscorrad \to \igs \gxtwo, \igs \gxtwop$ factor through $\igs \gxtwo_{-\mu'_2}, \igs \gxtwop_{-\mu_2}$ respectively. 
\begin{Cor} \label{Cor:AdjointCorrespondence}
Let $\Theta_2=(\g_2, \x_2, \mathsf{P}_2, \x_{2}')$ be as above. Let $\Sigma=(\g, \x, \sigma, \mathsf{P}, \omega,\x')$ be an excellent Hodge type lifting of $\Theta_2$ together with a good Hodge datum $\Omega$ for $\gx$ and $\mathsf{P}$ such that $\mathcal{G}$ is reductive and chosen to match $\mathcal{G}'$ under $\kappa_p$. If Conjecture \ref{Conj:XiaoZhu} holds for $\gx,\Omega$, then the natural maps $\igscorrad \gxtwo \to \igs \gxtwo_{-\mu'_2}, \igs \gxtwop_{-\mu_2}$ are isomorphisms.
\end{Cor}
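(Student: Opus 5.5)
The plan is to first settle the Hodge type statement for $\gx$, then transport it to $\gxtwo$ by contracting along $\ul{\apg} \to \ul{\apgtwo}$.

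\textbf{Step 1 (Hodge type).} Excellence of $\Sigma$ gives $\Sha^1(\mathbb{Q},\g)=0$ and $P$ trivial. We have chosen $\mathcal{G}$ reductive, which is possible because $G$ splits over an unramified extension, and $\mathcal{G}'$ matches it under $\kappa_p$. Conjecture \ref{Conj:XiaoZhu} for $\gx,\Omega$ is Conjecture \ref{Conj:ConstructionWorksSpecialFiber} in this setting. Theorem \ref{Thm:XZCorrespondences} then says that
\begin{align}
\igscorr \to \igs \gx_{-\mu'} \quad \text{and} \quad \igscorr \to \igs \gxp_{-\mu}
\end{align}
are isomorphisms. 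Both maps are $\ul{\apg}$-equivariant: Proposition \ref{Prop:AgEquivariance} gives stability of $\igscorr$ under the diagonal action, and the projections are equivariant because $\ag \cong \agp$ via $\kappa$.

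\textbf{Step 2 (contract).} Apply $-\times^{\ul{\apg}}\ul{\apgtwo}$ to these equivariant isomorphisms. This gives isomorphisms
\begin{align}
\igscorrad \isom \igs \gx_{-\mu'} \times^{\ul{\apg}} \ul{\apgtwo}.
\end{align}
By Lemma \ref{Lem:AdjointIgusaTorsor}, $\igs \gx \times^{\ul{\apg}} \ul{\apgtwo} \isom \igs\gxtwo$, and the same holds for the primed data. It remains to identify the open substacks: the contracted product of $\igs\gx_{-\mu'}$ should be $\igs\gxtwo_{-\mu'_2}$.

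\textbf{Step 3 (Newton strata).} The substack $\igs\gx_{-\mu'}$ is the pullback of $\bungpmumup$ along $\igs\gx \to \bun_{G'}$. Since $\bun_G \to \bun_{G_2}$ is compatible with $\beta_P,\beta_{P_2}$ and $G \to G_2$ is an ad-isomorphism, Lemma \ref{Lem:Intersection} shows that $B(G,-\mu)\cap\beta_P^{-1}B(G',-\mu')$ maps bijectively onto the corresponding set for $G_2$. This uses nonemptiness, which is excellence condition (4). Hence $\igs\gx_{-\mu'}$ is exactly the preimage of $\igs\gxtwo_{-\mu'_2}$ under $\igs\gx \to \igs\gxtwo$. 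Openness is stable under the $\ul{\apg}$-action, which acts trivially on $\bun_G$ by the argument of Lemma \ref{Lem:TwoCategoryTheoryCube}. So the contraction of $\igs\gx_{-\mu'}$ is the preimage of $\igs\gxtwo_{-\mu'_2}$ under the torsor map, which equals $\igs\gxtwo_{-\mu'_2}$. The symmetric argument handles the primed side.

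\textbf{Step 4 (compatibility).} Finally, check that the isomorphisms obtained agree with the natural maps $\igscorrad \to \igs\gxtwo_{-\mu'_2}, \igs\gxtwop_{-\mu_2}$ constructed after Lemma \ref{Lem:TwoCategoryTheoryCube}. This is by construction.

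The main obstacle is Step 3. One must verify carefully that the map $\igs\gx\times\ul{\agtwop}\to\igs\gxtwo$ respects the Newton loci, which requires compatibility of $\pi_{\mathrm{HT}}$ with the $\ag$-action modulo $G(\qp)$. The key input is that the map on Kottwitz sets is injective on the relevant fibre.
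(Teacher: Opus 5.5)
Your proposal is correct and follows the paper's proof. Theorem \ref{Thm:XZCorrespondences} handles the Hodge type correspondence $\igscorr$. Lemma \ref{Lem:AdjointIgusaTorsor} together with Lemma \ref{Lem:Intersection} then shows that $\igs \gx_{-\mu'} \times \ul{\apgtwo} \to \igs \gxtwo_{-\mu_2'}$ and its primed analogue are $\ul{\apg}$-torsors, and contracting gives the result; your Step 3 just spells out the Newton-strata bookkeeping (injectivity on Kottwitz fibres, and stability of the open substacks under the Deligne group actions) that the paper leaves implicit.
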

\begin{proof}[Proof of Corollary \ref{Cor:AdjointCorrespondence}]
By Theorem \ref{Thm:XZCorrespondences}, the result holds for $\igscorr$ under our assumptions. To show that the result holds for $\igscorrad \gxtwo$, we need to show that the natural maps
\begin{align}
    \igs \gx_{-\mu'} \times \apgtwo \to \igs \gxtwo_{-\mu_2'} \\
    \igs \gxp_{-\mu} \times  \apgtwo \to \igs \gxtwop_{-\mu_2}
\end{align}
are torsors for $\ul{\apg}$. This follows from Lemma \ref{Lem:AdjointIgusaTorsor} and Lemma \ref{Lem:Intersection}.
\end{proof}
\begin{Lem} \label{Lem:AdjointCorrespondenceFunctorialityI}
Let $\Theta_2$ be as above and let $\Sigma_4 \to \Sigma$ be a morphism of excellent Hodge type liftings of $\Theta_2$. There is an inclusion $\igscorradfour \gxfour \subset \igscorrad \gxtwo$ of subsheaves of $\igs \gxtwo \times \igs\gxtwop$. 
\end{Lem}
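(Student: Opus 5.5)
The plan is to push the functoriality of Proposition \ref{Prop:IntegralFunctoriality} through the construction $\igscorrad = \igscorr \times^{\underline{\apg}} \underline{\apgtwo}$. Write $\Sigma_4=(\g_4,\x_4,\sigma_4,\mathsf{P}_4,\omega_4,\x_4')$ and let the morphism $\Sigma_4 \to \Sigma$ over $\operatorname{id}_{\Theta_2}$ be $(g,\lambda_1)$. Here $g:(\g_4,\x_4) \to \gx$ satisfies $\sigma\circ g=\sigma_4$ and maps $\mathsf{Z}_{\g_4}$ to $\zg$, and $\lambda_1:\mathsf{P}_4\times^{\g_4}\g \to \mathsf{P}$ induces $g'_{\lambda_1}$ on the twisted data.

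First I fix good Hodge data $\Omega_4$ and $\Omega$. The compatibility of $\lambda_1$ with $\omega_4,\omega$ lets me take $\kappa^p$ for $\Sigma$ to be induced from $\kappa_4^p$ via $g$. Proposition \ref{Prop:IntegralFunctoriality} then supplies the commutative diagram \eqref{Eq:IntegralFunctorialityDiagram}, with $\operatorname{IgsCorr}_{(\iota_3,\kappa_4^p)}$ mapping to $\igscorr$ over $(g,g'_{\lambda_1})$. Next I need to identify $\operatorname{IgsCorr}_{(\iota_3,\kappa_4^p)}$ with $\operatorname{IgsCorr}_{\Omega_4}$. Since Conjecture \ref{Conj:ConstructionWorks} is not assumed, I cannot simply invoke the last clause of that proposition. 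Instead I use that both are subsheaves of $\igs(\g_4,\x_4)\times\igs(\g'_4,\x'_4)$ and that the proposition's construction gives an inclusion $\operatorname{IgsCorr}_{(\iota_3,\kappa_4^p)} \subset \operatorname{IgsCorr}_{(\iota_4,\kappa_4^p)}$. The direction I actually need is the following: every point of $\operatorname{IgsCorr}_{\Omega_4}$ should map into $\igscorr$. I would obtain this by observing that a formal quasi-isogeny that is $\Omega_4$-structure preserving for $\iota_4$ is also structure preserving for $\iota_3=(\iota_4, \iota\circ g)$. At $p$ this holds because the structure is a $G_4$-bundle isomorphism and pushes forward along $g$. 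Away from $p$ it holds by the $\afp$-compatibility of $\kappa^p$. The $V$-component arises from the Kisin–Pappas/Deligne construction, which takes a $\g_4$-abelian variety to the associated $\g$-abelian variety. I would carry out this step at the level of the pre-sheaves $\igscorrpre$ and then sheafify.

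With a map $\operatorname{IgsCorr}_{\Omega_4} \to \igscorr$ compatible with $(g,g'_{\lambda_1})$ in hand, I then check equivariance for $\underline{\mathcal{A}^p(\g_4)} \to \underline{\apg}$. This follows from the functoriality of the $\mathcal{A}$-group actions on Igusa stacks, together with the assumption that $g$ maps centers to centers. That assumption ensures the map of Deligne groups is defined and compatible with $\rho_\kappa$ of Section \ref{subsub:TwistAG}. Inducing up then gives
\begin{align}
\operatorname{IgsCorr}_{\Omega_4}\times^{\underline{\mathcal{A}^p(\g_4)}}\underline{\apgtwo} \to \igscorr\times^{\underline{\apg}}\underline{\apgtwo},
\end{align}
and this map is compatible with the maps to $\igs\gxtwo\times\igs\gxtwop$. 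The compatibility holds because $\sigma\circ g=\sigma_4$ and because the Igusa stack maps are functorial with uniqueness, by \cite[Theorem I]{DvHKZIgusaStacksII}. Both sides are subsheaves of $\igs \gxtwo\times\igs\gxtwop$. This should follow from Lemma \ref{Lem:AdjointIgusaTorsor} and the monomorphism property of Lemma \ref{Lem:CorrespondenceNewton}, since a torsor-induced subsheaf of a torsor-induced sheaf stays a subsheaf. A map of subsheaves over a common base is an inclusion, which proves the lemma.

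The main obstacle is the second step: showing that $\Omega_4$-structure preservation implies structure preservation for the direct-sum embedding $\iota_3$, in particular the away-from-$p$ compatibility of $\nu_{\kappa^p}$ on the $V_2$-summand and the goodness and admissibility of the required auxiliary Hodge datum for $\iota_3$. If this direct approach fails, I would instead use the $\iota_3$ version as an intermediary on both sides and argue only with the maps appearing in \eqref{Eq:IntegralFunctorialityDiagram}.
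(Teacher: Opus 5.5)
\textbf{The overall route is right, but your second step does not work.} Your architecture matches the paper's, whose proof is the single sentence that the lemma follows from Proposition~\ref{Prop:IntegralFunctoriality}. Two of your steps are fine and are treated this way in Lemma~\ref{Lem:AdjointCorrespondenceFunctorialityII}: the $\mathcal{A}$-equivariance step (via Proposition~\ref{Prop:AgEquivariance} and the centre-to-centre condition), and the subsheaf step (via Lemma~\ref{Lem:AdjointIgusaTorsor} together with the monomorphism of Lemma~\ref{Lem:CorrespondenceNewton}).

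The failure is your attempt to show $\operatorname{IgsCorr}_{(\iota_4,\kappa_4^p)}\subset \operatorname{IgsCorr}_{(\iota_3,\kappa_4^p)}$ by saying that an $\Omega_4$-structure preserving quasi-isogeny ``is also structure preserving for $\iota_3$''. A point of the $\iota_3$-correspondence needs a formal quasi-isogeny between the abelian schemes attached to $V_4\oplus V$. So besides $f$, you must produce a second quasi-isogeny between the abelian schemes obtained via $\iota\circ g$ and $\iota'\circ g'_{\lambda_1}$. Structure preservation is a condition on a quasi-isogeny you already have; it cannot create one. Pushing the $G_4'$-bundle isomorphism and the prime-to-$p$ identification forward along $g$ only gives an isomorphism of vector bundles on the Fargues--Fontaine curve and of $V^p$. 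It does not give a quasi-isogeny of abelian schemes over $R^+/\varpi$; that would be a Tate/motivic-type statement. The Kisin--Pappas construction used in the paper (Lemma~\ref{Lem:AgIntEquivariance}) only twists by $\mathcal{Z}_{\zlocp}$-torsors, so it does not transport quasi-isogenies to another Hodge embedding. The inclusion you want is exactly the independence-of-Hodge-embedding statement that the paper obtains only conditionally, in the last clause of Proposition~\ref{Prop:IntegralFunctoriality}, under Conjecture~\ref{Conj:ConstructionWorks}. Unconditionally, only $\operatorname{IgsCorr}_{(\iota_3,\kappa_4^p)}\subset\operatorname{IgsCorr}_{(\iota_4,\kappa_4^p)}$ is available.

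\textbf{The fix is to avoid this step.} The notation $\igscorradfour$ suppresses a choice of good Hodge datum for $\Sigma_4$, so choose it to contain $\iota_3=(\iota_4,\iota\circ g)$ and $\kappa_4^p$.
\begin{itemize}
\item Admissibility of $\iota_3'$ follows from that of $\iota_4'$ and $\iota'$, the fact that $g'_{\lambda_1}$ is a morphism of Shimura data, and \cite[Lemma 7.1.1]{DvHKZIgusaStacks}, exactly as $\iota_3$ is built in Proposition~\ref{Prop:IntegralFunctoriality}.
\item Goodness is arranged as in the footnote in Section~\ref{sub:AdjointCorrespondencesI}, using that $\mathsf{P}_4$ is trivial at $p$.
\end{itemize}
Proposition~\ref{Prop:IndepenceI} then identifies the resulting correspondence with $\operatorname{IgsCorr}_{(\iota_3,\kappa_4^p)}$. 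The lower arrow of \eqref{Eq:IntegralFunctorialityDiagram} maps it into $\igscorr$ over $(g,g'_{\lambda_1})$, with no comparison to $\iota_4$ needed; after that, your equivariance and subsheaf arguments go through unchanged. This is your stated fallback, and it is what the paper's one-line proof amounts to. For an arbitrary Hodge datum on $\Sigma_4$ not containing $\iota_3$, the paper's results do not yield the inclusion without the conjecture. So drop the direct approach and run the fallback.
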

\begin{proof}
This is a direct consequence of Proposition \ref{Prop:IntegralFunctoriality}.
\end{proof}

\subsubsection{Functoriality of abelian type correspondences} Let $\Theta_2$ and $\Theta_3$ as in Section \ref{sub:AdjointCorrespondencesI}. Suppose that we are given a morphism $(f,\lambda):\Theta_2 \to \Theta_3$. We would like to prove a version of Proposition \ref{Prop:IntegralFunctoriality} for the correspondences defined in Section \ref{sub:AdjointCorrespondencesI}. We let $\Sigma$ be an excellent Hodge type lifting of $\Theta_2$ and $\Sigma_1$ be an excellent Hodge type lifting of $\Theta_3$. 
\begin{Lem} \label{Lem:AdjointCorrespondenceFunctorialityII}
If there is a morphism $(g, \lambda_1):\Sigma \to \Sigma_1$ over $(f, \lambda)$, then there exists a (necessarily unique) commutative diagram of v-sheaves
\begin{equation}
    \begin{tikzcd}
        \igscorrad \arrow{d}{f_{\lambda}} \arrow{r} & \igs\gxtwo \times \igs\gxtwop \arrow{d}{f \times f_{\lambda}} \\
        \igscorradone \arrow{r} & \igs \gxthree \times \igs \gxthreep
    \end{tikzcd}
\end{equation}
Moreover, this diagram fits into a three-dimensional $2$-commutative diagram with the diagram \eqref{Eq:DiagramCompatibilityBunG}. 
\end{Lem}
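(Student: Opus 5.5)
The plan is to build the map of $\igscorrad$ into $\igscorradone$ by first producing the corresponding map on the Hodge type correspondences, and then inducing it along the quotient defining $\igscorrad$. The morphism $(g,\lambda_1):\Sigma \to \Sigma_1$ is a morphism of Hodge type Shimura data $g:\gx \to \gxone$ together with $\lambda_1:\mathsf{P}\times^{\g}\g_1 \isom \mathsf{P}_1$. Choose Hodge embeddings $\iota,\iota_1$ and prime-to-$p$ trivializations $\kappa^p,\kappa_1^p$ compatible with $\lambda_1$. Proposition \ref{Prop:IntegralFunctoriality}, with the auxiliary embedding $\iota_3$, then gives a map
\begin{align}
    \igscorr \gx \cong \operatorname{IgsCorr}_{(\iota_3,\kappa^p)}\gx \to \operatorname{IgsCorr}_{(\iota_1,\kappa_1^p)}\gxone \cong \igscorrone\gxone
\end{align}
over $(g,g'_{\lambda_1})$. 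It is compatible with the maps to $\bun_G\times\bun_{G'}$ and $\bun_{G_1}\times\bun_{G_1'}$. The identifications with $\operatorname{IgsCorr}_\Omega$ come from Proposition \ref{Prop:IndepenceI}, using good Hodge data $\Omega,\Omega_1$. To make $\iota_3$ admissible, I would invoke Lemma \ref{Lem:TwistingHodgeEmbedding} / Assumption \ref{Assump:InfinityHodge} for direct sums of embeddings.

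Next, check that this map is equivariant for $\ul{\apg} \to \ul{\mathcal{A}^p(\g_1)}$. Such a homomorphism exists because $g$ sends $\zg$ into $\mathsf{Z}(\g_1)$, which is part of the definition of a morphism of liftings. It is compatible with the identifications $\ag\cong\agp$ and $\agone \cong \mathcal{A}(\g_1')$ coming from $\kappa$ and $\kappa_1$, provided $\kappa_1=\lambda_1\circ\kappa$. Equivariance can be checked on the ambient product $\igs\gx\times\igs\gxp$, since $\igscorr$ is a subsheaf stable under the action (Proposition \ref{Prop:AgEquivariance}). On the ambient product it follows from the functoriality of Igusa stacks for the $\mathcal{A}$-group actions, as in \cite{DvHKZIgusaStacksII}. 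We also need $\ul{\apgtwo}\to\ul{\mathcal{A}^p(\g_3)}$, induced by $f$, to be compatible with the previous homomorphism via $\sigma,\sigma_1$. Given this, the contracted product functor gives
\begin{align}
    \igscorr\times^{\ul{\apg}}\ul{\apgtwo} \to \igscorrone\times^{\ul{\mathcal{A}^p(\g_1)}}\ul{\mathcal{A}^p(\g_3)}.
\end{align}
Its composite to $\igs\gxthree\times\igs\gxthreep$ agrees with $(f\times f_\lambda)$. This is because the torsor maps of Lemma \ref{Lem:AdjointIgusaTorsor} are functorial in morphisms of Shimura data, by uniqueness of morphisms of Igusa stacks.

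Uniqueness of the arrow is automatic. The correspondences embed as subsheaves of the products: for the target this follows from Lemma \ref{Lem:CorrespondenceNewton} transported through the torsor description, or at least from its image being a subsheaf by construction. So any map compatible with the horizontal arrows is unique, and commutativity can be checked after composing to the products. For the three-dimensional diagram with \eqref{Eq:DiagramCompatibilityBunG}, I would combine the $2$-commutativity statement of Proposition \ref{Prop:IntegralFunctoriality} with the descended maps of Lemma \ref{Lem:TwoCategoryTheoryCube}. The $\apg$ action is trivial on $\bun_G$, so the $2$-cells descend along the contracted product.

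The main obstacle I expect is bookkeeping. The chosen trivializations $\kappa^p,\kappa_p$ and the models $\mathcal{G},\mathcal{G}'$ must be simultaneously compatible with $g$ and $\lambda_1$, so that the identifications of $\mathcal{A}$-groups intertwine. In particular, $\igscorr$ must be independent of these choices up to the translation isomorphisms used in the proof of Lemma \ref{Lem:TrivialSiegelCase}. I would handle this by fixing $\kappa$ first and defining $\kappa_1$ by pushforward along $g$ and $\lambda_1$. The quotient of the translation isomorphism by the $\mathcal{A}$-group then acts trivially on the contracted product.
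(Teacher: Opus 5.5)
Your proposal is correct and is essentially the paper's argument: Proposition \ref{Prop:IntegralFunctoriality} gives the map of Hodge type correspondences over $g$. Proposition \ref{Prop:AgEquivariance} together with the functoriality of Igusa stacks makes this map $\ul{\apg}\to\ul{\mathcal{A}^p(\g_1)}$-equivariant, which can be checked on the ambient products because the correspondences are subsheaves. One then passes to contracted products. Your bookkeeping (taking $\kappa_1$ to be the pushforward of $\kappa$, and getting uniqueness from Lemma \ref{Lem:CorrespondenceNewton} and Lemma \ref{Lem:AdjointIgusaTorsor}) fills in what the paper leaves implicit.

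One point should be made explicit. Proposition \ref{Prop:IndepenceI} identifies $\igscorr\gx$ with $\operatorname{IgsCorr}_{(\iota_3,\kappa^p)}\gx$ only when the good Hodge datum $\Omega$ is built on $\iota_3$. If $\Omega$ is built on $\iota$, Proposition \ref{Prop:IntegralFunctoriality} gives only the inclusion $\operatorname{IgsCorr}_{(\iota_3,\kappa^p)}\gx\subset\operatorname{IgsCorr}_{(\iota,\kappa^p)}\gx$. So $\Omega$ should be fixed to contain $\iota_3$, which is also what the paper's short proof tacitly does.
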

\begin{proof}
Our commutative diagram of Shimura data induces a commutative diagram of Deligne groups since it maps centers to centers. By Proposition \ref{Prop:IntegralFunctoriality}, there are induced morphisms of correspondence spaces for $\gx \to \gxone$ which are all $\ag \to \agone$-equivariant by Proposition \ref{Prop:AgEquivariance} (the maps of Igusa stacks are $\ag \to \agone$-equivariant by the functoriality of Igusa stacks \cite[Theorem B]{KimFunctoriality}, and the correspondences are stable under this action; the induced map is thus also equivariant). The lemma now follows from the definition of the correspondences for $\gxtwo$ and $\gxthree$. 
\end{proof}

\subsection{Fixed points of some abelian type Igusa stacks} 
In this section we show an analogue of \cite[Theorem 6.2.1]{vHSemplinerFixedPoints} for Igusa stacks of abelian type. We fix a Shimura datum $\gxtwo$ of abelian type and for a totally real field $\f$, we let $\h_2=\operatorname{Res}_{\f/\Q} \g_{2,\f}$ and let $\hytwo$ be as in \cite[First paragraph of Section 3]{vHSemplinerFixedPoints}. If $\f$ is Galois over $\mathbb{Q}$ with Galois group $\Gamma$, then $\Gamma$ acts on $\hytwo$ by morphisms of Shimura data, and thus acts on $\igs \hytwo$ by \cite[Theorem B]{KimFunctoriality}. 
\begin{Thm} \label{Thm:FixedPointsAdjointIgusaStacks}
If the center $\mathsf{Z}_{\g_{2}}$ has $\mathbb{R}$-split rank zero and $\Sha^1(\mathbb{Q},\g_2) \to \Sha^1(\mathbb{Q},\h_2)$ is injective, then the natural map
    \begin{align}
         \igs \gxtwo \to  \igs \hytwo^{h \Gamma} \times_{\bun_{H_2}^{h \Gamma}} \bun_{G_2, -\mu_2}
    \end{align}
    is an isomorphism, where the superscript $h \Gamma$ denotes taking homotopy fixed points as in \cite[Definition A.1.8]{vHSemplinerFixedPoints}.
\end{Thm}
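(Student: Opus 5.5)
Throughout, we take as given the Hodge-type analogue \cite[Theorem 6.2.1]{vHSemplinerFixedPoints} and the generic-fiber fixed point results of \cite{vHSemplinerFixedPoints}. The plan is to reduce the claim to two inputs: a fixed-point statement for the infinite-level Shimura varieties $\msh\gxtwo^{\circ,\lozenge} \to (\msh\hytwo^{\circ,\lozenge})^{h\Gamma}$, and the Cartesian diagram \eqref{Eq:ConjectureCartesianDiagramGen} of Theorem \ref{Thm:IgusaMain}.

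\textbf{Step 1.} First I check that the natural map exists and is compatible with the maps to $\bun_{G_2,-\mu_2}$. The inclusion $\g_2 \to \h_2$ is a $\Gamma$-invariant morphism of Shimura data, so \cite[Theorem B]{KimFunctoriality} gives a $\Gamma$-equivariant (in the $2$-categorical sense) map $\igs\gxtwo \to \igs\hytwo$. This lifts to the homotopy fixed points and is compatible with $\bun_{G_2} \to \bun_{H_2}^{h\Gamma}$.

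\textbf{Step 2.} Both sides are v-stacks over $\bun_{G_2,-\mu_2}$, so it suffices to compare them after pullback along the v-cover $\operatorname{BL}:\operatorname{Gr}_{G_2,-\mu_2} \to \bun_{G_2,-\mu_2}$. On the left, Theorem \ref{Thm:IgusaMain} identifies the pullback with $\msh\gxtwo^{\circ,\lozenge}$. On the right, homotopy fixed points commute with fiber products. Combined with the Cartesian diagram for $\hytwo$, the pullback becomes
\[(\msh\hytwo^{\circ,\lozenge})^{h\Gamma} \times_{\operatorname{Gr}_{H_2,-\mu}^{h\Gamma}} \operatorname{Gr}_{G_2,-\mu_2}.\]
The Beauville--Laszlo map is $\Gamma$-equivariant, so this identification is legitimate. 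Here $\operatorname{Gr}$ is a sheaf, so $\operatorname{Gr}^{h\Gamma}$ is just the honest fixed points. Moreover $\operatorname{Gr}_{G_2}\to \operatorname{Gr}_{H_2}^{\Gamma}$ is an open and closed immersion, identifying $\operatorname{Gr}_{G_2,-\mu_2}$ with a union of components. This uses $H_2=\operatorname{Res} G_{2,\f}$ and $G_2=H_2^\Gamma$.

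\textbf{Step 3.} It remains to show that $\msh\gxtwo^{\circ,\lozenge}$ is the corresponding part of $(\msh\hytwo^{\circ,\lozenge})^{h\Gamma}$. Since $\msh\hytwo$ at infinite level is a diamond (a sheaf), homotopy fixed points are honest fixed points. The statement then follows from the fixed-point theorem for the Shimura varieties themselves in \cite{vHSemplinerFixedPoints}, restricted to the good reduction loci. That theorem is exactly where the two hypotheses enter:
\begin{itemize}
\item the condition that $\mathsf{Z}_{\g_2}$ has $\R$-split rank zero makes $\zg(\mathbb{Q})$ discrete in $\gaf$, so the infinite-level quotient behaves;
\item the injectivity of $\Sha^1(\mathbb{Q},\g_2) \to \Sha^1(\mathbb{Q},\h_2)$ rules out extra fixed components coming from other forms $\g_2^{\tau}$ with $\tau \in \ker(\Sha^1)$.
\end{itemize}
We also need that $\msh\gxtwo^{\circ}$ is the preimage of $\msh\hytwo^{\circ}$, which holds by potential crystallinity being detected after restriction.

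\textbf{Main obstacle.} I expect the hardest part to be making Step 2 rigorous for stacks. Specifically, one must check that homotopy fixed points of $\igs\hytwo$ commute with the relevant pullbacks and that the cover descends. A point of $\igs\hytwo^{h\Gamma}$ carries descent data that only agree $2$-categorically. Since $\igs\hytwo$ is not $0$-truncated over $\bun_{H_2}$, one has to verify that the fiber of $\igs\hytwo^{h\Gamma}\to\bun_{H_2}^{h\Gamma}$ is the sheaf $(\msh\hytwo^{\circ,\lozenge})^{\Gamma}$, up to the Grassmannian. I would handle this with the formalism of \cite[Appendix A]{vHSemplinerFixedPoints} (homotopy fixed points commute with $2$-fiber products) and conservativity of v-covers.
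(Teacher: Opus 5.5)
Your proposal follows essentially the paper's route: pull back along the v-cover $\operatorname{Gr}_{G_2,-\mu_2}\to\bun_{G_2,-\mu_2}$, use \cite[Lemma A.1.11]{vHSemplinerFixedPoints} to rewrite the target as $(\msh\hytwo^{\circ,\lozenge})^{\Gamma}\times_{\operatorname{Gr}^{\Gamma}_{H_2,-\mu_2}}\operatorname{Gr}_{G_2,-\mu_2}$, and reduce to the fixed-point statement for the Shimura varieties. The paper proves that statement by showing $\msh\gxtwo\to\msh\hytwo^{\Gamma}$ is a closed immersion of reduced Jacobson schemes that is bijective on $\mathbb{C}$-points, passes to the $\circ$-loci via \cite[Lemma 6.2.3]{vHSemplinerFixedPoints}, and concludes by v-descent. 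Two small corrections, both of which only make the argument easier: $\operatorname{Gr}_{G_2,-\mu_2}\to\operatorname{Gr}^{\Gamma}_{H_2,-\mu_2}$ is in fact an isomorphism \cite[Proposition 2.4.7]{vHSemplinerFixedPoints}, and $\igs\hytwo\to\bun_{H_2}$ \emph{is} $0$-truncated (its pullback to the Grassmannian is the sheaf $\msh\hytwo^{\circ,\lozenge}$), so the stacky obstacle you flag is handled directly by the appendix lemma.
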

\begin{proof}
    We largely follow the proof of \cite[Theorem 6.2.1]{vHSemplinerFixedPoints}. It suffices to show that the natural map is an isomorphism after basechanging via the v-cover
    \begin{align}
        \operatorname{Gr}_{G_2, -\mu_2} \to \bun_{G_2, -\mu_2}.
    \end{align}
    By the fiber product diagram and \cite[Lemma A.1.11]{vHSemplinerFixedPoints}, we may identify this with the map
    \begin{align} \label{Eq:NaturalMapToFixedPointsBaseChange}
        \mathbf{Sh}\gxtwo^{\circ, \lozenge} \to \left(\mathbf{Sh}\hytwo^{\circ, \lozenge}\right)^{\Gamma} \times_{\operatorname{Gr}_{H_2, -\mu_2}^{\Gamma}} \operatorname{Gr}_{G_2, -\mu_2}.
    \end{align}
    Since $\operatorname{Gr}_{G_{2}, -\mu_2} \to \operatorname{Gr}_{H_2, -\mu_2}^{\Gamma}$ is an isomorphism by \cite[Proposition 2.4.7]{vHSemplinerFixedPoints}, this is just the natural map $\mathbf{Sh}\gxtwo^{\circ, \lozenge}\to \left(\mathbf{Sh}\hytwo^{\circ, \lozenge}\right)^{\Gamma}$. The natural map $\mathbf{Sh}\gxtwo \to \mathbf{Sh}\hytwo^{\Gamma}$ is a closed immersion of reduced Jacobson $\mathbb{C}$-schemes (see the proof of \cite[Lemma 3.6.7]{vHSemplinerFixedPoints}), which is a bijection on $\mathbb{C}$-points by \cite[Corollary 3.5.2]{vHSemplinerFixedPoints} and thus an isomorphism by the proof of \cite[Lemma 3.6.7]{vHSemplinerFixedPoints}. This implies that the natural map $\mathbf{Sh}\gxtwo^{\lozenge} \to \left(\mathbf{Sh}\hytwo^{\lozenge}\right)^{\Gamma}$ is an isomorphism, and using \cite[Lemma 6.2.3]{vHSemplinerFixedPoints}, we moreover deduce that 
    \begin{align}
        \mathbf{Sh}\gxtwo^{\circ, \lozenge} \to \left(\mathbf{Sh}\hytwo^{\circ, \lozenge}\right)^{\Gamma}
    \end{align}
    is an isomorphism. In particular, it follows that the map in \eqref{Eq:NaturalMapToFixedPointsBaseChange} is an isomorphism. By v-descent, this implies that the natural map of the theorem is an isomorphism.
\end{proof}

\subsection{Correspondences in some abelian type cases II} Let $\Theta_{2}=(\g_2,\x_2,\mathsf{P}_2, \x_2')$ be a correspondence datum. Given a totally real field $\f$, there is a new correspondence datum $\Theta_{2,\f}=(\h_{2}, \y_{2}, \mathsf{P}_{2,\f}, \y_{2}')$, where $\h_{2}=\operatorname{Res}_{\f/\mathbb{Q}} \g_{2,\f}$ and $\mathsf{P}_{2,\f}$ is the pushout of $\mathsf{P}_{2}$ along $\g_{2} \to \h_{2}$, and where $\y_{2}, \y_{2}'$ are the induced Shimura datum along $\g_2 \to \h_2$ and $\g_2' \to \h_2'$. To be precise, it is not automatic that $\Sha^1(\mathbb{Q},\h_{2})=0$, so we will work under this assumption. 

\subsubsection{} If $\Sigma$ is a Hodge type lifting of $\Theta_{2}$, then there is a Hodge type lifting $\Sigma_{\f}$ of $\Theta_{2,\f}$ given by $\Sigma_{\f}=(\h_{1}, \y_{1}, \sigma_{\f}, \mathsf{P}_{\f}, \omega_{\f}, \y_{1}')$, where $\h_{1} \subset \operatorname{Res}_{\f/\mathbb{Q}} \g_{f}$ and $\hyo$ are as in Section \ref{Subsub:PSConstruction}, where
\begin{align}
    \sigma_{\f}=\restr{\operatorname{Res}_{\f/\mathbb{Q}} \sigma_{\f}}{\h_{1}} \\
    \mathsf{P}_{\f} = \mathsf{P} \times^{\g} \h_{1}
\end{align}
and $\omega_{\f}$ is the induced isomorphism. 

\begin{Thm} \label{Thm:CorrespondenceAdjoint}
Let $\Theta_{2}=(\g_2, \x_2,\mathsf{P}_2,\x_2')$ be a correspondence datum such $\mathsf{Z}_{\g_{2}}$ has $\mathbb{R}$-split rank zero. If there is a Hodge type lifting $\Sigma$ of $\Theta_{2}$ and a Galois totally real field $\f$ in which $p$ is unramified such that $\Sigma_{\f}$ is excellent, and such that Conjecture \ref{Conj:ConstructionWorks} holds for $\Sigma_{\f}$ and a good Hodge datum $\Omega$ for $\Sigma_{\f}$, then there is an $\underline{\gafp}$-equivariant isomorphism
\begin{align}
    \igs \gxtwo_{-\mu_2'} \to \igs \gxtwop_{-\mu_2}
\end{align}
over $\bun_{G_2'}$.
\end{Thm}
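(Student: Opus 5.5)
We follow the strategy sketched in Section \ref{subsub:IntroReduction}: first produce a $\Gamma$-equivariant exotic isomorphism for the base-changed datum $\Theta_{2,\f}$, and then pass to $\Gamma$-homotopy fixed points using Theorem \ref{Thm:FixedPointsAdjointIgusaStacks}.

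\textbf{Step 1: the isomorphism for $\Theta_{2,\f}$.} Let $\Gamma=\gal(\f/\mathbb{Q})$. The group $\Gamma$ acts on $\Theta_{2,\f}$ by morphisms of correspondence data. We want it to act on $\Sigma_{\f}$ by morphisms of Hodge type liftings over this action. This is where the construction $\hyo\subset \operatorname{Res}_{\f/\mathbb{Q}}\g_{\f}$ of Section \ref{Subsub:PSConstruction} should be $\Gamma$-stable.

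Since $\Sigma_{\f}$ is excellent and Conjecture \ref{Conj:ConstructionWorks} holds for it, the argument of Corollary \ref{Cor:AdjointCorrespondence} applies with Conjecture \ref{Conj:ConstructionWorks} in place of Theorem \ref{Thm:XZCorrespondences}. It shows that
\begin{align}
\operatorname{IgsCorr}_{\Theta_{2,\f},\Sigma_{\f}} \to \igs\hytwo_{-\mu_2'}, \qquad \operatorname{IgsCorr}_{\Theta_{2,\f},\Sigma_{\f}} \to \igs\hytwop_{-\mu_2}
\end{align}
are isomorphisms. The inputs are Proposition \ref{Prop:AgEquivariance}, Lemma \ref{Lem:AdjointIgusaTorsor} and Lemma \ref{Lem:Intersection}. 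This yields an isomorphism $\alpha_{\f}:\igs\hytwo_{-\mu_2'}\isom \igs\hytwop_{-\mu_2}$ over $\bun_{H_2'}$.

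\textbf{Step 2: $\Gamma$-equivariance.} Each $\gamma\in\Gamma$ gives a morphism $\Sigma_{\f}\to\Sigma_{\f}$ over $\gamma:\Theta_{2,\f}\to\Theta_{2,\f}$. By Lemma \ref{Lem:AdjointCorrespondenceFunctorialityII} it maps the correspondence into the correspondence, compatibly with \eqref{Eq:DiagramCompatibilityBunG}. Since the correspondence is a subsheaf of $\igs\hytwo\times\igs\hytwop$, these compatibilities are properties rather than extra data, so the cocycle conditions are automatic. Hence $\alpha_{\f}$ is $\Gamma$-equivariant and lies over the $\Gamma$-equivariant isomorphism $\beta_{P_{2,\f}}$ on $\bun_{H_2}$.

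\textbf{Step 3: homotopy fixed points.} We would check the hypotheses of Theorem \ref{Thm:FixedPointsAdjointIgusaStacks} for both $\g_2$ and $\g_2'$:
\begin{itemize}
\item the centres have $\mathbb{R}$-split rank zero, since $\mathsf{Z}_{\g_2'}=\mathsf{Z}_{\g_2}$;
\item $\Sha^1$ is trivial for $\g_2$ by the definition of a correspondence datum, and for $\g_2'$ by twisting, since $\mathsf{P}_2$ is trivial over $\afp$.
\end{itemize}
Theorem \ref{Thm:FixedPointsAdjointIgusaStacks} then identifies $\igs\gxtwo$ with $\igs\hytwo^{h\Gamma}\times_{\bun_{H_2}^{h\Gamma}}\bun_{G_2,-\mu_2}$, and likewise for $\gxtwop$.

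Restrict to the open loci over $\bun_{G_2',-\mu_2',-\mu_2}$, which maps into $\bun_{H_2',-\mu_2',-\mu_2}^{h\Gamma}$. On these loci, $\alpha_{\f}^{h\Gamma}$ together with the identity on $\bun_{G_2'}$ (via $\beta_{P_2}$, compatible with $\beta_{P_{2,\f}}$) gives the desired isomorphism. It is $\underline{\gafp}$-equivariant because $\alpha_{\f}$ is $\underline{\h_2(\afp)}$-equivariant and $\g_2(\afp)\subset\h_2(\afp)$ is $\Gamma$-fixed.

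\textbf{Main obstacle.} The delicate point is Step 2: making the $\Gamma$-action on $\Sigma_{\f}$ honest. This means checking that $\Gamma$ preserves $\h_1$, $\mathsf{P}_{\f}$, the map $\omega_{\f}$ and the Shimura data $\y_1,\y_1'$, and that it maps centres to centres. It also requires the compatibility of the good Hodge datum $\Omega$ with its $\Gamma$-translates, which Proposition \ref{Prop:IntegralFunctoriality} handles by showing the correspondence is independent of these choices.

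A secondary issue is matching the Newton loci. We need $\bun_{G_2,-\mu_2}\cap\beta_{P_2}^{-1}\bun_{G_2',-\mu_2'}$ to be the preimage of the corresponding $H_2$-locus. We would deduce this from the closed embedding of the flag varieties, \cite[Proposition 2.4.7]{vHSemplinerFixedPoints}, together with the fact that these loci are defined by pulling back open substacks along $\bun_{G_2}\to\bun_{H_2}$.
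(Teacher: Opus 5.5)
Your proposal is correct and follows the paper's proof: the paper also gets the isomorphisms for $\Theta_{2,\f}$ from Corollary \ref{Cor:AdjointCorrespondence}, whose argument, as you note, only needs the conclusion of Conjecture \ref{Conj:ConstructionWorks}. It likewise deduces $\Gamma$-stability of the correspondence from Lemma \ref{Lem:AdjointCorrespondenceFunctorialityII} applied to the $\Gamma$-action on $\Sigma_{\f}$ induced from $\Sigma$, and then takes $\Gamma$-homotopy fixed points and base changes using Theorem \ref{Thm:FixedPointsAdjointIgusaStacks}.

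Two small remarks. First, the vanishing of $\Sha^1(\mathbb{Q},\g_2')$ does not follow from twisting and the triviality of $\mathsf{P}_2$ over $\afp$; it follows from Kottwitz's duality between $\Sha^1$ of a group and $\Sha^1$ of the centre of its dual group, which only depends on the inner class. Second, your ``secondary issue'' is unnecessary: as in your Step 3, it suffices that the $G_2$-locus maps into the corresponding $H_2$-locus and to base change along it.
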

\begin{proof}
If $\mathsf{Z}_{\g_{2}}$ has $\mathbb{R}$-split rank zero, then by Corollary \ref{Cor:AdjointCorrespondence}, the natural maps $\operatorname{IgsCorr}_{\Theta_{2,\f}, \Sigma_{\f}} \to \igs \hytwo_{-\mu'_{2,H}}, \igs \hytwop_{-\mu_{2,H}}$ are isomorphisms. Recall that $\Gamma$ denotes $\gal(\f/\mathbb{Q})$.
\begin{Claim} \label{Claim:GammaStable}
The subsheaf
\begin{align}
    \operatorname{IgsCorr}_{\Theta_{2,\f}, \Sigma_{\f}} \subset \hytwo_{-\mu'_{2,H}} \times \hytwop_{-\mu_{2,H}}
\end{align}
is $\Gamma$-stable.
\end{Claim}
\begin{proof}
This is a direct consequence of Lemma \ref{Lem:AdjointCorrespondenceFunctorialityII}, because there is an action of $\Gamma$ on the tuple $\Sigma_{\f}=(\h_1, \y_1, \sigma_{\f}, \mathsf{P}_{\f}, \omega_{\f}, \y')$ since it is induced from the tuple $\Sigma$. 
\end{proof}
It follows from Claim \ref{Claim:GammaStable} that the natural maps above are $\Gamma$-equivariant. It now follows from Theorem \ref{Thm:FixedPointsAdjointIgusaStacks} that after taking $\Gamma$-homotopy fixed points and basechanging there is an induced isomorphism
\begin{align}
    \igs \gxtwo_{-\mu'} \to \igs \gxtwop_{-\mu}.
\end{align}
\end{proof}

\subsubsection{} Finally, we prove a Hodge type variant of Theorem \ref{Thm:CorrespondenceAdjoint}. Let $\Theta_2=(\g_2, \x_2, \mathsf{P}_2,\x_2')$ be a correspondence datum with $\gxtwo$ of Hodge type, and choose the trivial Hodge-type lifting $\Sigma=(\g_2, \x_2, \operatorname{Id}, \mathsf{P}_2, \operatorname{Id}, \x_2')$. 
\begin{Thm} \label{Thm:HodgeTypeCorrespondences}
If there is a Galois totally real field $\f$ in which $p$ is unramified such that $\Sigma_{\f}$ is excellent, and such that Conjecture \ref{Conj:ConstructionWorks} holds for $\Sigma_{\f}$ and a good Hodge datum $\Omega$ for $\Sigma_{\f}$, then there is an $\underline{\gafp}$-equivariant isomorphism
\begin{align}
    \igs \gxtwo_{-\mu_2'} \to \igs \gxtwop_{-\mu_2}
\end{align}
over $\bun_{G_2'}$.
\end{Thm}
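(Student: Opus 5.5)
We mirror the proof of Theorem \ref{Thm:CorrespondenceAdjoint}, with the simplification that $\Sigma$ is the trivial lifting, so no passage through Deligne groups or adjoint quotients at level $\g_2$ is needed beyond what happens for $\Sigma_{\f}$.

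\textbf{Step 1: the correspondence over $\f$.} Write $\Sigma_{\f}=(\h_1,\y_1,\sigma_{\f},\mathsf{P}_{\f},\omega_{\f},\y_1')$; here $\h_1\subset \operatorname{Res}_{\f/\mathbb{Q}}\g_{2,\f}$ and $\sigma_{\f}:\hyo\to\hytwo$ is an ad-isomorphism. Since $\Sigma_{\f}$ is excellent and Conjecture \ref{Conj:ConstructionWorks} holds for it and the good Hodge datum $\Omega$, the maps $\operatorname{IgsCorr}_{\Omega,\mathsf{P}_{\f}}\to \igs\hyo_{-\mu'},\igs\hyop_{-\mu}$ are isomorphisms. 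By Proposition \ref{Prop:AgEquivariance} this correspondence is stable under $\ul{\mathcal{A}^p(\h_1)}$. As in Corollary \ref{Cor:AdjointCorrespondence}, pushing out along $\mathcal{A}^p(\h_1)\to\apgtwo$ (Lemma \ref{Lem:AdjointIgusaTorsor} and Lemma \ref{Lem:Intersection}) gives isomorphisms $\operatorname{IgsCorr}_{\Theta_{2,\f},\Sigma_{\f}}\to\igs\hytwo_{-\mu'_{2,H}},\igs\hytwop_{-\mu_{2,H}}$. This step goes through verbatim with Conjecture \ref{Conj:ConstructionWorks} as input instead of Conjecture \ref{Conj:XiaoZhu}, since Theorem \ref{Thm:XZCorrespondences} only served to supply Conjecture \ref{Conj:ConstructionWorks}.

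\textbf{Step 2: $\Gamma$-equivariance.} As in Claim \ref{Claim:GammaStable}, $\Gamma=\gal(\f/\mathbb{Q})$ acts on $\Sigma_{\f}$ because $\Sigma_{\f}$ is induced from the trivial lifting $\Sigma$. Lemma \ref{Lem:AdjointCorrespondenceFunctorialityII} then shows that the correspondence is $\Gamma$-stable. Hence the isomorphism $\igs\hytwo_{-\mu'_{2,H}}\isom\igs\hytwop_{-\mu_{2,H}}$ is $\Gamma$-equivariant over $\bun_{H_2'}$.

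\textbf{Step 3: descent to $\g_2$.} Apply $h\Gamma$ and base change along $\bun_{G_2,-\mu_2}\to\bun_{H_2}^{h\Gamma}$, and along its primed analogue (identified through $\beta_{P_2}$), using the fixed-point theorem. This is the main obstacle: Theorem \ref{Thm:FixedPointsAdjointIgusaStacks} is stated only under the hypothesis that $\mathsf{Z}_{\g_2}$ has $\mathbb{R}$-split rank zero, and that hypothesis is not assumed here; for $\g_2$ of Hodge type the center typically contains the weight $\gm$.

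I would handle this by proving the needed fixed-point statement directly in the Hodge type case. The plan is to redo the proof of Theorem \ref{Thm:FixedPointsAdjointIgusaStacks} but replace the use of \cite[Theorem 6.2.1]{vHSemplinerFixedPoints} by its Hodge-type version, which needs only that $\mathsf{Z}_{\g_2}$ has equal $\mathbb{Q}$- and $\mathbb{R}$-split ranks. That condition holds for the centers arising here. Concretely, one checks after base change to $\operatorname{Gr}_{G_2,-\mu_2}$ that $\mathbf{Sh}\gxtwo^{\circ,\lozenge}\to(\mathbf{Sh}\hytwo^{\circ,\lozenge})^{\Gamma}$ is an isomorphism. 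This uses Proposition 2.4.7 and Lemma 6.2.3 of \cite{vHSemplinerFixedPoints}, together with bijectivity on $\mathbb{C}$-points; the latter requires $\Sha^1(\mathbb{Q},\g_2)\to\Sha^1(\mathbb{Q},\h_2)$ to be injective, and here it is automatic because $\Sha^1(\mathbb{Q},\g_2)=0$.

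With this in hand, v-descent gives $\igs\gxtwo_{-\mu_2'}\isom\igs\gxtwop_{-\mu_2}$. The map is $\ul{\gafp}$-equivariant and lies over $\bun_{G_2'}$, because every map in Steps 1 and 2 was compatible with the maps to $\bun$.
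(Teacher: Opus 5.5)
You have the paper's overall strategy: run the proof of Theorem \ref{Thm:CorrespondenceAdjoint} with the Hodge-type fixed point theorem \cite[Theorem 6.2.1]{vHSemplinerFixedPoints} in place of Theorem \ref{Thm:FixedPointsAdjointIgusaStacks}. The problem is that you run it through the wrong intermediate group. You keep $\hytwo$ with $\h_2=\operatorname{Res}_{\f/\mathbb{Q}}\g_{2,\f}$, push the correspondence out along $\mathcal{A}^p(\h_1)\to\mathcal{A}^p(\h_2)$, and descend along $\bun_{G_2,-\mu_2}\to\bun_{H_2}^{h\Gamma}$.

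In the proof of Theorem \ref{Thm:CorrespondenceAdjoint}, the hypothesis that $\mathsf{Z}_{\g_2}$ has $\mathbb{R}$-split rank zero is already used in your Step 1: it is the stated reason Corollary \ref{Cor:AdjointCorrespondence} applies. That hypothesis ensures $\mathsf{Z}_{\h_2}=\operatorname{Res}_{\f/\mathbb{Q}}\mathsf{Z}_{\g_2,\f}$ still has $\mathbb{R}$-split rank zero. For $\g_2$ of Hodge type this fails. The weight torus gives $\operatorname{Res}_{\f/\mathbb{Q}}\mathbb{G}_m\subset\mathsf{Z}_{\h_2}$, which has $\mathbb{R}$-split rank $[\f:\mathbb{Q}]$ but $\mathbb{Q}$-split rank $1$. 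So for $\f\neq\mathbb{Q}$ the datum $\hytwo$ violates SV5 and is not of Hodge type.

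Consequently Step 1 does not go through verbatim. In Step 3, checking equal split ranks for $\mathsf{Z}_{\g_2}$ checks the wrong group: neither fixed point theorem available to you covers $\g_2\to\h_2$ here. Your route would also need $\Theta_{2,\f}$ to be a correspondence datum, i.e.\ $\Sha^1(\mathbb{Q},\h_2)=0$. That is not among the hypotheses; excellence of $\Sigma_{\f}$ only gives $\Sha^1(\mathbb{Q},\h_1)=0$.

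The missing observation is that, because $\Sigma$ is the trivial lifting, $\g_2$ maps directly into the Hodge-type group $\h_1\subset\operatorname{Res}_{\f/\mathbb{Q}}\g_{2,\f}$ of Section \ref{Subsub:PSConstruction}, the subgroup with similitude in $\mathbb{G}_m$. Its center again has equal $\mathbb{Q}$- and $\mathbb{R}$-split ranks, and it carries the $\Gamma$-action. This is the group to which \cite[Theorem 6.2.1]{vHSemplinerFixedPoints} is applied; compare Lemma \ref{Lem:PELCovering}, where $\h_1=\operatorname{GU}_{\mb_{\f}}(V_{\f})$.

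So no Deligne-group pushout is needed:
\begin{itemize}
\item Conjecture \ref{Conj:ConstructionWorks} for $\Sigma_{\f}$ already gives $\operatorname{IgsCorr}_{\Omega,\mathsf{P}_{\f}}\isom\igs\hyo_{-\mu'}$ and $\operatorname{IgsCorr}_{\Omega,\mathsf{P}_{\f}}\isom\igs\hyop_{-\mu}$.
\item $\Gamma$-stability of this correspondence follows from the Hodge-type functoriality of Proposition \ref{Prop:IntegralFunctoriality}.
\item One then takes $h\Gamma$ and base changes along $\bun_{G_2,-\mu_2}\to\bun_{H_1}^{h\Gamma}$ and its primed analogue, using \cite[Theorem 6.2.1]{vHSemplinerFixedPoints} for $\gxtwo\hookrightarrow\hyo$ and $\gxtwop\hookrightarrow\hyop$.
\end{itemize}
With $\h_1$ in place of $\h_2$, your Steps 2 and 3 go through. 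This includes your correct remark that the $\Sha^1$-injectivity needed for bijectivity on $\mathbb{C}$-points is automatic from $\Sha^1(\mathbb{Q},\g_2)=0$.
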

\begin{proof}
The proof of Theorem \ref{Thm:CorrespondenceAdjoint} can be adapted  with Theorem \ref{Thm:FixedPointsAdjointIgusaStacks} replaced by \cite[Theorem 6.2.1]{vHSemplinerFixedPoints}. 
\end{proof}

\begin{Rem}
    In Section \ref{Sec:Examples}, we will show that there are many examples in which the hypotheses of Theorems \ref{Thm:CorrespondenceAdjoint} and \ref{Thm:HodgeTypeCorrespondences} are satisfied. For example, let $\gxtwo$ be as in Section \ref{Sub:ExampleBDR} or \ref{Sub:ExampleC} and not of type $D$. Note that $\Sha^1(\f, \g_2)=0$ for any number field $\f$, and that the center of $\g_2$ is finite, and thus of $\mathbb{R}$-split rank zero. Let $\Theta_{2}=(\g_2, \x_2, \mathsf{P}_2, \x_{2}')$ be a correspondence datum. It follows from Corollary \ref{Cor:ExistenceHodgeTypeBCVeryGood} that we can find a Hodge type lifting $\Sigma$ and a Galois totally real field $\f$ satisfying the assumptions of Theorem \ref{Thm:CorrespondenceAdjoint}, except perhaps the validity of Conjecture \ref{Conj:ConstructionWorks} for a choice of good Hodge datum $\Omega$ for $\Sigma_{\f}$. If we choose a good Hodge datum $\Omega$ for $\Sigma_{\f}$ with reductive parahorics (which is always possible), then Conjecture \ref{Conj:ConstructionWorks} follows from Conjecture \ref{Conj:XiaoZhu}, whose proof has been announced by Xiao--Zhu. 
\end{Rem}

\subsection{On Igusa varieties for some abelian type Shimura varieties} The goal of this section is to analyze Igusa varieties for Shimura varieties of abelian type. Given $b \in G_2(\qpbr)$, we define the v-sheaf Igusa variety as
\begin{equation}
    \begin{tikzcd}
        \ig_{b, \mathrm{v}}\gxtwo \arrow{r}{x} \arrow{d} & \igs \gxtwo \arrow{d} \\
        \spd \fpbar \arrow{r}{b} & \bun_{G_{2}},
    \end{tikzcd}
\end{equation}
and the Igusa variety over $\spec \fpbar$ by
\begin{equation}
        \begin{tikzcd}
        \operatorname{Ig}_{b}\gxtwo \arrow{r}{x} \arrow{d} & \igs \gxtwo_{}^{\mathrm{red}} \arrow{d} \\
        \spec \fpbar \arrow{r}{b} & \gtwoisoc. 
    \end{tikzcd} 
\end{equation}
Note that $\operatorname{Ig}_{b}\gxtwo$ is the reduction of $\ig_{b,\mathrm{v}}\gxtwo$, that $\tilde{G}_b$ acts on $\ig_{b,\mathrm{v}}\gxtwo$ and that $\ul{G_b(\qp)}$ acts on $\operatorname{Ig}_{b}\gxtwo$ in a compatible way. The main result of this section is the following theorem. Let $K^p_{2} \subset \g_2(\afp)$ be a neat compact open subgroup.
\begin{Thm} \label{Thm:IgusaVarietiesAbelianType}
Suppose that $\gx$ satisfies Milne's axiom SV5. The sheaf $\operatorname{Ig}_{b}\gxtwo$ is representable by a perfect scheme over $\fpbar$ and for all sufficiently small compact open subgroups $J_2 \subset G_{2,b}(\qp)$, the quotient
\begin{align}
    \operatorname{Ig}_{b}\gxtwo / \ul{K^p_{2} \times J_2}
\end{align}
is representable by a perfectly of finite type perfect scheme of pure dimension $\langle 2 \rho, \nu_{b}\rangle$. Moreover, the natural map
\begin{align}
    \operatorname{Ig}_{b}\gxtwo^{\diamond} \to \ig_{b,\mathrm{v}}\gxtwo
\end{align}
is an open immersion which induces a bijection on canonical compactifications. 
\end{Thm}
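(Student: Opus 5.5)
The plan is to reduce everything to the Hodge type case, where these statements are known, via the torsor description of abelian type Igusa stacks in Lemma \ref{Lem:AdjointIgusaTorsor}. Choose a Shimura datum $\gx$ of Hodge type with an ad-isomorphism $\gx \to \gxtwo$ such that $G$ is unramified whenever possible, in the sense of \cite{DvHKZIgusaStacksII}. Then $\igs \gx \times \ul{\agtwop} \to \igs \gxtwo$ is a $\ul{\apg}$-torsor. Pulling back along $b:\spd \fpbar \to \bun_{G_2}$ requires lifting $b$ to $G$. Since $G\to G_2$ is an ad-isomorphism, $B(G)\to B(G_2)$ is a bijection on fibres of the Kottwitz map by \cite[Proposition 4.10]{Kottwitz2}, as in Lemma \ref{Lem:Intersection}. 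We may therefore pick $b_1 \in G(\qpbr)$ in $B(G,-\mu)$ mapping to $b$.

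First, I identify $\ig_{b,\mathrm{v}}\gxtwo$ with $\left(\ig_{b_1,\mathrm{v}}\gx \times \ul{\agtwop}\right)/\ul{\apg}$, equivariantly for the relevant groups. Then I do the same for the perfect Igusa varieties after applying $\red$, which commutes with these fibre products and with quotients by locally profinite groups acting freely. For the Hodge type Igusa variety, \cite[Section 4.3]{DvHKZIgusaStacks} gives that $\operatorname{Ig}_{b_1}\gx$ is a perfect scheme, and that its quotients by $K^p \times J$ are perfectly of finite type of pure dimension $\langle 2\rho,\nu_{b_1}\rangle$. They also give that $\operatorname{Ig}_{b_1}\gx^{\diamond} \to \ig_{b_1,\mathrm{v}}\gx$ is an open immersion inducing a bijection on canonical compactifications. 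Note that $\langle 2\rho,\nu_{b_1}\rangle=\langle 2\rho,\nu_b\rangle$, since the roots are the same for $G$ and $G_2$.

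Next, I pass to finite level. SV5 and neatness of $K_2^p$ ensure that the group $\Delta=\ker(\ul{\apg}\to \ul{\agtwop}/K_2^p)$ acting on a component has the following property: after quotienting by a small $J_2$, the stabilisers are trivial. Concretely, $\operatorname{Ig}_{b}\gxtwo/\ul{K^p_2\times J_2}$ becomes a finite disjoint union of quotients $\operatorname{Ig}_{b_1}\gx^{+}/\Delta_i$. Here the $\Delta_i$ are images of $\g(\mathbb{Q})_+$-type arithmetic groups, modulo the centre, and act freely on finite level Igusa varieties. A free action of a finite group on a perfectly finite type perfect scheme has a perfectly finite type quotient of the same dimension. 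Representability of $\operatorname{Ig}_b\gxtwo$ itself then follows by taking the limit over $K_2^p,J_2$ along finite étale transition maps.

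Finally, the open immersion and the compactification statement descend along these free quotients, because both properties are étale local and are preserved by the $\diamond$ functor. I expect the main obstacle to be the freeness and finiteness at finite level. This means showing that the relevant subgroup of Deligne's group, intersected with $K^p_2 \times J_2$ and the centre, acts freely. I would argue this as in the Shimura variety case, via SV5 and neatness, using the Igusa product formula to control stabilisers of $\fpbar$-points by isogeny classes.
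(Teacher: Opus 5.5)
\textbf{The finite-level step is a genuine gap.} The step you flag as the main obstacle is the real content of the theorem, and your outline does not supply it. Everything rests on the claim that $\operatorname{Ig}_b\gxtwo/\ul{K_2^p\times J_2}$ is a \emph{finite} disjoint union of quotients of Hodge-type Igusa varieties by \emph{finite} groups acting \emph{freely}. The Shimura-variety argument you want to imitate does not transfer. There, freeness comes from arithmetic groups acting on the symmetric domain $X^+$ with compact stabilisers plus neatness, and finiteness of the decomposition comes from class-number finiteness. On the Igusa side there is no $X^+$, and no canonical piece $\operatorname{Ig}_{b_1}\gx^{+}$. Controlling stabilisers of $\fpbar$-points via isogeny classes would need an $\ul{\apg}$-equivariant description of $\fpbar$-points of Hodge-type Igusa varieties, plus a neatness argument inside the groups $I_\phi$. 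You do not carry either out.

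\textbf{What the paper does instead.} It imports finite \'etaleness from the generic fibre rather than proving freeness in characteristic $p$. The Cartesian square of Theorem \ref{Thm:IgusaMain} exhibits $\ig_x\gx/\ul{K^p}$ as a base change of $\mathbf{Sh}_K\gx^{\circ,\lozenge}_{[x]}\to[\spd C/\ul{K_{p,x}}]$. So finite \'etaleness of $\mathbf{Sh}_K\gx^{\circ,\lozenge}\to\mathbf{Sh}_{K_2}\gxtwo^{\circ,\lozenge}$, which is where SV5 and neatness genuinely enter, gives finite \'etaleness of $\ig_x\gx/\ul{K^p\times J}\to\ig_x\gxtwo/\ul{K_2^p\times J_2}$. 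This is Lemma \ref{Lem:FiniteEtaleIgusaIII}, Corollary \ref{Cor:FiniteEtaleIgusaIV} and Proposition \ref{Prop:FiniteEtaleIgusaVariety}. The paper then uses $\ig_{b,\mathrm{v}}\times_{\spd\fpbar}\spd C\simeq\ig_x$, and that reduction preserves finite \'etale surjections and profinite torsors (Lemmas \ref{Lem:ReductionFiniteEtale} and \ref{Lem:ReductionPRoFiniteEtale}), to get Corollary \ref{Cor:FiniteEtaleIgusaVarietyScheme} for the perfect Igusa varieties. The finiteness you take for granted is a separate step, Lemma \ref{Lem:FiniteEtaleIgusaVI}. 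It bounds the number of connected components using transitivity of $\Gad(\afp)$ on $\pi_0$ of the adjoint Shimura variety.

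\textbf{Two earlier problems.} First, a general abelian-type $\gxtwo$ only comes with a Hodge-type $\gx$ having the same adjoint datum, not with a morphism $\gx\to\gxtwo$. So Lemma \ref{Lem:AdjointIgusaTorsor} is not available in general. This is why the paper works with $\gx\to\gxad\leftarrow\gxtwo$: it pushes the properties down to open and closed images in the adjoint Igusa variety, then back up along the finite \'etale map from the $\gxtwo$ side.

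Second, even when a morphism exists, your formula $\ig_{b,\mathrm{v}}\gxtwo\simeq(\ig_{b_1,\mathrm{v}}\gx\times\ul{\agtwop})/\ul{\apg}$ is not right as stated. The action of $\ul{\apg}$ on $\igs\gx$ lies over an action on $\bun_G$ through $\gad(\qp)/\mathrm{Im}\,G(\qp)$ (see the proof of Lemma \ref{Lem:TwoCategoryTheoryCube}), so it need not preserve the fibre over $b_1$. Also, $\tilde G_{b_1}\to\tilde G_{2,b}$ has kernel $\ul{Z(\qp)}$ with $Z=\ker(G\to G_2)$, and need not be surjective. One must therefore divide by $Z(\qp)$ and induce along $G_{b_1}(\qp)\to G_{2,b}(\qp)$, which is exactly the bookkeeping in Corollary \ref{Cor:FiniteEtaleIgusaIV} and Proposition \ref{Prop:FiniteEtaleIgusaVariety}. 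Likewise, your claim that $(-)^{\mathrm{red}}$ commutes with quotients by non-compact locally profinite groups needs proof; the paper only establishes the finite \'etale and profinite-torsor cases.
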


\subsubsection{} Let $\gx \to \gxtwo$ be an ad-isomorphism of Shimura data of abelian type. Fix a place $v$ of the reflex field $\mathsf{E}$ of $\gx$ inducing a place $v_2$ of $\mathsf{E}_2 \subset \mathsf{E}$, let $E$ and $E_2$ be their completions. Let $C$ be the completion of an algebraic closure of $E$ and let $x \in \grgmu(C)$. Consider the fiber product
\begin{equation}
    \begin{tikzcd}
        \ig_x\gx \arrow{r} \arrow{d} & \spd C \arrow{d}{x} \\
        \mathbf{Sh}\gx^{\circ, \lozenge} \arrow{r}{\pi_{\mathrm{HT}}} & \grgmu
    \end{tikzcd}
\end{equation}
and define $\ig_x\gxtwo$ similarly. Our first goal is to understand the map $\ig_x \gx \to \ig_x \gxtwo$. 

\subsubsection{} \label{subsub:CentralLeavesGeneric} Let $K=K^pK_p \subset \gaf$ and $K_2=K_{2}^p K_{2,p} \subset \g_2(\af)$ be neat compact open subgroups with $K$ mapping to $K_2$. We now assume that $\g \to \g_{2}$ is surjective and that $\gx$ and $\gxtwo$ satisfy SV5. We consider the stabilizers $K_{p,x} \subset K_{p}$ and $K_{p,2,x} \subset K_{p,2}$.
\begin{Lem} \label{Lem:FiniteEtaleIgusaIII}
The natural
\begin{align}
    \ig_x\gx / \ul{K^p \times K_{p,x}} \to \ig_x\gxtwo / \ul{K_2^p \times K_{p,2,x}}
\end{align}
is finite \'etale.
\end{Lem}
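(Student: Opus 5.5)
We plan to reduce the statement to the well-known structure of the map between Shimura varieties at finite level. The starting point is the $2$-Cartesian diagram of Theorem \ref{Thm:IgusaMain}. Pulling back along $x:\spd C \to \grgmu$ identifies $\ig_x\gx$ with the fiber of $\pi_{\mathrm{HT}}:\mathbf{Sh}\gx^{\circ,\lozenge}\to\grgmu$ over $x$. This fiber is a closed subsheaf of the infinite level Shimura variety, stable under $\ul{\gafp \times K_{p,x}}$. Let $x_2$ be the image of $x$ in $\operatorname{Gr}_{G_2,-\mu_2}(C)$, using the isomorphism of flag varieties given by the ad-isomorphism. Then $\ig_x\gxtwo$ is the analogous fiber, and the map $\ig_x\gx\to\ig_x\gxtwo$ is induced from $\mathbf{Sh}\gx^{\circ,\lozenge}\to\mathbf{Sh}\gxtwo^{\circ,\lozenge}$. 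Here we use that $\pi_{\mathrm{HT}}$ is compatible with morphisms of Shimura data.

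\textbf{Step 1: identify the quotients with fibers at finite level.} We first show that
\[
\ig_x\gx/\ul{K^p\times K_{p,x}} \simeq \left(\mathbf{Sh}\gx^{\circ,\lozenge}/\ul{K^p}\right)\times_{\grgmu/\ul{K_p}} \left(\spd C\, /\ul{K_{p,x}}\right),
\]
and similarly for $\g_2$. Neatness and SV5 ensure that $K$ acts freely, so that $\mathbf{Sh}\gx^{\circ,\lozenge}\to \msh_K\gx^{\circ,\lozenge}$ is a pro-étale $\ul K$-torsor. Since the orbit of $x$ is $\ul{K_p/K_{p,x}}\cdot x \subset \grgmu$, the quotient is naturally a locally closed subsheaf of the finite level diamond $\msh_K\gx^{\circ,\lozenge}\times_{\spd E}\spd C$.

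\textbf{Step 2: finite étaleness at finite level.} By Deligne's description of connected components, for $\g\to\g_2$ surjective and $K\mapsto K_2$ with both neat, the map $\msh_K\gx\to\msh_{K_2}\gxtwo$ is finite étale onto a union of connected components after base change to $E$. Concretely, each connected component of the source maps to a component of the target as a finite étale cover with group a quotient of $K_2\cap \g_2(\mathbb{Q})_+ / \operatorname{Im}(K\cap \g(\mathbb Q)_+)$. This group is finite under SV5 and neatness. Such maps preserve the potentially crystalline locus, by \cite{ImaiMieda}, since it is defined by a condition on the associated $p$-adic local system, which is insensitive to finite étale covers. The Hodge--Tate period maps are compatible, with $K_p\to K_{p,2}$ inducing $K_{p,x}\to K_{p,2,x}$.

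\textbf{Step 3: conclude by base change.} The map in the lemma is then the base change of the finite étale map from Step 2 along $\spd C/\ul{K_{p,2,x}}$. It is twisted by the finite map $\ul{K_{p,2,x}}/\ul{K_{p,x}}$, whose fibers are profinite-set quotients. The latter is finite étale once we know that the image of $K_{p,x}$ in $K_{p,2,x}$ has finite index and the kernel is finite. Both hold because $G\to G_2$ is surjective with kernel central and with finite-index image on compact opens.

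We expect the main obstacle to be Step 1 and this last point. The stabilizers $K_{p,x}$ are not open, so the quotient by $\ul{K_{p,x}}$ must be handled carefully. We would argue by writing the quotients as limits over open $U_p \supset K_{p,x}$ of $\ig_x/\ul{K^p \times U_p}$. We would then check that the relevant transition squares are Cartesian, so that finite étaleness descends through the limit by \cite[Proposition 11.23]{EtCohDiam}-type arguments.
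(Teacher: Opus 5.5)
Your skeleton is the paper's. You identify $\ig_x\gx/\ul{K^p\times K_{p,x}}$ with the preimage $\msh_K\gx^{\circ,\lozenge}_{[x]}$ of $[K_p x/\ul{K_p}]\simeq[\spd C/\ul{K_{p,x}}]$ in the finite-level Shimura variety, and then base change the finite \'etale map $\msh_K\gx^{\circ,\lozenge}\to\msh_{K_2}\gxtwo^{\circ,\lozenge}$. But Step 3 rests on a false claim. The kernel of $K_{p,x}\to K_{p,2,x}$ is $K_p\cap Z(\qp)$ with $Z=\ker(G\to G_2)$, because $Z$ acts trivially on the flag variety. Being \emph{central} does not make it \emph{finite}: for $\operatorname{GSp}_{2g}\to\operatorname{PGSp}_{2g}$, and more generally whenever $\g$ is of Hodge type and $\g_2$ is adjoint, it contains an open subgroup of $\zp^{\times}$. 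Consequently the fibres of $[\spd C/\ul{K_{p,x}}]\to[\spd C/\ul{K_{p,2,x}}]$ involve the classifying stack of an infinite profinite group. That map is therefore not finite \'etale (not even representable), and the ``twist by $\ul{K_{p,2,x}}/\ul{K_{p,x}}$'' cannot be justified the way you propose. The infinite central part has to stay inside $\msh_K\gx\to\msh_{K_2}\gxtwo$, whose finite \'etaleness (this is where SV5 and neatness enter) is what absorbs it.

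The paper arranges this as follows. Since $G\to G_2$ is surjective, $G(\qp)\to G_2(\qp)$ is open, so the image of $K_p$ has finite index in $K_{p,2}$. Replacing $K_{p,2}$ by this image changes the target only by a finite \'etale map, since $\ul{K_{p,2,x}}$ acts freely on $\ig_x\gxtwo/\ul{K_2^p}$. Because $G$ acts on $\grgmu=\operatorname{Gr}_{G_2,-\mu_2}$ through $G_2$, one then has $K_p\cdot x=K_{p,2}\cdot x$. So $\msh_K\gx^{\circ,\lozenge}_{[x]}$ is exactly the preimage of $\msh_{K_2}\gxtwo^{\circ,\lozenge}_{[x]}$, and the lemma's map is literally a base change of the finite \'etale map, with no separate twist. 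Without shrinking, the correct comparison map is the pullback of the open and closed immersion $[K_p x/\ul{K_p}]\subset[K_{p,2}x/\ul{K_p}]$; this is fine for a reason unrelated to the size of the kernel.

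Two smaller corrections. In Step 1 you need $\msh_K\gx^{\circ,\lozenge}=\msh\gx^{\circ,\lozenge}/\ul{K^pK_p}$, not $\msh\gx^{\circ,\lozenge}/\ul{K^p}$. With $\ul{K^p}$ alone, the fibre product over $[\grgmu/\ul{K_p}]$ is the preimage of the whole orbit, namely $(\ig_x\gx/\ul{K^p})\times^{\ul{K_{p,x}}}\ul{K_p}$. Also, the proposed limit over open $U_p\supset K_{p,x}$ is ill-posed, since such $U_p$ do not preserve $\ig_x\gx$. It is also unnecessary: $\spd C\to[\spd C/\ul{K_{p,x}}]$ is a $\ul{K_{p,x}}$-torsor, so the Igusa stack diagram directly exhibits $\ig_x\gx/\ul{K^p}\to\msh_K\gx^{\circ,\lozenge}_{[x]}$ as a $\ul{K_{p,x}}$-torsor.
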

\begin{proof}
Since $G \to G_2$ is surjective, it follows that $G(\qp) \to G_2(\qp)$ is open. Therefore, after replacing $K_{p,2}$ with a finite index subgroup, we may assume that $K_{p}$ surjects onto $K_{p,2}$. It follows from the Igusa stack diagram that we have a $2$-Cartesian diagram
\begin{equation}
    \begin{tikzcd}
        \ig_x\gx / \ul{K^p} \arrow{r} \arrow{d} & \spd C \arrow{d}\\
        \mathbf{Sh}_K\gx^{\circ, \lozenge}_{[x]} \arrow{r} & \left[ \spd C / \ul{K_{p,x}}\right],
    \end{tikzcd}
\end{equation}
where $\mathbf{Sh}_K\gx^{\circ, \lozenge}_{[x]}$ is the inverse image in $\mathbf{Sh}_K\gx^{\circ, \lozenge}_{C}$ of $$\left[O(x) / \ul{K_p} \right] \subset \left[\grgmu/\ul{K_{p}}\right],$$ with $O(x)$ the image of the orbit map through $x$ of $\ul{K_p}$. There is a similar diagram for $\ig_x\gx / \ul{K_2^p}$ and the natural map $\mathbf{Sh}_K\gx^{\circ, \lozenge}_{[x]} \to \mathbf{Sh}_{K_2}\gxtwo^{\circ, \lozenge}_{[x]}$ is finite \'etale because $\mathbf{Sh}_K\gx^{\circ, \lozenge} \to \mathbf{Sh}_{K_2}\gxtwo^{\circ, \lozenge}$ is finite \'etale; this proves the lemma.
\end{proof}

\subsubsection{} Let $Z$ be the kernel of $G \to G_2$. 
\begin{Cor} \label{Cor:FiniteEtaleIgusaIV}
For $U \subset Z(\qp)$ a compact open subgroup, the natural map 
\begin{align}
    \ig_x\gx / \ul{K^p \times U} \to \ig_x\gxtwo / \ul{K_2^p}
\end{align}
is finite \'etale. 
\end{Cor}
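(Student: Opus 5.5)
We will deduce Corollary \ref{Cor:FiniteEtaleIgusaIV} from Lemma \ref{Lem:FiniteEtaleIgusaIII} by factoring the map through the quotients by stabilizers. First we choose $K_p$ and $K_{2,p}$ small enough that $K_p$ surjects onto $K_{2,p}$, as in the proof of Lemma \ref{Lem:FiniteEtaleIgusaIII}. We also arrange that $K_p \cap Z(\qp) \subset U$; since $Z(\qp)$ is a closed subgroup of $G(\qp)$, shrinking $K_p$ achieves this, and finite index changes of $U$ are harmless. Because $Z$ acts trivially on $\grgmu$, we have $Z(\qp) \subset G(\qp)_x$, hence $K_p \cap Z(\qp) \subset K_{p,x}$.

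We then factor the map as
\begin{align}
    \ig_x\gx / \ul{K^p \times U} \to \ig_x\gx / \ul{K^p \times U K_{p,x}} \to \ig_x\gxtwo / \ul{K_2^p \times K_{p,2,x}} \to \ig_x\gxtwo / \ul{K_2^p}.
\end{align}
For the first arrow we use that $U \cap K_{p,x}$ has finite index in both $U$ and $K_{p,x}$ (both are compact open in their ambient groups, and $Z(\qp)\cap K_{p,x}$ is open in $U$). Hence $\ul{U}$ has finite index in the compact group $\ul{U K_{p,x}}$, assuming $UK_{p,x}$ is a group, which holds because $U$ is central. The map is then a quotient by a finite group acting freely, and the action is free because $K^p$ is neat and SV5 holds. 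So it is finite \'etale.

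For the middle arrow, we compare it with the map $\ig_x\gx/\ul{K^p\times K_{p,x}} \to \ig_x\gxtwo / \ul{K_2^p \times K_{p,2,x}}$ of Lemma \ref{Lem:FiniteEtaleIgusaIII}, which is finite \'etale. The source there covers $\ig_x\gx/\ul{K^p\times UK_{p,x}}$ by a finite \'etale map. Since finite \'etaleness descends along finite \'etale surjections, we get that the middle arrow is finite \'etale, at least after shrinking $U$ to lie in $K_{p,x}$.

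The last arrow is the main obstacle. We must show that $\ul{K_{p,2,x}}$ acts trivially on $\ig_x\gxtwo / \ul{K_2^p}$, or at least with finite orbits. The point is that $\ig_x\gxtwo$ lies over the single point $x$, so $K_{p,2,x}$ acts through its action on the fiber of $\mathbf{Sh}\gxtwo^{\circ,\lozenge} \to \grgmu$. Via the Igusa stack diagram, we expect this to be the action of $K_{p,2,x}\subset \tilde G_{b}$ through automorphisms of the $G_2$-bundle $\mathcal{E}_x$. We would then need to compare this action with the $\ul{\g_2(\afp)}$-action using a Deligne-group argument (Lemma \ref{Lem:AdjointIgusaTorsor}), showing that modulo $K_2^p$ and the image of $K_{p,x}$ only a finite quotient remains. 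We would try first to absorb $K_{p,2,x}$ entirely by using the surjectivity $K_{p,x}\to K_{p,2,x}$ up to finite index. We also expect to need to redo the factorization with $K_{p,x}$ acting on the source, so that the composite becomes a quotient by $K_{p,x}/U$ on both sides.
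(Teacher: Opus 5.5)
Your proposal has a genuine gap: the chain of maps you write down is not a factorization of the map in the statement, because the last arrow $\ig_x\gxtwo/\ul{K_2^p\times K_{p,2,x}} \to \ig_x\gxtwo/\ul{K_2^p}$ points the wrong way. The natural map goes in the opposite direction. By the Igusa stack diagram in the proof of Lemma \ref{Lem:FiniteEtaleIgusaIII}, it is a $\ul{K_{p,2,x}}$-torsor over $\mathbf{Sh}_{K_2}\gxtwo^{\circ,\lozenge}_{[x]}$. So $K_{p,2,x}$ acts \emph{freely} on $\ig_x\gxtwo/\ul{K_2^p}$, with orbits homeomorphic to the profinite group $K_{p,2,x}$. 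What you set out to prove in your last step (trivial action, or finite orbits) is therefore false, and no Deligne-group argument can rescue it.

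Your first arrow has a similar problem. The group $UK_{p,x}/U\simeq K_{p,x}/(K_{p,x}\cap U)$ is not finite in general. Indeed, $K_{p,x}$ is open in the stabilizer $G(\qp)_x$, which contains all of $Z_G(\qp)$ (possibly much larger than $Z(\qp)$) and, for special $x$, the $\qp$-points of a parabolic subgroup. So that arrow is a torsor under an infinite profinite group, not a finite \'etale map.

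The missing idea is to replace the factorization by a Cartesian square, which is what the paper does.
\begin{itemize}
\item Shrink $U$ to the commensurable subgroup $K_{p,x}\cap Z(\qp)$, which is harmless, as you note.
\item Arrange, as in the proof of Lemma \ref{Lem:FiniteEtaleIgusaIII}, that $K_p\to K_{p,2}$ is surjective. Since $G$ acts on $\grgmu$ through $G_2$, the map $K_{p,x}\to K_{p,2,x}$ is then surjective.
\item Its kernel is $K_{p,x}\cap Z(\qp)=U$, because $Z=\ker(G\to G_2)$.
\end{itemize}
Now $\ig_x\gx/\ul{K^p\times U}\to \ig_x\gx/\ul{K^p\times K_{p,x}}$ is a $K_{p,x}/U$-torsor, and $\ig_x\gxtwo/\ul{K_2^p}\to\ig_x\gxtwo/\ul{K_2^p\times K_{p,2,x}}$ is a $K_{p,2,x}$-torsor. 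The map $\ig_x\gx/\ul{K^p\times U}\to\ig_x\gxtwo/\ul{K_2^p}$ is equivariant for the isomorphism $K_{p,x}/U\isom K_{p,2,x}$. Hence the induced map to the fiber product over $\ig_x\gx/\ul{K^p\times K_{p,x}}$ is a map of $K_{p,2,x}$-torsors over the same base, and so an isomorphism. The square whose bottom arrow is the map of Lemma \ref{Lem:FiniteEtaleIgusaIII} is therefore Cartesian, and the map in the corollary is a base change of that finite \'etale map. Your final sentence gestures in this direction, but this torsor comparison is the entire content of the proof.
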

\begin{proof}
After replacing $U$ by $K_{p,x} \cap Z(\qp)$ and replacing $K_{p,2,x}$ by the image of $K_{p,2}$, both of which induce finite \'etale maps, it follows from Lemma \ref{Lem:FiniteEtaleIgusaIII} and the Cartesian diagram
\begin{equation}
    \begin{tikzcd}
    \ig_x\gx / \ul{K^p \times U} \arrow{r} \arrow{d} &  \ig_x\gxtwo / \ul{K_2^p} \arrow{d} \\
        \ig_x\gx / \ul{K^p \times K_{p,x}} \arrow{r} &  \ig_x\gxtwo / \ul{K_2^p \times K_{p,2,x}}.
    \end{tikzcd}
\end{equation}
\end{proof}
\begin{Prop} \label{Prop:FiniteEtaleIgusaVariety}
    For $J \subset G_b(\qp)$ compact open mapping to $J_2 \subset G_{2,b}(\qp)$ compact open, the natural map
    \begin{align}
         \ig_x\gx / \ul{K^p \times J} \to \ig_x\gxtwo / \ul{K_2^p \times J_2}
    \end{align}
    is finite \'etale. 
\end{Prop}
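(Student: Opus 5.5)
The natural approach is to reduce Proposition \ref{Prop:FiniteEtaleIgusaVariety} to Corollary \ref{Cor:FiniteEtaleIgusaIV}, by comparing the quotients by $J$ and $J_2$ with the quotients by central compact opens. Here $x \in \grgmu(C)$ lies over $b$, and $\tilde{G}_b$, hence $\ul{G_b(\qp)}$, acts on $\ig_x\gx$ through the Igusa stack diagram; similarly for $G_2$. Since $G \to G_2$ is surjective with kernel $Z$, the map $G_b \to G_{2,b}$ is surjective with kernel $Z$ (central), so $G_b(\qp) \to G_{2,b}(\qp)$ has open image and kernel $Z(\qp)$.

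First I will shrink $J$. Passing to a finite index normal subgroup $J' \subset J$ replaces the source by a finite \'etale cover of itself, namely a $\ul{J/J'}$-torsor, and likewise on the target. So it suffices to treat sufficiently small $J$. I will then choose $J$ small, $U=J \cap Z(\qp)$, and set $J_2$ equal to the image of $J$, which is open since $G_b(\qp) \to G_{2,b}(\qp)$ is open. The profinite group $J/U$ then maps isomorphically, as a topological group, onto $J_2$.

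Next I will factor the map as
\begin{align}
\ig_x\gx/\ul{K^p \times U} \to \ig_x\gxtwo/\ul{K_2^p},
\end{align}
which is finite \'etale by Corollary \ref{Cor:FiniteEtaleIgusaIV}, and which is equivariant for $\ul{J/U} \simeq \ul{J_2}$. Taking quotients by these isomorphic profinite groups, I must show that a $\ul{J_2}$-equivariant finite \'etale map $f:X \to Y$ descends to a finite \'etale map $X/\ul{J_2} \to Y/\ul{J_2}$. The action on $Y$ should be free, because $K_2^p$ is neat and $J_2$ is small, and the same freeness holds on $X$. Granting this, the square
\begin{equation}
\begin{tikzcd}
X \arrow{r} \arrow{d} & Y \arrow{d} \\
X/\ul{J_2} \arrow{r} & Y/\ul{J_2}
\end{tikzcd}
\end{equation}
is Cartesian, since both vertical maps are $\ul{J_2}$-torsors and $f$ is equivariant. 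Finite \'etaleness of the bottom arrow then follows by v-descent along the v-cover $Y \to Y/\ul{J_2}$, using that being finite \'etale is v-local on the target for maps of diamonds.

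The main obstacle is the freeness of the $J_2$-action, that is, that the quotient maps are genuinely torsors. I would get this from neatness of $K^p$ and of $K^p_2$, together with the Igusa stack Cartesian diagram: stabilizers of points in $\ul{K_2^p \times J_2}$ inject into automorphism groups of points of $\mathbf{Sh}_{K_2}$, which are trivial by neatness and SV5, after shrinking $J_2$. If freeness fails in general, I would instead argue with stacky quotients, where the Cartesian square holds formally, and conclude by descent at the level of stacks.
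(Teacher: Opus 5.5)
Your proposal is correct and follows essentially the same route as the paper: replace $J_2$ by the open, finite-index image of $J$, set $U = J \cap Z(\mathbb{Q}_p)$ (the paper's ``$J_2 \cap Z(\mathbb{Q}_p)$'' should be read this way), and deduce the claim from Corollary \ref{Cor:FiniteEtaleIgusaIV} via the Cartesian square whose vertical maps are quotients by $J/U \simeq J_2$, with finite \'etaleness descending along the top-right-to-bottom-right vertical map. Your preliminary shrinking of $J$ and your discussion of freeness do not appear in the paper, which takes the Cartesian square for granted; your fallback to stacky quotients, where the square is Cartesian formally, is enough to make that step rigorous.
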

\begin{proof}
After replacing $J_2$ by the image of $J$ and setting $U= J_2 \cap Z(\qp)$, it follows from Corollary \ref{Cor:FiniteEtaleIgusaIV} and the Cartesian diagram   
\begin{equation}
    \begin{tikzcd}
    \ig_x\gx / \ul{K^p \times U} \arrow{r} \arrow{d} &  \ig_x\gxtwo / \ul{K_2^p} \arrow{d} \\
        \ig_x\gx / \ul{K^p \times J} \arrow{r} &  \ig_x\gxtwo / \ul{K_2^p \times J_{2}}.
    \end{tikzcd}
\end{equation}
\end{proof}

\subsubsection{} The v-sheaf $\ig_x \gx$ is a locally spatial diamond, and so it has a locally profinite set of connected components $\pi_0(\ig_x \gx)$, see \cite[Proposition 11.19.(i)]{EtCohDiam}. 
\begin{Lem} \label{Lem:FiniteEtaleIgusaV}
Suppose that $\g_2(\afp)$ acts transitively on $\pi_0(\mathbf{Sh}_{K_{p,2}}\gxtwo)$ for all $K_{p,2}$. Then $\ig_x\gxtwo / \ul{K_2^p \times K_{p,2,x}}$ is a finite union of $\g_2(\afp)$-translates of the (open and closed) image of $\ig_x\gx / \ul{K^p \times K_{p,x}}$. Moreover, if the locally profinite set $\pi_0(\ig_x\gx / \ul{K^p \times K_{p,x}})$ is quasi-compact, then $\pi_0(\ig_x\gxtwo / \ul{K_2^p \times K_{p,2,x}})$ is quasi-compact. 
\end{Lem}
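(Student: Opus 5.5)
We want Lemma \ref{Lem:FiniteEtaleIgusaV}: that $\ig_x\gxtwo / \ul{K_2^p \times K_{p,2,x}}$ is a finite union of $\g_2(\afp)$-translates of the image of $\ig_x\gx / \ul{K^p \times K_{p,x}}$, and the quasi-compactness consequence. We work with the $2$-Cartesian diagrams from the proof of Lemma \ref{Lem:FiniteEtaleIgusaIII}, which realise $\ig_x\gx / \ul{K^p}$ (resp. $\ig_x\gxtwo/\ul{K_2^p}$) as a $\ul{K_{p,x}}$-torsor (resp. $\ul{K_{p,2,x}}$-torsor) over $\mathbf{Sh}_K\gx^{\circ,\lozenge}_{[x]}$ (resp. $\mathbf{Sh}_{K_2}\gxtwo^{\circ,\lozenge}_{[x]}$). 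Passing to the quotients identifies $\ig_x\gx / \ul{K^p \times K_{p,x}}$ with $\mathbf{Sh}_K\gx^{\circ,\lozenge}_{[x]}$ up to the stacky quotient by the stabiliser, and similarly for $\gxtwo$. The statement therefore reduces to one about the finite étale map $\mathbf{Sh}_K\gx^{\circ,\lozenge}_{[x]} \to \mathbf{Sh}_{K_2}\gxtwo^{\circ,\lozenge}_{[x]}$ (finite étale by Lemma \ref{Lem:FiniteEtaleIgusaIII}).

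\textbf{Openness and closedness of the image.} A finite étale map of locally spatial diamonds is open and closed, so the image of $\ig_x\gx / \ul{K^p \times K_{p,x}}$ is open and closed.

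\textbf{Covering by translates.} The key input is the classical fact that $\msh_K\gx \to \msh_{K_2}\gxtwo$ is finite étale with image a union of connected components, and that $\g_2(\af)$ (indeed $\g_2(\afp)$, by the hypothesis) acts transitively on $\pi_0(\mathbf{Sh}_{K_{p,2}}\gxtwo)$. Hence finitely many $\g_2(\afp)$-translates of the image of $\msh_K\gx$ cover $\msh_{K_2}\gxtwo$; finiteness holds because $\pi_0(\msh_{K_2}\gxtwo)$ is finite at finite level. The $\g_2(\afp)$-action preserves the potentially crystalline locus and commutes with $\pi_{\mathrm{HT}}$, hence preserves the fibres $[x]$ and is compatible with the Igusa diagram. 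So the same finitely many translates (composed with the Hecke correspondences at level $K_2^p$, which is where the $\g_2(\afp)$-translates of the quotient live) cover $\mathbf{Sh}_{K_2}\gxtwo^{\circ,\lozenge}_{[x]}$, and pulling back along the torsor gives the claim for the Igusa quotients. \textbf{Main obstacle:} making sense of "$\g_2(\afp)$-translate" at finite level $K_2^p$. I would work at infinite level $\ig_x\gxtwo$, where $\g_2(\afp)$ acts honestly, note that the image of $\ig_x\gx\times\ul{\g_2(\afp)}$ is all of $\ig_x\gxtwo$ by transitivity on $\pi_0$, and then use compactness of $K_2^p$-double cosets to extract finitely many translates after quotienting.

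\textbf{Quasi-compactness.} $\pi_0$ of the image of a finite étale map from a space with quasi-compact $\pi_0$ is the continuous image of a quasi-compact set, hence quasi-compact. Translates by Hecke operators preserve this. A finite union of quasi-compact subsets of $\pi_0(\ig_x\gxtwo / \ul{K_2^p \times K_{p,2,x}})$ covering it then shows that it is quasi-compact.
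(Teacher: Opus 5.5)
Your proposal follows the same route as the paper. You start from the classical covering statement for $\mathbf{Sh}_{K_2}\gxtwo$, with finitely many translates because $\pi_0$ is finite at finite level. You transport it to $\mathbf{Sh}_{K_2}\gxtwo^{\circ,\lozenge}_{[x]} \simeq \ig_x\gxtwo/\ul{K_2^p\times K_{p,2,x}}$ using the diagrams from the proof of Lemma \ref{Lem:FiniteEtaleIgusaIII}. You then get quasi-compactness of $\pi_0$ because each translate is a finite \'etale image of something with quasi-compact $\pi_0$.

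The one step you leave unjustified is the passage from $\mathbf{Sh}$ to the potentially crystalline locus $\mathbf{Sh}^{\circ}$. Hecke-stability of $\mathbf{Sh}_{K_2}\gxtwo^{\circ}$ is not enough on its own. You also need that a point of $\mathbf{Sh}_K\gx$ lies in $\mathbf{Sh}_K\gx^{\circ}$ as soon as its image lies in $\mathbf{Sh}_{K_2}\gxtwo^{\circ}$, so that $\mathrm{Im}(\mathbf{Sh}_K\gx)\cap\mathbf{Sh}_{K_2}\gxtwo^{\circ}=\mathrm{Im}(\mathbf{Sh}_K\gx^{\circ})$. The paper imports exactly this compatibility from \cite[Lemma 4.1.3]{DvHKZIgusaStacksII}.

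Your ``main obstacle'' aside needs the same input, since transitivity is only given on $\pi_0$ of the Shimura varieties, not on $\pi_0$ of $\ig_x\gxtwo$. There, finiteness of the number of translates should again come from finiteness of $\pi_0$ at finite level, not from compactness of double cosets.
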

\begin{proof}
Note that $\mathbf{Sh}_{K_2}\gxtwo$ is a finite union of $\g_2(\afp)$ translates of the image of $\mathbf{Sh}_{K}\gx \to \mathbf{Sh}_{K_2}\gxtwo$, the image being an open and closed union of connected components. Using \cite[Lemma 4.1.3]{DvHKZIgusaStacksII}, the same result holds for $\mathbf{Sh}_{K_2}\gxtwo^{\circ, \lozenge}$. This immediately implies that $\ig_x\gxtwo / \ul{K_2^p \times K_{p,2,x}}$ is a finite union of $\g_2(\afp)$-translates of the image of $\ig_x\gx / \ul{K^p \times K_{p,x}}$. These all have quasicompact $\pi_0$ since they are finite \'etale quotients of things with quasicompact $\pi_0$. Thus we deduce that $\ig_x\gxtwo / \ul{K_2^p \times K_{p,2,x}}$ has quasicompact $\pi_0$. 
\end{proof}

\begin{Lem} \label{Lem:FiniteEtaleIgusaVI}
Suppose that $\g_2(\afp)$ acts transitively on $\pi_0(\mathbf{Sh}_{K_{p,2}}\gxtwo)$ for all $K_{p,2}$.  If the v-sheaf $\ig_x\gx / \ul{K^p \times J}$ has finitely many connected components, then so does $\ig_x\gxtwo / \ul{K^p_{2} \times J_2}$. 
\end{Lem}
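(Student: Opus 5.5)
The plan is to run the argument of Lemma \ref{Lem:FiniteEtaleIgusaV} once more, now at level $J$ and $J_2$, and then add a counting step for connected components. Both objects are locally spatial diamonds. Since $\g \to \g_2$ is surjective and $G(\qp) \to G_2(\qp)$ is open, I will first shrink $J$, and then replace $J_2$ by the image of $J$. Passing to a finite-index subgroup of $J_2$ only increases the number of connected components upstairs, and the quotient by the finite group $J_2/\operatorname{Im}(J)$ maps connected components onto connected components. So it suffices to treat the case where $J$ surjects onto $J_2$. Similarly, I may shrink $K_2^p$ so that $K^p$ surjects onto $K_2^p$, possibly after enlarging the source level.

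The first step is to produce a finite cover of the target by images of the source. Under the hypothesis that $\g_2(\afp)$ acts transitively on $\pi_0(\mathbf{Sh}_{K_{p,2}}\gxtwo)$, the argument in the proof of Lemma \ref{Lem:FiniteEtaleIgusaV} applies verbatim with $K_{p,x}$ replaced by $J$, using the Cartesian diagram with $\mathbf{Sh}_K\gx^{\circ,\lozenge}_{[x]}$. It shows that $\ig_x\gxtwo / \ul{K_2^p \times J_2}$ is a finite union of $\g_2(\afp)$-translates $g_i \cdot Y$. Here $Y$ is the image of the map
\begin{align}
    \ig_x\gx / \ul{K^p \times J} \to \ig_x\gxtwo / \ul{K_2^p \times J_2}.
\end{align}
By Proposition \ref{Prop:FiniteEtaleIgusaVariety} this map is finite \'etale, so $Y$ is open and closed. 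Each translate $g_i \cdot Y$ is again open and closed, because $g_i$ is an isomorphism onto $\ig_x\gxtwo / \ul{K_2^p \times J_2}$ (after shrinking the level so that it becomes normalized by the $g_i$, or by working with $g_i K_2^p g_i^{-1}$).

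The second step counts components. A finite \'etale map of locally spatial diamonds is open and closed, so it sends each connected component of the source onto a connected component of its image. Hence $\pi_0(Y)$ has cardinality at most $|\pi_0(\ig_x\gx/\ul{K^p\times J})|$, which is finite by assumption. The same bound holds for each $g_i\cdot Y$. Every connected component of the target lies in some $g_i\cdot Y$, and there are finitely many $i$, so the target has finitely many connected components.

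The main obstacle is the level bookkeeping in the first step. Translating by $g_i$ changes the level $K_2^p$ to $g_i K_2^p g_i^{-1}$, so I have to check that the translates really live in the same quotient. I would handle this by passing to a normal open subgroup of $K_2^p$ that is contained in all the conjugates; this only enlarges the component count by a finite factor. Once that is in place, the rest follows directly from the finite \'etaleness in Proposition \ref{Prop:FiniteEtaleIgusaVariety}.
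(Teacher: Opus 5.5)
\textbf{The gap is in your first step.} It does not go through as stated. The proof of Lemma~\ref{Lem:FiniteEtaleIgusaV} rests on the Cartesian square identifying $\ig_x\gx/\ul{K^p\times K_{p,x}}$ with the locus $\mathbf{Sh}_K\gx^{\circ,\lozenge}_{[x]}$ of a finite-level Shimura variety. Its corner $[\spd C/\ul{K_{p,x}}]$ comes from the $\ul{K_p}$-orbit of $x$ in $\grgmu$, so $K_{p,x}$ is forced to be the stabilizer of $x$ in $K_p$.

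A compact open $J\subset G_b(\qp)$ does not act on $\grgmu$ at all. There is no analogous square describing $\ig_x\gx/\ul{K^p\times J}$ inside a Shimura variety, so "verbatim with $K_{p,x}$ replaced by $J$" is not available. The finiteness of the number of translates in Lemma~\ref{Lem:FiniteEtaleIgusaV} is inherited from the finiteness of $\pi_0(\mathbf{Sh}_{K_2}\gxtwo)$. Hence you have no justification for a \emph{finite} union of $\g_2(\afp)$-translates at level $K_2^p\times J_2$. Tellingly, your argument uses the hypothesis on $\ig_x\gx/\ul{K^p\times J}$ only to bound the components of each translate. The finiteness of the number of translates is asserted unconditionally.

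The point you flagged as the main obstacle is harmless. Let $\tilde Y$ be the open and closed preimage of $Y$ at infinite level away from $p$. Since $K_2^p\cap gK_2^pg^{-1}$ has finite index in $K_2^p$, the $\ul{K_2^p}$-saturation of $g\tilde Y$ is a finite union of translates of $g\tilde Y$. So translates of $Y$ are open and closed without changing the level.

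\textbf{What is missing} is a way to move the finiteness hypothesis from level $J$ to level $K_{p,x}$, where Lemma~\ref{Lem:FiniteEtaleIgusaV} applies, and back to level $J_2$. The paper does this through infinite level, using quasi-compactness of $\pi_0$:
\begin{enumerate}
\item Since $\pi_0(\ig_x\gx/\ul{K^p\times J})$ is finite, $\pi_0(\ig_x\gx)$ is a finite union of orbits of the profinite group $K^p\times J$, hence quasi-compact.
\item Its continuous image $\pi_0(\ig_x\gx/\ul{K^p\times K_{p,x}})$ is then quasi-compact.
\item Lemma~\ref{Lem:FiniteEtaleIgusaV} gives quasi-compactness of $\pi_0(\ig_x\gxtwo/\ul{K_2^p\times K_{p,2,x}})$, hence of $\pi_0(\ig_x\gxtwo)$, hence of $\pi_0(\ig_x\gxtwo/\ul{K_2^p\times J_2})$.
\end{enumerate}
Only then does one work at level $J_2$. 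By Proposition~\ref{Prop:FiniteEtaleIgusaVariety}, the image of $\pi_0$ of the source is open, closed and finite. Its $\g_2(\afp)$-translates cover the target; Lemma~\ref{Lem:FiniteEtaleIgusaV} supplies the covering, but not its finiteness. Quasi-compactness then lets you extract finitely many translates, and your component-counting step concludes.
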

\begin{proof}
Since $\ig_x\gx / \ul{K^p \times J}$ has finitely many connected components, it follows that $\pi_0(\ig_x \gx)$ is quasi-compact, and thus that $\pi_0(\ig_x\gx / \ul{K^p \times K_{p,x}})$ is quasi-compact. It follows from Lemma \ref{Lem:FiniteEtaleIgusaV} that $\pi_0(\ig_x\gxtwo / \ul{K_2^p \times K_{p,2,x}})$ is quasi-compact and thus that $\pi_0(\ig_x\gxtwo)$ is quasi-compact. \smallskip 

Since the natural map $\ig_x\gx / \ul{K^p \times J} \to \ig_x\gxtwo / \ul{K^p_{2} \times J_2}$ is finite \'etale by Proposition \ref{Prop:FiniteEtaleIgusaVariety}, we see that the image of 
\begin{align}
    \pi_0(\ig_x\gx / \ul{K^p_{2} \times J}) \to \pi_0(\ig_x\gxtwo / \ul{K^p_{2} \times J_2})
\end{align}
is open and closed and finite. By Lemma \ref{Lem:FiniteEtaleIgusaV}, the rest of $\pi_0(\ig_x\gxtwo / \ul{K^p_{2} \times J_2})$ is covered by the $\g_2(\afp)$-translates of this image. Since $\pi_0(\ig_x\gx / \ul{K^p \times J})$ is quasi-compact, we only need finitely many translates, and thus $\pi_0(\ig_x\gxtwo / \ul{K^p_{2} \times J_2})$ is finite. 
\end{proof}

\subsubsection{} Our choice of $b$ defines a v-sheaf Igusa variety over $\spd \fpbar$ by
\begin{equation}
    \begin{tikzcd}
        \ig_{b,\mathrm{v}}\gx \arrow{r}{x} \arrow{d} & \igs \gx \arrow{d} \\
        \spd \fpbar \arrow{r}{b} & \bun_{G},
    \end{tikzcd}
\end{equation}
and an Igusa variety over $\spec \fpbar$ by
\begin{equation}
        \begin{tikzcd}
        \ig_{b}\gx \arrow{r}{x} \arrow{d} & \igs \gx^{\mathrm{red}} \arrow{d} \\
        \spec \fpbar \arrow{r}{b} & \gisoc. 
    \end{tikzcd} 
\end{equation}
Since $\igs \gx \to \bun_{G}$ is $0$-truncated, and the second $2$-Cartesian diagram is the reduction of the first, we find that $\ig_{b,\mathrm{v}}\gx$ and $\ig_{b}\gx$ are both sheaves. There is moreover a $\tilde{G}_b \times \ul{\gafp}$-equivariant isomorphism $\ig_{b,\mathrm{v}}\gx \times_{\spd \fpbar} \spd C \xrightarrow{\sim} \ig_x\gx$.
\begin{Lem} \label{Lem:ReductionFiniteEtale}
    If $X \to Y$ is a finite \'etale surjective morphism of v-sheaves on $\perf$, then $X^{\mathrm{red}} \to Y^{\mathrm{red}}$ is a finite \'etale surjective morphism of v-sheaves on $\affperf$.
\end{Lem}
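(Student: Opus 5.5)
The plan is to reduce the statement to the standard facts about finite étale maps of perfect schemes and about the specialization map for diamonds. Since being finite étale and surjective is v-local on the target, and reduction $\mathcal{F} \mapsto \mathcal{F}^{\red}$ commutes with fiber products (it is a right adjoint-type construction, $\mathcal{F}^{\red}(A)=\operatorname{Hom}(\spd A,\mathcal{F})$), it suffices to show: for every $\spec A \in \affperf$ and every map $\spd A \to Y$, the fiber product $X_A := X \times_Y \spd A$, which is finite étale surjective over $\spd A$, has reduction $X_A^{\red}$ finite étale surjective over $\spec A$. Indeed $X^{\red} \times_{Y^{\red}} \spec A = (X \times_Y \spd A)^{\red}$ by the commutation with fiber products and the fact that $(\spd A)^{\red}=\spec A$ (Lemma \ref{Lem:ReductionOfScheme}).

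The key step is then a ``topological invariance of the finite étale site'' for $\spd A$: the functor $B \mapsto \spd B$ from finite étale $A$-algebras to finite étale covers of $\spd A$ should be an equivalence. I would prove this by noting that finite étale maps to $\spd A$ are v-locally (hence on a cover by products of points) split, and compute on $\spd(R,R^+)$ points: $\spd A(R,R^+)=\operatorname{Hom}(A,R^+)$, and finite étale $R^+$-algebras, $R^+_{\red}$-algebras, and $R$-algebras compare via Henselianity of $(R^+,R^{\circ\circ})$ and almost purity. Concretely, I would first pass through the $p$-adic formal scheme $\spf W(A)$: finite étale $A$-algebras are equivalent to finite étale $W(A)$-algebras, and for formal schemes $\mathfrak{X}$ with perfect special fibre, finite étale covers of $\mathfrak{X}^{\diamond}$ over $\spd \zp$ versus $\spd A$ should correspond to finite étale covers of $\mathfrak{X}$; I expect to cite Gleason's specialization results (e.g. that $\spd A$ is a ``kimberlite''-type object whose étale site is that of $\spec A$) rather than reprove this.

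Given that equivalence, $X_A \simeq \spd B$ for a finite étale $A$-algebra $B$, and then $X_A^{\red}=(\spd B)^{\red}=\spec B$ by Lemma \ref{Lem:ReductionOfScheme}, which is finite étale over $\spec A$. Surjectivity follows because $|\spd A| \to |\spec A|$ (specialization) is surjective and compatible with $|\spd B| \to |\spec B|$, so surjectivity of $\spd B \to \spd A$ forces $\spec B \to \spec A$ surjective. Finally one checks $X^{\red}$ and $Y^{\red}$ are v-sheaves, which is already recorded in the paper.

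The main obstacle is the equivalence between finite étale covers of $\spd A$ and of $\spec A$, in particular descending a v-locally split finite étale cover of $\spd A$ to an algebraic one; I would try first to handle it through the specialization map and Gleason's identification of the étale site of $\spd A$ with that of $\spec A$, falling back on v-descent of finite étale algebras along $\spec R^+_{\red}\to\spec A$ using Lemma \ref{Lem:BasisTopology} and strict combs.
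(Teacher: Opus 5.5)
Your skeleton matches the paper's proof. You pull back along $\spd A\to Y$, recognize $X\times_Y\spd A$ as $\spd B$ for a finite \'etale $A$-algebra $B$, and conclude using that $\mathcal{F}\mapsto\mathcal{F}^{\red}$ commutes with fibre products and that $(\spd B)^{\red}=\spec B$ (Lemma \ref{Lem:ReductionOfScheme}). Your surjectivity argument via the elementary, functorial and surjective specialization map $|\spd A|\to|\spec A|$ works. It is a reasonable substitute for the paper's argument, which first reduces to $w$-contractible $A$ and then uses that $\spec B\to\spec A$ has a section.

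The gap is the one step that carries all the content: that every finite \'etale $X\times_Y\spd A\to\spd A$ is algebraic. The paper does not prove this; it takes it from \cite[Theorem 1.4]{kim2026descendingfiniteprojectivemodules}. None of the routes you sketch supplies it.

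\textbf{Gleason-type facts.} The facts recalled in the paper, such as Lemma \ref{Lem:ReductionOfScheme}, give full faithfulness of $\spec B\mapsto\spd B$ on finite \'etale $A$-algebras. That is, they control morphisms between covers that are already algebraic. They do not show that an arbitrary v-locally split finite \'etale cover of $\spd A$ descends to $A$, and this essential surjectivity is exactly why a separate, recent reference is needed.

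\textbf{Henselianity and almost purity.} These remarks only show that over a $w$-contractible $\spa(R,R^+)\to\spd A$ the cover splits, and hence comes from a finite \'etale $R^+$-algebra.

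\textbf{Your fallback v-descent.} To descend along $A\to R^+$ you need an isomorphism of the two pullbacks over $\spec(R^+\otimes_A R^+)$. The descent datum you actually have lives on $\spa(R,R^+)\times_{\spd A}\spa(R,R^+)$. That is only the part of $\spd(R^+\otimes_A R^+)$ where both pseudo-uniformizers become topologically nilpotent units. Extending sections of a finite \'etale scheme from that locus to all of $\spec(R^+\otimes_A R^+)$ is again an algebraization problem of the same kind. Switching to $R^+_{\red}$ changes nothing formally, since $\spec R^+_{\red}$ is not the reduction of $\spa(R,R^+)$ (that reduction is empty).

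\textbf{The $W(A)$ detour.} This merely restates the same problem for $\spf W(A)$.

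So your argument is complete only once this step is supplied by \cite[Theorem 1.4]{kim2026descendingfiniteprojectivemodules} or an equivalent result. If that result is only available for $w$-contractible $A$, first reduce to that case by v-descent of finite \'etale maps, as the paper does.
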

\begin{proof}
By \cite[Theorem 1.5]{BhattScholze} and fpqc descent for finite \'etale morphisms, it suffices to show that for all $w$-contractible $A \in \affperf$ and all maps $f:\spec A \to Y^{\mathrm{red}}$, the natural map $\spec A \times_{Y^{\mathrm{red}}} X^{\mathrm{red}} \to \spec A$ is finite \'etale surjective. The fiber product $\spd A \times_{Y} X$ is finite \'etale over $\spd A$ and thus of the form $\spd B \to \spd A$ for a finite \'etale $A$-algebra $B$ by \cite[Theorem 1.4]{kim2026descendingfiniteprojectivemodules}. Since reduction commutes with fiber products, we see that $\spec A \times_{Y^{\mathrm{red}}} X^{\mathrm{red}} \simeq \spec B$ is finite \'etale over $\spec A$, showing that $X^{\mathrm{red}} \to Y^{\mathrm{red}}$ is finite \'etale. Since $A$ is strictly $w$-local, the map $\spec B \to \spec A$ has a section, showing that $\spec A \to Y^{\mathrm{red}}$ lifts to $\spec A \to X^{\mathrm{red}}$, proving surjectivity.
\end{proof}
\begin{Lem} \label{Lem:ReductionPRoFiniteEtale}
If $X \to Y$ is a torsor on $\perf$ for a profinite group $H$, then $X^{\mathrm{red}} \to Y^{\mathrm{red}}$ is a torsor on $\affperf$ for $H$. 
\end{Lem}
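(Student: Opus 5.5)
The plan is to reduce to the finite-level statement just proved, Lemma \ref{Lem:ReductionFiniteEtale}, by writing the $H$-torsor as a limit of finite étale torsors under the finite quotients of $H$.

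First I would fix the meaning of an $H$-torsor on $\perf$: it is a map $X \to Y$ together with an action of $\ul{H}$ on $X$ over $Y$ such that $X \times \ul{H} \to X \times_Y X$ is an isomorphism and $X \to Y$ is v-surjective. The same definition makes sense on $\affperf$, using the constant sheaf $\ul{H}$ there. One notes that $\ul{H}^{\mathrm{red}} = \ul{H}$. Indeed, $\ul{H}$ on $\perf$ is $\varprojlim_N \ul{H/N}$, and maps $\spd A \to \ul{H/N}$ are locally constant functions on $|\spd A|$. Since $|\spd A| \to |\spec A|$ induces a bijection on $\pi_0$, or more directly since $(\spec A)^{\diamond}$ and $\spec A$ have the same clopen subsets via specialization, such a map is the same as a map $\spec A \to \ul{H/N}$.

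Second, reduction is a right adjoint, $\mathcal{F}^{\mathrm{red}}(A) = \operatorname{Hom}(\spd A, \mathcal{F})$. It therefore commutes with limits, and in particular with fiber products and cofiltered inverse limits. Applying reduction to the isomorphism $X \times \ul{H} \to X \times_Y X$ gives the isomorphism $X^{\mathrm{red}} \times \ul{H} \to X^{\mathrm{red}} \times_{Y^{\mathrm{red}}} X^{\mathrm{red}}$, and the action is transported in the same way. So the only remaining point is v-surjectivity of $X^{\mathrm{red}} \to Y^{\mathrm{red}}$.

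Third, for surjectivity, let $N \subset H$ run over open normal subgroups and set $X_N = X/\ul{N}$. Then $X_N \to Y$ is an $H/N$-torsor, hence finite étale surjective, and $X = \varprojlim_N X_N$. By Lemma \ref{Lem:ReductionFiniteEtale}, each $X_N^{\mathrm{red}} \to Y^{\mathrm{red}}$ is finite étale surjective, and $X^{\mathrm{red}} = \varprojlim X_N^{\mathrm{red}}$ because reduction commutes with limits.

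Given $\spec A \to Y^{\mathrm{red}}$ with $A$ w-contractible, the proof of Lemma \ref{Lem:ReductionFiniteEtale} produces sections of each $X_N^{\mathrm{red}} \times_{Y^{\mathrm{red}}} \spec A \to \spec A$. These pullbacks are trivial $H/N$-torsors, namely $\spec A \times H/N$. Choosing a compatible system of sections then amounts to taking an inverse limit of nonempty sets of the form $H/N$-torsor sets. These are nonempty compact systems, so the inverse limit is nonempty; alternatively one uses that $\varprojlim$ of trivialized torsors is trivialized by a point of $H$. This gives a section of $X^{\mathrm{red}}$ over $\spec A$, and since w-contractible affines form a basis of the v-topology by \cite[Theorem 1.5]{BhattScholze}, this proves v-surjectivity.

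The main obstacle is making sure the trivializations at each finite level can be chosen compatibly. I would handle this by first trivializing $X_{N_0}$ over $\spec A$, which reduces the problem to torsors over a connected-component-wise trivial situation. Since $\pi_0(\spec A)$ is extremally disconnected and all the torsors become $\ul{H/N}$-torsors over a profinite set, a section exists by the projectivity of extremally disconnected spaces.
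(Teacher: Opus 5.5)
Your proof is correct and follows the paper's route. The quasi-torsor structure comes from reduction commuting with limits together with $\ul{H}^{\mathrm{red}}=\ul{H}$. Surjectivity comes from writing the torsor as $\varprojlim_N X/\ul{N}$ of finite \'etale $H/N$-torsors and applying Lemma \ref{Lem:ReductionFiniteEtale} over a w-contractible $\spec A$.

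The paper leaves the compatibility of the finite-level sections implicit, and your final argument is the right way to supply it. Namely, take a continuous section of $\varprojlim_N \pi_0\bigl(X_N^{\mathrm{red}}\times_{Y^{\mathrm{red}}}\spec A\bigr)\to\pi_0(\spec A)$, which exists by projectivity of the extremally disconnected space $\pi_0(\spec A)$. By contrast, your earlier remark about ``nonempty compact systems'' is imprecise. Over a disconnected $\spec A$, the sets of sections are torsors under $C(\pi_0(\spec A),H/N)$, not finite sets, so compactness does not apply directly.
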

\begin{proof}
The reduction is clearly a quasi-torsor for $\ul{H}=\ul{H}^{\mathrm{red}}$. To prove surjectivity, we can argue as in the proof of Lemma \ref{Lem:ReductionFiniteEtale} by writing $H= \varprojlim_n H_n$ as an inverse limit of finite groups, which we can use to write $H$-torsors over $\spd A$ as inverse limits of finite \'etale covers of $\spd A$. 
\end{proof}
\begin{Cor} \label{Cor:FiniteEtaleIgusaVarietyScheme}
For $J \subset G_b(\qp)$ compact open mapping to $J_2 \subset G_{2,b}(\qp)$ compact open, the natural map (of sheaves on $\affperf$)
    \begin{align}
         \operatorname{Ig}_{b} \gx / \ul{K^p \times J} \to \operatorname{Ig}_b\gxtwo / \ul{K_2^p \times J_2}
    \end{align}
    is finite \'etale. 
\end{Cor}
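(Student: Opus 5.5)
The plan is to reduce the statement about reductions to the analytic statement of Proposition \ref{Prop:FiniteEtaleIgusaVariety}, using Lemmas \ref{Lem:ReductionFiniteEtale} and \ref{Lem:ReductionPRoFiniteEtale} to pass through the reduction functor.

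First, note that $\ig_x\gx \simeq \ig_{b,\mathrm{v}}\gx \times_{\spd \fpbar} \spd C$, equivariantly for $\tilde{G}_b \times \ul{\gafp}$. Hence the quotient $\ig_{b,\mathrm{v}}\gx/\ul{K^p\times J}$ base changes to $\ig_x\gx/\ul{K^p\times J}$, and likewise for $\gxtwo$. Finite \'etaleness of a map of v-sheaves over $\spd \fpbar$ can be checked after the v-cover $\spd C \to \spd \fpbar$. Proposition \ref{Prop:FiniteEtaleIgusaVariety} therefore gives that
\[
\ig_{b,\mathrm{v}}\gx/\ul{K^p\times J} \to \ig_{b,\mathrm{v}}\gxtwo/\ul{K_2^p\times J_2}
\]
is finite \'etale. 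Next, apply the reduction functor. Lemma \ref{Lem:ReductionFiniteEtale} requires surjectivity, so I would first replace the target by the open and closed image of the map (finite \'etale maps have open and closed image). Reduction preserves disjoint decompositions, so the reduced map is finite \'etale onto an open and closed subsheaf, and is therefore finite \'etale.

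It remains to identify $(\ig_{b,\mathrm{v}}\gx/\ul{K^p\times J})^{\mathrm{red}}$ with $\operatorname{Ig}_b\gx/\ul{K^p\times J}$. The quotient map $\ig_{b,\mathrm{v}}\gx \to \ig_{b,\mathrm{v}}\gx/\ul{K^p\times J}$ is a torsor for the profinite group $K^p\times J$; here neatness of $K^p$ and $J$ small enough make the action free. Lemma \ref{Lem:ReductionPRoFiniteEtale} then shows its reduction is a $K^p\times J$-torsor over the reduction of the quotient. Since $\operatorname{Ig}_b\gx$ is the reduction of $\ig_{b,\mathrm{v}}\gx$ (reduction commutes with fiber products), the reduction of the quotient equals $\operatorname{Ig}_b\gx/\ul{K^p\times J}$. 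The same argument applies to $\gxtwo$, and the two identifications are compatible with the maps by functoriality of the reduction.

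The main obstacle is the freeness of the action. The action of $J\subset \tilde G_b$ on $\ig_{b,\mathrm{v}}$ is only through $\tilde G_b$, and one must make sure the $\ul{K^p\times J}$-quotient is a genuine torsor rather than a stacky quotient. I would shrink $J$ (harmlessly, since passing to finite-index subgroups only composes with finite \'etale maps) until it acts freely, using neatness as in the proof of Lemma \ref{Lem:FiniteEtaleIgusaIII}.
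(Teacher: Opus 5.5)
Your proposal is correct and follows essentially the paper's proof, which identifies the map as the reduction of the quotient map of v-sheaf Igusa varieties via Lemma \ref{Lem:ReductionPRoFiniteEtale} (citing \cite[Corollary 9.11]{EtCohDiam} for the quotients) and then concludes from Proposition \ref{Prop:FiniteEtaleIgusaVariety}, v-descent of finite \'etale maps along $\spd C \to \spd \fpbar$, and Lemma \ref{Lem:ReductionFiniteEtale}. Your restriction to the open and closed image, to meet the surjectivity hypothesis of Lemma \ref{Lem:ReductionFiniteEtale}, is a valid detail the paper leaves implicit (the proof of that lemma gives finite \'etaleness without using surjectivity), and the paper does not shrink $J$ but simply treats the quotient maps as torsors for the profinite groups.
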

\begin{proof}
By Lemma \ref{Lem:ReductionPRoFiniteEtale}, the map of the corollary is the reduction of the map of v-sheaves on $\perf$, see \cite[Corollary 9.11]{EtCohDiam}, given by
\begin{align}
         \operatorname{Ig}_{b, \mathrm{v}} \gx / \ul{K^p \times J} \to \operatorname{Ig}_{b,\mathrm{v}}\gxtwo / \ul{K_2^p \times J_2}
\end{align}
By Proposition \ref{Prop:FiniteEtaleIgusaVariety} and Lemma \ref{Lem:ReductionFiniteEtale}, together with v-descent for finite \'etale morphisms, this map is finite \'etale. 
\end{proof}

\begin{proof}[Proof of Theorem \ref{Thm:IgusaVarietiesAbelianType}]
Let $\gx$ be an auxiliary Shimura datum of Hodge type. Then there is a diagram of surjective morphisms of Shimura data
\begin{equation}
    \begin{tikzcd}
        \gxtwo \arrow{dr} && \gx \arrow{dl} \\
        & \gxad.
    \end{tikzcd}
\end{equation}
The theorem holds for $\gx$ by \cite[Theorem VIII, Lemma 4.4.2, Proposition 8.7.1]{DvHKZIgusaStacks} and \cite[Proposition 5.14.(4)]{MaoCompact}. Using Corollary \ref{Cor:FiniteEtaleIgusaVarietyScheme}, we see that the natural map
\begin{align} \label{eq:NaturalMapIgusaDevissage}
         \operatorname{Ig}_{b} \gx / \ul{K^p \times J} \to \operatorname{Ig}_{b}\gxad / \ul{K^{p, \mathrm{ad}} \times J_2^{\mathrm{ad}}}
\end{align}
is finite \'etale. Since finite \'etale quotients of (perfections of) quasiprojective varieties of pure dimension $\langle 2 \rho, \nu_b \rangle$ are again quasiprojective varieties of pure dimension $\langle 2 \rho, \nu_b \rangle$, we deduce that the theorem holds for the (open and closed) image of the natural map in \eqref{eq:NaturalMapIgusaDevissage}. The rest of $\operatorname{Ig}_{b}\gxad / \ul{K^{p, \mathrm{ad}} \times J^{\mathrm{ad}}}$ is covered by the open and closed $\gad(\afp)$-translates of this image, see Lemma \ref{Lem:FiniteEtaleIgusaV}.\footnote{Note that $\gx(\afp)$ acts transitively on $\pi_0(\mathbf{Sh}\gxad)$ because of weak approximation for adjoint groups, see \cite[Theorem 7.8 on p. 415]{PR}, and the description of $\pi_0(\mathbf{Sh}\gxad)$ of \cite[Section 2.1.3]{DeligneVarietes}.} Thus it suffices to show that $\operatorname{Ig}_{b}\gxtwo / \ul{K_2^p \times J_2}$ has finitely many connected components. This follows from Lemma \ref{Lem:FiniteEtaleIgusaVI} together with the following three facts: Taking $\pi_0$ is unaffected by taking diamonds, partial compactification, and base change from $\spd \fpbar$ to $\spd C$. The latter follows e.g. from \cite[Theorem 1.13.(ii)]{EtCohDiam}. The first fact holds because taking $\pi_0$ agrees with hom into the perfect scheme $\spec \fpbar \coprod \spec \fpbar$, and that the diamond functor is fully faithful. The second fact follows from the partial properness of $\spd \fpbar \coprod \spd \fpbar$. \smallskip 

The theorem for $\gxtwo$ now follows directly from Corollary \ref{Cor:FiniteEtaleIgusaVarietyScheme}, since finite \'etale covers of perfectly finite type schemes that are equidimensional of dimension $\langle 2 \rho, \nu_b \rangle$ are again perfectly finite type schemes that are equidimensional of dimension $\langle 2 \rho, \nu_b \rangle$. We moreover note that the finite level Igusa varieties for $\gxad$ are perfections of quasi-projective varieties, showing that the same holds for the finite level Igusa varieties for $\gxtwo$. 

\subsubsection{Basic Igusa varieties} Let $\gxtwo$ be an abelian type Shimura datum as before with a place $v$ above $p$ of the reflex field. Let $\g'_2$ be the inner form of $\g_2$ of \cite[Proposition 3.1]{HansenMiddle}, and choose isomorphisms $\g_2' \otimes \afp \xrightarrow{\sim} \g_2 \otimes \afp$ and $\g_2' \otimes \qp \xrightarrow{\sim} \g_{2,b}$, where $b \in G(\qpbr)$ is a representative of the basic element of $\bgmu$. Let $\mathcal{M}_{G_2,b,\mu,\infty}$ be the infinite level Rapoport--Zink space associated to $(G_2,b,\mu)$ with its action of $\tilde{G}_b \times \ul{G(\qp)}$. Consider the following version of \cite[Definition 3.2]{HansenMiddle}.\footnote{It is expected that $\mathbf{Sh}\gxtwo^{\circ,[b]}=\mathbf{Sh}\gxtwo^{[b]}$, i.e., that the basic locus does not meet the boundary, and thus that our definition is equivalent to \cite[Definition 3.2]{HansenMiddle}. This is known in all cases that basic uniformization is known.}
\begin{Def} \label{Def:BasicUniformization}
We say that $\gxtwo$ satisfies \emph{$\circ$ basic uniformization} at $v$ if there is a $\ul{\g_2(\af)}$-equivariant isomorphism
\begin{align}
    \mathbf{Sh}\gxtwo^{\circ,[b]} \xrightarrow{\sim} \varprojlim_{K_{2}^p} \g_2'(\mathbb{Q}) \backslash \left(\mathcal{M}_{G_2,b,\mu,\infty} \times \g_2(\afp) / K_{2}^p \right),
\end{align}
where $\g_2'(\mathbb{Q})$ acts on $\mathcal{M}_{G_2,b,\mu,\infty}$ via its inclusion into $\ul{G_b(\qp)}$, such that: Under this isomorphism, the Hodge--Tate period map $\mathbf{Sh}\gxtwo^{\circ,[b]} \to \grgmu$ is identified with the projection 
\begin{align}
    \varprojlim_{K_{2}^p} \g_2'(\mathbb{Q}) \backslash \left(\mathcal{M}_{G_2,b,\mu,\infty} \times \g_2(\afp) / K_{2}^p \right) \to \mathcal{M}_{G_2,b,\mu,\infty} / \ul{G_b(\qp)} 
\end{align}
followed by the local Hodge--Tate period map.
\end{Def}
As explained in the proof of \cite[Theorem 3.3]{HansenMiddle}, it follows from the work of Xiao--Zhu, Kim, Howard--Pappas, Shen \cite{XiaoZhu}, \cite{KimFunctoriality}, \cite{HowardPappas}, \cite{ShenUniformization} that: If $G_2$ is unramified and $p>2$, that then $\gxtwo$ satisfies $\circ$ basic uniformization at $v$.

\subsubsection{} Choose $\tilde{x} \in \mathcal{M}_{G_2,b,\mu,\infty}(C)$ with image $x \in \grgmu(C)$ under the local Hodge--Tate period map.
\begin{Cor}
Suppose that $\gxtwo$ satisfies $\circ$ basic uniformization at $v$, then there is a $\ul{\g_2'(\af)}$-equivariant isomorphism
\begin{align}
    \ig_x\gxtwo \xrightarrow{\sim} \varprojlim_{K_{2}^p} \g_2'(\mathbb{Q}) \backslash \left(G_b \times \g_2(\afp) / K_{2}^p \right),
\end{align}
where $\g_2'(\mathbb{Q})$ acts on $G_b$ via its inclusion into $G_b(\qp)$.
\end{Cor}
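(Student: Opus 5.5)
The plan is to pull back the $\circ$ basic uniformization isomorphism of Definition \ref{Def:BasicUniformization} along the point $x:\spd C \to \grgmu$. By definition $\ig_x\gxtwo$ is the fiber product of $\mathbf{Sh}\gxtwo^{\circ,\lozenge}$ and $\spd C$ over $\grgmu$. Since $x$ is the image of $\tilde{x}$, it lies in the image of the local Hodge--Tate period map $\mathcal{M}_{G_2,b,\mu,\infty} \to \grgmu$, which lands in the basic stratum $\grgmu^{[b]}$ (the preimage of the basic point of $\bun_{G_2}$). Hence $\pi_{\mathrm{HT}}^{-1}(x)$ is contained in $\mathbf{Sh}\gxtwo^{\circ,[b]}$, and we may replace the Shimura variety by its basic locus.

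Next I use the compatibility of periods in Definition \ref{Def:BasicUniformization}. This identifies $\ig_x\gxtwo$ with the fiber over $x$ of
\begin{align}
\varprojlim_{K_2^p} \g_2'(\mathbb{Q}) \backslash \left(\mathcal{M}_{G_2,b,\mu,\infty} \times \g_2(\afp)/K_2^p\right) \to \grgmu .
\end{align}
The key local input is the standard fact that the Hodge--Tate period map $\pi_{\mathrm{HT}}^{\mathrm{loc}}:\mathcal{M}_{G_2,b,\mu,\infty} \to \grgmu^{[b]}$ is a $\tilde{G}_b$-torsor. This follows by writing $\mathcal{M}_{G_2,b,\mu,\infty}$ as the fiber product of $\grgmu$ and $\spd \fpbar$ over $\bun_{G_2}$, together with $[\spd \fpbar/\tilde{G}_b] \simeq \bun_{G_2}^{[b]}$ (Fargues--Scholze). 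Consequently its fiber over $x$ is $\tilde{G}_b \times \spd C$, trivialized by $\tilde{x}$. The projection onto $\grgmu$ factors through the quotient by $\g_2'(\mathbb{Q}) \subset \ul{G_b(\qp)} \subset \tilde{G}_b$ and is $\tilde{G}_b$-invariant, so taking fibers commutes with the $\g_2'(\mathbb{Q})$-quotient and the limit over $K_2^p$. We obtain
\begin{align}
\ig_x\gxtwo \simeq \varprojlim_{K_2^p} \g_2'(\mathbb{Q})\backslash\left(\tilde{G}_b \times_{\spd \fpbar} \spd C \times \g_2(\afp)/K_2^p\right).
\end{align}
This is the statement, with $G_b$ read as the group diamond $\tilde{G}_b$ base changed to $C$. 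Equivariance for $\ul{\g_2(\af)} = \ul{\g_2'(\af)}$ away from $p$ is inherited directly from the uniformization.

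The main obstacle is making the fiber computation rigorous at the level of v-sheaves. One must check that taking the fiber of $\spd C \to \grgmu$ commutes with the quotient by the discrete group $\g_2'(\mathbb{Q})$ and with the inverse limit. Quotients by discrete groups acting freely, since $K_2^p$ is neat, commute with fiber products. Limits commute with fiber products formally. Finally, one must confirm that the $\g_2'(\mathbb{Q})$-action on the fiber $\tilde{G}_b$ is left multiplication via $\g_2'(\mathbb{Q}) \subset G_b(\qp)$; this follows from the $\tilde{G}_b$-equivariance of the torsor structure.
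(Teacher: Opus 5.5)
Your fiber computation is correct and matches the first step of the paper's proof. You pull the uniformization back along $x$ and use that the local Hodge--Tate fiber over $x$ is $\ul{G_b(\qp)}\cdot\tilde{x}$. Since $b$ is basic, $\tilde{G}_b=\ul{G_b(\qp)}$, so your $\tilde{G}_b$ is the paper's $G_b$. This gives a $\ul{\g_2(\afp)}$-equivariant isomorphism $\psi_{\tilde{x}}:\ig_x\gxtwo\xrightarrow{\sim}Y:=\varprojlim_{K_2^p}\g_2'(\mathbb{Q})\backslash(G_b\times\g_2(\afp)/K_2^p)$.

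The gap is the remaining equivariance. The statement asks for $\ul{\g_2'(\af)}$-equivariance, and $\g_2'(\af)=\g_2(\afp)\times G_b(\qp)$, but you only treat the part away from $p$. The $G_b(\qp)$-action on $\ig_x\gxtwo$ does not come from the Shimura variety. It comes from the Igusa stack, through $\ig_x\gxtwo\simeq\ig_{b,\mathrm{v}}\gxtwo\times_{\spd\fpbar}\spd C$ (an identification that uses $\tilde{x}$), with $\tilde{G}_b$ acting by automorphisms of $b$. On the target, right multiplication by $h$ corresponds to $g\tilde{x}\mapsto gh\tilde{x}$ on the fiber. That map is not induced by any automorphism of $\mathcal{M}_{G_2,b,\mu,\infty}$ or of the uniformization. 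Moreover, Definition \ref{Def:BasicUniformization} only requires compatibility with $\pi_{\mathrm{HT}}$ and with $\ul{\g_2(\af)}$, not with the map to the Igusa stack. So nothing is inherited here, and an actual argument is needed.

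The paper supplies it as follows. Write $\mathcal{M}=\mathcal{M}_{G_2,b,\mu,\infty}$. By the Igusa stack Cartesian diagram, over $C$ one has $\mathbf{Sh}\gxtwo^{\circ,[b]}\simeq\ig_x\gxtwo\times^{\ul{G_b(\qp)}}\mathcal{M}$. The uniformization therefore becomes a $\ul{G_2(\qp)}$-equivariant isomorphism $\ig_x\gxtwo\times^{\ul{G_b(\qp)}}\mathcal{M}\simeq Y\times^{\ul{G_b(\qp)}}\mathcal{M}$ over $\mathcal{M}/\ul{G_b(\qp)}$. Pulling back to $\mathcal{M}$ gives a family of isomorphisms $\psi_{\tilde{y}}:\ig_x\gxtwo\to Y$ indexed by $\tilde{y}\in\mathcal{M}$. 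They satisfy $\psi_{h\tilde{y}}(zh^{-1})=\psi_{\tilde{y}}(z)h^{-1}$ for $h\in G_b(\qp)$, and $\psi_{g\tilde{y}}=\psi_{\tilde{y}}$ for $g\in G_2(\qp)$. Your map is $\psi_{\tilde{x}}$, and it is $G_b(\qp)$-equivariant exactly when $\psi_{h\tilde{x}}=\psi_{\tilde{x}}$ for all $h$. This holds for the following reason. For fixed $z$, the map $\tilde{y}\mapsto\psi_{\tilde{y}}(z)$ is continuous and factors through $\mathcal{M}/\ul{G_2(\qp)}$, which is connected \cite{Connectedness}. It lands in the totally disconnected $Y$, hence is constant. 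Without this connectedness input, or some other compatibility of the uniformization with the Igusa stack, your argument only yields $\ul{\g_2(\afp)}$-equivariance.
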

\begin{proof}
It follows from the definition that $\ig_x\gxtwo$ is $\ul{\g_2(\afp)}$-equivariantly identified with
\begin{align}
    \ig_x\gxtwo \xrightarrow{\sim} \varprojlim_{K_{2}^p} \g_2'(\mathbb{Q}) \backslash \left(\ul{G_{2,b}(\qp)} \cdot \tilde{x} \times \g_2(\afp) / K_{2}^p \right).
\end{align}
We now show $G_b(\qp)$-equivariance. From uniformization we have a $G_2(\qp)$-equivariant isomorphism 
\[
    \overline{}{\Psi}: \ig_x\gxtwo \times^{G_b(\qp)} \mathcal{M}_{G_2,b,\mu,\infty} \xrightarrow{\sim}  \varprojlim_{K_{2}^p} \g_2'(\mathbb{Q}) \backslash \ul{\g_2'(\af)}/\ul{K^p} \times^{\ul{G_b(\qp)}} \mathcal{M}_{G_2,b,\mu,\infty}
\]
pulling back to the product we obtain an isomorphism
\[
    \Psi: \ig_x\gxtwo \times \mathcal{M}_{G_2,b,\mu,\infty} \xrightarrow{\sim}  \varprojlim_{K_{2}^p} \g_2'(\mathbb{Q}) \backslash \ul{\g_2'(\af)}/\ul{K^p} \times \mathcal{M}_{G_2,b,\mu,\infty}
\]
which induces a family of isomorphisms indexed by points $\widetilde{x} \in \mathcal{M}_{G_2,b,\mu,\infty}$ denoted by $\psi_{\widetilde{x}}: \ig_x\gxtwo \to \varprojlim_{K_{2}^p} \g_2'(\Q)\backslash\ul{\g_2'(\af)}/K_2^p$. Since the map $\Psi$ is per definition $\underline{G_2(\qp)}$ and $\underline{G_b(\qp)}$-equivariant the maps $\psi_{-}(*)$ satisfy $\psi_{g-}(*) = \psi_{-}(*)$ for all $g \in \ul{G_2(\qp)}$ and $\psi_{h-}(*h^{-1}) = \psi_{-}(*)h^{-1}$ for all $h \in \ul{G_{b}(\qp)}$. On the other hand because the map $\Psi$ is continuous, for fixed $x \in \ig_x\gxtwo$ the map $\widetilde{x} \mapsto \psi_{\widetilde{x}}(x)$ is continuous, and by the $G_2$-equivariance above it factors through $\mathcal{M}_{G_2,b,\mu,\infty}/\ul{G_2(\qp)}$. By the main result of \cite{Connectedness} this quotient is connected, but $\varprojlim_{K_{2}^p}\g_2'(\Q)\backslash\ul{\g_2'(\af)}/\ul{K_2^p}$ is totally disconnected, thus the map is constant. So we deduce that in fact $\psi_{\widetilde{x}}(x)h^{-1} = \psi_{h\widetilde{x}}(xh^{-1}) = \psi_{\widetilde{x}}(xh^{-1})$ for any $h \in \ul{G_{b}(\qp)}$ as desired.
\end{proof}

\begin{Cor} \label{Cor:BasicIgusaVariety}
    Suppose that $\gxtwo$ satisfies \emph{$\circ$ basic uniformization} at $v$, then there is a $\ul{\g_2(\afp)}$-equivariant isomorphism of stacks over $\mathrm{Bun}_{G_2, \fpbar}$
    \[
        \igs\gxtwo_{\fpbar} \xrightarrow{\sim} [\g_2'(\Q)\backslash\ul{\g_2'(\af)}/\ul{G_2(\qp)}].
    \]
\end{Cor}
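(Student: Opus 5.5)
We prove Corollary \ref{Cor:BasicIgusaVariety} by descending the description of $\ig_x\gxtwo$ in the previous corollary along the chart $\spd C \to \bun_{G_2}$, in the same way that Theorem \ref{Thm:IgusaMain} is used elsewhere to identify Igusa stacks. First observe that basic uniformization forces the image of $\igs\gxtwo_{\fpbar} \to \bun_{G_2,-\mu_2,\fpbar}$ to be the basic stratum. Indeed, $\mathbf{Sh}\gxtwo^{\circ,\lozenge}$ decomposes into Newton strata via $\pi_{\mathrm{HT}}$ and the Beauville--Laszlo map. If $\g_2'(\mathbb{R})$ is compact modulo center (the case of interest for \cite{HansenMiddle}), every point is basic. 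Otherwise the statement should be read over the basic open stratum, and we restrict to it throughout.

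The basic stratum of $\bun_{G_2,\fpbar}$ is $[\spd \fpbar/\tilde{G}_b]$, and $\tilde{G}_b$ is the semidirect product of $\ul{G_b(\qp)}$ with a connected unipotent part. So we construct a $\tilde{G}_b$-equivariant isomorphism
\[
\ig_{b,\mathrm{v}}\gxtwo \xrightarrow{\sim} \varprojlim_{K_2^p}\g_2'(\mathbb{Q})\backslash\left(\ul{G_b(\qp)} \times \g_2(\afp)/K_2^p\right),
\]
where the unipotent part acts trivially on the right. This trivial action is forced because the target is totally disconnected over $\spd\fpbar$ and the unipotent group is connected. The construction goes as follows.
\begin{enumerate}
\item Base change to $\spd C$ and use the isomorphism $\ig_{b,\mathrm{v}}\gxtwo \times_{\spd\fpbar}\spd C \simeq \ig_x\gxtwo$ together with the previous corollary. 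This gives the displayed isomorphism over $\spd C$, equivariantly for $G_b(\qp)$ and $\g_2(\afp)$.
\item Both sides are disjoint unions of copies of the base, indexed by a locally profinite set. We descend from $\spd C$ to $\spd\fpbar$ using that $\pi_0$ is insensitive to this base change, by \cite[Theorem 1.13.(ii)]{EtCohDiam}.
\end{enumerate}
Taking the quotient by $\tilde{G}_b$ then gives
\[
\igs\gxtwo_{\fpbar} \simeq \left[\left(\varprojlim_{K_2^p}\g_2'(\mathbb{Q})\backslash \ul{G_b(\qp)}\times\ul{\g_2(\afp)}/\ul{K_2^p}\right)/\ul{G_b(\qp)}\right].
\]
We rewrite this as $[\g_2'(\mathbb{Q})\backslash\ul{\g_2'(\af)}/\ul{G_2'(\qp)}]$ using the chosen identifications $\g_2'\otimes\afp \simeq \g_2\otimes\afp$ and $\g_2'\otimes\qp\simeq G_{2,b}$. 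The group written $G_2(\qp)$ in the statement should be understood as $G_{2,b}(\qp)=\g_2'(\qp)$.

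The main obstacle is making the descent of step 2 precise while keeping track of the $\tilde{G}_b$-action, rather than only the action of its $\qp$-points. We would handle this by arguing with the connectedness of the unipotent radical and the fact that maps from a connected v-sheaf to a locally profinite set are constant, as in the proof of the previous corollary. This also fixes the map to $\bun_{G_2}$ as the tautological one, and gives $\g_2(\afp)$-equivariance.
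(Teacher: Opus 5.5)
Your route is the one the paper intends. The paper gives no separate argument and treats the corollary as immediate from the preceding one: restrict to the basic stratum, identify $\ig_{b,\mathrm{v}}\gxtwo$ with $\ul{\g_2'(\Q)\backslash\g_2'(\af)}$ via $\ig_{b,\mathrm{v}}\gxtwo\times_{\spd\fpbar}\spd C\simeq\ig_x\gxtwo$ and the previous corollary, then quotient by $\ul{G_{2,b}(\qp)}=\ul{\g_2'(\qp)}$. Your reading of $\ul{G_2(\qp)}$ as this group is correct. Two of your side claims are wrong, however.

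First, neither basic uniformization nor compactness of $\g_2'(\R)$ modulo center forces $\igs\gxtwo$ to lie over the basic stratum. The group $\g_2'$ is compact modulo center at infinity by construction in every case, so your dichotomy is empty. Yet for the modular curve, for instance, the ordinary locus gives points of $\igs\gxtwo$ over the $\mu$-ordinary stratum. The corollary must therefore be read as a statement about the open substack $\igs\gxtwo\times_{\bun_{G_2}}\bun_{G_2}^{b}$; Remark \ref{Rem:Sha1Trivial} calls it a description of the basic Igusa variety. If you keep your current justification, you would be asserting something false about the whole Igusa stack.

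Second, for basic $b$ all slopes of $\mathrm{ad}(\mathcal{E}_b)$ vanish. Hence $\tilde{G}_b=\ul{G_b(\qp)}$ has no unipotent part, and $\bun_{G_2}^{b}=[\spd\fpbar/\ul{G_b(\qp)}]$. What you call the main obstacle is therefore vacuous. The genuine input is that the isomorphism over $\spd C$ is $G_b(\qp)$-equivariant, so that it passes to the $\ul{G_b(\qp)}$-quotient. This is exactly what the connectedness argument in the proof of the preceding corollary supplies. After that, your step 2 finishes the proof, either via invariance of $\pi_0$ under $\spd C\to\spd\fpbar$ (as the paper does in the proof of Theorem \ref{Thm:IgusaVarietiesAbelianType}) or by v-descent using connectedness of $\spd C\times_{\spd\fpbar}\spd C$.
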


\smallskip 
\end{proof}

\section{Examples} \label{Sec:Examples}
The goal of this section is to describe a large number of examples where the hypotheses of Theorem \ref{Thm:CorrespondenceAdjoint} are satisfied. Using the classification of Shimura data of abelian type, one can write down all examples of triples $(\g_2, \x_2, \mathsf{P}_2)$ with $\gxtwo$ of adjoint abelian type where Conjecture \ref{Conj:Main} should apply. However, we do not know how to construct Hodge type lifts for all of these, and we are agnostic about the existence of Hodge type lifts in general. Instead, we give examples of Shimura data of types $A,B,C,D^{\mathbb{R}}$ (in the sense of the classification of abelian type Shimura varieties of \cite[Appendix B]{Milne}) where we can construct Hodge type lifts. 
\begin{itemize}
    \item In type $B,C$ we will focus on the adjoint setting and deduce that: There are not many $\mathsf{P}_2$ for which $\g'_2$ admits a Shimura datum, but they all admit good Hodge type liftings satisfying the conditions of Theorem \ref{Thm:CorrespondenceAdjoint}.

    \item In type $D^{\mathbb{R}}$ we will focus on special orthogonal groups rather than their adjoint quotients.\footnote{This is because there are cohomology classes for the adjoint group that we do not know how to lift to Hodge type Shimura varieties.} Here we will similarly find that there are not many $\mathsf{P}_2$ for which $\g'_2$ admits a Shimura datum, but they all admit good Hodge type liftings. Unfortunately these Hodge type liftings do not satisfy the assumptions of Theorem \ref{Thm:CorrespondenceAdjoint} because they do not have connected center. Nevertheless, we have decided to include them for completeness.  

    \item In type $A$, we focus on Shimura data of PEL type. Here there are many examples of $\gx$ and $\mathsf{P}$ for which $\g'$ admits a Shimura datum (of PEL type). Unfortunately, these often fail to satisfy the Hasse principle. Moreover, the induced adjoint Shimura datum admits examples of $\mathsf{P}$ which do not lift to $\gx$. 

\end{itemize}

\subsection{Type \texorpdfstring{$B,C,D^{\mathbb{R}}$}{B,C,DR}} Let $\mL^{+}$ be a totally real field with set of real places $\Sigma_{\infty}$. Let $\g_0, \g_2$ over $\mlp$ be as in Section \ref{Sub:ExampleBDR}, Section \ref{Sub:ExampleC} or Section \ref{Sub:ExampleDR}. We will also use the decomposition $\Sigma_{\infty}=\Sigma_{\infty,c} \coprod \Sigma_{\infty,nc}$ from the appendix. 

\subsubsection{} \label{subsub:PadicTorsorChoiceBC} Now let $p$ be a prime number. Recall that (where the product runs over primes $\mathfrak{p}$ of $\mlp$ over $p$).
\begin{align}
    \rH^1(\qp, G_2) \simeq \pi_1(G_2)_{\gal_{\qp}} \simeq \prod_{\mathfrak{p}} \mathbb{Z}/2 \mathbb{Z}.
\end{align}
Under this isomorphism, the natural map to $\pi_1(\g_2)_{\gal_{\mathbb{Q}}}=\mathbb{Z}/2 \mathbb{Z}$ is given by the direct sum of the maps $\alpha_{\mathfrak{p}}$, where $\alpha_{\mathfrak{p}}$ is the identity map $\mathbb{Z}/2 \mathbb{Z} \to \mathbb{Z}/2\mathbb{Z}$.

\subsubsection{} \label{subsub:RealTorsorChoiceBC} Over the real numbers $\R$ we have an isomorphism
\begin{align}
    \g_{2,\R} \simeq \prod_{\tau \in \Sigma_{\infty}} \g_{2,0} \otimes_{\mlp, \tau} \mathbb{R}
\end{align}
and $\pi_1(\g_{2,\R})=\prod_{\tau \in \Sigma_{\infty}} \mathbb{Z}/2 \mathbb{Z}$. By Lemmas \ref{Lem:KottwitzInvariantCompactForm} and \ref{Lem:KottwitzInvariantShimuraForm}, for every subset $S_{\infty} \subset \Sigma_{\infty,\mathrm{nc}}$ there is a class
\begin{align}
    [P_{S_{\infty}}] \in H^1(\mathbb{R}, \g_{2})
\end{align}
with the following properties:
\begin{itemize}
    \item For $\tau \in S_{\infty}$, the image of $[P_{S_{\infty}}]$ in $\pi_1(\g_{2,0} \otimes_{\mlp, \tau} \mathbb{R})$ is nontrivial. 

    \item For $\tau \not \in S_{\infty}$, the class of $[P_{S_{\infty}}]$ in $H^1(\mathbb{R}, \g_{2,0} \otimes_{\mlp, \tau} \mathbb{R})$ is trivial. 
\end{itemize}

\subsubsection{} \label{subsub:TypeBCohomologyClasses} We now combine the claim with the results of the previous section and the short exact\footnote{Here we are using that $\Sha^1(\mlp,\g_{2,0})=0$. In type $B$ and $C$ this follows because $\g_{2,0}$ is adjoint, see \cite[Theorem 6.22]{PR}). In type $D$ this follows because the map $\g_{0,2}^{\mathrm{sc}} \to \g_{0,2}$ has kernel $\mu_{2}$, see \cite[Remark on page 337]{PR}.} sequence of \cite[Theorem 5.16]{Borovoi}
\begin{align}
    1 \to \rH^1(\mathbb{Q},\g_{2}) \to \bigoplus \rH^1(\mathbb{Q}_v, \g_{2}) \to \pi_1(\g_{2})_{\gal(\qbar)/\Q} \to 1.
\end{align}
Let $S_{\infty} \subset \Sigma_{\infty}$ be as before and let $S_p$ be a subset of $\Sigma_{p}$, the set of primes $\mathfrak{p}$ of $\mlp$ above $p$. We will use $\widetilde{\Sigma_{p}}$ to denote the set of embeddings $\mlp \to \qpbar$, which comes with a forgetful map $\widetilde{\Sigma_{p}} \to \Sigma_{p}$. Finally, we write $S=S_{\infty} \cup S_{p}$.

\begin{Lem} \label{Lem:ExistenceCohomologyTypeBCD}
If $S$ has even cardinality, then there is a unique cohomology class $[\mathsf{P}_S]$ in $H^1(\mathbb{Q},\g_{2})$ such that: It is nontrivial precisely at places $v \in S$ and for $v \in S_{\infty}$ it agrees with the class of Section \ref{subsub:RealTorsorChoiceBC}. 
\end{Lem}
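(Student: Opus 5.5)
The plan is to use Borovoi's short exact sequence
\[
1 \to \rH^1(\mathbb{Q},\g_{2}) \to \bigoplus_v \rH^1(\mathbb{Q}_v, \g_{2}) \to \pi_1(\g_{2})_{\gal(\qbar/\Q)} \to 1,
\]
whose exactness on the left is where $\Sha^1(\mlp,\g_{2,0})=0$ enters, as recorded in the footnote of Section \ref{subsub:TypeBCohomologyClasses}. Existence and uniqueness of $[\mathsf{P}_S]$ then reduce to producing a family of local classes $(c_v)_v$ with the prescribed behaviour and checking that its image in $\pi_1(\g_2)_{\gal_{\mathbb{Q}}}=\mathbb{Z}/2\mathbb{Z}$ vanishes.

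First I would fix the local classes. For $v=\infty$, take $c_\infty=[P_{S_\infty}]$ from Section \ref{subsub:RealTorsorChoiceBC}. For $v=p$, use the identification $\rH^1(\qp,G_2)\simeq\prod_{\mathfrak{p}}\mathbb{Z}/2\mathbb{Z}$ of Section \ref{subsub:PadicTorsorChoiceBC} and take $c_p$ to be the element with entry $1$ exactly at $\mathfrak{p}\in S_p$. At every other place take $c_v$ trivial. Here I read "nontrivial precisely at places $v\in S$" as referring to places of $\mlp$, that is, to the factors of $\g_2=\operatorname{Res}_{\mlp/\mathbb{Q}}\g_{2,0}$, so this family has the required local behaviour.

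Next I would compute the image of $(c_v)$ in $\mathbb{Z}/2\mathbb{Z}$, which is the sum of the images $\kappa(c_v)$. At $p$, Section \ref{subsub:PadicTorsorChoiceBC} says the map is the sum of the identities $\alpha_{\mathfrak{p}}$, so the contribution is $|S_p| \bmod 2$. At $\infty$, the map $\pi_1(\g_{2,\R})_{\gal_{\R}}=\prod_\tau\mathbb{Z}/2\mathbb{Z}\to\pi_1(\g_2)_{\gal_{\mathbb{Q}}}=\mathbb{Z}/2\mathbb{Z}$ is the sum map, since Galois permutes the embeddings $\tau$ transitively and the coinvariants of the induced module are a single copy. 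Each factor $\tau\in S_\infty$ has nontrivial Kottwitz invariant and the others trivial, so the contribution is $|S_\infty| \bmod 2$. The total is therefore $|S| \bmod 2$, which vanishes because $|S|$ is even. Exactness then produces a global class, and injectivity of $\rH^1(\mathbb{Q},\g_2)\to\bigoplus_v$ gives uniqueness.

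The step I expect to need the most care is the archimedean one. I have to confirm that the Kottwitz map at $\infty$ in Borovoi's sequence agrees with the $\kappa$ of Lemmas \ref{Lem:KottwitzInvariantCompactForm} and \ref{Lem:KottwitzInvariantShimuraForm}, so that the sign and normalisation match those used at $p$. I also have to confirm that "trivial in $H^1(\mathbb{R},\g_{2,0}\otimes_\tau\R)$" for $\tau\notin S_\infty$ really forces a zero contribution, which is immediate since $\kappa$ of the trivial class is $0$. Finally, uniqueness requires the class to be pinned down at $\infty$ as an element of $H^1$ and not merely through its Kottwitz invariant. This is exactly why the statement prescribes agreement with $[P_{S_\infty}]$ at $\infty$, so the argument works once the local families are fixed as above.
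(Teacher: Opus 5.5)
Your proposal is correct and is the paper's argument: the paper's proof cites the real and $p$-adic computations of Sections \ref{subsub:RealTorsorChoiceBC} and \ref{subsub:PadicTorsorChoiceBC}, the Hasse principle (injectivity in Borovoi's sequence) and $\pi_1(\g_2)_{\gal_{\mathbb{Q}}}=\mathbb{Z}/2\mathbb{Z}$, which amounts to the parity count $|S_p|+|S_\infty|\equiv|S|\equiv 0 \bmod 2$ you carry out. The normalisation worry you raise at $\infty$ is harmless, since everything takes values in $\mathbb{Z}/2\mathbb{Z}$, where signs are irrelevant. For uniqueness at $\mathfrak{p}\in S_p$, note that the local group is $\mathbb{Z}/2\mathbb{Z}$, so "nontrivial" determines the local class.
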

\begin{proof}
    This follows from the computations in Sections \ref{subsub:RealTorsorChoiceBC} and \ref{subsub:PadicTorsorChoiceBC}, together with the Hasse principle and the fact that $\pi_1(\g_2)_{\gal(\qbar)/\Q}=\mathbb{Z}/2 \mathbb{Z}$.
\end{proof}
Let $S$ be as in the statement of Lemma \ref{Lem:ExistenceCohomologyTypeBCD}, choose a $\g_{2}$-torsor $\mathsf{P}_2 \in [\mathsf{P}_S]$ and let $\g_{2}'$ be the corresponding inner form. We have the following important lemma.
\begin{Lem} \label{Lem:ExistenceShimuraDatumTypeB}
    There is a weak Shimura datum $\mathsf{X}_{2}'$ for $\g_{2}'$. 
\end{Lem}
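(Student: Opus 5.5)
We work one real place at a time. Since $\g_{2,\R}\simeq\prod_{\tau\in\Sigma_\infty}\g_{2,0}\otimes_{\mlp,\tau}\R$ and $\g_2'$ is the twist by $\mathsf{P}_2$, we get $\g'_{2,\R}\simeq\prod_\tau \g'_{2,0,\tau}$. Here $\g'_{2,0,\tau}$ is the twist of $\g_{2,0,\tau}$ by the $\tau$-component of $[\mathsf{P}_{S}]_\R=[P_{S_\infty}]$. A weak Shimura datum for a product is just a product of conjugacy classes $h_\tau:\mathbb{S}\to\g'_{2,0,\tau}$, each satisfying SV1 and SV2 (the trivial homomorphism is allowed on compact factors). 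So it suffices to produce, for each $\tau$, a homomorphism $h'_\tau$ satisfying SV1 and SV2.

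For $\tau\notin S_\infty$, the class is trivial at $\tau$, so $\g'_{2,0,\tau}\simeq\g_{2,0,\tau}$. We take $h'_\tau$ to be the $\tau$-component of the given $\x_2$: the standard Hermitian datum if $\tau\in\Sigma_{\infty,nc}$, and the trivial one if $\tau\in\Sigma_{\infty,c}$.

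For $\tau\in S_\infty\subset\Sigma_{\infty,nc}$, the class $[P_{S_\infty}]_\tau$ was constructed via Lemmas \ref{Lem:KottwitzInvariantCompactForm} and \ref{Lem:KottwitzInvariantShimuraForm}. I would choose it concretely to be the cocycle $\sigma\mapsto k=h_\tau(i)$, with $h_\tau\in\x_{2,\tau}$, exactly as in Section \ref{subsub:ComputationHodgeEmbedding}. Since $\g_{2,0}$ is adjoint (types $B$, $C$) or $\operatorname{SO}$ (type $D^\R$), $h(i)^2$ is central and this defines a class in $H^1(\R,\g_{2,0,\tau})$. Twisting by $\operatorname{Ad}(h(i))$ gives the compact inner form, because $\operatorname{Ad}h(i)$ is a Cartan involution by SV2. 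We then have two cases. If the twist is the compact form, we take $h'_\tau$ trivial. Otherwise the twist is again a noncompact form of Hermitian type, and we use the argument of Lemma \ref{Lem:HodgeEmbeddingComputation}: the cocharacter $\mu_{h}$ commutes with $k$, so $h$ itself descends to $h':\mathbb{S}\to\g'_{2,0,\tau}$. Then $h'(i)$ defines a Cartan involution of the twist, since twisting by the Cartan involution composed with $\operatorname{Ad}h'(i)$ gives back the original compact form. SV1 is preserved because the adjoint weights of $\mu_{h'}$ are the same as those of $\mu_h$.

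The main obstacle is matching the classes. The lemma of the paper only pins down the image of $[P_{S_\infty}]_\tau$ in $\pi_1$ and its nontriviality, not the class itself. So I need to check, using Lemmas \ref{Lem:KottwitzInvariantCompactForm} and \ref{Lem:KottwitzInvariantShimuraForm}, that within the fiber of the Kottwitz map some class with a Shimura-type twist exists. Equivalently, I need to choose $\mathsf{P}_2$ so that its real components are the compact-form class or the class of $h(i)$. I expect these lemmas to say precisely that both such classes have nontrivial Kottwitz invariant, so they satisfy the defining property of $[P_{S_\infty}]$. The global class in Lemma \ref{Lem:ExistenceCohomologyTypeBCD} is unique given its local components by the Hasse principle, so choosing the real component this way is legitimate.
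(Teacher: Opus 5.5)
Your core argument is the paper's proof. At $\tau\in S_{\infty}$ the real component of $[\mathsf{P}_S]$ is the compact-form class, so the twist is compact and one takes the trivial datum there and $\mathsf{X}_{\tau}$ elsewhere. The paper takes this class ``by construction'' from Lemma \ref{Lem:KottwitzInvariantCompactForm}. Your explicit cocycle $\sigma\mapsto h_{\tau}(i)$ is also legitimate, because $h_{\tau}(i)^2=h_{\tau}(-1)=1$ (the weight is trivial in a group with finite center; being central is not enough for the cocycle condition). You are right that it is this choice of class, not just its Kottwitz invariant, that makes the lemma true.

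Drop the ``otherwise'' branch. With $k=h_{\tau}(i)$ it never arises, and its argument is wrong anyway. The descended $h$ lands in a compact group with $\operatorname{Ad}h_{\tau}(i)\neq 1$, which violates SV2. Moreover, Lemma \ref{Lem:HodgeEmbeddingComputation} uses $h'(z)=[(1,z)]$, not $h$ itself.
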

\begin{proof}
We take the weak real Shimura datum $\mathsf{X}'$ for $\g'_{2,\mathbb{R}}$ which equals $\mathsf{X}_{\tau}$ at $\tau \not \in S_{\infty}$ and is trivial at $\tau \in S_{\infty}$. Since $g'_{2,\tau,\R}$ is compact at places $\tau \in S_{\infty}$ by construction, this is indeed a weak real Shimura datum.
\end{proof}

\subsubsection{} Let $S$ and $\mathsf{P}_S$ be as before and consider the Shimura datum $\gxtwop$ with reflex field $\mathsf{E}'_2$. 
Choose an isomorphism $\mathbb{C} \isom \qpbar$ determining $p$-adic places $v,v'$ of the reflex fields $\mathsf{E}_2, \mathsf{E}'_2$ respectively. We can now ask when the set $\bgmumup$ is nonempty. For this, we note that our isomorphism $\mathbb{C} \isom \qpbar$ induces a bijection $\Sigma_{\infty} \to \widetilde{\Sigma_{p}}$. For a prime $\mathfrak{p}$ of $\mlp$ above $p$ we will write $\widetilde{\Sigma}_{\mathfrak{p}}$ for the preimage of $\mathfrak{p}$ in $\widetilde{\Sigma_{p}}$.
\begin{Lem} \label{Lem:NonemptyB}
    The set $B(G_2,P_S,-\mu, -\mu')$ is nonempty if and only if the following conditions hold.
    \begin{itemize}
    
        \item For $\mathfrak{p} \in S_p$, the preimage of $\widetilde{\Sigma}_{\mathfrak{p}}$ in $\Sigma_{\infty}$ intersects $S_{\infty}$ in a set of odd cardinality 

        \item For $\mathfrak{p} \not \in S_p$, the preimage of $\widetilde{\Sigma}_{\mathfrak{p}}$ in $\Sigma_{\infty}$ intersects $S_{\infty}$ in a set of even cardinality.
    \end{itemize}
\end{Lem}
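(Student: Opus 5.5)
The plan is to reduce nonemptiness of $B(G_2,P_S,-\mu,-\mu')=\beta_{P_S}(B(G_2,-\mu))\cap B(G_2',-\mu')$ to a comparison of Kottwitz invariants. As in Remark \ref{Rem:VacuousConjectureRemark}, this set is nonempty if and only if $\kappa(\mu)+\kappa([P_S])=\kappa(\mu')$ in $\pi_1(G_2)_{\gal_{\qp}}=\pi_1(G_2')_{\gal_{\qp}}$. Concretely, I would first show that the basic element of $B(G_2',-\mu')$ lies in the image of $B(G_2,-\mu)$ as soon as the Kottwitz invariants match. This uses that, by \cite[Proposition 4.10]{Kottwitz2}, the basic locus maps bijectively to $\pi_1(G)_{\gal_{\qp}}$, and that $\beta_{P_S}$ preserves the basic locus and is compatible with $\kappa$ after adding $\kappa([P_S])$. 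The converse is immediate, since $B(G_2,-\mu)$ lies in a single fiber of $\kappa$.

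The remaining work is to compute both sides of the identity inside $\pi_1(G_2)_{\gal_{\qp}}\simeq\prod_{\mathfrak p}\mathbb{Z}/2\mathbb{Z}$, using the decomposition of Section \ref{subsub:PadicTorsorChoiceBC}. First, by construction (Lemma \ref{Lem:ExistenceCohomologyTypeBCD}), $\kappa([P_S])$ has $\mathfrak p$-component $1$ exactly when $\mathfrak p\in S_p$.

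Second, I would compute $\kappa(\mu)$ and $\kappa(\mu')$ factor by factor. Write $G_2=\prod_{\mathfrak p}\operatorname{Res}_{\mlp_{\mathfrak p}/\qp}\g_{2,0}$, so that $\pi_1$ of the $\mathfrak p$-factor is $\bigoplus_{\tau\in\widetilde\Sigma_{\mathfrak p}}\mathbb{Z}/2$, with coinvariants summing the components. Under the bijection $\Sigma_\infty\to\widetilde\Sigma_p$, the Hodge cocharacter $\mu$ has $\tau$-component equal to the nontrivial minuscule class for $\tau\in\Sigma_{\infty,\mathrm{nc}}$ and trivial for $\tau\in\Sigma_{\infty,c}$. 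By Lemma \ref{Lem:ExistenceShimuraDatumTypeB}, $\mu'$ agrees with $\mu$ at $\tau\notin S_\infty$ and is trivial at $\tau\in S_\infty\subset\Sigma_{\infty,\mathrm{nc}}$. Thus $\kappa(\mu')-\kappa(\mu)$ has $\mathfrak p$-component equal to the parity of $\#(S_\infty\cap\widetilde\Sigma_{\mathfrak p})$, using the nontrivial image in $\pi_1$ from Lemma \ref{Lem:KottwitzInvariantShimuraForm}.

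Comparing components then gives exactly the stated conditions: odd cardinality for $\mathfrak p\in S_p$, and even cardinality otherwise. The main obstacle is the second step. There I must verify carefully that the minuscule Hodge cocharacter at a noncompact place maps to the generator of $\mathbb{Z}/2$ in each type $B,C,D^{\mathbb R}$, via the appendix lemmas. I would also need to be sure that the identification $\pi_1(G_2)\simeq\pi_1(G_2')$ induced by $P_S$ matches the factorwise identifications. For the latter I would pass through $\mathbb{C}\isom\qpbar$ and Lemma \ref{Lem:RealSquare}.
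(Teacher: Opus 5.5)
Your proposal is correct and is essentially the paper's argument: you reduce to the identity $\kappa(\mu)+\kappa([P_S])=\kappa(\mu')$, check it one prime $\mathfrak p$ at a time in $\mathbb{Z}/2\mathbb{Z}$, and use that $\kappa([P_S])$ is nontrivial at $\mathfrak p$ exactly when $\mathfrak p\in S_p$ while $\kappa(\mu)-\kappa(\mu')$ at $\mathfrak p$ is the parity of $\#(S_\infty\cap\widetilde\Sigma_{\mathfrak p})$; your basic-element justification of the first reduction spells out more than the paper does. One small correction: the nontriviality in $\pi_1$ of the Hodge cocharacter at a noncompact place comes from the proof of Lemma \ref{Lem:KottwitzInvariantCompactForm}, not from Lemma \ref{Lem:KottwitzInvariantShimuraForm}, which concerns compact places.
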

\begin{proof}
It is clear that the condition can be checked for one $\mathfrak{p}$ at a time, so fix one. Then we need to check that 
\begin{align}
    \kappa(\mu) + \kappa([P])  = \kappa(\mu')
\end{align}
in $\pi_1(G_{2})_{\operatorname{Gal}_{\mlp_{\mathfrak{p}}}}=\mathbb{Z} / 2 \mathbb{Z}$. The element $\kappa(\mu) - \kappa(\mu')$ counts the cardinality (modulo $2$) of the intersection of the preimage of $\widetilde{\Sigma}_{\mathfrak{p}}$ in $\Sigma_{\infty}$ with $S_{\infty}$, and $ \kappa([P])$ is $1 \in \mathbb{Z}/2 \mathbb{Z}$ when $\mathfrak{p} \in S_{p}$ and zero else (by construction). The lemma can be read off directly from this. 
\end{proof}

\subsubsection{Hodge type} \label{subsub:HodgetypeB}
Let $\gxtwo$, $S$, $\mathsf{P}_S$,$\gxtwop$ and $\mathbb{C} \isom \qpbar$ be as before. Let $\mL$ be a quadratic totally imaginary extension of $\mlp$ such that all primes above $p$ in $\mlp$ stay inert in $\mL$, and write $\ml=\mlp(\sqrt{\epsilon})$ for a totally negative element $\epsilon \in \mlp$. Let $\Upsilon=(\g_0, \x_0, \iota_0, V, \psi, \epsilon)$, where $\iota_0, V, \psi$ are as in Sections \ref{Sub:ExampleBDR}, \ref{Sub:ExampleC}, \ref{Sub:ExampleDR}. Let $\gx \to \gtwo$ be the Hodge type Shimura datum constructed from $\Upsilon$ in Section \ref{Sub:AbstractHodge}, together with its Hodge embedding $\iota:\gx \to \gvlx$. 
\begin{Prop} \label{Prop:HodgeCoverBC}
There is a $\g$-torsor $\mathsf{P}$ over $\spec \mathbb{Q}$ together with an isomorphism $\mathsf{P} \times^{\g} \g_2 \to \mathsf{P}_{S}$ such that: The group $\g'=\operatorname{Aut}_{\g}(\mathsf{P})$ admits a Shimura datum $\x'$ such that $\g' \to \g_2'$ underlies a morphism of Shimura data $\gxp \to \gxtwop$, such that the map $\iota':\g' \to \gvlxp$ satisfies Assumption \ref{Assump:InfinityHodge}, and such that
\begin{align}
    B(G,P,-\mu, -\mu')
\end{align}
is nonempty. 
\end{Prop}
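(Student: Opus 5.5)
We construct $\mathsf{P}$ by lifting the class $[\mathsf{P}_S]$ place by place and then gluing with the Hasse principle/Borovoi's sequence for $\g$. The group $\g$ built from $\Upsilon$ in Section \ref{Sub:AbstractHodge} is (roughly) a quotient of $\g_0^{\mathrm{sc}}\times \operatorname{Res}_{\ml/\mathbb{Q}}\gm$, with center a torus containing $\operatorname{Res}_{\ml/\mathbb{Q}}\gm$. The plan has three steps.

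\textbf{Step 1 (local lifts).} For each place $v\in S$ we lift the local class $[P_{S,v}]\in H^1(\mathbb{Q}_v,\g_2)$ to $H^1(\mathbb{Q}_v,\g)$.
\begin{itemize}
\item At $p$: every prime of $\mlp$ above $p$ is inert in $\ml$, so we can use the central $\operatorname{Res}_{\ml/\mathbb{Q}}\gm$ factor. Via the Kottwitz map, $H^1(\qp,G)\to \pi_1(G)_{\gal_{\qp}}$ surjects onto the relevant $\mathbb{Z}/2$ at each $\mathfrak p\in S_p$. We check this with the commutative square comparing $\pi_1(G)_{\gal_{\qp}}$ and $\pi_1(G_2)_{\gal_{\qp}}$, using that inertness makes the norm condition compatible.
\item At $\tau\in S_\infty$: we take the cocycle $\sigma\mapsto k=[(h(i),-i)]$ of Section \ref{subsub:ComputationHodgeEmbedding}, restricted to the factor at $\tau$. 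It maps to the compact-form class, which by Lemmas \ref{Lem:KottwitzInvariantCompactForm} and \ref{Lem:KottwitzInvariantShimuraForm} is the class of Section \ref{subsub:RealTorsorChoiceBC}.
\item At all other places we take the trivial class.
\end{itemize}

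\textbf{Step 2 (global gluing).} We apply the exact sequence of \cite[Theorem 5.16]{Borovoi} to $\g$. The local classes glue to a global class exactly when their Kottwitz invariants sum to zero in $\pi_1(\g)_{\gal_{\mathbb{Q}}}$. Since $\pi_1(\g)$ has a free part coming from the torus, the sum can be nonzero. We correct it by modifying the lift at one auxiliary place, chosen so that the correction maps trivially to $\g_2$, using a class of the central torus $\operatorname{Res}_{\ml/\mathbb{Q}}\gm$ (or its norm-one part) that vanishes away from that place. We pick this correction place to be a finite place $\ell\neq p$ inert in $\ml$, if one is needed.

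We expect this to be the main obstacle. The requirement is that $\mathsf{P}$ stay trivial over $\afp$, so the correction cannot be placed at $\ell\ne p$. It must therefore be absorbed at $p$ or at $\infty$, using the freedom in choosing the lifts there. This is exactly where the inertness hypothesis at $p$ and the sign of $\epsilon$ enter. The compatibility $\mathsf{P}\times^{\g}\g_2\simeq\mathsf{P}_S$ then follows from the injectivity of $H^1(\mathbb{Q},\g_2)\to\prod_v H^1(\mathbb{Q}_v,\g_2)$, which holds by the Hasse principle noted in the footnote of Section \ref{subsub:TypeBCohomologyClasses}.

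\textbf{Step 3 (Shimura datum, Hodge embedding, nonemptiness).} We define $\x'$ at each real place.
\begin{itemize}
\item At $\tau\notin S_\infty$ we keep $\x_\tau$.
\item At $\tau\in S_\infty$ we take $h'(z)=[(1,z)]$ as in Lemma \ref{Lem:HodgeEmbeddingComputation}.
\end{itemize}
Lemma \ref{Lem:HodgeEmbeddingComputation} then gives that $\iota'$ is a morphism of weak real Shimura data into $\gvlxp$, which is Assumption \ref{Assump:InfinityHodge}; SV1–SV3 follow from this. Compatibility with $\gxtwop$ holds because $h'$ projects to the trivial datum on the compact factors used in Lemma \ref{Lem:ExistenceShimuraDatumTypeB}.

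For nonemptiness of $B(G,P,-\mu,-\mu')$, we argue as in Lemma \ref{Lem:NonemptyB}: we need $\kappa(\mu)+\kappa([P])=\kappa(\mu')$ in $\pi_1(G)_{\gal_{\qp}}$. On the $\g_2$ part this is the parity condition of Lemma \ref{Lem:NonemptyB}. On the central torus part, the changes of $h$ at $\tau\in S_\infty$ shift $\kappa(\mu')$ by the cocharacters $z\mapsto z$ in $\operatorname{Res}_{\ml/\mathbb{Q}}\gm$. This shift should match the torus contribution of $\kappa([P])$ by our choice in Step 1, given inertness. If needed, we adjust $S_\infty$-dependent choices in Step 1 so that the two sides agree.
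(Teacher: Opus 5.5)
Your Step 2 is where the argument breaks down. You posit a possibly nonzero obstruction in $\pi_1(\g)_{\gal_{\mathbb{Q}}}$ and propose to cancel it at an auxiliary place $\ell\neq p$. As you then notice, that would destroy triviality over $\afp$. Your fallback of ``absorbing'' the obstruction at $p$ or $\infty$ is never carried out. At $p$ it cannot work in any case: since every $\mathfrak p\mid p$ is inert in $\ml$, the local lift at $p$ is unique (cf.\ Lemma \ref{Lem:CohomologyComputationHodgetypeII}), so there is no freedom there. As written, the existence of the global torsor is therefore not established.

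The missing idea is that there is no obstruction at all. Lift into $\operatorname{Res}_{\mlp/\mathbb{Q}}\g_{0,++}\subset\g$ and reduce to $\mlp$ by Shapiro's lemma. By Corollary \ref{Cor:AlgebraicFundamentalGroupHodgeAdjoint}, the map $\pi_1(\g_{0,++})_{\gal_{\mlp}}\to\pi_1(\g_{2,0})_{\gal_{\mlp}}$ is an isomorphism. Both sides are $\mathbb{Z}/2$, because $X_*(\mathsf{Z}^{\circ}_{\g_{0,++}})\to\pi_1(\g_{0,++})$ is multiplication by $2$ on $\mathbb{Z}$ with the sign action, hence zero on coinvariants. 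By functoriality of the Kottwitz maps, the sum of the invariants of your local lifts maps to the sum of the local invariants of the global class $[\mathsf{P}_S]$, which vanishes. So the sum vanishes, and Borovoi's exact sequence produces a global class. Its image agrees with $[\mathsf{P}_S]$ by the Hasse principle for $\g_{2,0}$ (Lemma \ref{Lem:HassePrinciple}). The free part of $\pi_1(\g)_{\gal_{\mathbb{Q}}}$ is irrelevant: local $H^1$-classes have torsion Kottwitz invariants, and the torsion of $\pi_1(\g)_{\gal_{\mathbb{Q}}}$ maps isomorphically onto $\pi_1(\g_2)_{\gal_{\mathbb{Q}}}$. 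Note also that the center of $\g$ does not contain $\operatorname{Res}_{\ml/\mathbb{Q}}\gm$, only the subtorus whose norm to $\mlp$ lies in $\gm$.

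The same missing structural fact leaves your nonemptiness argument at the level of ``should match, adjust if needed.'' Because all primes above $p$ are inert in $\ml$, one has $\pi_1(G)_{\gal_{\qp}}\cong\pi_1(G_2)_{\gal_{\qp}}\oplus\mathbb{Z}$, with the $\mathbb{Z}$ detected by the similitude. On the first summand, the identity $\kappa(\mu)+\kappa([P])=\kappa(\mu')$ is the corresponding identity for $G_2$, i.e.\ the parity conditions of Lemma \ref{Lem:NonemptyB}. On the second summand, $\kappa([P])$ contributes nothing (it is torsion, and $P$ even comes from $\g_{0,++}$), while $\mu$ and $\mu'$ both have similitude $1$. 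There is no further ``torus shift'' to match and nothing to adjust.

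Finally, Lemma \ref{Lem:HodgeEmbeddingComputation} only gives the twisting statement for $\g_3\to\operatorname{Res}_{\mlp/\mathbb{Q}}\g_{V_{0,\ml}}$. For Assumption \ref{Assump:InfinityHodge} on $\g\subset\g_3$, you must also check that $h'$ lies in the same signed connected component as $h$ in the definition of $\x$. This is what Proposition \ref{Prop:TwistIsHodgeEmbedding} verifies.
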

\begin{proof}
We show that $\mathsf{P}_S$ lifts to a torsor for $\operatorname{Res}_{\mlp/\Q} \g_{0,++} \subset \g$. We use Shapiro's lemma to reduce it to a question over $\mlp$. By Corollary \ref{Cor:AlgebraicFundamentalGroupHodgeAdjoint}, the natural map
\begin{align}
     \pi_1(\g_{0,++})_{\gal_{\mlp}} \to \pi_1(\g_{2})_{\gal_{\mlp}}
\end{align}
is an isomorphism. Since $\g_{0,++}$ and $\g_{2,0}$ satisfy the Hasse principle, see Lemma \ref{Lem:HassePrinciple}, it suffices to construct lifts of $[\mathsf{P}_S]$ locally at all places. At places above $p$, the (unique) existence of the local lift is guaranteed by Lemma \ref{Lem:CohomologyComputationHodgetypeII}, and at infinite places, we choose the lift constructed to the cocycle $k$ described in Section \ref{subsub:AppendixInnerTwisting}. \smallskip 

Next, we observe that $B(G,P,-\mu, -\mu')$ is nonempty. This is because the equality
\begin{align}
    \kappa_{G_2}(\mu_2) + \kappa_{G_2}(\mathsf{P}_S) = \kappa_{G_2}(\mu_2')
\end{align}
in $\pi_1(G_2)_{\gal_{\qp}}$ implies the equality
\begin{align}
     \kappa_{G}(\mu) + \kappa_{G}(\mathsf{P}) = \kappa_{G}(\mu')
\end{align}
in $\pi_1(G)_{\gal_{\qp}}=\pi_1(G_2)_{\gal_{\qp}} \oplus \mathbb{Z}$ (see Lemma \ref{Lem:CohomologyComputationHodgetypeII}). Indeed, this comes down to checking that $\mu,\mu'$ have the same composition (namely, $1$) with the similitude map, which is well known. \smallskip 

Finally, note that Assumption \ref{Assump:InfinityHodge} holds by Proposition \ref{Prop:TwistIsHodgeEmbedding}.
\end{proof}
\begin{Rem}
When choosing $\ml$, we only really need those places $\mathfrak{p}$ such that $\mathfrak{p} \in S_{p}$ to be inert in $\ml$ over $\mlp$. In the type $B$ case, this is precisely saying that the quadratic spaces $V$ and $V'$ become isomorphic over $\ml$. This should be compared with the choice of auxiliary CM extension of the totally real field in the work of Tian--Xiao \cite{TianXiao}.
\end{Rem}

\subsubsection{} Write $\Theta_{2}=(\g_2, \x_2, \mathsf{P}_S, \x_{2}')$, write $\sigma$ for the natural map $\gx \to \gxtwo$ and let $\omega:\mathsf{P} \times^{\g} \g_2 \to \mathsf{P}_S$ be an isomorphism (see Proposition \ref{Prop:HodgeCoverBC}), so that $\Sigma=(\g,\x,\sigma,\mathsf{P}, \omega)$ is a Hodge type lifting of $\Theta_{2}$ in the sense of Definition \ref{Def:HodgeTypeLifting}.
\begin{Cor} \label{Cor:ExistenceHodgeTypeBCVeryGood}
If $\g^{\mathrm{ad}}_{\mathbb{C}}$ is not of type $D$, then there is a Galois totally real field $\f$ such that $\Sigma_{\f}$ is an excellent Hodge type cover of $\Theta_{2,\f}$.
\end{Cor}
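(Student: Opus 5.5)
The plan is to produce a Galois totally real field $\f$, with $p$ unramified in $\f$, that kills the local obstructions at $p$, and then to check the five conditions of Definition \ref{Def:ExcellentHodgeTypeLifting} for $\Sigma_{\f}=(\h_1,\y_1,\sigma_{\f},\mathsf{P}_{\f},\omega_{\f},\y_1')$, together with the standing requirements of a Hodge type lifting (that $H_1$ splits over an unramified extension, and that $\Sha^1(\mathbb{Q},\h_2)=0$ for $\Theta_{2,\f}$).

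First I choose $\f$. Every group involved ($\g_2$, $\g_{0,++}$, $\g$, $\mathsf{P}$) splits over a finite extension of $\qp$. The classes in $H^1(\qp,-)$, namely $[\mathsf{P}_S]$ and its lift $[\mathsf{P}]$, are detected by Kottwitz maps to groups of exponent $2$, modulo the similitude factor. I pick $\f$ Galois and totally real such that for every prime $w\mid p$ of $\f$:
\begin{itemize}
\item $\f_w/\qp$ is unramified of even degree divisible by the local splitting degree;
\item $\f_w$ is large enough that $G$, $G_2$ and $Z_G$ split over $\f_w\mlp_{\mathfrak{p}}$.
\end{itemize}
Such an $\f$ exists by weak approximation and Krasner's lemma (for instance a compositum of real quadratic fields with prescribed behavior at $p$). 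The even-degree condition makes the restriction maps $H^1(\mlp_{\mathfrak{p}},-)\to H^1(\f_w\mlp_{\mathfrak{p}},-)$ kill the classes coming from $\mathbb{Z}/2$ Kottwitz invariants, via corestriction and multiplication by the degree. This gives condition (2), and it also makes $H_1$ quasi-split and unramified.

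Next I check condition (3). By Appendix-type computations the center of $\g$ is a torus built from $\operatorname{Res}_{\ml/\mathbb{Q}}\gm$ and $\gm$, and it remains a torus after passing to $\h_1\subset \operatorname{Res}_{\f/\mathbb{Q}}\g_{\f}$. I then want $H^1(\qp,Z_{H_1})=0$. Since $Z_{H_1}$ is an induced torus from a product of norm-one-type pieces, I compute $H^1$ by Tate--Nakayama as $\pi_1(Z)_{\gal,\mathrm{tors}}$. Choosing $\f_w$ to contain the residue extension of $\ml_{\mathfrak{p}}$ makes this torus split or induced, so the torsion vanishes; inertness of primes above $p$ in $\ml$ was arranged for precisely this. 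Condition (4) follows from Proposition \ref{Prop:HodgeCoverBC} by base change: the Kottwitz equality $\kappa(\mu)+\kappa(P)=\kappa(\mu')$ is preserved, and the class $P$ is now trivial. Condition (5) is Assumption \ref{Assump:InfinityHodge} for the Hodge embedding composed with restriction of scalars, which follows from Proposition \ref{Prop:TwistIsHodgeEmbedding} applied over $\f$.

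The main obstacle is condition (1), $\Sha^1(\mathbb{Q},\h_1)=0$, together with $\Sha^1(\mathbb{Q},\h_2)=0$. Via Shapiro's lemma this reduces to the Hasse principle for $\g_{0,++}$ over $\f\mlp$ and for $\g_{2,0}$ over $\f\mlp$. For $\g_{2,0}$ in types $B$ and $C$ this holds because the group is adjoint. For $\g_{0,++}$ I would invoke Lemma \ref{Lem:HassePrinciple} over the field $\f\mlp$; it relies on the derived group being simply connected or having the correct $\pi_1$, which is the content of Lemma \ref{Lem:ConnectedCenter}. That lemma fails in type $D$, which explains the hypothesis. The remaining difficulty is to ensure that the subgroup $\h_1\subset\operatorname{Res}_{\f/\mathbb{Q}}\g_{\f}$, cut out by a similitude condition, still has trivial $\Sha^1$. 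I would compute this with Borovoi's sequence: $\Sha^1$ is dual to a subquotient of $\pi_1(\h_1)_{\mathrm{tors}}$, and I would check that this torsion vanishes by the same corestriction argument.
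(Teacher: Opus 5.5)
Your overall plan matches the paper's: choose $\f$ by weak approximation with $p$ unramified so that the primes above $p$ split in $\ml\f/\mlp\f$, then verify the conditions of Definition \ref{Def:ExcellentHodgeTypeLifting}. Conditions (4) and (5) are fine. The genuine gap is condition (3), which is exactly where the hypothesis excluding type $D$ enters.

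Condition (3) requires $\mathsf{Z}_{\h_1}$ to be connected, and you only assert that the center ``is a torus''. Your treatment of condition (3) never uses the hypothesis, so it would apply verbatim in type $D$, where its conclusion is false. There $\mathsf{Z}_{\g_0}=\gm\times\mu_2$, and the $\mu_2$ survives in $\mathsf{Z}_{\g}$ and $\mathsf{Z}_{\h_1}$.

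You instead attach the type-$D$ hypothesis to the Hasse principle, which is the wrong place. Lemma \ref{Lem:HassePrinciple} only uses two facts: the derived group is simply connected, and the abelianization is $\gm\times\operatorname{Res}\mathsf{T}^1$. Both also hold in type $D^{\mathbb{R}}$. Lemma \ref{Lem:ConnectedCenter} is a statement about the center, not about $\pi_1$ or simple connectivity.

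The missing step is the following. Outside type $D$ one has $\mathsf{Z}_{\g_0}=\gm$ (Lemma \ref{Lem:CenterGSpinTypeBD} for odd $n$, a direct check in type $C$). Hence the center is a fiber product over the norm map $t\mapsto tt^c$. That map is surjective with connected kernel, so the center is a torus (Lemma \ref{Lem:ConnectedCenter}). Only then does your Hilbert 90 computation $H^1(\qp,Z_{H_1})=0$, using the splitting above $p$, finish condition (3).

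Condition (1) is also left open, and the route you sketch would not close it. Shapiro's lemma does not reduce $\Sha^1(\mathbb{Q},\h_1)$ to $\g_{0,++}$ over $\f\mlp$, because $\h_1$ is cut out by a similitude condition and is not a Weil restriction. Checking that the torsion of $\pi_1(\h_1)$ vanishes proves nothing either. That group is already torsion free, since the derived group is simply connected, while $\Sha^1$ of a torus with torsion-free cocharacters can be nonzero (norm-one tori of biquadratic extensions).

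What you need is the following:
\begin{enumerate}
\item Require $\f$ to be linearly disjoint from $\mlp$. You never impose this, but it is needed for $\f\otimes_{\mathbb{Q}}\mlp=\mkp$ to be a field and for $\Sigma_{\f}$ to be defined.
\item Apply Lemma \ref{Lem:PSCompatibility}: $\hyo$ is the datum built from $\Upsilon_{\f}$ in the appendix.
\item Lemma \ref{Lem:HassePrinciple} then applies verbatim, reducing the claim to the Hasse norm theorem for the quadratic extension $\mkp(\sqrt{\epsilon})/\mkp$.
\end{enumerate}

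Finally, your argument for killing $[\mathsf{P}]$ at $p$ runs the wrong way. The identity $\operatorname{cor}\circ\operatorname{res}=[n]$ only bounds the kernel of restriction; it does not show restriction kills $2$-torsion classes. Also, $[\f_w:\qp]$ even does not force $[\f_w\mlp_{\mathfrak{p}}:\mlp_{\mathfrak{p}}]$ even. This is repairable: the splitting above $p$ that you impose gives $H^1(\qp,\h_1)\cong H^1(\qp,\h_1^{\ab})=0$ directly. Alternatively, as in the paper, use that every class dies over a sufficiently large unramified extension.
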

\begin{proof}
We first construct a Galois totally real field $\f$ such that:
\begin{itemize}
    \item The prime $p$ is unramified in $\f$.

    \item The field $\mlp$ is disjoint from $\f$.

    \item In the extension $\mL \cdot \f$ of $\mlp \cdot \f$, all primes above $p$ of $\mlp \cdot \f$ split.

    \item The algebraic group $\h_{2}$ is quasi-split at $p$.

    \item The class of $\mathsf{P}$ maps to zero in $H^1(\qp,\h_{2})$.
\end{itemize}
To construct it, choose an unramified extension $K$ of $\qp$ containing $\mL_{\mathfrak{p}}$ for all primes $\mathfrak{p}$ above $p$, such that $\g_2$ is quasi-split over $K$, and such that $[\mathsf{P}]$ is trivial over $K$. This is possible because $\mL$ is unramified over $\qp$, because groups become quasi-split over unramified extensions, and because cohomology classes become trivial over unramified extensions. Using weak approximation, we can choose a totally real number field $\f_1$ disjoint from (the Galois closure of) $\mL$ in which $p$ is inert and such that $\f_1 \otimes \qp$ is $K$. Then we let $\f$ be the Galois closure of $\f_1$, which has all the properties above by construction. \smallskip

By Lemma \ref{Lem:PSCompatibility} and the second property, the group $\hyo$ is constructed from $\Upsilon_{\f}$. It follows from Lemma \ref{Lem:HassePrinciple} that $\hyo$ satisfies the Hasse principle. It moreover follows from the last property and Lemma \ref{Lem:CohomologyComputationHodgetypeII} that $\mathsf{P}$ maps to zero in $H^1(\qp,\ho)$, and from the fourth property that $\hyo$ is quasi-split at $p$. It follows from the third property and Lemmas \ref{Lem:CocentersHodgeType} and \ref{Lem:ConnectedCenter} that $H^1(\qp, Z_{H_{1}})=0$ and that $\mathsf{Z}_{\h_{1}}$ is a torus. It furthermore follows from Proposition \ref{Prop:HodgeCoverBC} that $B(H_1, -\mu_1) \cap B(H_1,-\mu_1')$ is nonempty, because it contains $B(G,-\mu, -\mu')$. Assumption \ref{Assump:InfinityHodge} holds because it holds for $\gx, \iota$ and $\mathsf{P}$, or alternatively, by another application of Proposition \ref{Prop:TwistIsHodgeEmbedding}.
\end{proof}

\subsection{Type \texorpdfstring{$A$}{A}} Let $\mL$ be a CM field with totally real subfield $\mL^{+}$. Let $\mb$ be a division algebra with center $\mL$ equipped with an involution of the second kind $\ast$ such that $\mL^{\ast}=\mL^{+}$. Let $V$ (resp. $V'$) be an $n$-dimensional Hermitian space over $(\mb, \ast)$ and for $\tau:\mL^{+} \to \mathbb{R}$ let $(r_{\tau}, s_{\tau})$ (resp. $(r_{\tau}', s_{\tau}')$) the signatures of $V \otimes_{\mlp, \tau} \mathbb{R}$ in the sense of \cite[Definition 1.2.5.2]{Lan}. Consider the algebraic groups 
\begin{align}
    \g_{0}&:=\operatorname{U}_{\mb}(V) \\
    \g_{0}&:=\operatorname{U}_{\mb}(V') \\
    \g_{+}&:=\operatorname{Res}_{\mlp/\mathbb{Q}} \operatorname{U}_{\mb}(V) \subset \operatorname{GU}_{\mb}(V):=\g \\
    \g_{+}'&:=\operatorname{Res}_{\mlp/\mathbb{Q}} \operatorname{U}_{\mb}(V') \subset \operatorname{GU}_{\mb}(V'):=\g'
\end{align}
associated to $V$ and $V'$. Write $\x$ and $\x'$ for the Shimura data associated to $\g$ and $\g'$ by \cite[Proposition 8.14]{Milne}, and $\mathsf{E}$ and $\mathsf{E}'$ for their reflex fields. 

\subsubsection{} Let $\mathsf{P}_0$ be the $(\operatorname{U}_{\mb}(V),\operatorname{U}_{\mb}(V'))$ bitorsor over $\mlp$ of $\mb$-linear isomorphisms of Hermitian spaces $V \to V'$, and let $\mathsf{P}$ be the induced $\g$-torsor. Let $\iota:\gx \to \gvx$ be the tautological Hodge embedding, and note that the tautological Hodge embedding $\iota':\gxp \to \gvxp$ is naturally identified with the twist by $\mathsf{P}$ of $\iota$. Thus Assumption \ref{Assump:InfinityHodge} is tautologically satisfied. 

\subsubsection{} Fix a prime $p$ such that $\mb$ splits over all primes $\mathfrak{p}$ of $\mlp$ above $p$. Assume moreover that $\mathsf{P}_0$ is trivial over $\mathbb{A}_{\mlp,f}^{p}$ and fix an isomorphism $\mathbb{C} \isom \qpbar$, inducing places $v$ and $v'$ of $\mathsf{E}$ and $\mathsf{E}'$ with completions $E$ and $E'$, respectively. We now want to compute when $B(G, -\mu, -\mu')$ is nonempty in our situation. This is slightly easier to do for $B(G^{\mathrm{ad}}, -\mu, -\mu')$, as we will now do.

\subsubsection{} Note that $G^{\mathrm{ad}}=\prod_{\mathfrak{p}} \g_{0, \mathfrak{p}}^{\mathrm{ad}}$, where the product runs over primes $\mathfrak{p}$ of $\mlp$ above $p$. Furthermore
\begin{align}
\pi_1(\g_{0,\mathfrak{p}}^{\mathrm{ad}})_{\gal_{\mlp_{\mathfrak{p}}}} = \threepartdef{\mathbb{Z}/n\mathbb{Z}}{\mathfrak{p} \text{ splits in } \ml}{\mathbb{Z}/2\mathbb{Z}}{\mathfrak{p} \text{ does not split in } \ml \text{ and $n$ is even}}{0}{\mathfrak{p} \text{ does not split in } \ml \text{ and $n$ is odd.}}
\end{align}
Note that there is an identification $\Sigma_{\infty}= \widetilde{\Sigma_p}$ from the embeddings $\tau:\mlp \to \R$ to the set of embeddings $\tau:\mlp \to \qpbar$, and that there is a natural forgetful map $u:\widetilde{\Sigma_p} \to \Sigma_p$. 

\begin{Lem} \label{Lem:NonemptynessBGMUTypeAAdjoint}
    The set $B(G^{\mathrm{ad}},-\mu,-\mu')$ is nonempty if and only if the following two conditions hold:
    \begin{itemize}
        \item For all primes $\mathfrak{p}$ split in $\ml$ we have
        \begin{align}
          \sum_{\tau \in u^{-1}(\mathfrak{p})} r_{\tau} = \sum_{\tau \in u^{-1}(\mathfrak{p})} r_{\tau}' \mod n
        \end{align}

        \item If $n$ is even, then for all primes $\mathfrak{p}$ nonsplit in $\ml$ we have 
        \begin{align}
            \kappa_{G^{\mathrm{ad}}}(\mathsf{P}_{\mathfrak{p}})+\sum_{\tau \in u^{-1}(\mathfrak{p})} r_{\tau} = \sum_{\tau \in u^{-1}(\mathfrak{p})} r_{\tau}' \mod 2
        \end{align}
    \end{itemize}
\end{Lem}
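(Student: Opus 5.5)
The plan is to reduce nonemptiness to an identity of Kottwitz invariants, and then evaluate that identity one prime $\mathfrak{p}$ at a time. This follows the pattern of the proof of Lemma \ref{Lem:NonemptyB}.

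First I recall, as in Remark \ref{Rem:VacuousConjectureRemark}, that $B(G^{\mathrm{ad}},P,-\mu,-\mu')=\beta_P(B(G^{\mathrm{ad}},-\mu))\cap B(G'^{\mathrm{ad}},-\mu')$ is nonempty exactly when
\[
\kappa(\mu)+\kappa([P])=\kappa(\mu') \quad \text{in } \pi_1(G^{\mathrm{ad}})_{\gal_{\qp}}=\pi_1(G'^{\mathrm{ad}})_{\gal_{\qp}}.
\]
The forward direction is immediate: every element of $B(G,-\mu)$ has Kottwitz invariant $\kappa(\mu)$, up to the sign conventions for $-\mu$, and $\beta_P$ shifts $\kappa$ by $\kappa([P])$. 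For the converse, I would argue that when the invariants agree, the unique basic element of $B(G'^{\mathrm{ad}})$ with that invariant lies in both sets. This uses that $B(G,-\mu)_{\mathrm{bsc}}$ is a singleton with invariant $\kappa(\mu)$, and that $\beta_P$ preserves basicness.

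Next I decompose $G^{\mathrm{ad}}=\prod_{\mathfrak{p}}\g^{\mathrm{ad}}_{0,\mathfrak{p}}$. This makes $\pi_1$, $\kappa(\mu)$, $\kappa(\mu')$ and $\kappa([P])$ all decompose as products, so the condition can be checked one $\mathfrak{p}$ at a time. Since $\mb$ splits at $\mathfrak{p}$, the group $\g^{\mathrm{ad}}_{0,\mathfrak{p}}$ is $\mathrm{PGL}_n$ over $\mlp_{\mathfrak{p}}$ when $\mathfrak{p}$ splits in $\ml$, and the adjoint of a unitary group over $\ml_{\mathfrak{p}}/\mlp_{\mathfrak{p}}$ otherwise.

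Then I compute both sides at each $\mathfrak{p}$.
\begin{itemize}
\item \textbf{Hodge cocharacter terms.} Via $\mathbb{C}\isom\qpbar$, the Hodge cocharacter at each $\tau\in u^{-1}(\mathfrak{p})$ is of type $(1^{r_\tau},0^{s_\tau})$, up to the choice of the factor of $\ml\otimes_{\mlp,\tau}\mathbb{R}$ defining the signature. Its image in $\pi_1(\mathrm{PGL}_n)=\mathbb{Z}/n$ is $r_\tau$ mod $n$, up to a sign common to $\mu$ and $\mu'$, and restriction of scalars sums these over $u^{-1}(\mathfrak{p})$. In the nonsplit case I project to the Galois coinvariants $\mathbb{Z}/2$ for $n$ even, and to $0$ for $n$ odd.
\item \textbf{Torsor term, split case.} Here $\kappa([P_{\mathfrak{p}}])=0$, since $\mathsf{P}_{0,\mathfrak{p}}$ is a torsor for a group that is essentially $\mathrm{GL}_n$, and Hermitian spaces over the split algebra $\ml_{\mathfrak{p}}=\mlp_{\mathfrak{p}}\times\mlp_{\mathfrak{p}}$ are all isomorphic. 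So its image under the map to $\pi_1$ vanishes.
\item \textbf{Torsor term, nonsplit case.} This term stays as $\kappa_{G^{\mathrm{ad}}}(\mathsf{P}_{\mathfrak{p}})$, which is what the lemma states. For $n$ odd the condition is vacuous because the target group is $0$.
\end{itemize}

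The main obstacle is the bookkeeping of conventions. I need to check that the signature convention of \cite{Lan} matches the Hodge cocharacter's weights under the embedding attached to $\tau$, and that the choice of complex embedding of $\ml$ over $\tau$ introduces only a sign common to $\mu$ and $\mu'$, so that it cancels. I would fix a CM type compatible with $\mathbb{C}\isom\qpbar$ and verify this directly on the standard representation.
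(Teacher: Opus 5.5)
Your plan is the paper's argument: reduce to the identity $\kappa(\mu)+\kappa([P])=\kappa(\mu')$ in $\pi_1(G^{\mathrm{ad}})_{\gal_{\qp}}$, check it one $\mathfrak{p}$ at a time, and read off the image of the Hodge cocharacter at $\tau$ as $r_\tau$. You also supply details the paper leaves implicit, namely the converse via the basic element and the vanishing of $\kappa([P_{\mathfrak{p}}])$ at split primes.

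One step needs correcting: the claim that the choice of lift $\tilde\tau$ of $\tau$ to $\ml$ only introduces a sign that cancels.
\begin{itemize}
\item At a split $\mathfrak{p}$, identify $\g_{0,\mathfrak{p}}$ with a form of $\mathrm{GL}_n$ by choosing a prime $w\mid\mathfrak{p}$ of $\ml$. The contribution of $\tau\in u^{-1}(\mathfrak{p})$ is $r_\tau$ or $s_\tau\equiv -r_\tau$ mod $n$, according to whether the $p$-adic embedding attached to $\tilde\tau$ induces $w$ or $w^c$.
\item Relabelling at a single $\tau_0$, i.e.\ replacing $\tilde\tau_0$ by its conjugate, replaces $r_{\tau_0}-r'_{\tau_0}$ by $s_{\tau_0}-s'_{\tau_0}=-(r_{\tau_0}-r'_{\tau_0})$ in the stated sum. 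The Kottwitz identity itself does not change.
\item So the sign is common to $\mu_\tau$ and $\mu'_\tau$ at each $\tau$, but not uniform across $u^{-1}(\mathfrak{p})$. Only a simultaneous swap over all of $u^{-1}(\mathfrak{p})$ is harmless.
\end{itemize}
Hence the split-prime condition holds only under a specific convention: for each split $\mathfrak{p}$, all $\tilde\tau$ with $\tau\in u^{-1}(\mathfrak{p})$ must induce the same prime of $\ml$ above $\mathfrak{p}$. This convention is implicit in the paper's appeal to Xiao--Zhu. You should state it explicitly rather than argue that the choice is irrelevant.
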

\begin{proof}
We only need to check when $\kappa_{G^{\mathrm{ad}}}(\mu) + \kappa_{G^{\mathrm{ad}}}(P) = \kappa_{G^{'\mathrm{ad}}}(\mu')$. We may check this equality one prime $\mathfrak{p}$ above $p$ at a time. The lemma now comes down to the claim that the image of $-\mu$ (resp. $-\mu'$) in $\pi_1(\g_{0,\mathfrak{p}}^{\mathrm{ad}})_{\gal_{\mlp_{\mathfrak{p}}}}$ is given by $r_{\tau}$ (resp. $r_{\tau}'$). This follows from the description of a standard $h \in \x$ and $h' \in \x'$ as in \cite[Example 2.1.8.(ii)]{XiaoZhu}. 
\end{proof}

\begin{Rem}
We note that the existence of $V'$ forces the equality
\begin{align}
    \sum_{\mathfrak{p}} \kappa_{G^{\mathrm{ad}}}(\mathsf{P}_{\mathfrak{p}}) + \sum_{\tau} r_{\tau} = \sum_{\tau} r_{\tau}' \mod 2,
\end{align}
see \cite[Example 2.1.8.(ii)]{XiaoZhu}. 
\end{Rem}

\newcommand{\mlpi}{\mathsf{L}^{+}_{\mathfrak{p}_i}}
\newcommand{\mli}{\mathsf{L}_{\mathfrak{p}_i}}

The following Lemma recovers \cite[Remark 7.3.10]{XiaoZhu} as a special case. More precisely, \cite[Remark 7.3.10]{XiaoZhu} deals with the case that $\mathfrak{p}$ is inert in $\ml$ and $P$ is trivial. 
\begin{Lem} \label{Lem:NonemptynessBGMUTypeA}
    The set $B(G,-\mu,-\mu')$ is nonempty if and only if the following two conditions hold:
    \begin{itemize}
        \item For all primes $\mathfrak{p}$ split in $\ml$ we have
        \begin{align}
          \sum_{\tau \in u^{-1}(\mathfrak{p})} r_{\tau} = \sum_{\tau \in u^{-1}(\mathfrak{p})} r_{\tau}'
        \end{align}

        \item If $n$ is even, then for all primes $\mathfrak{p}$ nonsplit in $\ml$ we have 
        \begin{align}
            \kappa_{G}(\mathsf{P}_{\mathfrak{p}})+\sum_{\tau \in u^{-1}(\mathfrak{p})} r_{\tau} = \sum_{\tau \in u^{-1}(\mathfrak{p})} r_{\tau}' \mod 2.
        \end{align}

        \item If $n$ is odd, then either for all primes $\mathfrak{p}$ nonsplit in $\ml$ we have 
        \begin{align}
            \kappa_{G}(\mathsf{P}_{\mathfrak{p}})+\sum_{\tau \in u^{-1}(\mathfrak{p})} r_{\tau} = \sum_{\tau \in u^{-1}(\mathfrak{p})} r_{\tau}' \mod 2,
        \end{align}
        or for all primes $\mathfrak{p}$ nonsplit in $\ml$ we have 
        \begin{align}
            \kappa_{G}(\mathsf{P}_{\mathfrak{p}})+\sum_{\tau \in u^{-1}(\mathfrak{p})} r_{\tau} \neq \sum_{\tau \in u^{-1}(\mathfrak{p})} r_{\tau}' \mod 2.
        \end{align}
    \end{itemize}
    
\end{Lem}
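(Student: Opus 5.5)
The plan is to reduce nonemptiness to an equality of Kottwitz invariants, exactly as in the proof of Lemma \ref{Lem:NonemptynessBGMUTypeAAdjoint}. By Remark \ref{Rem:VacuousConjectureRemark}, $B(G,-\mu,-\mu')$, i.e. $\beta_P(B(G,-\mu))\cap B(G',-\mu')$, is nonempty if and only if $\kappa_G(\mu)+\kappa_G([P])=\kappa_G(\mu')$ in $\pi_1(G)_{\gal_{\qp}}$. So the task is to compute this group for $G=\operatorname{GU}_{\mb}(V)_{\qp}$ and the images of $\mu,\mu',[P]$.

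\textbf{Structure of the group.} There is an exact sequence $1\to G_{+}\to G\xrightarrow{\operatorname{sim}}\gm\to 1$ with $G_{+}=\prod_{\mathfrak{p}}\operatorname{Res}_{\mlp_{\mathfrak{p}}/\qp}\operatorname{U}(V)_{\mathfrak{p}}$. Since $\pi_1(\operatorname{U}_n)=\mathbb{Z}$, I will describe $\pi_1(G)$ as a sublattice of $\mathbb{Z}\oplus\bigoplus_{\tau}\mathbb{Z}$. The first coordinate is the similitude factor. The coordinate at $\tau$ is the determinant, read at one of the two CM types above $\tau$; the Galois action swaps the two types, acting as $-1$ up to a similitude shift.

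Taking $\gal_{\qp}$-coinvariants prime by prime, I expect the following contributions. At a prime $\mathfrak{p}$ split in $\ml$, the local unitary group is $\operatorname{Res}\operatorname{GL}_n$, which contributes a free summand $\mathbb{Z}$. At a prime $\mathfrak{p}$ nonsplit in $\ml$, complex conjugation acts by $-1$, which contributes a $\mathbb{Z}/2$. The similitude $\mathbb{Z}$ is glued to these through the relation $\det = \operatorname{sim}^n\cdot(\text{norm part})$. Carrying this gluing out carefully yields
\[
\pi_1(G)_{\gal_{\qp}} \subset \mathbb{Z}\oplus\bigoplus_{\mathfrak{p}\ \mathrm{split}}\mathbb{Z}\oplus\bigoplus_{\mathfrak{p}\ \mathrm{nonsplit}}\mathbb{Z}/2 ,
\]
possibly as a subgroup of index dividing $2$.

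\textbf{Images of $\mu,\mu',[P]$.} Using the standard $h$ of \cite[Example 2.1.8.(ii)]{XiaoZhu}, the image of $-\mu$ has similitude coordinate $1$. At split $\mathfrak{p}$ its coordinate is $\sum_{\tau\in u^{-1}(\mathfrak{p})}r_\tau$, and at nonsplit $\mathfrak{p}$ it is the same sum reduced mod $2$; the analogous description holds for $\mu'$. The class $[P]$ has similitude coordinate $0$. At split $\mathfrak{p}$ its coordinate is $0$, since $H^1(\mlp_{\mathfrak{p}},\operatorname{GL}_n)=0$; at nonsplit $\mathfrak{p}$ it is $\kappa_G(\mathsf{P}_{\mathfrak{p}})$. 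Comparing coordinates then gives the first two conditions, with exact equality at split primes because the coordinate there is free.

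\textbf{Main obstacle.} The hard part is pinning down the coinvariants exactly at nonsplit primes when $n$ is odd. There, $\det$ and $\operatorname{sim}$ interact: $\operatorname{sim}^n$ versus the norm identifies the individual $\mathbb{Z}/2$'s with each other, so only the common parity of the discrepancy survives, diagonally across all nonsplit $\mathfrak{p}$. I expect the coinvariants at nonsplit primes to collapse to a single diagonal $\mathbb{Z}/2$ modulo the subgroup generated by the image of the similitude direction. In that case the equation reduces to requiring that all nonsplit discrepancies be equal: either all zero or all one mod $2$, which is the third condition.

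To carry this out, I would first treat a single nonsplit prime with $\mlp_{\mathfrak{p}}=\qp$. There I would compute $X_*(T)_{\gal}$ explicitly for the maximal torus of $\operatorname{GU}_n$ over a quadratic extension and check that $\kappa$ of the central cocharacter $z\mapsto z$ equals $n$ times a generator plus the similitude generator. I would then use induction along $\operatorname{Res}$ to handle the general case.
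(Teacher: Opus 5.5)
You have the right strategy, and it is the paper's: reduce via Remark \ref{Rem:VacuousConjectureRemark} to the equation $\kappa(\mu)+\kappa([P])=\kappa(\mu')$ in $\pi_1(G)_{\gal_{\qp}}$, then compute that group from the cocenter. The gap is the step that actually produces the lemma, namely the torsion of $\pi_1(G)_{\gal_{\qp}}$ at the nonsplit primes. You do not carry it out, and the two descriptions you give are inconsistent with each other and both wrong.

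Write $I$ (resp.\ $S$) for the primes of $\mlp$ above $p$ that are nonsplit (resp.\ split) in $\ml$, and set $\Delta=\sum_{v\in I}e_v$. The correct answer, which the paper computes, is
$\pi_1(G)_{\gal_{\qp}}\cong\mathbb{Z}\oplus\mathbb{Z}^{S}\oplus(\mathbb{Z}/2)^{I}/\langle n\Delta\rangle$.
The nonsplit part is therefore a quotient of $\bigoplus_{v\in I}\mathbb{Z}/2$, not a subgroup.
\begin{itemize}
\item If you compared coordinates inside $\mathbb{Z}\oplus\bigoplus\mathbb{Z}\oplus\bigoplus\mathbb{Z}/2$, as your first description suggests, you would get ``every nonsplit discrepancy vanishes mod $2$'' for all $n$. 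For odd $n$ this contradicts the third condition, which allows all discrepancies equal to $1$.
\item Nor does the torsion ``collapse to a single diagonal $\mathbb{Z}/2$''. For odd $n$ it is $(\mathbb{Z}/2)^{I}/\langle\Delta\rangle\cong(\mathbb{Z}/2)^{|I|-1}$. The condition ``all discrepancies equal'' says exactly that the discrepancy vector lies in $\{0,\Delta\}$: the diagonal is what gets killed, not what survives.
\end{itemize}

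Your proposed verification cannot detect this. Take a single nonsplit prime with $\mlp_{\mathfrak p}=\qp$, so $X_*(T)=\{(x,y_1,y_2):y_1+y_2=nx\}$ and the nontrivial Galois element acts by $(x,y_1)\mapsto(x,nx-y_1)$. The coinvariants are $\mathbb{Z}\oplus\mathbb{Z}/\gcd(2,n)$, which is torsion-free for odd $n$. So the heuristic that each nonsplit prime contributes a $\mathbb{Z}/2$ already fails there, and induction along $\operatorname{Res}$ from this case will not recover the general answer.

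The relation $n\Delta=0$ comes from the similitude coordinate $x$ being shared by all primes above $p$. The cocenter is $\{((t_v)_v,\lambda):\mathrm{Nm}(t_v)=\lambda^n\text{ for all }v\}$, a fiber product over one $\gm$, and coinvariants do not commute with this fiber product. Concretely, applying $1-\sigma$ to the vector with $x=1$ and all $y$'s zero shifts the $\mathbb{Z}/2$-coordinate at every nonsplit prime by $n$ simultaneously.

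To close the gap, compute the coinvariants of the whole cocenter at once, as the paper does, using $0\to X_*(T)\to\mathbb{Z}\oplus\mathbb{Z}^{\Sigma_p(L)}\to\mathbb{Z}^{\Sigma_p^+(L)}\to0$. The torsion is the cokernel on $H_1(\gal_{\qp},-)$:
\begin{itemize}
\item the unitary part gives $\bigoplus_{v\in I}\gal(L_v/L_v^+)$, by Shapiro's lemma and local class field theory;
\item the similitude factor contributes $-n$ times the transfer from $\gal_{\qp}^{\mathrm{ab}}$, whose image is $\langle n\Delta\rangle$.
\end{itemize}
With this structure in hand, your computation of the images of $\mu$, $\mu'$ and $[P]$ gives all three conditions.
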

\begin{proof}
    For $G$ as above we have an isomorphism
    \[
        G_{\qp} \simeq H \subset \left(\prod_{\mathfrak{p}|p} \mathrm{GU}(L_\mathfrak{p}/L^+_\mathfrak{p})\right) 
    \]
    where $H$ is the subgroup of elements $(g_v, \lambda_v)$ such that for all $v$ there is some $\lambda \in \mathbb{G}_m$ such that $\lambda_v = \lambda$. Its cocenter is thus the group
    \[
        T(R) \coloneqq \{((t_v), \lambda) \, | \, t_v \in (L_v \otimes R)^\times, \lambda \in R^\times \text{ and for all v, }\mathrm{Nm}^{L_v}_{L^+_v}(t_v) = \lambda^n \}.
    \]
    So we calculate that
    \[
        X_*(T) \subset \mathbb{Z} \times \mathbb{Z}^{\Sigma_p(L)}
    \]
    is precisely the subset of $(x, (y_w))$ such that $nx = y_w + y_{w^c}$ for all $w$, and the Galois group $\gal_{\qp}$ acts in the obvious way, here $\Sigma_p(L)$ denotes the set of places of $L$ above $p$ with its natural Galois action. Let $I$ denote the set of places of $L_p^+$ inert in $L_p$, and let $S$ denote the set of places of $L_p^+$ which are split in $L_p$. Since there is a short exact sequence
    \[
        0 \to X_*(T) \to \mathbb{Z} \times \mathbb{Z}^{\Sigma_p(L)} \xrightarrow{\sigma} \mathbb{Z}^{\Sigma_p^+(L)} \to 0
    \]
    taking coinvariants under $\gal_\qp$, we obtain
    \[
        0 \to Y \to X_*(T)_\Gamma \to (\mathbb{Z} \times \mathbb{Z}^{I/\gal_{\qp}} \times \mathbb{Z}^{(S/\gal_{\qp})^2}) \to \mathbb{Z}^{I/\gal_{\qp}} \times \mathbb{Z}^{S/\gal_{\qp}} \to 0
    \]
    where $Y = \mathrm{Coker}\left(\mathrm{H}_1(\gal_{\qp}, \mathbb{Z}^{\Sigma_p} \times \mathbb{Z}) \to \mathrm{H}_1(\gal_{\qp}, \mathbb{Z}^{\Sigma_p^+}) \right)$. Using Shapiro's lemma and transfer maps to calculate $Y$ we get
    \[
         0 \to (\mathbb{Z}/2\mathbb{Z})^{I/\gal_{\qp}}/(n \cdot \sum_{i \in I/\gal_{\qp}} e_i) \to X_*(T)_\Gamma \to \mathbb{Z} \times \mathbb{Z}^{I/\gal_{\qp}} \times \mathbb{Z}^{S/\gal_{\qp}} \to \mathbb{Z}^{I/\gal_{\qp}} \times \mathbb{Z}^{S/\gal_{\qp}} \to 0,
    \]
    where the $e_i$ are basis vectors for $(\mathbb{Z}/2\mathbb{Z})^{I/\gal_{\qp}}$. Since the kernel of the rightmost map is the free module $\mathbb{Z}^{S/\gal_{\qp}} \times \mathbb{Z}$, we find that 
    \[
        X_*(T)_\Gamma \cong (\mathbb{Z}/2\mathbb{Z})^{I/\gal_{\qp}}/(n \cdot \sum_{i \in I/\gal_{\qp}} e_i) \oplus \mathbb{Z}^{S/\gal_{\qp}} \times \mathbb{Z}.
    \]
    From this the result follows by a simple calculation.
\end{proof}

\subsubsection{} For $\f$ a totally real field disjoint from $\mlp$ we write $\mb_{\f}=\mb \otimes_{\mathbb{Q}} \f$, $\ml_{\f}= \ml \otimes_{\mathbb{Q}} \f$ and $V_{\f}=V \otimes_{\mathbb{Q}} \f$ and similarly define $V'_{\f}$. This induces $\h_1=\operatorname{GU}_{\mb_{\f}}(V_{\f})$ and $\h_1'=\operatorname{GU}_{\mb_{\f}}(V_{\f})$.
\begin{Lem} \label{Lem:PELCovering}
There is a Galois totally real field $\f$ disjoint from $\mlp$ such that: The field $\f$ is unramified at $p$, the $\mb_{\f} \otimes \qp$ Hermitian spaces $V \otimes \qp$ and $V' \otimes \qp$ are isomorphic, and $Z_{H_1}$ is a product of induced tori.
\end{Lem}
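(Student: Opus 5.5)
The plan is to mimic the construction of $\f$ in the proof of Corollary \ref{Cor:ExistenceHodgeTypeBCVeryGood}: first choose the local field at $p$ that all completions of $\f$ at $p$ should equal, and then globalize by weak approximation.

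\textbf{Step 1: choose the local field $K$.} Let $K$ be a finite unramified extension of $\qp$ of even degree, large enough that $\ml_{\mathfrak{p}}$ embeds into $K$ for every prime $\mathfrak{p}$ of $\mlp$ above $p$ at which $\ml$ is unramified. By weak approximation, choose a totally real field $\f_1$ that is linearly disjoint from the Galois closure of $\ml$, in which $p$ is inert, and with $\f_1\otimes\qp\simeq K$. Let $\f$ be the Galois closure of $\f_1$. The completions of $\f$ at primes above $p$ are composita of conjugates of $K$, hence equal to $K$. Therefore $\f$ is unramified at $p$, and it is still disjoint from $\mlp$, after possibly adjusting $\f_1$ as in loc.\ cit.

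\textbf{Step 2: the hermitian spaces become isomorphic at $p$.} Since $\mb$ splits at all $\mathfrak{p}\mid p$, Morita equivalence reduces the comparison of $V\otimes\qp$ and $V'\otimes\qp$ to a comparison of hermitian spaces over $\ml_{\mathfrak{p}}\otimes_{\mlp_{\mathfrak{p}}}\f_{\mathfrak{q}}$ over $\mlp_{\mathfrak{p}}\otimes\f_{\mathfrak{q}}$, one local factor at a time.
\begin{itemize}
\item At a local factor where $\ml$ becomes split (automatic for $\mathfrak{p}$ unramified in $\ml$, by the choice of $K$), hermitian spaces of the same rank are isomorphic.
\item At a nonsplit local factor, which arises only when $\ml_{\mathfrak{p}}/\mlp_{\mathfrak{p}}$ is ramified, hermitian spaces are classified by rank and discriminant in $\mlp_{\mathfrak{p}}^\times/\mathrm{Nm}$. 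The discriminants $d,d'$ come from $\mlp_{\mathfrak{p}}$, and the compositum $K\cdot\mlp_{\mathfrak{p}}$ has even degree over $\mlp_{\mathfrak{p}}$. By functoriality of the local Artin map, $\mathrm{Art}_{K\mlp_\mathfrak{p}}(d/d')$ restricts to $\mathrm{Art}_{\mlp_\mathfrak{p}}(N(d/d'))=\mathrm{Art}_{\mlp_\mathfrak{p}}((d/d')^{[K\mlp_\mathfrak{p}:\mlp_\mathfrak{p}]})=1$. Hence the discriminants agree after base change.
\end{itemize}
Rank is preserved, so $V\otimes\qp\simeq V'\otimes\qp$ as $\mb_\f\otimes\qp$-hermitian spaces.

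\textbf{Step 3: the center $Z_{H_1}$.} The center of $\h_1=\operatorname{GU}_{\mb_\f}(V_\f)$ is the torus
\begin{align}
\{t\in\operatorname{Res}_{\ml_\f/\mathbb{Q}}\gm : t\bar t\in\gm\}.
\end{align}
At the split factors over $\qp$ this torus is $\{(a,b): ab\in\gm\}\simeq\operatorname{Res}\gm\times\gm$, which is induced. To handle the whole statement uniformly, I would write the torus as the pullback of $\operatorname{Res}_{\ml_\f/\mathbb{Q}}\gm\to\operatorname{Res}_{\mlp_\f/\mathbb{Q}}\gm\leftarrow\gm$ and verify inducedness factor by factor using the splitting established in Step 2.

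\textbf{Main obstacle.} I expect the main difficulty to be primes above $p$ that ramify in $\ml$. No unramified $\f$ can split these primes, so the induced-torus claim for $Z_{H_1}$ does not follow from Step 3 at those factors. My first attempt would be to observe that the hypothesis that $\mb$ splits above $p$ (as used later) is accompanied by $\ml$ being unramified at $p$, and to add this as a standing reduction. Failing that, I would restrict the claim to the factors where $\ml$ is unramified at $p$.
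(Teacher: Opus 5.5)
Your construction is the paper's. You take $K/\qp$ unramified and containing every $\mL_{\mathfrak{p}}$, use weak approximation to get $\f_1$ with $\f_1\otimes\qp\simeq K$, and let $\f$ be its Galois closure. Once $\mL_\f$ splits at every prime above $p$, your Step~3 computation $Z_{H_1,\qp}\simeq\operatorname{Res}\gm\times\gm$ gives the torus claim. The classification of split hermitian spaces by rank gives the isomorphism at $p$. This even makes the paper's separate requirement that $[P]$ become trivial over $K$ automatic.

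What needs fixing is how you handle the obstacle, which is real: if some $\mathfrak{p}\mid p$ ramifies in $\mL$, the lemma can fail for every choice of $\f$.
\begin{itemize}
\item \textbf{An example.} Take $\mlp=\mathbb{Q}$, $\mb=\mL$ ramified at $p$, and $V,V'$ whose discriminant ratio is a non-norm from $\mL_p:=\mL\otimes\qp$. Isometry at $p$ then forces $[K:\qp]$ to be even. Consequently every element of $\qp^\times$ is a norm from $\mL_pK$. The sequence $1\to\ker N\to Z_{H_1}\xrightarrow{N}\gm\to 1$ then gives $H^1(\qp,Z_{H_1})\simeq(\mathbb{Z}/2)^m\neq 0$, where $\f\otimes\qp\simeq K^m$. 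So $Z_{H_1,\qp}$ is not a product of induced tori.
\item \textbf{Consequence.} Your fallback of restricting to the unramified factors cannot prove the lemma. You must assume $\mL$ is unramified above $p$, and this is exactly what the paper's proof uses (``because $\mL$ is unramified over $\qp$'').
\item \textbf{The justification.} Your reason for this assumption is wrong: splitting of $\mb$ above $p$ says nothing about ramification of $\mL$ (already $\mb=\mL$ is allowed). The correct source is the standing hypothesis of Theorem~\ref{Thm:IntroMainIgusaStack} that $G$ splits over an unramified extension of $\qp$. Indeed $Z_G$ contains the norm-one torus of $\mL_{\mathfrak{p}}/\mlp_{\mathfrak{p}}$, which splits only over $\mL_{\mathfrak{p}}$.
\end{itemize}

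With this hypothesis in place, the ramified branch of your Step~2 and the even-degree condition on $K$ become superfluous. Note also that $[K:\qp]$ even would not by itself make $[K\mlp_{\mathfrak{p}}:\mlp_{\mathfrak{p}}]$ even.
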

\begin{proof}
To construct it, choose an unramified extension $K$ of $\qp$ containing $\mL_{\mathfrak{p}}$ for all primes $\mathfrak{p}$ above $p$, such that $G$ is quasi-split over $K$, and such that $[P]$ is trivial over $K$. This is possible because $\mL$ is unramified over $\qp$, because groups become quasi-split over unramified extensions, and because cohomology classes become trivial over unramified extensions. Using weak approximation, we can choose a totally real number field $\f_1$ disjoint from $\mL$ in which $p$ is inert and such that $\f_1 \otimes \qp$ is $K$. Then we let $\f$ be the Galois closure of $\f_1$, which has all the properties above by construction.
\end{proof}

\begin{proof}[Proof of Theorem \ref{Thm:IntroMainIgusaStack}]
In types $B,C$, the theorem follows from Theorem \ref{Thm:CorrespondenceAdjoint} together with Corollary \ref{Cor:ExistenceHodgeTypeBCVeryGood}. In type $A$, it follows from Theorem \ref{Thm:HodgeTypeCorrespondences} together with Lemma \ref{Lem:PELCovering} and Proposition \ref{Prop:PEL}. 
\end{proof}

\section{Cohomological applications} \label{Sec:Cohomology} In this section we give cohomological applications of our main results. In Section \ref{sub:gluing} we glue together the Igusa sheaves for the two different Shimura data. In Section \ref{sub:HeckeOperators}, we show that Hecke operators applied to this glued sheaf compute the cohomology of both Shimura varieties. In Section \ref{Sec:SpecAct} we prove some results about the localization of the spectral action on affine open subschemes and formal completions of the stack of $L$-parameters. In Sections \ref{sub:ExamplesSpectral} and \ref{Sub:DifferentAtP} we use these results to prove Theorem \ref{Thm:IntroCohomology}.

\subsection{Gluing} \label{sub:gluing} Let $\gx$ be a Shimura datum of abelian type, and fix a prime $p$ together with an isomorphism $\mathbb{C} \simeq \qpbar$. Let $\mathsf{P}$ be a $\g$-torsor over $\spec \mathbb{Q}$ trivial over $\afp$, let $\g'=\operatorname{Aut}_{\g}(\mathsf{P})$ and let $\gxp$ be a Shimura datum satisfying Assumption \ref{Assump:Infinity}. Choose a trivialization $\kappa^p:\mathsf{P} \otimes \afp \to \g \otimes \afp$ inducing an identification $\g \otimes \afp \to \g' \otimes \afp$. We let $v$ and $v'$ be the places of $\mathsf{E}$ and $\mathsf{E}'$ determined by our identification $\mathbb{C} \simeq \qpbar$. Assume that Conjecture \ref{Conj:Main} holds, and consider the commutative diagram produced by it
\begin{equation}
    \begin{tikzcd}
        \igs \gx_{-\mu'} \arrow[r, "\sim"] \arrow{d} & \igs \gxp_{-\mu} \arrow{d} \\
        \bun_G \arrow{r}{\beta_{\mathsf{P}}} & \bun_{G'}.
    \end{tikzcd}
\end{equation}
Notice that $\igs \gx_{-\mu'} \subset \igs \gx$ and $\igs \gxp_{-\mu} \subset \igs \gxp$ are open substacks. Thus we may use the isomorphism above to glue $\igs \gx$ to $\igs \gxp$ along their common open substack, we will denote this gluing by $\igs \gxkappa$. The $\underline{\gafp}$-equivariance of the isomorphism gives us an action of $\underline{\gafp}$ on $\igs \gxkappa$. If we use $\beta_{\mathsf{P}}^{-1}$ to give $\igs \gxp$ a structure map $\tildepi$ to $\bun_G$, then $\igs \gxkappa$ lives over $\bun_{G}$.

\subsubsection{} Fix an algebraic closure $k$ of $\fp$ and let $\ell \not=p$. We let $\bungmucmup \subset \bung$ be the union of the quasicompact open substacks $\bungmu$ and $\bungmup$. Fix a neat compact open subgroup $K^p \subset \gafp$ and consider (where the level $K^p$ objects are as in \cite[Section 4.5]{DvHKZIgusaStacksII})
\begin{align}
    \pi:\igs_{K^p} \gx  &\to \bungmucmup \\
    \pi':\igs_{K^p}\gxp  &\to \bungmucmup \\
    \tilde{\pi}:\igs_{K^p} \gxkappa  &\to \bungmucmup.
\end{align}
\subsubsection{} For $\Lambda$ a $\zl$-algebra where $\ell$ is nilpotent containing a fixed square root of $p$, and for $W$ a standard $\Lambda$ local system as in \cite[Section 5.2.1]{DvHKZIgusaStacksII}, we define objects in $D_{\text{\'et}}(\bungmucmupk, \Lambda)$ by
\begin{align}
    \mathcal{F}=\mathcal{F}_{\mathbb{W}}&:=R \pi_{k, \ast} \mathbb{W}, \qquad 
     \mathcal{F}'=\mathcal{F}_{\mathbb{W}}':=R \pi'_{k, \ast} \mathbb{W}, \qquad
    \tildef=\tildef_{\mathbb{W}}:=R \tildepi_{k, \ast} \mathbb{W}. 
\end{align}
Let $\mathbb{T}_{K^p}$ denote the Hecke algebra for $\gafp$ with level $K^p$ and coefficients in $\Lambda$; note that $\mathbb{T}_{K^p}$ acts on $\mathcal{F}, \mathcal{F}', \tildef$. 
\begin{Lem} \label{Lem:RestrictionTildeF}
    There are natural $\mathbb{T}_{K^p}$-equivariant isomorphisms 
    \begin{align}
    \restr{\tildef}{\bungmu} &\isom \mathcal{F} \\
    \restr{\tildef}{\bungmup} &\isom \mathcal{F}'.
\end{align}
\end{Lem}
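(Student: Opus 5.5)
The plan is to apply smooth (in fact open) base change to the open immersions $j:\bungmu \hookrightarrow \bungmucmup$ and $j':\bungmup \hookrightarrow \bungmucmup$. Since $\igs_{K^p}\gxkappa$ is obtained by gluing $\igs_{K^p}\gx$ and $\igs_{K^p}\gxp$ along the common open $\igs_{K^p}\gx_{-\mu'} \simeq \igs_{K^p}\gxp_{-\mu}$, the first step is to identify the fibre products
\begin{align}
    \igs_{K^p}\gxkappa \times_{\bungmucmup} \bungmu &\simeq \igs_{K^p}\gx, \\
    \igs_{K^p}\gxkappa \times_{\bungmucmup} \bungmup &\simeq \igs_{K^p}\gxp.
\end{align}
For the first, the preimage of $\bungmu$ in the glued stack is the union of $\igs_{K^p}\gx$ (which maps entirely into $\bungmu$) and the preimage of $\bungmu$ in $\igs_{K^p}\gxp$ under $\tilde\pi=\beta_P^{-1}\circ\pi'$. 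The latter is $\igs_{K^p}\gxp_{-\mu}$ by definition, and it is already glued into $\igs_{K^p}\gx$. The second identification is symmetric. These identifications are compatible with the $\underline{\gafp}$-actions, because the gluing isomorphism is $\underline{\gafp}$-equivariant and passes to level $K^p$ by taking quotients.

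The second step is to invoke base change for an open immersion. For any map of v-stacks $f:Y\to B$ and open $U\subset B$, we have $j^*Rf_*\simeq R(f_U)_*j'^*$; in this setting it is simply the fact that restriction to an open commutes with pushforward. This gives $\restr{\tildef}{\bungmu} = R(\tilde\pi|)_{k,*}\mathbb{W} \simeq R\pi_{k,*}\mathbb{W}=\mathcal{F}$, and likewise for $\mathcal{F}'$. I will also need to check that the standard local system $\mathbb{W}$ on the glued stack restricts to the standard local systems on each piece. Since $\mathbb{W}$ is defined through the $\underline{K^p}$-torsor over the infinite-level Igusa stack and a representation of $K^p$, this reduces to the statement that the gluing is compatible with the infinite-level covers. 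That follows from equivariance.

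The third step is Hecke equivariance. The Hecke operators are given by correspondences $\igs_{K_1^p} \leftarrow \igs_{K_1^p\cap gK_2^pg^{-1}} \to \igs_{K_2^p}$ over $\bungmucmup$, all coming from the $\underline{\gafp}$-action at infinite level. The base change isomorphisms are natural in these correspondences, so the isomorphisms commute with the $\mathbb{T}_{K^p}$-action. I expect this bookkeeping to be the main obstacle. It amounts to checking that the identifications of fibre products are compatible with the pullback and trace maps defining the Hecke action, which in turn uses that the gluing isomorphism of Conjecture \ref{Conj:Main} is $\underline{\gafp}$-equivariant and not merely equivariant up to level.
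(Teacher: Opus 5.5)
Your proposal is correct and follows the same route as the paper. The paper proves this by base change along the open immersions $\bungmu \hookrightarrow \bungmucmup$ and $\bungmup \hookrightarrow \bungmucmup$, and deduces Hecke-equivariance from the $\gafp$-equivariance of the gluing isomorphism. Your explicit identification of the preimages of these two open substacks in the glued Igusa stack is exactly the input the paper's one-line argument uses implicitly.
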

\begin{proof}
This is a direct consequence of smooth base change along the open immersions $\bungmu \to \bungmucmup $ and $\bungmup \to \bungmucmup $, see \cite[Theorem 1.10.(iii)]{EtCohDiam}. The Hecke-equivariance follows from the $\gafp$-equivariance of the isomorphism $\igs \gx_{-\mu'} \isom \igs \gxp_{-\mu}$.
\end{proof}
\begin{Cor} \label{Cor:ULA}
The sheaf $\tildef$ is ULA with respect to $\bungmucmupk \to \spd k$ in the following three situations.
\begin{itemize}
    \item The Shimura varieties for both $\gx$ and $\gxp$ are proper. 

    \item Both $\gx$ and $\gxp$ are of Hodge type.

    \item The Shimura data $\gx$ and $\gxp$ are of abelian type and satisfy Milne's axiom SV5.
\end{itemize}
\end{Cor}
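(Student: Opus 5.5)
The plan is to reduce the ULA property of $\tildef$ to the ULA property of the two Igusa sheaves $\mathcal{F}$ and $\mathcal{F}'$ separately, which is known in each of the three listed situations from the literature on Igusa stacks. The key observation is that being ULA with respect to $\bungmucmupk \to \spd k$ is local on the source for the analytic (indeed open) topology. This is the characterization of ULA sheaves on $\bung$ of \cite{FarguesScholze}, in terms of compactness and admissibility of the stalks $i_b^{\ast}$ along the strata, or equivalently in terms of the restrictions to the charts $\mathcal{M}_b$. Since $\bungmucmupk$ is covered by the two open substacks $\bungmu$ and $\bungmup$, it suffices to check that $\restr{\tildef}{\bungmu}$ and $\restr{\tildef}{\bungmup}$ are ULA. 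By Lemma \ref{Lem:RestrictionTildeF}, these restrictions are identified with $\mathcal{F}$ and $\mathcal{F}'$ respectively.

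The second step is to invoke the ULA results for $\mathcal{F}$ and $\mathcal{F}'$ in each case.
\begin{itemize}
\item In the proper case, $\pi$ is proper, or at least the relevant fibers are. ULA-ness should then follow from the argument of \cite[Section 8]{DvHKZIgusaStacks}, which proves it via the Cartesian diagram of Theorem \ref{Thm:IgusaMain} together with properness of the Hodge--Tate period map, and the fact that $R\Gamma$ of the Shimura variety is finite.
\item In the Hodge type case, the statement is the ULA theorem for Igusa sheaves proved in \cite{DvHKZIgusaStacks}, which uses the partial minimal compactification.
\item In the abelian type case with SV5, it is the corresponding result of \cite[Section 5]{DvHKZIgusaStacksII}.
\end{itemize}
Since ULA-ness is checked on the stalks at each $b$, this comes down to the statement that $i_b^{\ast}\mathcal{F}$ is computed by the cohomology of Igusa varieties with finite-level structure, and that this cohomology is admissible with finite-type fibers. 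For the abelian type case, Theorem \ref{Thm:IgusaVarietiesAbelianType} gives the needed finiteness.

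The main obstacle is to make sure that the ULA condition is genuinely local on the base over open covers of $\bung$, rather than only a stratum-by-stratum statement. If necessary, I would verify this directly: ULA is equivalent to the statement that for every $b$ the pullback to the smooth chart $\mathcal{M}_b \to \bung$ is ULA over $\spd k$. The image of this chart lies in any open substack containing the stratum of $b$, so locality over opens follows, since ULA-ness on locally spatial diamonds is étale, and in particular open, local. I would also need to make sure the Hecke and Weil-descent data play no role. Indeed, the property being proved is purely about the underlying sheaf over $\spd k$.
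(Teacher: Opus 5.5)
Your proposal is correct and follows essentially the same route as the paper: use Lemma \ref{Lem:RestrictionTildeF} together with the stratumwise characterization of ULA sheaves in \cite[Theorem V.7.1]{FarguesScholze} to reduce to the ULA property of $\mathcal{F}$ and $\mathcal{F}'$ separately, then cite the known results in the proper and Hodge type cases, and use Theorem \ref{Thm:IgusaVarietiesAbelianType} for the abelian type SV5 case. One correction to your attributions: the proper case is \cite[Proposition 5.2.6]{DvHKZIgusaStacksII}, and the abelian type SV5 case is not taken from \cite{DvHKZIgusaStacksII} but is deduced in this paper from Theorem \ref{Thm:IgusaVarietiesAbelianType} and \cite[Theorem V.7.1]{FarguesScholze}, as in the proof of \cite[Corollary 8.5.4]{DvHKZIgusaStacks}.
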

\begin{proof}
By \cite[Theorem V.7.1]{FarguesScholze}, it suffices to prove that both $\mathcal{F}$ and $\mathcal{F}'$ are ULA. In the first situation, this is \cite[Proposition 5.2.6]{DvHKZIgusaStacksII}, and in the second situation this is \cite[Corollary 8.5.4]{DvHKZIgusaStacks}. In the third situation, this follows from Theorem \ref{Thm:IgusaVarietiesAbelianType} and \cite[Theorem V.7.1]{FarguesScholze}, as in the proof of \cite[Corollary 8.5.4]{DvHKZIgusaStacks}.
\end{proof}

\subsection{Hecke operators} \label{sub:HeckeOperators} Recall the global and local Hecke stacks and the Hecke correspondences of Fargues--Scholze, see \cite[Section 8.4]{FarguesScholze}. We have the following lemma.
\begin{Lem} \label{Lem:HeckeCorrespondencesInnerForm}
    The torsor $\mathsf{P}$ induces an isomorphism $\beta_{\mathsf{P}}:\mathrm{Hck}_{G} \to \mathrm{Hck}_{G'}$ of global Hecke stacks fitting a $2$-commutative diagram
    \begin{equation}
        \begin{tikzcd}
            & \mathrm{Hck}_{G} \arrow{dd}{\beta_{\mathsf{P}}} \arrow{dr}{h_{1,G}} \arrow[dl,"h_{2,G}", swap] \\
            \bun_{G} \arrow{dd}{\beta_{\mathsf{P}}} & & \bun_{G} \times \operatorname{Div}^1 \arrow{dd}{\beta_{\mathsf{P}} \times \operatorname{Id}} \\
            & \mathrm{Hck}_{G'} \arrow{dr}{h_{1,G'}} \arrow[dl,"h_{2,G'}", swap] \\
            \bun_{G'} & & \bun_{G'} \times \operatorname{Div}^1. 
        \end{tikzcd}
    \end{equation}
    It moreover induces an isomorphism $\beta_{\mathsf{P}}:\mathcal{H}ck_{G} \to \mathcal{H}ck_{G'}$ of local Hecke stacks fitting in a $2$-commutative diagram
    \begin{equation}
        \begin{tikzcd}
            \mathrm{Hck}_{G} \arrow{r} \arrow{d}{\beta_{\mathsf{P}}} & \mathcal{H}ck_{G} \arrow{d}{\beta_{\mathsf{P}}} \\
            \mathrm{Hck}_{G'} \arrow{r} & \mathcal{H}ck_{G'}.
        \end{tikzcd}
    \end{equation}
\end{Lem}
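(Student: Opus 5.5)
The plan is to build $\beta_{\mathsf{P}}$ on Hecke stacks directly from the construction $\beta_P:\mathcal{E} \mapsto \operatorname{Isom}_G(\mathcal{E},P)$ of \cite[Section III.4.1]{FarguesScholze}, applied in families over the base and compatibly with modifications. Recall that $\mathrm{Hck}_G(S)$ is the groupoid of tuples $(\mathcal{E}_1,\mathcal{E}_2,D,\alpha)$, with $\mathcal{E}_i$ $G$-bundles on $X_S$, $D \in \operatorname{Div}^1(S)$ and $\alpha:\mathcal{E}_1|_{X_S\setminus D}\isom \mathcal{E}_2|_{X_S\setminus D}$, and that $h_{1,G},h_{2,G}$ are the two projections. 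Write $P_S$ for the pullback of $P$ to $X_S$. I will send such a tuple to $(\beta_P(\mathcal{E}_1),\beta_P(\mathcal{E}_2),D,\beta_P(\alpha))$, where $\beta_P(\mathcal{E}_i)=\operatorname{Isom}_G(\mathcal{E}_i,P_S)$ and $\beta_P(\alpha)$ is precomposition with $\alpha^{-1}$ on $X_S\setminus D$. This is a $G'$-bundle isomorphism, because $G'=\operatorname{Aut}_G(P)$ acts by postcomposition and precomposition commutes with postcomposition. Functoriality in $S$ and in isomorphisms of tuples is clear. The inverse is $\beta_{P^{-1}}$, using the $(G',G)$-bitorsor $P^{\mathrm{op}}$, exactly as \cite{FarguesScholze} shows $\beta_P$ is an equivalence on $\bun_G$. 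The $2$-commutativity of the first diagram then holds by construction: $h_{1}$ and $h_{2}$ only remember $(\mathcal{E}_1,D)$ and $\mathcal{E}_2$, and $\beta_P$ acts on these by the already-defined map on $\bun_G$ and the identity on $D$.

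For the local Hecke stack I will use $\mathcal{H}ck_G = L^+_{\operatorname{Div}^1}G\backslash L_{\operatorname{Div}^1}G/L^+_{\operatorname{Div}^1}G$, that is, pairs of $G$-torsors over $\operatorname{Spec}B^+_{\mathrm{dR}}(D)$ identified over $B_{\mathrm{dR}}$. The same recipe applies. Restrict $P$ to the formal completion of $X_S$ along $D$, and twist each torsor $\mathcal{E}$ there to $\operatorname{Isom}_G(\mathcal{E},P|_{\widehat{D}})$. The map $\mathrm{Hck}_G\to\mathcal{H}ck_G$ restricts bundles and the modification to the completion at $D$, and restriction commutes with forming $\operatorname{Isom}(-,P)$. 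This gives the second square canonically, and it is $2$-commutative because both composites are given by canonically isomorphic twisting functors.

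I expect the main obstacle to be bookkeeping rather than substance. First, $\beta_P$ must be made genuinely compatible with the identification of $\mathcal{H}ck_G$ as a double quotient of loop groups. Trivializing $P$ étale-locally over $\operatorname{Spec}\qp$, however, identifies $L_{\operatorname{Div}^1}G$ with $L_{\operatorname{Div}^1}G'$ via inner twisting, and then everything reduces to descent. Second, one must check that Beauville--Laszlo gluing is compatible with twisting. This holds because $P_S$ is pulled back from $\operatorname{Spec}\qp$, so its restriction to the punctured curve and to the completion are compatible, and the gluing is functorial in the contracted torsor.
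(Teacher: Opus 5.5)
Your proposal is correct and follows essentially the same route as the paper. The paper also sends $(\mathcal{E}_1,\mathcal{E}_2,D,f)$ to $(\beta_P(\mathcal{E}_1),\beta_P(\mathcal{E}_2),D,\beta_P(f))$ with $\beta_P(\mathcal{E})=\operatorname{Isom}_G(\mathcal{E},P_{(R,R^+)})$, reads the $2$-commutativity off the construction, and treats the local Hecke stack by ``a similar argument''; your explicit inverse via the opposite bitorsor and your twisting over the completion along $D$ only fill in details the paper leaves implicit.
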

\begin{proof}
We first recall the construction of the isomorphism $\beta_{P}:\bung \to \bungp$, see \cite[Corollary III.4.3]{FarguesScholze}. First of all, the $G$-torsor $P$ over $\spec \qp$ defines for $(R,R^+) \in \perf$ a $G$-torsor $P_{(R,R^+)}$ on $X_{(R,R^+)}$ by pullback. Given $(R,R^+) \in \perf$ and a $G$-bundle $\mathcal{E}$ on $X_{(R,R^+)}$, then $\beta_{P}(\mathcal{E})=\operatorname{Isom}_{G}(\mathcal{E}, P_{(R,R^+)})$, considered as a $G'$-bundle via its natural action of $G'_{X_{(R,R^+)}}=\operatorname{Aut}_G(P_{(R,R^+)})$. \smallskip 

The global Hecke stack is the moduli stack sending $S=\spa(R,R^+)$ to the set of quadruples $(\mathcal{E}_1, \mathcal{E}_2, D,f)$, where $\mathcal{E}_1,\mathcal{E}_2 \in \bung(R,R^+)$, where $D \subset X_{(R,R^+)}$ is a degree $1$ Cartier divisor and where 
\begin{align}
    f:\restr{\mathcal{E}_1}{X_{(R,R^+)} \setminus D} \to \restr{\mathcal{E}_2}{X_{(R,R^+)} \setminus D}
\end{align}
is an isomorphism of $G$-bundles. It is clear that $f$ induces an isomorphism
\begin{align}
    \beta_P(f):\restr{\beta_P(\mathcal{E}_1)}{X_{(R,R^+)} \setminus D} \to \restr{\beta_P(\mathcal{E}_2)}{X_{(R,R^+)} \setminus D}
\end{align}
and so we get a quadruple $(\beta_P(\mathcal{E}_1),\beta_P(\mathcal{E}_2), D, \beta_P(f)) \in \mathrm{Hck}_{G'}(R,R^+)$. This defines a morphism $\beta_P:\mathrm{Hck}_{G} \to \mathrm{Hck}_{G'}$ making the first diagram of the lemma commute. \smallskip 

A similar argument shows the existence of the isomorphism $\beta_P:\mathcal{H}ck_{G} \to \mathcal{H}ck_{G'}$ of local Hecke stacks sitting inside the second commutative diagram of the lemma.
\end{proof}
We can also consider $\mathrm{Hck}_{G, \le \mu} \to \bun_{G} \times \Div$ and $\mathrm{Hck}_{G, \le \mu'} \to \bun_{G} \times \Divp$ and their basechanges to $k$.

\subsubsection{} We note that Lemma \ref{Lem:RestrictionTildeF} provides us with natural maps $\tildef \to \mathcal{F}$ and $\tildef \to \mathcal{F}'$. Let us denote by $i_1$ the inclusion of $\bun_{G,k}^{[1]} \to \bun_{G,k}$ and let us denote by $i_{1'}$ the inclusion of $\bun_{G,k}^{[P]} \to \bun_{G',k}$. Under the morphism $\beta_{P}$, the map $i_{1'}$ gets identified with the inclusion of $\bun_{G',k}^{[1]} \to \bun_{G',k}$. 
\begin{Lem} \label{Lem:Boundedness}
The natural maps
\begin{align}
    i_1^{\ast} T_{\mu} j_{k,!} \tildef & \to i_1^{\ast} T_{\mu} j_{k,!} \mathcal{F} \\
    i_{1'}^{\ast} T_{\mu'} j_{k,!} \tildef & \to i_{1'}^{\ast} T_{\mu'} j_{k,!} \mathcal{F}'
\end{align}
are isomorphisms.
\end{Lem}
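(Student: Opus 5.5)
The plan is to reduce the statement to a support computation for the Hecke operator at the neutral point. By base change for the Hecke correspondence, $i_1^{\ast} T_{\mu} j_{k,!}(-)$ is computed by pulling back along $h_2$ to the fiber of $\mathrm{Hck}_{G,\le \mu}$ over $\bun_{G,k}^{[1]}$, tensoring with the Satake sheaf $\mathcal{S}_{\mu}$, and pushing forward along $h_1$. Concretely, $i_1^{\ast}T_\mu$ only sees the restriction of its input to the locus of $G$-bundles $\mathcal{E}$ that are modifications of type bounded by $\mu$ (at a degree one divisor, base changed to $k$) of the trivial bundle. This locus is the image of the Beauville--Laszlo map from $\operatorname{Gr}_{G,\le\mu}$ (or its $-\mu$ version, depending on normalization). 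For minuscule $\mu$ that image is exactly $\bungmu$. Hence $i_1^{\ast}T_\mu j_{k,!}\mathcal{G}$ depends only on $\restr{\mathcal{G}}{\bungmu}$. More precisely, for a map $\mathcal{G}_1\to\mathcal{G}_2$ of sheaves on $\bungmucmupk$ that is an isomorphism over $\bungmu$, the induced map after applying $i_1^{\ast}T_\mu j_{k,!}$ is an isomorphism.

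The steps, in order, are as follows.

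\textbf{(1)} Write $i_1^{\ast}T_\mu j_{k,!}\mathcal{G} = R h_{1,\ast}\bigl(h_2^{\ast} j_{k,!}\mathcal{G}\otimes \mathcal{S}_\mu\bigr)$ restricted to $\bun^{[1]}_{G,k}\times\spd C$. Then identify the relevant fiber of $h_2$ with $\operatorname{Gr}_{G,-\mu}$, mapping to $\bun_G$ via $\mathrm{BL}$. This is the same identification used in the proofs that $i_1^\ast T_\mu$ of the Igusa sheaf computes Shimura cohomology in \cite[Section 8]{DvHKZIgusaStacks}.

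\textbf{(2)} Observe that $\mathrm{BL}(\operatorname{Gr}_{G,-\mu})\subset\bungmu$ by the definition of $\bgmu$ as the $\mu^{-1}$-admissible set. Consequently $h_2^{\ast} j_{k,!}\mathcal{G}$ on this fiber equals the pullback of $\restr{\mathcal{G}}{\bungmu}$; the extension by zero $j_!$ contributes nothing outside $\bungmucmupk$ here.

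\textbf{(3)} Apply Lemma \ref{Lem:RestrictionTildeF}. The map $\tildef\to\mathcal{F}$ is the adjunction map, which restricts to the isomorphism $\restr{\tildef}{\bungmu}\isom\mathcal{F}$ (with $\mathcal{F}$ regarded as living on $\bungmu$ and extended by $j_{!}$, or pushed forward). It therefore becomes an isomorphism after step (2).

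\textbf{(4)} The second statement follows by the symmetric argument. Transport everything along $\beta_{\mathsf{P}}$ using Lemma \ref{Lem:HeckeCorrespondencesInnerForm}, so that $i_{1'}$ becomes the neutral stratum of $\bun_{G'}$ and $T_{\mu'}$ becomes the $G'$-Hecke operator. Then the Beauville--Laszlo image of $\operatorname{Gr}_{G',-\mu'}$ lands in $\bungpmup$, which is where $\tildef$ agrees with $\mathcal{F}'$.

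The main obstacle is making step (1) precise with the $j_{k,!}$ and the normalizations. One has to be careful about which leg $h_1$ or $h_2$ carries the modification, whether the bound is $\mu$ or $-\mu$, and how $\mathcal{F}$ is viewed as a sheaf on the larger stack. I would handle this by copying the base change argument in \cite[Section 8]{DvHKZIgusaStacks}, which shows that the support is contained in $\bungmu$ for exactly this operator. I also need to check that the $\mathbb{T}_{K^p}$- and $W_E$-actions are compatible; this is automatic since all maps are natural.
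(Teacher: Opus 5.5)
Your proposal is correct and follows essentially the same route as the paper. The paper also base changes $i_1^{\ast}$ along the Hecke correspondence to the part of $\mathrm{Hck}_{G,\mu,k}$ lying over the neutral stratum, notes via \cite[Lemma 8.4.4]{DvHKZIgusaStacks} that the other leg factors through $\bungmu$, and concludes with Lemma \ref{Lem:RestrictionTildeF}. The only differences are cosmetic: the paper writes $T_{\mu}=h_{2,\ast}h_1^{\ast}[-d](-\tfrac{d}{2})$, so your legs are swapped as you anticipated, and it uses smooth base change along the open immersion $i_1$.
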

\begin{proof}
We only prove the first statement, the second statement can be proved in a similar fashion using Lemma \ref{Lem:HeckeCorrespondencesInnerForm}. We consider the following commutative diagram
\begin{equation}
\begin{tikzcd}
    \bungmu \arrow[d] & \operatorname{Hck}_{G, \mu, k}^{[1]} \arrow{d}{\tilde{i}_1} \arrow{r}{\tilde{h}_2} \arrow[l, densely dotted] & \bun_{G,k}^{[1]} \times \Div \arrow{d}{i_1} \\
    \bun_{G} &\arrow{l}{h_1} \operatorname{Hck}_{G, \mu, k} \arrow{r}{h_2} & \bun_{G,k} \times \Div,
\end{tikzcd}
\end{equation}
where we note that the dotted arrow exists by \cite[Lemma 8.4.4]{DvHKZIgusaStacks}. The Hecke operator $T_{\mu}$ is given by $h_{2,\ast} h_1^{\ast}[-d](-\tfrac{d}{2})$, see the proof of \cite[Theorem 8.4.8]{DvHKZIgusaStacks}. Using smooth base change for the open immersion $\bun_{G,k}^{[1]} \to \bun_{G,k}$, we see that 
\begin{align}
    i_1^{\ast} T_{\mu} &\simeq i_1^{\ast} h_{2,\ast} h_1^{\ast}[-d](-\tfrac{d}{2}) \\
    & \simeq \tilde{h}_{2,\ast} \tilde{i}_1^{\ast} h_1^{\ast}[-d](-\tfrac{d}{2}).
 \end{align}
The lemma now follows from the fact that the natural map $\tilde{i}_1^{\ast} h_1^{\ast} j_{k,!} \tildef \to \tilde{i}_1^{\ast} h_1^{\ast} j_{k,!} \mathcal{F}$ is an isomorphism, which is a consequence of the above commutative diagram and Lemma \ref{Lem:RestrictionTildeF}.
\end{proof}
We have the following important result. Let $j:\bungmucmup \to \bun_{G}$ be the inclusion and use $\mathbb{W}$ to denote the automorphic local system on $\mathbf{Sh}_{K^p}\gx$ and $\mathbf{Sh}_{K^p}\gxp$ induced by $W$. 
\begin{Prop} \label{Prop:WeilCohShimVar}
There is a $G(\qp) \times W_E \times \mathbb{T}_{K^p}$-equivariant isomorphism
\begin{align}
    i_1^{\ast} T_{\mu} j_{k,!} \tildef_{\mathbb{W}} &\to R \Gamma(\mathbf{Sh}_{K^p}\gx_{\ebar}, \mathbb{W})[d](d/2)
\end{align}
and a $G'(\qp) \times W_{E'} \times \mathbb{T}_{K^p}$-equivariant isomorphism
\begin{align}
      i_{1'}^{\ast} T_{\mu'} j_{k,!} \tildef_{\mathbb{W}} &\to R \Gamma(\mathbf{Sh}_{K^p}\gxp_{\ebar}, \mathbb{W})[d'](d'/2)
\end{align}
\end{Prop}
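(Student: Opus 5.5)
The first isomorphism follows by combining Lemma \ref{Lem:Boundedness} with the known computation of Shimura variety cohomology via Igusa stacks. The second follows by transporting everything along $\beta_{\mathsf{P}}$ and applying the same computation to $\gxp$.

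For the first isomorphism, Lemma \ref{Lem:Boundedness} identifies $i_1^{\ast} T_{\mu} j_{k,!} \tildef_{\mathbb{W}}$ with $i_1^{\ast} T_{\mu} j_{k,!} \mathcal{F}_{\mathbb{W}}$. This identification is $\mathbb{T}_{K^p}$-equivariant by Lemma \ref{Lem:RestrictionTildeF}. It is also $G(\qp) \times W_E$-equivariant, since both actions come from the automorphism group of the neutral point and the Weil action on the Hecke operator, and these are natural in the sheaf. Here $\mathcal{F}_{\mathbb{W}} = R\pi_{k,\ast}\mathbb{W}$ is pushed forward from the open substack $\bungmu \subset \bungmucmup$; on $\bungmu$ the map $j_{k,!}\mathcal{F}$ restricts to the extension by zero used in \cite{DvHKZIgusaStacks,DvHKZIgusaStacksII}. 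So it remains to quote the existing result: for Hodge type this is \cite[Theorem 8.4.8]{DvHKZIgusaStacks}, and for abelian type it is the analogous statement of \cite[Section 5]{DvHKZIgusaStacksII}. Both give
\[ i_1^{\ast} T_{\mu} j_{!} R\pi_{\ast}\mathbb{W} \simeq R\Gamma(\mathbf{Sh}_{K^p}\gx_{\ebar},\mathbb{W})[d](d/2). \]
The proof of that result runs as follows. Apply proper/smooth base change in the Cartesian square of Theorem \ref{Thm:IgusaMain}. Use the fact that the fiber of $h_1$ over the neutral stratum is $\operatorname{Gr}_{G,-\mu}$ modulo $\ul{G(\qp)}$. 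Then identify $R\Gamma$ of $\mathbf{Sh}^{\circ,\lozenge}$ with that of $\mathbf{Sh}$ via \cite{ImaiMieda} and the dotted arrow in the proof of Lemma \ref{Lem:Boundedness}.

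For the second isomorphism, Lemma \ref{Lem:Boundedness} again reduces the claim to $i_{1'}^{\ast} T_{\mu'} j_{k,!}\mathcal{F}'$. I then transport the setup along the isomorphism $\beta_{\mathsf{P}}:\bung \to \bungp$ of Lemma \ref{Lem:HeckeCorrespondencesInnerForm}. This lemma identifies the global and local Hecke stacks of $G$ and $G'$, hence identifies the Hecke operators $T_{\mu'}$ for $G$ (defined through $\beta_{\mathsf{P}}$) with $T_{\mu'}$ for $G'$. It also sends $i_{1'}$ to the inclusion of the neutral stratum of $\bun_{G'}$. Under this transport, $\mathcal{F}'$ becomes $R\pi'_{\ast}\mathbb{W}$ on $\bun_{G',-\mu'}$, and the same cited result for $\gxp$ gives $R\Gamma(\mathbf{Sh}_{K^p}\gxp_{\ebar},\mathbb{W})[d'](d'/2)$. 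This is $G'(\qp)\times W_{E'}\times\mathbb{T}_{K^p}$-equivariant, where the Hecke algebras are matched via $\kappa^p$.

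I expect the main obstacle to be the compatibility of $\beta_{\mathsf{P}}$ with the normalizations in the Hecke operators. The relevant data are the sheaf $\mathcal{S}_{V_{\mu'}}$ on the local Hecke stack, the shift $[-d']$ and twist $(-d'/2)$, and the square root of $p$; these must match between $G$ and $G'$ under the identification ${}^{\mathrm{L}}G = {}^{\mathrm{L}}G'$. I would check this by showing that $\beta_{\mathsf{P}}$ on $\mathcal{H}ck$ identifies the Schubert cells for $\mu'$. These cells are determined by $\operatorname{Gr}_{G} \simeq \operatorname{Gr}_{G'}$ over $\spd \qpbar$, which is equivariant for the inner twisting. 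It would then follow that geometric Satake sends $V_{\mu'}$ to the same perverse sheaf. The equivariance under $G'(\qp)=\operatorname{Aut}(P)$ acting on the stratum $\bun_G^{[P]}$ should then follow formally.
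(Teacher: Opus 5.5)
Your skeleton matches the paper's: Lemma~\ref{Lem:Boundedness} reduces $\tildef_{\mathbb{W}}$ to $\mathcal{F}$ (resp.\ $\mathcal{F}'$), the second isomorphism comes from transport along $\beta_{\mathsf{P}}$ via Lemma~\ref{Lem:HeckeCorrespondencesInnerForm}, and the computation itself is imported from the Igusa stack literature. The gap is in the one step you treat as formal.

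Base change in the diagram of Theorem~\ref{Thm:IgusaMain} over $\operatorname{Hck}^{[1]}_{G,\mu}\simeq[\operatorname{Gr}_{G,\mu}/\ul{G(\qp)}]$, followed by proper base change, only produces $R\Gamma(\mathbf{Sh}_{K^p}\gx^{\circ,\lozenge}_{C},\mathbb{W})$ with $C=\widehat{\ebar}$. (With the paper's conventions $T_\mu=h_{2,\ast}h_1^{\ast}$, this stack is the preimage of the neutral stratum under $h_2$, not a fiber of $h_1$; $h_1$ then becomes the Beauville--Laszlo map.) That is the cohomology of the potentially crystalline (good reduction) locus, which for non-proper Shimura varieties is a proper open subset of $\mathbf{Sh}^{\mathrm{an}}$ missing a neighbourhood of the boundary.

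Neither \cite{ImaiMieda} nor the dotted arrow of Lemma~\ref{Lem:Boundedness} identifies this with $R\Gamma(\mathbf{Sh}_{K^p}\gx_{\ebar},\mathbb{W})$. Imai--Mieda only supply the quasi-compact open $\mathbf{Sh}^{\circ}$ and its compatibility with change of level. The dotted arrow says that modifications of the trivial bundle bounded by $\mu$ land in $\bungmu$; that is what makes $i_1^{\ast}T_{\mu}$ depend only on the restriction to $\bungmu$, i.e.\ it is what proves Lemma~\ref{Lem:Boundedness}, and it says nothing about the boundary of the Shimura variety.

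Comparing the good reduction locus with the full generic fiber needs genuine input about compactifications of the integral model and the behaviour of nearby cycles at the boundary. This is why the paper's proof combines the proof of \cite[Theorem 5.3.11]{DvHKZIgusaStacksII} with \cite[Proposition 5.21]{wu2025arithmeticcompactificationsintegralmodels} on arithmetic compactifications of abelian type integral models, as explained in \cite[Remark 5.3.12]{DvHKZIgusaStacksII}. You need to add this ingredient explicitly, since the $\circ$-Igusa stacks used here only see $\mathbf{Sh}^{\circ}$.

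By contrast, the obstacle you single out is formal. Lemma~\ref{Lem:HeckeCorrespondencesInnerForm} identifies the global and local Hecke stacks of $G$ and $G'$, hence the Satake sheaves, compatibly with ${}^{\mathrm{L}}G={}^{\mathrm{L}}G'$. The shift $[-d']$ and twist $(-d'/2)$ are the same on both sides. This is all the paper uses for the second isomorphism.
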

\begin{proof}
This can be proved by combining \cite[proof of Theorem 5.3.11]{DvHKZIgusaStacksII} with \cite[Proposition 5.21]{wu2025arithmeticcompactificationsintegralmodels}, as explained in \cite[Remark 5.3.12]{DvHKZIgusaStacksII}.
\end{proof}

\subsubsection{} \label{subsub:Rational} Now suppose that $L$ is a finite extension of $\ql$ containing a fixed square root of $p$. Let $W$ be a standard $\mathcal{O}_L$ local system as in \cite[Section 5.2.1]{DvHKZIgusaStacksII}.
\begin{Hyp} \label{Hyp:ULA}
    For all $n$ the sheaf $\tilde{\mathcal{F}}_{\mathbb{W}/\ell^n \mathbb{W}}$ is ULA.
\end{Hyp}
Hypothesis \ref{Hyp:ULA} is often satisfied by Corollary \ref{Cor:ULA}. Consider the pro-system
\begin{align}
    \{\tilde{\mathcal{F}}_{\mathbb{W}/\ell^n \mathbb{W}}\}_{n \in \mathbb{N}}.
\end{align}
Assuming Hypothesis \ref{Hyp:ULA}, this defines an object $\tilde{F}_{\mathbb{W}} \in D_{\mathrm{lis}}(\bungmucmupk, \mathcal{O}_L)$ via the equivalence of categories of \cite[Lemma 7.4]{CaraianiHamannZhang}. We can moreover base change this object to consider it as an object $\tilde{F}_{\mathbb{W},\Lambda'}$ in $D_{\mathrm{lis}}(\bungmucmupk, \Lambda)$ for $\Lambda=L, \zlbar, \qlbar$, see \cite[Lemma 7.5]{CaraianiHamannZhang}. For such $\Lambda$ we also consider
\begin{align}
    R \Gamma(\mathbf{Sh}_{K^p}\gx_{\ebar}, \mathbb{W}_{\Lambda})[d](d/2):=\varinjlim_{K_p} R \Gamma(\mathbf{Sh}_{K^pK_p}\gx_{\ebar}, \mathbb{W}_{\Lambda})[d](d/2),
\end{align}
and its variant for $\gxp$.
\begin{Prop} \label{Prop:WeilCohShimVarRat}
Let $\Lambda \in \{\mathcal{O}_L,L,\zlbar,\qlbar\}$ and assume Hypothesis \ref{Hyp:ULA}. There is a $G(\qp) \times W_E \times \mathbb{T}_{K^p}$-equivariant isomorphism
\begin{align}
    i_1^{\ast} T_{\mu} j_{k,!} \tildef_{\mathbb{W}, \Lambda} &\to R \Gamma(\mathbf{Sh}_{K^p}\gx_{\ebar}, \mathbb{W}_{\Lambda})[d](d/2)
\end{align}
and a $G'(\qp) \times W_{E'} \times \mathbb{T}_{K^p}$-equivariant isomorphism
\begin{align}
      i_{1'}^{\ast} T_{\mu} j_{k,!} \tildef_{\mathbb{W}} &\to R \Gamma(\mathbf{Sh}_{K^p}\gxp_{\ebar}, \mathbb{W})[d'](d'/2)
\end{align}
\end{Prop}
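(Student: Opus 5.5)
The plan is to deduce Proposition \ref{Prop:WeilCohShimVarRat} from its torsion version, Proposition \ref{Prop:WeilCohShimVar}, applied to $\mathbb{W}/\ell^n\mathbb{W}$ for every $n$, and then pass to the limit. I will treat $\Lambda=\mathcal{O}_L$ first; the cases $L,\zlbar,\qlbar$ then follow by base change.

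For each $n$, Proposition \ref{Prop:WeilCohShimVar} gives a $G(\qp)\times W_E\times\mathbb{T}_{K^p}$-equivariant isomorphism
\begin{align}
i_1^{\ast}T_\mu j_{k,!}\tildef_{\mathbb{W}/\ell^n}\isom R\Gamma(\mathbf{Sh}_{K^p}\gx_{\ebar},\mathbb{W}/\ell^n)[d](d/2).
\end{align}
These isomorphisms are compatible with the transition maps in $n$, because every ingredient (the Igusa stack diagram, the pushforward $R\tilde{\pi}_{k,\ast}$, and the identification in \cite[Theorem 5.3.11]{DvHKZIgusaStacksII}) is functorial in the coefficient sheaf. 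Under Hypothesis \ref{Hyp:ULA}, the equivalence of \cite[Lemma 7.4]{CaraianiHamannZhang} identifies $\tildef_{\mathbb{W}}$ with the ULA pro-system $\{\tildef_{\mathbb{W}/\ell^n}\}$, compatibly with reduction modulo $\ell^n$. The operations $j_{k,!}$, $T_\mu$ and $i_1^\ast$ on $D_{\mathrm{lis}}$ preserve the ULA objects and commute with $-\otimes^{\mathbb{L}}_{\mathcal{O}_L}\mathcal{O}_L/\ell^n$. Hence the left side for $\mathcal{O}_L$ is $R\varprojlim_n$ of the left sides with torsion coefficients.

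On the Shimura variety side, I need
\begin{align}
R\Gamma(\mathbf{Sh}_{K^p}\gx_{\ebar},\mathbb{W})=\varinjlim_{K_p}R\varprojlim_n R\Gamma(\mathbf{Sh}_{K^pK_p},\mathbb{W}/\ell^n).
\end{align}
This does not literally commute with $R\varprojlim_n$, so the comparison has to be made in a way that accounts for the colimit over $K_p$. I expect this to be the main obstacle. My approach is to compare $K_p$-invariants on both sides for each compact open $K_p$, using that $K_p$-invariants of smooth objects are computed by a compact projective generator $c\text{-Ind}_{K_p}^{G(\qp)}$. On the Shimura side, finite-level cohomology is a perfect complex, so it is the $R\varprojlim_n$ of its reductions. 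On the $i_1^\ast$ side, $i_1^\ast$ of a ULA sheaf is admissible, so taking $K_p$-invariants commutes with the limit in the same way (cf. \cite[Section 7]{CaraianiHamannZhang}).

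With the finite-level isomorphisms in hand, I take the colimit over $K_p$; equivariance for $G(\qp)\times W_E\times\mathbb{T}_{K^p}$ is inherited termwise. For $\Lambda=L,\zlbar,\qlbar$ I apply the base change of \cite[Lemma 7.5]{CaraianiHamannZhang}, which commutes with the Hecke and restriction functors, and compare it with the corresponding base change of cohomology. The statement for $\gxp$ is obtained by the same argument, with $i_{1'}$, $T_{\mu'}$, Lemma \ref{Lem:HeckeCorrespondencesInnerForm} and the second isomorphism of Proposition \ref{Prop:WeilCohShimVar}.
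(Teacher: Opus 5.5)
Your proposal is correct and follows the paper's route. The paper deduces the statement in one line from the torsion case (Proposition \ref{Prop:WeilCohShimVar}) together with \cite[Lemma 7.4.(3)]{CaraianiHamannZhang}; that lemma covers the passage from the ULA pro-system $\{\tildef_{\mathbb{W}/\ell^n\mathbb{W}}\}_n$ to the $\mathcal{O}_L$-object, which you carry out by hand via $K_p$-invariants and admissibility, and base change to $L,\zlbar,\qlbar$ goes through \cite[Lemma 7.5]{CaraianiHamannZhang} exactly as you do. The one point to tighten is that your generators $c\text{-Ind}_{K_p}^{G(\qp)}\mathbf{1}$ should be taken with $K_p$ pro-$p$, so that taking $K_p$-invariants is exact with $\ell$-adic coefficients.
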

\begin{proof}
    This immediately follows from \cite[Lemma 7.4.(3)]{CaraianiHamannZhang} and Proposition \ref{Prop:WeilCohShimVar}.
\end{proof}

\subsection{Applying the spectral action}\label{Sec:SpecAct} Let $\widehat{G}$ be the dual group of $G$ over $\zl[\sqrt{p}]$ equipped with its action of $W_{\qp}$. We define the $L$-group ${}^LG$ as the semi-direct product $\widehat{G}\rtimes W_\qp$, see \cite[Section~2.1]{BuzzardGee}. We consider the $\zl[\sqrt{p}]$-representation $V_\mu$ (resp. $V_{\mu'}$) of $\widehat{G}$ as in \cite[Section 8.4.1]{DvHKZIgusaStacks}. It extends canonically to a representation $r_\mu$ of ${}^LG_E\coloneqq \widehat{G}\rtimes W_E$ (resp. ${}^LG_{E'}$) as in \cite[Section 8.4.6]{DvHKZIgusaStacks}. We denote by $\loc$ the stack of $L$-parameters over $\zl[\sqrt{p}]$ as in \cite{DHKMModuli}, \cite{ZhuCoherent}, and \cite{FarguesScholze}. This is the stack quotient of the moduli space $\cocycle_\Lambda$ of 1-cocycles (in $\Lambda/\zl[\sqrt{p}]$-algebras) by the conjugation action of $\dualG$. 

\subsubsection{} From now on we will work in the following setup. 
\begin{Setup} \label{Setup:StandardSetup}
Let $L$ be a finite extension of $\ql[\sqrt{p}]$. We let $\Lambda$ and $W$ be one of the following.
\begin{itemize}
    \item $\Lambda$ is an $\mathcal{O}_L$-algebra in which $\ell$ is nilpotent, and $W$ is any standard $\Lambda$-local system.

    \item $W$ is a standard $\mathcal{O}_L$-local system satisfying Hypothesis \ref{Hyp:ULA} and $\Lambda \in \{\mathcal{O}_L,L,\zlbar,\qlbar\}$.
\end{itemize}
\end{Setup}

\subsubsection{} Assume that $P=\mathsf{P} \otimes \qp$ is trivial and fix a trivialization $\kappa_p$ inducing an identification $G=G'$. The following result is a consequence of the spectral action, see \cite[Theorem X.0.2]{FarguesScholze} and \cite[Theorem IX.0.1]{FarguesScholze}.
\begin{Cor}\label{Cor:SpectralActnExists}
If either $\ell$ is coprime to the order of $\pi_0(\zg)$ or $\ell$ is invertible in $\Lambda$, then there is a natural map
    \begin{equation} \label{Eq:SpectralActionMapZhuConjecture}
    \begin{tikzcd}
        \operatorname{RHom}_{\loc}(V_{\mu}, V_{\mu'}) \arrow{d} \\
        \operatorname{RHom}_{G(\qp) \times  \mathbb{T}_{K^p}}\left(R \Gamma(\mathbf{Sh}_{K^p}\gx_{\ebar},\mathbb{W})[d], R \Gamma(\mathbf{Sh}_{K^p}\gxp_{\ebar},\mathbb{W})[d'] \right)
        \end{tikzcd}
    \end{equation}
\end{Cor}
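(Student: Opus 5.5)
The map \eqref{Eq:SpectralActionMapZhuConjecture} is produced by letting $\operatorname{Perf}(\loc)$ act on $\tildef$ and then restricting to the neutral stratum, where Hecke operators compute cohomology. Since $P$ is trivial, $\kappa_p$ identifies $G=G'$ and $\beta_P$ with the identity of $\bung$. In particular $i_{1'}=i_1$, and $\tildef$ is a single object on $\bun_G$ that controls both Shimura varieties.

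First, the sheaf we act on is $j_{k,!}\tildef$. It lives in $D_{\mathrm{lis}}(\bun_{G,k},\Lambda)$, via the construction of Section \ref{subsub:Rational} in the rational case. It carries a $\mathbb{T}_{K^p}$-action, since $\mathbb{T}_{K^p}$ acts on $\tildef$ by Lemma \ref{Lem:RestrictionTildeF}.

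Second, we invoke the spectral action of \cite[Theorem X.0.2]{FarguesScholze}. This requires $\ell$ invertible in $\Lambda$, or $\ell$ coprime to $|\pi_0(\zg)|$; the latter is the hypothesis under which Fargues--Scholze construct the action integrally, and is the reason for the stated assumption. The action is a monoidal functor $\operatorname{Perf}(\loc)\to \operatorname{End}(D_{\mathrm{lis}}(\bun_{G,k},\Lambda))$. Under it, the vector bundle attached to $V_\mu$ acts as the Hecke operator $T_\mu$, with the Weil action forgotten, by compatibility with \cite[Theorem IX.0.1]{FarguesScholze}; likewise $V_{\mu'}$ acts as $T_{\mu'}$. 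Functoriality of the action gives a map
\begin{align}
\operatorname{RHom}_{\loc}(V_\mu,V_{\mu'}) \to \operatorname{RHom}(T_\mu(j_{k,!}\tildef), T_{\mu'}(j_{k,!}\tildef)).
\end{align}
Its image lands in $\mathbb{T}_{K^p}$-equivariant maps, because $\mathbb{T}_{K^p}$ acts on the object $j_{k,!}\tildef$ and the spectral action is functorial in the object.

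Third, we apply $i_1^{\ast}$. Under $\bun_{G,k}^{[1]}\simeq [\ast/\ul{G(\qp)}]$, the functor $i_1^{\ast}$ sends $\operatorname{RHom}$ in $D_{\mathrm{lis}}(\bun_{G,k},\Lambda)$ to $\operatorname{RHom}$ of smooth $G(\qp)$-representations. By Proposition \ref{Prop:WeilCohShimVar}, or Proposition \ref{Prop:WeilCohShimVarRat} in the rational case, we have $i_1^{\ast}T_\mu j_{k,!}\tildef \simeq R\Gamma(\mathbf{Sh}_{K^p}\gx_{\ebar},\mathbb{W})[d]$ and $i_1^{\ast}T_{\mu'}j_{k,!}\tildef\simeq R\Gamma(\mathbf{Sh}_{K^p}\gxp_{\ebar},\mathbb{W})[d']$, both $G(\qp)\times\mathbb{T}_{K^p}$-equivariantly once the Tate twists are dropped. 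Composing the map from the second step with $i_1^{\ast}$ and these identifications gives \eqref{Eq:SpectralActionMapZhuConjecture}.

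The main obstacle is checking that the two identifications use the same $G(\qp)$-action on the neutral stratum. The second one involves $\beta_P$, which is the identity only after choosing $\kappa_p$, so we must verify that this choice intertwines the $G'(\qp)$-action on the cohomology of $\gxp$ with the $G(\qp)$-action. This should follow from Lemma \ref{Lem:HeckeCorrespondencesInnerForm} for trivial $P$.
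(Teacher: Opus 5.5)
Your proposal is correct and matches the paper's proof essentially step for step. In both, Fargues--Scholze's spectral action gives a map from $\operatorname{RHom}_{\loc}(V_\mu,V_{\mu'})$ to $\mathbb{T}_{K^p}$-equivariant maps $T_\mu j_{k,!}\tildef \to T_{\mu'} j_{k,!}\tildef$; one then restricts along $i_1^{\ast}$ and identifies both sides with the two cohomology complexes via Proposition \ref{Prop:WeilCohShimVar}, where the $G(\qp)$-compatibility you flag is automatic once $\kappa_p$ identifies $G=G'$ and $i_{1'}=i_1$.
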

Note that we are taking homomorphisms of $G(\qp) \times  \mathbb{T}_{K^p}$-modules only here; these morphisms are not required to be Galois equivariant in any way.
\begin{Rem}
    If $P$ is not trivial, then left hand side of equation \eqref{Eq:SpectralActionMapZhuConjecture} is zero, see Section \ref{Sub:DifferentAtP}. 
\end{Rem}

\begin{proof}[Proof of Corollary \ref{Cor:SpectralActnExists}]
By \cite[Theorem X.0.2]{FarguesScholze} and \cite[Theorem IX.0.1]{FarguesScholze}, there is a natural morphism
\begin{align}
    \operatorname{RHom}_{\loc}(V_{\mu}, V_{\mu'}) \to \operatorname{RHom}_{D_{\mathrm{lis}}(\bun_{G,k}, \Lambda)}(T_{\mu} \tildef_{\mathbb{W}}, T_{\mu'} \tildef_{\mathbb{W}}). 
\end{align}
It follows from the functoriality of the spectral action that it lands in the subspace of 
$\operatorname{RHom}_{D_{\mathrm{lis}}(\bun_{G,k}, \Lambda)}(T_{\mu} \tildef_{\mathbb{W}}, T_{\mu'} \tildef_{\mathbb{W}})$ consisting of morphisms commuting the action of $\mathbb{T}_{K^p}$ on source and target. The corollary follows by composing with
\begin{align}
    \operatorname{RHom}_{D_{\mathrm{lis}}(\bun_{G,k}, \Lambda)}(T_{\mu} j_{k,!}\tildef_{\mathbb{W}}, T_{\mu'} j_{k,!}\tildef_{\mathbb{W}}) \to \operatorname{RHom}_{D_{\mathrm{lis}}(\bun_{G,k}^{[1]}, \Lambda)}(i_{1}^{\ast} T_{\mu} j_{k,!}\tildef_{\mathbb{W}}, i_{1}^{\ast} T_{\mu'} \tildef_{\mathbb{W}}),
\end{align}
and identifying the right hand side with 
\begin{align}
    \operatorname{RHom}_{G(\qp)}\left(R \Gamma(\mathbf{Sh}_{K^p}\gx_{\ebar},\mathbb{W})[d], R \Gamma(\mathbf{Sh}_{K^p}\gxp_{\ebar},\mathbb{W})[d'] \right)
\end{align}
using Proposition \ref{Prop:WeilCohShimVar}. 
\end{proof}

\begin{Rem}
It is not clear that the map of Corollary \ref{Cor:SpectralActnExists} is nonzero. In the rest of this section we will prove refinements of Corollary \ref{Cor:SpectralActnExists} which allows us to "localize" over the GIT quotient 
\begin{align}
    \cocycle_\Lambda \sslash \hat{G}=:\xspec.
\end{align}
\end{Rem}

\subsubsection{} Fix a field $\Lambda \to \kappa$ and a closed point $\phi:\spec \kappa \to \xspec$ with inverse image $q^{-1}(\phi)$ in $\lock$ along the map $q:\lock \to \xspec$. Let us write $\lochat$ for the formal completion of $\lock$ in the closed subset $q^{-1}(\phi)$. By \cite[Lemma 3.13]{ZouCategoricalLS}, the stable infinity category $\indperf(\lochat)$ can be identified with the full subcategory of $\indperf(\loc)$ of those complexes that are set-theoretically supported on $q^{-1}(\phi)$. 

\subsubsection{} Zou defines in \cite[Definition 3.9]{ZouCategoricalLS} the category (using the Lurie tensor product)
\begin{align}
    \mathcal{D}_{\mathrm{lis}}(\bun_{G},\kappa)_{\phi}^{\wedge}:=\mathcal{D}(\bun_{G}, \kappa) \otimes_{\indperf (\loc)} \indperf(\lochat),
\end{align}
where the morphism $\indperf(\loc) \to \indperf(\lochat)$ is given by pullback. By construction, it admits an action of $\indperf(\lochat)$. If $k$ is algebraically closed, then by \cite[Corollary 3.18]{ZouCategoricalLS}, it can be identified with the full subcategory of $\mathcal{D}(\bun_{G}, \kappa)$ consisting of objects $\mathcal{F}$ such that all irreducible subquotients of all cohomology sheaves of $\mathcal{F}$ have Fargues--Scholze $L$-parameter $\phi$.

\subsubsection{} We recall from \cite[Definition A.1]{HamannLee} the full subcategory
\begin{align}
    \mathcal{D}_{\mathrm{lis}}(\bun_{G},\kappa)_{\phi} \subset \mathcal{D}(\bun_{G}, \kappa)
\end{align}
consisting of those objects where the action of $\mathcal{Z}^{\mathrm{spec}}(G,\kappa)$ factors through the local ring of $\xspec \otimes_{\Lambda} k_{\phi}$ at the closed point $\phi$. 
\begin{Lem}
We have an inclusion $\mathcal{D}_{\mathrm{lis}}(\bun_{G},\kappa)_{\phi}^{\wedge} \subset \mathcal{D}_{\mathrm{lis}}(\bun_{G},\kappa)_{\phi}$.
\end{Lem}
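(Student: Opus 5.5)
The plan is to compare the two ways the spectral Bernstein centre acts on an object of the completed category. By construction, the category $\mathcal{D}_{\mathrm{lis}}(\bun_{G},\kappa)_{\phi}^{\wedge}$ is a module over $\indperf(\lochat)$. The action of $\mathcal{Z}^{\mathrm{spec}}(G,\kappa)=\mathcal{O}(\xspec\otimes\kappa)$ on any $\mathcal{F}$ in the completed category therefore factors through the endomorphisms of the unit $\mathcal{O}_{\lochat}$ acting via the spectral action. Concretely, the centre acts on $\mathcal{D}(\bun_G,\kappa)$ through the map $\mathcal{O}(\xspec)\to \operatorname{End}(\mathcal{O}_{\loc})$, and this map is compatible with the tensor product defining the completion. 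Hence on $\mathcal{D}_{\mathrm{lis}}(\bun_{G},\kappa)_{\phi}^{\wedge}$ the centre acts through
\begin{align}
    \mathcal{O}(\xspec\otimes\kappa) \to \Gamma(\lochat,\mathcal{O}) .
\end{align}

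The next step is to show that this map factors through the local ring $\mathcal{O}_{\xspec\otimes\kappa,\phi}$. Every element $f$ not vanishing at $\phi$ pulls back along $q$ to a function that is nonvanishing on $q^{-1}(\phi)$. Such a function is invertible on the formal completion along $q^{-1}(\phi)$, because a function invertible modulo a nilpotent thickening is invertible in the limit. So $f$ becomes a unit in $\Gamma(\lochat,\mathcal{O})$, and the map extends over the localization at $\phi$. This is exactly the condition in \cite[Definition A.1]{HamannLee}.

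It remains to check that the inclusion is compatible with the identification of $\mathcal{D}_{\mathrm{lis}}(\bun_{G},\kappa)_{\phi}^{\wedge}$ as a full subcategory of $\mathcal{D}(\bun_G,\kappa)$. By \cite[Lemma 3.13]{ZouCategoricalLS}, the pullback $\indperf(\loc)\to\indperf(\lochat)$ has a fully faithful right adjoint onto complexes supported on $q^{-1}(\phi)$. The induced embedding of the completed category into $\mathcal{D}(\bun_G,\kappa)$ is linear over $\indperf(\loc)$, and in particular over the centre. So the centre's action on an object in the image agrees with the action computed above.

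The main obstacle is the bookkeeping with the Lurie tensor product, namely making sure the centre's action on the completed category really is induced via $\Gamma(\lochat,\mathcal{O})$ and survives the fully faithful embedding. If this is awkward, a fallback is available when $\kappa$ is algebraically closed. Then \cite[Corollary 3.18]{ZouCategoricalLS} says that all irreducible subquotients of the cohomology sheaves have Fargues--Scholze parameter $\phi$, so the centre acts on each through the residue field at $\phi$. One then has to pass from subquotients to the whole object by a dévissage, using nilpotence of the action of $\mathfrak{m}_\phi$ on bounded pieces and a colimit argument.
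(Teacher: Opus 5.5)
Your proof is correct, but it takes a different route from the paper's.

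\textbf{The paper's route.} It argues via generators. By \cite[Lemma 3.17]{ZouCategoricalLS}, $\mathcal{D}_{\mathrm{lis}}(\bun_{G},\kappa)_{\phi}^{\wedge}$ is compactly generated by objects that are compact and ULA in $\mathcal{D}(\bun_G,\kappa)$. Since $\mathcal{D}_{\mathrm{lis}}(\bun_{G},\kappa)_{\phi}$ is closed under colimits, it suffices to check these generators, and the paper does so directly from \cite[Corollary 3.18]{ZouCategoricalLS}, the description by Fargues--Scholze parameters of irreducible subquotients.

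\textbf{Your route.} You argue formally from the $\indperf(\lochat)$-module structure. The spectral Bernstein centre acts on the completed category through $\pi_0\Gamma(\lochat,\mathcal{O})$, and there every $f$ with $f(\phi)\neq 0$ is a unit. Equivalently, the cofiber of $f$ on $\mathcal{O}_{\loc}$ is supported on $V(q^{*}f)$, which misses $q^{-1}(\phi)$, so it acts by zero on objects supported there.

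\textbf{What yours buys.} You need neither compact generation nor ULA-ness. You also avoid the algebraically closed hypothesis under which the paper invokes the subquotient description. Finally, you never translate a statement about irreducible subquotients into the localization condition of \cite[Definition A.1]{HamannLee}.

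\textbf{What it costs.} Exactly the bookkeeping you flag, and it is routine. First, the endomorphisms of the unit must act via the Fargues--Scholze map from $\mathcal{Z}^{\mathrm{spec}}(G,\kappa)$ to the centre of $\mathcal{D}(\bun_G,\kappa)$; this compatibility is part of the construction of the spectral action, cf.\ \cite[Theorem X.0.2]{FarguesScholze}. Second, the embedding of the completed category into $\mathcal{D}(\bun_G,\kappa)$ must be $\indperf(\loc)$-linear. This holds because the embedding is obtained by tensoring $\mathcal{D}(\bun_G,\kappa)$ with the $\indperf(\loc)$-linear identification of $\indperf(\lochat)$ with complexes supported on $q^{-1}(\phi)$ from \cite[Lemma 3.13]{ZouCategoricalLS}. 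Which adjoint realizes that identification does not matter, only its linearity.

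\textbf{What the paper's buys.} It is shorter given Zou's results, and it sits in the same ULA and subquotient framework the paper reuses in the next lemma.

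Your fallback through \cite[Corollary 3.18]{ZouCategoricalLS} is therefore not needed; it would bring back the algebraically closed hypothesis and require a more careful dévissage.
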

\begin{proof}
By \cite[Lemma 3.17]{ZouCategoricalLS}, we know that $\mathcal{D}_{\mathrm{lis}}(\bun_{G},\kappa)_{\phi}^{\wedge}$ is compactly generated by objects which are compact and ULA in $\mathcal{D}(\bun_{G}, \kappa)$. Since $\mathcal{D}_{\mathrm{lis}}(\bun_{G},\kappa)_{\phi}$ is closed under colimits, it suffices to show that it contains the compact generators of $\mathcal{D}_{\mathrm{lis}}(\bun_{G},\kappa)_{\phi}^{\wedge}$. But this follows directly from \cite[Corollary 3.18]{ZouCategoricalLS}. 
\end{proof}
We thank David Hansen for helpful suggestions about the proof of the next lemma.
\begin{Lem} \label{Lem:CompletingLocalization}
If $\mathcal{F} \in \mathcal{D}_{\mathrm{lis}}(\bun_{G},\kappa)_{\phi}$ is an object whose corresponding object in $\mathcal{D}(\bun_{G}, \kappa)$ is ULA and supported at finitely many Newton strata, then $\mathcal{F} \in \mathcal{D}_{\mathrm{lis}}(\bun_{G},\kappa)_{\phi}^{\wedge}$. 
\end{Lem}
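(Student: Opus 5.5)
We will check membership in $\mathcal{D}_{\mathrm{lis}}(\bun_{G},\kappa)_{\phi}^{\wedge}$ through the description in \cite[Corollary 3.18]{ZouCategoricalLS}. Since $\kappa$ may be assumed algebraically closed after base change, it suffices to show that every irreducible subquotient of every cohomology sheaf of $\mathcal{F}$ has Fargues--Scholze parameter $\phi$. The finiteness of the support lets us argue by dévissage along the Newton stratification.

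First, we will reduce to a single stratum. Choose an ordering of the finitely many strata $b_1,\dots,b_r$ meeting the support of $\mathcal{F}$, compatible with the closure relations. We then build a finite filtration of $\mathcal{F}$ by excision triangles $j_{b,!}j_b^{\ast}\mathcal{F} \to \mathcal{F} \to \cdots$. Each graded piece has the form $i_{b,!}i_b^{\ast}\mathcal{F}$, or, using the locally closed inclusions, $i_{b!}$ of an object of $\mathcal{D}(G_b(\qp),\kappa)$.

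The ULA property is preserved by these restrictions and extensions. This follows from \cite[Theorem V.7.1]{FarguesScholze}: ULA objects restricted to a stratum correspond to complexes of admissible representations, whose cohomology consists of admissible representations. Moreover, both $\mathcal{D}_{\mathrm{lis}}(\bun_G,\kappa)_\phi$ and $\mathcal{D}_{\mathrm{lis}}(\bun_{G},\kappa)_{\phi}^{\wedge}$ are stable under cones and under the functors $i_{b!}$, $i_b^\ast$. Stability holds because the spectral Bernstein center acts compatibly with these functors, and because the completed category is a tensor-localization that commutes with the $\mathcal{D}(\bun_G)$-linear functors. So it suffices to treat $\mathcal{F}=i_{b!}A$ with $A$ a complex of admissible $G_b(\qp)$-representations on which $\mathcal{Z}^{\mathrm{spec}}$ acts through the local ring at $\phi$.

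Now we treat the single-stratum case. The cohomology groups $H^i(A)$ are admissible, and the spectral center acts on each through the local ring $\mathcal{O}_{\xspec,\phi}$. We will then use the following fact. An irreducible subquotient $\pi$ of $H^i(A)$ has its Fargues--Scholze parameter given by the character of $\mathcal{Z}^{\mathrm{spec}}$ through which it acts. Because $\pi$ is irreducible, that character is a point of $\xspec$, and since the action factors through the local ring at the closed point $\phi$, the point must be $\phi$ itself. The subtle point is that factoring through the \emph{local} ring still allows generizations. However, a $\kappa$-point with $\kappa$ algebraically closed that factors through $\spec\mathcal{O}_{\phi}$ and lands in a closed point must equal $\phi$. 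Schur's lemma for admissible irreducibles (\cite[Section IX]{FarguesScholze}) ensures that the center acts by scalars.

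We expect the main obstacle to be exactly this: upgrading ``the action factors through the local ring'' to ``every subquotient has parameter $\phi$'' for a complex that is not bounded. To handle it, we would use the ULA hypothesis to know that each $H^i$ is admissible and that, after restricting to a compact open $K$, the $K$-invariants $H^i(A)^K$ are finite-dimensional $\kappa$-modules. A finite-dimensional module over the local ring of $\xspec$ is supported at the closed point, which forces the scalar action on subquotients. Passing to the colimit over $K$ then concludes. Once this holds, \cite[Corollary 3.18]{ZouCategoricalLS} gives the claim.
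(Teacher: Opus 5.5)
Your proposal is correct and follows the paper's skeleton. You reduce via \cite[Corollary 3.18]{ZouCategoricalLS} to irreducible subquotients of cohomology sheaves, you use the excision filtration with graded pieces $i_{b!}i_b^{\ast}\mathcal{F}$ to reduce to one stratum, and you use that ULA makes the cohomology of $i_b^{\ast}\mathcal{F}$ admissible.

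The difference is the last step. The paper takes $\mathcal{G}_0\subset\mathcal{G}$ and an irreducible quotient $\mathcal{G}_0\to\mathcal{H}$. It then applies the functorial decomposition $A=\bigoplus_{\phi'}A_{\phi'}$ of ULA objects from \cite[Proposition A.5]{HamannLee} to this map, so that $\phi$-locality passes from $\mathcal{G}$ to $\mathcal{G}_0$ and then to $\mathcal{H}$. You instead argue at finite level:
\begin{itemize}
\item $\mathcal{Z}^{\mathrm{spec}}(G,\kappa)$ acts on the finite-dimensional space $H^i(A)^K$ through the local ring at $\phi$, so $\mathfrak{m}_\phi$ acts nilpotently there.
\item Nilpotence, unlike invertibility, passes to $\pi^K=M_1^K/M_2^K$ for any irreducible subquotient $\pi=M_1/M_2$, by exactness of $K$-invariants for pro-$p$ $K$.
\item Schur's lemma then forces the central character of $\pi$ to vanish on $\mathfrak{m}_\phi$.
\end{itemize}
Your route is self-contained and shows exactly where ULA enters. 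The paper's is shorter and gives the reusable fact that $\phi$-locality passes to subobjects and quotients of admissible objects.

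Two corrections.
\begin{itemize}
\item Your first pass at the single-stratum case should be dropped, along with the diagnosis of the difficulty as one of generizations. The real problem is that invertibility of elements outside $\mathfrak{m}_\phi$ is not inherited by subquotients; $\mathbb{Z}\subset\mathbb{Q}$ already shows this. Your finite-level nilpotence argument is the actual proof, not a supplement to that first attempt.
\item You do not need $\mathcal{D}_{\mathrm{lis}}(\bun_{G},\kappa)_{\phi}^{\wedge}$ to be stable under $i_{b!}i_b^{\ast}$. Linearity is also the wrong justification, since these functors do not commute with Hecke operators. The dévissage only uses three facts: $i_{b!}i_b^{\ast}$ preserves ULA; it preserves $\phi$-locality, by naturality of the central action; and $\mathcal{D}_{\mathrm{lis}}(\bun_{G},\kappa)_{\phi}^{\wedge}$ is closed under extensions.
\end{itemize}
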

\begin{proof}
By \cite[Corollary 3.18]{ZouCategoricalLS}, it suffices to show that all irreducible subquotients of all cohomology sheaves of $\mathcal{F}$ have Fargues--Scholze $L$-parameter $\phi$. The category $D_{\mathrm{lis}}(\operatorname{Bun}_{G},\kappa)$ has a semi-orthogonal decomposition into $D_{\mathrm{lis}}(\bun^b_G,\kappa)$'s via excision triangles \cite[Theorem I.5.1]{FarguesScholze}; it follows that $\mathcal{F}$ has a filtration whose graded pieces are $i_{b!}i_b^\ast\mathcal{F}$. Thus it suffices to prove that the irreducible subquotients of the cohomology sheaves of $i_{b!}i_b^\ast\mathcal{F}$ have Fargues--Scholze $L$-parameter $\phi$. Since $i_{b!}$ is fully faithful, this comes down to showing that the cohomology sheaves of $i_b^\ast\mathcal{F}$ have Fargues--Scholze $L$-parameter $\phi$. 

Let $\mathcal{G}$ be one of these cohomology sheaves and let $f:\mathcal{G}_0 \to \mathcal{H}$ be an irreducible quotient of $\mathcal{G}_0 \subset \mathcal{G}$. Then $\mathcal{H}$ is an irreducible smooth representation, hence admissible (ULA). The map $f$ has a direct sum decomposition (source and target are ULA, now use \cite[Proposition A.5]{HamannLee})
\begin{align}
    \bigoplus_{\phi'} \mathcal{G}_{0,\phi'} \to \bigoplus_{\phi'} \mathcal{H}_{\phi'},
\end{align}
where the direct sums run over semisimple $L$-parameters over $\kappa$. Since $\mathcal{G}$ and hence $\mathcal{G}_0$ is $\phi$-local, it follows that $\mathcal{G}_{0,\phi'}$ is zero unless $\phi'=\phi$. Since $f$ is surjective, we see that $\mathcal{H}=\mathcal{H}_{\phi}$. Since $\mathcal{H}$ is irreducible, it follows from the definition that its $L$-parameter must equal $\phi$.
\end{proof}

\begin{Cor} \label{Cor:LocalizationCompletion}
Let $\phi$ be as above and suppose that either $\ell$ is coprime to the order of $\pi_0(\zg)$ or that $\ell$ is invertible in $\kappa$. If $\tildef_{\mathbb{W}}$ is ULA, then there is a morphism 
\begin{equation} \label{Eq:SpectralActionMapZhuConjectureCompletion}
    \begin{tikzcd}
        \operatorname{RHom}_{\lochat}(V_{\mu}, V_{\mu'}) \arrow{d} \\
        \operatorname{RHom}_{G(\qp) \times  \mathbb{T}_{K^p}}\left(R \Gamma(\mathbf{Sh}_{K^p}\gx_{\ebar},\mathbb{W})_{\phi}[d], R \Gamma(\mathbf{Sh}_{K^p}\gxp_{\ebar},\mathbb{W})_{\phi}[d'] \right).
        \end{tikzcd}
\end{equation}
\end{Cor}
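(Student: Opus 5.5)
The plan is to run the argument of Corollary \ref{Cor:SpectralActnExists} inside the localized category $\mathcal{D}_{\mathrm{lis}}(\bun_{G},\kappa)_{\phi}^{\wedge}$, which carries an action of $\indperf(\lochat)$.

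\textbf{Step 1: localize $\tildef_{\mathbb{W}}$ at $\phi$.} The spectral Bernstein centre $\mathcal{Z}^{\mathrm{spec}}(G,\kappa)$ acts on $\tildef_{\mathbb{W}}$. Since $\tildef_{\mathbb{W}}$ is ULA, \cite[Proposition A.5]{HamannLee} gives a decomposition into summands indexed by semisimple parameters, and we keep the summand $\tildef_{\mathbb{W},\phi} \in \mathcal{D}_{\mathrm{lis}}(\bun_{G},\kappa)_{\phi}$. This summand is again ULA. It is supported on $\bungmucmup$, which is quasicompact and so meets only finitely many Newton strata. Lemma \ref{Lem:CompletingLocalization} therefore places $\tildef_{\mathbb{W},\phi}$ (after extending by $j_{k,!}$) in $\mathcal{D}_{\mathrm{lis}}(\bun_{G},\kappa)_{\phi}^{\wedge}$.

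\textbf{Step 2: apply the completed spectral action.} We need to know that the Hecke operators $T_{\mu}$ and $T_{\mu'}$ on this completed category are given by the action of $\restr{V_{\mu}}{\lochat}$ and $\restr{V_{\mu'}}{\lochat}$. This follows from the compatibility of the Fargues--Scholze spectral action with base change along $\indperf(\loc) \to \indperf(\lochat)$, which is how the tensor-product definition of \cite[Definition 3.9]{ZouCategoricalLS} is built. The hypothesis on $\ell$ is what allows the spectral action to exist, exactly as in Corollary \ref{Cor:SpectralActnExists}. Functoriality of the action then gives a map
\begin{align}
\operatorname{RHom}_{\lochat}(V_{\mu},V_{\mu'}) \to \operatorname{RHom}(T_{\mu} j_{k,!}\tildef_{\mathbb{W},\phi},\, T_{\mu'} j_{k,!}\tildef_{\mathbb{W},\phi}).
\end{align}
Because the Hecke algebra $\mathbb{T}_{K^p}$ acts through endomorphisms of $\tildef$ and the spectral action is natural, the image lies in the $\mathbb{T}_{K^p}$-equivariant morphisms.

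\textbf{Step 3: identify the targets with localized cohomology.} Restrict along $i_1^{\ast}$, noting that $i_1=i_{1'}$ since $P$ is trivial. Proposition \ref{Prop:WeilCohShimVar} (or Proposition \ref{Prop:WeilCohShimVarRat}) identifies $i_1^{\ast}T_{\mu} j_{k,!}\tildef_{\mathbb{W}}$ with $R\Gamma(\mathbf{Sh}_{K^p}\gx_{\ebar},\mathbb{W})[d]$, and similarly for $\mu'$. We must also check that $\phi$-localization commutes with $T_{\mu}$ and with $i_1^{\ast}$. For $T_{\mu}$ this holds because the spectral centre is central in the Hecke action. For $i_1^{\ast}$ it holds because localization is a direct summand decomposition compatible with the centre action on $\bun_G^{[1]}$, which is the Bernstein centre action on $G(\qp)$-representations. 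Hence $i_1^{\ast}T_{\mu} j_{k,!}\tildef_{\mathbb{W},\phi}$ is the $\phi$-part of the cohomology, and composing with the restriction map gives the stated morphism.

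I expect Step 2 to be the main obstacle. One must make precise that the Hecke operators $T_{\mu}$ act on the completed category and agree with the action of $\restr{V_{\mu}}{\lochat}$ on objects supported at $\phi$; this is a statement about the Lurie tensor product. I would try to deduce it from $\mathcal{D}_{\mathrm{lis}}(\bun_{G},\kappa)_{\phi}^{\wedge}$ being a full subcategory of $\mathcal{D}(\bun_{G},\kappa)$ (\cite[Corollary 3.18]{ZouCategoricalLS}) that is preserved by the $\indperf(\loc)$-action, using that $V_{\mu}$ acting on a $\phi$-supported object depends only on $\restr{V_{\mu}}{\lochat}$.
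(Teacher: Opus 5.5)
Your proposal is correct and follows the paper's route. You localize $\tildef_{\mathbb{W}}$ at $\phi$, note that the summand is still ULA and supported on the finitely many Newton strata of $\bungmucmup$, so that Lemma \ref{Lem:CompletingLocalization} places it in $\mathcal{D}_{\mathrm{lis}}(\bun_{G},\kappa)_{\phi}^{\wedge}$, and then rerun the argument of Corollary \ref{Cor:SpectralActnExists} using the $\indperf(\lochat)$-action. The paper's proof consists only of the localization step followed by ``the corollary follows''; your Steps 2 and 3, and your check of the finite-strata hypothesis of the lemma, fill in details the paper leaves implicit.
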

\begin{proof}
Since $\tildef_{\mathbb{W}}$ is ULA, so is $\tildef_{\mathbb{W},\phi}$. Therefore by Lemma \ref{Lem:CompletingLocalization}, we see that $\tildef_{\mathbb{W},\phi} \in  \mathcal{D}_{\mathrm{lis}}(\bun_{G},\kappa)_{\phi}^{\wedge}$. The corollary follows.
\end{proof}

\newcommand{\colim}{\operatorname{colim}}

\subsubsection{} Recall that $\xspec$ is a disjoint union of affine schemes (\cite[Theorem VIII.1.3]{FarguesScholze}). Recall moreover from \cite[discussion before Theorem IX.5.2]{FarguesScholze} that there is a direct product decomposition
\begin{align}
    \mathcal{D}(\bung)=\prod_{c \in \pi_0(\mathrm{Par}_{\hat{G}})} \mathcal{D}(\bung)_{c}
\end{align}
according to the action of $\mathcal{O}(\mathrm{Par}_{\hat{G}})$ on $\mathcal{D}(\bung)$; this is compatible with the spectral action. 

\subsubsection{} From now until the end of this section we work in the following somewhat complicated setting. Fix a sufficiently large finite set $\mathscr{W} \subset \pi_0(\mathrm{Par}_{\hat{G}})$ and consider the induced spectral action of $\operatorname{Perf}(\mathrm{Par}_{\hat{G}},\mathscr{W})$ on $\mathcal{D}(\bung, \Lambda)_{\mathscr{W}}:=\textstyle \prod_{c \in \mathscr{W}} \mathcal{D}(\bung)_{c}$. Let $S \subset \mathcal{O}(\mathrm{Par}_{\hat{G}, \mathscr{W}})$ be a multiplicative subset and recall that $\mathcal{O}(\mathrm{Par}_{\hat{G}, \mathscr{W}})$ acts functorially on all objects of $\mathcal{D}(\bung)_{\mathscr{W}}$. The following definition is closely modeled on \cite[Definition A1]{HamannLee}.
\begin{Def}
    Write $\mathcal{D}(\bung)_{S}$ for the full subcategory of $\mathcal{D}(\bung)_{\mathscr{W}}$ consisting of objects where all $f \in S$ act via isomorphisms. 
\end{Def}
The following proposition is the direct analogue of \cite[Proposition A.2]{HamannLee}. 
\begin{Prop}\label{Prop:HansenApp}
If $\Lambda$ is a field, then the following hold.
\begin{enumerate}
        \item The subcategory $\mathcal{D}(\bung)_{S}$ is stable under the spectral action.

        \item The inclusion $\mathcal{D}(\bung)_{S} \subset \mathcal{D}(\bung)_{\mathscr{W}}$ has a left adjoint $\mathcal{L}_{S}$ which commutes with the spectral action. 
        
        \item The natural map $\mathrm{Hom}(B, A)[S^{-1}] \to \mathrm{Hom}(B, \mathcal{L}_S(A))$ is an isomorphism for all $A$ and for any compact object $B$. 
        \item The support of $A$ contains that of $\mathcal{L}_S(A)=A_{S}$.
    \end{enumerate}
\end{Prop}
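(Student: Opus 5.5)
We model the argument directly on the proof of \cite[Proposition A.2]{HamannLee}, replacing the localization at a single closed point of $\xspec$ by localization at an arbitrary multiplicative subset $S \subset \mathcal{O}(\mathrm{Par}_{\hat{G}, \mathscr{W}})$. The basic input is that $\mathcal{O}(\mathrm{Par}_{\hat{G}, \mathscr{W}})$ acts on $\mathcal{D}(\bung)_{\mathscr{W}}$ through the spectral action. Concretely, $f$ acts on $A$ as $f \ast A$, where $f$ is regarded as an endomorphism of the unit $\mathcal{O}$ in $\operatorname{Perf}(\mathrm{Par}_{\hat{G},\mathscr{W}})$. Since the action is monoidal, for any perfect complex $M$ the endomorphism $f$ of $M \ast A$ equals both $M \ast (f_A)$ and $(f_M) \ast A$.

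For (1), let $A \in \mathcal{D}(\bung)_S$ and $M$ be perfect. The endomorphism $f$ of $M\ast A$ is $M \ast (f_A)$, which is an isomorphism because $f_A$ is. Hence $M \ast A \in \mathcal{D}(\bung)_S$.

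For (2), we construct $\mathcal{L}_S$ explicitly as a filtered colimit. Partially order $S$ by divisibility and set
\[
\mathcal{L}_S(A) = \operatorname{colim}_{f \in S} A,
\]
where the transition map $A \to A$ from $f$ to $fg$ is multiplication by $g$. One then checks three things.
\begin{itemize}
\item Each $f\in S$ acts invertibly on the colimit. This is the standard cofinality argument for localization of modules.
\item For $B \in \mathcal{D}(\bung)_S$, the restriction map $\operatorname{Hom}(\mathcal{L}_S A, B) \to \operatorname{Hom}(A,B)$ is an isomorphism. Indeed, $\operatorname{Hom}(\mathcal{L}_S A,B)$ is the limit of copies of $\operatorname{Hom}(A,B)$ with transition maps given by the $f$'s. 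These act invertibly because $f_B$ is invertible and the action is natural, so the limit system is constant.
\item $\mathcal{L}_S$ commutes with the spectral action. The action $M\ast -$ commutes with colimits, since it is exact and the spectral action is colimit-preserving on $\operatorname{Ind}\operatorname{Perf}$. It is also $\mathcal{O}$-linear, so $M \ast \mathcal{L}_S(A) = \mathcal{L}_S(M \ast A)$.
\end{itemize}
Because the colimit is filtered, no field hypothesis is needed for this step.

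For (3), compact objects commute with filtered colimits. This gives
\[
\operatorname{Hom}(B,\mathcal{L}_S A) = \operatorname{colim}_{f} \operatorname{Hom}(B,A),
\]
with transition maps multiplication by the elements of $S$. Since the $\mathcal{O}$-action on $\operatorname{Hom}(B,A)$ through $A$ agrees with the action through $B$ by naturality, this colimit is exactly $\operatorname{Hom}(B,A)[S^{-1}]$. This also holds at the level of mapping complexes, because localization is exact.

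For (4), interpret support in the sense of \cite{HamannLee}, namely the set of Newton strata $b$ with $i_b^\ast A \ne 0$ (or the corresponding spectral support). Then $i_b^\ast$ commutes with colimits, so $i_b^\ast \mathcal{L}_S A = \mathcal{L}_S(i_b^\ast A)$, where the right-hand side is localization inside $D(G_b(\qp),\Lambda)$, on which $\mathcal{O}$ acts compatibly. This vanishes whenever $i_b^\ast A=0$. The same compatibility of $\mathcal{L}_S$ with restriction to strata, together with (3) applied to compact generators such as compactly induced representations, gives the inclusion for spectral support, using that $\Lambda$ is a field.

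The main obstacle is (2) together with (3). One must make sure that $\mathcal{L}_S(A)$ lies in the product subcategory $\mathcal{D}(\bung)_{\mathscr{W}}$, which requires finiteness of $\mathscr{W}$ so that colimits are computed componentwise. One must also verify that the ring action through $A$ and through a compact $B$ coincide on Hom-spaces. I would check the latter using the centrality of the spectral Bernstein center, namely that the action factors through $\mathcal{Z}(\mathcal{D}(\bung))$.
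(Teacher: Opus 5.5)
Your proposal is correct and takes the same route as the paper, whose proof simply refers to the formal localization argument of \cite[A.1--A.3]{HamannLee}, transplanted from the complement of the maximal ideal of a single parameter to a general multiplicative set $S$, as you do. The one point to tighten is the definition of $\mathcal{L}_S$: a colimit over $S$ ordered by divisibility, with transition maps ``multiplication by $g$'', is not in general a well-defined diagram (the cofactor $g$ need not be unique), let alone a coherent one in the $\infty$-category. It is cleaner to use the $\operatorname{Mod}_R$-linear structure on $\mathcal{D}(\bung)_{\mathscr{W}}$, where $R=\mathcal{O}(X^{\mathrm{spec}}_{\widehat{G},\mathscr{W}})$, which the spectral action provides by pullback along $\mathrm{Par}_{\hat{G},\mathscr{W}}\to X^{\mathrm{spec}}_{\widehat{G},\mathscr{W}}$, and to set $\mathcal{L}_S(A):=R[S^{-1}]\ast A$; after that, your verifications of (1)--(4) go through as written.
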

We write $\mathcal{L}_S(A)=:A_{S}$.
\begin{proof}
    The proof is the same as \cite[A.1-A.3]{HamannLee}.
\end{proof}

\subsubsection{} Now assume that $S$ is finitely generated. Let $j_0: U_0 \to X_{\widehat{G}, \mathscr{W}}^{\mathrm{spec}}$ be the affine open immersion defined by localization at the set $S$, with closed complement $i_0: Z_0 \to X_{\widehat{G}, \mathscr{W}}^{\mathrm{spec}}$ and we let $R_{U_0} =\mathcal{O}_{X^{\mathrm{spec}}_{\hat{G}, \mathscr{W}}}(U_0)$. Finally we let $U$ be the fiber product
\begin{align}
    \mathrm{Par}_{\hat{G}, \mathscr{W}} \times_{X_{\hat{G}, \mathscr{W}}^{\mathrm{spec}}} \spec R_{U_0}
\end{align}
and let $j: U \to \mathrm{Par}_{\hat{G}}$ be the corresponding open immersion, with $i: Z \to \mathrm{Par}_{\hat{G}}$ the closed complement. 

\begin{Prop} \label{Prop:LocalizationOpen}
    Let $U$ be as above. If $\Lambda$ is a field, then the spectral action induces an action of the category $\operatorname{IndPerf}(U)$ on the category $\mathcal{D}(\bung, \Lambda)_S$ defined above. 
\end{Prop}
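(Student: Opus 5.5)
The plan is to realize $\operatorname{IndPerf}(U)$ as a localization of $\operatorname{IndPerf}(\mathrm{Par}_{\hat{G},\mathscr{W}})$ and check that the spectral action descends along it. Since $j\colon U \to \mathrm{Par}_{\hat{G},\mathscr{W}}$ is the base change of the affine open immersion $j_0\colon U_0 \to X^{\mathrm{spec}}_{\hat{G},\mathscr{W}}$, and $U_0 = \spec R_{U_0}$ with $R_{U_0}=\mathcal{O}(X^{\mathrm{spec}}_{\hat G,\mathscr{W}})[S^{-1}]$, the pullback $j^{\ast}\colon \operatorname{IndPerf}(\mathrm{Par}_{\hat{G},\mathscr{W}}) \to \operatorname{IndPerf}(U)$ is a symmetric monoidal Bousfield localization. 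Its fully faithful right adjoint $j_{\ast}$ identifies $\operatorname{IndPerf}(U)$ with the full subcategory of objects on which every $f \in S$ acts invertibly. Equivalently, $\operatorname{IndPerf}(U) \simeq \operatorname{IndPerf}(\mathrm{Par}_{\hat G,\mathscr{W}}) \otimes_{\operatorname{Mod}_{\mathcal{O}(X^{\mathrm{spec}})}} \operatorname{Mod}_{R_{U_0}}$, because $q$ is affine over $X^{\mathrm{spec}}$ and localization is flat. The key fact I will use is that $j_{\ast}j^{\ast}\mathcal{E} \simeq \mathcal{E} \otimes_{\mathcal{O}} \mathcal{O}[S^{-1}] = \operatorname{colim}_{f \in S}(\mathcal{E} \xrightarrow{f} \mathcal{E} \xrightarrow{f}\cdots)$, which holds because $S$ is finitely generated and one can use a single product $f$ of generators.

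The steps are as follows.

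First, I check that the spectral action of $\mathcal{O}(X^{\mathrm{spec}})$ through $\operatorname{IndPerf}$ is compatible with the $\mathcal{Z}^{\mathrm{spec}}$-action on objects. Consequently, for $A \in \mathcal{D}(\bung)_S$ and any $\mathcal{E}$, the object $\mathcal{E} \ast A$ lies in $\mathcal{D}(\bung)_S$; this is Proposition \ref{Prop:HansenApp}(1). Moreover $f$ acts invertibly on $A$, hence on $\mathcal{E}\ast A$.

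Second, I show that the action of $\operatorname{IndPerf}(\mathrm{Par}_{\hat G,\mathscr{W}})$ on $\mathcal{D}(\bung)_S$ factors through $j^{\ast}$. It suffices to show that for $\mathcal{E}$ with $j^{\ast}\mathcal{E}=0$ (i.e.\ $\mathcal{E}$ is $S$-torsion, supported on $Z$) we have $\mathcal{E}\ast A=0$, and that $\mathcal{E} \ast A \to (j_\ast j^\ast\mathcal{E})\ast A$ is an isomorphism. Using the colimit formula and the fact that the action commutes with colimits in $\mathcal{E}$ (we are working with the Ind-extension), $(j_\ast j^\ast \mathcal{E}) \ast A$ is the colimit of $\mathcal{E}\ast A$ along multiplication by $f$. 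This equals $\mathcal{E}\ast A$ because $f$ already acts invertibly on $\mathcal{E} \ast A$. From this, $\mathcal{D}(\bung)_S$ becomes a module over $\operatorname{IndPerf}(\mathrm{Par}) \otimes_{\operatorname{Mod}_{\mathcal{O}}} \operatorname{Mod}_{R_{U_0}} \simeq \operatorname{IndPerf}(U)$. The relevant formal input is the universal property of the relative Lurie tensor product: a module category on which $\operatorname{Mod}_{\mathcal{O}}$ acts through $\operatorname{Mod}_{R_{U_0}}$, i.e.\ an $S$-local one, is canonically a module over the base change. This mirrors Zou's construction of the completion used above.

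The main obstacle will be the identification $\operatorname{IndPerf}(U) \simeq \operatorname{IndPerf}(\mathrm{Par}) \otimes_{\operatorname{Mod}_{\mathcal{O}}}\operatorname{Mod}_{R_{U_0}}$, together with the compatibility between the $\mathcal{O}(X^{\mathrm{spec}})$-linearity of the spectral action and the center action used to define $\mathcal{D}(\bung)_S$. For the first, I would use that $\mathrm{Par}_{\hat{G},\mathscr{W}}$ is a quotient of an affine scheme by a reductive group, so $\operatorname{IndPerf}$ is compactly generated by perfect complexes and base change along a flat affine localization is computed on generators. For the second, I would use \cite[Theorem IX.0.1, X.0.2]{FarguesScholze}, which says that the action of $\mathcal{O}(\mathrm{Par})$ on the unit is the Bernstein-center action. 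I expect the field hypothesis on $\Lambda$ to be needed only through Proposition \ref{Prop:HansenApp}.
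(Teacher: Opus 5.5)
Your strategy has the same shape as the paper's. You realize $\operatorname{IndPerf}(U)$ as the $S$-localization of $\operatorname{IndPerf}(\mathrm{Par}_{\hat G,\mathscr{W}})$, note that $S$ acts invertibly on $\mathcal{D}(\bung)_S$ by Proposition~\ref{Prop:HansenApp}, and factor the action. Your second step is correct, and it is a more explicit version of the paper's appeal to the universal property of localization: the colimit along $f$ of $\mathcal{E}\ast A$ is $\mathcal{E}\ast A$ because $f$ already acts invertibly, and you then use the universal property of $-\otimes_{\operatorname{Mod}_{\mathcal{O}}}\operatorname{Mod}_{R_{U_0}}$. The gap is in your first paragraph. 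There you take as known that $j^{\ast}\colon\operatorname{IndPerf}(\mathrm{Par}_{\hat G,\mathscr{W}})\to\operatorname{IndPerf}(U)$ is a Bousfield localization with fully faithful right adjoint, or equivalently that $\operatorname{IndPerf}(U)\simeq\operatorname{IndPerf}(\mathrm{Par}_{\hat G,\mathscr{W}})\otimes_{\operatorname{Mod}_{\mathcal{O}}}\operatorname{Mod}_{R_{U_0}}$.

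This is the standard affine base change for $\operatorname{QCoh}$. The spectral action, however, is an action of $\operatorname{IndPerf}$, and the two categories can differ. When $\Lambda$ has characteristic $\ell$, $\hat G$ need not be linearly reductive, so perfect complexes on $[Z^1/\hat G]$ need not be compact in $\operatorname{QCoh}$. The comparison functor $\operatorname{Ind}\bigl(\operatorname{Perf}(\mathrm{Par}_{\hat G,\mathscr{W}})[S^{-1}]\bigr)\to\operatorname{IndPerf}(U)$ is fully faithful. Indeed, mapping complexes between perfect complexes are $\hat G$-cohomology of bounded complexes, and this commutes with the colimit along $f$. Its essential surjectivity, however, is exactly the claim that $\operatorname{Perf}(U)$ is generated under cones and retracts by restrictions of perfect complexes from $\mathrm{Par}_{\hat G}$. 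Without this, your argument only gives an action of a possibly proper full subcategory of $\operatorname{IndPerf}(U)$.

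Your proposed justification, compact generation by perfect complexes plus ``base change computed on generators'', is tautological on $\mathrm{Par}_{\hat G}$. It says nothing about generation on $U$. In the modular case such generation statements are genuine theorems: for instance \cite[Theorem VIII.5.2]{FarguesScholze} uses that $Z^1$ is lci together with conditions at closed orbits.

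The missing input is what the paper supplies. Apply the Fargues--Scholze generation criterion \cite[Theorem VIII.5.8 of v2]{FarguesScholze} directly to $\tilde U=Z^1(W_{\qp},\hat G)_\Lambda\times_{\xspec}U_0$. Its first condition passes to any open. The remaining conditions are checked on closed $\hat G$-orbits, and since $\tilde U$ is pulled back from an open of the GIT quotient, these are exactly the closed orbits of $Z^1$ lying in $\tilde U$. Barr--Beck--Lurie then identifies $\operatorname{IndPerf}(\mathrm{Par}_{\hat G})\simeq\operatorname{Mod}_{\mathcal{O}(Z^1)}(\operatorname{IndPerf}(\mathrm{B}\hat G))$ and $\operatorname{IndPerf}(U)\simeq\operatorname{Mod}_{\mathcal{O}(\tilde U)}(\operatorname{IndPerf}(\mathrm{B}\hat G))$. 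Since $\mathcal{O}(\tilde U)=\mathcal{O}(Z^1)[S^{-1}]$, the second is visibly the $S$-localization of the first. With that identification in hand, your second step completes the proof.
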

\begin{proof}
    Recall from \cite[Theorem VIII.5.2]{FarguesScholze} that the $\infty$-category $\operatorname{IndPerf}(\mathrm{Par}_{\hat{G}})_{\mathscr{W}}$ is generated under cones and retracts by the pullbacks of elements of $\mathrm{Perf}(\hat{G})$. One way to verify this is to use \cite[Theorem VIII.5.8 of v2]{FarguesScholze} applied to the scheme $Z^1(W_{\mathbb{Q}_p}, \widehat{G})_\Lambda$ parameterizing local Langlands parameters. We note that conditions (1), (2), (3) of this theorem are stable under passage to the open $\tilde{U} \subset Z^1(W_{\mathbb{Q}_p}, \widehat{G})_\Lambda$ that is the inverse image of $U$. Indeed, condition (1) clearly passes to any open whatsoever, whereas conditions (2), (3) are only checked on closed $G$-orbits in $Z^1(W_{\mathbb{Q}_p}, \widehat{G})_\Lambda$, and since $\widetilde{U}$ is obtained from base change from the open $U_0$, we see that closed orbits in $\widetilde{U}$ are in bijection with closed orbits in $Z^1(W_{\mathbb{Q}_p}, \widehat{G})_\Lambda$ which lie in $\widetilde{U}$. Thus applying the higher Bar--Beck--Lurie theorem \cite[Theorem 4.7.3.5]{HA}, and using the commutative diagram
    \[
    \begin{tikzcd}
        &U \arrow{r}{j}\arrow{dr}& \mathrm{Par}_{\hat{G}} \arrow{d}\\
        && \mathrm{B}\hat{G}
    \end{tikzcd}
    \]
    we obtain 
\begin{equation}
    \begin{tikzcd}
        \mathrm{Mod}_{\mathcal{O}(Z^1(W_{\mathbb{Q}_p}, \widehat{G})_\Lambda)}(\mathrm{IndPerf}(\mathrm{B}\widehat{G})) \arrow[d, "\sim"] \arrow{r}{L} & \mathrm{Mod}_{\mathcal{O}(\widetilde{U})}(\mathrm{IndPerf}(\mathrm{B}\widehat{G}))  \\
        \mathrm{IndPerf}(\mathrm{Par}_{\hat{G}}) \arrow{r}{j^{\ast}} & \mathrm{IndPerf}(U). \arrow{u}{\sim} 
    \end{tikzcd}
\end{equation}
Here the vertical equivalences are induced by pushforward, and the top horizontal arrow (defined such that the diagram commutes) is the functor $L(M) = M \otimes_{\mathcal{O}(Z^1(W_{\mathbb{Q}_p}, \widehat{G})_\Lambda)} \mathcal{O}(\widetilde{U})$. By abuse of notation we denote by $S$ the subset of $Z(\mathrm{Mod}_{\mathcal{O}(Z^1(W_{\mathbb{Q}_p}, \widehat{G})_\Lambda)}(\mathrm{IndPerf}(\mathrm{B}\widehat{G})))$ induced by the multiplicative set $S$, then the functor $L$ induces an equivalence
    \[
        \overline{L}: \mathrm{Mod}_{\mathcal{O}(Z^1(W_{\mathbb{Q}_p}, \widehat{G})_\Lambda)}(\mathrm{IndPerf}(\mathrm{B}\widehat{G}))[S^{-1}] \xrightarrow{\sim} \mathrm{Mod}_{\mathcal{O}(\widetilde{U})}(\mathrm{IndPerf}(\mathrm{B}\widehat{G})).
    \]

    Now we relate this back to the spectral action. Because the spectral action of $\mathrm{IndPerf}(\mathrm{Par}_{\widehat{G}})$ on $\mathcal{D}(\bung)_S$ sends the elements of $S$ to invertible morphisms (by part (1) of proposition \ref{Prop:HansenApp}), by the universal mapping property of localization there exists an induced action of $\mathrm{IndPerf}(U)$ on $\mathcal{D}(\bung)_S$. 
\end{proof}

We have the following upgrade of Corollary \ref{Cor:SpectralActnExists}.

\begin{Cor}\label{Cor:SpectralActnLocalizes}
Suppose that either $\ell$ is coprime to the order of $\pi_0(\zg)$ or that $\ell$ is invertible in $\Lambda$, and let $U \subset \loc$ be an open substack cut out by a finitely generated multiplicative set $S \subset \mathcal{O}_{\rZ^1(W_{\qp}, \hat{G})//\hat{G}}$. Then there exists a natural map
\begin{equation} \label{Eq:SpectralActionMapZhuConjectureOpen}
    \begin{tikzcd}
        \operatorname{RHom}_{U}(V_{\mu}, V_{\mu'}) \arrow{d} \\
        \operatorname{RHom}_{G(\qp) \times  \mathbb{T}_{K^p}}\left(R \Gamma(\mathbf{Sh}_{K^p}\gx_{\ebar},\mathbb{W})_{\phi}[d]_S, R \Gamma(\mathbf{Sh}_{K^p}\gxp_{\ebar},\mathbb{W})_{\phi}[d']_S\right).
        \end{tikzcd}
\end{equation}
\end{Cor}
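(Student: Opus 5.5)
The plan is to imitate the proof of Corollary \ref{Cor:SpectralActnExists}, replacing the spectral action of $\operatorname{Perf}(\loc)$ on $\mathcal{D}(\bun_{G,k},\Lambda)$ by the localized action of $\operatorname{IndPerf}(U)$ on $\mathcal{D}(\bung,\Lambda)_S$ from Proposition \ref{Prop:LocalizationOpen}.

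First, choose a finite set $\mathscr{W}\subset\pi_0(\mathrm{Par}_{\hat G})$ large enough that $\tildef_{\mathbb{W}}$, $T_\mu j_{k,!}\tildef_{\mathbb{W}}$ and $T_{\mu'}j_{k,!}\tildef_{\mathbb{W}}$ all lie in $\mathcal{D}(\bung)_{\mathscr{W}}$. This is possible because $\tildef_{\mathbb{W}}$ is supported on the quasicompact open $\bungmucmup$ and is ULA, so it has only finitely many components. Then apply the localization functor $\mathcal{L}_S$ of Proposition \ref{Prop:HansenApp} to obtain $(\tildef_{\mathbb{W}})_S\in\mathcal{D}(\bung)_S$. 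By Proposition \ref{Prop:HansenApp}(2), $\mathcal{L}_S$ commutes with the spectral action, so $(T_\mu j_{k,!}\tildef_{\mathbb{W}})_S\simeq V_\mu\ast(j_{k,!}\tildef_{\mathbb{W}})_S$, and likewise for $\mu'$. Since $\mathbb{T}_{K^p}$ acts on $\tildef_{\mathbb{W}}$ through endomorphisms and $\mathcal{L}_S$ is a functor, this identification is $\mathbb{T}_{K^p}$-equivariant.

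Next, the $\operatorname{IndPerf}(U)$-action of Proposition \ref{Prop:LocalizationOpen}, being an action of a monoidal category through the objects $j^\ast V_\mu, j^\ast V_{\mu'}$, produces a natural map
\begin{align}
\operatorname{RHom}_U(V_\mu,V_{\mu'})\to \operatorname{RHom}\big((T_\mu j_{k,!}\tildef_{\mathbb{W}})_S,(T_{\mu'}j_{k,!}\tildef_{\mathbb{W}})_S\big).
\end{align}
As in Corollary \ref{Cor:SpectralActnExists}, functoriality of the action in the object $\tildef_{\mathbb{W}}$ shows that the image consists of $\mathbb{T}_{K^p}$-equivariant maps. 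Then restrict along $i_1^\ast$ and identify the result with the Shimura variety cohomology via Proposition \ref{Prop:WeilCohShimVar} or Proposition \ref{Prop:WeilCohShimVarRat}, together with Lemma \ref{Lem:Boundedness}. The $\phi$-subscript is handled by first localizing at $\phi$ as in Corollary \ref{Cor:LocalizationCompletion}, or simply by taking $S$ disjoint from the maximal ideal of $\phi$.

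The step I expect to be the main obstacle is commuting $i_1^\ast$ with $\mathcal{L}_S$, which is what identifies $i_1^\ast(T_\mu j_{k,!}\tildef)_S$ with $R\Gamma(\mathbf{Sh}_{K^p}\gx_{\ebar},\mathbb{W})[d]_S$. Here $(-)_S$ on the target means localizing the action of $\mathcal{Z}^{\mathrm{spec}}$, equivalently of the Bernstein center, on the $G(\qp)$-representation. I would argue that $i_1^\ast$ is $\mathcal{O}(\mathrm{Par}_{\hat G,\mathscr{W}})$-linear, since the spectral Bernstein center acts compatibly on each Newton stratum. I would also write $\mathcal{L}_S$ as the filtered colimit over multiplication by elements of $S$, which uses that $S$ is finitely generated, as in \cite{HamannLee}. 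Since $i_1^\ast$ preserves colimits, it then commutes with $\mathcal{L}_S$.

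Finally, the hypothesis that $\ell$ is coprime to $|\pi_0(\zg)|$ or invertible is needed only to have the spectral action at all, exactly as in Corollary \ref{Cor:SpectralActnExists}. When $\Lambda$ is not a field, one first base changes to a field, or notes that the proof of Proposition \ref{Prop:HansenApp} goes through verbatim.
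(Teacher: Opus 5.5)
Your proposal is the argument the paper intends. The paper gives no separate proof: it presents the corollary as the upgrade of Corollary~\ref{Cor:SpectralActnExists} obtained by letting $\operatorname{IndPerf}(U)$ act on $\mathcal{D}(\bung,\Lambda)_S$ via Proposition~\ref{Prop:LocalizationOpen}. Your treatment of the step it leaves implicit is correct: you commute $i_1^\ast$ with $\mathcal{L}_S$ by writing $\mathcal{L}_S$ as the sequential colimit along multiplication by the product of the generators of $S$, and use that the centre acts on the open neutral stratum through the Fargues--Scholze map to the Bernstein centre of $G(\qp)$.

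Two justifications need adjusting.

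First, a ULA object supported on a quasicompact open need not lie in finitely many components of $\mathrm{Par}_{\hat G}$. Its stalks are admissible but can meet infinitely many summands of the product decomposition; the cohomology at infinite level at $p$ typically does, and $T_\mu$ respects the decomposition. So instead of choosing $\mathscr{W}$ to contain all of $\tildef_{\mathbb{W}}$, replace $\tildef_{\mathbb{W}}$ by its summand in $\mathcal{D}(\bung)_{\mathscr{W}}$, say with $\mathscr{W}$ the component containing $\phi$. This projection is multiplication by an idempotent of $\mathcal{O}(\mathrm{Par}_{\hat G})$, so it commutes with the spectral action, the Hecke operators and $i_1^\ast$. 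It also loses nothing once you localize at $\phi$, since $\phi$-local objects lie in the component of $\phi$.

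Second, base changing to a field does not prove the statement for non-field $\Lambda$. Propositions~\ref{Prop:HansenApp} and~\ref{Prop:LocalizationOpen} are only established for fields, so you should take the coefficients to be a field, as the paper implicitly does.
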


\subsection{Examples} \label{sub:ExamplesSpectral} The goal of this section is to give example applications of Corollaries \ref{Cor:SpectralActnExists}, \ref{Cor:LocalizationCompletion} and \ref{Cor:SpectralActnLocalizes}. We let $\Lambda$ and $W$ be as in Setup \ref{Setup:StandardSetup} and continue to assume that $P$ is trivial. Fix a field $\Lambda \to \kappa$ and a closed point $\phi:\spec \kappa \to \xspec$ corresponding to a semisimple $L$-parameter $\phi$. 

\subsubsection{} Let $S_{\phi} \subset \dualG$ be the centralizer of $\phi$. Recall that $\phi$ is called \emph{generous}, see \cite[Definition 2.1.5]{hansen2024beijingnotescategoricallocal}, if there is an isomorphism
\begin{align}
    \loc \times^{\mathbb{L}}_{\xspec} \{\phi\} \simeq \left[\spec \kappa / S_{\phi} \right],
\end{align}
where $\mathbb{L}$ denotes the derived fiber product. When $\kappa$ has characteristic $\ell$, generous parameters are also called parameters of Langlands--Shahidi type, see \cite[Remark 6.5]{HamannLee}. We have the following consequence of an amazing theorem of Alper--Hall--Rydh \cite{AlperHallRydh}.
\begin{Thm} \label{Thm:LinearlyReductiveCompletion}
    If $\phi$ is generous and $S_{\phi}$ is linearly reductive over $\kappa$, then there is an isomorphism 
    \begin{align}
        \lochat \simeq \left[ \spf A / S_{\phi} \right],
    \end{align}
    where $\spf A$ is the completion of a finite type $\kappa$-algebra in a finitely generated ideal of definition, together with a commutative diagram
    \begin{equation}
        \begin{tikzcd}
            \lochat \arrow{r} \arrow{d} & \loc \arrow{dr} \\
            \left[ \spf A / S_{\phi} \right] \arrow{r} & \left[\spec \kappa / S_{\phi} \right] \arrow{r} & \left[\spec \kappa / \dualG \right].
        \end{tikzcd}
    \end{equation}
\end{Thm}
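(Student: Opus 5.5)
The plan is to apply the Luna étale slice theorem for algebraic stacks of Alper--Hall--Rydh at a closed point of $\loc$ lying over $\phi$, and then use generosity to identify the completion along the full fiber $q^{-1}(\phi)$ with the completion at that single point.

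First, since $\phi$ is a semisimple parameter, the fiber $q^{-1}(\phi)$ has a unique closed point, namely the orbit $x=[\phi]$, with stabilizer $S_{\phi}$. Generosity says that the derived fiber $\loc \times^{\mathbb{L}}_{\xspec}\{\phi\}$ is $[\spec \kappa/S_{\phi}]$. In particular, the reduced classical fiber is just the residual gerbe $\mathcal{G}_x \simeq B S_{\phi}$, so the completion of $\loc$ along $q^{-1}(\phi)$ agrees with the completion along the closed point $x$, i.e.\ along $\mathcal{G}_x$.

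Next, $\loc$ is a quotient $[\cocycle_\Lambda/\dualG]$ of an affine scheme of finite type by a reductive group. It therefore has affine diagonal and is of finite type, and its closed point $x$ has linearly reductive stabilizer $S_{\phi}$ by hypothesis. The Alper--Hall--Rydh local structure theorem then applies. It gives an affine scheme $\spec B$ of finite type over $\kappa$ (after base change to $\kappa$) with an $S_{\phi}$-action, a fixed point $w$, and an étale representable morphism
\begin{align}
  f:[\spec B/S_{\phi}] \to \loc \otimes \kappa
\end{align}
sending $w$ to $x$ and inducing an isomorphism of stabilizers at $w$. By the companion statement on completions (the coherent completeness theorem in the same paper), $f$ induces an isomorphism of complete local stacks $\widehat{[\spec B/S_{\phi}]}_w \simeq \widehat{(\loc)}_x$. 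Writing $A$ for the completion of $B$ at the $S_{\phi}$-invariant ideal of the fixed point $w$, we get $\lochat \simeq [\spf A/S_{\phi}]$. This is the completion of a finite type $\kappa$-algebra along a finitely generated ideal, as required.

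For the diagram, the map $[\spf A/S_{\phi}] \to [\spec\kappa/S_{\phi}]$ is the structure map. The map $[\spec \kappa/S_{\phi}] \to [\spec\kappa/\dualG]$ is induced by $S_{\phi}\subset \dualG$. Commutativity with $\loc \to [\spec\kappa/\dualG]$ holds because the étale slice can be chosen compatibly with the map to $B\dualG$: the slice is constructed as an $S_{\phi}$-stable locally closed subscheme $\spec B \subset \cocycle_\Lambda$ through $\phi$ (Luna's construction inside the $\dualG$-scheme), so that $f$ is the composite
\begin{align}
  [\spec B/S_{\phi}] \to [\cocycle_\Lambda/\dualG] \to B\dualG,
\end{align}
and this factors through $BS_{\phi}$.

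The main obstacle is this last compatibility, together with checking that "completion along $q^{-1}(\phi)$" really coincides with the AHR completion at $x$. For the first point, I would use the version of the slice theorem for a quotient stack $[X/\dualG]$ with $X$ affine, where the slice is an actual subscheme of $X$. This is available because $\dualG$ is reductive and $S_{\phi}$ is linearly reductive, so the orbit $\dualG\cdot\phi$ is closed and affine. For the second point, I would use generosity as above, and also note that its nilpotent fiber directions are absorbed into $\spf A$.
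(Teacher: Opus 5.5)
Your proposal is correct and follows the paper's argument, which just cites \cite[Theorem 1.1]{AlperHallRydh} for $\loc$ at the closed point $x=\phi$ with $H=G_x=S_{\phi}$. Your additional steps are details the paper leaves implicit, and your treatment of them is sound: generosity makes $q^{-1}(\phi)$ set-theoretically the single closed point, so completing along the fiber is completing at $x$; the slice map is \'etale with fiber the residual gerbe, so it induces an isomorphism of completions; and the diagram is compatible with $[\spec \kappa/\dualG]$.
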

\begin{proof}
This is a direct consequence of \cite[Theorem 1.1]{AlperHallRydh} applied to $\mathfrak{X}=\loc$, $x=\phi$ and $H=G_x=S_{\phi}$.
\end{proof}

\begin{Rem}
If $\kappa$ has characteristic zero, then $S_{\phi}$ is linearly reductive if and only if the identity component of $S_{\phi}$ is reductive over $\kappa$. If $\kappa$ does not have characteristic zero, then $S_{\phi}$ is linearly reductive if the identity component of $S_{\phi}$ is a torus and $\pi_0(S_{\phi})$ is of order prime to $\ell$.
\end{Rem}

\begin{Cor} 
Suppose that $\tildef_{\mathbb{W}}$ is ULA. If $\phi$ is generous and $S_{\phi}$ is linearly reductive over $\kappa$, then there is a morphism 
\begin{equation} \label{Eq:SpectralActionMapZhuConjectureCompletionII}
    \begin{tikzcd}
        \operatorname{RHom}_{S_{\phi}}(V_{\mu}, V_{\mu'}) \arrow{d} \\
        \operatorname{RHom}_{G(\qp) \times  \mathbb{T}_{K^p}}\left(R \Gamma(\mathbf{Sh}_{K^p}\gx_{\ebar},\mathbb{W})_{\phi}[d], R \Gamma(\mathbf{Sh}_{K^p}\gxp_{\ebar},\mathbb{W})_{\phi}[d'] \right).
        \end{tikzcd}
\end{equation}
\end{Cor}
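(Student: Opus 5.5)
The plan is to combine Corollary \ref{Cor:LocalizationCompletion} with the presentation of $\lochat$ from Theorem \ref{Thm:LinearlyReductiveCompletion}. First, since $\tildef_{\mathbb{W}}$ is ULA, Corollary \ref{Cor:LocalizationCompletion} already provides a morphism
\begin{align}
\operatorname{RHom}_{\lochat}(V_{\mu}, V_{\mu'}) \to \operatorname{RHom}_{G(\qp) \times \mathbb{T}_{K^p}}\left(R \Gamma(\mathbf{Sh}_{K^p}\gx_{\ebar},\mathbb{W})_{\phi}[d], R \Gamma(\mathbf{Sh}_{K^p}\gxp_{\ebar},\mathbb{W})_{\phi}[d'] \right).
\end{align}
The linear reductivity of $S_{\phi}$ (in particular, no $\ell$-torsion in $\pi_0(S_\phi)$ in positive characteristic) should take care of the hypothesis on $\ell$ appearing there. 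It therefore suffices to construct a natural map $\operatorname{RHom}_{S_{\phi}}(V_{\mu}, V_{\mu'}) \to \operatorname{RHom}_{\lochat}(V_{\mu}, V_{\mu'})$ and precompose.

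To build this map, I would use the commutative diagram of Theorem \ref{Thm:LinearlyReductiveCompletion}. The vector bundles $V_\mu, V_{\mu'}$ on $\lochat$ are pulled back from $[\spec \kappa/\dualG]$ via $\loc \to [\spec\kappa/\dualG]$. By the diagram, this pullback agrees with the pullback along
\begin{align}
\lochat \simeq [\spf A/S_\phi] \to [\spec \kappa/S_\phi] \to [\spec\kappa/\dualG].
\end{align}
Hence $V_\mu|_{\lochat}$ is the pullback of $\operatorname{Res}^{\dualG}_{S_\phi} V_\mu$ along $g:\lochat \to [\spec\kappa/S_\phi]$. Pullback along $g$ is a functor, so it induces
\begin{align}
\operatorname{RHom}_{[\spec\kappa/S_\phi]}(V_\mu,V_{\mu'}) = \operatorname{RHom}_{S_\phi}(V_\mu,V_{\mu'}) \to \operatorname{RHom}_{\lochat}(g^*V_\mu, g^*V_{\mu'}).
\end{align}
By linear reductivity, the source is simply $\operatorname{Hom}_{S_\phi}(V_\mu,V_{\mu'})$ in degree zero, although this is not needed for the existence of the map.

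The main point to check is that the identification of $V_\mu|_{\lochat}$ with $g^*V_\mu$ is compatible with the way the spectral action of $\operatorname{Perf}(\lochat)$ was set up in Corollary \ref{Cor:LocalizationCompletion}. That is, the objects acting there are pullbacks of $V_\mu$ under $\lochat \to \loc$, and $T_\mu$ is recovered from the action of these pullbacks on $\tildef_{\mathbb{W},\phi}$. This compatibility follows from the $2$-commutativity of the diagram in Theorem \ref{Thm:LinearlyReductiveCompletion}, together with the fact that the action of $\operatorname{Perf}(\lochat)$ on $\mathcal{D}_{\mathrm{lis}}(\bun_G,\kappa)^\wedge_\phi$ is obtained by base change from the action of $\operatorname{Perf}(\loc)$.

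There is one subtle point, which is the need for the isomorphism $\lochat\simeq[\spf A/S_\phi]$ rather than just a map to $[\spec\kappa/S_\phi]$. This is exactly what Theorem \ref{Thm:LinearlyReductiveCompletion} provides, using generosity and linear reductivity. The composite of the two maps above gives the desired morphism.
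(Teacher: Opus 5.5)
Your construction is the paper's argument spelled out. The paper's proof says only that the statement is a direct consequence of Theorem \ref{Thm:LinearlyReductiveCompletion} together with Corollary \ref{Cor:LocalizationCompletion}. Precomposing the map of Corollary \ref{Cor:LocalizationCompletion} with pullback along $g\colon \lochat \to [\spec \kappa / S_\phi]$ is exactly what that one-liner means. The commutative diagram of Theorem \ref{Thm:LinearlyReductiveCompletion} is what identifies $g^*V_\mu$ with the restriction of $V_\mu$ to $\lochat$.

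One step of yours does not hold up: the claim that linear reductivity of $S_\phi$ takes care of the hypothesis of Corollary \ref{Cor:LocalizationCompletion}. That hypothesis says $\ell$ is prime to $|\pi_0(\zg)|$ or invertible in $\kappa$. It concerns the center of $\mathsf{G}$, whereas linear reductivity is a condition on the centralizer $S_\phi \subset \dualG$, and the latter does not imply the former. For instance, a torus is linearly reductive in every characteristic. So a parameter whose centralizer is a torus satisfies your hypotheses whether or not $\ell$ divides $|\pi_0(\zg)|$.

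The $\ell$-condition therefore has to be carried along as a standing assumption, which is what the paper tacitly does (compare the hypothesis $e = |\pi_0(Z(G))| \in \Lambda^{\times}$ in Theorem \ref{Thm:IntroCohomology}).

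A smaller point concerns your last paragraph: your construction never uses the isomorphism $\lochat \simeq [\spf A / S_\phi]$. Only the morphism $g$ and the commutativity of the diagram enter.
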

\begin{proof}
This is a direct consequence of Theorem \ref{Thm:LinearlyReductiveCompletion} together with Corollary \ref{Cor:LocalizationCompletion}.
\end{proof}
\begin{Cor} \label{Cor:DirectSummandI}
Suppose that $\tildef_{\mathbb{W}}$ is ULA. If $\phi$ is generous and $S_{\phi}$ is linearly reductive over $\kappa$ and $V_{\mu}$ is a direct summand of $V_{\mu'}$ as a representation of $S_{\phi}$, then $R \Gamma(\mathbf{Sh}_{K^p}\gx_{\ebar},\mathbb{W})_{\phi}[d]$ is a $G(\qp) \times  \mathbb{T}_{K^p}$-equivariant direct summand of 
$R \Gamma(\mathbf{Sh}_{K^p}\gxp_{\ebar},\mathbb{W})_{\phi}[d']$.
\end{Cor}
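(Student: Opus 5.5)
The plan is to obtain the direct summand relation from additivity of the spectral action, once the action has been restricted to the formal completion at $\phi$. Corollary \ref{Cor:LocalizationCompletion} already tells us that the $\phi$-localized glued sheaf $\tildef_{\mathbb{W},\phi}$ lies in $\mathcal{D}_{\mathrm{lis}}(\bun_{G},\kappa)^{\wedge}_{\phi}$. We use here that $\tildef_{\mathbb{W}}$ is ULA and, by Lemma \ref{Lem:RestrictionTildeF}, supported on finitely many Newton strata, so that Lemma \ref{Lem:CompletingLocalization} applies. Consequently $\operatorname{Perf}(\lochat)$, and hence $\operatorname{IndPerf}(\lochat)$, acts on it.

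By Theorem \ref{Thm:LinearlyReductiveCompletion} we have $\lochat \simeq [\spf A/S_{\phi}]$, compatibly with the maps to $[\spec\kappa/\dualG]$. It follows that the restrictions of $V_{\mu}$ and $V_{\mu'}$ to $\lochat$ are pulled back from $[\spec \kappa/S_{\phi}]$ along the structure map. They are the restrictions of the representations $V_\mu,V_{\mu'}$ of $\dualG$ to $S_\phi$.

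Now suppose $V_{\mu'}|_{S_\phi}\simeq V_\mu|_{S_\phi}\oplus W$ as $S_\phi$-representations. Pulling back gives a splitting $\restr{V_{\mu'}}{\lochat}\simeq \restr{V_{\mu}}{\lochat}\oplus \mathcal{W}$ in $\operatorname{Perf}(\lochat)$. In particular there are maps $s\colon V_\mu\to V_{\mu'}$ and $r\colon V_{\mu'}\to V_\mu$ with $r\circ s=\mathrm{id}$.

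Since the action is a functor of monoidal categories, these maps give morphisms $V_\mu \ast \tildef_{\mathbb{W},\phi}\to V_{\mu'}\ast\tildef_{\mathbb{W},\phi}\to V_\mu\ast\tildef_{\mathbb{W},\phi}$ composing to the identity. These morphisms commute with $\mathbb{T}_{K^p}$, because the Hecke algebra acts through endomorphisms of $\tildef$ and the spectral action is functorial. Under the identification of the completed action with the Hecke operators $T_\mu$, $T_{\mu'}$ on $\phi$-local objects, we then apply $i_1^\ast$ (using $[P]$ trivial, so $i_1=i_{1'}$). This step is essentially the content of Corollary \ref{Cor:LocalizationCompletion}. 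Proposition \ref{Prop:WeilCohShimVar} (or \ref{Prop:WeilCohShimVarRat}), after $\phi$-localization, then yields $G(\qp)\times\mathbb{T}_{K^p}$-equivariant maps
\begin{align}
R\Gamma(\mathbf{Sh}_{K^p}\gx_{\ebar},\mathbb{W})_\phi[d]\to R\Gamma(\mathbf{Sh}_{K^p}\gxp_{\ebar},\mathbb{W})_\phi[d']\to R\Gamma(\mathbf{Sh}_{K^p}\gx_{\ebar},\mathbb{W})_\phi[d]
\end{align}
whose composite is the identity. This exhibits the first complex as a direct summand of the second.

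The main obstacle is compatibility. We must check that the $\phi$-localization $(-)_\phi$ commutes with $i_1^\ast T_\mu j_!$, and that the action of $V_\mu$ via $\operatorname{Perf}(\lochat)$ on $\tildef_{\mathbb{W},\phi}$ agrees with $T_\mu$ applied to the localized sheaf. For the first point I would use that $\phi$-localization is a projector commuting with Hecke operators, since the spectral Bernstein center acts centrally. For the second I would use that pullback $\operatorname{IndPerf}(\loc)\to\operatorname{IndPerf}(\lochat)$ is monoidal and that the completed category is a tensor product over $\operatorname{IndPerf}(\loc)$.
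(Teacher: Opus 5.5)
Your proposal is correct and is essentially the paper's argument: the paper also deduces the corollary from Lemma \ref{Lem:CompletingLocalization} applied to $\tildef_{\mathbb{W},\phi}$ (which places it in $\mathcal{D}_{\mathrm{lis}}(\bun_G,\kappa)^{\wedge}_{\phi}$) and from Theorem \ref{Thm:LinearlyReductiveCompletion}. That theorem shows $\restr{V_\mu}{\lochat}$ and $\restr{V_{\mu'}}{\lochat}$ are pulled back from $[\spec\kappa/S_\phi]$, and the paper concludes by additivity of the $\operatorname{Perf}(\lochat)$-action, with the passage to Shimura cohomology via $i_1^\ast$ and Proposition \ref{Prop:WeilCohShimVar} left implicit; your write-up simply fills in the retract and compatibility details that the paper's one-line proof omits.
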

\begin{proof}
This is a direct consequence of Theorem \ref{Thm:LinearlyReductiveCompletion} together with Lemma \ref{Lem:CompletingLocalization} applied to $\tildef_{\mathbb{W}, \phi}$, and the additivity of the spectral action of $\operatorname{Perf}(\lochat)$ on $\mathcal{D}_{\mathrm{lis}}(\bun_{G},\kappa)_{\phi}^{\wedge}$.
\end{proof}
\begin{eg}
Suppose that $S_{\phi}=Z(\dualG)^{\gal_{\qp}}$. Then $V_{\mu}$ is a direct summand of $V_{\mu'}$ as a representation of $S_{\phi}$ (or vice versa) if and only if $\kappa(\mu)=\kappa(\mu')$ if and only if $\bgmumup$ is nonempty.
\end{eg}

\subsection{What if the groups are different at \texorpdfstring{$p$}{p}} \label{Sub:DifferentAtP} Let us start again from Proposition \ref{Prop:WeilCohShimVar}, but now in the situation that $P$ is not trivial so that $[1] \not=[1']$. In this case, it follows as in the proof of Corollary \ref{Cor:SpectralActnExists} that there is a morphism
\begin{align}
    \operatorname{RHom}_{\loc}(V_{\mu}, V_{\mu'}) \to \operatorname{RHom}_{D_{\mathrm{lis}}(\bun_{G,k}, \Lambda)}(T_{\mu} \tildef, T_{\mu'} \tildef).
\end{align}
The left hand side is zero, because there are no homomorphisms between $V_{\mu}$ and $V_{\mu'}$ considered as representations of $Z(\dualG)^{\gal_{\qp}}$. Indeed, both are direct sums of a single character $\kappa(\mu), \kappa(\mu') \in \pi_1(G)_{\gal_{\qp}}$, which differ by $\kappa([P]) \not=0$. The right hand side is also zero, because the sheaves $T_{\mu} \tildef, T_{\mu'} \tildef$ are supported on different connected components of $\bun_{G,k}$ (this follows from the compatibility of the spectral action with the central grading, see \cite[Lemma 5.3.2]{zou2024categoricalformfarguesconjecture}). Therefore, it is clear that a new idea is needed to relate the cohomology groups of the different Shimura varieties (see \cite[Conjecture 4.7.16]{ZhuCoherent} for a conjectural comparison at parahoric level).

\subsubsection{} Let $\Lambda, W$ be as in Setup \ref{Setup:StandardSetup}. Suppose that $\Lambda$ is a field, choose a semisimple $L$-parameter $\phi:W_{\qp} \to {}^{L} G(\flbar)$ and a morphism $\mathbb{T}_{K^p} \to \Lambda$ with kernel $\mathfrak{m}$. Recall that $\phi$ is called \emph{supercuspidal} if $Z(\dualG)^{\gal_{\qp}} \subset S_{\phi}$ has finite index. The following result and its proof were suggested to us by David Hansen.
\begin{Prop} \label{Prop:NonzeroIFF}
Assume that $\kappa(\mu)+\kappa([P])=\kappa(\mu')$. Suppose that $\phi$ is supercuspidal and that $\dim V_{\mu}, \dim V_{\mu'}$ are invertible in $\Lambda$. Then $$R \Gamma(\mathbf{Sh}_{K^p}\gx_{\ebar},\Lambda)_{\phi, \mathfrak{m}} \not=0$$ if and only if $$R \Gamma(\mathbf{Sh}_{K^p}\gxp_{\ebar},\Lambda)_{\phi, \mathfrak{m}} \not=0.$$
\end{Prop}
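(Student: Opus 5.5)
We work with the glued sheaf $\tildef=\tildef_{\mathbb{W}}$ on $\bungmucmupk$ (extended by $j_{k,!}$ to $\bun_{G,k}$), localized at $\mathfrak{m}$ and at $\phi$. By Proposition \ref{Prop:WeilCohShimVar}, after localizing (which commutes with $i_1^\ast$, $i_{1'}^\ast$ and Hecke operators, since the spectral centre acts compatibly and $\mathbb{T}_{K^p}$ commutes with everything) we have
\begin{align}
R\Gamma(\mathbf{Sh}_{K^p}\gx_{\ebar},\Lambda)_{\phi,\mathfrak{m}}[d] &\simeq i_1^\ast T_\mu (j_{k,!}\tildef)_{\phi,\mathfrak{m}}, \\
R\Gamma(\mathbf{Sh}_{K^p}\gxp_{\ebar},\Lambda)_{\phi,\mathfrak{m}}[d'] &\simeq i_{1'}^\ast T_{\mu'} (j_{k,!}\tildef)_{\phi,\mathfrak{m}}.
\end{align}
By symmetry it suffices to show that each side is nonzero if and only if $\mathcal{A}:=(j_{k,!}\tildef)_{\phi,\mathfrak{m}}$ is nonzero.

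The first step is to use supercuspidality. Since $\phi$ is supercuspidal, the standard result (Fargues--Scholze, Hansen) says that any object of $\mathcal{D}_{\mathrm{lis}}(\bun_G,\Lambda)_\phi$ is supported on the basic locus. In fact, a non-basic $b$ has $G_b$ a proper Levi inner form, and its irreducible representations have parameters factoring through a proper Levi, which is impossible for supercuspidal $\phi$. Hence $\mathcal{A}=\bigoplus_{b\text{ basic}} i_{b!}i_b^\ast\mathcal{A}$, with each basic stratum open and closed in the $\phi$-local category. Moreover $T_\mu$ preserves $\phi$-locality, and by the compatibility of the spectral action with the central grading (\cite[Lemma 5.3.2]{zou2024categoricalformfarguesconjecture}) it carries the basic component with Kottwitz invariant $\kappa$ to the one with invariant $\kappa+\kappa(\mu)$ (sign conventions to be fixed). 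Since $\mathcal{A}$ is supported on $\bungmucmup$, its basic pieces lie at $b_\mu$, the basic element of $B(G,-\mu)$, and at $b_{\mu'}$, the basic element of $\beta_P^{-1}B(G',-\mu')$. The hypothesis $\kappa(\mu)+\kappa([P])=\kappa(\mu')$ forces these two to coincide: they have the same Kottwitz invariant, so they are equal. Call this point $b_0$; then $\mathcal{A}=i_{b_0!}i_{b_0}^\ast\mathcal{A}$. Now $T_\mu$ sends this piece to the basic stratum $[1]$, and $T_{\mu'}$ sends it to $[P]$, that is, to $i_{1'}$. Consequently $i_1^\ast T_\mu\mathcal{A}\simeq T_\mu\mathcal{A}$ and $i_{1'}^\ast T_{\mu'}\mathcal{A}\simeq T_{\mu'}\mathcal{A}$, because all other strata receive nothing.

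The second step is conservativity. Via the spectral action we have $T_{\mu^\vee}T_\mu\mathcal{A}\simeq (V_{\mu}^\vee\otimes V_\mu)\ast\mathcal{A}$. Restricted to the $\phi$-local (completed) category, where the relevant part of $S_\phi$ is the finite-index $Z(\dualG)^{\gal_{\qp}}$, the representation $V_\mu^\vee\otimes V_\mu$ contains the trivial representation as a direct summand. This uses the trace map $V_\mu^\vee\otimes V_\mu\to\mathbf{1}$ split by the coevaluation, which is a splitting precisely because $\dim V_\mu$ is invertible in $\Lambda$. Hence $\mathcal{A}$ is a direct summand of $T_{\mu^\vee}T_\mu\mathcal{A}$, so $T_\mu\mathcal{A}=0$ if and only if $\mathcal{A}=0$; the same argument applies with $\mu'$.

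The main obstacle I anticipate is making the support statement precise in families: I need that the $\phi$-localization of $j_{k,!}\tildef$ (with the Hecke localization at $\mathfrak{m}$ included) is genuinely concentrated on the basic stratum and splits off. I will try Lemma \ref{Lem:CompletingLocalization} together with the ULA property (Corollary \ref{Cor:ULA}). These reduce the question to irreducible subquotients on each stratum having parameter $\phi$, and the supercuspidal vanishing on non-basic strata can then be checked pointwise via the compatibility of Fargues--Scholze parameters with parabolic induction.
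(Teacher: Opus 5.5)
Your proposal is correct and follows essentially the paper's own argument, only in a different order. The paper uses the same four ingredients:
\begin{itemize}
\item conservativity of $T_\mu$ and $T_{\mu'}$ on $\tildef_{\phi,\mathfrak{m}}$, coming from the trace splitting of $\Lambda\to V\otimes V^\vee$ when $\dim V_\mu$ and $\dim V_{\mu'}$ are invertible;
\item the hypothesis $\kappa(\mu)+\kappa([P])=\kappa(\mu')$, which puts $\tildef$ on a single connected component, so that $T_\mu\tildef$ and $T_{\mu'}\tildef$ land on the components of $[1]$ and $[1']$;
\item supercuspidality (Theorem IX.7.2 of Fargues--Scholze), which reduces nonvanishing to the basic stratum;
\item Proposition \ref{Prop:WeilCohShimVar}, which identifies these restrictions with the cohomology of the two Shimura varieties.
\end{itemize}

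One small point to tidy: the splitting of $\mathbf{1}$ off $V_\mu^\vee\otimes V_\mu$ already holds as $\widehat{G}$-representations. Your appeal in that step to the completed category and to the finite-index subgroup $Z(\widehat{G})^{\mathrm{Gal}_{\mathbb{Q}_p}}\subset S_\phi$ is therefore unnecessary.
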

\begin{proof}
Under our assumption, the inclusion $\Lambda \to V_{\mu} \otimes V_{\mu}^{\vee}=\operatorname{End}(V_{\mu})$ of $\hat{G}$-representations has a section given by $f \mapsto \tfrac{\operatorname{Tr} f}{\dim V_{\mu}}$, turning it into a direct summand, and the same works with $\Lambda \to V_{\mu'} \otimes V_{\mu'}^{\vee}$. \smallskip 

It follows from the above observation and the functoriality of the spectral action that $\tildef$ is functorially a direct summand of $T_{V_{\mu}^{\vee}} \circ T_{\mu}$ and also functorially a direct summand of $T_{V_{\mu'}^{\vee}} \circ T_{\mu'}$; this implies that the same holds for $\tildef_{\phi, \mathfrak{m}}$. In particular, we find that
\begin{align}
    T_{\mu} \tildef_{\phi, \mathfrak{m}} \not =0 \iff  \tildef_{\phi, \mathfrak{m}} \not =0 \iff T_{\mu'} \tildef_{\phi, \mathfrak{m}} \not =0.
\end{align}
Our assumption that $\kappa(\mu)+\kappa([P])=\kappa(\mu')$ tells us that $\tildef$ is supported on a single component of $\bung$, namely the component associated to $\kappa(\mu)$. It follows that $T_{\mu} \tildef$ and $T_{\mu'} \tildef$ are also supported on a single component, namely the components containing $[1]$ and $[1']$, respectively. Since 
$\phi$ is supercuspidal, it follows from this that (by applying \cite[Theorem IX.7.2]{FarguesScholze})
\begin{align}
    T_{\mu} \tildef_{\phi, \mathfrak{m}} \not =0 &\iff i_1^{\ast} T_{\mu} \tildef_{\phi, \mathfrak{m}} \not =0 \\
    T_{\mu'} \tildef_{\phi, \mathfrak{m}} \not =0 &\iff i_{1'}^{\ast} T_{\mu'} \tildef_{\phi, \mathfrak{m}} \not =0.
\end{align}
The proposition now follows from Propositions \ref{Prop:WeilCohShimVar} and \ref{Prop:WeilCohShimVarRat}.
\end{proof}

\subsubsection{} Now suppose that $S_{\phi}=Z(\dualG)^{\gal_{\qp}}$. Recall that characters of $Z(\dualG)^{\gal_{\qp}}$ correspond to $\pi_1(G)_{\gal_{\qp}}$. Using Theorem \ref{Thm:LinearlyReductiveCompletion} we can define line bundles $\mathcal{L}(\lambda)$ on $\lochat$ for $\lambda \in \pi_1(G)_{\gal_{\qp}}$. Suppose that $\kappa(\mu)+\kappa([P])=\kappa(\mu')$, and define the line bundle $\mathcal{L}=\mathcal{L}(-\kappa([P]))$ on $\lochat$ as above. There is a natural equivalence (see \cite[discussion before Lemma 5.3.2]{zou2024categoricalformfarguesconjecture})
\begin{align}
    D_{\mathrm{lis}}(\bun_{G,k}, \Lambda)_{\phi} = \prod_{\lambda \in \pi_1(G)_{\gal_{\qp}}} D_{\mathrm{lis}}(\bun_{G,k}, \Lambda)_{\phi, \lambda}. 
\end{align}
The Hecke operator $T_{\mathcal{L}}$ defines an equivalence (because $\mathcal{L}$ is a line bundle)
\begin{align}
    D_{\mathrm{lis}}(\bun_{G,k}, \Lambda)^{\wedge}_{\phi} \isom D_{\mathrm{lis}}(\bun_{G,k}, \Lambda)^{\wedge}_{\phi},
\end{align}
which takes the summand indexed by $\lambda$ to the summand indexed by $\lambda-\kappa([P])$ (by \cite[Lemma 5.3.2]{zou2024categoricalformfarguesconjecture}). In particular, since $\phi$ is supercuspidal, it induces an equivalence
\begin{align}
    D_{\mathrm{lis}}(\bun_{G,k}^{[1']}, \Lambda)_{\phi}^{\wedge} \to D_{\mathrm{lis}}(\bun_{G,k}^{[1]}, \Lambda)_{\phi}^{\wedge}
\end{align}
or alternatively
\begin{align}
    T_{\mathcal{L}}:D_{\mathrm{lis}}(G'(\qp), \Lambda)_{\phi}^{\wedge} \to D_{\mathrm{lis}}(G(\qp), \Lambda)_{\phi}^{\wedge}.
\end{align}
This is some incarnation of a local Jacquet--Langlands correspondence.
\begin{Thm} \label{Thm:arii2j3asd}
Suppose that $S_{\phi}=Z(\dualG)^{\gal_{\qp}}$ and that $\kappa(\mu)+\kappa([P])=\kappa(\mu')$. For $N=i_{1}^{\ast} T_{\mathcal{L}(\kappa(\mu))}\tildef_{\mathbb{W},\phi}$, there is a $G(\qp) \times \mathbb{T}_{K^p}$-equivariant isomorphism 
    \begin{align}
        N^{\oplus \dim V_{\mu}} \isom  R \Gamma(\mathbf{Sh}_{K^p}\gx_{\ebar}, \mathbb{W})_{\phi}[d]
    \end{align}
    and a $G(\qp) \times \mathbb{T}_{K^p}$-equivariant isomorphism
    \begin{align}
        N^{\oplus \dim V_{\mu'}} \isom T_{\mathcal{L}} R \Gamma(\mathbf{Sh}_{K^p}\gxp_{\ebar}, \mathbb{W})_{\phi}[d'].
    \end{align}
\end{Thm}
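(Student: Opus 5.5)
We work throughout in the completed category $D_{\mathrm{lis}}(\bun_{G,k},\Lambda)^{\wedge}_{\phi}$, which carries an action of $\operatorname{Perf}(\lochat)$ by Corollary \ref{Cor:LocalizationCompletion}. We may place $\tildef_{\mathbb{W},\phi}$ there by Lemma \ref{Lem:CompletingLocalization}, using the ULA property from Corollary \ref{Cor:ULA}. Since $S_{\phi}=Z(\dualG)^{\gal_{\qp}}$ is diagonalizable, hence linearly reductive, Theorem \ref{Thm:LinearlyReductiveCompletion} identifies $\lochat$ with $[\spf A/S_{\phi}]$. Under this identification, the restriction of a $\dualG$-representation to $\lochat$ is determined by its restriction to $S_{\phi}$. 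The centre $Z(\dualG)^{\gal_{\qp}}$ acts on $V_{\mu}$ through the single character $\kappa(\mu)\in\pi_1(G)_{\gal_{\qp}}$, and on $V_{\mu'}$ through $\kappa(\mu')$. This gives isomorphisms
\begin{align}
\restr{V_{\mu}}{\lochat}\simeq \mathcal{L}(\kappa(\mu))^{\oplus \dim V_{\mu}}, \qquad \restr{V_{\mu'}}{\lochat}\simeq \mathcal{L}(\kappa(\mu'))^{\oplus \dim V_{\mu'}}.
\end{align}
By additivity of the spectral action we obtain $T_{\mu}\tildef_{\phi}\simeq (T_{\mathcal{L}(\kappa(\mu))}\tildef_{\phi})^{\oplus\dim V_{\mu}}$, and similarly for $\mu'$. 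I would check that these isomorphisms are $\mathbb{T}_{K^p}$-equivariant; this follows from the functoriality of the spectral action, as in Corollary \ref{Cor:SpectralActnExists}.

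For the first isomorphism, apply $i_1^{\ast}$ and Proposition \ref{Prop:WeilCohShimVar} (or Proposition \ref{Prop:WeilCohShimVarRat} for rational coefficients), after localizing at $\phi$. Two points need checking. First, localization at $\phi$ commutes with $i_1^{\ast}T_{\mu}j_{k,!}$, because the spectral action is $\mathcal{Z}^{\mathrm{spec}}$-linear. Second, $j_{k,!}$ is harmless here, since Lemma \ref{Lem:Boundedness} identifies $i_1^{\ast}T_{\mu}j_{k,!}\tildef$ with the corresponding expression for $\mathcal{F}$. Together these give $N^{\oplus\dim V_{\mu}}\isom R\Gamma(\mathbf{Sh}_{K^p}\gx_{\ebar},\mathbb{W})_{\phi}[d]$.

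For the second isomorphism, the hypothesis gives $\kappa(\mu')=\kappa(\mu)+\kappa([P])$. Since $\mathcal{L}(\lambda)\otimes\mathcal{L}(\lambda')=\mathcal{L}(\lambda+\lambda')$, we have
\begin{align}
T_{\mathcal{L}}\circ T_{\mathcal{L}(\kappa(\mu'))}\simeq T_{\mathcal{L}(\kappa(\mu))}.
\end{align}
Applying $T_{\mathcal{L}}$ to $T_{\mu'}\tildef_{\phi}$ therefore gives $(T_{\mathcal{L}(\kappa(\mu))}\tildef_{\phi})^{\oplus\dim V_{\mu'}}$. It remains to compare the restrictions to strata. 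The stalk $i_{1'}^{\ast}T_{\mu'}\tildef_{\phi}$ computes the cohomology of the Shimura variety for $\gxp$, by Proposition \ref{Prop:WeilCohShimVar}. The functor $T_{\mathcal{L}}$ moves the component of $[1']$ to the component of $[1]$, by \cite[Lemma 5.3.2]{zou2024categoricalformfarguesconjecture}. Because $\phi$ is supercuspidal, every $\phi$-localized object is supported on the basic (semistable) locus, by \cite[Theorem IX.7.2]{FarguesScholze}. On that component the basic locus is the single stratum $[1]$ (resp. $[1']$). So the restriction $i_1^{\ast}$ commutes with $T_{\mathcal{L}}$ in the sense that $i_1^{\ast}T_{\mathcal{L}}(\mathcal{G})=T_{\mathcal{L}}(i_{1'}^{\ast}\mathcal{G})$, where the right-hand side uses the induced equivalence on $D_{\mathrm{lis}}(G'(\qp),\Lambda)^{\wedge}_{\phi}$. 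Combining these identifications yields the second isomorphism.

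I expect the main obstacle to be justifying this compatibility of $T_{\mathcal{L}}$ with restriction to the basic strata. The difficulty is that $\mathcal{L}$ is a line bundle on the formal completion and not a representation of $\dualG$. I would first reduce to showing that $T_{\mathcal{L}}$ preserves the subcategory supported on basic points, which follows from supercuspidality. I would then use the equivalences $D_{\mathrm{lis}}(\bun^{[b]}_{G,k})\simeq D(G_b(\qp))$ to transport the statement to the categories of representations.
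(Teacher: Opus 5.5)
Your proposal is correct and follows essentially the paper's argument. You restrict $V_{\mu},V_{\mu'}$ to $\lochat$ to get $\mathcal{L}(\kappa(\mu))^{\oplus\dim V_{\mu}}$ and $\mathcal{L}(\kappa(\mu'))^{\oplus\dim V_{\mu'}}$, use $\mathcal{L}\otimes\mathcal{L}(\kappa(\mu'))\simeq\mathcal{L}(\kappa(\mu))$, and use supercuspidality via \cite[Theorem IX.7.2]{FarguesScholze} to pass between the components of $[1']$ and $[1]$, as in the proof of Proposition \ref{Prop:NonzeroIFF}.

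The compatibility $i_1^{\ast}T_{\mathcal{L}}\simeq T_{\mathcal{L}}\,i_{1'}^{\ast}$ that you flag as the main obstacle needs no separate argument. It is exactly how the paper defines $T_{\mathcal{L}}\colon D_{\mathrm{lis}}(G'(\qp),\Lambda)^{\wedge}_{\phi}\to D_{\mathrm{lis}}(G(\qp),\Lambda)^{\wedge}_{\phi}$, since each $\phi$-localized summand is concentrated on its unique basic stratum. Note also that, like the paper, you apply Theorem \ref{Thm:LinearlyReductiveCompletion} without checking its generosity hypothesis on $\phi$.
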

\begin{proof}
The first statement follows directly from the definitions. For the second statement, we note that
\begin{align}
     T_{\mathcal{L}(\kappa(\mu'))} j_{k,!} \tildef_{\mathbb{W},\phi} =  T_{\kappa(\mathcal{P})} \circ T_{\mathcal{L}(\kappa(\mu))} j_{k,!} \tildef_{\mathbb{W},\phi}.
\end{align}
Recall moreover that
\begin{align}
    i_{1'}^{\ast} T_{\mathcal{L}(\kappa(\mu'))}^{\oplus \operatorname{Dim} V_{\mu'}} j_{k,!} \tildef_{\mathbb{W},\phi} \isom R \Gamma(\mathbf{Sh}_{K^p}\gxp_{\ebar}, \mathbb{W})_{\phi}[d'].
\end{align}
Since $\phi$ is supercuspidal, the result now follows as in the proof of Proposition \ref{Prop:NonzeroIFF}.
\end{proof}

\appendix

\section{Standard Hodge type Shimura data} \label{Appendix:HodgeEmbeddings}
The goal of this section is to give an abstract presentation of the construction of certain standard Hodge type Shimura data of type $B,C$ and $D^{\mathbb{R}}$, and to analyze their properties. This is used in Section \ref{Sec:Examples} and has been moved here for future reference and to de-clutter that section. Our presentation is inspired by \cite[Section 6]{DeligneTravaux}, \cite{Shih}, \cite[Section 12 of v1]{KretShinSymplecticArxiv}. 

\subsection{Abstract setup} \label{Sub:AbstractHodge}

\subsubsection{} Let $\mL^{+}$ be a totally real field and let $\g_0$ be a connected reductive group over $\mL^{+}$ with $\g_0^{\mathrm{ad}}$ simple. Let $\Sigma_{\infty}$ be the set of morphisms $\tau:\mlp \to \mathbb{R}$ and write $\Sigma_{\infty,\mathrm{c}} \subset \Sigma_{\infty}$ be the set of infinite places $\tau$ where $\g_{0,\tau,\mathbb{R}}$ has compact modulo center $\mathbb{R}$-points, write $\Sigma_{\infty,\mathrm{nc}}$ for its complement. Let $\{\x_{\tau}\}_{\tau \in \Sigma_{\infty,\mathrm{nc}}}$ be weak real Shimura data for $\g_{0,\tau,\mathbb{R}}:=\g_0 \otimes_{\mlp, \tau} \mathbb{R}$. We use this to equip $\g_4=\operatorname{Res}_{\mlp/\mathbb{Q}} \g_0$ with a weak Shimura datum $\x_4$. It is a Shimura datum (i.e., it satisfies Milne's axiom SV3) if $\Sigma_{\infty,\mathrm{nc}} \not=\emptyset$. \smallskip

Let $(V_0,\psi_{0})$ be a symplectic space over $\mlp$ with similitude group $\g_{V_{0}}$ (over $\mlp$) and let $\iota_0:\g_0 \to \g_{V_{0}}$ be a closed immersion of connected reductive groups over $\mlp$ such that the image of $\iota_0$ contains the scalars. We \textbf{assume} that for $\tau \in \Sigma_{\infty, \mathrm{nc}}$, the map $\iota_{0,\tau,\mathbb{R}}:=\iota_0 \otimes_{\mlp, \tau} \mathbb{R}$ is a morphism of weak real Shimura data and \textbf{assume} that the induced map $X_{\tau} \to H_{V_{0},\tau, \mathbb{R}}$ is surjective on $\pi_0$. Write $\g_{0}^0  $ for the kernel of $\g_0 \to \g_{V_{0}} \xrightarrow{\operatorname{sim}} \mathbb{G}_m$, and \textbf{assume} that it is connected. Note that the induced map $\g_4 \to \operatorname{Res}_{\mlp/\mathbb{Q}} \g_{V_{0}}$ is not a morphism of Shimura data unless $\Sigma_{\infty, \mathrm{nc}}=\Sigma_{\infty}$. \smallskip 

We are going to construct a Shimura datum of Hodge type $\gx$ together with a distinguished Hodge embedding $\iota$, from $\g_0, \x_0=\{\x_{\tau}\}_{\tau \in \Sigma_{\infty,\mathrm{nc}}}$ and $\iota_0$, depending on a choice of totally negative element $\epsilon \in \mlp$. This Shimura datum will come equipped with a natural ad-isomorphism $\g \to \g_4^{\mad}=:\g_2$ compatible with Shimura data. 

\subsubsection{} Let $\mL$ be a quadratic totally imaginary extension of $\mlp$, let $\mathsf{T}_{0,\mL}=\operatorname{Res}_{\mL/\mlp} \mathbb{G}_{m}$ and write $V_{0,\ml}=V_{0} \otimes_{\mlp} \mL$. Write $\mL=\mlp(\sqrt{\epsilon})$ for some totally negative element $\epsilon \in \mlp$. Let $\psi'_{\epsilon}$ be the bilinear form on $\ml$ given by $\psi'_{\epsilon}(y_1, y_2)=\operatorname{Tr}_{\ml/\mlp}(\epsilon y_1 y_2^c)$. Finally we consider the $\mlp$-bilinear alternating form $\psi_{\mL,\epsilon} = \psi_0 \otimes \psi'_{\epsilon}$ on $V_{0,\mL}$.

\subsubsection{} Our choice of $\epsilon$ induces for each $\tau \in \Sigma_{\infty}$ a complex place $\tilde{\tau}$ of $\mL$ extending $\tau$. This induces an isomorphism
\begin{align}
    \mathsf{T}_{0,\mL} \otimes_{\mlp, \tau} \mathbb{R} \xrightarrow{\sim} \mathbb{S},
\end{align}
and we use this to equip $\mathsf{T}_{\mL}=\operatorname{Res}_{\mlp/\Q} \mathsf{T}_{0,\mL} = \operatorname{Res}_{\ml/\Q} \mathbb{G}_m$ with the weak real Shimura datum which is trivial for $\tau \in \Sigma_{\infty,\mathrm{nc}}$ and the identity map for $\tau \in \Sigma_{\infty,\mathrm{c}}$. We take the product of $\mathsf{T}_{\mL} \times \g_4$ and equip it with the product weak Shimura datum. 

\subsubsection{} The natural map
\begin{align}
    \mathsf{T}_{0,\mL} \times \g_0 \to \g_{V_{0,\mL}}
\end{align}
which lets $(t,g)$ act on $V_{0,\ml}$ by $t \otimes g$, factors through the quotient $\g_{0,+}$ of $\mathsf{T}_{0,\mL} \times \g_0$ by the anti-diagonally embedded
\begin{align}
    \mathbb{G}_{m} \subset \mathsf{T}_{0,\mL} \times \g_0.
\end{align}
Let us write $\iota_{0,+}$ for the induced map from $\g_{0,+} \to \g_{V_{0,\mL}}$. We moreover note that the induced map
\begin{align}
    \iota_3:\g_{3}:=\operatorname{Res}_{\mlp/\Q} \g_{0,+}\to \operatorname{Res}_{\mlp/\mathbb{Q}} \g_{V_{0,\mL}}
\end{align}
underlies a morphism of weak Shimura data, see e.g. Section \ref{subsub:ComputationHodgeEmbedding}. Let us write $\gxthree$ for the weak Shimura datum with underlying group $\g_3$ constructed above.

\subsubsection{} Now \textbf{assume} that
\begin{align}
    \g_{0} \xrightarrow{\iota_0} \g_{V_{0}} \xrightarrow{\operatorname{sim}} \gm
\end{align}
identifies $\gm$ with the maximal abelian quotient of $\g_0$; this will be satisfied in all our examples. Then the maximal abelian quotient of $\g_0 \times \mathsf{T}_{0, \mL}$ is given by (this follows from a straightforward dimension argument)
\begin{align}
    \gm \times \mathsf{T}_{0, \mL}
\end{align}
via the map $(g,t) \mapsto (\operatorname{sim}(g),t)$. Let $t \mapsto t^c$ be the nontrivial element of $\operatorname{Gal}(\mL/\mlp)$, and let $\mathsf{T}^1_{0, \mL} \subset \mathsf{T}_{0, \mL}$ be the kernel of $t \mapsto tt^c$ (the norm one torus). Consider the natural map
\begin{align}
    \operatorname{sim}':\g_0 \times \mathsf{T}_{0,\ml} &\to \gm \times \mathsf{T}^1_{0, \mL} \\
    (g,t) &\mapsto (\operatorname{sim}(g) t t^c, t/t^c).
\end{align}
\begin{Lem} \label{Lem:CocentersHodgeType}
If $\operatorname{sim} \circ \iota_0$ identifies $\gm$ with the maximal abelian quotient of $\g_0$, then the map $\operatorname{sim}'$ factors through $\g_0 \to \g_{0,+}$ and the induced map
    \begin{align}
    \operatorname{sim}'':\g_{0,+} \to \gm \times \mathsf{T}^1_{0, \mL}
    \end{align}
    identifies the target with the maximal abelian quotient of $\g_{0,+}$.
\end{Lem}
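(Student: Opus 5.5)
The plan is to verify the factorization first, then the abelianization. The group $\g_{0,+}$ is the quotient of $\mathsf{T}_{0,\ml} \times \g_0$ by $\gm$ embedded anti-diagonally, i.e. by elements $(a, a^{-1})$ for $a \in \gm$. Here $a$ sits in $\mathsf{T}_{0,\ml}$ via $\mlp^\times \subset \ml^\times$ and in $\g_0$ via the scalars contained in the image of $\iota_0$. So first I will evaluate $\operatorname{sim}'$ on such an element. We have $\operatorname{sim}(a^{-1}) = a^{-2}$, since a scalar similitude $a$ has multiplier $a^2$. Moreover $a a^c = a^2$ and $a/a^c = 1$ because $a$ is fixed by conjugation. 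Hence $\operatorname{sim}'(a^{-1},a) = (1,1)$, and $\operatorname{sim}'$ factors through $\g_{0,+}$.

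Next I will show that $\operatorname{sim}''$ is surjective with connected kernel of the right shape. Surjectivity is checked on $\overline{\mlp}$-points after a base change splitting $\ml$, so that $\mathsf{T}_{0,\ml} \simeq \gm^2$ with $t=(t_1,t_2)$. Then $t t^c = t_1t_2$ and $t/t^c = t_1/t_2$. Given a target point $(\lambda, u)$, take $t=(u,1)$ and choose $g$ with $\operatorname{sim}(g) = \lambda u^{-1}$; such $g$ exists because $\operatorname{sim}\circ\iota_0$ is surjective. Since the target is a torus, $\operatorname{sim}''$ kills the derived group $\g_{0,+}^{\der}$, which is the image of $\g_0^{\der}$. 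It therefore remains to compare dimensions and check connectedness.

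The abelianization of $\mathsf{T}_{0,\ml} \times \g_0$ is $\mathsf{T}_{0,\ml} \times \gm$ (via $\operatorname{sim}$), of dimension $3$. Quotienting by a one-dimensional subgroup, the abelianization $\g_{0,+}^{\ab}$ is the quotient of $\mathsf{T}_{0,\ml}\times \gm$ by the image of the anti-diagonal $\gm$. That image is $\{(a, a^{-2})\}$, a one-dimensional torus, so $\g_{0,+}^{\ab}$ is a $2$-dimensional torus, matching $\dim(\gm \times \mathsf{T}^1_{0,\ml}) = 2$.

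The induced map $\g_{0,+}^{\ab} \to \gm \times \mathsf{T}^1_{0,\ml}$ is a surjection of $2$-dimensional tori, hence an isogeny. It remains to show it is injective, i.e. to compute the kernel on cocharacters. This is the step I expect to be the main obstacle, since a naive count might give a degree $2$ isogeny. Over $\overline{\mlp}$, the map on $(t_1,t_2,s)$ modulo $(a,a,a^{-2})$ is $(t_1t_2 s,\ t_1/t_2)$. Its kernel consists of the points with $t_1=t_2=b$ and $s=b^{-2}$, which is exactly the subgroup we quotiented by. Hence the induced map is an isomorphism over $\overline{\mlp}$, and by descent it is an isomorphism over $\mlp$, so $\operatorname{sim}''$ identifies the target with the maximal abelian quotient.
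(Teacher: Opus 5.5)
Your proof is correct and follows the same route as the paper. Both arguments show that $\operatorname{sim}'$ is surjective, and that the kernel of $\g_0^{\mathrm{ab}}\times\mathsf{T}_{0,\ml}\to\gm\times\mathsf{T}^1_{0,\ml}$ is exactly the image $\{(a^{-2},a)\}$ of the anti-diagonal $\gm$, so the induced map from $\g_{0,+}^{\mathrm{ab}}$ is an isomorphism; your explicit computation over $\overline{\mlp}$ and the dimension count just spell out what the paper states tersely.
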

\begin{proof}
The map $\operatorname{sim}'$ is clearly surjective, and the kernel of the induced map $\g_0^{\mathrm{ab}} \times \mathsf{T}_{0,\ml} \to \gm \times \mathsf{T}^1_{0, \mL}$ is given by the anti-diagonally embedded $\gm \subset \gm \times \mathsf{T}_{0,\ml}$. Thus $\operatorname{sim}''$ is surjective, and identifies the target with the maximal abelian quotient of $\g_{0,+}$ since $\mathsf{G}_{0,+}$ has maximal abelian quotient of rank two.
\end{proof}

\subsubsection{} As a corollary of Lemma \ref{Lem:CocentersHodgeType}, we see that $\g_{0,+}$ is connected because it sits in a short exact sequence
\begin{align}
    1 \to \g_{0}^0 \to \g_{0,+} \to \gm \times \mathsf{T}^1_{0, \mL} \to 1
\end{align}
and $\g_{0}^0$ is connected by assumption. Let us write $\g_{0,++} \subset \g_{0,+}$ for the inverse image of $\{1\} \times \mathsf{T}^1_{0, \mL}$, it is connected because it is an extension of $\mathsf{T}^1_{0, \mL}$ by $\g_{0}^0$.

\subsubsection{} We can consider the symplectic space $V_{0,\ml}$ over $\mathbb{Q}$ equipped with the symplectic form $\psi_{\mathbb{Q}}=\operatorname{Tr}_{\mlp/\Q} \psi_{\ml,\epsilon}$; we will denote it by $V_{\ml}$. Consider the similitude map 
\begin{align}
    \operatorname{Res}_{\mlp/\mathbb{Q}} \g_{V_{0,\mL}} \xrightarrow{\operatorname{sim}} \operatorname{Res}_{\mlp/\mathbb{Q}} \mathbb{G}_{m,\mlp},
\end{align}
and $\operatorname{Res}_{\mlp/\mathbb{Q}}' \g_{V_{0,\mL}}:=\operatorname{sim}^{-1}(\mathbb{G}_m)$ equipped with the weak Shimura datum
\begin{align} \label{Eq:SignedShimuraDatum}
\left(\prod_{\tau} H_{V_{\ml},0,\tau}^+\right) \coprod \left(\prod_{\tau} H_{V_{\ml},0,\tau}^{-}\right) \subset \prod_{\tau} H_{V_{\ml},0,\tau}.
\end{align}
There is a natural map $\operatorname{Res}_{\mlp/\mathbb{Q}}' \g_{V_{0,\mL}} \to \g_{V_{\ml}}$, which is a morphism of weak Shimura data. We now define $\g \subset \g_{3}$ to be the inverse image of $\gm \subset \operatorname{Res}_{\mlp/\mathbb{Q}} \mathbb{G}_{m,\mlp}$ under $\operatorname{sim} \circ \iota_3$ (which is connected since $\g_{0,++}$ is). We equip it with the weak Shimura datum given by the intersection of $\x_3$ and \eqref{Eq:SignedShimuraDatum}.\footnote{This weak Shimura datum is nonempty because we have assumed that for all $\tau \in \Sigma_{\infty, \mathrm{nc}}$, the map $X_{\tau} \to H_{V_{0},\tau,\mathbb{R}}$ induced by $\iota_0$ is surjective on $\pi_0$.} We get a commutative diagram
\begin{equation}
    \begin{tikzcd}
        &\g \arrow{dl}{\iota} \arrow[r, hook] \arrow{d}{j} & \g_3 \arrow{d}{\iota_3} \\
        \g_{V_{\ml}} & \operatorname{Res}_{\mlp/\mathbb{Q}}' \g_{V_{0,\mL}} \arrow[r, hook] \arrow{d} \arrow{l}& \operatorname{Res}_{\mlp/\mathbb{Q}} \g_{V_{0,\mL}} \arrow{d}{\operatorname{sim}} \\
        & \mathbb{G}_m \arrow[r, hook] & \operatorname{Res}_{\mlp/\mathbb{Q}} \mathbb{G}_{m,\mlp}.
    \end{tikzcd}
\end{equation}
We will sometimes adorn $\g, \x, \iota, \g_{0,+}$ and $\g_{0,++}$ with the subscript $\Upsilon$, where $\Upsilon=(\g_0, \x_0, \iota_0, V_0, \psi_0, \epsilon)$, to emphasize their dependence on the choices made above. 

\subsubsection{Compatibility with inner twisting} \label{subsub:AppendixInnerTwisting} Write $\g_3 \otimes \mathbb{R}$ as 
\begin{align}
    \prod_{\tau} \g_{0,+,\tau, \mathbb{R}} = \prod_{\tau} \frac{\g_{0,\tau,\mathbb{R}} \times \mathbb{S}}{\mathbb{G}_{m}}.
\end{align}
Let $S_{\infty} \subset \Sigma_{\infty,\mathrm{nc}}$ be a subset of places and let $h=(h_{\tau})_{\tau} \in \prod_{\tau} x_{\tau}$ be an element such that $\iota_{0,\tau} \circ h_{\tau} \in \mathsf{H}_{V_{0,\tau}}^{\pm}$. Following the notation in Section \ref{subsub:ComputationHodgeEmbedding}, consider the element $k=(k_{\tau})_{\tau} \in \g_3(\mathbb{R})$ where
\begin{align}
    k_{\tau} = \twopartdef{(h_{\tau}(i),-i)}{\tau \in S_{\infty}}{1}{}.
\end{align}
Then $k$ lies in the kernel of the similitude map, and thus in particular in $\g(\mathbb{R})$. Similarly consider the morphism $h':\mathbb{S} \to \g_3$ given by $h'=(h'_{\tau})_{\tau}$, where $h'_{\tau}=h_{\tau}$ when $\tau \not \in S_{\infty}$ and given by $z \mapsto h_{\tau}(z) \cdot [(1,z)]$ if $\tau \in S_{\infty}$. Let $\mathsf{P}$ be a $\g$-torsor and consider $\g'=\operatorname{Aut}_{\g}(\mathsf{P})$ together with $\iota':\g' \to \g_{V_{L}'}$. 
\begin{Prop} \label{Prop:TwistIsHodgeEmbedding}
If $\mathsf{P} \otimes \mathbb{R}$ is isomorphic to the torsor determined by the cocycle $k$, then $h'$ naturally factors through $\g'$ and $\iota' \circ h' \in \mathsf{H}_{V_{\mathsf{L}}'}^{\pm}$.
\end{Prop}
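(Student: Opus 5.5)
The plan is to reduce the statement, factor by factor over the real places $\tau \in \Sigma_{\infty}$, to the computation in Lemma \ref{Lem:HodgeEmbeddingComputation}.

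\textbf{Identifying $\g'_{\mathbb{R}}$.} Since $\mathsf{P}\otimes\mathbb{R}$ is the torsor given by the cocycle $\sigma \mapsto k$, we have
\begin{align}
\g'(\mathbb{R}) = \{ g \in \g(\mathbb{C}) : k\,\sigma(g)\,k^{-1} = g\}.
\end{align}
Likewise $V_{\ml}'\otimes\mathbb{R} = \{ w \in V_{\ml,\mathbb{C}} : k\sigma(w) = w\}$, with $\psi'$ the restriction of $\psi_{\mathbb{C}}$. This is legitimate because $k$ lies in the kernel of the similitude, so the twist of the similitude line is trivial and no rescaling of $\psi'$ is needed; this is the normalization of Section \ref{subsub:TwistSymplectic}. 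All of these structures decompose along $\g_3\otimes\mathbb{R} = \prod_\tau \g_{0,+,\tau,\mathbb{R}}$ and along the corresponding orthogonal decomposition of $V_{\ml}\otimes\mathbb{R}$.

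\textbf{Factoring $h'$ through $\g'$.} We check $k\,\sigma(h'(z))\,k^{-1} = h'(z)$ componentwise.
\begin{itemize}
\item For $\tau \notin S_\infty$ we have $k_\tau = 1$, and $h'_\tau = h_\tau$ is defined over $\mathbb{R}$, so the identity is clear.
\item For $\tau \in S_\infty$, the factors $h_\tau(z)$ and $[(1,z)]$ commute with $k_\tau = [(h_\tau(i), -i)]$. Indeed, $h_\tau(\mathbb{S})$ is commutative, and the $\mathbb{S}$-factor is central in $\g_{0,+,\tau}$.
\end{itemize}
So conjugation by $k_\tau$ is trivial on $h'_\tau(\mathbb{S})$, and the twisted real structure agrees with the original one there. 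Hence $h'_\tau$ is defined over $\mathbb{R}$ for the twisted structure. Since $h'$ also satisfies the similitude condition defining $\g\subset\g_3$, we get that $h'$ factors through $\g'_{\mathbb{R}}$.

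\textbf{Membership of $\iota'\circ h'$ in $\mathsf{H}^{\pm}_{V'_{\ml}}$.} Two things are needed.
\begin{itemize}
\item \emph{Minuscule cocharacter.} The Hodge cocharacter of $\iota'\circ h'$ must be in the minuscule class for $\g_{V'_{\ml}}$. This holds because over $\mathbb{C}$, $\mu_{h'}$ differs from $\mu_h$ only by the central $\mathbb{S}$-factor at $\tau\in S_\infty$. That factor acts on $V_{0,\ml}\otimes_\tau\mathbb{C}$ with weights $(1,0)$ and $(0,1)$ on the two eigenlines of $\ml\otimes\mathbb{C}$.
\item \emph{Definiteness.} The form $w\mapsto\psi'(w,(\iota' h')(i)w)$ must be definite on $V'_{\ml}\otimes\mathbb{R}$, with a \emph{uniform} sign across all $\tau$.
\end{itemize}

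For the definiteness, first take $\tau\notin S_\infty$. The twist is trivial there, so the sign is that of $\iota\circ h$ restricted to the $\tau$-factor.

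Now take $\tau\in S_\infty$. Writing $W = V_{0,\tau}\otimes_{\mathbb{R}}\mathbb{C}$ with $\psi_W = \psi_0\otimes\psi'_\epsilon$, this is exactly the situation of Section \ref{subsub:ComputationHodgeEmbedding}. There $h'(z) = h_\tau(z)[(1,z)]$, which differs from the $h'(z)=[(1,z)]$ of Lemma \ref{Lem:HodgeEmbeddingComputation} by the factor $h_\tau(z)$. This commutes with everything relevant, and $h_\tau(i)^2$ is central.

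I would redo the computation in that lemma, which gives
\begin{align}
\psi(w, h'(i)w) = \pm\,\psi(w, h(i)\,[(h(i),1)]\,k^{-1} w),
\end{align}
up to this commuting factor. The factor collapses using $h_\tau(i)^2 = [(-1,1)] = [(1,-1)]$ in the quotient by the antidiagonal $\mathbb{G}_m$. The expression should then reduce to $\psi_{W,\mathbb{C}}(w, c\,\sigma(w))$, where $c$ is either $h_\tau(i)$ or the element $[(1,i)]$. The latter is the complex structure coming from $\mathsf{T}_{\ml}$, for which $\psi'_\epsilon$ is definite because $\epsilon$ is totally negative.

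\textbf{Main obstacle.} The hard step is this sign bookkeeping. The sign obtained at $\tau\in S_\infty$ must match the sign at $\tau\notin S_\infty$, so that $\iota'\circ h'$ lands in $\mathsf{H}^+$ or $\mathsf{H}^-$ as a whole rather than in a mixed component. I would track it by writing $V_{0,\tau}$ as a $\pm$-eigenspace decomposition for $\iota_0 h_\tau(i)$ and $\ml\otimes_\tau\mathbb{C}=\mathbb{C}\oplus\mathbb{C}$, and evaluating both forms on pure tensors. The assumption that $\iota_{0,\tau}\circ h_\tau\in\mathsf{H}^{\pm}_{V_{0,\tau}}$, together with the signed condition \eqref{Eq:SignedShimuraDatum}, should force the signs to agree.
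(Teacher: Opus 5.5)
\textbf{There is a genuine gap, located in how you treat the factor $h_\tau(z)$ at $\tau\in S_\infty$.}

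You keep $h'_\tau(z)=h_\tau(z)\cdot[(1,z)]=[(h_\tau(z),z)]$ and assert that the extra factor is harmless because it commutes with everything. It does commute, but it is not harmless.

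\emph{Failure of the complex structure.} Since $k_\tau=[(h_\tau(i),-i)]$ and $[(1,-1)]$ acts by $-1$, you get $h'_\tau(i)=k_\tau\cdot[(1,-1)]=-k_\tau$. Hence $h'_\tau(i)^2=1$, and $h'_\tau(i)$ is an involution, not a complex structure.

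\emph{Failure of definiteness.} If you carry out your own ``collapse'' honestly, the element $c$ you obtain is $-1$, not $h_\tau(i)$ or $[(1,i)]$:
\[
\psi'(w,h'_\tau(i)w)=-\psi_{W,\mathbb{C}}(w,\sigma(w)).
\]
This form is anti-Hermitian, because $\overline{\psi_{W,\mathbb{C}}(w,\sigma w)}=-\psi_{W,\mathbb{C}}(w,\sigma w)$. But $\psi'$ is real-valued on $W'$, so this form vanishes identically there.

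\emph{Failure of the minuscule condition.} Your minuscule argument fails for the same reason. The cocharacter $\mu_h\cdot\mu_{\mathbb{S}}$ has three distinct weights on $V_{0,\tau}\otimes_{\mathbb{R}}(\ml\otimes_{\mlp,\tau}\mathbb{C})$, whereas the minuscule class of $\g_{V'}$ has two.

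\emph{Failure of the similitude condition.} Finally, the similitude of $h'_\tau(z)$ differs from the similitude at the places $\tau\notin S_\infty$ by a factor $z\bar z$. So your assertion that $h'$ satisfies the similitude condition also fails, and $h'$ would not even factor through $\g'$.

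So with the extra factor the statement is false, and no sign bookkeeping can rescue it.

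\textbf{What is needed, and what the paper does.} At $\tau\in S_\infty$ you must take $h'_\tau(z)=[(1,z)]$. Equivalently, $h'_\tau(i)=[(h_\tau(i),1)]\,k_\tau^{-1}$. This is exactly the $h'$ of Lemma \ref{Lem:HodgeEmbeddingComputation}.

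With this choice your factoring argument goes through, since $[(1,z)]$ is central and real. Nothing needs to be redone, because the lemma, applied at each $\tau\in S_\infty$, gives two things:
\begin{itemize}
\item the minuscule condition;
\item the identity $\psi'(w,h'_\tau(i)w)=\psi_{W,\mathbb{C}}(w,h_\tau(i)\sigma(w))$, which is definite \emph{with the same sign} as $w\mapsto\psi(w,h_\tau(i)w)$.
\end{itemize}

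That per-place sign preservation dissolves your ``main obstacle.'' For every $\tau$, $h'_{3,\tau}$ lies in the same component $H^{+}_{V_{\ml},0,\tau}$ or $H^{-}_{V_{\ml},0,\tau}$ as $h_{3,\tau}$. Hence membership of $h$ in \eqref{Eq:SignedShimuraDatum} transfers to $h'$ (with $V$ replaced by $V'$), with no eigenspace computation.

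This is precisely the paper's proof:
\begin{itemize}
\item twist $\g_3$ by the pushout $\mathsf{P}_3$;
\item deduce from the lemma that $\iota_3'$ is a morphism of weak Shimura data;
\item use the sign preservation to get that $j'$ is one as well, and hence that $\iota'$ is.
\end{itemize}
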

\begin{proof}
Write $\mathsf{P}_3$ for the pushout of $\mathsf{P}$ to $\g_3$, and write $\g_3'$ for the induced pure inner twist with map $\iota_3'$. It follows directly from Lemma \ref{Lem:HodgeEmbeddingComputation} that $\iota_3'$ is a morphism of weak Shimura data. We will show that $j'$ is also a morphism of weak Shimura data, implying that $\iota'$ is a morphism of weak Shimura data. For this, we have to check that if $h$ lies in \eqref{Eq:SignedShimuraDatum}, then so does $h'$ (with $V$ replaced by $V'$). But this follows from the statement of Lemma \ref{Lem:HodgeEmbeddingComputation}, since it shows that $h'_{3,\tau}$ and $h_{3,\tau}$ land in the same connected component of $H_{V_{L},0,\tau}$.
\end{proof}

\subsubsection{Compatibility with the Piatetski--Shapiro construction} \label{Subsub:PSConstruction} Let $\mlp$ and $\Upsilon$ be as above, let $\f$ denote a totally real field which is disjoint from $\mlp$ and let $\mkp$ be the composite of $\f$ and $\mlp$. Consider $\gx$ and $\iota$ defined as above using $\Upsilon$. Let $\h_{1} \subset \operatorname{Res}_{\f/\mathbb{Q}} \g_{\f}$ be the inverse image of $\mathbb{G}_m \subset \operatorname{Res}_{\f/\mathbb{Q}} \gm$ under the similitude map. The natural map $\g \to \operatorname{Res}_{\f/\mathbb{Q}} \g_{\f}$ factors through $\h_{1}$, and in fact we get morphisms of weak Shimura data $\gx \to \hyo$. It comes equipped with a Hodge embedding (noting that $V_{\mkp} = V_{\mlp} \otimes_{\mathbb{Q}} \f$, since $\f \otimes_{\mathbb{Q}} \mlp=\mkp$)
\begin{align}
    \hyo \to (\g_{V_{\mkp}}, \mathsf{H}_{V_{\mkp}}).
\end{align}
\subsubsection{} On the other hand, define 
\begin{align}
    \h_0=\h_{0,\f}=\operatorname{Res}_{\mkp/\mlp} \g_{0,\mkp},
\end{align}
together with $\iota_{0,\f}:\h_0 \to \operatorname{Res}_{\mkp/\mlp} \g_{V_{0},\mkp} \to \g_{V_{\mkp},0}$, where $V_{\mkp}$ is equipped with the $\mlp$-linear symplectic form $\operatorname{Tr}_{\mkp/\mlp} \psi_{\mkp}$. We equip $\h_0$ with the weak Shimura datum $\mathsf{Y}_0$ via the natural map $\g_0 \to \h_0$. We now write $\Upsilon_{\f}=(\h_0,\mathsf{Y}_0, \iota_{0,\f}, V_{\mkp}, \operatorname{Tr}_{\mkp/\mlp} \psi_{\mkp}, \epsilon)$ and write $\mathsf{K}=\mkp(\sqrt{\epsilon})$. Write $\hyop=\gx_{\Upsilon_{\f}}$, together with the Hodge embedding
\begin{align}
    \hyop \to (\g_{V_{\mkp}}, \mathsf{H}_{V_{\mkp}}).
\end{align}
\begin{Lem} \label{Lem:PSCompatibility}
There is an equality $\hyop=\hyo$ as sub-(weak Shimura data) of $(\g_{V_{\mkp}}, \mathsf{H}_{V_{\mkp}})$.
\end{Lem}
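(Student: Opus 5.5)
We prove Lemma \ref{Lem:PSCompatibility} by showing that the two constructions give the same algebraic subgroup of $\g_{V_{\mkp}}$ and then the same weak Shimura datum. The strategy is to write both groups as explicit subgroups of $\operatorname{Res}_{\mkp/\mathbb{Q}}\g_{V_{0,\mathsf{K}}}$, where $\mathsf{K}=\mkp(\sqrt{\epsilon})=\ml\otimes_{\mlp}\mkp$.

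\emph{Step 1: the ambient symplectic space.} First we check that the two symplectic $\mathbb{Q}$-spaces agree. On one side we have $V_{\mkp}=V_{\ml}\otimes_{\mathbb{Q}}\f$ with $\operatorname{Tr}_{\f/\mathbb{Q}}$ of the $\f$-linear extension of $\psi_{\mathbb{Q}}$. On the other side we have $(V_0\otimes_{\mlp}\mkp)\otimes_{\mkp}\mathsf{K}$ with $\operatorname{Tr}_{\mlp/\mathbb{Q}}\big((\operatorname{Tr}_{\mkp/\mlp}\psi_{0,\mkp})\otimes\psi'_\epsilon\big)$. Both are $V_0\otimes_{\mlp}\mathsf{K}$, and transitivity of traces identifies both forms with $\operatorname{Tr}_{\mathsf{K}/\mathbb{Q}}$ of $\epsilon\,\psi_0(v_1,v_2)\,y_1y_2^c$. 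Hence both Hodge embeddings land in the same $\g_{V_{\mkp}}$.

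\emph{Step 2: equality of groups.} Unwinding the definitions, $\g_{\Upsilon_{\f}}$ is the subgroup of $\operatorname{Res}_{\mlp/\mathbb{Q}}\big((\h_0\times\operatorname{Res}_{\mathsf{K}/\mlp}\gm)/\gm\big)$ with similitude in $\gm$. Since $\h_0=\operatorname{Res}_{\mkp/\mlp}\g_{0,\mkp}$, this is the image of $\operatorname{Res}_{\mkp/\mathbb{Q}}(\g_{0,\mkp})\times\operatorname{Res}_{\mathsf{K}/\mathbb{Q}}\gm$ in $\operatorname{Res}_{\mkp/\mathbb{Q}}\g_{V_{0,\mathsf{K}}}$, cut out by the rational similitude condition. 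On the other side, $\operatorname{Res}_{\f/\mathbb{Q}}\g_{\f}$ lies inside $\operatorname{Res}_{\mkp/\mathbb{Q}}\g_{0,+,\mkp}$, using the disjointness $\f\otimes\mlp=\mkp$. The group $\h_1$ is then the locus where the $\mkp$-valued similitude lies in $\gm$.

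One subtlety is that the quotient by the antidiagonal $\operatorname{Res}_{\mkp/\mlp}\gm$ versus $\gm$ differs. However, both groups embed into $\operatorname{Res}_{\mkp/\mathbb{Q}}\g_{V_{0,\mathsf{K}}}$, and both can be described as the connected subgroup generated by $\operatorname{Res}_{\mkp/\mathbb{Q}}$ of the image of $\g_{0,\mkp}\times\mathsf{T}_{0,\mathsf{K}}$, intersected with similitude in $\gm$. This is the main point to verify. I would check it on $\mathbb{Q}$-points after base change to $\bar{\mathbb{Q}}$, where everything splits as products over embeddings of $\mkp$. I would then use connectedness, noting that both groups are extensions of tori by $\operatorname{Res}\g_0^0$, as in Lemma \ref{Lem:CocentersHodgeType}, together with equality of dimensions and of the images in the cocenter.

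\emph{Step 3: Shimura data.} With the groups equal as subgroups of $\g_{V_{\mkp}}$, a weak Shimura datum is determined by a single $h$. The datum on $\hyo$ is induced from $\gx\to\hyo$. The datum $\mathsf{Y}_0$ is also induced from $\g_0\to\h_0$, and the $\mathsf{T}$-component uses the same places $\tilde\tau$ determined by $\epsilon$. It therefore suffices to check that the image of $h\in\x$ lies in both data, which holds by construction. Both data are then the $\h_1(\mathbb{R})$-conjugacy class of this $h$, intersected with the same signed Siegel condition \eqref{Eq:SignedShimuraDatum}.

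I expect Step 2 to be the main obstacle, specifically matching the two different central quotients.
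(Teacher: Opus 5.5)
Your Steps~1 and~3 are fine. Step~2, however, misidentifies $\g_{\Upsilon_{\f}}$, and the repair you sketch is not a proof.

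The construction of Section~\ref{Sub:AbstractHodge}, applied to a datum over a totally real field $F$, uses the torus $\operatorname{Res}_{F(\sqrt{\epsilon})/F}\gm$ and the quotient by the antidiagonal $\mathbb{G}_{m,F}$. In Step~1 you take the base field of $\Upsilon_{\f}$ to be $\mlp$: your form is $\operatorname{Tr}_{\mkp/\mlp}\psi_{0,\mkp}\otimes\psi'_{\epsilon}$, with $\psi'_{\epsilon}$ living on $\ml$. In Step~2 you instead insert $\operatorname{Res}_{\mathsf{K}/\mlp}\gm$ as the torus while still quotienting by $\mathbb{G}_{m,\mlp}$. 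The definition does not produce this hybrid, and your ``central quotient mismatch'' is an artifact of it.

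If you work consistently over $\mlp$ with $\h_0=\operatorname{Res}_{\mkp/\mlp}\g_{0,\mkp}$, you must cut $\h_0$ down to similitudes in $\gm$ for $\iota_{0,\f}$ to exist at all. The torus is then $\mathsf{T}_{0,\ml}$, and the resulting group is strictly smaller than $\h_1$ as soon as $\f\neq\Q$. Its dimension drops by $[\mkp:\Q]-[\mlp:\Q]$, and the discrepancy lies in the connected center: that center has dimension $[\mlp:\Q]+1$ for this group, versus $[\mkp:\Q]+1$ for $\h_1$. So the dimension/cocenter comparison you propose would fail in the reading you adopt. Your claim that both groups are ``the connected subgroup generated by the image of $\operatorname{Res}_{\mkp/\Q}(\g_{0,\mkp}\times\mathsf{T}_{0,\mathsf{K}})$'' is asserted, not verified.

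The missing idea is to read $\Upsilon_{\f}$ as the base change of $\Upsilon$ to the totally real field $\mkp$, with CM field $\mathsf{K}=\mkp(\sqrt{\epsilon})=\ml\otimes_{\mlp}\mkp$. The notation $\mathsf{K}$ signals this, and the proof of Corollary~\ref{Cor:ExistenceHodgeTypeBCVeryGood} uses it when it asks the primes of $\mkp$ above $p$ to split in $\mathsf{K}$. With this reading there is nothing to reconcile:
\begin{itemize}
\item The quotient commutes with base change, so the group $\g_{0,+}$ attached to $\Upsilon_{\f}$ is $(\g_{0,+})_{\mkp}=(\mathsf{T}_{0,\mathsf{K}}\times\g_{0,\mkp})/\mathbb{G}_{m,\mkp}$. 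By exactness of $\operatorname{Res}_{\mkp/\mlp}$, its restriction to $\mlp$ is automatically a quotient by $\operatorname{Res}_{\mkp/\mlp}\gm$, which is exactly the quotient you were worried about.
\item Since $\mlp\otimes_{\Q}\f=\mkp$, transitivity of Weil restriction gives $\operatorname{Res}_{\f/\Q}\bigl((\operatorname{Res}_{\mlp/\Q}\g_{0,+})_{\f}\bigr)=\operatorname{Res}_{\mkp/\Q}(\g_{0,+})_{\mkp}$. This is the group $\g_3$ attached to $\Upsilon_{\f}$.
\item Weil restriction preserves fiber products. Hence $\operatorname{Res}_{\f/\Q}\g_{\f}$ is the preimage of $\operatorname{Res}_{\f/\Q}\gm$ under the similitude to $\operatorname{Res}_{\mkp/\Q}\gm$, and $\h_1$ is the preimage of $\gm$. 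That is verbatim the definition of $\g_{\Upsilon_{\f}}$.
\end{itemize}
Combined with your Steps~1 and~3, this is exactly the paper's one-line proof (``a straightforward consequence of the definitions, using the exactness of Weil restriction''). No connectedness or dimension argument is needed.
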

\begin{proof}
This is a straightforward consequence of the definitions, using the exactness of Weil restriction. 
\end{proof}

\subsection{Type \texorpdfstring{$B, D^{\mathbb{R}}$} {B, DR}} 
\label{Sub:ExampleBDR} Let $\mL^{+}$ be a totally real field and let $(W_0,q)$ be a quadratic space over $\mL^{+}$ of dimension $n$. We will assume that for all $\tau \in \Sigma_{\infty}$ the signature $(r_{\tau}, s_{\tau})$ of $W_0 \otimes_{\mlp,\tau} \R$ has the property that $r_{\tau}, s_{\tau} \in \{0,2,n-2,n\}$. Write $\Sigma_{\infty,\mathrm{c}}$ for the set of embeddings $\tau:\mL^{+} \to \mathbb{R}$ where the signature $(r_{\tau}, s_{\tau})$ of $W_0 \otimes_{\mlp, \tau} \R$ is given by $(0,n)$ or $(n,0)$, and write $\Sigma_{\infty,\mathrm{nc}}$ for the set of embeddings so that the signature is $(2,n-2)$ or $(n-2,2)$. Let $\g_0=\operatorname{GSpin}_{W_{0}}$ be the group of spinor similitudes of $(W_0,q)$, see \cite[Section 3]{KretShinOrthogonal}. For $\tau \in \Sigma_{\infty,\mathrm{nc}}$ we define a weak real Shimura datum $X_{\tau}$ for $\g_{0,\tau}$ as in \cite[Section 3.1]{MadapusiSpin} or \cite[Section 2.2]{AndreattaGOrenHowardMadapusi}: If the signature $(r_{\tau}, s_{\tau})$ is equal to $(n-2,2)$, then $X_{\tau}$ is the space of oriented negative definite planes in $W_0 \otimes_{\mlp,\tau} \mathbb{R}$, and when the signature is $(2,n-2)$, we take the space of oriented negative definite planes in $W_0^{\vee} \otimes_{\mlp,\tau} \mathbb{R}$. \smallskip 

We let $C=\operatorname{CL}(W_0)$ denote the Clifford algebra of $W_0$ over $\mL^{+}$, which comes equipped with a canonical anti-involution $\ast$ (see \cite[Section 3.1]{KretShinOrthogonal}). By \cite[Section 1.6, Lemma 1.7]{MadapusiSpin}, a choice of an element $\delta \in C^{\times}$ satisfying $\delta^{\ast}=-\delta$ defines an $\mL^{+}$-linear symplectic form $\Psi_{\delta}$ on $C$. This defines a closed immersion
\begin{align}
    \iota_{0,\delta}:\g_0 \to \mathsf{G}_{0,C,\psi_{\delta}}
\end{align}
such that the similitude character of $\mathsf{G}_{W_{0}}$ restricts to the spinor norm $\mathcal{N}:\operatorname{GSpin}_{W_{0}} \to \gm$.
\begin{Lem} \label{Lem:ShimuraDataExistTypeB}
There is a $\delta$ with the property that for all $\tau \in \Sigma_{\infty,\mathrm{nc}}$ the map $\iota_{0,\delta} \otimes_{\mlp, \tau} \mathbb{R}$ is a morphism of weak real Shimura data and the induced map $X_{\tau} \to H_{V_{0,\tau,\mathbb{R}}}$ is surjective on $\pi_0$.
\end{Lem}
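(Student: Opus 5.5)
We need to choose $\delta \in C^{\times}$ with $\delta^{\ast}=-\delta$ so that, at every noncompact place $\tau \in \Sigma_{\infty,\mathrm{nc}}$, the form $\Psi_\delta(x,h(i)y)$ is definite for $h \in X_\tau$, and so that both components of $X_\tau$ land in $H^{+}_{V_{0,\tau,\mathbb{R}}}$ and $H^{-}_{V_{0,\tau,\mathbb{R}}}$ respectively. The plan is to reduce this to a purely archimedean statement at each $\tau$ separately, and then to globalize using weak approximation in the $\mlp$-vector space $C^{-}=\{c\in C : c^\ast=-c\}$.

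\textbf{Local step.} Fix $\tau\in\Sigma_{\infty,\mathrm{nc}}$. Following \cite[Section 3.1, Lemma 3.5]{MadapusiSpin}, choose an orthogonal basis and let $e_1,e_2$ be orthogonal vectors spanning a negative definite plane, with $q(e_i)<0$, and put $\delta_\tau=e_1e_2\in C_\tau^\times$. Then $\delta_\tau^\ast=e_2e_1=-\delta_\tau$. For $h$ the point of $X_\tau$ attached to the oriented plane $\langle e_1,e_2\rangle$, $h(i)$ acts on $C$ by left multiplication by $e_1e_2/\sqrt{q(e_1)q(e_2)}$ (up to sign). In this case the standard computation in loc.\ cit.\ shows that $\Psi_{\delta_\tau}(x,h(i)x)$ is $\pm\operatorname{Tr}(x x^\ast)$ up to a positive scalar, and this is definite. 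It follows that $\iota_{0,\delta_\tau}\otimes_\tau\mathbb{R}$ is a morphism of weak real Shimura data. Reversing the orientation replaces $h$ by its conjugate, which flips the sign of the form. So the two components of $X_\tau$ go to $H^{\pm}$ respectively, and the map is surjective on $\pi_0$. In the signature $(2,n-2)$ case we apply the same argument to $W_0^\vee$, where the roles of the signs are swapped, which amounts to the same computation.

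\textbf{Openness and globalization.} The set of $\delta\in C^{-}\otimes_\tau\mathbb{R}$ that are invertible and have the property above is open. Indeed, the condition of definiteness of $\Psi_\delta(x,h(i)x)$ for a fixed $h$ is open in $\delta$. Since $X_\tau$ is a single $\g_0(\mathbb{R})$-orbit (two components exchanged by an element of $\g_0(\mathbb{R})$) and $\Psi_{\delta}$ transforms equivariantly, it suffices to check one $h$ per component. The $\pi_0$ statement is locally constant in $\delta$. This open set is nonempty by the local step. Since $C^{-}$ is a finite-dimensional $\mlp$-vector space, $C^{-}$ is dense in $\prod_{\tau\in\Sigma_{\infty,\mathrm{nc}}} C^{-}\otimes_{\mlp,\tau}\mathbb{R}$. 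We may therefore pick $\delta\in C^{-}$ lying simultaneously in all these open sets, and invertibility is automatic there.

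\textbf{Main obstacle.} The main difficulty is the sign bookkeeping in the local step. One must check definiteness, and check that the two components go to \emph{different} components $H^{\pm}$ rather than to the same one. The same bookkeeping has to be done uniformly in both signature cases $(n-2,2)$ and $(2,n-2)$. I would handle this by citing the explicit computation of \cite[Lemma 1.7, Section 3]{MadapusiSpin} and the orientation argument above.
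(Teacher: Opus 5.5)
Your route is the paper's: verify a positivity condition on $\delta$ at each noncompact place, note that it is open in $\delta$, then use density of the skew elements $C^-$ in $\prod_{\tau}C^-\otimes_{\mlp,\tau}\mathbb{R}$. The paper cites Andreatta--Goren--Howard--Madapusi Pera for the positivity condition and Howard--Pappas for the $\pi_0$ statement. You do the signature $(n-2,2)$ case by hand, and your $\pi_0$ argument is correct and does not depend on $\delta$: $\bar h\in X_\tau$ and $\bar h(i)=-h(i)$.

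\textbf{Minor correction.} With $J=e_1e_2$, the form $\Psi_{J}(x,h(i)x)$ is not $\pm\mathrm{Tr}(xx^*)$. That form is indefinite: compare $x=1$ with $x=e_1$. The correct expression is $\mathrm{Tr}(x\,Jx^*J^{-1})$. Conjugation by $J$ is the automorphism of $C$ induced by the Cartan involution, which is $-1$ on $\langle e_1,e_2\rangle$ and $+1$ on its orthogonal complement. That twist is what makes the form definite.

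\textbf{The real gap: signature $(2,n-2)$.} This case is not ``the same computation.'' The element $\delta$ must lie in $C=\mathrm{CL}(W_0,q)$, and $\Psi_\delta$ is a form on this $C$. A computation in the Clifford algebra of $(W_0,-q)$ gives an element of the wrong algebra.

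In fact, for $n$ odd (type $B$) no $\delta$ works at such a place. Let $z$ be the product of an orthogonal basis $v_1,\dots,v_n$ of $W_0$.
\begin{itemize}
\item $z$ is central in $C$, and $zz^*=\prod_i q(v_i)$ is a scalar whose image under $\tau$ has sign $(-1)^{s_\tau}$.
\item Right multiplication by $z$ commutes with the left action of $\g_0$, hence with $h(i)$.
\item For every $\delta$ one has $\Psi_\delta(xz,h(i)xz)=\tau(zz^*)\,\Psi_\delta(x,h(i)x)$.
\end{itemize}
When $s_\tau=n-2$ is odd, $\tau(zz^*)<0$, so $x\mapsto\Psi_\delta(x,h(i)x)$ takes both signs for every $\delta$.

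\textbf{Fix.} First replace $q$ by $aq$, with $a\in\mlp^\times$ negative exactly at the places of signature $(2,n-2)$; such $a$ exists by weak approximation. This leaves unchanged:
\begin{itemize}
\item $\mathrm{GSpin}$, via the canonical isomorphism $C^+(W_0,q)\cong C^+(W_0,aq)$;
\item $\mathrm{SO}(W_0)$;
\item the data $X_\tau$.
\end{itemize}
It does replace $C$. Afterwards every noncompact place has signature $(n-2,2)$, and your argument goes through verbatim.

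The paper's proof invokes only the signature $(n,2)$ positivity criterion, so it implicitly needs the same normalization. As literally stated, the lemma fails for odd $n$ whenever some $\tau\in\Sigma_{\infty,\mathrm{nc}}$ has signature $(2,n-2)$.
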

\begin{proof}
It is explained in \cite[Discussion before Lemma 2.2.1]{AndreattaGOrenHowardMadapusi} that if $\delta \in C^{\times}_{\mathbb{R}}$ has a certain positivity property for $\tau \in \Sigma_{\infty,\mathrm{nc}}$, then $\iota_{\delta, \tau, \mathbb{R}}$ is compatible with weak Shimura data. To show that we can choose a $\delta \in C^{\times}$ satisfying such a positive property for all $\tau$ at once, we note that the subspace $\{\delta^{\ast}=-\delta\} \subset C$ is a $\mathbb{Q}$-linear subspace and thus satisfies weak approximation. We find that $\{\delta^{\ast}=-\delta\} \subset C^{\times}$ is dense in $\{\delta^{\ast}=-\delta\} \subset C^{\times}_{\mathbb{R}}$, and we can thus find a $\delta$ which satisfies the right positive condition at all $\tau$ simultaneously (since the positivity condition is an open condition). The final statement about connected components follows the discussion in \cite[Section 7.1.2-7.1.3]{HowardPappas}.
\end{proof}
\begin{Lem} \label{Lem:CenterGSpinTypeBD}
The scalars $\gm \subset \g_{W_0}$ are contained in the image of $\iota_{0,\delta}$. If $n$ is odd, then $\gm \subset \mathsf{Z}_{\g_0}$ is an equality. 
\end{Lem}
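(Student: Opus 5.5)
We argue directly inside the Clifford algebra $C=\operatorname{CL}(W_0)$, using the concrete description of $\operatorname{GSpin}_{W_0}$ as the group of units $g$ in the even Clifford algebra $C^{+}$ with $g W_0 g^{-1}=W_0$, acting on $C$ by left multiplication. The map $\iota_{0,\delta}$ is exactly this left multiplication action on $C$, and the symplectic form $\Psi_\delta$ is built so that $g$ scales it by $\mathcal{N}(g)=g^{\ast}g$.

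For the first claim, take a scalar $t\in\gm\subset C^{+,\times}$. Such a $t$ is central in $C$, so it normalizes $W_0$ and hence lies in $\g_0=\operatorname{GSpin}_{W_0}$. Its image under $\iota_{0,\delta}$ is left multiplication by $t$ on $C$, which is the scalar $t$ in $\g_{0,C,\psi_\delta}$. The image therefore contains the scalars, with spinor norm $t^{\ast}t=t^2$, consistent with the similitude of a scalar being its square.

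For the second claim, assume $n$ is odd. It suffices to compute the center of $\g_0$ over an algebraic closure, and I will use the central extension
\begin{align}
    1\to \gm \to \operatorname{GSpin}_{W_0}\to \operatorname{SO}(W_0)\to 1,
\end{align}
where the map to $\operatorname{SO}(W_0)$ is given by conjugation on $W_0$. When $n$ is odd, $\operatorname{SO}(W_0)$ has trivial center: it is adjoint of type $B$, since $-1$ has determinant $-1$. Hence the center of $\g_0$ maps into the trivial group, so it lies in the kernel $\gm$. Combined with the first claim this gives $\gm=\mathsf{Z}_{\g_0}$. The one point to check is that this kernel is exactly the scalars. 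An element of $C^{+,\times}$ acting trivially by conjugation on $W_0$ commutes with all of $C$. For $n$ odd the center of $C$ is $\mlp\oplus\mlp z$, where $z$ is the product of an orthogonal basis and $z$ is odd. Intersecting with $C^{+}$ therefore leaves only the scalars $\mlp$.

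The main obstacle is not conceptual. It is identifying the embedding of $\gm$ that the paper intends, namely scalars in $\g_{W_0}$ versus scalars in $\g_{0,C,\psi_\delta}$, with the central $\gm$ of $\operatorname{GSpin}$, and checking the parity bookkeeping in the center of the Clifford algebra. I would settle this by the explicit basis computation above.
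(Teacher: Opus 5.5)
Your proof is correct, but it takes a different route from the paper. The paper gives no argument of its own: it cites \cite[equation (3.2)]{KretShinOrthogonal} for the first claim and deduces the second from \cite[Lemma 3.1.(v)]{KretShinOrthogonal}.

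You instead verify both facts directly in the Clifford algebra. For the first claim, $\iota_{0,\delta}$ is left multiplication on $C$, and $g$ scales $\Psi_\delta$ by $g^{\ast}g$. Hence the central $\gm\subset C^{+,\times}$ maps identically onto the scalars of $\g_{0,C,\psi_\delta}$, with similitude $t^2$. For the second claim, when $n$ is odd the center of $\operatorname{GSpin}_{W_0}$ maps into the trivial center of the adjoint group $\operatorname{SO}(W_0)$. It therefore lies in $\ker(\operatorname{GSpin}_{W_0}\to\operatorname{SO}(W_0))$, and you identify this kernel with $\gm$ using $Z(C)=\mlp\oplus\mlp z$ with $z$ odd. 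The steps are sound, and working over $\overline{\mlp}$ is harmless in characteristic zero.

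The citation route is shorter. Yours is self-contained, and it also settles the notational point in the statement: $\g_{W_0}$ must be read as the symplectic similitude group $\g_{0,C,\psi_\delta}$. That is exactly what the appendix uses later, namely that the image of $\iota_0$ contains the scalars of $\g_{V_0}$.
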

\begin{proof}
    The first claim follows from \cite[equation (3.2)]{KretShinOrthogonal}, and the second claim is well known (it can be deduced from \cite[Lemma 3.1.(v)]{KretShinOrthogonal}.
\end{proof}
Finally, we define $\g_{2,0}=\operatorname{SO}(W_0)$ and $\g_{2}=\operatorname{Res}_{\mlp/\mathbb{Q}} \g_{2,0}$. We let $\gxtwo$ be the weak Shimura datum induced from the weak Shimura datum $\x_3$. If $W_0$ is odd dimensional, then the group $\g_{2,0}$ is an adjoint group. If $W_0$ is even dimensional, then the group $\g_{2,0}$ has center $\mu_2$.

\subsection{Type \texorpdfstring{$C$}{C}} \label{Sub:ExampleC} Now we closely follow \cite[Section 6.2]{DeligneTravaux}. Let $\mlp$ be a totally real field and let $\mb$ be a quaternion algebra over $\mlp$ with canonical involution $b \mapsto \ul{b}$. Let $V_{0}$ be a free $\mb$-module of finite rank $n$ equipped with a symmetric $\mlp$-bilinear nondegenerate pairing $\Phi$ satisfying for all $b \in \mb, x,y \in V_{0}$
\begin{align}
    \Phi(bx, y) = \Phi(x, \ul{b}y). 
\end{align}
Let $\g_{0}$ be the group of $\mb$-linear similitudes of $(V_{0},\Phi)$ over $\mlp$, with similitude character $\operatorname{sim}:\g_{0} \to \mathbb{G}_m$. Let $\iota_0:\g_{0} \to \g_{V_{0}}$ be the forgetful map to the group of $\mlp$-linear symplectic similitudes of $V_{0}$.

\subsubsection{} Let $\Sigma_{\infty,c}$ be the set of places of $\mlp$ such that $\mb \otimes_{\mlp, \tau} \mathbb{R}$ is a nonsplit (or definite) quaternion algebra, and let $\Sigma_{\infty,\mathrm{nc}}$ be its complement. We assume that $\Phi$ is chosen such that for $\tau \in \Sigma_{\infty,c}$, the group $\g_{0,\tau,\mathbb{R}}$ has $\mathbb{R}$-points that are compact modulo center; this is possible by \cite[the discussion surrounding Lemma 12.1 of v1]{KretShinSymplecticArxiv}. 
\begin{Lem} \label{Lem:TypeCComputation}
For each $\tau \in \Sigma_{\infty,\mathrm{nc}}$ there is a weak real Shimura datum $\x_{\tau}$ on $\g_{0,\tau,\mathbb{R}}$  with the property that $\iota_0 \otimes_{\mlp, \tau} \mathbb{R}$ is a morphism of weak real Shimura data. 
\end{Lem}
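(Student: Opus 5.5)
The plan is to reduce to an explicit linear-algebra computation at each place $\tau \in \Sigma_{\infty,\mathrm{nc}}$, following \cite[Section 6.2]{DeligneTravaux}. Fix $\tau \in \Sigma_{\infty,\mathrm{nc}}$, so that $\mb_\tau := \mb \otimes_{\mlp,\tau}\R \simeq M_2(\R)$. Morita equivalence identifies the $\mb_\tau$-module $V_{0,\tau} := V_0 \otimes_{\mlp,\tau}\R$ with $W \otimes_\R \R^2$ for a real vector space $W$ of dimension $2n$. Under this identification, the pairing $\Phi$, which is symmetric and satisfies $\Phi(bx,y)=\Phi(x,\ul{b}y)$, becomes $\Phi = \omega_W \otimes \omega_{\R^2}$. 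Here $\omega_{\R^2}$ is the standard symplectic form, for which the canonical involution is the adjoint, and $\omega_W$ is a symplectic form on $W$, so that the tensor product is symmetric. Hence $\g_{0,\tau,\R} \simeq \operatorname{GSp}(W,\omega_W)$. Although $\Phi$ is called symmetric, the forgetful map $\iota_0$ is taken to the group $\g_{V_0}$ of symplectic similitudes of an alternating form on $V_0$; concretely one uses $\Phi(x, jy)$ for a suitable pure quaternion $j$, or equivalently $\omega_W \otimes g$ for a symmetric form $g$ on $\R^2$. I will fix the symplectic form underlying $\iota_0$ in this explicit way.

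Next I define $\x_\tau$ as the $\g_{0,\tau}(\R)$-conjugacy class of $h:\mathbb{S} \to \operatorname{GSp}(W)$, where $h(a+bi)$ acts by $a + bJ$ for a complex structure $J$ on $W$ such that $\omega_W(x,Jx)$ is definite. This is the standard Siegel datum, so SV1 and SV2 hold, and $\x_\tau$ is a weak real Shimura datum. The image of $h$ under $\iota_0$ acts on $V_{0,\tau} = W\otimes\R^2$ as $h\otimes 1$. It remains to check that the relevant symplectic form, of shape $\omega_W \otimes g$ with $g$ symmetric definite on $\R^2$, makes $x \mapsto \psi(x, (J\otimes 1)x)$ definite. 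This holds because $\omega_W(\cdot, J\cdot)$ is definite and the tensor product of two definite forms is definite. Therefore $\iota_0 \circ h \in \mathsf{H}_{V_{0,\tau}}^{\pm}$, and $\iota_0 \otimes_{\mlp,\tau}\R$ is a morphism of weak real Shimura data.

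The main obstacle is pinning down the symplectic form on $V_0$ that defines $\g_{V_0}$, that is, determining which $j$ gives the Hodge embedding, and making sure it is compatible at every noncompact place simultaneously. If $g$ turns out to be indefinite at some $\tau$, I would replace $h$ by its composite with an element of $\g_{V_0}(\R)$ normalizing the image of $\g_{0,\tau}$, or replace $J$ by $J\otimes j_\tau$, and then recheck positivity. Failing that, I would adjust $j$ using weak approximation, in the same way as in the proof of Lemma \ref{Lem:ShimuraDataExistTypeB}.
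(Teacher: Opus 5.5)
Your argument is the paper's argument made explicit. The paper cites Kret--Shin to identify $\iota_0\otimes_{\mlp,\tau}\R$ with the natural diagonal map $\operatorname{GSp}_{2n,\R}\to\operatorname{GSp}_{4n,\R}$ and then takes the Siegel datum. That is exactly your Morita computation in the case where the symmetric form $g$ on $\R^2$ is definite.

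The one thing to correct is your contingency plan. Any $h$ factoring through $\g_{0,\tau,\R}$ satisfies $\iota_0(h(i))=J\otimes 1$, and $\psi(\cdot,(J\otimes 1)\cdot)=\omega_W(\cdot,J\cdot)\otimes g$ is indefinite whenever $g$ is. So conjugating by the normalizer cannot help, and $J\otimes j_\tau$ does not even lie in the image of $\g_{0,\tau,\R}$. Definiteness of $g$ is therefore necessary, not optional.

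Write $\psi=\Phi(\cdot,j\cdot)$ with $\ul{j}=-j$. This is the only possible shape of the alternating form, since the commutant of $\g_0$ is $\mb$. Then definiteness of $g$ amounts to $\operatorname{Nrd}(j)>0$ at every $\tau\in\Sigma_{\infty,\mathrm{nc}}$. You should simply impose this condition, and weak approximation on $\{b\in\mb:\ul{b}=-b\}$ lets you do so, because at each split real place the condition is open and nonempty: it is the set of elliptic elements of $\mathfrak{sl}_2(\R)$. This is precisely the choice hidden in the Kret--Shin setup that the paper relies on.
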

\begin{proof}
It follows from \cite[Lemma 12.1 of v1]{KretShinSymplecticArxiv} that for $\tau \in \Sigma_{\infty,\mathrm{nc}}$ we can identify 
\begin{align}
    \iota_0 \otimes_{\mlp, \tau} \mathbb{R}
\end{align}
with the natural map $\operatorname{GSp}_{2n, \mathbb{R}} \to \operatorname{GSp}_{4n, \mathbb{R}}$; thus we can use the standard Siegel Shimura datum for $\operatorname{GSp}_{2n, \mathbb{R}}$.
\end{proof}
As before, we get a Shimura datum $\x_3$ for $\g_3=\operatorname{Res}_{\mlp/\Q} \g_0$. In this case, it is clear that the image of $\iota_0$ contains the scalars $\gm$, and that this is equal to the center of $\g_{0}$ (for example by checking it over $\mathbb{R}$ using the arguments of Lemma \ref{Lem:TypeCComputation}).

\subsubsection{} Finally, we define $\g_{2,0}=\g_{0}^{\mathrm{ad}}$, set $\g_2=\g_3^{\mathrm{ad}}$ and let $\gxtwo$ be the adjoint Shimura datum associated to $\gxthree$.

\subsection{Type \texorpdfstring{$D^{\mathbb{R}}$}{DR}} \label{Sub:ExampleDR} We now give a generalization of the type $D^{\mathbb{R}}$ cases treated before. 

\subsubsection{} Let $\mlp$ be a totally real field and let $\mb$ be a totally indefinite quaternion algebra over $\mlp$ with canonical involution $b \mapsto \ul{b}$. Let $W$ be a free $\mb$-module of finite rank $n \ge 4$ equipped with a nondegenerate pairing $\Phi: W \times W \to \mb$ satisfying for all $b,c \in \mb, x,y \in W$
\begin{align}   
    \Phi(xb, yc) &= \ul{b} \Phi(x,y) c \\
    \Phi(x, y) &= \ul{-\Phi(x, y)}. 
\end{align}
Let $\g_0$ be the special Clifford group of $(W,\Phi)$ over $\mlp$ as in \cite[Section 2, Case B]{Shih}. 
\begin{Lem}
    For $\tau \in \Sigma_{\infty}$, there is an isomorphism
    \begin{align}
        \g_{0,\tau,\mathbb{R}} \xrightarrow{\sim} \operatorname{GSpin}(r_{\tau},s_{\tau}),
    \end{align}
    with $r_{\tau}+s_{\tau}=2n$. 
\end{Lem}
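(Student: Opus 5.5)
The plan is to reduce to the classical structure of real forms of (spinor similitude groups of) orthogonal groups, by passing to a splitting field of the quaternion algebra at each real place. Fix $\tau \in \Sigma_{\infty}$. Since $\mb$ is totally indefinite, we have $\mb \otimes_{\mlp,\tau} \mathbb{R} \simeq M_2(\mathbb{R})$. We then use Morita equivalence: choose the idempotent $e = \begin{pmatrix} 1 & 0 \\ 0 & 0\end{pmatrix}$ and set $W_\tau = W \otimes_{\mlp,\tau}\mathbb{R}$. Then $W_\tau e$ is a real vector space of dimension $2n$.

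The first step is to transport the pairing. From the skew-hermitian form $\Phi$ we obtain a bilinear form on $W_\tau e$. Concretely, we compose $\Phi$ with the map $M_2(\mathbb{R}) \to \mathbb{R}$ that extracts a suitable matrix entry, using that the canonical involution is $b \mapsto \operatorname{adj}(b)$. The relation $\Phi(x,y) = -\ul{\Phi(x,y)}$ turns the $\ul{\cdot}$-skew-hermitian form into a \emph{symmetric} nondegenerate $\mathbb{R}$-bilinear form $q_\tau$ on $W_\tau e$. This is the standard dictionary of \cite[Section 2, Case B]{Shih}, equivalently \cite[Section 6]{DeligneTravaux}: skew-hermitian forms over $M_2$ with the canonical involution correspond to quadratic spaces of twice the rank. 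Under this dictionary, $\mb$-linear isometries of $(W_\tau,\Phi)$ correspond to isometries of $(W_\tau e, q_\tau)$. Hence $\operatorname{U}_{\mb}(W,\Phi)\otimes_\tau \mathbb{R} \simeq \operatorname{O}(q_\tau)$, and the identity components match: $\operatorname{SU} \simeq \operatorname{SO}(r_\tau,s_\tau)$, where $(r_\tau,s_\tau)$ is the signature of $q_\tau$ and $r_\tau+s_\tau=2n$.

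The second step is to identify the special Clifford group. Shih defines $\g_0$ via the even Clifford algebra of $(W,\Phi)$ over $\mlp$, constructed as a descent of the even Clifford algebra of the underlying quadratic space over a splitting field. Its formation commutes with base change, so $\g_0\otimes_\tau\mathbb{R}$ is the special Clifford group of $(W_\tau,\Phi)$. Via the Morita equivalence this is the spinor similitude group $\operatorname{GSpin}(q_\tau) = \operatorname{GSpin}(r_\tau,s_\tau)$. To make this canonical, we check that the even Clifford algebra of $(W_\tau,\Phi)$ identifies with $C^+(W_\tau e,q_\tau)$ compatibly with the embeddings of $W_\tau$ and $W_\tau e$. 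It is enough to check this over $\mathbb{C}$, where it holds by Shih's construction, and then descend using that the isomorphism is Galois-equivariant.

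The main obstacle is this last compatibility: verifying that Shih's special Clifford group really base changes to $\operatorname{GSpin}$ rather than to some other central extension of $\operatorname{SO}$. To handle it, I would compare both groups as central extensions of $\operatorname{SO}(q_\tau)$ by $\mathbb{G}_m$, with connected total space, and match their kernels and spinor norms. The nondegeneracy and rank condition $n \ge 4$ ensure that the $\operatorname{SO}$ is semisimple of type $D_n$, so that such an extension is determined by this data.
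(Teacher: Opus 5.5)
Your proposal is correct and takes essentially the same route as the paper, whose proof simply cites \cite[Section 3, Case B]{Shih}. That argument is the one you describe: total indefiniteness of $\mb$ splits it at every real place, Morita equivalence converts the skew-hermitian space $(W_\tau,\Phi)$ into a $2n$-dimensional real quadratic space, and Shih's special Clifford group (his $\operatorname{Gpin}$) base-changes to $\operatorname{GSpin}$ of that space. Your extra descent and central-extension check is not needed, since $\mathbb{R}$ is already a splitting field and the identification there is part of Shih's discussion.
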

\begin{proof}
This follows from the discussion in \cite[Section 3, Case B]{Shih}. Note that op. cit. (and \cite[Appendix B]{Milne}) writes $\operatorname{Gpin}$ for the special Clifford group which is denoted by $\operatorname{GSpin}$ in the more recent literature that we have been citing (e.g. \cite{MadapusiSpin}, \cite{KretShinOrthogonal}, \cite{AndreattaGOrenHowardMadapusi}). 
\end{proof}
We assume that $r_{\tau},s_{\tau} \in \{0,2,2n,2n-2\}$ for all $\tau$, and write $\Sigma_{\infty,\mathrm{c}}$ for those $\tau$ where either $r_{\tau}$ or $s_{\tau}$ is zero, and $\Sigma_{\infty,\mathrm{nc}}$ for its complement. We can moreover choose Shimura data $\x_{\tau}$ for $\tau \in \Sigma_{\infty,\mathrm{nc}}$ as in the proof of Lemma \ref{Lem:ShimuraDataExistTypeB}. This induces a Shimura datum $\x_4$ on $\g_4=\operatorname{Res}_{\mlp/\Q} \g_0$.

\subsubsection{} Following \cite[Section 5]{Shih}, we choose a totally real quadratic extension $\mlp_{1}$ of $\mlp$ over which $\mb$ splits; this is possible because $\mb$ is totally indefinite. We then observe as in loc. cit. that 
\begin{align}
    \operatorname{Res}_{\mlp_{1}/\mlp} \g_{0,\mlp_{1}}
\end{align}
is isomorphic to $\operatorname{GSpin}(\tilde{W})$ for some quadratic space $(\tilde{W},\tilde{q})$ over $\mlp_{1}$ of dimension $2n$. The Shimura data $\x_{\tau}$ for $\tau \in \Sigma_{\infty, \mathrm{nc}}$ induce Shimura data $\x_{\tau_1}$ for $\tau \in \Sigma_{1,\infty, \mathrm{nc}}$, where $\Sigma_{1,\infty, \mathrm{nc}}$ denotes the set of maps $\mlp_1 \to \mathbb{R}$ whose restriction to $\mlp$ lies in $\Sigma_{\infty, \mathrm{nc}}$. \smallskip 

If we let $C$ be the Clifford algebra of $(\tilde{W},\tilde{q})$, then by Lemma \ref{Lem:ShimuraDataExistTypeB} there is a choice of $\delta$ together with a closed embedding
\begin{align}
    \iota_{0,\delta}:\operatorname{Res}_{\mlp_{1}/\mlp} \g_0 \to \operatorname{GSp}(C, \psi_{\delta})
\end{align}
which is compatible with Shimura data after tensoring up to $\mathbb{R}$ along $\tau \in \Sigma_{1,\infty, \mathrm{nc}}$. We let $\iota_0$ be the composition of $\iota_{0,\delta}$ with the natural map $\g_0 \to \operatorname{Res}_{\mlp_{1}/\mlp} \g_{0,\mlp_{1}}$, so that $\iota_0$ is compatible with Shimura data after tensoring up to $\mathbb{R}$ along $\tau \in \Sigma_{\infty, \mathrm{nc}}$.

\subsubsection{} Finally, we consider $\g_{2,0}=\operatorname{SO}_{\mb}(W)$ and $\g_{2}=\operatorname{Res}_{\mlp/\mathbb{Q}} \g_{2,0}$ with Shimura datum $\x_2$ induced from the Shimura datum $\x_3$. The group $\g_{2,0}$ has center $\mu_{2}$.

\subsection{Computing the center and the cocenter} \textbf{We now assume that we are in one of the cases of Sections \ref{Sub:ExampleBDR}, \ref{Sub:ExampleC} or \ref{Sub:ExampleDR}.} 
Then the image of $\iota_0:\g_0 \to \g_{V_{0}}$ contains the scalars $\mathbb{G}_m$. It is moreover true that $\mathsf{Z}_{\g_0}$ is equal to $\gm \times \mu_2$ in type $D^{\mathbb{R}}$ and equal to $\mathbb{G}_m$ otherwise; this follows from Lemma \ref{Lem:CenterGSpinTypeBD} and its proof. Moreover, the natural map $\nu:\mathsf{Z}_{\g_0} \to \mathbb{G}_m$ (where the target is the maximal abelian quotient of $\g_0$) agrees with $z \mapsto z^2$ on the scalars and is injective on the $\mu_2$ factor (if it exists). \smallskip

Consider the map (whose source we think of as the center of $\g_0 \times \mathsf{T}_{0,\ml}$)
\begin{align}
    \mathsf{Z}_{\g_0} \times \mathsf{T}_{0,\mL} &\to \mathbb{G}_m \times \mathsf{T}_{0,\mL}^1 \\
    (z,t) &\mapsto (\nu(z) t t^c, t/t^c).
\end{align}
This visibly factors through the quotient by the anti-diagonally embedded $\mathbb{G}_m$ via a map
\begin{align}
    \nu':\mathsf{Z}_{\g_{0,+}} \to \mathbb{G}_m \times \mathsf{T}_{0,\mL}^1=\g_{0,+}^{\ab}. 
\end{align}
\begin{Lem} \label{Lem:CenterComputationHodgeType}
The natural map $\{1\} \times \mathsf{T}_{0,\mL}^1 \to \mathsf{Z}_{\g_{0,+}}$ maps isomorphically onto $\mathsf{Z}_{\g_{0,++}}^{\circ}$
\end{Lem}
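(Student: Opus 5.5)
Throughout, $\g_{0,+}=(\g_0\times\mathsf{T}_{0,\ml})/\gm$ with $\gm$ embedded anti-diagonally, and $\g_{0,++}\subset\g_{0,+}$ is the inverse image of $\{1\}\times\mathsf{T}^1_{0,\ml}$ under $\operatorname{sim}''$.

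The plan is to compute $\mathsf{Z}_{\g_{0,+}}$ explicitly and then intersect with $\g_{0,++}$. Since $\g_0^{\mathrm{ad}}$ is simple and $\mathsf{T}_{0,\ml}$ is central, the center of $\g_{0,+}$ is the image of $\mathsf{Z}_{\g_0}\times\mathsf{T}_{0,\ml}$. (Centers pass to quotients by central subgroups, up to checking that no extra central elements appear; this holds because the adjoint group is unchanged.) So
\[
\mathsf{Z}_{\g_{0,+}}=(\mathsf{Z}_{\g_0}\times\mathsf{T}_{0,\ml})/\gm ,
\]
with $\mathsf{Z}_{\g_0}=\gm$, or $\gm\times\mu_2$ in type $D^{\mathbb{R}}$, as recorded just before the lemma.

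Next compute $\mathsf{Z}_{\g_{0,++}}$. An element $\gamma\in\g_{0,++}$ centralizes $\g_{0,++}$. Because $\g_{0,+}$ is generated by $\g_{0,++}$ and the central torus image of $\gm\times\mathsf{T}_{0,\ml}$ (the quotient $\g_{0,+}/\g_{0,++}\cong\gm$ is realized by central elements), $\gamma$ is central in $\g_{0,+}$. Hence $\mathsf{Z}_{\g_{0,++}}=\mathsf{Z}_{\g_{0,+}}\cap\g_{0,++}=\ker(\operatorname{pr}_1\circ\nu')$.

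Now calculate this kernel. A class $[(z,t)]$ lies in it iff $\nu(z)tt^c=1$. Use the anti-diagonal $\gm$ to normalize: multiplying $(z,t)$ by $(\lambda^{-1},\lambda)$ replaces $\nu(z)$ by $\lambda^{-2}\nu(z)$ and $tt^c$ by $\lambda^2tt^c$. For the scalar part $z\in\gm$ we have $\nu(z)=z^2$, so we may take $z=1$ up to the sign ambiguity $z=\pm1$. The class $[(-1,1)]$ equals $[(1,-1)]$, which is already of the form $(1,t)$ with $t\in\mathsf{T}^1$.

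After this normalization the elements are $(1,t)$ with $tt^c=1$, i.e. $t\in\mathsf{T}^1_{0,\ml}$, together with, in type $D^{\mathbb{R}}$, the extra finite factor $\mu_2$ coming from $\mathsf{Z}_{\g_0}$. That factor has $\nu$ injective on it, and combining $\nu(\zeta)=-1$ with $tt^c=-1$ gives finitely many additional components. The map $\{1\}\times\mathsf{T}^1\to\mathsf{Z}_{\g_{0,+}}$ is injective, since $(1,t)=(\lambda^{-1},\lambda)$ forces $\lambda=1$. Its image is a connected torus of the same dimension as $\mathsf{Z}_{\g_{0,++}}$ (namely $[\mlp:\mlp]\cdot 1$ relative rank, matching $\dim\mathsf{Z}_{\g_{0,+}}-1$). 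It is therefore the identity component, and the isomorphism onto its image holds scheme-theoretically because the map is a monomorphism of tori.

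The main obstacle is the first reduction: justifying scheme-theoretically that $\mathsf{Z}_{\g_{0,++}}$ equals $\mathsf{Z}_{\g_{0,+}}\cap\g_{0,++}$ and that the center of the quotient is the image of the center. I would handle this over $\overline{\mlp}$ using that the derived groups of $\g_0$, $\g_{0,+}$ and $\g_{0,++}$ coincide. The possible disconnectedness from $\mu_2$ in type $D$ does not affect the identity component, which is exactly why the statement is phrased with $\mathsf{Z}^{\circ}$.
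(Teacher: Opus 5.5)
Your argument is correct and follows the same outline as the paper's: the image of $\{1\}\times\mathsf{T}^1_{0,\ml}$ is central in $\g_{0,+}$ and lies in $\g_{0,++}$ because $\nu'(1,t)=(1,t^2)$, the map is injective, and a dimension count finishes; the only difference is that you spell out the paper's ``straightforward dimension argument'' by identifying $\mathsf{Z}_{\g_{0,++}}$ with $\ker(\mathrm{pr}_1\circ\nu')$ on $\mathsf{Z}_{\g_{0,+}}=(\mathsf{Z}_{\g_0}\times\mathsf{T}_{0,\ml})/\gm$. One cosmetic point: your normalization has no sign ambiguity, since $[(z,t)]=[(1,zt)]$ for scalar $z$, so when $\mathsf{Z}_{\g_0}=\gm$ you in fact get $\mathsf{Z}_{\g_{0,++}}\cong\mathsf{T}^1_{0,\ml}$ exactly.
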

\begin{proof}
We first note that $\{1\} \times \mathsf{T}_{0,\mL}^1$ maps to $\{1\} \times \mathsf{T}^1_{\mL}$ under $\nu$, so that it indeed maps (injectively) to $\mathsf{Z}_{\g_{0,++}} \subset \mathsf{Z}_{\g_{0,+}}$. The lemma now follows from a straightforward dimension argument. 
\end{proof}
\begin{Cor} \label{Cor:CenterCoCenterComputationHodge}
The connected center and the maximal abelian quotient of $\g_{0,++}$ are both isomorphic to $\mathsf{T}_{\mL}^1$. Under these identifications, the natural map is given by the multiplication by $2$ map $[2]$.
\end{Cor}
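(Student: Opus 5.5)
We need to prove Corollary \ref{Cor:CenterCoCenterComputationHodge}: the connected center and the maximal abelian quotient of $\g_{0,++}$ are both isomorphic to $\mathsf{T}^1_{\mL}$ (meaning $\mathsf{T}^1_{0,\mL}$), and the natural map $\mathsf{Z}^{\circ}_{\g_{0,++}} \to \g_{0,++}^{\ab}$ is $[2]$.

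The plan has three steps, taken in order.

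\textbf{Step 1: the connected center.} Lemma \ref{Lem:CenterComputationHodgeType} identifies $\mathsf{Z}^{\circ}_{\g_{0,++}}$ with the image of $\{1\}\times \mathsf{T}^1_{0,\mL}$. So this part follows directly from that lemma.

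\textbf{Step 2: the maximal abelian quotient.} Use the short exact sequence
\begin{align}
    1 \to \g_0^0 \to \g_{0,++} \to \mathsf{T}^1_{0,\mL} \to 1,
\end{align}
obtained by restricting $\operatorname{sim}''$ of Lemma \ref{Lem:CocentersHodgeType} to the inverse image of $\{1\}\times \mathsf{T}^1_{0,\mL}$. We need the abelianization of $\g_{0,++}$ to be exactly this torus. For this it suffices to show that $\g_0^0$ has trivial cocenter, i.e.\ that $\g_0^0$ is semisimple (it is connected by assumption).
\begin{itemize}
\item $\g_0^0$ is the kernel of $\operatorname{sim}\circ\iota_0$.
\item By assumption, $\operatorname{sim}\circ\iota_0$ identifies $\gm$ with $\g_0^{\ab}$.
\item Hence $\g_0^0 = \ker(\g_0 \to \g_0^{\ab})$. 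This is the derived group up to a finite central kernel, so it is semisimple and connected.
\end{itemize}
Since a semisimple connected normal subgroup has trivial image in any torus quotient, $\g_{0,++}^{\ab} \simeq \mathsf{T}^1_{0,\mL}$. Equivalently, $\g_{0,++}$ is the preimage of $\{1\}\times\mathsf{T}^1_{0,\mL}$ under $\operatorname{sim}''$, so the restriction of $\operatorname{sim}''$ to it surjects onto $\mathsf{T}^1_{0,\mL}$ with semisimple kernel, hence is the maximal abelian quotient.

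\textbf{Step 3: the composite map.} Compute the composite
\begin{align}
    \mathsf{T}^1_{0,\mL} \to \mathsf{Z}^{\circ}_{\g_{0,++}} \to \g_{0,++}^{\ab} \simeq \mathsf{T}^1_{0,\mL}
\end{align}
using the explicit formula for $\nu'$. An element $t$ with $tt^c=1$ maps to $[(1,t)]$. Its image under $\nu'$ is
\begin{align}
    (\nu(1)\,tt^c,\ t/t^c) = (1,\ t/t^c) = (1,\ t^2),
\end{align}
since $t^c = t^{-1}$. So the natural map is $[2]$.

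\textbf{Main obstacle.} The only subtlety is Step 2: confirming that the kernel $\g_0^0$ really has no torus part, so that the abelianization is not larger. This follows from the assumption that $\operatorname{sim}\circ\iota_0$ identifies $\gm$ with $\g_0^{\ab}$, combined with a dimension count as in Lemma \ref{Lem:CocentersHodgeType}.
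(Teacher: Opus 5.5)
Your proposal is correct and is essentially the paper's argument. The connected center comes from Lemma \ref{Lem:CenterComputationHodgeType}, and the formula $\nu'(1,t)=(1,t/t^c)=(1,t^2)$ from that lemma's proof gives the map $[2]$. Your Step 2 makes explicit what the paper leaves implicit: restricting $\operatorname{sim}''$ to $\g_{0,++}$ gives its maximal abelian quotient because the kernel $\g_0^0$ is exactly $\g_0^{\der}$, hence semisimple; no ``finite central kernel'' caveat is needed.
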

\begin{proof}
This is a direct consequence of Lemma \ref{Lem:CenterComputationHodgeType} and its proof.
\end{proof}

\begin{Lem} \label{Lem:ConnectedCenter}
If $G^{\mathrm{ad}}_{\mathbb{C}}$ is not of type $D$, then $\mathsf{Z}_{\g_{0,++}}$ is connected.   
\end{Lem}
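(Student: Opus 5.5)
The plan is to compute $\mathsf{Z}_{\g_{0,++}}$ explicitly and observe that the only potential source of disconnectedness is the extra $\mu_2$ factor in $\mathsf{Z}_{\g_0}$, which by the discussion just before Lemma \ref{Lem:CenterComputationHodgeType} occurs only in type $D^{\mathbb{R}}$. The hypothesis that $G^{\mathrm{ad}}_{\mathbb{C}}$ is not of type $D$ excludes the type $D^{\mathbb{R}}$ case and the even-dimensional case of Section \ref{Sub:ExampleBDR}. So we are in type $B$ with $n$ odd or in type $C$, and there $\mathsf{Z}_{\g_0}=\gm$ is the scalars, by Lemma \ref{Lem:CenterGSpinTypeBD} and the discussion for Section \ref{Sub:ExampleC}.

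\textbf{Step 1: the center of $\g_{0,+}$.} Let $H=\g_0\times\mathsf{T}_{0,\mL}$, a connected reductive group, and let $C\simeq\gm$ be the anti-diagonal central subtorus. I will show $\mathsf{Z}_{H/C}=\mathsf{Z}_H/C$. Take $x\in H$ whose image is central, so $g\mapsto[x,g]$ takes values in $C$. Since $C$ is central, this map is a homomorphism $H\to C$. A homomorphism to a torus kills $H^{\der}$, and it is trivial on $\mathsf{Z}_H$. As $H=\mathsf{Z}_H\cdot H^{\der}$, the map is trivial, so $x\in\mathsf{Z}_H$. Hence
\begin{align}
    \mathsf{Z}_{\g_{0,+}}=(\gm\times\mathsf{T}_{0,\mL})/\gm\xrightarrow{\sim}\mathsf{T}_{0,\mL},\qquad (z,t)\mapsto zt,
\end{align}
which is an isomorphism because $\gm\subset\mathsf{T}_{0,\mL}$ and the quotient is by $(a,a^{-1})$.

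\textbf{Step 2: the center of $\g_{0,++}$.} The subgroup $\g_{0,++}$ contains $\g_0^0\supseteq\g_{0,+}^{\der}$, so an element of $\g_{0,++}$ that is central in $\g_{0,++}$ commutes with $\g_{0,+}^{\der}$. Using $\g_{0,+}=\mathsf{Z}_{\g_{0,+}}\cdot\g_{0,+}^{\der}$, such an element is central in $\g_{0,+}$. Therefore
\begin{align}
    \mathsf{Z}_{\g_{0,++}}=\mathsf{Z}_{\g_{0,+}}\cap\g_{0,++}.
\end{align}
By definition, $\g_{0,++}$ is the kernel of the first coordinate of $\operatorname{sim}''$ from Lemma \ref{Lem:CocentersHodgeType}. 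On the representative $(1,t)$ of the class of $t\in\mathsf{T}_{0,\mL}$, the map $\nu'$ gives $(tt^c,\,t/t^c)$. So $\mathsf{Z}_{\g_{0,++}}$ is identified with the kernel of the norm $t\mapsto tt^c$, namely the norm-one torus $\mathsf{T}^1_{0,\mL}$. This is a torus, hence connected, and it agrees with Lemma \ref{Lem:CenterComputationHodgeType}.

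The main obstacle is Step 2: one must check that the center of the subgroup $\g_{0,++}$ is not larger than the intersection with $\mathsf{Z}_{\g_{0,+}}$. The containment of the derived group handles this. In type $D$ the same computation goes through with $\mathsf{Z}_{\g_0}=\gm\times\mu_2$. There the $\mu_2$ factor survives in the kernel of the similitude, because $\nu$ is injective on it, and it gives an extra component; this is exactly why the hypothesis is needed.
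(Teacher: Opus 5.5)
Your argument is correct and follows essentially the same route as the paper. In both, $\mathsf{Z}_{\g_0}=\gm$ gives $\mathsf{Z}_{\g_{0,+}}=(\gm\times\mathsf{T}_{0,\mL})/\gm$, and $\mathsf{Z}_{\g_{0,++}}$ is cut out by the first coordinate $z^2tt^c$ of the similitude map. The only differences are in presentation. The paper proves the preimage $\{(z,t): z^2tt^c=1\}$ connected via a Cartesian square over $[2]$ and the norm, whereas you collapse the quotient directly to $\mathsf{T}^1_{0,\mL}$ via $(z,t)\mapsto zt$. You also spell out $\mathsf{Z}_{H/C}=\mathsf{Z}_H/C$ and $\mathsf{Z}_{\g_{0,++}}=\mathsf{Z}_{\g_{0,+}}\cap\g_{0,++}$, which the paper uses without comment.
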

\begin{proof}
By Lemma \ref{Lem:CenterGSpinTypeBD}, the natural map $\mathbb{G}_m \to \mathsf{Z}_{\mathsf{G}_{0}}$ is an isomorphism under our assumption. It follows that the center of $\g_{0,+}$ is isomorphic to
\begin{align}
    \frac{\mathsf{T}_{0,\ml} \times \mathbb{G}_m}{\mathbb{G}_m}. 
\end{align}
Under this identification, the similitude map to $\mathbb{G}_m \times \mathsf{T}^1_{0,\ml}$ is given by
\begin{align}
    (t,z) \mapsto (z^2 tt^c, t/t^c). 
\end{align}
Writing
\begin{align}
    \mathsf{T}_{u}=\{(t,z) \in \mathsf{T}_{0,\ml} \times \mathbb{G}_m \; | \; (t^2=t t^c) \},
\end{align}
we see that $\mathsf{Z}_{\g_{0,++}}$ is given by $T_u / \mathbb{G}_m$; it thus suffices to show that $T_u$ is connected. This torus lies in a Cartesian diagram
\begin{equation}
    \begin{tikzcd}
        T_u \arrow{r} \arrow{d} & \mathsf{T}_{0,\ml} \arrow{d}{t \mapsto t t^c} \\
        \mathbb{G}_m \arrow{r}{[2]} & \mathbb{G}_m.
    \end{tikzcd}
\end{equation}
Since the right vertical map is surjective with connected kernel, the same is true for the left vertical map, showing that $T_u$ is connected.
\end{proof}

\subsection{Some cohomology computations} Let the notation be as in the previous section, in particular we will continue to \textbf{assume} that we are in one of the examples of Sections \ref{Sub:ExampleBDR}, \ref{Sub:ExampleC}, \ref{Sub:ExampleDR}. We will now deduce some cohomological consequences of Corollary \ref{Cor:CenterCoCenterComputationHodge} which will be used in Section \ref{Sec:Examples}.

\begin{Lem} \label{Lem:HassePrinciple}
The group $\g_{0,++}$ satisfies $\Sha^1(\mlp,\g_{0,++})=0$ and the group $\g$ satisfies $\Sha^1(\Q,\g)=0$.
\end{Lem}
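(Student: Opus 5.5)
We need to prove $\Sha^1(\mlp,\g_{0,++})=0$ and $\Sha^1(\Q,\g)=0$. The plan is to use Kottwitz/Borovoi's description of $\Sha^1$ in terms of the algebraic fundamental group: for a connected reductive group $H$ over a number field $F$ there is a canonical isomorphism $\Sha^1(F,H) \simeq \Sha^1(F, Z(\widehat H))^\vee$, or equivalently a description via $\pi_1(H)$ (Borovoi, \cite[Theorem 5.16]{Borovoi}). So we compute $\pi_1(\g_{0,++})$ as a Galois module.

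First step: the derived group of $\g_{0,++}$ equals that of $\g_0$, and by Lemma \ref{Lem:CenterComputationHodgeType} and Corollary \ref{Cor:CenterCoCenterComputationHodge} the cocenter $\g_{0,++}^{\ab}$ is $\mathsf{T}^1_{0,\ml}$. There is the exact sequence $\pi_1(\g_{0,++}^{\der}) \to \pi_1(\g_{0,++}) \to X_\ast(\mathsf{T}^1_{0,\ml}) \to 0$, with $\pi_1(\g_0^{\der})$ trivial in type $C$ and $B$ (the derived group of $\operatorname{GSpin}$ resp. the $\mb$-similitude group is simply connected: $\operatorname{Spin}$, resp. a form of $\operatorname{Sp}$), and in type $D^{\mathbb{R}}$ also simply connected (Spin form). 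Hence $\pi_1(\g_{0,++}) \simeq X_\ast(\mathsf{T}^1_{0,\ml})$, which is $\mathbb{Z}$ with Galois acting through $\gal(\ml/\mlp)$ by $-1$. Equivalently, $\Sha^1(\mlp,\g_{0,++}) \simeq \Sha^1(\mlp,\mathsf{T}^1_{0,\ml})$ since $\g_{0,++}^{\der}$ is simply connected (Sansuc/Kneser–Harder–Chernousov give $\Sha^1$ of simply connected groups vanishes and the map to the cocenter is then injective on $\Sha^1$). Finally $\Sha^1(\mlp,\mathsf{T}^1_{0,\ml})$ vanishes: for the norm one torus of a cyclic (quadratic) extension, $H^1(\mlp,\mathsf{T}^1)=\mlp^\times/\operatorname{Nm}\ml^\times$, and the Hasse norm theorem for cyclic extensions gives injectivity into the product of local groups.

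For $\g$: by construction $\g$ is an extension $1\to \operatorname{Res}_{\mlp/\Q}\g_{0,++}\to \g \xrightarrow{\operatorname{sim}} \gm \to 1$ (as $\g$ is the preimage of $\gm$ in $\g_3$ and $\g_{0,++}$ the similitude-one part). Here the expected obstacle is that a long exact sequence argument is not directly injective on $\Sha^1$; instead I would again use that $\g^{\der}=\operatorname{Res}\g_0^{\der}$ is simply connected, so $\Sha^1(\Q,\g)\hookrightarrow \Sha^1(\Q,\g^{\ab})$, and compute $\g^{\ab}$ via Lemma \ref{Lem:CocentersHodgeType}: it is the torus $\{(z,t)\in \gm\times \operatorname{Res}_{\ml/\Q}\gm\}$-quotient, namely $\gm \times \operatorname{Res}_{\mlp/\Q}\mathsf{T}^1_{0,\ml}$ (fibered appropriately). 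Then $\Sha^1$ of $\gm$ vanishes by Hilbert 90, and $\Sha^1(\Q,\operatorname{Res}_{\mlp/\Q}\mathsf{T}^1)=\Sha^1(\mlp,\mathsf{T}^1)=0$ by Shapiro and the previous step. The delicate point is identifying $\g^{\ab}$ precisely (whether it is a product or a nontrivial extension); if it is only an extension $1\to \operatorname{Res}\mathsf{T}^1\to\g^{\ab}\to\gm\to1$, I would use $H^1(\Q,\gm)=0$ locally and globally to conclude by a diagram chase that $\Sha^1(\Q,\g^{\ab})$ is a quotient of $\Sha^1(\Q,\operatorname{Res}\mathsf{T}^1)=0$, after checking the connecting maps from $\gm(\Q)$ and $\gm(\Q_v)$ are compatible.
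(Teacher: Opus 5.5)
Your proposal is correct and follows the same route as the paper: use that the derived groups are simply connected to reduce $\Sha^1$ to the cocenter, identify $\g_{0,++}^{\ab}\simeq\mathsf{T}^1_{0,\ml}$ and $\g^{\ab}\simeq\gm\times\operatorname{Res}_{\mlp/\Q}\mathsf{T}^1_{0,\ml}$, and conclude with the Hasse norm theorem, Hilbert 90 and Shapiro's lemma. The ``delicate point'' you flag does not arise. Since $\g$ is the preimage of $\gm\times\operatorname{Res}_{\mlp/\Q}\mathsf{T}^1_{0,\ml}$ under $\operatorname{Res}\operatorname{sim}''$ and $\g^{\der}=\g_3^{\der}$, the cocenter $\g^{\ab}$ is exactly this direct product, so your fallback diagram chase is unnecessary (and, as stated, it would not be justified anyway, because a lift to $H^1(\Q,\operatorname{Res}\mathsf{T}^1)$ of a class in $\Sha^1(\Q,\g^{\ab})$ need not be locally trivial).
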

\begin{proof}
Since $\g_0^{\der}=\g_{0,++}^{\der}$ is simply connected, we have that
\begin{align}
    \Sha^1(\mlp, \g_{0,++}) = \Sha^1(\mlp, \g_{0,++}^{\ab})
\end{align}
and $\g_{0,++}^{\ab}=\mathsf{T}_{0,\ml}^{1}$ by Corollary \ref{Cor:CenterCoCenterComputationHodge}. The result now follows from the Hasse principle for quadratic extensions. Similarly since $\g^{\der}$ is simply connected, it follows that 
\begin{align}
    \Sha^1(\Q, \g) = \Sha^1(\Q, \g^{\ab})
\end{align}
and $\g^{\ab}=\gm \times \mathsf{T}_{\ml}^{1}$; the result follows as before (using Shapiro's lemma).
\end{proof}

\begin{Lem} \label{Lem:AlgebraicFundamentalGroupComputationHodge}
    The group $\pi_1(\g_{0,++})$ is isomorphic to $\mathbb{Z}$, with the Galois group $\operatorname{Gal}_{\mL^{+}}$ acting through $\operatorname{Gal}(\mL/\mlp)$ such that the nontrivial element acts by multiplication by $-1$.
\end{Lem}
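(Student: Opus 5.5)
The plan is to compute $\pi_1(\g_{0,++})$ from the standard exact sequence relating the algebraic fundamental group to the derived group and the cocenter. Since $\g_{0,++}^{\der}=\g_0^{\der}$ is simply connected (as used in the proof of Lemma \ref{Lem:HassePrinciple}), we have $\pi_1(\g_{0,++}^{\der})=0$. The sequence $1\to \g_{0,++}^{\der}\to \g_{0,++}\to \g_{0,++}^{\ab}\to 1$ then induces a $\operatorname{Gal}_{\mlp}$-equivariant isomorphism
\begin{align}
  \pi_1(\g_{0,++}) \isom \pi_1(\g_{0,++}^{\ab}) = X_{\ast}(\g_{0,++}^{\ab}).
\end{align}
The first step is therefore to recall why $\pi_1$ of a reductive group with simply connected derived subgroup equals the cocharacter lattice of its maximal abelian quotient. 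This follows either from Borovoi's definition $\pi_1=X_\ast(T)/X_\ast(T^{\mathrm{sc}})$, applied to a maximal torus $T$ whose image in $\g^{\ab}_{0,++}$ is $T/T^{\der}$, or by citing \cite{Borovoi}.

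The second step identifies $\g_{0,++}^{\ab}$ with $\mathsf{T}^1_{0,\ml}$ using Corollary \ref{Cor:CenterCoCenterComputationHodge}. This identification comes from Lemma \ref{Lem:CocentersHodgeType}: the map $\operatorname{sim}''$ realizes $\g_{0,+}^{\ab}=\gm\times\mathsf{T}^1_{0,\ml}$, and $\g_{0,++}$ is the preimage of $\{1\}\times\mathsf{T}^1_{0,\ml}$. Restricting gives a surjection $\g_{0,++}\to \mathsf{T}^1_{0,\ml}$ with kernel $\g_0^0$, whose derived group is the same and whose abelianization is trivial up to the finite center. Here one should double-check that this surjection is really the maximal abelian quotient. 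Dimension counting handles the toral part, and connectedness of the kernel together with simply connectedness of $\g_0^{\der}$ handles any component issue. The corollary packages exactly this.

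The third step is the computation of $X_\ast(\mathsf{T}^1_{0,\ml})$. Over $\ml$ we have $\mathsf{T}_{0,\ml}=\operatorname{Res}_{\ml/\mlp}\gm\simeq \gm^2$, with cocharacter lattice $\mathbb{Z}^2$. The Galois group acts through $\operatorname{Gal}(\ml/\mlp)$ by swapping the two coordinates. The norm-one torus is the kernel of $t\mapsto tt^c$, so its cocharacters are $\{(a,b): a+b=0\}\simeq\mathbb{Z}$, generated by $(1,-1)$. The swap sends $(1,-1)$ to $(-1,1)$, so complex conjugation acts by $-1$, which gives the claim.

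I expect the only real obstacle to be the first step: making sure the identification of $\pi_1$ with $X_\ast$ of the cocenter is applicable and Galois-equivariant. The remaining steps are routine bookkeeping given the corollary.
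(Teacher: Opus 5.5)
Your proposal is correct and follows the paper's proof. Simple connectedness of $\g_{0,++}^{\der}=\g_0^{\der}$ gives $\pi_1(\g_{0,++})\isom X_\ast(\g_{0,++}^{\ab})$, and Lemma \ref{Lem:CocentersHodgeType} identifies $\g_{0,++}^{\ab}$ with $\mathsf{T}^1_{0,\ml}$, whose cocharacter lattice is $\mathbb{Z}$ with the nontrivial element of $\operatorname{Gal}(\ml/\mlp)$ acting by $-1$; your explicit computation of $X_\ast(\mathsf{T}^1_{0,\ml})$ spells out what the paper leaves implicit (and the kernel $\g_0^0$ is exactly $\g_0^{\der}$, so your ``up to the finite center'' caveat is unnecessary).
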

\begin{proof}
    Since $\g_0^{\der}=\g_{0,++}^{\der}$ is simply connected, we have that
    \begin{align}
        \pi_1(\g_{0,++}) \xrightarrow{\sim} X_{\ast}(\g_{0,++}^{\ab}),
    \end{align}
    which is isomorphic to $\mathbb{Z}$ with the desired Galois action by Lemma \ref{Lem:CocentersHodgeType}.
\end{proof}

\begin{Lem} \label{Lem:FundamentalGroupComputationAdjoint}
There is an isomorphism (where the Galois action is trivial on the right hand side)
\begin{align}
    \pi_1(\g_{2,0}) \simeq \mathbb{Z}/2\mathbb{Z}
\end{align}
\end{Lem}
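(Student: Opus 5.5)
We need to show $\pi_1(\g_{2,0}) \simeq \mathbb{Z}/2\mathbb{Z}$ with trivial Galois action, where $\g_{2,0}$ is one of: $\operatorname{SO}(W_0)$ (type $B$, odd dimension, adjoint), $\g_0^{\mathrm{ad}}$ for the quaternionic similitude group (type $C$), or $\operatorname{SO}_{\mb}(W)$ (type $D^{\mathbb{R}}$, center $\mu_2$). The plan is to compute $\pi_1$ over $\overline{\mlp}$ via the standard description $\pi_1(H) = X_\ast(T)/\langle \text{coroots}\rangle$ for a maximal torus $T$. Since $\pi_1$ is an abelian group of order $2$ in every case, the Galois action is automatically trivial: $\operatorname{Aut}(\mathbb{Z}/2\mathbb{Z})$ is trivial. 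So the whole content is the computation of the underlying abstract group.

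\textbf{Type $B$.} Over $\overline{\mlp}$, $\g_{2,0} \simeq \operatorname{SO}_{2m+1}$, which is adjoint with simply connected cover $\operatorname{Spin}_{2m+1}$. The kernel of $\operatorname{Spin}_{2m+1} \to \operatorname{SO}_{2m+1}$ is $\mu_2$. For a semisimple group, $\pi_1$ is the Cartier dual (character group) of the fundamental group scheme $\ker(H^{\mathrm{sc}}\to H)$, so $\pi_1 \simeq \mathbb{Z}/2\mathbb{Z}$.

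\textbf{Type $C$.} Over $\overline{\mlp}$, $\g_0 \simeq \operatorname{GSp}_{2n}$ by the identification used in Lemma \ref{Lem:TypeCComputation}. Hence $\g_{2,0} \simeq \operatorname{PSp}_{2n}$, whose simply connected cover $\operatorname{Sp}_{2n}$ has center $\mu_2$. This again gives $\mathbb{Z}/2\mathbb{Z}$.

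\textbf{Type $D^{\mathbb{R}}$.} Over $\overline{\mlp}$, $\operatorname{SO}_{\mb}(W)$ becomes $\operatorname{SO}_{2n}$, since $\mb$ splits and $W$ becomes a $2n$-dimensional quadratic space, as in the discussion following \cite{Shih}. This group is not adjoint. However, $\operatorname{Spin}_{2n} \to \operatorname{SO}_{2n}$ has kernel $\mu_2$ and $\operatorname{Spin}_{2n}$ is simply connected, so $\pi_1(\operatorname{SO}_{2n}) = \mathbb{Z}/2\mathbb{Z}$.

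The only point needing care is identifying each $\g_{2,0}$ geometrically with the stated classical group, and using that $\pi_1 = \operatorname{Hom}(\ker(H^{\mathrm{sc}} \to H), \mathbb{G}_m)$ for semisimple $H$. Neither step is hard. An alternative uniform argument would note that in all cases $\g_{2,0}$ is the quotient of the simply connected $\g_0^{\der}$ by a subgroup $\mu_2$, which is read off from the centers computed in Lemma \ref{Lem:CenterGSpinTypeBD}: in type $C$ the center of $\operatorname{Sp}$ is $\mu_2$, and in types $B$ and $D$ the spin kernel is $\mu_2$.
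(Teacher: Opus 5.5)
Your proposal is correct and matches the paper's proof. The paper simply calls the computation of $\pi_1(\g_{2,0})$ well known, and deduces that the Galois action is trivial because $\mathbb{Z}/2\mathbb{Z}$ admits no nontrivial automorphisms; its added remark about splitting over a quadratic extension is not needed, as your phrasing via $\operatorname{Aut}(\mathbb{Z}/2\mathbb{Z})=1$ makes clear, and your geometric identifications of $\g_{2,0}$ with $\operatorname{SO}_{2m+1}$, $\operatorname{PSp}_{2n}$ or $\operatorname{SO}_{2m}$ just supply the omitted details (the $\operatorname{SO}_{2m}$ case also covers the even-dimensional $\operatorname{SO}(W_0)$ of Section~\ref{Sub:ExampleBDR}, which your case list does not name separately).
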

\begin{proof}
    This is well known. The Galois action is trivial because $\g_{2,0}$ is an inner form of a group that splits over a quadratic extension of $\mlp$, and groups of order $2$ cannot act nontrivially on $\mathbb{Z}/2\mathbb{Z}$.
\end{proof}

\begin{Cor} \label{Cor:AlgebraicFundamentalGroupHodgeAdjoint}
The natural map
\begin{align}
    \pi_1(\g_{0,++})_{\gal_{\mlp}} \to \pi_1(\g_{2,0})_{\gal_{\mlp}}
\end{align}
is an isomorphism
\end{Cor}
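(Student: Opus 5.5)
The plan is to compute both sides explicitly from the two preceding lemmas and then check that the natural map matches the generators. By Lemma \ref{Lem:AlgebraicFundamentalGroupComputationHodge}, $\pi_1(\g_{0,++}) \simeq \mathbb{Z}$, and $\gal_{\mlp}$ acts through $\gal(\ml/\mlp)$ with the nontrivial element acting by $-1$. Hence
\begin{align}
\pi_1(\g_{0,++})_{\gal_{\mlp}} \simeq \mathbb{Z}/2\mathbb{Z}.
\end{align}
By Lemma \ref{Lem:FundamentalGroupComputationAdjoint}, $\pi_1(\g_{2,0}) \simeq \mathbb{Z}/2\mathbb{Z}$ with trivial Galois action, so its coinvariants are again $\mathbb{Z}/2\mathbb{Z}$. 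A map between groups of order two is an isomorphism if and only if it is nonzero. It therefore suffices to show that the map is surjective, or equivalently that it is already surjective before taking coinvariants.

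For surjectivity, factor the map $\g_{0,++} \to \g_{2,0}$ through the adjoint quotient. Write $\g_{0,++}^{\der} = \g_0^{\der}$; this group is simply connected (it is $\operatorname{Spin}$ or $\operatorname{Sp}$, respectively the norm-one group of the quaternionic similitude group). Use the description $\pi_1 = X_\ast(T)/X_\ast(T_{\mathrm{sc}})$ for a compatible choice of maximal tori. The map $\pi_1(\g_{0,++}) \to \pi_1(\g_{2,0})$ is then induced by the surjection $X_\ast(T_{0,++}) \to X_\ast(T_{2,0})$ composed with the quotient map. The point to check is that $X_\ast(T_{2,0})$ is generated modulo coroots by the image of $X_\ast(T_{0,++})$. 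This holds if the map $T_{0,++} \to T_{2,0}$ is surjective with the relevant behaviour on cocharacters. I would verify it by comparing with $\g_0 \to \g_{2,0}$.

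In types $B$ and $C$, $\g_{2,0} = \g_0^{\mathrm{ad}}$, and $\pi_1(\g_0) = X_\ast(\gm) = \mathbb{Z}$ surjects onto $\pi_1(\g_0^{\mathrm{ad}}) = \mathbb{Z}/2$. This is the standard fact that for $\operatorname{GSpin}$ and $\operatorname{GSp}$ the centre $\gm$ has a cocharacter whose image in the adjoint torus is a half coweight not in the coroot lattice. For example, the $\operatorname{GSp}$ cocharacter $t \mapsto \operatorname{diag}(t,\dots,t,1,\dots,1)$ maps to the generator. The remaining step is to transport this to $\g_{0,++}$.

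The step I expect to be the main obstacle is showing that the image of $\pi_1(\g_{0,++})$ (rather than of $\pi_1(\g_0)$) still hits the nontrivial element. The idea is as follows. The map $\g_0 \times \mathsf{T}_{0,\ml} \to \g_{0,+}$ identifies $\pi_1(\g_{0,+})$ with $(\mathbb{Z} \oplus X_\ast(\mathsf{T}_{0,\ml}))/\mathbb{Z}$, and $\g_{0,++}$ is cut out by the similitude condition. By Corollary \ref{Cor:CenterCoCenterComputationHodge}, the generator of $X_\ast(\mathsf{T}^1_{0,\ml}) = \pi_1(\g_{0,++})$ lifts, after passing to $\overline{\mlp}$ where $\mathsf{T}_{0,\ml}$ splits, to the cocharacter $z \mapsto (z^{-1}, (z,1))$ of $\g_0 \times \mathsf{T}_{0,\ml}$, up to the antidiagonal. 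Its image in $\g_0^{\mathrm{ad}}$ is the image of the cocharacter $z \mapsto z^{-1}$ of the $\gm$ factor of $\g_0$ modulo scalars. This is exactly the generator produced above, because the $\gm$ in $\g_0$ is the one whose similitude is $z^2$. The map is therefore nonzero on coinvariants, which proves the isomorphism. In type $D^{\mathbb{R}}$ I would check the same computation separately, using the extra $\mu_2$ in the centre.
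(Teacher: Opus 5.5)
Your reduction is sound: both coinvariant groups are $\mathbb{Z}/2\mathbb{Z}$, so it suffices to show the map is nonzero. The gap is in the step meant to prove this, and as written it fails.

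\textbf{The proposed lift is wrong.} Take your cocharacter $z\mapsto(z^{-1},(z,1))$ of $\g_0\times\mathsf{T}_{0,\ml}$, with $z^{-1}$ a scalar. It does not land in $\g_{0,++}$: the first coordinate of its image under $\operatorname{sim}'$ is $\operatorname{sim}(z^{-1})\cdot tt^c=z^{-2}\cdot z=z^{-1}\neq 1$. More seriously, its $\g_0$-component is central. Its image in $\g_{2,0}=\g_0/\gm$ is therefore the trivial cocharacter, so taken at face value your computation shows the map is \emph{zero}.

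\textbf{The same confusion appears in your sentence about the centre.} Cocharacters of the central $\gm\subset\g_0$ die in the adjoint torus. The scalar cocharacter has similitude $z^2$, so it represents $2\in\pi_1(\g_0)\simeq\mathbb{Z}$ and hence $0$ in $\pi_1(\g_{2,0})$. What generates $\pi_1(\g_0)$ and hits the nontrivial class is a \emph{non-central} cocharacter of similitude $z$, such as the $\operatorname{diag}(z,\dots,z,1,\dots,1)$ you mention.

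\textbf{How to repair your route.} Take a non-central cocharacter $g$ of $\g_0$ with $\operatorname{sim}\circ g=z^{-1}$ and pair it with $t=(z,1)$. Then:
\begin{itemize}
\item $(g,t)$ lies in $\g_{0,++}$;
\item under $t\mapsto t/t^c$ it maps to a generator of $X_\ast(\g_{0,++}^{\ab})\simeq\pi_1(\g_{0,++})$;
\item its image in $\pi_1(\g_{2,0})$ is that of $g$, namely the nontrivial element.
\end{itemize}

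\textbf{The paper's argument is cleaner.} It avoids explicit cocharacters and works uniformly in all three types, including $D^{\mathbb{R}}$, which you left open. The kernel of $\g_{0,++}\to\g_{2,0}$ is the central torus $\mathsf{Z}^\circ_{\g_{0,++}}$, the image of $\{1\}\times\mathsf{T}^1_{0,\ml}$. This gives an exact sequence $0\to X_\ast(\mathsf{Z}^\circ_{\g_{0,++}})\to\pi_1(\g_{0,++})\to\pi_1(\g_{2,0})\to 0$. Connectedness of this kernel is the precise reason your ``surjective before coinvariants'' holds. The first map is $[2]\colon\mathbb{Z}\to\mathbb{Z}$, with the nontrivial element of $\gal(\ml/\mlp)$ acting by $-1$. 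Hence it vanishes on coinvariants, and the coinvariant map is injective. Right exactness of coinvariants, or the order count, then gives the isomorphism.
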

\begin{proof}
Consider the long exact sequence in cohomology associated to the short exact sequence
    \begin{align}
        0 \to X_{\ast}(\mathsf{Z}_{\g_{0,++}}^{\circ}) \to \pi_1(\g_{0,++}) \to \pi_1(\g_{2,0}) \to 0.
    \end{align}
The map 
\begin{align}
    X_{\ast}(\mathsf{Z}_{\g_{0,++}}^{\circ}) \to \pi_1(\g_{0,++})
\end{align}
can be identified (by Lemma \ref{Lem:CenterComputationHodgeType}) with $[2]:\mathbb{Z} \to \mathbb{Z}$, and thus induces the zero map after taking coinvariants for the Galois action. Thus
\begin{align}
     \pi_1(\g_{0,++})_{\gal_{\mlp}} \to \pi_1(\g_{0})_{\gal_{\mlp}}
\end{align}
is injective. Both the left hand side and the right hand side are isomorphic to $\mathbb{Z}/2\mathbb{Z}$, and we conclude.
\end{proof}

\begin{Lem} \label{Lem:CohomologyComputationHodgetype}
Let $p$ be a prime number and let $\mathfrak{p}$ be a prime above $p$ in $\mlp$. The natural map (of abelian groups)
\begin{align}
    H^1(\mlp_{\mathfrak{p}}, \mathsf{Z}_{\g_{0,++}}^{\circ}) \to H^1(\mlp_{\mathfrak{p}}, \g_{0,++})
\end{align}
is zero.
\end{Lem}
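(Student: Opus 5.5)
The approach is to factor the map through the maximal abelian quotient and use Tate--Nakayama, reducing everything to Corollary \ref{Cor:CenterCoCenterComputationHodge}. Write $Z^{\circ}=\mathsf{Z}_{\g_{0,++}}^{\circ}$ and $D=\g_{0,++}^{\ab}$. Both are identified with the norm-one torus $\mathsf{T}^1_{0,\ml}$, and the composite $Z^{\circ}\to \g_{0,++}\to D$ is the multiplication-by-$2$ map. Since $\g_{0,++}^{\der}=\g_0^{\der}$ is simply connected, Kottwitz's (or Borovoi's) description gives, for the nonarchimedean local field $F=\mlp_{\mathfrak{p}}$, an isomorphism
\[
H^1(F,\g_{0,++})\xrightarrow{\sim} H^1(F,D)\simeq \pi_1(\g_{0,++})_{\gal_F},
\]
compatible with $H^1(F,Z^{\circ})\simeq X_\ast(Z^{\circ})_{\gal_F}$. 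One could also argue more elementarily: $H^1(F,\g^{\der}_{0,++})=0$ by Kneser, so $H^1(F,\g_{0,++})\to H^1(F,D)$ is injective.

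With this, it suffices to show that $[2]:H^1(F,\mathsf{T}^1_{0,\ml})\to H^1(F,\mathsf{T}^1_{0,\ml})$ is zero, that is, that the map induced on $X_\ast(\mathsf{T}^1)_{\gal_F}$ by multiplication by $2$ vanishes.

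By Lemma \ref{Lem:AlgebraicFundamentalGroupComputationHodge}, $X_\ast(\mathsf{T}^1)=\mathbb{Z}$, with the Galois action factoring through $\gal(\ml_{\mathfrak{p}}/F)$ and the nontrivial element acting by $-1$. There are two cases.
\begin{itemize}
\item If $\mathfrak{p}$ is nonsplit in $\ml$, the coinvariants are $\mathbb{Z}/2\mathbb{Z}$, and $[2]$ is zero on them. This matches the observation already used in the proof of Corollary \ref{Cor:AlgebraicFundamentalGroupHodgeAdjoint}.
\item If $\mathfrak{p}$ splits in $\ml$, then $\mathsf{T}^1$ is $\gm$ over $F$, so $H^1(F,\mathsf{T}^1)=0$ by Hilbert 90. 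The map is then trivially zero.
\end{itemize}

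The main obstacle is making sure the compatibility of the Kottwitz maps with $Z^{\circ}\to\g_{0,++}$ holds exactly, and is not off by a unit. Working via the injection into $H^1(F,D)$ avoids this: one only needs the composite $Z^{\circ}\to D$, which is literally $[2]$ by Corollary \ref{Cor:CenterCoCenterComputationHodge}, together with the fact that $H^1(F,\mathsf{T}^1)$ is $2$-torsion (it is $F^\times/\mathrm{Nm}$, or $0$ in the split case).
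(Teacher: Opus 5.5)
Your proposal is correct and follows the paper's proof essentially step for step. Like the paper, you use simple connectedness of $\mathsf{G}_0^{\mathrm{der}}$ to reduce to the composite $\mathsf{Z}^{\circ}_{\mathsf{G}_{0,++}} \to \mathsf{G}_{0,++}^{\mathrm{ab}}$, identify that composite with $[2]$ on the norm-one torus via Corollary \ref{Cor:CenterCoCenterComputationHodge}, and conclude because $H^1(\mathsf{L}^{+}_{\mathfrak{p}},\mathsf{T}^1)$ is $0$ in the split case and $\mathbb{Z}/2\mathbb{Z}$ in the nonsplit case.

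One small correction: over a $p$-adic field, Kottwitz identifies $H^1(F,D)$ with the \emph{torsion} subgroup of $X_*(D)_{\operatorname{Gal}_F}$, not with the full coinvariants. So your reformulation ``$[2]$ vanishes on $X_*(\mathsf{T}^1)_{\operatorname{Gal}_F}$'' is false in the split case, where the coinvariants are $\mathbb{Z}$. This is harmless, since you treat that case separately via Hilbert 90.
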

\begin{proof}
Since $\g_0^{\der}$ is simply connected, we can identify the map of the lemma with
\begin{align}
    H^1(\mlp_{\mathfrak{p}}, \mathsf{Z}_{\g_{0,++}}^{\circ}) \to H^1(\mlp_{\mathfrak{p}}, \g_{0,++}^{\ab}).
\end{align}
Now Corollary \ref{Cor:CenterCoCenterComputationHodge} tells us that $\mathsf{Z}_{\g_{0,++}}^{\circ} \to \g_{0,++}^{\ab}$ may be identified with $[2]:\mathsf{T}^1_{\ml} \to \mathsf{T}^1_{\ml}$. If $\mathfrak{p}$ splits in $\ml$, then $\mathsf{T}^1_{\ml} \otimes \mlp_{\mathfrak{p}}$ has no cohomology and we are done. If $\mathfrak{p}$ does not split in $\ml$, then $H^1(\mlp_{\mathfrak{p}},\mathsf{T}^1_{\ml}) \simeq \mathbb{Z}/2\mathbb{Z}$, so that multiplication by $2$ is equal to the zero map, proving the lemma. 
\end{proof}

\begin{Lem} \label{Lem:CohomologyComputationHodgetypeII}
Let $p$ be a prime number and let $\mathfrak{p}$ be a prime above $p$ in $\mlp$. The natural map (of abelian groups)
\begin{align}
    H^1(\mlp_{\mathfrak{p}}, \g_{0,++}) \to H^1(\mlp_{\mathfrak{p}}, \g_{2,0}) = \pi_1(\g_{2,0})_{\gal_{\mlp_{\mathfrak{p}}}}
\end{align}
is injective, and an isomorphism if $\mathfrak{p}$ is nonsplit in $\ml$.
\end{Lem}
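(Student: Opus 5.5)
We need to prove that $H^1(\mlp_{\mathfrak{p}}, \g_{0,++}) \to H^1(\mlp_{\mathfrak{p}}, \g_{2,0})$ is injective, and that it is an isomorphism when $\mathfrak{p}$ is nonsplit in $\ml$. The plan is to translate everything into Kottwitz's description of $H^1$ over a $p$-adic field and then read off the answer from Corollary \ref{Cor:CenterCoCenterComputationHodge} and Lemma \ref{Lem:AlgebraicFundamentalGroupComputationHodge}.

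\textbf{Reduction to fundamental groups.} Over the nonarchimedean field $F=\mlp_{\mathfrak{p}}$, Kottwitz's theorem identifies $H^1(F,H)\simeq \pi_1(H)_{\gal_F,\mathrm{tors}}$ for any connected reductive $H$. For $H=\g_{2,0}$, whose $\pi_1$ is finite, this is all of $\pi_1(\g_{2,0})_{\gal_F}$, matching the equality in the statement. The identification is functorial, so the map in question becomes the map on torsion of coinvariants induced by $\pi_1(\g_{0,++})\to\pi_1(\g_{2,0})$. Two inputs describe this map:
\begin{itemize}
\item By Lemma \ref{Lem:AlgebraicFundamentalGroupComputationHodge}, $\pi_1(\g_{0,++})=\mathbb{Z}$, with $\gal_F$ acting through the decomposition group of $\mathfrak{p}$ in $\gal(\ml/\mlp)$ by $-1$.
\item By Lemma \ref{Lem:FundamentalGroupComputationAdjoint}, $\pi_1(\g_{2,0})=\mathbb{Z}/2$ with trivial action.
\end{itemize}
The surjection $\mathbb{Z}\to\mathbb{Z}/2$ has kernel $X_\ast(\mathsf{Z}^\circ_{\g_{0,++}})$, embedded via $[2]$ by Corollary \ref{Cor:CenterCoCenterComputationHodge}. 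So the map is reduction mod $2$.

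\textbf{Nonsplit case.} If $\mathfrak{p}$ is nonsplit in $\ml$, the decomposition group is all of $\gal(\ml/\mlp)$. The coinvariants of $\mathbb{Z}$ with the $-1$ action are $\mathbb{Z}/2$, which is entirely torsion. The reduction map $\mathbb{Z}/2\to\mathbb{Z}/2$ is then an isomorphism, giving both injectivity and surjectivity. This is the local analogue of Corollary \ref{Cor:AlgebraicFundamentalGroupHodgeAdjoint}, and the argument there applies verbatim with $\gal_{\mlp}$ replaced by $\gal_F$.

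\textbf{Split case.} If $\mathfrak{p}$ splits, $\gal_F$ acts trivially on $\pi_1(\g_{0,++})=\mathbb{Z}$. The coinvariants are $\mathbb{Z}$, which has zero torsion, so $H^1(F,\g_{0,++})=0$ and injectivity is trivial. As a cross-check, $\g_{0,++}^{\ab}=\mathsf{T}^1_{\ml}$ splits over $F$ and $\g^{\der}_{0}$ is simply connected. Hence $H^1(F,\g_{0,++})\hookrightarrow H^1(F,\g^{\ab}_{0,++})=H^1(F,\gm)=0$, using Kneser's vanishing of $H^1$ for the simply connected derived group.

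\textbf{Main obstacle.} The step needing the most care is functoriality and the precise form of Kottwitz's isomorphism for non-simply-connected-derived groups like $\g_{2,0}$. If one prefers to avoid it, there is an alternative in the nonsplit case. Use the exact sequence $1\to\mathsf{Z}_{\g_{0,++}}\to\g_{0,++}\to\g_{2,0}\to1$ when $\mathsf{Z}_{\g_{0,++}}$ is connected, and otherwise its analogue with $\mathsf{Z}^\circ$ together with the finite part. Injectivity follows from the twisting argument combined with Lemma \ref{Lem:CohomologyComputationHodgetype}, which says the image of $H^1$ of the center in $H^1(F,\g_{0,++})$ is zero; since the inner twists of $\g_{0,++}$ satisfy the same hypotheses, this holds on all fibers. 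Surjectivity then follows by comparing cardinalities: both sides have order $2$.
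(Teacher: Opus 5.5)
Your proof is correct, but your main route differs from the paper's.

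\textbf{The paper's route.} The paper argues in two steps. Injectivity comes from the long exact sequence of $1 \to \mathsf{Z}^{\circ}_{\g_{0,++}} \to \g_{0,++} \to \g_{2,0} \to 1$ together with the proof of Lemma \ref{Lem:CohomologyComputationHodgetype}. There, after passing to $\g_{0,++}^{\mathrm{ab}}$ via Kneser, the map from the center becomes $[2]$ on $H^1(\mlp_{\mathfrak{p}},\mathsf{T}^1_{\ml})$. That group is $0$ or $\mathbb{Z}/2\mathbb{Z}$, so the map vanishes. Next, $H^1(\mlp_{\mathfrak{p}},\g_{0,++})\simeq H^1(\mlp_{\mathfrak{p}},\mathsf{T}^1_{\ml})$ is computed to be $0$ or $\mathbb{Z}/2\mathbb{Z}$. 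Surjectivity in the nonsplit case then follows by counting. This is essentially your fallback paragraph.

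\textbf{Your route.} You apply Kottwitz's isomorphism $H^1(F,H)\simeq(\pi_1(H)_{\gal_F})_{\mathrm{tors}}$ to both groups and read off the map as reduction mod $2$ on torsion coinvariants. This gives injectivity and surjectivity in one uniform computation and makes the map explicit. It also avoids a non-abelian point the paper leaves implicit: $H^1$ of the center must act trivially on every fiber of $H^1(\mlp_{\mathfrak{p}},\g_{0,++})\to H^1(\mlp_{\mathfrak{p}},\g_{2,0})$, not only on the fiber over the base point. Your twisting remark handles this, as does the observation that the action is translation inside $H^1(\mlp_{\mathfrak{p}},\g^{\mathrm{ab}}_{0,++})$.

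\textbf{Trade-off.} Your route needs functoriality of Kottwitz's isomorphism. That is standard here, since $\g_{0,++}\to\g_{2,0}$ is surjective, or it follows from the functorial map $\kappa$ on $B(G)$. The paper needs only Kneser and Tate--Nakayama for the norm-one torus on the source, and uses Kottwitz on the target only for its cardinality, which the statement already presupposes.

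\textbf{Minor remark on your fallback.} The sequence with $\mathsf{Z}^{\circ}_{\g_{0,++}}$ already has quotient $\g_{2,0}$ in all three cases. So no separate treatment of a finite part of the center is needed.
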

\begin{proof}
The long exact sequence in cohomology associated to $1 \to \mathsf{Z}_{\g_{0,++}}^{\circ} \to \g_{0,++} \to \g_{2,0} \to 1$ together with the proof of Lemma \ref{Lem:CohomologyComputationHodgetype} tells us the map is injective. If $\mathfrak{p}$ is split in $\ml$, then it follows from Corollary \ref{Cor:CenterCoCenterComputationHodge} that $H^1(\mlp_{\mathfrak{p}}, \g_{0,++}) \simeq 0$. If $\mathfrak{p}$ is nonsplit in $\ml$, then it follows from Corollary \ref{Cor:CenterCoCenterComputationHodge} that $H^1(\mlp_{\mathfrak{p}}, \g_{0,++}) \simeq \mathbb{Z}/2 \mathbb{Z}$, and it now follows as in the proof of Corollary \ref{Cor:AlgebraicFundamentalGroupHodgeAdjoint} that it is also surjective.
\end{proof}
\begin{Lem} \label{Lem:FundamentalGroupCoinvariantsR}
Let $\tau \in \Sigma_{\infty}$. The natural map
\begin{align}
    \pi_{1}(\g_{0,++} \otimes_{\mlp, \tau} \mathbb{R})_{\gal_{\mathbb{R}}} \to \pi_{1}(\g_{2,0} \otimes_{\mlp, \tau} \mathbb{R})
\end{align}
is an isomorphism.
\end{Lem}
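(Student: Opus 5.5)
The plan is to mimic the proof of Corollary \ref{Cor:AlgebraicFundamentalGroupHodgeAdjoint}, replacing $\gal_{\mlp}$ by $\gal_{\mathbb{R}}=\gal(\mathbb{C}/\mathbb{R})$ acting through the embedding $\tau$. We start from the short exact sequence of $\gal_{\mathbb{R}}$-modules
\begin{align}
    0 \to X_{\ast}(\mathsf{Z}_{\g_{0,++}}^{\circ}) \to \pi_1(\g_{0,++}) \to \pi_1(\g_{2,0}) \to 0,
\end{align}
base changed along $\tau$. This sequence holds because $\g_{0,++}^{\der}=\g_0^{\der}$ is simply connected and $\g_{0,++} \to \g_{2,0}$ is a central isogeny onto the adjoint group modulo the connected center. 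Taking $\gal_{\mathbb{R}}$-coinvariants is right exact, so we obtain an exact sequence
\begin{align}
    X_{\ast}(\mathsf{Z}_{\g_{0,++}}^{\circ})_{\gal_{\mathbb{R}}} \to \pi_1(\g_{0,++})_{\gal_{\mathbb{R}}} \to \pi_1(\g_{2,0})_{\gal_{\mathbb{R}}} \to 0.
\end{align}
Surjectivity is then immediate. Note also that $\pi_1(\g_{2,0})$ carries trivial Galois action by Lemma \ref{Lem:FundamentalGroupComputationAdjoint}, so its coinvariants are $\pi_1(\g_{2,0}) \simeq \mathbb{Z}/2\mathbb{Z}$, which matches the target of the statement.

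For injectivity, we identify both $X_{\ast}(\mathsf{Z}_{\g_{0,++}}^{\circ})$ and $\pi_1(\g_{0,++})=X_\ast(\g_{0,++}^{\ab})$ with $X_\ast(\mathsf{T}^1_{0,\ml})\simeq\mathbb{Z}$, using Corollary \ref{Cor:CenterCoCenterComputationHodge} and Lemma \ref{Lem:AlgebraicFundamentalGroupComputationHodge}. Under these identifications the first map is $[2]$. Since $\ml$ is totally imaginary, the place $\tau$ does not split in $\ml$. Hence complex conjugation acts on this $\mathbb{Z}$ by $-1$ (this is the restriction of the $\gal(\ml/\mlp)$-action described in Lemma \ref{Lem:AlgebraicFundamentalGroupComputationHodge}). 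The coinvariants are therefore $\mathbb{Z}/2\mathbb{Z}$ on both sides, and $[2]$ induces the zero map on them. It follows that $\pi_1(\g_{0,++})_{\gal_{\mathbb{R}}}\simeq \mathbb{Z}/2\mathbb{Z} \to \mathbb{Z}/2\mathbb{Z}$ is injective, and being also surjective, it is an isomorphism.

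The only point needing care is the second step: verifying that the identifications of Corollary \ref{Cor:CenterCoCenterComputationHodge} are $\gal_{\mathbb{R}}$-equivariant after base change along $\tau$, and that conjugation really acts by $-1$. This holds because $\mathsf{T}^1_{0,\ml}\otimes_{\mlp,\tau}\mathbb{R}\simeq S^1$ is anisotropic, since $\ml=\mlp(\sqrt{\epsilon})$ with $\epsilon$ totally negative. As a cross-check, the same conclusion follows from counting: both sides have order two, and the surjection from the first part forces a bijection.
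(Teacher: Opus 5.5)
Your proposal is correct and follows the same route as the paper. The paper proves injectivity by the ``$[2]$ is zero on $\mathbb{Z}/2\mathbb{Z}$'' argument of Lemma \ref{Lem:CohomologyComputationHodgetype} and identifies the image as in Corollary \ref{Cor:AlgebraicFundamentalGroupHodgeAdjoint}; you simply make explicit the right-exactness of coinvariants and the fact that complex conjugation acts by $-1$ because $\ml$ is CM. One small precision: the exact sequence only requires that $\g_{0,++}\to\g_{2,0}$ be surjective with kernel the torus $\mathsf{Z}^{\circ}_{\g_{0,++}}$, and in type $D^{\mathbb{R}}$ the target $\g_{2,0}$ is not adjoint, so your phrase ``onto the adjoint group'' should be read in that sense.
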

\begin{proof}
It follows as in the proof of Lemma \ref{Lem:CohomologyComputationHodgetype} that the map to $\pi_{1}(\g_{2,0} \otimes_{\mlp, \tau} \mathbb{R})$ is injective. It moreover follows as in the proof of Corollary \ref{Cor:AlgebraicFundamentalGroupHodgeAdjoint} that the image gets identified with $\pi_1(\g_{2,0})$
\end{proof}
The following lemma is well known. 
\begin{Lem} \label{Lem:KottwitzInvariantCompactForm}
For $\tau \in \Sigma_{\infty,\mathrm{nc}}$, let $[Q] \in H^1(\mathbb{R}, \g_{2,0}^{\mathrm{ad}} \otimes_{\mlp, \tau} \mathbb{R})$ be the class of the compact inner form. Then the image of $[Q]$ in $\pi_1(\g_{2,0}^{\mathrm{ad}})$ is nontrivial. 
\end{Lem}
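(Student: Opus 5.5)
We need to show: for $\tau \in \Sigma_{\infty,\mathrm{nc}}$, the class $[Q]$ of the compact inner form in $H^1(\mathbb{R}, \g_{2,0}^{\mathrm{ad}}\otimes_{\mlp,\tau}\mathbb{R})$ has nontrivial image in $\pi_1(\g_{2,0}^{\mathrm{ad}})_{\gal_\R}$ under the Kottwitz map. The plan is to compute this image explicitly with Borovoi's description of real Galois cohomology (Proposition \ref{Prop:BorovoiRealCohomology}) and a fundamental torus.

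First, the compact form is always realized by a Cartan involution. Pick $h \in \x_\tau$. By axiom SV2, conjugation by $h(i)$ is a Cartan involution of the adjoint group, so the cocycle $\sigma \mapsto \bar h(i)$, with $\bar h(i)$ the image of $h(i)$ in $\g^{\mathrm{ad}}_{2,0,\tau}(\R)$, represents $[Q]$. Since $h(i)$ lies in a maximal compact torus $T_0$, this class lies in the image of $H^1(\R,T)$ for the fundamental torus $T = Z(T_0)$. For a torus, the Kottwitz map $H^1(\R,T) \to X_\ast(T)_{\gal_\R}$ sends the cocycle attached to $t = \lambda(-1)$, for $\lambda$ a cocharacter of $T_0$, to the class of $\lambda$. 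Here $h(i) = \mu_h(i)\overline{\mu_h(i)}$-type expressions reduce to $\bar h(i) = \bar\mu(-1)$ up to the central weight, so the Kottwitz invariant of $[Q]$ is the image of $\mu_h$ in $\pi_1(\g^{\mathrm{ad}})_{\gal_\R}$. This is the same fact recorded in Section \ref{subsub:RealBGShimura}: the image of $[\x]$ in $B(\R,\g^{\mathrm{ad}})=H^1(\R,\g^{\mathrm{ad}})$ is the compact class, and its Kottwitz invariant is $\mu_X$. I would simply cite that compatibility.

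The remaining, and main, step is to check that the image of the minuscule cocharacter $\mu_h$ in $\pi_1(\g_{2,0}^{\mathrm{ad}})$ is nonzero. The group $\pi_1$ here is $\mathbb{Z}/2$ in types $B$ and $C$, and the Galois action is trivial by Lemma \ref{Lem:FundamentalGroupComputationAdjoint}, so coinvariants do not kill anything. I would proceed case by case on the real groups.

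In type $C$ at a noncompact place, $\g_{0}\otimes\R \simeq \operatorname{GSp}_{2n}$ by Lemma \ref{Lem:TypeCComputation}. Then $\mu_h$ is the Siegel cocharacter $(1^n,0^n)$, whose image in $\pi_1(\operatorname{PSp}_{2n}) = \mathbb{Z}/2$ is the generator, because it does not lift to $\operatorname{Sp}_{2n}$ modulo the center.

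In type $B$, $\g^{\mathrm{ad}}_{\tau} = \operatorname{SO}(2,n-2)$ and $\mu_h$ is the cocharacter $(1,0,\dots,0)$ of $\operatorname{SO}$. Its image in $\pi_1(\operatorname{SO}) = \mathbb{Z}/2$ is nontrivial, since it lifts to $\operatorname{GSpin}$ only with spinor norm of odd degree. Equivalently, $V_\mu$ is the spin representation, on which the center of $\operatorname{Spin}$ acts nontrivially.

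The computation in type $B$ is the only step needing care. To be safe, I would cross-check it via the real Kottwitz set by applying Proposition \ref{Prop:BorovoiRealCohomology} directly with $T_0(\R)[2]$.
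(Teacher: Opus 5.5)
Your proof is correct in types $B$ and $C$ and follows the paper's route. The paper likewise cites \cite[Lemma 2.1.4]{XiaoZhu} (this is the compatibility in Section~\ref{subsub:RealBGShimura}, i.e.\ your first step) to identify the image of $[Q]$ with the image of the Hodge cocharacter $\mu_\tau$. It then checks that this image is nonzero in $\pi_1(\g_{2,0})\simeq\mathbb{Z}/2\mathbb{Z}$.

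The only difference is that the paper does the second step uniformly rather than case by case. It lifts $\mu_\tau$ to $\mu_{0,\tau}$ on $\g_0$ and notes that $\iota_0$ identifies $\pi_1(\g_0)$ with $\pi_1(\g_{V_0})=\mathbb{Z}$ via the similitude. The Siegel Hodge cocharacter has similitude degree $1$. Since the central $\mathbb{G}_m\subset\g_0$ has similitude $z\mapsto z^2$, the map $\mathbb{Z}\to\pi_1(\g_{2,0})$ is reduction mod $2$. Your spinor-norm remark in type $B$ is exactly this argument.

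There are two small corrections.
\begin{enumerate}
\item Your case list omits the type $D^{\mathbb{R}}$ groups of Sections~\ref{Sub:ExampleBDR} and~\ref{Sub:ExampleDR}, which the appendix also covers; the uniform argument handles them. Your coroot-parity check works for $\mathrm{SO}_{2m}$ as well, but only if you compute in $\pi_1(\g_{2,0})=\pi_1(\mathrm{SO}_{2m})$ and not in $\pi_1$ of the adjoint quotient. For $m$ odd, the real group $\mathrm{SO}(2,2m-2)$ is an outer form. The class of $(1,0,\dots,0)$, which is $2\in\mathbb{Z}/4\mathbb{Z}\simeq\pi_1(\mathrm{PSO}_{2m})$, then dies in the Galois coinvariants.
\item Your ``equivalently'' in type $B$ is misstated. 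For $G=\mathrm{SO}_{2m+1}$ the dual group is $\widehat{G}=\mathrm{Sp}_{2m}$, and $V_\mu$ is its standard representation, on which $Z(\widehat{G})=\mu_2$ acts by $-1$. The spin representation lives on the group side (it underlies the Hodge embedding of $\mathrm{GSpin}$), not on the dual side.
\end{enumerate}
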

\begin{proof}
By \cite[Lemma 2.1.4]{XiaoZhu}, the image of $[Q]$ agrees with the image of $\mu_{\tau}$, where $\mu_{\tau}$ is the Hodge cocharacter associated to $\x_{2,\tau}$. This Hodge cocharacter lifts to a cocharacter $\mu_{0,\tau}$ of $\g_{0} \otimes_{\mlp, \tau} \mathbb{R}$ because of the existence of $\x_{\tau}$. It suffices to show that $\mu_{0,\tau}$ maps to $1 \in \mathbb{Z} = \pi_1(\g_{0,\tau})$. Now $\iota_0$ identifies $\pi_1(\g_0) \xrightarrow{\sim} \pi_1(G_{V_{0}})$, and it is well known that in the Siegel case $\mu_0$ maps to $1$ under the similitude map. 
\end{proof}

\begin{Lem} \label{Lem:KottwitzInvariantShimuraForm}
For $\tau \in \Sigma_{\infty,\mathrm{c}}$, let $[Q] \in H^1(\mathbb{R}, \g_{2,0} \otimes_{\mlp, \tau} \mathbb{R})$ be the class of an inner form admitting a Shimura datum. Then the image of $[Q]$ in $\pi_1(\g_{2,0})$ is nontrivial.
\end{Lem}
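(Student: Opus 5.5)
The plan is to reduce the statement to a computation with the Kottwitz map over $\mathbb{R}$, using the cartesian square of Lemma \ref{Lem:RealSquare} and the description of the class $[\x]$ from Section \ref{subsub:RealBGShimura}. Fix $\tau \in \Sigma_{\infty,\mathrm{c}}$ and write $H=\g_{2,0}\otimes_{\mlp,\tau}\mathbb{R}$. This group is compact in this case. Let $H'$ be the pure inner form determined by $[Q]$, and assume it admits a weak real Shimura datum $\x'$ whose adjoint part is noncompact. If $\x'$ were trivial on the adjoint group, we would have $[Q]$ with $H'$ compact, hence $[Q]$ trivial; so I read the statement as saying that $H'$ admits a Shimura datum nontrivial on the adjoint group.

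The first step uses Section \ref{subsub:RealBGShimura}. The class $[\x']\in B(\mathbb{R},H')$ maps to the class of the compact inner form of $H'^{\mathrm{ad}}$ in $H^1(\mathbb{R},H'^{\mathrm{ad}})$, and its Kottwitz image is $\kappa(\mu_{\x'})$. Because $\x'$ is nontrivial on the adjoint group, $\mu_{\x'}$ is a nonzero minuscule (in types $B,C,D^{\mathbb{R}}$) cocharacter. Exactly as in the proof of Lemma \ref{Lem:KottwitzInvariantCompactForm}, using the lift to $\g_0$ and the Siegel computation, $\mu_{\x'}$ maps to the nontrivial element of $\pi_1(\g_{2,0})=\mathbb{Z}/2\mathbb{Z}$ (Lemma \ref{Lem:FundamentalGroupComputationAdjoint}). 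Here the lift to $\g_0$ is via the inner twist, since $\pi_1$ is unchanged by inner twisting.

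The second step transports this back to $H$ via $\beta_{Q}^{-1}$. Since $H$ is compact, the compact inner form of $H$ is $H$ itself, so the trivial class corresponds to the compact form. Under $\beta_Q^{-1}$, the compact-form class of $H'$ (the image of $[\x']$) is sent to the class in $H^1(\mathbb{R},H)$ whose twist is compact, which is the trivial class. Call this compact-form class $c'$; thus $\beta_Q(1)=[Q]$ and $\beta_Q^{-1}(c')=1$. Lemma \ref{Lem:RealSquare}, and its $B(\mathbb{R},-)$-version from Section \ref{subsub:RealTwistKottwitz}, gives $\kappa(c') = \kappa(1)+\kappa([Q]) = \kappa([Q])$, and more precisely $\kappa([Q]) = -\kappa_{H'}(\beta_Q(\text{trivial}\to))$ up to sign. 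I would phrase this cleanly as follows: $\beta_{Q^{-1}}$ sends the trivial $H'$-torsor to $[Q^{-1}]$ and the compact class $c'$ to the trivial class, so $\kappa(c')=\kappa([Q])$ in $\pi_1(\g_{2,0})_{\gal_\mathbb{R}}=\mathbb{Z}/2\mathbb{Z}$, where signs are irrelevant.

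The third step combines the two. We have $\kappa(c')$, the Kottwitz invariant of the compact inner form of $H'$. By the argument of Lemma \ref{Lem:KottwitzInvariantCompactForm} applied to $H'$, this invariant equals $\kappa(\mu_{\x'})$, which is nontrivial by \cite[Lemma 2.1.4]{XiaoZhu} and the first step. Hence $\kappa([Q])\neq 0$.

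The main obstacle I expect is bookkeeping. One part is making the Kottwitz map on $H^1(\mathbb{R},-)$ of the possibly non-adjoint group $\g_{2,0}$ (center $\mu_2$ in type $D$) compatible with \cite[Lemma 2.1.4]{XiaoZhu}, which is stated for adjoint groups. The other is checking that the minuscule Hodge cocharacter has nontrivial image in $\mathbb{Z}/2\mathbb{Z}$ for $H'$, not just for the original form. I would handle both by passing to $\g_{0,++}$ via Lemma \ref{Lem:FundamentalGroupCoinvariantsR}, where $\pi_1$ is computed explicitly.
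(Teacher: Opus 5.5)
Your route is the paper's. Lemma~\ref{Lem:RealSquare} identifies $\kappa([Q])$, up to a sign that is irrelevant in $\mathbb{Z}/2\mathbb{Z}$, with the Kottwitz invariant of the class $c'=\beta_Q(1)\in H^1(\mathbb{R},H')$ whose twist is the compact form (this is the paper's $[P]$). One then applies Lemma~\ref{Lem:KottwitzInvariantCompactForm}, i.e.\ \cite[Lemma 2.1.4]{XiaoZhu} together with the Siegel computation, to $H'$. When $\g_{2,0}$ is adjoint (types $B$ and $C$) this is complete. There is then only one class in $H^1(\mathbb{R},H')$ with compact twist, namely the Cartan class $\sigma\mapsto h'(i)$, and its invariant is $\kappa(\mu_{\x'})\neq 0$.

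The type $D$ issue you flag at the end is a genuine gap, not bookkeeping, and passing to $\g_{0,++}$ will not close it. For non-adjoint $H'=\mathrm{SO}(q')$, the class $c'$ can differ from the Cartan class by a central class coming from $H^1(\mathbb{R},Z(H'))$, and the two can have different invariants. So your Step 3, ``$\kappa(c')=\kappa(\mu_{\x'})$'', fails.

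Here is a concrete case. Let $H=\mathrm{SO}(q_0)$ with $q_0$ positive definite of rank $2m$, and let $T=\mathrm{SO}(2)^m$. Twisting by $\lambda\in X_*(T)/2X_*(T)=H^1(\mathbb{R},T)$ with $k$ odd coordinates gives a form with $2k$ negative directions. Its Kottwitz invariant is $\sum_j\lambda_j\equiv k \bmod 2$ in $\pi_1(H)_{\gal_{\mathbb{R}}}=\mathbb{Z}^m/(R+2\mathbb{Z}^m)$, where $R=\{x:\sum_j x_j \text{ even}\}$ is the coroot lattice. Take $q'$ with $2$ positive and $2m-2$ negative directions, so that $\mathrm{SO}(q')$ admits a Shimura datum. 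Then $\kappa([Q])\equiv m-1$, which vanishes for $m$ odd. By contrast, the form with $2$ negative directions gives an isomorphic group and has invariant $1$. For $m=3$, this class is the image of the class of $\mathrm{SU}(2,2)$ under $H^1(\mathbb{R},\mathrm{SU}(4))\to H^1(\mathbb{R},\mathrm{SO}(6))$, with $\mathrm{SU}(4)=\mathrm{Spin}(6)$, so its invariant is trivial by Kottwitz's description of the kernel.

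So in type $D_m$ with $m$ odd, the statement itself fails for this $[Q]$. The paper's proof has the same limitation, since it applies the adjoint Lemma~\ref{Lem:KottwitzInvariantCompactForm} to a non-adjoint $H'$; the paper's main results avoid type $D$. The same computation with $\lambda=(1,\dots,1)$ shows that your aside, ``$H'$ compact forces $[Q]$ trivial'', is false for non-adjoint groups, although this only affects how you read the hypothesis.
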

\begin{proof}
Write $H=\g_{2,0} \otimes_{\mlp, \tau} \mathbb{R}$, choose $Q \in [Q]$ and let $H'=\operatorname{Aut}_H(Q)$ be the inner form admitting a Shimura datum. Then $Q$ induces a canonical Galois equivariant identification $\pi_1(H)=\pi_1(H')$ and we have the following commutative diagram (see Lemma \ref{Lem:RealSquare})
    \begin{equation}
        \begin{tikzcd}
            H^1(\R, H) \arrow{r}{\beta_Q} \arrow{d} & H^1(\R, H') \arrow{d} \\
            \pi_1(H)_{\gal_{\R}} \arrow{r}{+[Q]} & \pi_1(H')_{\gal_{\R}},
        \end{tikzcd}
    \end{equation}
    where the top map sends $[Q]$ to $[1]$. Now we note that $Q$ is a $(H,H')$-bitorsor, and if we write $P$ for the underlying $H'$-torsor, then $\beta_P$ is the inverse of $\beta_{Q}$. So we see that under the identification $\pi_1(H)_{\gal_{\R}}=\pi_1(H')_{\gal_{\R}}$, the image of $[Q]$ is given by minus the image of $[P]$. Now Lemma \ref{Lem:KottwitzInvariantCompactForm} applies to show that the image of $[P]$ is nontrivial, so we are done. 
\end{proof}

\DeclareRobustCommand{\VAN}[3]{#3}
\bibliographystyle{amsalpha}
\bibliography{references}

\end{document}